\documentclass[12pt,oneside,a4paper]{article}
\usepackage{amsmath,amsthm, amssymb}
\usepackage{hyperref}

\usepackage[left=3.0cm,right=3.0cm,top=2.0cm,bottom=2.0cm]{geometry}
\usepackage[utf8]{inputenc}
\usepackage{esint}
\usepackage[expansion=false]{microtype}

\newtheoremstyle{mytheorem}
  {3pt}
  {3pt}
  {\itshape}
  {}
  {\bfseries}
  {.}
  {1em}
  {}

\newtheorem{theoremA}{Theorem}

\newtheorem{theoremC}{Theorem}

\newtheorem{theoremD}{Theorem}

\newtheorem{theoremE}{Theorem}

\newtheorem{theoremF}{Theorem}

\newtheorem{CorollaryX}{Corollary}

\newtheorem{definition}{Definition}[section]
\newtheorem{proposition}[definition]{Proposition}

\theoremstyle{mytheorem}
\newtheorem{theorem}[definition]{Theorem}

\newtheorem{lemma}[definition]{Lemma}

\newtheorem{corollary}[definition]{Corollary}

\newtheorem{question}[definition]{Question}

\theoremstyle{remark}
\newtheorem{remark}[definition]{Remark}

\theoremstyle{definition}

\begin{document}
\title{Locally Conformally Flat Manifolds with Positive Scalar Curvature:
Kleinian Groups, Moduli Spaces, and Euclidean Rigidity}
\author{Jialong Deng \thanks{Jialongdeng@gmail.com}}
\date{}

\maketitle
\begin{abstract}
We study locally conformally flat (LCF) Riemannian manifolds with nonnegative or positive scalar curvature (PSC), using the conformal boundary of the developing image. For closed oriented LCF $n$-manifolds with PSC and infinite fundamental group, $n\ge5$, we bound the macroscopic dimension of their Riemannian universal covers by $\lfloor(n-1)/2\rfloor$, establish the existence of a nontrivial homotopy group above the middle dimension, and, under mild additional hypotheses, bound the Hausdorff dimension of the limit sets of their Kleinian groups in the interval $(1,(n-2)/2)$.
 In particular, no closed aspherical manifold admits an LCF metric with PSC. If the scalar curvature is at least $n(n-1)$ and the manifold is not isometric to the round sphere, then every smooth nonzero-degree map to the sphere expands somewhere when its Kleinian group is elementary, while the developing map expands somewhere whenever the fundamental group is infinite. We prove that the space of LCF metrics with PSC and its moduli space are contractible in dimension three for finite fundamental group and for $S^2\times S^1$, and that the moduli space is empty or contractible for smooth manifolds homeomorphic to spherical space forms but not diffeomorphic to $S^n$ in dimensions $n\ge4$. For complete open simply connected LCF manifolds of nonnegative scalar curvature, we obtain Euclidean rigidity under additional topological hypotheses at infinity (and, in dimension three, from vanishing second homology alone). We also show that, in dimensions $n\ge4$, the Euclidean conclusion can fail when neither of the two additional topological hypotheses is assumed: we construct complete contractible examples of PSC that are not homeomorphic to $\mathbb{R}^n$.

\end{abstract}

\tableofcontents
\section{Introduction}
This paper is the second in a project
\cite{2025arXiv251213528D} devoted to the geometry of
locally conformally flat (LCF) Riemannian manifolds with positive scalar
curvature (PSC).  By Kuiper's theorem \cite{zbMATH03061883}, every closed,
oriented, simply connected LCF \(n\)-manifold, \(n\ge 3\), is conformally
diffeomorphic to the round sphere \((S^n,g_{st})\).  The case of
nontrivial fundamental group is considerably more flexible.  Whereas
\cite{2025arXiv251213528D} treats LCF \(4\)-manifolds with negative scalar curvature, the present paper treats
LCF \(n\)-manifolds in all dimensions \(n\ge 3\), under nonnegative or positive
scalar curvature assumptions, and develops their geometric, topological, and
metric consequences.  The results of this paper are driven by a single
mechanism: scalar curvature imposes quantitative control on the conformal
boundary of the developing image, and this control is translated into global
rigidity, dimension bounds, and topological restrictions by means of Kleinian
group theory, shape theory, potential theory, capacity, and topology.  The
control itself appears through Hausdorff dimension bounds for the limit set,
boundary behavior of conformal factors, and Patterson--Sullivan theory.

In dimensions $n\ge 4$, Schoen and Yau~\cite{zbMATH04075988} prove
that if an LCF manifold $(M^n,g)$ has PSC, then its developing map into $(S^n,g_{st})$
is injective, without any assumption on the fundamental group.  With
its image denoted by $\Omega\subset S^n$, the associated holonomy representation
identifies $\pi_1(M)$ with a Kleinian group $\Gamma$, yielding a
realization $ M^n\;\cong\;\Omega/\Gamma.$
With $\Lambda(\Gamma):=S^n\setminus\Omega$, the limit set of
$\Gamma$, the Schoen--Yau theorem further gives
\begin{equation}\label{eq:SY-upper}
        \dim_{\mathcal H}\bigl(\Lambda(\Gamma)\bigr)\;\le\;\frac{n-2}{2}.
\end{equation}

These results naturally lead to a problem posed by Yau in his 1990
problem list~\cite[Problem~2]{MR1216573}, which asks for a description
of the possible algebraic and geometric structures of the fundamental
group $\pi_1(M)$ of a closed LCF manifold.  They are also related to
another problem in Yau's problem list~\cite[Problem~36]{MR1216573},
which asks whether one can characterize the domain $\Omega$.  Yau
remarks that, in the case where the scalar curvature can be made
constant, the limit set $\Lambda(\Gamma)$ coincides with the singular
set of the natural Yamabe equation.

On the other hand, using the Gauss--Bonnet--Chern formula, Gursky
\cite{zbMATH00750638} shows that if \(n=4\) or \(6\)
and \(H_1(M;\mathbb Z)=0\), then
\((M^n,g)\) is conformally equivalent to the round sphere
\((S^n,g_{st})\).  He asks whether this rigidity phenomenon
persists in other dimensions \(n\ge3\).  In higher dimensions, however,
the Gauss--Bonnet--Chern integrand becomes less directly effective for
extracting scalar curvature rigidity.  We therefore take a different
route: Patterson--Sullivan theory for the Kleinian group
\(\Gamma\) converts the size of the limit set into algebraic information
about \(\Gamma\), and this yields a mechanism available in dimensions
\(n\geq5\).

When the fundamental group is finite, we answer Gursky's question affirmatively when \(n\not\equiv3\pmod4\) and negatively when \(n\equiv3\pmod4\); see Theorem~\ref{nLCF}.
In the four-dimensional case, the author \cite[p.~12]{2025arXiv251213528D} gives an alternative proof of Gursky's result using the classification of closed, LCF $4$-manifolds with PSC.

The most intriguing cases of Yau's two problems and Gursky's question arise when the fundamental group is infinite and $n \ge 5$. Our main result in this direction is the following theorem.

\begin{theoremA}\label{thm:intro-A}
Let $(M^n,g)$, $n\geq5$, be a closed, connected, oriented, locally conformally
flat Riemannian manifold with positive scalar curvature and infinite
fundamental group.  Then there exists an integer
$i\ge\lfloor n/2\rfloor+1$ such that $\pi_i(M^n)\ne 0$, and its universal Riemannian cover $(\widetilde M,\widetilde g)$ has macroscopic dimension at most $\left\lfloor\frac{n-1}{2}\right\rfloor$.  
 
Let $M^n\cong\Omega/\Gamma$ be the Kleinian realization given by the
 developing map.  If, in addition,
$H_1(M^n;\mathbb{Z})=0$, then $\Gamma$ is
non-elementary.  Under these additional assumptions, if $\Gamma$ is
torsion-free, then the Hausdorff dimension of its limit set satisfies
\begin{equation}\label{eq:HD-intro}
        1\;<\;\dim_{\mathcal H}\!\bigl(\Lambda(\Gamma)\bigr)\;<\;\frac{n-2}{2}.
\end{equation}
\end{theoremA}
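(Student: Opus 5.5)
The plan is to work throughout with the Schoen--Yau realization $M^n\cong\Omega/\Gamma$, where $\Omega\subset S^n$ is the developing image and $\Lambda=S^n\setminus\Omega$ satisfies \eqref{eq:SY-upper}. All topological and macroscopic conclusions will be read off from the fact that the universal cover $\widetilde M$ is diffeomorphic to $\Omega$, which is the complement of a compact set of small dimension.

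\textbf{Step 1 (homotopy group).} Since $\dim_{\mathcal H}\Lambda\le (n-2)/2$, the covering dimension satisfies $\dim\Lambda\le\lfloor (n-2)/2\rfloor$. Alexander duality, in its Čech form, then gives $\widetilde H_j(\Omega)\cong \check H^{n-1-j}(\Lambda)$. This vanishes for $n-1-j>\dim\Lambda$, that is, for $j\le n-2-\lfloor(n-2)/2\rfloor$. A general-position argument gives more: maps $S^j\to S^n$ with $j<n-\dim\Lambda-1$ can be pushed off $\Lambda$, so $\Omega$ is $\lfloor n/2\rfloor$-connected.

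Now suppose $\pi_i(M)=\pi_i(\Omega)=0$ for all $i\ge\lfloor n/2\rfloor+1$. Then $\widetilde M$ would be contractible, so $M$ would be aspherical. An aspherical closed manifold has torsion-free $\pi_1$, and by duality $\widetilde H^{n}_c(\widetilde M)\ne0$. Alternatively, $\Gamma$ would have cohomological dimension $n$, while $\Lambda$ has small dimension. I would obtain the contradiction by comparing with Bestvina--Mess/Kapovich: $H^n(\Gamma;\mathbb Z\Gamma)\cong \check H^{n-1}(\partial)$, realized through the end structure of $\Omega$. For $\Omega=S^n\setminus\Lambda$ with $\dim\Lambda<n-1$, the group $H^n_c(\Omega)\cong\mathbb Z$ is not compatible with $\Omega$ being an open contractible $n$-manifold whose end is carried by $\Lambda$. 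Concretely, $\widetilde H_{n-1}(\Omega)\cong \check H^0(\Lambda)$ reduced, so contractibility forces $\Lambda$ to be connected with all Čech cohomology vanishing. A contractible $\Omega$ would also make $\Lambda$ cell-like, so $\Omega$ is a contractible manifold that is simply connected at infinity. Hence $\Omega\cong\mathbb R^n$ and $\Lambda$ is a single point. Then $\Gamma$ is a group of similarities acting cocompactly on $\mathbb R^n$, so $M$ is flat up to conformal change. This contradicts PSC, via Gromov--Lawson/Schoen--Yau for tori and their finite quotients. This cell-like/shape-theory step is the one I expect to be the main obstacle, and it is where shape theory enters.

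\textbf{Step 2 (macroscopic dimension).} Since $\Lambda$ is compact of covering dimension $d\le\lfloor(n-2)/2\rfloor$, $\Omega$ deformation retracts, in a proper and bounded way, onto a complex of dimension $n-1-d$ dual to $\Lambda$. Instead, I would build a $\Gamma$-equivariant map $\widetilde M\to K$ into a complex of dimension $\lfloor(n-1)/2\rfloor$ with uniformly bounded fibers. Use nerves of $\Gamma$-invariant covers by Whitney-type cubes in $\Omega$ relative to $\Lambda$, and push off by general position against $\Lambda$ of codimension $>\lceil n/2\rceil$. Bounded fibers in $\widetilde g$ follow because, after the cover is $\Gamma$-invariant, cocompactness converts conformal smallness into metric boundedness.

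\textbf{Step 3 (Kleinian part).} Assume $H_1(M;\mathbb Z)=0$. An elementary infinite $\Gamma$ is virtually abelian and has $|\Lambda|\le2$. The one-point case was excluded in Step 1. In the two-point case $M$ is finitely covered by $S^{n-1}\times S^1$-type quotients of $S^n\setminus\{0,\infty\}$, and $\Gamma$ surjects onto an infinite cyclic or infinite dihedral group. The dihedral case must be ruled out by orientation and the torsion-free/abelianization argument, giving $H_1\neq0$ and a contradiction. So $\Gamma$ is non-elementary. If it is also torsion-free, Patterson--Sullivan theory for a convex-cocompact-type action gives $\delta(\Gamma)=\dim_{\mathcal H}\Lambda$. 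Strict inequality $<(n-2)/2$ should follow from rigidity in Schoen--Yau equality, which forces the conformal factor to be critical and contradicts strict PSC. The lower bound $>1$ uses Step 1: if $\dim_{\mathcal H}\Lambda\le1$, then Bowen/Sullivan rigidity shows $\Lambda$ is a round circle or totally disconnected, in which case $\Gamma$ is Fuchsian or Schottky-like. In either case $M$ fibers over a hyperbolic surface or is a connected sum of $S^{n-1}\times S^1$'s, and both have $H_1\ne0$.
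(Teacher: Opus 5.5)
Your Step~1 breaks at the claim that a contractible $\Omega=S^n\setminus\Lambda$ forces $\Lambda$ to be cell-like.

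\emph{Step 1.}
\begin{itemize}
\item Alexander duality only gives that $\Lambda$ is \v{C}ech-acyclic, which is strictly weaker. Newman's acyclic $2$-complex $P$ with $\pi_1(P)\neq1$ (Lemma~\ref{lem:topological}) and the continuum $K$ of Theorem~\ref{thm:counterexample-psc} both have contractible complements. Yet neither is cell-like, and neither complement is simply connected at infinity.
\item The next inference, from $\Omega\cong\mathbb R^n$ to $\#\Lambda=1$, also fails: take a tame arc.
\item The fact $H^n_c(\Omega)\cong\mathbb Z$ gives no contradiction, since it holds for every connected oriented open $n$-manifold.
\item The contradiction has to come from the group. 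The missing ingredient is Izeki's theorem, which you never invoke: it makes $\Gamma$ convex cocompact and Gromov-hyperbolic with $\partial\Gamma\cong\Lambda$.
\item Asphericity then makes $\Gamma$ torsion-free with $\operatorname{cd}_{\mathbb Z}\Gamma=n$. In the non-elementary case, Kapovich's formula (equivalently Bestvina--Mess, $H^n(\Gamma;\mathbb Z\Gamma)\cong\check H^{n-1}(\Lambda)$) gives $\operatorname{cd}_{\mathbb Z}\Gamma=\dim\Lambda+1<n$. In the elementary case, a torsion-free virtually cyclic group is $\mathbb Z$, so $\operatorname{cd}_{\mathbb Z}\Gamma=1<n$.
\item You named this route but replaced it with the broken shape-theoretic chain.
\end{itemize}

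There is also an off-by-one problem. With only $\dim\Lambda\le\lfloor(n-2)/2\rfloor$, your inequality $j<n-\dim\Lambda-1$ gives $(n/2-1)$-connectedness for even $n$, not $\lfloor n/2\rfloor$-connectedness. You need the strict bound $\dim_{\mathcal H}\Lambda<(n-2)/2$ already in Step~1; it comes from Nayatani's criterion for a torsion-free finite-index subgroup together with Sullivan's theorem. You need it again in Step~2 to reach the floor $\lfloor(n-1)/2\rfloor$ when $n$ is even.

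\emph{Step 2.} This is not yet an argument.
\begin{itemize}
\item A Whitney-type cover of $\Omega$ has an $n$-dimensional nerve, and general position against $\Lambda$ does not reduce nerve multiplicity.
\item To get a target of dimension $\dim\Lambda+1$ with uniformly bounded fibers, you need covers of $\Lambda$ at every scale with multiplicity $\dim\Lambda+1$ and uniformly controlled Lebesgue numbers. Topological dimension alone does not provide this.
\item That uniform control is exactly the content of the Buyalo--Lebedeva theorem, $\operatorname{asdim}X=\dim\partial_\infty X+1$ for cobounded proper geodesic hyperbolic spaces.
\item The paper applies it to $\mathrm{CH}(\Gamma)$ after identifying its Gromov boundary with $\Lambda$. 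It transfers the result to $(\widetilde M,\widetilde g)$ by Milnor--\v{S}varc and quasi-isometry invariance, then uses $\dim_{mc}\le\operatorname{asdim}$. The two-point case is handled directly with $x\mapsto\log|x|$.
\end{itemize}

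\emph{Step 3.}
\begin{itemize}
\item The equality $\delta(\Gamma)=\dim_{\mathcal H}\Lambda$ again requires Izeki's convex cocompactness.
\item The strict upper bound is Nayatani's criterion, not an unproved equality-case rigidity.
\item In the totally disconnected case, the right conclusion is that $\Gamma$ is free: Kapovich's formula gives $\operatorname{cd}_{\mathbb Z}\Gamma=1$, then Stallings--Swan applies. Hence $\Gamma^{\mathrm{ab}}\neq0$. Your claim that $M$ is a connected sum of copies of $S^{n-1}\times S^1$ is unjustified and unnecessary.
\item Your two-ended argument works once you note that $\mathbb Z$ and $D_\infty$ both have nonzero abelianization. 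The dihedral case need not be excluded, and orientation could not exclude it anyway: $\mathbb{RP}^n\#\mathbb{RP}^n$ for odd $n$ is orientable with fundamental group $D_\infty$.
\end{itemize}
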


Theorem~\ref{thm:intro-A} gives a partial answer to Yau's Problems~2 and~36 and to
Gursky's question.  The lower bound in~\eqref{eq:HD-intro} is the new
content: combined with~\eqref{eq:SY-upper}, it traps
$\dim_{\mathcal H}(\Lambda(\Gamma))$ in the interval $(1,(n-2)/2)$
under the hypotheses $H_1(M;\mathbb{Z})=0$ and $\Gamma$ torsion-free.
This constrains the geometry of $\Omega$ (addressing Problem~36) and
exhibits nontrivial algebraic structure of $\pi_1(M)$ (addressing
Problem~2).   In particular, since $\dim_{\mathcal H}(\Lambda(\Gamma))>1$
implies $\Lambda(\Gamma)\ne\varnothing$ and hence $M\not\cong S^n$
conformally, Gursky's rigidity fails when $\pi_1(M)$ is infinite. Using  Schoen and Yau's Liouville theorem \cite{MR4836036}, the proof can be generalized to the
case of nonnegative scalar curvature, see Corollary \ref{nonnegative}.  
 
Theorem~\ref{thm:intro-A} also verifies Gromov's macroscopic dimension conjecture~\cite{zbMATH00867495} for LCF manifolds. It asserts that if $(M^n,g)$ is a closed Riemannian manifold with PSC, then the macroscopic dimension of its universal cover is at most $n-2$. The conjecture is established for $3$-manifolds by Bolotov~\cite{zbMATH01985808}, while in higher dimensions only partial results are available; see, for example,~\cite{zbMATH06371564,zbMATH08005926}.

The first conclusion follows by combining Schoen--Yau's low degree homotopy
vanishing and limit set estimate~\cite{zbMATH04075988} with Izeki's
convex cocompactness theorem~\cite{zbMATH01785298} and Kapovich's formula for
cohomological dimension~\cite[Proposition~9.6]{MR2491697}.
Non-elementarity of $\Gamma$ under $H_1(M^n;\mathbb{Z})=0$ is
established by a separate algebraic argument (Lemma~\ref{Abel} in
the body).  For the Hausdorff dimension bound, Sullivan's
theorem~\cite{zbMATH03903608} gives
$\delta(\Gamma)=\dim_{\mathcal H}(\Lambda(\Gamma))$ for non-elementary
convex cocompact groups.  To obtain the strict lower bound, set
$d=\dim_{\mathrm{top}}\Lambda(\Gamma)$.  If $d=0$, Kapovich's
formula~\cite[Proposition~9.6]{MR2491697} gives
$\operatorname{cd}_{\mathbb Z}(\Gamma)=1$, and the Stallings--Swan theorem,
in the form recorded by Kapovich \cite[Theorem~2.5]{MR2491697}, then implies
that $\Gamma$ is free.  If $d=1$ and
$\dim_{\mathcal H}\Lambda(\Gamma)\le1$, Kapovich's rigidity theorem
\cite[Theorem~1.3]{MR2491697} shows that the limit set is a round circle;
torsion-freeness and convex cocompactness then identify $\Gamma$ with a
closed hyperbolic surface group.  Both alternatives contradict
$H_1(M^n;\mathbb Z)=0$.  Nayatani's criterion
\cite[Corollary~3.4]{zbMATH01028179} supplies the strict upper bound.  For
the macroscopic dimension, the proof realizes \(\pi_1(M)\) as a convex cocompact Kleinian group whose limit set, by PSC, has strictly bounded Hausdorff dimension, the floor function arising from integrality of topological
dimension.  The geometric action on the convex hull identifies this limit set with
the Gromov boundary, so a boundary formula computes the asymptotic dimension, which
quasi-isometry invariance transfers to the universal cover and which bounds the
macroscopic dimension.

An immediate topological consequence of Theorem~\ref{thm:intro-A}, which also plays a
role in the proof of Theorem~\ref{Lipschitz} below, is the following.

\begin{CorollaryX}[Corollary \ref{LCF aspherical}]\label{CorollaryB}
A closed, connected, oriented, aspherical manifold of dimension $n\geq5$ does not admit a
locally conformally flat metric with positive scalar curvature.
\end{CorollaryX}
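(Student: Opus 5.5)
The plan is to argue by contradiction and let Theorem~\ref{thm:intro-A} do the work. Let $M^n$, $n\ge5$, be closed, connected, oriented and aspherical, and suppose it carries a locally conformally flat metric $g$ with positive scalar curvature. Asphericity gives $\widetilde M\simeq *$, and since $\widetilde M$ is then a contractible $n$-manifold without boundary, it is noncompact. Because $M$ is compact, the covering $\widetilde M\to M$ must therefore have infinitely many sheets, so $\pi_1(M)$ is infinite. (A finite group would also be excluded here: $\pi_1(M)$ has the finite-dimensional $K(\pi,1)$ given by $M$, hence is torsion-free, and a finite torsion-free group is trivial.) Consequently all hypotheses of Theorem~\ref{thm:intro-A} are satisfied.

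Theorem~\ref{thm:intro-A} now yields an integer $i\ge\lfloor n/2\rfloor+1\ge 2$ with $\pi_i(M)\ne0$. Asphericity, however, means $\pi_i(M)=\pi_i(\widetilde M)=0$ for every $i\ge2$. This contradiction proves the corollary.

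As a consistency check, the macroscopic dimension part of Theorem~\ref{thm:intro-A} gives a second contradiction. For a closed aspherical manifold, the universal cover has macroscopic dimension $n$, since the fundamental class is nonzero in $H^{\mathrm{lf}}_n(\widetilde M)$. Theorem~\ref{thm:intro-A} instead bounds it by $\lfloor (n-1)/2\rfloor<n$. This route relies on an extra input (Dranishnikov's characterization), so the homotopy-group route is the one to write down.

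I see no serious obstacle, since all the depth is contained in Theorem~\ref{thm:intro-A}. The only point needing care is the justification that $\pi_1(M)$ is infinite, which the noncompactness argument for the contractible cover settles.
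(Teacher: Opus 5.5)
Your argument is correct and is essentially the paper's proof: $\pi_1(M)$ is infinite because a contractible universal cover cannot be a closed manifold (it would have nonzero top-dimensional homology), and then the integer $i\ge\lfloor n/2\rfloor+1$ with $\pi_i(M)\neq 0$ supplied by Theorem~\ref{Kleinian} contradicts asphericity. Your parenthetical torsion-freeness remark only reduces a finite $\pi_1(M)$ to the trivial group, which the noncompactness argument must still exclude, so it can simply be omitted.
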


Beyond these topological consequences, conformally flat geometries also arise
as backgrounds for rigid supersymmetry in gauged-supergravity constructions
\cite{zbMATH06267307}. This connection
provides additional motivation for studying their global rigidity. We next
investigate pointwise Lipschitz properties of natural maps associated with LCF
manifolds of PSC.

\begin{theoremC}\label{Lipschitz}
Let $(M^{n}, g)$, $n \ge 4$, be a closed, connected, oriented, locally conformally flat manifold with scalar curvature
$\mathrm{Sc}_{g} \ge n(n-1)$, and assume that $(M^{n}, g)$ is not isometric to $(S^{n}, g_{st})$.
View its fundamental group $\pi_1(M)$ as a Kleinian group.
\begin{enumerate}
\item[(1)] If $\pi_1(M)$ is elementary, then every smooth map of
nonzero degree $f\colon(M^{n},g)\to (S^{n},g_{st})$ has a
point $p\in M$ at which the pointwise Lipschitz constant of $f$ is
strictly greater than~$1$.
\item[(2)] If $\pi_1(M)$ is infinite (whether elementary or non-elementary),
then for the developing map
$\mathrm{dev} \colon (\widetilde{M}, \tilde g) \to (S^{n}, g_{st})$,
there exists a point $x \in \widetilde{M}$ such that the pointwise Lipschitz constant satisfies
$\mathrm{Lip}_{x}(\mathrm{dev}) > 1$.
\end{enumerate}
\end{theoremC}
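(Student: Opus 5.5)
For part (1) I plan to reduce to Llarull's rigidity theorem by passing to a spin finite cover. For part (2) I plan to turn the assumption $\mathrm{Lip}(\mathrm{dev})\le1$ into a superharmonic conformal factor on $\Omega$ and rule it out with a removable-singularity argument across the small limit set.

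\textbf{Part (1).} Suppose, for contradiction, that $\mathrm{Lip}_p(f)\le 1$ at every $p\in M$.
\begin{enumerate}
\item[(a)] Since $\Gamma=\pi_1(M)$ is elementary, it is virtually abelian. I would go through the elementary cases using the Kleinian realization $M\cong\Omega/\Gamma$ and the Schoen--Yau injectivity of $\mathrm{dev}$.
\item[(b)] If $\Lambda(\Gamma)=\varnothing$, then $\Gamma$ is finite and $\widetilde M\cong S^n$.
\item[(c)] If $\Lambda(\Gamma)$ is two points (loxodromic case), then $\Gamma$ has a finite-index infinite cyclic subgroup. The corresponding finite cover is diffeomorphic to $S^{n-1}\times S^1$ or a twisted product over $S^1$.
\item[(d)] If $\Lambda(\Gamma)$ is one point (parabolic case), then $M$ would be a compact quotient of $\mathbb R^n$ by Euclidean similarities, hence a compact Euclidean space form. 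It is aspherical, which is excluded by Corollary~\ref{CorollaryB} for $n\ge5$. For $n=4$ I would exclude it by the classical fact that a closed flat manifold carries no PSC metric.
\item[(e)] In the surviving cases, pick a finite cover $p\colon M'\to M$ that is spin (e.g.\ $S^n$, or the orientable double cover $S^{n-1}\times S^1$). Give it the metric $p^*g$, so $\mathrm{Sc}\ge n(n-1)$ still holds.
\item[(f)] The map $f\circ p$ has degree $\deg f\cdot\deg p\neq0$, and its pointwise Lipschitz constants equal those of $f$, hence are $\le1$. Llarull's theorem (in its pointwise, nonzero-degree form) then says $f\circ p$ is an isometry onto $(S^n,g_{st})$.
\item[(g)] Being an isometry, $f\circ p$ has degree $\pm1$, which forces $\deg p=1$. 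So $M$ is isometric to the round sphere, a contradiction.
\end{enumerate}

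\textbf{Part (2).} Write $\mathrm{dev}^*g_{st}=\lambda^2\tilde g$. Then $\mathrm{Lip}_x(\mathrm{dev})=\lambda(x)$. Suppose $\lambda\le1$ everywhere.
\begin{enumerate}
\item[(a)] By Schoen--Yau, $\mathrm{dev}$ is a diffeomorphism onto $\Omega=S^n\setminus\Lambda(\Gamma)$. So $\tilde g$ becomes a metric $w^{4/(n-2)}g_{st}$ on $\Omega$ with $w=\lambda^{-(n-2)/2}\ge1$.
\item[(b)] The conformal change formula gives
\begin{equation*}
-\tfrac{4(n-1)}{n-2}\Delta_{g_{st}}w=\mathrm{Sc}_{\tilde g}\,w^{\frac{n+2}{n-2}}-n(n-1)w\;\ge\;n(n-1)\bigl(w^{\frac{n+2}{n-2}}-w\bigr)\;\ge\;0.
\end{equation*}
Hence $w$ is a positive superharmonic function on $\Omega$.
\item[(c)] Since $\pi_1(M)$ is infinite, $\Lambda(\Gamma)\neq\varnothing$.
\item[(d)] Because $\tilde g$ is complete (it covers a closed manifold), every point of $\Lambda$ is at infinite $\tilde g$-distance. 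This forces $\liminf w=\infty$ on approach to $\Lambda$; otherwise a $g_{st}$-geodesic into $\Lambda$ along which $w$ stays bounded would have finite $\tilde g$-length.
\end{enumerate}

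\textbf{Conclusion via removable singularities.} By \eqref{eq:SY-upper}, $\dim_{\mathcal H}\Lambda\le (n-2)/2<n-2$, so $\Lambda$ is a closed polar set (Newtonian capacity zero). A superharmonic function bounded below on the complement of a closed polar set extends to a lower semicontinuous superharmonic function on all of $S^n$, with values $\liminf w$ on $\Lambda$. On a closed manifold the minimum principle forces a superharmonic function to be constant, so $w$ is a constant. But its extended value on $\Lambda$ is $+\infty$ while it is finite on $\Omega$, a contradiction. Hence $\mathrm{Lip}_x(\mathrm{dev})>1$ somewhere.

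\textbf{Main obstacle.} I expect the delicate step to be the potential theory: the precise removable-singularity statement across polar sets, combined with the blow-up of $w$ at $\Lambda$ derived from completeness. I would first try the cleaner route of building a test function supported away from $\Lambda$ via a capacity-zero cutoff. For part (1) the hardest point is the case bookkeeping for elementary groups, especially the spin structure on the finite cover and excluding the parabolic case when $n=4$.
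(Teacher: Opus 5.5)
\textbf{Part (1).} This part follows the paper's route. You sort the elementary cases, exclude a one-point limit set by asphericity, pass to a spin finite cover ($S^n$ or $S^{n-1}\times S^1$), and apply Llarull. Your descent via $\deg(f\circ p)=\deg f\cdot\deg p=\pm1$ is a tidier substitute for the paper's volume comparison. For $n=4$, your flat-manifold argument replaces the paper's Schoen--Yau citation without loss. Freeness of the deck action on $\Omega=\mathbb R^n$ already forces the similarities to be isometries, and then the conformal class contains a flat metric.

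\textbf{Part (2): the gap at step (d).} Completeness only says that every curve ending at a point $p\in\Lambda$ has infinite $\tilde g$-length. If $\liminf_{x\to p}w(x)<\infty$, you obtain a \emph{sequence} $x_k\to p$ with $w(x_k)$ bounded. You do not obtain a curve into $p$ along which $w$ stays bounded, so your geodesic argument does not apply.

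Completeness alone genuinely does not force uniform blow-up. Near $0\in\mathbb R^n$, take the cylindrical length factor $|x|^{-1}$ and insert sufficiently deep dips on the balls $B(2^{-k}e_1,4^{-k})$. Every curve into $0$ still crosses infinitely many dip-free annuli, each of length about $\log 2$, yet the factor tends to $0$ along $2^{-k}e_1$. In the paper, the statement $u\to+\infty$ at $\partial\Omega$ is exactly what is imported from Ma--Qing's boundary lemma, and that lemma uses the scalar-curvature hypothesis. Your final contradiction (``the extended value on $\Lambda$ is $+\infty$'') rests on (d), so the argument as written does not close.

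\textbf{The repair, and how your route compares.} The fix is easy, because (d) is not needed at all. Once $\bar w\equiv c$, you have $\tilde g=c^{4/(n-2)}g_{st}|_{\Omega}$ on the proper subdomain $\Omega\subsetneq S^n$. A minimizing round geodesic from $y_0\in\Omega$ to a nearest point of $\Lambda$ stays in $\Omega$ until its endpoint and has finite $\tilde g$-length, which contradicts completeness. This is precisely the first step of the paper's Proposition~\ref{minimum}.

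With that change, your Part (2) is a correct proof by a genuinely different route. The paper uses boundary blow-up to guarantee that $u$ attains an interior minimum, and then applies the strong minimum principle on $\Omega$. You instead use that $\Lambda$ is polar, since $\dim_{\mathcal H}\Lambda\le(n-2)/2<n-2$. You then extend the superharmonic function $w\ge1$ across $\Lambda$ and apply the minimum principle on the closed sphere. In effect, you trade Ma--Qing's boundary-behavior lemma for two inputs:
\begin{itemize}
\item the classical removability theorem for superharmonic functions bounded below across closed polar sets, applied to the Laplace--Beltrami operator in charts;
\item the Schoen--Yau size estimate on $\Lambda$.
\end{itemize}
The paper's Proposition~\ref{minimum}, by contrast, is stated for arbitrary proper domains and never needs to address the size of the complement explicitly.
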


For assertion~\textup{(1)}, suppose that $\pi_1(M)$ is elementary. If the
pointwise Lipschitz constant were
everywhere bounded above by~$1$, then Proposition~\ref{Llarull} would imply
that $(M^{n}, g)$ is isometric to $(S^{n}, g_{st})$, a contradiction.
For assertion~\textup{(2)}, suppose that $\pi_1(M)$ is infinite. Injectivity
of the developing map identifies $\widetilde M$ conformally with a proper
domain $\Omega\subsetneq S^n$. With
$(\mathrm{dev}^{-1})^*\tilde g=u^{4/(n-2)}g_{st}$ on $\Omega$,
Proposition~\ref{minimum} gives $\min_\Omega u<1$. Since the pointwise
Lipschitz constant of $\mathrm{dev}$ at the point corresponding to
$x\in\Omega$ is $u(x)^{-2/(n-2)}$, the conclusion follows.

The developing-map framework used in
Theorems~\ref{thm:intro-A} and~\ref{Lipschitz} also provides information
about the topology of spaces and moduli spaces of locally conformally flat
metrics with positive scalar curvature. Questions concerning the topology and rigidity of such moduli spaces have recently arisen in dimension three.
Moroianu, Pino Carmona, and
Shahbazi\footnote{I thank Carlos Shahbazi for bringing their beautiful paper to my attention.}
\cite{2026arXiv260303272M} show that every closed three-dimensional
heterotic soliton with vanishing torsion which is LCF and has constant scalar curvature is rigid, in the sense that it is an isolated point of the
corresponding moduli space.  In a different direction, Bamler and Kleiner
\cite{2019arXiv190908710B} prove, using Ricci flow through singularities,
that if \(M^3\) is a closed, connected, oriented smooth three-manifold, then
the space of positive scalar curvature metrics on \(M^3\) is either empty or
contractible.    They ask in Question~1.8 whether the same conclusion holds for the space of conformally flat PSC metrics.

  In dimension $3$, the notion of conformal flatness used
in their paper coincides with the notion of LCF used in our work:
a Riemannian metric is LCF if and only if its Cotton tensor vanishes,
equivalently, if its Schouten tensor satisfies the Cotton--York
condition; in dimensions $n\ge 4$ this is equivalent to the vanishing
of the Weyl tensor.  Ricci flow does not preserve local conformal
flatness in dimension $3$, so the Bamler--Kleiner approach is
unavailable in the LCF setting.  They also note that such a contractibility statement would imply Theorem~1.6 and Corollary~1.7 of their paper.

\begin{theoremD}\label{thm:intro-D}
Let $M^n$ be a connected, closed, oriented smooth $n$-manifold. Denote by $X(M^n)$ the space of locally conformally flat metrics with positive scalar curvature on $M^n$, and by $\mathcal{M}(M^n)$ its moduli space. Then:
\begin{enumerate}
\item[(i)] If $n = 3$ and either $\pi_1(M^3)$ is finite or $M^3$ is diffeomorphic to $S^2 \times S^1$, then both $X(M^3)$ and $\mathcal{M}(M^3)$ are contractible.
\item[(ii)] If $n \ge 4$ and $M^n$ is homeomorphic to a spherical space form but not diffeomorphic to $S^n$, then $\mathcal{M}(M^n)$ is either empty or contractible.
\end{enumerate}
\end{theoremD}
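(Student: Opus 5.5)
The strategy is to use the Schoen--Yau injectivity of the developing map (for $n\ge4$) and Kuiper's theorem (for the simply connected case) to turn an LCF metric with PSC into a conformal factor on a fixed model. The model is a domain $\Omega\subset S^n$ carrying a Kleinian group $\Gamma$. We then argue that the space of admissible conformal factors is convex, and that the choice of model is unique up to a contractible ambiguity.

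\textbf{Finite fundamental group, $n=3$.} Let $g\in X(M^3)$. The universal cover is a closed, simply connected LCF $3$-manifold. By Kuiper's theorem it is conformal to $(S^3,g_{st})$, so $M^3\cong S^3/\Gamma$ with $\Gamma\subset \mathrm{Conf}(S^3)$ finite. A finite group of conformal maps fixes a point of $\mathbb H^4$, so after conjugation $\Gamma\subset O(4)$. The conformal class of $g$ therefore contains a round metric $g_0$, and we can write $g=u^4 g_0$ with $u>0$.

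Let $L_{g_0}=-8\Delta_{g_0}+\mathrm{Sc}_{g_0}$ be the conformal Laplacian. Then $\mathrm{Sc}_g=u^{-5}L_{g_0}u$, so PSC within a fixed conformal class means $L_{g_0}u>0$. This is a convex cone condition on $u$. Hence the PSC metrics in one conformal class form a convex, and therefore contractible, set.

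Next we parametrize the round metrics. Two round metrics on $S^3/\Gamma$ in the same conformal class differ by a conformal diffeomorphism of $S^3$ that normalizes $\Gamma$, modulo isometries. So round structures form a space $\mathcal R$ fibering over the LCF structures (Teichmüller-type data). I would show $X(M^3)\simeq \mathrm{Diff}(M)\times_{\mathrm{Isom}}(\text{conformal factors})$.

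A cleaner route avoids choosing $g_0$. Given $g$, take the barycenter (Douady--Earle type) of the pushforward of the volume measure of $g$ on $\Gamma\backslash S^3$, lifted to $S^3$. This gives a $\Gamma$-fixed point in $\mathbb H^4$ and hence a canonical round metric $g_0(g)$ depending continuously on $g$. The linear homotopy $g_t=((1-t)+tu)^4 g_0(g)$ stays in $X$. This deformation-retracts $X(M)$ onto the space of round metrics $\mathcal R(M)$. Since $\mathcal R(M)\simeq \mathrm{Diff}(M)/\mathrm{Isom}(M,g_0)$, and the moduli space is $X/\mathrm{Diff}$, the moduli space retracts to a point.

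The contractibility of $X$ itself then reduces to the generalized Smale conjecture for spherical space forms, $\mathrm{Diff}(M)\simeq\mathrm{Isom}(M)$. I would quote this from Bamler--Kleiner and its spherical space form versions.

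\textbf{$S^2\times S^1$.} Here $\Omega=S^3\setminus\{p,q\}$ and $\Gamma\cong\mathbb Z$ is generated by a loxodromic element. The conformal factor relative to the product metric again satisfies a convex condition. The parameters of the loxodromic element (dilation and rotation) form a contractible set. Quotienting by the normalizer (translations in $\mathbb R\times SO(3)$) leaves a contractible moduli space. For $X$ itself I would invoke $\mathrm{Diff}(S^2\times S^1)\simeq \mathrm{Isom}$ modulo the known $\pi_0$ and loop factors, which again come from Bamler--Kleiner.

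\textbf{Part (ii).} Assume $n\ge4$ and that $M$ is homeomorphic to $S^n/\Gamma$ but not diffeomorphic to $S^n$. If $\pi_1(M)$ is trivial, $M$ is an exotic sphere. Kuiper's theorem then forces $M$ to be diffeomorphic to $S^n$, so $X=\emptyset$. Otherwise $\pi_1$ is finite, and the same barycenter and convex-cone argument applies. Every LCF PSC metric is conformal to a round quotient $S^n/\Gamma$. Isometry classes of round quotients with a given finite $\Gamma$ modulo diffeomorphisms of $M$ form a single point, by rigidity of finite subgroups of $O(n+1)$ up to conjugacy given the diffeomorphism type. The conformal factors modulo the isometry group form a convex set quotiented by a compact group acting linearly, which is contractible by radial retraction to $u\equiv1$. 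So $\mathcal M(M)$ is either empty or contractible.

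\textbf{Main obstacle.} The delicate step is the passage from $X$ to $\mathcal M$. The $\mathrm{Diff}(M)$-action is not free, since it has isometry stabilizers, so the moduli space could a priori be only a non-contractible orbifold quotient. I would handle this by performing the retraction onto $u\equiv1$ equivariantly: the straight-line homotopy commutes with diffeomorphisms because the barycenter is natural. The quotient of a contractible equivariant retract onto a single orbit is then a point.
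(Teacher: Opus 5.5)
Your architecture is the paper's. In each conformal class you pick a canonical round (resp.\ round-cylindrical) representative, interpolate conformal factors linearly using linearity of the conformal Laplacian, and use $\mathrm{Diff}(M)$-equivariance to push the homotopy down to $\mathcal M(M)$. You then take contractibility of the locally homogeneous metrics from Bamler--Kleiner and use de Rham rigidity to make the round locus a single orbit.

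What differs is the key lemma producing the canonical representative. The paper imports it for $n=3$ from Bamler--Kleiner's Lemma~9.2: a variational normalization on the universal cover, made deck-invariant by uniqueness, with its continuity proved there. For $n\ge4$ it combines Marques' existence with Obata's uniqueness, which is available only because $\Gamma\neq1$. It then proves $C^\infty$-continuity by the implicit function theorem, the linearization being invertible thanks to the strict gap $\lambda_1(-\Delta_h)>n$.

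Your Douady--Earle barycenter of the normalized lifted volume measure is conformally natural. So $\Gamma$-invariance, $\mathrm{Diff}(M)$-equivariance, and $g_0(g)=g$ for round $g$ come for free, and you never need uniqueness of the round metric. In principle it also yields an equivariant retraction onto round metrics for $M\cong S^n$, the case the paper's closing remark says its Obata argument cannot handle.

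The price is that $C^\infty$-continuity of $g\mapsto g_0(g)$, on which the whole argument rests, is asserted rather than proved. You should supply it in three steps. First, normalize the developing map at a base point so that it depends continuously on $g$; it is obtained by integrating the flat normal conformal Cartan connection, which is built from $g$ and its Schouten tensor. Second, observe that the barycenter is the unique minimizer of a strictly convex, proper function on $\mathbb H^{n+1}$ built from Busemann functions, so it depends continuously on the measure. Third, read off continuity of the resulting conformal factor.

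Two points in the $S^2\times S^1$ case need repair.
\begin{itemize}
\item You invoke Schoen--Yau injectivity only for $n\ge4$. In dimension three you need the Liouville theorem of Lesourd--Unger--Yau, or Schoen--Yau together with Witten's positive mass theorem, to know the following: the developing map is injective with image $S^3\setminus\{p,q\}$, where $p,q$ are the fixed points of a loxodromic generator. A single point is excluded because $S^2\times S^1$ is not aspherical. After that, the round-cylinder metric on this domain is unique by Liouville's theorem on conformal maps, so your canonical representative is well defined. Its continuity still needs the argument above; the paper takes all of this from Bamler--Kleiner's Lemma~9.2(b).
\item Contractibility of $X(S^2\times S^1)$ cannot be read off from the homotopy type of $\mathrm{Diff}(S^2\times S^1)$. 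The space of locally cylindrical metrics is not a single orbit: metrics with different invariants $(L,\theta)$ are non-isometric, and the isometry groups jump, e.g.\ at $\theta=0$. Cite Bamler--Kleiner's Theorem~1.2, which asserts exactly this contractibility.
\end{itemize}
Likewise, the pairs $(L,A)\in(0,\infty)\times\mathrm{SO}(3)$ do not form a contractible set. What is contractible is the set of conjugacy invariants $(L,\theta)\in(0,\infty)\times[0,\pi]$. You must also check that the quotient topology on the moduli of cylindrical metrics agrees with it; the paper does this using the minimum and maximum of the displacement function $p\mapsto d_{\widetilde h}(p,\tau p)$.
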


Theorem~\ref{thm:intro-D}(i) gives a positive answer to Bamler and Kleiner's
Question~1.8 for $3$-manifolds with finite fundamental group, and
extends it to the case $M^3\cong S^2\times S^1$. The empty alternative in
Theorem~\ref{thm:intro-D}(ii) is essential. For example, if $M$ is an exotic sphere,
then Kuiper's theorem implies that $X(M)=\varnothing$, since an LCF metric on
$M$ would force $M$ to be diffeomorphic to $S^n$. The
author shows~\cite{zbMATH07368018} that the conformal class of a
metric is dense in the moduli space in the Gromov--Hausdorff topology;
whether $\mathcal{M}(M^n)$ is contractible in that topology remains
open.

For $n=3$ with finite fundamental group, applying the normalization argument
of Bamler and Kleiner~\cite[Lemma~9.2(a)]{2019arXiv190908710B} on the universal
cover and then descending the unique minimizing conformal factor to $M$
produces a canonical $\mathrm{Diff}(M)$-equivariant normalized round
representative in each conformal class. Interpolation of the
corresponding conformal factors gives a strong deformation retraction, and
their Theorem~1.2 makes its target contractible. Equivariance allows the
deformation to descend to the moduli space, whose round locus is a point by
de Rham rigidity \cite{MR43468}. For $M\cong S^2\times S^1$, the same
interpolation retracts onto the locally cylindrical metrics, whose moduli
space is identified explicitly with $(0,\infty)\times[0,\pi]$. For $n\ge4$,
nonemptiness of $X(M)$ and Kuiper's theorem, followed by a Cartan fixed-point
argument, identify $M$ diffeomorphically with a spherical space form.
Marques' result \cite[Corollary~3.2]{MR2950765} supplies a positive constant sectional curvature representative in each
conformal class, whereas Obata's theorem \cite{zbMATH03374588} makes its
curvature one normalization unique. The implicit function theorem and the
strict Lichnerowicz--Obata spectral gap give continuity of this canonical
projection. Interpolation then yields the equivariant retraction, and de
Rham rigidity completes the proof.

Theorems~\ref{thm:intro-A}, \ref{Lipschitz}, and~\ref{thm:intro-D} use the developing map to study closed LCF  manifolds with PSC.  We next turn to complete noncompact manifolds.  In the simply
connected case, the holonomy is trivial, and the developing map realizes
\(M\) itself as a domain $ \Omega\subset S^n .$
The central object is then the conformal boundary $ \Lambda:=S^n\setminus\Omega .$
Since no nontrivial Kleinian group remains, the topology of \(M\) must be read directly from \(\Lambda\).  Shape theory provides the mechanism for doing this without assuming any regularity of \(\Lambda\).

This point of view is related to a classical rigidity problem for open
three-manifolds.  Gromov and Lawson
\cite[Corollary~10.9]{MR720933}, by minimal surface methods, show that a
complete noncompact \(3\)-manifold with uniformly positive scalar curvature and finitely
generated fundamental group is simply connected at infinity.  Since an open
contractible \(3\)-manifold which is simply connected at infinity is
homeomorphic to \(\mathbb R^3\) \cite{275662}, it follows that
any complete open contractible \(3\)-manifold with uniformly positive scalar
curvature is homeomorphic to \(\mathbb R^3\); see also the index-theoretic
proof in \cite[Theorem~1]{MR2721617}.  Motivated by the work of Gromov--Lawson, the following conjecture has circulated in the field for decades, but has only recently begun to receive attention: A complete, connected, open, contractible \(3\)-manifold with nonnegative scalar curvature is diffeomorphic to \(\mathbb{R}^3\).

Under additional geometric or topological assumptions, the conjecture is 
partially verified in recent work; see~\cite[p.~2]{2026arXiv260301887C}.
The LCF case requires new methods: the conformal boundary $\Lambda$ may be a wild compact set with no smoothness or
rectifiability, so the minimal surface and index theoretic methods  do not apply.  We introduce shape-theoretic methods to
extract topological information on $\Lambda$ directly from the global
topology of $M$, without imposing any regularity on $\Lambda$.  To the
best of our knowledge, this is the first use of shape theory to study
developing map limit sets in the scalar curvature setting.  Furthermore, the dictionary relating properties (algebraic, coarse, etc.) of Kleinian groups to the global and local shape of their limit sets, as well as to the homology and end topology of the associated domains of discontinuity, will be developed in a subsequent paper in this  project~\cite{DengKleinianScalarShape}.

\begin{theoremE}\label{thm:intro-E}
Let \((M^n,g)\), \(n \ge 3\), be a complete, open, simply connected, locally
conformally flat \(n\)-manifold with nonnegative scalar curvature. Let \(\Phi:M\to \Phi(M)=:\Omega\subset S^n\) be the developing map, and set
\(\Lambda:=S^n\setminus\Omega\). Assume that one of the following holds:
\begin{enumerate}
\item[(i)] \(n=3\) and \(H_2(M;\mathbb Z)=0\);
\item[(ii)] \(n \ge 4\), \(M\) is \(\left\lfloor \frac{n-2}{2}\right\rfloor\)-connected at infinity, and \(\Lambda\) satisfies the small loops condition in \(S^n\).
\end{enumerate}
Then \(M\) is homeomorphic to \(\mathbb R^n\).
\end{theoremE}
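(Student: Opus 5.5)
Set-up. By Schoen--Yau (and, for nonnegative scalar curvature, their Liouville theorem \cite{MR4836036}), the developing map $\Phi$ is injective. Hence $M\cong\Omega=S^n\setminus\Lambda$. Here $\Lambda$ is a nonempty compact set, since $M$ is open, and $\dim_{\mathcal H}\Lambda\le\frac{n-2}{2}$. In particular $\Lambda$ has topological dimension at most $\lfloor\frac{n-2}{2}\rfloor$ and does not separate $S^n$. The plan is to show that $\Lambda$ is \emph{cellular}, i.e.\ an intersection of a nested sequence of closed $n$-balls. Then $S^n\setminus\Lambda\cong S^n\setminus\{pt\}\cong\mathbb R^n$ by the standard fact that the quotient $S^n/\Lambda$ is homeomorphic to $S^n$ for cellular $\Lambda$.

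Shape of $\Lambda$. Alexander--\v{C}ech duality gives
\[
\check H^{q}(\Lambda)\cong \tilde H_{n-q-1}(\Omega)=\tilde H_{n-q-1}(M).
\]
Connectivity at infinity of $M$ corresponds to the homotopy pro-groups of small neighborhoods of $\Lambda$, so it translates into shape-theoretic connectivity of $\Lambda$. Combined with the dimension bound, this gives $\mathrm{Sh}(\Lambda)=\mathrm{Sh}(pt)$.
\begin{itemize}
\item[(ii)] Simple connectivity of $M$ and duality give vanishing Čech cohomology in degrees $n-2,n-1$. The main vanishing comes from the hypothesis that $M$ is $\lfloor\frac{n-2}{2}\rfloor$-connected at infinity. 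This makes the pro-homotopy groups of $\Lambda$ vanish up to degree $\lfloor\frac{n-2}{2}\rfloor\ge\dim\Lambda$. A finite-dimensional compactum that is shape $k$-connected with $k\ge\dim$ has trivial shape (Borsuk/Mardešić--Segal), so $\Lambda$ has the shape of a point.
\item[(i)] Here $\dim\Lambda=0$, since $\dim_{\mathcal H}\le 1/2$. So $\Lambda$ is totally disconnected, and $\check H^0$ counts its components. Duality gives $\tilde{\check H}{}^0(\Lambda)\cong H_2(M)=0$, so $\Lambda$ is a single point and $M\cong\mathbb R^3$ directly. This case needs no small loops condition.
\end{itemize}

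Cellularity (the main obstacle). Trivial shape alone does not give cellularity in $S^n$; wild examples exist, such as Whitehead-type continua. This is exactly where the small loops condition is used. The codimension is at least $n-\lfloor\frac{n-2}{2}\rfloor\ge3$. In that range, McMillan's criterion, in its higher-dimensional form due to Daverman, applies: a compactum of trivial shape satisfying the cellularity criterion / small loops condition is cellular. The step to verify carefully is that the paper's small loops condition matches the hypothesis of that criterion. The hypotheses need checking in low cases: $n=4$ gives $\dim\le1$, still codimension $\ge3$, and the criterion holds in dimension $n\ge4$ with the codimension-$3$ hypothesis.

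Conclusion. Given cellularity, $S^n/\Lambda\cong S^n$ and $\Omega\cong\mathbb R^n$, so $M\cong\mathbb R^n$.
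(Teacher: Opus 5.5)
Your architecture is the paper's: an injective developing map with the bound $\dim\Lambda\le d:=\lfloor\frac{n-2}{2}\rfloor$, trivial shape via a finite-dimensional Whitehead theorem, Lacher, McMillan/Repov\v{s}, then Brown. Your case (i) is exactly the paper's argument (Theorem~\ref{contractible}). One minor correction there: for complete open $M$, the bound $\dim_{\mathcal H}\Lambda\le\frac{n-2}{2}$ comes from Ma--Qing, applied after the zero-capacity argument shows $\Lambda=\partial\Omega$; it does not come from Schoen--Yau.

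In case (ii), however, the step that carries the proof is asserted rather than proved. Connectivity at infinity is a statement about the \emph{deleted} neighborhoods $N_i=V_i\setminus\Lambda$: it makes the towers $\{\pi_q(N_i)\}$, $q\le d$, pro-trivial. The shape groups $\operatorname{pro}\text{-}\pi_q(\Lambda)$ are computed from the \emph{full} neighborhoods $V_i$. To pass from one tower to the other you need at least pro-surjectivity of $\pi_q(N_i)\to\pi_q(V_i)$, i.e.\ that $q$-spheres in $V_j$ can be homotoped inside $V_j$ off $\Lambda$. For $q=1$ this follows from non-separation. For $q\ge2$ topological dimension gives nothing, because general position fails for wild compacta: Antoine's necklace is $0$-dimensional, yet a singular disk in $\mathbb R^3$ bounding a loop that links it cannot be moved off it. Since $d\ge2$ once $n\ge6$, the claim ``connectivity at infinity corresponds to the pro-groups of small neighborhoods'' is a genuine gap, not a routine detail.

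This is precisely where the paper uses the small loops condition. By Hollingsworth--Rushing, SLC together with $\dim\Lambda=m\le n-3$ gives arbitrarily small neighborhoods $W$ with $\pi_j(W,W\setminus\Lambda)=0$ for $1\le j\le n-m-1$. Passing to the component containing $\Lambda$ gives $\pi_j(V_i,N_i)=0$ for $j\le d+1$. Hence there are levelwise isomorphisms $\pi_q(N_i)\cong\pi_q(V_i)$ for $q\le d$, and Morita's theorem gives $\operatorname{Sh}(\Lambda)=\operatorname{Sh}(*)$.

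Conversely, the paper does not need SLC for cellularity. Given $U\supset\Lambda$, apply the definition of connectivity at infinity with $C=S^n\setminus U$ and set $V=S^n\setminus K$. Every loop in $V\setminus\Lambda$ then dies in $U\setminus\Lambda$, which is the cellularity criterion.

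So you have spent SLC on the wrong step. Your route from SLC to the criterion would in any case need Venema's equivalences (SLC $\Leftrightarrow$ ILC when $\dim\Lambda\le n-2$, and ILC $\Leftrightarrow$ cellularity criterion for compacta of trivial shape), not McMillan alone. If you want to keep your allocation, you can instead close the shape step with the metric bound. For a Lipschitz $f\colon S^q\to V_j$ in a Euclidean chart, the translations $v$ with $(f(S^q)+v)\cap\Lambda\neq\varnothing$ form $\Lambda-f(S^q)$. This set has Hausdorff dimension at most $\frac{n-2}{2}+q<n$, so almost every small translate of $f$ misses $\Lambda$.

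Two smaller points remain. First, connectedness of $\Lambda$, which you need before any pointed Whitehead theorem, comes from one-endedness via the bijection between ends of $\Omega$ and components of $\Lambda$. Your duality remark in degrees $n-2,n-1$ is vacuous, since $\dim\Lambda\le n-3$. Second, for $n=4$ McMillan's criterion is not available; the paper uses Repov\v{s}'s $4$-dimensional cellularity theorem.
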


The proof proceeds as follows. By the Liouville theorem of Schoen and Yau,
\(M^n\) embeds as a domain \(\Omega\subset S^n\). The Ma--Qing
estimate~\cite[Theorem~1.3]{zbMATH08016940} yields
$\dim_{\mathcal H}(\Lambda)\leq\frac{n-2}{2}$ and
$\dim(\Lambda)\leq\left\lfloor\frac{n-2}{2}\right\rfloor=:d.$
When \(n=3\), \v{C}ech--Alexander duality and the hypothesis
\(H_2(M;\mathbb Z)=0\) imply that \(\Lambda\) is connected; hence it is a
single point. Suppose that \(n\geq4\). The \(d\)-connectedness at infinity
implies that \(\Omega\) is one-ended. The non-separation theorem then
identifies the ends of \(\Omega\) with the components of \(\Lambda\), so
\(\Lambda\) is connected. The connectivity at infinity, combined with the
small loops condition and the dimension bound, gives the vanishing of the
homotopy pro-groups of \(\Lambda\). Morita's
finite-dimensional Whitehead theorem~\cite[Theorem~1.2, p.~394]{zbMATH03497097}
therefore implies that \(\Lambda\) has the shape of a point and hence has
Property \(UV^\infty\). The same pro-triviality at infinity also yields the
cellularity criterion. Repov\v{s}'s
theorem~\cite[Theorem, p.~564]{zbMATH04011439} for \(n=4\), and McMillan's
cellularity criterion in its cell-like form
\cite[Theorem~3.2.3, p.~107]{zbMATH05624618} for \(n\geq5\), originating in
\cite[Theorem~1, p.~327]{zbMATH03190942}, then show that \(\Lambda\) is
cellular in \(S^n\). Consequently, collapsing \(\Lambda\) to a point produces
a space homeomorphic to \(S^n\), and hence \(\Omega\) is homeomorphic to
\(\mathbb R^n\).

Theorem~\ref{thm:intro-E}(i) settles the conjecture above for three-dimensional LCF
manifolds with nonnegative scalar curvature.  In dimension four,
Theorem~\ref{LCF-4-manifold} gives a differentiable strengthening in
which the hypothesis in Theorem~\ref{thm:intro-E}(ii) is replaced by
\(H_3(M;\mathbb R)=0\), bounded geometry, and uniformly positive scalar
curvature, and \(M\) is shown to be diffeomorphic to the standard
\(\mathbb R^4\). Beyond the
topological hypotheses, Corollary~\ref{Contractible for} identifies five
analytic and asymptotic-geometric conditions, each independently
implying Euclidean rigidity, thereby partially answering Yau's
Problem~36~\cite[Problem~36]{MR1216573}. On the other hand, the
Euclidean conclusion in Theorem~\ref{thm:intro-E}\textup{(ii)} can fail when neither of its additional
topological hypotheses is assumed, as Corollary~\ref{cor:sharpness} makes
precise; the following theorem supplies the underlying examples.

\begin{theoremF}\label{F}
For every $n\geq 4$, there exists a complete open contractible locally conformally flat
$n$-manifold with positive scalar curvature that is not homeomorphic to
$\mathbb{R}^n$. Moreover, for every $n\geq 6$, there exists such a manifold
with uniformly positive scalar curvature.
\end{theoremF}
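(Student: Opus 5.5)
Any such example will be realized as $\Omega=S^n\setminus\Lambda$, where $\Lambda\subset S^n$ is a compact set of small dimension with the shape of a point but not cellular. Such an $\Omega$ is a contractible open subset of $S^n$, hence of $\mathbb R^n$. It is not homeomorphic to $\mathbb R^n$ because it is not simply connected at infinity. The metric will be $u^{4/(n-2)}g_{st}$ on $\Omega$, with $u$ a positive solution of $L_{g_{st}}u=-\frac{4(n-1)}{n-2}\Delta u+\frac{n(n-2)}{4}u\ge 0$ (with strict inequality somewhere) that blows up at $\Lambda$ fast enough for completeness. This metric is automatically LCF with positive scalar curvature.

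\textbf{Topological input.} For $n\ge4$ I take $\Lambda$ to be a wild arc, or more generally a non-cellular cell-like set, of topological dimension $1$. For $n\ge4$ this is an Antoine-type or Fox--Artin-type construction, for instance the arcs of Blankinship in $\mathbb R^n$, whose complements are not simply connected at infinity. Since an arc is cell-like, \v{C}ech--Alexander duality gives $\tilde H_*(\Omega)=0$. I would check $\pi_1(\Omega)=0$ directly from the Blankinship construction, which is built as an intersection of tubes whose complements have trivial $\pi_1$ but fail to be simply connected locally at the arc. With $\pi_1=0$ and vanishing homology, Whitehead's theorem gives contractibility. Non-cellularity forces $\Omega\not\cong\mathbb R^n$, since otherwise $S^n/\Lambda\cong S^n$ with $\Lambda$ cellular. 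I must also arrange that $\Lambda$ can be chosen with Hausdorff dimension exactly $1$, or arbitrarily close to its topological dimension. I would arrange this by building the arc from tubes with geometrically shrinking radii, so that Minkowski dimension is close to $1$. This matters because the Schoen--Yau/Ma--Qing bound $\dim_{\mathcal H}\Lambda\le(n-2)/2$ requires $1\le(n-2)/2$, i.e.\ $n\ge4$, which is consistent with the stated range.

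\textbf{Analytic input and main obstacle.} The main obstacle is producing a complete PSC conformal metric on $S^n\setminus\Lambda$. For $\dim_{\mathcal H}\Lambda<(n-2)/2$, I would follow the Schoen--Yau/Delanoë-type construction of singular Yamabe metrics. Take a Frostman-type measure $\mu$ on $\Lambda$ and set
\[
u(x)=\int_\Lambda G(x,y)\,d\mu(y),
\]
where $G$ is the Green function of $L_{g_{st}}$, so $G(x,y)\sim |x-y|^{2-n}$. Then $L u=0$ on $\Omega$, so the scalar curvature is $0$ there, and the metric is complete provided $\int u^{2/(n-2)}\,ds=\infty$ along every path to $\Lambda$. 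To obtain strictly positive curvature I would instead use $u+1$, since $L(u+1)=\frac{n(n-2)}{4}>0$. This yields scalar curvature $\mathrm{Sc}=c(u+1)^{-(n+2)/(n-2)}>0$, but not uniformly positive. Completeness is the delicate point. For a $1$-dimensional set the natural choice is a sum $\sum_k\sum_j\varepsilon_k G(\cdot,y_{k,j})$ over net points at scale $r_k$, which makes $u\gtrsim d(x,\Lambda)^{-(n-2)/2}$ up to logarithmic factors. This is possible when the net count times $r_k^{n-2}$ can be made to grow like $r_k^{-(n-2)/2}$ relative to distance, which needs $1<\frac{n-2}{2}$ for convergence away from $\Lambda$ with a margin. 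For $n=4,5$ I would use a borderline logarithmic weighting, accepting only $\mathrm{Sc}>0$.

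\textbf{Uniform positivity for $n\ge6$.} Here $\dim_{\mathcal H}\Lambda=1<\frac{n-2}{2}$ strictly, with room to spare, and I would instead solve the singular Yamabe problem $Lu=\frac{n(n-2)}{4}u^{(n+2)/(n-2)}$ on $\Omega$ with $u\to\infty$ at $\Lambda$. I would use sub/supersolutions: the potential above as a subsolution-type barrier, and the local model $d(x,\Lambda)^{-(n-2)/2}$, which is a supersolution near a set of dimension $<(n-2)/2$ by the Mazzeo--Pacard computation. This gives a complete metric of constant positive scalar curvature. I expect the hardest verification to be the existence of a wild arc that is regular enough, for example locally Ahlfors $1$-regular, for these barrier estimates to hold uniformly along $\Lambda$. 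I would first attempt this with Blankinship's construction using self-similar tube scalings.
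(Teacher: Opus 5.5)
\textbf{Gap 1: the topological input.} You propose taking $\Lambda$ to be one of Blankinship's arcs and then checking $\pi_1(\Omega)=0$ from his construction. This fails. Blankinship's arcs, like the Antoine and Fox--Artin examples they generalize, are built precisely so that $\pi_1(S^n\setminus\Lambda)\neq1$; that is the content of his theorem. Cell-likeness of $\Lambda$ gives only $\widetilde H_*(\Omega)=0$ by Alexander duality, so with these arcs $\Omega$ is not contractible. You never supply a mechanism that makes the complement globally simply connected while keeping a nontrivial fundamental group at infinity, and that is the heart of the matter.

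The paper obtains it from a nested tower of regular neighborhoods $V_j$ of rank-two roses, whose inclusions realize the endomorphism $\omega(a)=[a,b]$, $\omega(b)=[a,b^{-1}]$ of $F_2$:
\begin{itemize}
\item $H_1(\omega)=0$ makes $S^n\setminus K$ acyclic, via natural Alexander duality and direct limits;
\item general position of $2$-disks against $1$-dimensional spines ($2+1-n<0$) makes each stage $C_j$ simply connected;
\item injectivity of $\omega$ produces loops on $\partial V_k$ that stay essential in $V_1$.
\end{itemize}
Note that this $K$ is acyclic but \emph{not} cell-like, since its pro-$\pi_1$ is a tower of injections of $F_2$. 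Insisting on a cell-like, non-cellular $\Lambda$ is stronger than needed and forces wildness, because a tame cell-like graph is a tree and hence cellular. Wildness then destroys the regularity your analysis requires. For $n\geq6$ the paper again uses a non-cell-like set: Newman's acyclic PL $2$-complex $P$ with $\pi_1(P)\neq1$. It is tame, so its area measure is Ahlfors $2$-regular for free, and its complement is non-Euclidean because $\pi_1(\partial C)\cong\pi_1(P)\neq1$.

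\textbf{Gap 2: the analytic route to uniform positivity.} The supersolution $d(x,\Lambda)^{-(n-2)/2}$ comes from the Mazzeo--Pacard expansion $\Delta d\approx(n-k-1)/d$ near a \emph{smooth} $k$-dimensional submanifold. A wild arc is by definition not tame, let alone smooth. The positive singular Yamabe problem is not known to be solvable for arbitrary compact singular sets. Moreover, your barriers are not ordered. For a $1$-regular measure $\mu$, the Green potential $\int G(\cdot,y)\,d\mu(y)$ is comparable to $d^{3-n}$, which dominates $d^{-(n-2)/2}$ as soon as $n\geq5$.

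Constant scalar curvature is not needed. The paper takes $U=1+V$ with $V=\int\mathfrak q(\cdot,y)^{-(\mathfrak d+a)}\,d\mu(y)$, where $a=(n-2)/2$, inserting a factor $\log(8/\mathfrak q)^{-a/2}$ when $\mathfrak d=a$. It uses $L_{g_{st}}\mathfrak q^{-b}=A_{n,b}\mathfrak q^{-b}+B_{n,b}\mathfrak q^{-b-2}$, with both coefficients positive for $0<b<n-2$, together with Ahlfors-regular potential estimates, to obtain $L_{g_{st}}U\asymp U^{(n+2)/(n-2)}$. Hence $\mathrm{Sc}$ is pinched between two positive constants, and completeness follows from $U^{2/(n-2)}\gtrsim d^{-1}$, up to a logarithm.

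\textbf{A further point at $n=4$.} This is the critical dimension, and the PSC construction needs $\mathcal C_{1+2/n,\,n/2}(\Lambda)=0$ together with a measure whose Wolff potential is infinite at \emph{every} point of $\Lambda$. The paper enforces this by capacity-thinning each stage and proving an Evans-type theorem via Sion's minimax theorem. A logarithmically weighted net does not by itself give infinite length along every curve to $\Lambda$.

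Your shift $u\mapsto u+1$ for the non-uniform case does match the paper's construction. Note, though, that $L_{g_{st}}1=n(n-1)$ with your normalization of the Laplacian term, not $n(n-2)/4$.
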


Karakhanyan's theorem \cite[Theorem~1.1]{zbMATH07873599} motivates the
metric construction.  It characterizes the compact sets $K\subset S^n$
for which $S^n\setminus K$ admits a complete scalar-flat metric conformal
to the round metric: this occurs if and only if the corresponding
spherical Bessel capacity vanishes, in the sense of
\cite[Section~2.2]{zbMATH07873599}.

 The proof has complementary topological and analytic parts.  For $n\geq 4$,
we construct an increasingly thin nested tower whose bonding maps kill
homology while preserving fundamental-group information.  Its limit is a
\v{C}ech-acyclic compactum whose complement is contractible but not simply
connected at infinity, and hence is not Euclidean.  Analytically, the tower
is chosen so that the limit set has zero critical Bessel capacity.
Potential theory then provides a conformal factor whose blow-up along the
limit set yields a complete scalar-flat metric; constant shifts of this
factor produce complete LCF metrics with PSC.  For $n\geq 6$, we use
Newman's construction \cite{zbMATH03054974} (see also
\cite[Example~3.2.2]{zbMATH07206284}) to obtain an acyclic piecewise
linear (PL) $2$-complex whose
complement is contractible but not homeomorphic to $\mathbb{R}^n$,
and whose area measure is Ahlfors $2$-regular.  A conformal potential of
this measure is then chosen with the homogeneity dictated by the
conformal Laplacian: its blow-up gives completeness, while the
corresponding potential estimates yield scalar curvature bounded
between two positive constants.  The restriction $n\ge6$ is precisely
the condition that the $2$-dimensional spine lie at or below the
critical dimension $(n-2)/2$.

\paragraph*{Organization of the Paper.}  In Section~\ref{3}, for manifolds with finite fundamental group, we answer Gursky's question affirmatively when \(n\not\equiv3\pmod4\) and negatively when \(n\equiv3\pmod4\), and we obtain a strict lower bound for the Hausdorff dimension of the limit set of some infinite Kleinian group. In Section~\ref{4}, we show that, for certain maps, the pointwise Lipschitz constant is strictly greater than one at some point. In Section~\ref{5}, we prove contractibility of both the space of metrics and its moduli space in the specified three-dimensional cases, and prove the empty-or-contractible alternative for the moduli spaces of the higher-dimensional spherical-space-form manifolds appearing in Theorem~\ref{thm:intro-D}. In Section~\ref{Euclidean 1}, we prove Euclidean rigidity for complete open LCF manifolds with nonnegative scalar curvature under additional topological, analytic, or bounded geometry hypotheses. In Section~\ref{counterexample psc}, we construct complete LCF metrics with PSC on contractible manifolds.

\paragraph*{Acknowledgments.}          
This research was supported through the program   ``Oberwolfach Leibniz Fellows'' by the Mathematisches Forschungsinstitut Oberwolfach.   This work originates from a broader project initiated during my postdoctoral stay at the Yau Center. I thank Akito Futaki and Shing-Tung Yau for their support during that time, and this work was supported by the YMSC Overseas Shuimu Scholarship and NSFC 12401063, and partially supported by NSFC 12271284. Results were presented in seminars in 2025. I thank Gerhard Huisken, Thomas Schick, and Uwe Semmelmann for helpful discussions in person.

\paragraph*{Disclosure on AI assistance.}
The author developed the proof structure of Theorem~\ref{F}, while ChatGPT-5.6 Sol was used to assist in elaborating the detailed arguments. The author also used ChatGPT-5.6 Sol to polish and copy-edit this manuscript, including refining the presentation of some mathematical arguments, improving mathematical exposition and language, checking grammar and style, and providing typesetting assistance.
 The author assumes full responsibility for the final manuscript, including all mathematical claims and source attributions.

\section{Limit sets of Kleinian groups}\label{3}

The following theorem gives a sharp answer to Gursky's question when the fundamental group is finite: rigidity holds if \(n\not\equiv3\pmod4\), whereas counterexamples exist in every dimension \(n\equiv3\pmod4\).

\begin{theorem}\label{nLCF}
Let \( (M^n, g) \) be a closed, connected, oriented, smooth Riemannian manifold of dimension \( n \geq 3 \), which is locally conformally flat with positive scalar curvature. Assume further that \( H_1(M^n; \mathbb{Z}) = 0 \) and that the fundamental group \( \pi_1(M) \) is finite.

\begin{itemize}
\item[(i)] If \(n\) is even or \(n\equiv1\pmod4\), then \((M^n,g)\) is conformally equivalent to the standard sphere \((S^n,g_{st})\).

\item[(ii)] If \(n\equiv3\pmod4\), then \(\pi_1(M)\) is trivial or isomorphic to the binary icosahedral group \(I_{120}^*\cong\mathrm{SL}_2(\mathbb F_5)\); accordingly, \((M^n,g)\) is conformally equivalent to \((S^n,g_{st})\) or to a spherical space form \(S^n/\Gamma\) with \(\Gamma\cong I_{120}^*\). The congruence restriction is optimal: for every \(n\equiv3\pmod4\), both alternatives occur.
\end{itemize}
\end{theorem}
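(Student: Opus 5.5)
Since $\pi_1(M)$ is finite, the universal cover $\widetilde M$ is a closed, simply connected LCF manifold. By Kuiper's theorem it is conformally diffeomorphic to $(S^n,g_{st})$, and the developing map is a conformal diffeomorphism $\widetilde M\to S^n$. The holonomy then embeds $\Gamma=\pi_1(M)$ as a finite subgroup of the orientation-preserving conformal group $\mathrm{Conf}^+(S^n)\cong \mathrm{SO}_0(n+1,1)$, and $M\cong S^n/\Gamma$ conformally. Because $\Gamma$ is finite, it fixes a point of $\mathbb H^{n+1}$ (Cartan fixed-point theorem, via the barycenter of an orbit). Conjugating that point to the origin places $\Gamma$ inside $\mathrm{SO}(n+1)$, acting freely on $S^n$. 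So $M$ is conformally equivalent to a spherical space form $S^n/\Gamma$ with $\Gamma\subset\mathrm{SO}(n+1)$ acting freely. The hypothesis $H_1(M;\mathbb Z)=0$ says that $\Gamma$ is perfect. The whole problem therefore reduces to the group theory of perfect groups acting freely and orthogonally on spheres.

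The next step uses the classification of such groups (Wolf's classification of spherical space forms, or the Zassenhaus--Suzuki description of Frobenius complements). If $n$ is even, the only nontrivial group acting freely on $S^n$ is $\mathbb Z/2$, by the Lefschetz fixed-point argument; since $\mathbb Z/2$ is not perfect, $\Gamma=1$. If $n$ is odd, a nontrivial perfect finite group with a free linear action is, by Zassenhaus, $\mathrm{SL}_2(\mathbb F_5)$ or $\mathrm{SL}_2(\mathbb F_5)\times G'$ with $G'$ metacyclic of coprime order. Perfectness forces $G'=1$, so $\Gamma\cong I^*_{120}$.

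It remains to determine when $I^*_{120}$ acts freely and orthogonally on $S^n$. Every fixed-point-free irreducible complex representation of $I^*_{120}$ is one of the two $2$-dimensional representations coming from $\mathrm{SU}(2)$. Each is quaternionic, so its realification is $4$-dimensional, and every fixed-point-free real representation is a sum of such blocks. Hence the real dimension $n+1$ is divisible by $4$, which means $n\equiv 3\pmod 4$. Consequently, for $n\equiv1\pmod4$ we again get $\Gamma=1$ and $M$ is conformal to the round sphere, proving (i). For $n\equiv3\pmod4$ we obtain the dichotomy in (ii). The main obstacle is stating this representation-theoretic step correctly. I will cite Wolf's book: a group with a fixed-point-free representation has all its fixed-point-free irreducibles of the same degree, and for the binary icosahedral group these are quaternionic of complex dimension $2$. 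The conclusion that $4\mid n+1$ then follows.

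For optimality I would take $n+1=4k$ and let $I^*_{120}\subset \mathrm{Sp}(1)=S^3\subset\mathbb H$ act diagonally by left multiplication on $\mathbb H^k\supset S^{4k-1}$. This action is free because nonzero unit quaternions act without fixed vectors. The quotient $S^n/I^*_{120}$ with the induced round metric is closed, oriented (the action is in $\mathrm{SO}(4k)$), LCF, and has PSC. Its $H_1=(I^*_{120})^{ab}=0$, since $I^*_{120}$ is perfect. The round $S^n$ itself realizes the trivial alternative.
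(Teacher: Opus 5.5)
Your proof is correct and shares the paper's skeleton. Kuiper's theorem and the Cartan fixed-point theorem realize $M$ conformally as a spherical space form $S^n/\Gamma$, and $H_1(M;\mathbb Z)=0$ makes $\Gamma$ perfect. The Wolf--Zassenhaus classification (which the paper cites in Allcock's reformulation) then leaves $\Gamma=1$ or $\Gamma\cong I^*_{120}$, and the diagonal $\mathrm{Sp}(1)$-action on $\mathbb H^k$ gives optimality.

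The genuinely different step is the divisibility $4\mid n+1$. The paper proves it elementarily. It restricts the orthogonal representation $\sigma$ to $Q_8\subset 2A_4\subset I^*_{120}$; freeness forces the central involution to act as $-\mathrm{id}_V$. Then $\sigma(i),\sigma(j)$ satisfy the quaternion relations and turn $V=\mathbb R^{n+1}$ into a left $\mathbb H$-vector space. This uses nothing about the character table of $\mathrm{SL}_2(\mathbb F_5)$ and disposes of even $n$ at the same time, so your separate Lefschetz argument, though correct, is not needed.

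Your route is valid but needs more input. You must know:
\begin{itemize}
\item that the only fixed-point-free irreducibles are the two $2$-dimensional ones (the faithful $4$- and $6$-dimensional ones, $\mathrm{Sym}^3$ and $\mathrm{Sym}^5$ of the defining representation, have eigenvalue $1$ on elements of order $3$);
\item that these two have Frobenius--Schur indicator $-1$ (their characters are real, and a real form would embed $I^*_{120}$ in $\mathrm O(2)$, whose finite subgroups are cyclic or dihedral);
\item that quaternionic-type constituents of a real representation occur with even multiplicity.
\end{itemize}
In return you get a complete description of $V\otimes\mathbb C$ as a sum of copies of those two representations. That is the information one would need to classify these space forms up to isometry, not just to constrain $n$.

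Two small corrections. First, the nonsolvable groups in Zassenhaus's list also include index-two extensions of $\mathrm{SL}_2(\mathbb F_5)\times G'$ (Wolf's type VI). You should discard these explicitly, since they have nontrivial abelianization. Second, in the optimality step, freeness holds because unit quaternions other than $1$ have no fixed vectors, not ``nonzero'' ones.
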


\emph{Proof outline.}
Kuiper's theorem and the Cartan fixed-point theorem identify \(M\)
conformally with a spherical space form \(S^n/\Gamma\). Since
\(H_1(M;\mathbb Z)=0\), the group \(\Gamma\) is perfect, so
Wolf's classification in Allcock's reformulation gives \(\Gamma=\{e\}\) or
\(\Gamma\cong I_{120}^*\); in the latter case, restriction to
\(Q_8\subset I_{120}^*\) equips \(\mathbb R^{n+1}\) with a quaternionic
vector-space structure and forces \(4\mid n+1\). Conversely, the diagonal
action of \(I_{120}^*\subset\operatorname{Sp}(1)\) on
\(S^{4r-1}\subset\mathbb H^r\) realizes the nontrivial alternative in every
admissible dimension, while the round sphere realizes the trivial one.

\begin{proof}
Let \(p\colon\widetilde M\to M\) be the universal covering, let
\(\widetilde g:=p^*g\), and denote the deck transformation group by \(\Pi\).
Since \(\Pi\cong\pi_1(M)\) is finite, \(p\) is finite-sheeted; hence
\(\widetilde M\) is closed. Moreover, \((\widetilde M,\widetilde g)\) is
simply connected and locally conformally flat. Kuiper's theorem, recalled in
the Introduction, therefore gives a conformal diffeomorphism
\(D\colon(\widetilde M,\widetilde g)\to (S^n,g_{st})\).
Each \(\delta\in\Pi\) is an isometry of \(\widetilde g\), because
\(p\circ\delta=p\). Thus \(\rho(\delta):=D\circ\delta\circ D^{-1}\)
defines an injective homomorphism
\(\rho\colon\Pi\to\operatorname{Conf}(S^n)\). Its image is finite and acts
freely on \(S^n\): if \(\rho(\delta)\) fixes \(x\in S^n\), then \(\delta\)
fixes \(D^{-1}(x)\), and hence \(\delta=e\).

We next conjugate this conformal action to an isometric action. Boundary
extension gives a group isomorphism
\(\operatorname{Conf}(S^n)\cong\operatorname{Isom}(\mathbb H^{n+1})\):
every conformal diffeomorphism of
\(S^n=\partial_\infty\mathbb H^{n+1}\) extends uniquely to a hyperbolic
isometry, and every hyperbolic isometry induces a conformal boundary map.
Recall also the Cartan fixed-point theorem: if a compact group acts
isometrically on a complete, simply connected Riemannian manifold of
nonpositive sectional curvature, then it has a common fixed point. Applying
this theorem to the finite group corresponding to \(\rho(\Pi)\) in
\(\operatorname{Isom}(\mathbb H^{n+1})\), we obtain a common fixed point
\(x_0\in\mathbb H^{n+1}\).

Choose \(A\in\operatorname{Isom}(\mathbb H^{n+1})\) such that \(A(x_0)=0\),
where \(0\) is the origin in the Poincar\'e ball model, and let
\(a\in\operatorname{Conf}(S^n)\) be its boundary map. Set
\[
\rho_0(\delta):=a\rho(\delta)a^{-1},
\qquad
\Gamma:=\rho_0(\Pi),
\qquad
F:=a\circ D.
\]
The hyperbolic extensions of the elements of \(\Gamma\) fix \(0\). The
stabilizer of \(0\) in the isometry group of the Poincar\'e ball is naturally
identified with \(\mathrm O(n+1)\), and its boundary action is precisely
\(\operatorname{Isom}(S^n,g_{st})\). Hence
\(\Gamma\subset\operatorname{Isom}(S^n,g_{st})\).
The map \(\rho_0\colon\Pi\to\Gamma\) is an isomorphism, the action of
\(\Gamma\) remains free, and \(F\circ\delta=\rho_0(\delta)\circ F\)
for every \(\delta\in\Pi\).
Consequently, \(F\) descends to a diffeomorphism
\(\overline F\colon M=\widetilde M/\Pi\to S^n/\Gamma\).

We verify explicitly that \(\overline F\) is conformal. Write
\(F^*g_{st}=e^{2u}\widetilde g\) for some
\(u\in C^\infty(\widetilde M)\). For every \(\delta\in\Pi\),
\[
\begin{aligned}
e^{2u\circ\delta}\widetilde g
 &=\delta^*(F^*g_{st})
  =(F\circ\delta)^*g_{st} \\
 &=(\rho_0(\delta)\circ F)^*g_{st}
  =F^*g_{st}
  =e^{2u}\widetilde g .
\end{aligned}
\]
Thus \(u\circ\delta=u\), so \(u=\overline u\circ p\) for some
\(\overline u\in C^\infty(M)\). Let
\(\pi_\Gamma\colon S^n\to S^n/\Gamma\) be the quotient map and let
\(g_\Gamma\) be the quotient round metric, characterized by
\(\pi_\Gamma^*g_\Gamma=g_{st}\). Since
\(\pi_\Gamma\circ F=\overline F\circ p\), we have
\[
\begin{aligned}
p^*(\overline F^*g_\Gamma)
 &=F^*(\pi_\Gamma^*g_\Gamma)
  =F^*g_{st}
  =e^{2u}\widetilde g
  =p^*(e^{2\overline u}g).
\end{aligned}
\]
Because \(p\) is a surjective local diffeomorphism,
\(\overline F^*g_\Gamma=e^{2\overline u}g\).
Therefore, \((M,g)\) is conformally equivalent to the spherical space form
\((S^n/\Gamma,g_\Gamma)\), and
\(\pi_1(M)\cong\Pi\cong\Gamma\).

The Hurewicz theorem in degree one gives
\(H_1(M;\mathbb Z)\cong\pi_1(M)^{\mathrm{ab}}\cong\Gamma^{\mathrm{ab}}\).
Hence \(H_1(M;\mathbb Z)=0\) implies that \(\Gamma\) is perfect. We now use
Wolf's classification in Allcock's intrinsic reformulation
\cite[Theorem~1.1 and Remark~1.3, pp.~5563--5564]{zbMATH06871580}.
At the beginning of \S~3, Allcock notes that the trivial group is the perfect
base case of type~(I) and that the only other perfect group in
Theorem~1.1 is the binary icosahedral group \(2A_5\)
\cite[p.~5569]{zbMATH06871580}. Hence
\[
\Gamma=\{e\}
\qquad\text{or}\qquad
\Gamma\cong I_{120}^*\cong2A_5
\cong\mathrm{SL}_2(\mathbb F_5).
\]

Suppose that \(\Gamma\cong I_{120}^*\), and let
\[
\sigma\colon\Gamma\longrightarrow\mathrm O(V),
\qquad V:=\mathbb R^{n+1},
\]
be the orthogonal representation defining the action of \(\Gamma\) on
\(S^n\), where \(S^n\) is regarded as the unit sphere in \(V\). Freeness of
the action gives
\(\ker\bigl(\sigma(\gamma)-\operatorname{id}_V\bigr)=\{0\}\)
for every \(\gamma\in\Gamma\setminus\{e\}\).

We first exhibit a quaternion subgroup of \(I_{120}^*\). Under the double covering
\(\operatorname{Sp}(1)\to\mathrm{SO}(3)\), the binary icosahedral group
\(I_{120}^*=2A_5\) is the inverse image of the icosahedral subgroup
\(A_5\subset\mathrm{SO}(3)\)
\cite[p.~5563]{zbMATH06871580}. Choose a tetrahedral subgroup
\(A_4\subset A_5\). Its inverse image \(2A_4\) is a subgroup of \(2A_5\);
in the standard quaternionic realization
\cite[(3.1), p.~5573]{zbMATH06871580},
\[
2A_4=
\left\{
 \pm1,\ \pm i,\ \pm j,\ \pm k,\
 \frac{\pm1\pm i\pm j\pm k}{2}
\right\},
\]
where the four signs in the last term are chosen independently. Hence
\(Q_8:=\{\pm1,\pm i,\pm j,\pm k\}\subset2A_4\subset I_{120}^*\).
After choosing an isomorphism \(\Gamma\cong I_{120}^*\), we regard this
\(Q_8\) as a subgroup of \(\Gamma\).

Let \(z=-1\in Q_8\). Since \(z\ne e\), freeness implies
\(\ker\bigl(\sigma(z)-\operatorname{id}_V\bigr)=\{0\}\).
Moreover, \(z^2=e\), so the minimal polynomial of \(\sigma(z)\) divides
\(X^2-1=(X-1)(X+1)\).
The roots are distinct; hence \(\sigma(z)\) is diagonalizable over
\(\mathbb R\) with eigenvalues in \(\{1,-1\}\). Its \(1\)-eigenspace is zero,
and therefore \(\sigma(z)=-\operatorname{id}_V\).

Set \(\mathsf I:=\sigma(i)\) and \(\mathsf J:=\sigma(j)\). Since
\(i^2=j^2=z\) and \(ji=zij\) in \(Q_8\), we obtain
\(\mathsf I^2=\mathsf J^2=-\operatorname{id}_V\) and
\(\mathsf J\mathsf I=-\mathsf I\mathsf J\).
By the defining relations of the quaternion algebra, there is consequently
a unital \(\mathbb R\)-algebra homomorphism
\[
\Phi\colon\mathbb H\longrightarrow\operatorname{End}_{\mathbb R}(V),
\qquad
\Phi(i)=\mathsf I,\quad
\Phi(j)=\mathsf J.
\]
Its kernel is a proper two-sided ideal of the division algebra \(\mathbb H\);
hence \(\Phi\) is injective. Thus \(V\) is a finite-dimensional left
\(\mathbb H\)-vector space. It therefore has an \(\mathbb H\)-basis, so
\(V\cong\mathbb H^m\) for some \(m\geq1\). Consequently,
\(n+1=\dim_{\mathbb R}V=4m\), and hence \(n\equiv3\pmod4\).

It follows that if \(n\) is even or \(n\equiv1\pmod4\), then
\(\Gamma\not\cong I_{120}^*\), and therefore \(\Gamma=\{e\}\). The conformal
equivalence constructed above then identifies \((M,g)\) with
\((S^n,g_{st})\), proving~(i). If \(n\equiv3\pmod4\), the classification
gives exactly the two alternatives in~(ii).

It remains to establish optimality. Let \(n=4r-1\), where \(r\geq1\), and
realize \(I_{120}^*\subset\operatorname{Sp}(1)\) as a group of unit
quaternions \cite[p.~5563]{zbMATH06871580}. Let it act diagonally on
\(\mathbb H^r\) by
\(\xi\cdot(v_1,\ldots,v_r):=(\xi v_1,\ldots,\xi v_r)\).
The quaternion norm is multiplicative, so this action preserves the Euclidean
norm and restricts to an isometric action on
\(S^{4r-1}\subset\mathbb H^r\). It is free: if
\(\xi\cdot(v_1,\ldots,v_r)=(v_1,\ldots,v_r)\), choose an index \(\ell\) with
\(v_\ell\ne0\). Then \((\xi-1)v_\ell=0\), and multiplication on the right by
\(v_\ell^{-1}\) gives \(\xi=1\).

The action is orientation preserving. Indeed, the determinant of the
diagonal action defines a continuous map
\(\operatorname{Sp}(1)\to\{\pm1\}\).
Since \(\operatorname{Sp}(1)\) is connected and the determinant equals \(1\)
at the identity, this map is identically \(1\). Therefore,
\(N_r:=S^{4r-1}/I_{120}^*\)
is a closed, connected, oriented spherical space form. Its quotient round
metric has constant sectional curvature \(1\); hence it is locally
conformally flat and has scalar curvature \((4r-1)(4r-2)>0\).
Since \(4r-1\geq3\), the sphere \(S^{4r-1}\) is simply connected. Thus
\(\pi_1(N_r)\cong I_{120}^*\ne\{e\}\).
Because \(I_{120}^*\) is perfect,
\(H_1(N_r;\mathbb Z)\cong\pi_1(N_r)^{\mathrm{ab}}\cong(I_{120}^*)^{\mathrm{ab}}=0\).
Hence \(N_r\) realizes the nontrivial alternative in every dimension
\(n\equiv3\pmod4\), while the standard round sphere realizes the
trivial-group alternative. For \(r=1\), \(N_1\) is the Poincar\'e homology
sphere \(P^3=S^3/I_{120}^*\).
\end{proof}

\begin{remark}
The PSC  hypothesis is retained only to formulate the
result as an answer to Gursky's question; it is not used in the classification
argument. Thus the conclusions of Theorem~\ref{nLCF} remain valid without
any assumption on the scalar curvature. If, moreover, \(M\) is an integral
homology sphere, then the binary-icosahedral alternative can occur only when
\(n=3\): for \(n=4r-1\geq7\), a spherical space form with fundamental group
\(I_{120}^*\) has
\(H_3(M;\mathbb Z)\cong H_3(BI_{120}^*;\mathbb Z)\cong\mathbb Z/120\).
Consequently, an LCF integral homology sphere with finite fundamental group
is conformally equivalent to \(S^n\) when \(n\geq4\), while in dimension
three it is conformally equivalent either to \(S^3\) or, up to orientation,
to the Poincar\'e homology sphere.
\end{remark}

\subsection{Properties of Kleinian Groups}

In light of Theorem~\ref{nLCF}, we now focus on closed, LCF Riemannian \(n\)-manifolds with PSC for \(n\geq5\) and infinite fundamental group. Additional hypotheses on \(H_1(M;\mathbb Z)\) will be imposed only where explicitly stated.

Let \( B^{n+1} := \{ x \in \mathbb{R}^{n+1} \mid |x| < 1 \} \) be the Poincar\'e ball model of \(\mathbb H^{n+1}\), endowed with the hyperbolic metric
\[
g_H = 4(1 - |x|^2)^{-2} \sum_{i=1}^{n+1} (dx^i)^2.
\]
Every element of \(\operatorname{Conf}(S^n)\) extends uniquely to a
diffeomorphism of the closed ball
\(\overline{B^{n+1}}:=B^{n+1}\cup S^n\) whose restriction to
\(B^{n+1}\) is an isometry of \(g_H\). Conversely, every isometry of
\((B^{n+1},g_H)\) extends continuously to the boundary \(S^n\), where it
induces a conformal transformation of \((S^n,g_{st})\). Therefore, boundary
restriction induces a natural group isomorphism
\(\operatorname{Isom}(B^{n+1},g_H)\cong\operatorname{Conf}(S^n)\).

Let \( \Gamma \) be a discrete subgroup of the conformal group \( \operatorname{Conf}(S^n) \) of the standard sphere \( (S^n, g_{st}) \); that is, \( \Gamma \) is a Kleinian group. Fix \(o\in B^{n+1}\). Its limit set is
\[
\Lambda(\Gamma):=\overline{\Gamma\cdot o}^{\,B^{n+1}\cup S^n}\cap S^n,
\]
and this definition is independent of the choice of \(o\in B^{n+1}\).

\begin{proposition}\label{virtual}
Let $\Gamma$ be a Kleinian group and let $\Gamma'\leq\Gamma$ be a
finite-index subgroup. Then
$\Lambda(\Gamma')=\Lambda(\Gamma)$.
\end{proposition}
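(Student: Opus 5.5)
The plan is to use the orbit definition of the limit set given just above, which does not depend on the basepoint, together with a finite decomposition of $\Gamma$ into cosets of $\Gamma'$.

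\emph{The easy inclusion.} Since $\Gamma'\le\Gamma$, we have $\Gamma'\cdot o\subset\Gamma\cdot o$. Taking closures in $\overline{B^{n+1}}$ and intersecting with $S^n$ gives $\Lambda(\Gamma')\subset\Lambda(\Gamma)$.

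\emph{The reverse inclusion.} Because $\Gamma'$ has finite index, I will write $\Gamma$ as a finite union of right cosets, $\Gamma=\bigcup_{k=1}^m\Gamma'\gamma_k$. Then
\[
\Gamma\cdot o=\bigcup_{k=1}^m\Gamma'\cdot(\gamma_k o).
\]
A finite union of sets has closure equal to the union of the closures. Hence
\[
\overline{\Gamma\cdot o}\cap S^n=\bigcup_{k=1}^m\Bigl(\overline{\Gamma'\cdot(\gamma_k o)}\cap S^n\Bigr).
\]
Each term on the right is $\Lambda(\Gamma')$ computed with the basepoint $\gamma_k o\in B^{n+1}$. By the basepoint independence recorded in the definition, each term equals $\Lambda(\Gamma')$. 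Therefore $\Lambda(\Gamma)=\Lambda(\Gamma')$.

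\emph{Where care is needed.} The only point requiring attention is the basepoint independence, which the paper states but does not prove. For completeness I would justify it as follows. The hyperbolic distance satisfies $d_H(\gamma o,\gamma o')=d_H(o,o')$ for all $\gamma$. Hyperbolic balls of a fixed radius whose centers leave every compact subset of $B^{n+1}$ have Euclidean diameters tending to $0$, since the conformal factor $4(1-|x|^2)^{-2}$ blows up at the boundary. So if $\gamma_j o\to\xi\in S^n$, then $\gamma_j o'\to\xi$ as well. By symmetry in $o$ and $o'$, the two limit sets coincide.
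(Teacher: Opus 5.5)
Your proof is correct and follows essentially the same route as the paper: a right-coset decomposition $\Gamma=\bigcup_k\Gamma'\gamma_k$ combined with basepoint independence of the limit set. Your step ``the closure of a finite union is the union of the closures'' is the set-level form of the paper's pigeonhole argument on a sequence $\gamma_k o\to\xi$, and your extra justification of basepoint independence (hyperbolic balls of fixed radius near $S^n$ are Euclidean-small) is sound; the paper itself uses the same estimate later, in Lemma~\ref{lem:hull}.
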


\begin{proof}
Fix $o\in B^{n+1}$. Since $\Gamma'\subseteq\Gamma$, every accumulation
point of the $\Gamma'$-orbit of $o$ is an accumulation point of the
$\Gamma$-orbit of $o$. Thus
$\Lambda(\Gamma')\subseteq\Lambda(\Gamma)$.

Let $\xi\in \Lambda(\Gamma)$. Then there exists a sequence of distinct elements $\{\gamma_k\}_{k\ge 1}\subset \Gamma$ such that $\gamma_k o\to \xi$ in $B^{n+1}\cup S^n$. Because $[\Gamma:\Gamma']<\infty$, there is a disjoint right-coset decomposition
$\Gamma=\bigsqcup_{i=1}^{m}\Gamma' g_i.$
By the pigeonhole principle, some coset $\Gamma' g_j$ contains infinitely many $\gamma_k$. Passing to that subsequence (still denoted $\gamma_k$), write
$\gamma_k=\gamma'_k g_j$, $\gamma'_k\in \Gamma'.$ The $\gamma'_k$ are distinct: if $\gamma'_k=\gamma'_\ell$, then $\gamma_k=\gamma_\ell$, contrary to the choice of distinct $\gamma_k$.

Set $o':=g_j o\in B^{n+1}$. Then
$\gamma_k o=\gamma'_k(g_j o)=\gamma'_k o'\to \xi.$ Thus $\xi$
is an accumulation point of the $\Gamma'$-orbit of $o'$, so
$\xi\in\Lambda(\Gamma')$ by the independence of the base point. Therefore
$\Lambda(\Gamma)\subseteq\Lambda(\Gamma')$, and the two inclusions prove the
claim.
\end{proof}

Let \((M^n,g)\) be a closed, connected, oriented, smooth, LCF Riemannian
manifold with \(n\geq4\) and PSC. Schoen and
Yau~\cite[Theorem~4.5]{zbMATH04075988} prove that the developing map of
\((\widetilde M,\widetilde g)\) is injective and that its holonomy
representation
\(\rho\colon\pi_1(M)\to\operatorname{Conf}(S^n)\)
is faithful and has discrete image. Set \(\Gamma:=\rho(\pi_1(M))\). The image
of the developing map is an open subset \(\Omega\subset S^n\) on which
\(\Gamma\) acts properly discontinuously, and \(M\) is diffeomorphic to the
quotient \(\Omega/\Gamma\).

Moreover, \(\Omega=\Omega(\Gamma):=S^n\setminus\Lambda(\Gamma)\). Indeed,
the maximality of the domain of discontinuity gives
\(\Omega\subseteq\Omega(\Gamma)\)~\cite{zbMATH00195006}, and hence
\(\Lambda(\Gamma)\subseteq S^n\setminus\Omega\). Since
\(M\cong\Omega/\Gamma\) is compact, there exists a compact set
\(K\subset\Omega\) such that \(\Gamma K=\Omega\). Suppose that
\(x\in\partial\Omega\cap\Omega(\Gamma)\). Choose \(y_k\in\Omega\) with
\(y_k\to x\), and write \(y_k=\gamma_kz_k\), where \(z_k\in K\). Choose a
compact neighborhood \(C\) of \(x\) contained in \(\Omega(\Gamma)\). Proper
discontinuity implies that
\(\{\gamma\in\Gamma:\gamma K\cap C\ne\varnothing\}\)
is finite. For all sufficiently large \(k\), \(y_k\in C\); after passing to a
subsequence, \(\gamma_k=\gamma\) is therefore constant. Since \(\gamma K\)
is compact and hence closed, \(x\in\gamma K\subset\Omega\), contradicting
\(x\in\partial\Omega\). Thus \(\Omega\) is closed in \(\Omega(\Gamma)\).
Furthermore, Schoen and Yau's estimate for the complement of the developing
image~\cite[Theorem~4.7]{zbMATH04075988} and the inclusion
\(\Lambda(\Gamma)\subseteq S^n\setminus\Omega\) give
\[
\dim_{\mathrm{top}}\Lambda(\Gamma)
\leq\dim_{\mathcal H}\Lambda(\Gamma)
\leq\frac{n-2}{2}<n-1.
\]
The non-separation theorem for compact subsets of the sphere now implies that
\(S^n\setminus\Lambda(\Gamma)\) is connected
\cite[p.~48, Corollary~1 to Theorem~IV.4]{MR0006493}. Thus
\(\Omega(\Gamma)\) is connected, and the nonempty subset \(\Omega\), being
both open and closed in \(\Omega(\Gamma)\), equals \(\Omega(\Gamma)\).

 The following standard facts may be found in \cite{zbMATH00195006}; for the
 structure of elementary groups, see also
 \cite[\S~5.5, especially Lemma~1 and Theorem~5.5.9]{MR4221225}. A Kleinian
 group $\Gamma$ is called \textit{elementary} if its limit set
 $\Lambda(\Gamma)$ has at most two points; otherwise, it is called
 \textit{non-elementary}. If $\Lambda(\Gamma)$ is empty, then $\Gamma$ is
 finite. If $\Lambda(\Gamma)$ consists of a single point, then $\Gamma$
 contains an abelian subgroup of finite index of rank $k$ with
 $1\leq k \leq n$. If $\Lambda(\Gamma)$ consists of two points, then $\Gamma$ contains an
 infinite cyclic subgroup of finite index. Conversely, every abelian Kleinian
 group is elementary. Together with Proposition~\ref{virtual}, these facts show
 that a Kleinian group is elementary if and only if it is virtually abelian. If
 $\Gamma$ is non-elementary, then $\Lambda(\Gamma)$ is the unique minimal
 nonempty closed \(\Gamma\)-invariant subset of \(S^n\).
 
For a Kleinian group \(\Gamma\), the \emph{critical exponent}
\(\delta(\Gamma)\) is defined by
\[
\delta(\Gamma):=\inf\left\{s>0\,\middle|\,
\sum_{\gamma\in\Gamma}\exp\bigl(-s\,d_H(x,\gamma y)\bigr)<\infty\right\},
\]
where \(x,y\in B^{n+1}\) and \(d_H\) denotes the hyperbolic distance
induced by \(g_H\). The value is independent of the choices of \(x\) and
\(y\). For a finite group, the definition gives \(\delta(\Gamma)=0\). If
\(\Gamma\) is non-elementary, then \(0<\delta(\Gamma)\leq n\). If
\(\Gamma'\leq\Gamma\), then \(\delta(\Gamma')\leq\delta(\Gamma)\).

A Kleinian group \( \Gamma \) is said to be \emph{convex cocompact} if the quotient
$(\Omega(\Gamma) \cup B^{n+1}) / \Gamma$ is compact. If
\(\#\Lambda(\Gamma)\geq2\), this is equivalent to the compactness of
\(\mathrm{CH}(\Gamma)/\Gamma\), where \(\mathrm{CH}(\Gamma)\subset B^{n+1}\)
is the closed convex hull of the union of all complete geodesics with distinct
ideal endpoints in \(\Lambda(\Gamma)\). Equivalently, it is the smallest
closed convex subset whose closure in \(\overline{B^{n+1}}\) contains
\(\Lambda(\Gamma)\)~\cite{MR1218098}.
Indeed, any closed convex subset with this boundary-closure property contains
every such complete geodesic and hence contains the closed convex hull above.
For \(\#\Lambda(\Gamma)\leq1\), we set \(\mathrm{CH}(\Gamma)=\varnothing\)
by convention.

A Kleinian group \( \Gamma \) is said to be \emph{geometrically finite} if there exists a uniform bound on the orders of its finite subgroups and the \( \varepsilon \)-neighborhood of \( \mathrm{CH}(\Gamma)/\Gamma \) in \( B^{n+1}/\Gamma \) has finite volume for some \( \varepsilon > 0 \).

\begin{proposition} \label{geo finite}
Let \( \Gamma \) be a geometrically finite (resp. convex cocompact) Kleinian group, and let \( \Gamma' \leq \Gamma \) be a subgroup of finite index. Then \( \Gamma' \) is also geometrically finite (resp. convex cocompact).
\end{proposition}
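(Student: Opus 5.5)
The plan is to use Proposition~\ref{virtual}, which gives \(\Lambda(\Gamma')=\Lambda(\Gamma)\), together with the resulting equality of convex hulls \(\mathrm{CH}(\Gamma')=\mathrm{CH}(\Gamma)\). Both definitions depend only on the limit set and on the group action. Passing to the finite-index subgroup then replaces each relevant quotient by a finite-sheeted covering of the corresponding quotient for \(\Gamma\). The proof reduces to two facts: a finite-sheeted cover of a compact space is compact, and a finite-sheeted cover of a finite-volume region has finite volume.

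\emph{Convex cocompact case.} Put \(X:=\Omega(\Gamma)\cup B^{n+1}\), which equals \(\Omega(\Gamma')\cup B^{n+1}\) because the limit sets agree. Let \(K\subset X\) be compact with \(\Gamma K=X\). Such a \(K\) exists because \(\Gamma\) acts properly discontinuously on \(X\), \(X\) is locally compact, and \(X/\Gamma\) is compact: cover \(X\) by relatively compact open sets and extract a finite subcover of their images. Write \(\Gamma=\bigsqcup_{i=1}^m\Gamma' g_i\) and set \(K':=\bigcup_i g_iK\). This \(K'\) is compact and satisfies \(\Gamma'K'=X\), so \(X/\Gamma'\), being the continuous image of \(K'\), is compact.

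\emph{Geometrically finite case.} The orders of finite subgroups of \(\Gamma'\) are bounded by the bound for \(\Gamma\), since every finite subgroup of \(\Gamma'\) is a finite subgroup of \(\Gamma\). For the volume condition, let \(N_\varepsilon\subset B^{n+1}\) be the \(\varepsilon\)-neighborhood of \(\mathrm{CH}(\Gamma)=\mathrm{CH}(\Gamma')\). It is invariant under both groups. Choose a measurable fundamental domain \(F\) for \(\Gamma\) acting on \(B^{n+1}\), for instance a Dirichlet domain centred at a point with trivial stabilizer. Then \(\bigcup_i g_i^{-1}F\) is a fundamental domain for \(\Gamma'\), up to measure zero. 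Using \(\Gamma\)-invariance of \(N_\varepsilon\) and invariance of hyperbolic volume,
\[
\operatorname{vol}(N_\varepsilon/\Gamma')\le\sum_{i=1}^m\operatorname{vol}(g_i^{-1}F\cap N_\varepsilon)=m\operatorname{vol}(N_\varepsilon/\Gamma)<\infty .
\]

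The main obstacle is the volume statement when \(\Gamma\) has torsion. In that case \(B^{n+1}/\Gamma\) is an orbifold, and "volume" must be read as the measure of \(N_\varepsilon\cap F\) for a fundamental domain \(F\). The fundamental-domain computation above handles this without any manifold structure, since the fixed-point sets of nontrivial elements have measure zero. The elementary case with \(\#\Lambda\le1\) needs separate mention, because \(\mathrm{CH}=\varnothing\) there and the volume condition holds trivially for both groups.
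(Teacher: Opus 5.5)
Your argument is correct and follows the paper's route. Proposition~\ref{virtual} gives $\Lambda(\Gamma')=\Lambda(\Gamma)$, hence equal hulls and equal $\Omega\cup B^{n+1}$; the $\Gamma'$-quotients are finite-sheeted covers of the $\Gamma$-quotients; and the bound on finite subgroups is inherited. The difference is that you verify the two covering facts by hand, via $K'=\bigcup_i g_iK$ and a fundamental-domain volume count, where the paper simply cites them.

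There is one small slip. With $\Gamma=\bigsqcup_i\Gamma' g_i$, the fundamental domain for $\Gamma'$ is $\bigcup_i g_iF$, not $\bigcup_i g_i^{-1}F$, since the inverses of a right transversal need not form one. Each piece $g_iF\cap N_\varepsilon$ still has volume $\operatorname{vol}(F\cap N_\varepsilon)$, so your bound is unaffected.
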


\begin{proof}
By Proposition~\ref{virtual}, the limit set \( \Lambda(\Gamma') \) coincides with \( \Lambda(\Gamma) \). Consequently, the hyperbolic convex hull \( \mathrm{CH}(\Gamma') \subset B^{n+1} \) equals \( \mathrm{CH}(\Gamma) \). The quotient \( \mathrm{CH}(\Gamma')/\Gamma' \) is a finite-sheeted orbifold covering of \( \mathrm{CH}(\Gamma)/\Gamma \). Likewise, \((\Omega(\Gamma')\cup B^{n+1})/\Gamma'\) is a finite-sheeted orbifold covering of \((\Omega(\Gamma)\cup B^{n+1})/\Gamma\), which proves convex cocompactness when applicable. Moreover, the \( \varepsilon \)-neighborhood of \( \mathrm{CH}(\Gamma')/\Gamma' \) has finite volume, since finiteness of volume is preserved under finite-sheeted orbifold coverings.

In addition, any finite subgroup of \( \Gamma' \) is also a finite subgroup of \( \Gamma \), so the orders of finite subgroups of \( \Gamma' \) are uniformly bounded. This proves geometric finiteness.
\end{proof}

Convex cocompact Kleinian groups are geometrically finite. Conversely, geometrically finite Kleinian groups without parabolic elements are precisely the convex cocompact ones~\cite{MR1218098}. If \(\Gamma\) is a non-elementary convex cocompact Kleinian group, then
Sullivan's theorem~\cite{zbMATH03903608} states that
$$
\Gamma \text{ non-elementary and convex cocompact}
\quad\Longrightarrow\quad
\delta(\Gamma)=\dim_{\mathcal H}\Lambda(\Gamma),
$$

where \(\dim_{\mathcal H}\) denotes Hausdorff dimension.

\subsection{The Hausdorff Dimension of the Limit Set}\label{Hausdorff}

Let \(q\colon\Omega\to M\cong\Omega/\Gamma\) be the quotient map, and
continue to write \(g\) for the pullback metric \(q^*g\) on \(\Omega\).
Schoen and Yau's estimate~\cite[Theorem~4.7]{zbMATH04075988} gives
\(\dim_{\mathcal H}\Lambda(\Gamma)\leq\frac{n-2}{2}\).
For non-elementary \(\Gamma\), Proposition~\ref{prop:standing} below sharpens
this inequality to a strict one. If \(\Gamma\) is elementary, then, because
\(\Gamma\) is infinite, its limit set has one or two points and hence
\(\dim_{\mathcal H}\Lambda(\Gamma)=0\). Under the additional hypotheses of
Theorem~\ref{Kleinian}, torsion-freeness yields a strict lower bound. We first
record the critical-exponent estimate used repeatedly below.

\begin{proposition}\label{prop:standing}
Let $(M^n,g)$, $n\geq4$, be a closed, connected, oriented, locally
conformally flat manifold with positive scalar curvature and infinite
fundamental group, and write $M=\Omega/\Gamma$ as above. Then $\Gamma$ is
Gromov-hyperbolic and convex
cocompact. If $\Gamma$ is non-elementary, then
\[
0<\dim_{\mathcal H}\Lambda(\Gamma)=\delta(\Gamma)<\frac{n-2}{2}.
\]
\end{proposition}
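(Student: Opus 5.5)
The proof splits into two parts: first convex cocompactness (from which Gromov-hyperbolicity follows), then the dimension estimate.

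\emph{Convex cocompactness.} We invoke Izeki's convex cocompactness theorem \cite{zbMATH01785298}, as announced in the Introduction. It states that for a closed LCF manifold with PSC and $n\ge4$, the holonomy group $\Gamma$ is convex cocompact. The input is the Schoen--Yau bound $\dim_{\mathcal H}\Lambda(\Gamma)\le\frac{n-2}{2}$, which is strictly less than $n-2$ when $n\geq 3$. This bound excludes parabolic elements: a rank-$k$ parabolic subgroup with $k\ge1$ would force $\delta\ge k/2$, and the cusp geometry is incompatible with the cocompact action on $\Omega$. Together with compactness of $\Omega/\Gamma$, this places us in the setting of Izeki's theorem.

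\emph{Gromov-hyperbolicity.} If $\#\Lambda(\Gamma)\ge2$, then $\Gamma$ acts properly and cocompactly by isometries on the closed convex hull $\mathrm{CH}(\Gamma)$. This is a geodesic subspace of $\mathbb H^{n+1}$ and hence $\delta$-hyperbolic, so the \v{S}varc--Milnor lemma makes $\Gamma$ quasi-isometric to a hyperbolic space, i.e.\ Gromov-hyperbolic. If $\#\Lambda(\Gamma)=1$, the group would be virtually $\mathbb Z^k$ and parabolic, which the previous step excludes. The remaining elementary case is virtually cyclic, hence trivially hyperbolic.

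\emph{The dimension estimate.} Assume now that $\Gamma$ is non-elementary. Sullivan's theorem \cite{zbMATH03903608} gives $\dim_{\mathcal H}\Lambda(\Gamma)=\delta(\Gamma)$, and $\delta(\Gamma)>0$ holds for every non-elementary group. The non-strict upper bound is Schoen--Yau \cite[Theorem~4.7]{zbMATH04075988}.

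\emph{Strict inequality.} This is the main obstacle. I would use Nayatani's criterion \cite[Corollary~3.4]{zbMATH01028179}. For convex cocompact non-elementary $\Gamma$, Nayatani constructs from the Patterson--Sullivan measure a canonical $\Gamma$-invariant metric $g_{PS}=\phi^{4/(n-2)}g_{st}$ on $\Omega$. By his corollary, the sign of its scalar curvature equals the sign of $\frac{n-2}{2}-\delta(\Gamma)$, and the scalar curvature vanishes identically when $\delta=\frac{n-2}{2}$.

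To exclude equality, suppose $\delta(\Gamma)=\frac{n-2}{2}$. Then $M$ carries a scalar-flat metric in the conformal class of $g$. This contradicts positive Yamabe invariant: the class contains the PSC metric $g$, so the first eigenvalue of the conformal Laplacian is positive. A scalar-flat metric $u^{4/(n-2)}g$ in the class would make $u>0$ an eigenfunction of the conformal Laplacian with eigenvalue $0$, which is impossible. Hence $\delta(\Gamma)<\frac{n-2}{2}$.

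The delicate points are verifying that Nayatani's hypotheses hold (convex cocompactness and non-elementarity, both established above) and that his metric descends smoothly to $M$. The latter follows from the $\Gamma$-equivariance of the Patterson--Sullivan density.
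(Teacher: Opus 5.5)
Your proof is correct, but the strict upper bound takes a different route from the paper's.

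\textbf{The paper's route.} It passes by Selberg's lemma to a torsion-free finite-index subgroup $\Gamma_0$. It checks that $\Gamma_0$ is still non-elementary and convex cocompact, and that $\delta(\Gamma_0)=\delta(\Gamma)$ via the coset decomposition of the Poincar\'e series. It then applies Nayatani's Corollary~3.4 as a black box to the closed PSC manifold $\Omega/\Gamma_0$, which gives $\delta(\Gamma_0)<\frac{n-2}{2}$ at once.

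\textbf{Your route.} You work directly with $\Gamma$. Schoen--Yau together with Sullivan gives $\delta(\Gamma)\le\frac{n-2}{2}$. You then exclude equality by descending the scalar-flat Patterson--Sullivan metric to $M$ and invoking $\lambda_1(L_g)>0$. This is the same mechanism the paper itself uses in the remark after Corollary~\ref{nonnegative}.

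\textbf{What each buys.} Your route dispenses with Selberg's lemma and the finite-index bookkeeping. The cost is that you use the pointwise scalar-curvature identity for Nayatani's metric rather than his packaged corollary. You should therefore say explicitly two things. First, that identity is a local computation valid for any non-elementary $\Gamma$, torsion allowed. Second, $g_{PS}$ descends to $M$ because $\Gamma$ acts freely on $\Omega$ as the deck group of $\widetilde M\cong\Omega$. This is precisely the point the paper handles by passing to $\Gamma_0$.

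\textbf{Hyperbolicity.} Deriving Gromov-hyperbolicity from convex cocompactness via \v{S}varc--Milnor on $\mathrm{CH}(\Gamma)$ is a valid substitute for the paper's citation of Izeki's Theorem~3.

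\textbf{The parabolic heuristic.} The argument in your first paragraph does not work as stated. A rank-$k$ cusp only forces $\delta>k/2$, which is compatible with $\delta\le\frac{n-2}{2}$ whenever $k<n-2$. Convex cocompactness therefore rests entirely on Izeki's theorem, which applies because a PSC manifold is not finitely covered by a torus. This is exactly how the paper proceeds.
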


\begin{proof}
The manifold $M^n$ is not finitely covered by a torus, so
Izeki's results~\cite[Theorems~2 and~3]{zbMATH01785298} show that $\Gamma$ is
Gromov-hyperbolic and convex cocompact. The first assertion follows. For the
remaining assertion, suppose that $\Gamma$ is non-elementary. Because
$\Gamma\cong\pi_1(M)$ is finitely generated and
$\operatorname{Conf}(S^n)\cong\mathrm{PO}(n+1,1)$ has a faithful
finite-dimensional linear representation (for example, its adjoint
representation), Selberg's lemma~\cite{zbMATH03319625} provides a torsion-free subgroup
$\Gamma_0\leq\Gamma$ of finite index. Then $\Gamma_0$ is non-elementary by
Proposition~\ref{virtual} and convex cocompact by
Proposition~\ref{geo finite}. Let $g_0$ be the
pullback of $g$ to $M_0:=\Omega/\Gamma_0$. Then
$\Lambda(\Gamma_0)=\Lambda(\Gamma)$ by Proposition~\ref{virtual}, so
$\Omega=\Omega(\Gamma_0)$. Choose a right-coset decomposition
$\Gamma=\bigsqcup_{i=1}^m\Gamma_0g_i$. For $s>0$,
\[
\sum_{\gamma\in\Gamma}e^{-s d_H(x,\gamma y)}
=\sum_{i=1}^m\sum_{\gamma_0\in\Gamma_0}
e^{-s d_H(x,\gamma_0g_i y)}.
\]
By base-point independence, every inner series has critical exponent
$\delta(\Gamma_0)$. Since the outer sum is finite,
$\delta(\Gamma)=\delta(\Gamma_0)$.

The manifold $M_0$ is closed, and $g_0$ is locally conformally flat with
positive scalar curvature; in particular, $M_0$ is not finitely covered by a
torus. We have already shown that $\Gamma_0$ is torsion-free,
non-elementary, and convex cocompact. Therefore Nayatani's criterion
\cite[Corollary~3.4]{zbMATH01028179} gives
$\delta(\Gamma_0)<\frac{n-2}{2}.$ Finally, Sullivan's
theorem~\cite{zbMATH03903608} gives
$\dim_{\mathcal H}\Lambda(\Gamma)=\delta(\Gamma)$, while the standard
critical-exponent inequality recalled above gives $\delta(\Gamma)>0$.
\end{proof}

\begin{theorem}\label{Kleinian}
Let \( (M^n,g) \), \(n\geq5\), be a closed, connected, oriented, locally conformally flat Riemannian manifold with positive scalar curvature and infinite fundamental group. Then \(M\) can be realized as a quotient \(\Omega/\Gamma\), as described above, with \(\Gamma\cong\pi_1(M)\). There exists an integer
$i \geq \Big\lfloor \frac{n}{2} \Big\rfloor + 1$
such that $\pi_i(M^n) \neq 0$.

If, in addition, \( H_1(M^n;\mathbb{Z}) = 0 \), then \( \Gamma \) is non-elementary.

If, moreover, \(\Gamma\) is torsion-free, then the Hausdorff dimension of the limit set satisfies
\[
1 < \dim_{\mathcal H}\Lambda(\Gamma) < \frac{n-2}{2}.
\]
\end{theorem}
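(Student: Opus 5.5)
The realization $M\cong\Omega/\Gamma$ with $\Gamma\cong\pi_1(M)$ has already been set up via Schoen--Yau. I will prove the three remaining assertions in order.

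\emph{Nonvanishing homotopy group.} Schoen--Yau's low-degree vanishing gives $\pi_i(\Omega)=0$ for $1\le i\le\lfloor n/2\rfloor$. For $n\ge5$ this follows from Alexander duality, because $\dim\Lambda(\Gamma)\le (n-2)/2$ makes $\Omega=S^n\setminus\Lambda$ highly connected. Since $\Omega$ is the universal cover of $M$, this yields $\pi_i(M)=\pi_i(\Omega)$ for $i\ge2$.

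Suppose, for contradiction, that $\pi_i(M)=0$ for every $i\ge\lfloor n/2\rfloor+1$. Together with the low-degree vanishing, $\Omega$ would be contractible, so $M$ would be a closed aspherical $n$-manifold. Then $\Gamma$ would be torsion-free with $\operatorname{cd}_{\mathbb Z}\Gamma=n$.

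On the other side, Proposition~\ref{prop:standing} gives that $\Gamma$ is convex cocompact. Kapovich's formula \cite[Proposition~9.6]{MR2491697} then gives $\operatorname{cd}\Gamma=\dim_{\mathrm{top}}\Lambda(\Gamma)+1\le\lfloor (n-2)/2\rfloor+1<n$. In the elementary case the same conclusion holds directly, since an infinite virtually abelian $\Gamma$ with $\Lambda$ of one or two points has cd at most $1$ or is virtually $\mathbb Z^k$ with $k<n$ as a quotient. Either way this contradicts $\operatorname{cd}\Gamma=n$.

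\emph{Non-elementary under $H_1(M;\mathbb Z)=0$.} I will argue by contradiction, assuming $\Gamma$ is elementary. It is infinite, so it is virtually abelian. By Proposition~\ref{prop:standing} it is convex cocompact, so it has no parabolics; hence $\Lambda(\Gamma)$ consists of two points and $\Gamma$ is virtually $\mathbb Z$. Such a group surjects onto $\mathbb Z$ or onto the infinite dihedral group $D_\infty$.

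The first alternative contradicts $H_1=0$ immediately. In the second, the abelianization of $D_\infty$ is $(\mathbb Z/2)^2\ne0$, which again contradicts $H_1=0$ because $H_1(M)=\Gamma^{ab}$. This is the content of Lemma~\ref{Abel}.

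\emph{Hausdorff dimension bounds.} Assume now that $\Gamma$ is non-elementary and torsion-free. Proposition~\ref{prop:standing} gives the strict upper bound $\dim_{\mathcal H}\Lambda(\Gamma)<(n-2)/2$ directly. For the lower bound, set $d=\dim_{\mathrm{top}}\Lambda(\Gamma)\le\dim_{\mathcal H}\Lambda(\Gamma)$, and split into cases.
\begin{itemize}
\item If $d\ge2$, then $\dim_{\mathcal H}\Lambda(\Gamma)\ge2>1$.
\item If $d=0$, Kapovich's formula gives $\operatorname{cd}\Gamma=1$. Stallings--Swan then makes $\Gamma$ free and nontrivial, so $H_1(M)=\Gamma^{ab}\ne0$, a contradiction.
\item If $d=1$ and $\dim_{\mathcal H}\Lambda(\Gamma)\le1$, Kapovich's rigidity \cite[Theorem~1.3]{MR2491697} makes $\Lambda(\Gamma)$ a round circle. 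Then $\Gamma$ preserves a totally geodesic $\mathbb H^2$ and acts cocompactly on it, because its convex hull is that plane. Being torsion-free, $\Gamma$ is a closed surface group of genus at least $2$, whose abelianization $\mathbb Z^{2g}$ is nonzero, again a contradiction.
\end{itemize}
Hence $\dim_{\mathcal H}\Lambda(\Gamma)>1$.

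The main obstacle is the $d=1$ case. It requires checking the exact hypotheses of Kapovich's rigidity theorem (convex cocompactness together with the equality or inequality $\dim_{\mathcal H}\le\dim_{\mathrm{top}}$). It also requires handling a possibly orientation-reversing action on the circle. To deal with the latter, I will pass to the index-$\le2$ subgroup preserving the plane's orientation, and use the fact that nonorientable closed surfaces also have nonzero $H_1$.
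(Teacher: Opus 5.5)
Your overall architecture matches the paper's: assume the higher homotopy groups vanish, deduce asphericity and $\operatorname{cd}\Gamma=n$, then contradict this with Kapovich's formula. However, the low-degree vanishing step has a genuine off-by-one gap when $n$ is even.

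\textbf{The gap at $i=n/2$.} You assert $\pi_i(\Omega)=0$ for $1\le i\le\lfloor n/2\rfloor$ using only the non-strict Schoen--Yau bound $\dim\Lambda(\Gamma)\le(n-2)/2$. For $n=2m$ and $i=m$ this does not follow.
\begin{itemize}
\item The non-strict bound only gives $\dim_{\mathrm{top}}\Lambda(\Gamma)\le m-1$. Alexander duality then yields $\widetilde H_m(\Omega)\cong\check H^{m-1}(\Lambda(\Gamma))$, which need not vanish.
\item General position fares no better: it needs $(m+1)+\dim_{\mathcal H}\Lambda(\Gamma)<2m$, which fails if $\dim_{\mathcal H}\Lambda(\Gamma)=m-1$.
\end{itemize}
Your contradiction hypothesis only covers $i\ge m+1$, so $\pi_m(\Omega)$ is never controlled. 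Hence contractibility of $\Omega$, and with it asphericity and $\operatorname{cd}\Gamma=n$, does not follow.

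\textbf{The fix.} In the non-elementary case, use the strict inequality $\dim_{\mathcal H}\Lambda(\Gamma)<(n-2)/2$ from Proposition~\ref{prop:standing} (Nayatani's criterion together with Sullivan's theorem). For $n=2m$ this forces $\dim_{\mathrm{top}}\Lambda(\Gamma)\le m-2$. Then Alexander duality plus Hurewicz (using $\pi_1(\Omega)=0$), or general position, gives vanishing through degree $m$. This is precisely why the paper invokes the strict bound at this point. In the elementary case, use $\Omega\cong S^{n-1}\times\mathbb R$ directly.

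\textbf{The elementary case in the same step.} Your sentence there is also wrong as written. A one-point limit set would make $M=\mathbb R^n/\Gamma$ with $\Gamma$ crystallographic and virtually $\mathbb Z^n$. Then $k=n$ and $\operatorname{cd}\Gamma=n$, so there is no contradiction. You must first exclude this case by convex cocompactness (no parabolics), as you do later. Then $\Gamma$ is torsion-free and virtually cyclic, hence $\Gamma\cong\mathbb Z$ with $\operatorname{cd}\Gamma=1<n$.

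\textbf{The other two parts are sound.}
\begin{itemize}
\item Non-elementarity: you use that an infinite virtually cyclic group surjects onto $\mathbb Z$ or $D_\infty$, both of which have nonzero abelianization. This is a valid shortcut. The paper's Lemma~\ref{Abel} instead takes a normal infinite cyclic subgroup of finite index and uses a sign homomorphism or the transfer. Your route quotes the structure theorem for virtually cyclic groups; the paper's is self-contained.
\item Hausdorff-dimension bounds: these follow the paper's Lemma~\ref{lem:strict-lower}. Passing to the orientation-preserving index-two subgroup gains nothing, since $\Gamma^{\mathrm{ab}}=0$ does not pass to finite-index subgroups. The contradiction must come from $\Gamma$ itself being the fundamental group of a possibly nonorientable closed hyperbolic surface, which the paper uses directly.
\end{itemize}
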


Theorem~\ref{Kleinian} immediately yields a partial result toward the long-standing conjecture that a closed, oriented aspherical manifold does not admit a Riemannian metric of PSC.
For other partial results related to this conjecture, see \cite{zbMATH07375613}.

\begin{corollary}\label{LCF aspherical}
Let \( M^n \) be a closed, connected, oriented, aspherical manifold with \( n \geq 5 \).
Then \( M^n \) does not admit a locally conformally flat Riemannian metric with positive scalar curvature.
\end{corollary}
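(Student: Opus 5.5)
Corollary~\ref{LCF aspherical} should follow from the first assertion of Theorem~\ref{Kleinian}, together with the finite-fundamental-group case. Suppose, for contradiction, that \(M^n\) with \(n\ge5\) is closed, connected, oriented, aspherical, and carries an LCF metric \(g\) with positive scalar curvature. We split according to whether \(\pi_1(M)\) is finite or infinite.

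\emph{Finite fundamental group.} The universal cover \(\widetilde M\) is then a finite-sheeted cover of \(M\), so it is a closed manifold of dimension \(n\ge5\). Asphericity makes \(\widetilde M\) contractible. A closed \(n\)-manifold has \(H_n(\widetilde M;\mathbb Z/2)\cong\mathbb Z/2\ne0\), so it cannot be contractible. This is a contradiction. Equivalently, a nontrivial group of finite cohomological dimension must be torsion-free, and \(B\pi_1(M)=M\) is finite-dimensional; so \(\pi_1(M)\) is trivial, and then \(M\) itself would be a contractible closed manifold.

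\emph{Infinite fundamental group.} Here every hypothesis of Theorem~\ref{Kleinian} holds: \(M\) is closed, connected, oriented, LCF with PSC, \(n\ge5\), and \(\pi_1(M)\) is infinite. The theorem then gives an integer \(i\ge\lfloor n/2\rfloor+1\ge2\) with \(\pi_i(M)\ne0\). This contradicts asphericity, which says \(\pi_i(M)=0\) for all \(i\ge2\).

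The corollary itself has no real obstacle; all the difficulty sits in the first assertion of Theorem~\ref{Kleinian}. That assertion is the step I would expect to need care, so I indicate how I would prove it. By Proposition~\ref{prop:standing}, \(\Gamma\) is convex cocompact and Gromov-hyperbolic, with \(\dim_{\mathrm{top}}\Lambda(\Gamma)\le\dim_{\mathcal H}\Lambda(\Gamma)\le (n-2)/2\).

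Pass to a torsion-free finite-index subgroup \(\Gamma_0\) via Selberg's lemma. Kapovich's formula \(\operatorname{cd}\Gamma_0=\dim_{\mathrm{top}}\Lambda+1\) then gives \(\operatorname{cd}\Gamma_0\le\lfloor (n-2)/2\rfloor+1<n\).

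If the finite cover \(M_0=\Omega/\Gamma_0\) were aspherical, then \(M_0\) would be a closed \(n\)-dimensional \(K(\Gamma_0,1)\), forcing \(\operatorname{cd}\Gamma_0=n\). So some higher homotopy group \(\pi_i(M_0)\cong\pi_i(M)\), \(i\ge2\), is nonzero.

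Schoen--Yau's vanishing of \(\pi_i(M)\) for \(2\le i\le\lfloor n/2\rfloor\) then pushes this \(i\) up to \(i\ge\lfloor n/2\rfloor+1\). This degree bound is not even needed for the corollary, which only uses that some \(\pi_i(M)\) with \(i\ge2\) is nonzero.
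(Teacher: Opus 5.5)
Your proof is correct and is essentially the paper's own. Finite $\pi_1(M)$ is ruled out because the universal cover would be a closed contractible manifold. For infinite $\pi_1(M)$, the first assertion of Theorem~\ref{Kleinian} produces a nonzero $\pi_i(M)$ with $i\geq 2$, which contradicts asphericity.

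Your sketch of that assertion, comparing Kapovich's formula $\operatorname{cd}=\dim_{\mathrm{top}}\Lambda+1<n$ with $\operatorname{cd}=n$ for a closed aspherical $n$-manifold, is also the paper's mechanism. There is one small caveat, irrelevant to the corollary as you note: for even $n$, the general-position vanishing up to degree $\lfloor n/2\rfloor$ needs the strict bound $\dim_{\mathcal H}\Lambda(\Gamma)<\frac{n-2}{2}$ from Nayatani's criterion in Proposition~\ref{prop:standing}. The non-strict Schoen--Yau estimate alone does not reach that degree.
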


\begin{proof}
The fundamental group \(\pi_1(M)\) is infinite. Indeed, if it were finite,
then the universal cover would be both a closed oriented manifold and a
contractible space, contradicting its nonzero top-dimensional homology.
If such a metric existed, Theorem~\ref{Kleinian} would imply that there exists
an integer $i\geq\lfloor n/2\rfloor+1$ such that
\(\pi_i(M^n)\neq0\), which contradicts the asphericity of \(M\).
\end{proof}

The proof of Theorem~\ref{Kleinian} uses Izeki's convex cocompactness theorem, Nayatani's critical-exponent criterion, and Kapovich's cohomological-dimension and rigidity theorems. We first record the elementary algebraic lemma needed to exclude elementary holonomy.

\begin{lemma}\label{Abel}
Let $G$ and $F$ be groups fitting into a short exact sequence
\[
1 \longrightarrow \mathbb{Z} \xrightarrow{\;\iota\;} G \xrightarrow{\;\pi\;} F \longrightarrow 1,
\]
where $F$ is finite. Then the abelianization $G^{\mathrm{ab}} := G/[G,G]$ is nontrivial.
\end{lemma}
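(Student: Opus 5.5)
Since the conclusion only asks for some nontrivial abelian quotient of $G$, the plan is to construct an explicit surjection from $G$ onto an infinite cyclic group. The natural candidate is a transfer-type homomorphism to the normal infinite cyclic subgroup $N:=\iota(\mathbb Z)$. Write $m:=|F|$.

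\textbf{Step 1: conjugation action.} Since $N\cong\mathbb Z$ is normal in $G$, conjugation gives a homomorphism $\chi\colon G\to\operatorname{Aut}(\mathbb Z)=\{\pm1\}$. Concretely, for $g\in G$ and $t:=\iota(1)$ we have $gtg^{-1}=t^{\chi(g)}$. Let $G_0:=\ker\chi$; it has index at most $2$.

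\textbf{Step 2: the case $\chi$ nontrivial.} Then $\chi$ is a surjection $G\to\{\pm1\}\cong\mathbb Z/2$. This is a nontrivial abelian quotient, so $G^{\mathrm{ab}}\neq 0$ and we are done.

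\textbf{Step 3: the case $\chi$ trivial.} Now $N$ is central. I would use the transfer $V\colon G\to N$, which is a homomorphism because $N$ is abelian of finite index $m$. For central $N$, the standard computation gives $V(z)=z^m$ for every $z\in N$. Hence $V(t)=t^m\neq 1$, since $t$ has infinite order. So $V$ has image a nontrivial subgroup of $N\cong\mathbb Z$, which is infinite cyclic. Therefore $G^{\mathrm{ab}}$ surjects onto $\mathbb Z$ and is nontrivial; in fact it is infinite.

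\textbf{Avoiding the transfer.} To keep things elementary, I would instead argue directly that $t$ has infinite order in $G^{\mathrm{ab}}$. Suppose $t^k\in[G,G]$ for some $k\neq 0$. Apply the homomorphism $\theta\colon G\to\mathbb Z$ defined as follows. Choose a set-theoretic section $s\colon F\to G$ and write $g=\iota(a(g))\,s(\pi(g))$. Then set $\theta(g):=\sum_{f\in F}a\bigl(s(f)\,g\,s(f\pi(g))^{-1}\bigr)$. This is the explicit transfer, and centrality makes the cocycle terms telescope. One checks $\theta(t)=m$, which gives $km=0$, a contradiction.

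\textbf{Main obstacle.} The only real work is verifying that $\theta$ is a homomorphism and that $\theta(\iota(1))=m$; this is routine bookkeeping with the cocycle. The case split in Steps 2 and 3 is what makes the whole argument run.
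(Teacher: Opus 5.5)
Your proposal is correct and follows the paper's proof essentially step for step: the same case split on the conjugation character $G\to\operatorname{Aut}(\mathbb Z)=\{\pm1\}$, and in the central case the transfer $G\to N$ with $V(z)=z^m$. Your explicit $\theta$ is that same transfer written in coordinates, so it does not actually avoid it.

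Two small points in the explicit version. First, you must normalize $s(1)=1$, or read off the $N$-coordinate via $\iota^{-1}$; otherwise $\theta(1)=-m\,\iota^{-1}(s(1))\neq0$ and $\theta$ is not a homomorphism. Second, the homomorphism property comes from $N$ being abelian and $f\mapsto f\pi(g)$ permuting $F$, not from centrality; centrality is what gives $\theta(t)=m$.
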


\begin{proof}

Let $H:=\iota(\mathbb Z)$ and $a:=\iota(1)$. Since $\iota$ is injective, $H\cong\mathbb Z$. We write $H$ multiplicatively inside $G$, so $a^k:=\iota(k)$ for $k\in\mathbb Z$. Exactness gives $\ker\pi=H$, hence $H\trianglelefteq G$.

Let $\pi_0\colon G\to G/H$ be the quotient map. Then there exists an
isomorphism $\bar\pi\colon G/H\xrightarrow{\;\sim\;}F$ such that
$\pi=\bar\pi\circ\pi_0$.
We henceforth identify $F$ with $G/H$ via $\bar\pi$. Thus
$m:=[G:H]=|G/H|=|F|<\infty.$ Whenever we refer to an element of $F$, or
to a coset $gH\in G/H$, this identification is tacitly used.

Because $H$ is normal, for every $g\in G$ and $x\in H$ we have $gxg^{-1}\in H$ (indeed, $\pi(gxg^{-1})=\pi(g)\pi(x)\pi(g)^{-1}=1$, so $gxg^{-1}\in\ker\pi=H$). Define
$\varepsilon\colon G\to \mathrm{Aut}(H)$ by
$\varepsilon(g)(x):=gxg^{-1}$ for $x\in H$. For $g_1,g_2\in G$ and $x\in H$,
\[
\varepsilon(g_1g_2)(x)
=(g_1g_2)x(g_1g_2)^{-1}
=g_1\big(g_2xg_2^{-1}\big)g_1^{-1}
=\varepsilon(g_1)\!\left(\varepsilon(g_2)(x)\right),
\]
so $\varepsilon(g_1g_2)=\varepsilon(g_1)\circ\varepsilon(g_2)$ and $\varepsilon$ is a group homomorphism.

Since $H\cong\mathbb Z$ is infinite cyclic, fix the generator $a=\iota(1)$ of $H$. Any automorphism of $H$ must send $a$ to a generator of $H$, i.e., $a$ or $a^{-1}$; conversely, either choice extends uniquely by $a^k\mapsto a^{\pm k}$ for all $k\in\mathbb Z$. If $a'$ is another generator, then $a'=a^{\pm1}$, hence the sign is independent of the chosen generator. Therefore $\mathrm{Aut}(H)=\{\pm\mathrm{id}_H\}\cong\{\pm1\}.$ Under the identification $\mathrm{Aut}(H)\cong\{\pm1\}$, write $\varepsilon(g)\in\{\pm1\}$ so that $gag^{-1}=a^{\varepsilon(g)}$. Then for each $k\in\mathbb Z$, $ga^kg^{-1}=(gag^{-1})^k=a^{\varepsilon(g)k},$ so the single sign $\varepsilon(g)$ determines the whole automorphism $\varepsilon(g)\in\mathrm{Aut}(H)$.

For $h\in H$, $H$ is abelian, so $hah^{-1}=a$; hence $\varepsilon(h)=1$ and $H\subseteq\ker\varepsilon$.

 Define
 $\bar\varepsilon:G/H\to \{\pm1\},\qquad \bar\varepsilon(gH):=\varepsilon(g).$
This map is well defined: if
$gH=g'H$, then $g^{-1}g'\in H\subseteq\ker\varepsilon$, so
$\varepsilon(g')=\varepsilon(g)\varepsilon(g^{-1}g')=\varepsilon(g)$. Moreover, for $g_1H,g_2H\in G/H$ one has
\[
\bar\varepsilon(g_1H\cdot g_2H)
=\bar\varepsilon(g_1g_2H)
=\varepsilon(g_1g_2)
=\varepsilon(g_1)\varepsilon(g_2)
=\bar\varepsilon(g_1H)\bar\varepsilon(g_2H),
\]
so $\bar\varepsilon$ is a homomorphism. Via our identification $F\simeq G/H$, we view $\bar\varepsilon$ as a homomorphism $F\to\{\pm1\}$ such that
$\varepsilon=\bar\varepsilon\circ\pi.$

\emph{Case 1: $\varepsilon$ (equivalently $\bar\varepsilon$) is nontrivial.}
Choose $g\in G$ with $\varepsilon(g)=-1$. Since $\{\pm1\}$ is abelian,
$\varepsilon([x,y])=1\qquad(x,y\in G),$ so
$[G,G]\subseteq\ker\varepsilon$. 

We recall the universal property of abelianization: let $q:G\to G^{\mathrm{ab}}:=G/[G,G]$ be the quotient map. If $f:G\to A$ is a homomorphism with $A$ abelian and $[G,G]\subseteq\ker f$, then there exists a \emph{unique} homomorphism $\widehat f:G^{\mathrm{ab}}\to A$ with $f=\widehat f\circ q$.

Hence $\varepsilon$ factors through the abelianization: applying this with
$f=\varepsilon$ gives a unique
$\widetilde\varepsilon:G^{\mathrm{ab}}\to \{\pm1\}$ such that $\varepsilon=\widetilde\varepsilon\circ q .$ 
Explicitly,
$\widetilde\varepsilon(g[G,G])=\varepsilon(g)$; this is well defined because
$[G,G]\subseteq\ker\varepsilon$. Since
$\widetilde\varepsilon(g[G,G])=-1\neq1$, the map
$\widetilde\varepsilon$ is nontrivial. The only homomorphism from the trivial
group is the trivial map, so $G^{\mathrm{ab}}$ cannot be trivial. Thus
$G^{\mathrm{ab}}\neq0$.

\emph{Case 2: $\varepsilon$ is trivial.}
Then $\varepsilon(g)=1$ for all $g\in G$, i.e., $gag^{-1}=a$. For $h=a^k\in H$,
$ghg^{-1}=(gag^{-1})^k=a^k=h,$ so $H\subseteq Z(G)$.

Fix a set of right coset representatives
$\mathcal T=\{t_1,\dots,t_m\}\subset G$, so that $G=\bigsqcup_{i=1}^m Ht_i,$
and fix the order $1<2<\cdots<m$. For each $g\in G$ and $1\le i\le m$ there exist unique elements $h_i(g)\in H$ and $\sigma_g(i)\in\{1,\dots,m\}$ such that
\begin{equation}\label{eq:coset}
t_i g = h_i(g)\, t_{\sigma_g(i)}.
\end{equation}

The existence and uniqueness of such elements can be established as follows.   The right cosets $\{Ht_j\}_{j=1}^m$ partition $G$, so $t_ig$ lies in exactly one $Ht_j$; set $\sigma_g(i):=j$ and define $h_i(g)\in H$ by \eqref{eq:coset}.  
If $t_i g = h t_j = h' t_{j'}$ with $h,h'\in H$, then $Ht_j=Ht_{j'}$, so $j=j'$ because the cosets $Ht_j$ are pairwise disjoint. With $j$ fixed, $h t_j=h' t_j$ implies $h=h'$ since right multiplication by $t_j^{-1}$ is injective in a group.

Define the transfer map (also called the Verlagerung)
\[
\mathrm{Ver}:G\to H,\qquad \mathrm{Ver}(g):=\prod_{i=1}^m h_i(g),
\]
with
the factors taken in the fixed order $1,\dots,m$.
 Because $H$ is abelian and the product is finite, any permutation of the factors yields the same element of $H$; fixing an order just makes the definition explicit.

The map $\mathrm{Ver}$ is a group homomorphism.  Fix $g_1,g_2\in G$. For
each $i$ there exist $h_i^{(1)}\in H$ and $\sigma_{g_1}(i)$ with
$t_i g_1 = h_i^{(1)}\, t_{\sigma_{g_1}(i)}.$ Likewise, for each $i$ there
exist $h_{\sigma_{g_1}(i)}^{(2)}\in H$ and
$\sigma_{g_2}(\sigma_{g_1}(i))$ with
$t_{\sigma_{g_1}(i)} g_2 = h_{\sigma_{g_1}(i)}^{(2)}\,
t_{\sigma_{g_2}\circ\sigma_{g_1}(i)}.$ Multiplying gives
\[
t_i(g_1g_2)=h_i^{(1)}h_{\sigma_{g_1}(i)}^{(2)}\, t_{\sigma_{g_2}\circ\sigma_{g_1}(i)}.
\]
By the uniqueness in \eqref{eq:coset}, $h_i(g_1g_2)=h_i^{(1)}h_{\sigma_{g_1}(i)}^{(2)}$. Hence
\begin{equation}\label{eq:Ver_prod}
\mathrm{Ver}(g_1g_2)
=\prod_{i=1}^m h_i^{(1)}h_{\sigma_{g_1}(i)}^{(2)}
=\Big(\prod_{i=1}^m h_i^{(1)}\Big)\Big(\prod_{i=1}^m h_{\sigma_{g_1}(i)}^{(2)}\Big).
\end{equation}

The map $\sigma_{g_1}$ is a permutation.  If $\sigma_{g_1}(i)=\sigma_{g_1}(j)$, then
\[
Ht_i g_1 = H t_{\sigma_{g_1}(i)} = H t_{\sigma_{g_1}(j)} = H t_j g_1.
\]
Right-multiplying by $g_1^{-1}$ gives $Ht_i=Ht_j$, hence $i=j$ by disjointness.  Thus the map $\sigma_{g_1}$ is injective.  Fix $k\in\{1,\dots,m\}$. Since the cosets $Ht_r$ partition $G$, there exists a unique $r$ with $t_k g_1^{-1}\in Ht_r$, say $t_k g_1^{-1}=h' t_r$ with $h'\in H$. Multiplying on the right by $g_1$ yields $t_k=h' t_r g_1$, so $Ht_k=Ht_r g_1$, hence $\sigma_{g_1}(r)=k$.  
Thus $\sigma_{g_1}$ is bijective (a permutation). Therefore the multiset $\{h_{\sigma_{g_1}(i)}^{(2)}:1\le i\le m\}$ is just a reordering of $\{h_i^{(2)}:1\le i\le m\}$, and since $H$ is abelian,
$\prod_{i=1}^m h_{\sigma_{g_1}(i)}^{(2)}=\prod_{i=1}^m h_i^{(2)}.$ Applying this to \eqref{eq:Ver_prod} gives
$\mathrm{Ver}(g_1g_2)=\mathrm{Ver}(g_1)\mathrm{Ver}(g_2)$.

For any $x,y\in G$,
\[
\mathrm{Ver}([x,y])=\mathrm{Ver}(x)\mathrm{Ver}(y)\mathrm{Ver}(x)^{-1}\mathrm{Ver}(y)^{-1}=1,
\]
because $H$ is abelian. Hence $[G,G]\subseteq\ker\mathrm{Ver}$. By the universal property of abelianization, there is a unique homomorphism
$\overline{\mathrm{Ver}}:G^{\mathrm{ab}}\to H$ such that $ \mathrm{Ver}=\overline{\mathrm{Ver}}\circ q,$ where $q:G\to G^{\mathrm{ab}}$ is the quotient map.

Since $H\subseteq Z(G)$,  $\mathrm{Ver}(h)=h^{\,m}$ for all $h\in H$. Indeed, if $h\in H$, then $t_i h = h t_i$ for every $i$. Comparing with \eqref{eq:coset} and invoking the uniqueness of $h_i(\cdot)$ and $\sigma_{\cdot}(\cdot)$ yields $\sigma_h(i)=i$ and $h_i(h)=h$ for all $i$. Consequently,
$\mathrm{Ver}(h)=\prod_{i=1}^m h = h^{\,m}.$

Let $j:H\hookrightarrow G$ denote the inclusion and let
$j_*:H\to G^{\mathrm{ab}},$ $j_*:=q\circ j$ be the
homomorphism induced by inclusion. Since  $\mathrm{Ver}(h)=h^{\,m}$ for all
$h\in H\subseteq Z(G)$, we obtain
$(\overline{\mathrm{Ver}}\circ j_*)(h)=\mathrm{Ver}(h)=h^{\,m}$, $(h\in H).$
Since $H\cong\mathbb Z$, it is torsion-free: if $h^m=1$ with $h=a^p$, then $a^{mp}=1$ in $H$, hence $mp=0$ in $\mathbb Z$, so $p=0$ and $h=1$. Thus $h\mapsto h^m$ is injective, so the composite $\overline{\mathrm{Ver}}\circ j_*$ is nontrivial.

If $G^{\mathrm{ab}}=0$, then every homomorphism from $G^{\mathrm{ab}}$ is trivial. In particular, $\overline{\mathrm{Ver}}$ is trivial, and hence
$\overline{\mathrm{Ver}}\circ j_*$ is trivial, contradicting the previous paragraph. Therefore $G^{\mathrm{ab}}\neq0$. If $F=1$, then $m=1$ and $G=H\cong\mathbb Z$, so $G^{\mathrm{ab}}\cong\mathbb Z\neq0$; this is already encompassed by Case~2.
\end{proof}

\begin{remark}\label{free}
The infinite cyclic group \( \mathbb{Z} \) in Lemma~\ref{Abel} cannot be replaced by a free group of rank at least two. For instance, let \( A_5 \) denote the alternating group on five letters. Since \( A_5 \) is perfect, the free product \( G = A_5 \ast A_5 \) is also perfect, as the abelianization of a free product is the direct sum of the abelianizations of its factors.

Consider the natural folding homomorphism \( \pi: G \to A_5 \), which restricts to the identity on each factor. Hence $\ker\pi$ meets every conjugate of either factor trivially, and the Kurosh subgroup theorem implies that $\ker\pi$ is free. Using the rational Euler characteristic of virtually free groups and its multiplicativity under passage to finite-index subgroups, we obtain
\[
\chi(G)=\frac1{60}+\frac1{60}-1=-\frac{29}{30},
\qquad
\chi(\ker\pi)=60\chi(G)=-58,
\]
so \(\ker\pi\) has rank \(1-(-58)=59\). Thus $\ker\pi\cong F_{59}$, yielding the short exact sequence
\[
1 \longrightarrow F_{59} \longrightarrow G \xrightarrow{\pi} A_5 \longrightarrow 1.
\]
Thus, $G$ admits a free normal subgroup of rank $59$ and finite index while remaining perfect, showing that Lemma~\ref{Abel} fails if $\mathbb{Z}$ is replaced by a free group of rank at least two.
\end{remark}

\begin{lemma}\label{lem:strict-lower}
Let $\Gamma\leq\operatorname{Isom}(\mathbb H^{n+1})$ be a torsion-free,
non-elementary, convex cocompact Kleinian group. If
$\Gamma^{\mathrm{ab}}=0$, then
$\dim_{\mathcal H}\!\bigl(\Lambda(\Gamma)\bigr)>1$.
\end{lemma}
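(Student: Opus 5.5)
We argue by contradiction: assume $\dim_{\mathcal H}\Lambda(\Gamma)\le1$, and set $d:=\dim_{\mathrm{top}}\Lambda(\Gamma)$. Since topological dimension is bounded by Hausdorff dimension, $d\in\{0,1\}$. We then split into these two cases, following the outline given in the Introduction.

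\emph{Case $d=0$.} Because $\Gamma$ is convex cocompact and torsion-free, it acts freely and cocompactly on $(B^{n+1}\cup\Omega(\Gamma))$. Kapovich's formula \cite[Proposition~9.6]{MR2491697} then gives $\operatorname{cd}_{\mathbb Z}(\Gamma)=d+1=1$. The Stallings--Swan theorem, as recorded in \cite[Theorem~2.5]{MR2491697}, implies that $\Gamma$ is a free group. It is nontrivial, since it is non-elementary, so $\Gamma^{\mathrm{ab}}\cong\mathbb Z^r$ with $r\ge1$. This contradicts $\Gamma^{\mathrm{ab}}=0$. Note that non-elementary already rules out $r=1$, but any $r\ge1$ suffices here.

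\emph{Case $d=1$.} Here $\dim_{\mathcal H}\Lambda(\Gamma)\le1=\dim_{\mathrm{top}}\Lambda(\Gamma)$, so Kapovich's rigidity theorem \cite[Theorem~1.3]{MR2491697} applies to the convex cocompact group $\Gamma$. It says that $\Lambda(\Gamma)$ is a round circle $C\subset S^n$. Then $\Gamma$ preserves the totally geodesic hyperbolic plane $P\subset B^{n+1}$ spanned by $C$, and $\mathrm{CH}(\Gamma)=P$. Convex cocompactness gives that $P/\Gamma$ is compact. Restricting to $P$ defines a homomorphism $\Gamma\to\operatorname{Isom}(\mathbb H^2)$.

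Its kernel fixes $P$ pointwise. Such elements are elliptic, and torsion-freeness and discreteness force the kernel to be trivial: an isometry fixing a point with an infinite-order element would contradict discreteness. The image is discrete and acts cocompactly on $P\cong\mathbb H^2$, and it acts freely, since any point stabilizer in $P$ is finite and hence trivial. So $\Gamma\cong\pi_1(\Sigma)$ for a closed surface $\Sigma$ of negative Euler characteristic. Since $\Gamma$ is non-elementary, $\Sigma$ is not a torus or Klein bottle, and it has nonzero $H_1$ in every case. This again contradicts $\Gamma^{\mathrm{ab}}=0$. Non-orientable $\Sigma$ is allowed, and its abelianization is still nontrivial.

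The main obstacle is the careful verification of the hypotheses of Kapovich's results. His cohomological dimension formula needs the right setting: torsion-free convex cocompact groups with the limit set's \v{C}ech dimension. His rigidity theorem needs $\dim_{\mathcal H}=\dim_{\mathrm{top}}=1$ for a convex cocompact group. Passing from ``$\Lambda$ is a round circle'' to ``$\Gamma$ is a closed surface group'' also requires the restriction-to-$P$ argument above, including injectivity of the restriction map.

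Combining the two cases, $\dim_{\mathcal H}\Lambda(\Gamma)>1$.
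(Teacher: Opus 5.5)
Your proposal is correct and follows essentially the same route as the paper: you argue by contradiction with $d=\dim_{\mathrm{top}}\Lambda(\Gamma)\in\{0,1\}$, use Kapovich's formula $\operatorname{cd}_{\mathbb Z}(\Gamma)=d+1$ together with Stallings--Swan when $d=0$, and use Kapovich's rigidity theorem together with the injective restriction to the invariant plane $P$ when $d=1$, which yields a closed, possibly nonorientable, hyperbolic surface group. The one hypothesis check the paper makes explicit and you leave implicit is that a convex cocompact group is geometrically finite without parabolic elements, so the peripheral family in Kapovich's Proposition~9.6 is empty; the free action on $B^{n+1}\cup\Omega(\Gamma)$ that you cite is not the relevant input.
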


\begin{proof}
Suppose, for contradiction, that
$\dim_{\mathcal H}\!\bigl(\Lambda(\Gamma)\bigr)\leq 1,$ and put
$d:=\dim_{\mathrm{top}}\!\bigl(\Lambda(\Gamma)\bigr).$ The limit set is a
nonempty compact metric space, and topological dimension
does not exceed Hausdorff dimension. Hence $d\in\{0,1\}$.

Because $\Gamma$ is convex cocompact, it is geometrically finite and has no
parabolic elements. Thus the peripheral family $\Pi'$ in
\cite[Proposition~9.6]{MR2491697} is empty, and that proposition gives
$\operatorname{cd}_{\mathbb Z}(\Gamma)=d+1.$ If $d=0$, then
$\operatorname{cd}_{\mathbb Z}(\Gamma)=1$. We use the
following form of the Stallings--Swan theorem, recorded by Kapovich
\cite[Theorem~2.5]{MR2491697}: if $G$ is a torsion-free group satisfying
$\operatorname{cd}_R(G)\leq1$, then $G$ is free. Applying this theorem with
$R=\mathbb Z$ and $G=\Gamma$, we conclude that $\Gamma$ is free. Since
$\Gamma$ is non-elementary, it is a free group of rank at least two and
therefore has nontrivial abelianization, contradicting
$\Gamma^{\mathrm{ab}}=0$.

It remains to consider $d=1$. Our assumption then gives
$\dim_{\mathrm{top}}\!\bigl(\Lambda(\Gamma)\bigr)
 =
\dim_{\mathcal H}\!\bigl(\Lambda(\Gamma)\bigr)=1.$ By
\cite[Theorem~1.3]{MR2491697}, the limit set is a round circle $C$ and
$\Gamma$ preserves the totally geodesic plane
$P\cong\mathbb H^2\subset B^{n+1}$ bounded by $C$. Consider the restriction
homomorphism $\rho\colon\Gamma\to \operatorname{Isom}(P).$ Its kernel
is a discrete subgroup of the compact pointwise stabilizer of
$P$, hence is finite and therefore trivial because $\Gamma$ is torsion-free.

Set $\Gamma_P:=\rho(\Gamma)$. Since $\rho$ is injective, the proper
discontinuity of the action of $\Gamma$ on $\mathbb H^{n+1}$ implies that
$\Gamma_P$ acts properly discontinuously on $P$. Moreover,
$\mathrm{CH}(\Gamma)=P,$ so convex cocompactness implies that
$P/\Gamma_P$ is compact. Since $\Gamma_P$
is torsion-free, $\Sigma:=P/\Gamma_P$ is a closed hyperbolic surface, possibly
nonorientable. In either case $H_1(\Sigma;\mathbb Z)\neq0$. Therefore
$\Gamma^{\mathrm{ab}}\cong\Gamma_P^{\mathrm{ab}}
\cong H_1(\Sigma;\mathbb Z)\neq0$,
again contradicting $\Gamma^{\mathrm{ab}}=0$.

Both possibilities are impossible. Therefore
$\dim_{\mathcal H}\!\bigl(\Lambda(\Gamma)\bigr)>1$.
\end{proof}

\begin{remark}\label{rem:torsion-caveat}
Torsion-freeness is used essentially in Lemma~\ref{lem:strict-lower}: it is
needed both for the cohomological-dimension-one conclusion and to eliminate
the finite kernel of the action on the invariant hyperbolic plane. Passing to
a finite-index torsion-free subgroup by Selberg's lemma does not remove this
issue, because trivial abelianization need not pass to finite-index subgroups.
\end{remark}

\begin{proof}[Proof of Theorem~\ref{Kleinian}]
By Proposition~\ref{prop:standing}, $\Gamma$ is Gromov-hyperbolic and its
Kleinian action is convex cocompact, hence geometrically finite.

Suppose that $\pi_i(M^n)=0$ for all
$i\geq\lfloor n/2\rfloor+1$. We first verify the required low-degree
vanishing. If $\Gamma$ is elementary, then, since $\Gamma$ is infinite,
$\Lambda(\Gamma)$ is nonempty and hence consists of one or two points.
Accordingly, $\Omega(\Gamma)$ is diffeomorphic to either
$\mathbb R^n$ or $S^{n-1}\times\mathbb R$, and hence
\[
\pi_i(M)=\pi_i(\Omega(\Gamma))=0,
\qquad 2\leq i\leq n-2.
\]
If $\Gamma$ is non-elementary, Proposition~\ref{prop:standing} gives
$\dim_{\mathcal H}\Lambda(\Gamma)<\frac{n-2}{2}$.
For every $i\leq\lfloor n/2\rfloor$, this implies
$(i+1)+\dim_{\mathcal H}\Lambda(\Gamma)<n$.
The relative general-position argument in the proof of Schoen and
Yau~\cite[Theorem~4.6(ii)]{zbMATH04075988} now applies: it extends a map
$S^i\to\Omega(\Gamma)$ to $B^{i+1}\to S^n$ and deforms the extension,
relative to its boundary, away from $\Lambda(\Gamma)$ under the strict
inequality above. Therefore,
\[
\pi_i(M)=\pi_i(\Omega(\Gamma))=0,
\qquad 2\leq i\leq\left\lfloor\frac n2\right\rfloor.
\]
Thus, under the supposition above, all higher homotopy groups of $M$ vanish.
The simply connected CW complex $\widetilde M$ is therefore weakly
contractible and hence contractible by Whitehead's theorem. Consequently,
$M$ is aspherical and is a finite $n$-dimensional model for $B\Gamma$.
In particular, $\Gamma$ has finite integral cohomological dimension and is
torsion-free. If $C_*(\widetilde M)$ denotes the cellular chain complex, then
the free cocompact action and the contractibility of $\widetilde M$ give the
cochain identification
\[
\operatorname{Hom}_{\mathbb Z\Gamma}
 \bigl(C_*(\widetilde M),\mathbb Z\Gamma\bigr)
\cong C_c^*(\widetilde M;\mathbb Z).
\]
Consequently,
$H^n(\Gamma;\mathbb Z\Gamma)\cong H_c^n(\widetilde M;\mathbb Z)$,
and compact-support Poincar\'e duality on the oriented manifold
$\widetilde M$ gives
$H_c^n(\widetilde M;\mathbb Z)\cong H_0(\widetilde M;\mathbb Z)\cong\mathbb Z$.
Since $M$ is an $n$-dimensional model for $B\Gamma$, it follows that
$\operatorname{cd}_{\mathbb Z}(\Gamma)=n$.

If $\Gamma$ were elementary, then it would be virtually abelian; being infinite
and Gromov-hyperbolic, it would therefore be virtually cyclic. Since it is
torsion-free, the classification of infinite virtually cyclic groups implies
that $\Gamma\cong\mathbb Z$, which would give
$\operatorname{cd}_{\mathbb Z}(\Gamma)=1<n$. Thus $\Gamma$ is
non-elementary. Since $\Gamma$ is convex cocompact, it is geometrically finite
without parabolic elements, and Kapovich's formula
\cite[Proposition~9.6]{MR2491697}, together with
Proposition~\ref{prop:standing}, yields
\[
n=\operatorname{cd}_{\mathbb Z}(\Gamma)
 =\dim_{\mathrm{top}}\Lambda(\Gamma)+1
 \leq\dim_{\mathcal H}\Lambda(\Gamma)+1
 <\frac n2<n,
\]
a contradiction. Therefore there exists an integer
$i\geq\lfloor n/2\rfloor+1$ such that $\pi_i(M^n)\neq0$.

For the remaining conclusions, assume that
$H_1(M^n;\mathbb Z)=0$.

Suppose, for contradiction, that the infinite Kleinian group $\Gamma$ is
elementary. Then $\Gamma$ is virtually abelian. Since abelian subgroups of
Gromov-hyperbolic groups are finite or virtually cyclic, $\Gamma$ must be
virtually cyclic. Taking the core of a finite-index infinite cyclic subgroup
gives a normal infinite cyclic subgroup of finite index, and hence a short
exact sequence
\[
1 \longrightarrow \mathbb{Z} \longrightarrow \Gamma \longrightarrow F \longrightarrow 1,
\]
where $F$ is finite. By Lemma~\ref{Abel}, such a group has nontrivial
abelianization, contradicting
$H_1(M^n;\mathbb Z)\cong\Gamma^{\mathrm{ab}}=0$. Hence $\Gamma$ is
non-elementary.

Suppose now that $\Gamma$ is torsion-free. Since the holonomy representation
is faithful, $\Gamma^{\mathrm{ab}}\cong H_1(M^n;\mathbb Z)=0.$ Thus
$\Gamma$ is perfect, and Lemma~\ref{lem:strict-lower} gives
$\dim_{\mathcal H}\!\bigl(\Lambda(\Gamma)\bigr)>1.$

For the upper bound, Proposition~\ref{prop:standing} gives
$\dim_{\mathcal H}\!\bigl(\Lambda(\Gamma)\bigr)<(n-2)/2$. Combining this
with the strict lower bound, we obtain
$1<\dim_{\mathcal H}\!\bigl(\Lambda(\Gamma)\bigr)<\frac{n-2}{2}$.
\end{proof}

\begin{remark}
Lemma~\ref{lem:strict-lower} excludes the equality
$\dim_{\mathcal H}\Lambda(\Gamma)=1$ in every dimension under the additional
hypotheses $H_1(M;\mathbb Z)=0$ and $\Gamma$ torsion-free in
Theorem~\ref{Kleinian}. In particular, no congruence condition on $n$ is
needed for the strict lower bound.
\end{remark}

\begin{remark}
More generally, let $\Gamma$ be a non-elementary geometrically finite
Kleinian group. If
$\dim_{\mathrm{top}}\Lambda(\Gamma)=\dim_{\mathcal H}\Lambda(\Gamma)=k$,
then
Kapovich's rigidity theorem \cite[Theorem~1.3]{MR2491697} implies that
$\Lambda(\Gamma)$ is a round $k$-sphere. Under the additional hypotheses
$H_1(M;\mathbb Z)=0$ and $\Gamma$ torsion-free in
Theorem~\ref{Kleinian}, the case $k=1$ is impossible by
Lemma~\ref{lem:strict-lower}.
\end{remark}

\begin{remark}
The proof of Theorem~\ref{Kleinian} also yields further information about
the Hausdorff dimension of \(\Lambda(\Gamma)\). For example, suppose that
\(\Gamma\) is elementary. Since \(\Gamma\) is infinite,
\(\#\Lambda(\Gamma)\in\{1,2\}\). If \(\#\Lambda(\Gamma)=1\), then
\(\Omega\cong\mathbb R^n\), so \(M\) is aspherical, contradicting
Corollary~\ref{LCF aspherical}. Hence \(\#\Lambda(\Gamma)=2\), and therefore
\(\dim_{\mathcal H}\Lambda(\Gamma)=0\). Moreover, \(\Gamma\) contains an
infinite cyclic subgroup of finite index generated by a loxodromic element,
\(\Omega\) is conformally equivalent to the round cylinder
\(S^{n-1}\times\mathbb R\), and \(M\) is finitely covered by
\(S^{n-1}\times S^1\).
\end{remark}

\begin{corollary}\label{nonnegative}
Let $(M^n,g)$ be a closed, connected, oriented, locally conformally flat
Riemannian manifold of dimension $n\geq7$ with nonnegative scalar curvature.
Suppose $(M^n,g)$ is not finitely covered by a torus,
\(H_1(M^n;\mathbb Z)=0\), and $\Gamma:=\pi_1(M)$ is infinite. Then $\Gamma$
is non-elementary. Furthermore, if \(\Gamma\) is torsion-free, then
\[
1<\dim_{\mathcal H}\Lambda(\Gamma)\leq\frac{n-2}{2}.
\]
\end{corollary}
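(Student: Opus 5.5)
The plan is to reduce to the positive-scalar-curvature machinery already assembled, replacing Schoen--Yau's injectivity theorem for PSC by their Liouville theorem \cite{MR4836036} for nonnegative scalar curvature. That theorem should give, for $n\geq 7$ and $\mathrm{Sc}_g\geq 0$, that the developing map of $(\widetilde M,\widetilde g)$ is injective. It should also give that the complement of the developing image $\Omega$ has Hausdorff dimension at most $(n-2)/2$. Arguing exactly as in the paragraph preceding Proposition~\ref{prop:standing}, we will identify $\Omega$ with $\Omega(\Gamma)$, obtain $M\cong\Omega/\Gamma$ with $\Gamma\cong\pi_1(M)$ a Kleinian group, and get $\dim_{\mathcal H}\Lambda(\Gamma)\leq (n-2)/2$. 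The non-separation theorem still applies, since $(n-2)/2<n-1$, so the open-closed argument goes through verbatim. The restriction $n\geq7$ is the range in which we expect the cited Liouville theorem to supply these conclusions.

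Next we need Gromov-hyperbolicity and convex cocompactness of $\Gamma$. Izeki's theorems \cite{zbMATH01785298} apply to closed LCF manifolds with nonnegative scalar curvature that are not finitely covered by a torus; this is exactly why that hypothesis appears in the statement. We then obtain that $\Gamma$ is Gromov-hyperbolic and convex cocompact. This is the step I expect to be the main obstacle. One must check that Izeki's hypotheses cover the nonnegative case. If they require injectivity of the developing map or the dimension bound as input, these are supplied by the previous step. Otherwise, a fallback is the following: if $\mathrm{Sc}_g\not\equiv 0$, a standard conformal deformation (first Yamabe eigenfunction) produces a PSC LCF metric and Proposition~\ref{prop:standing} applies directly. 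So only the scalar-flat case needs Izeki's nonnegative version.

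Non-elementarity then follows exactly as in the proof of Theorem~\ref{Kleinian}. If $\Gamma$ were elementary, it would be virtually abelian, hence, being infinite and hyperbolic, virtually cyclic. Taking the normal core of a finite-index infinite cyclic subgroup gives $1\to\mathbb Z\to\Gamma\to F\to1$ with $F$ finite. Lemma~\ref{Abel} then gives $\Gamma^{\mathrm{ab}}\neq0$, contradicting $\Gamma^{\mathrm{ab}}\cong H_1(M;\mathbb Z)=0$.

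Finally, assume $\Gamma$ is torsion-free. Then $\Gamma$ is torsion-free, non-elementary, convex cocompact and perfect, so Lemma~\ref{lem:strict-lower} gives $\dim_{\mathcal H}\Lambda(\Gamma)>1$. Combined with the upper bound $\dim_{\mathcal H}\Lambda(\Gamma)\leq(n-2)/2$ from the first step, this yields the claim. The upper bound is only non-strict because Nayatani's strict criterion uses positivity of the scalar curvature. The strict lower bound is compatible with the upper bound since $(n-2)/2\geq 5/2>1$ when $n\geq7$.
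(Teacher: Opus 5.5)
Your skeleton matches the paper's. Izeki's results \cite{zbMATH01785298} resolve exactly the obstacle you flag: Theorem~2(2) gives injectivity of the developing map and convex cocompactness when $n\geq7$, $\mathrm{Sc}_g\geq0$ and $M$ has no finite torus cover, and Theorem~3 gives Gromov-hyperbolicity. After that, Lemma~\ref{Abel} and Lemma~\ref{lem:strict-lower} are applied just as you apply them. Note that the restriction $n\geq7$ comes from Izeki's Theorem~2(2), not from Theorem~\ref{Liouville theorem}, which holds for all $n\geq3$; the remark after the corollary uses this to lower the dimension. Your fallback for $\mathrm{Sc}_g\not\equiv0$ is the argument of the remark following the corollary. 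It does not remove the need for Izeki's theorem in the scalar-flat case, where in fact $\dim_{\mathcal H}\Lambda(\Gamma)=\frac{n-2}{2}$.

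The step that fails as written is the source of your upper bound. Theorem~\ref{Liouville theorem} gives injectivity and zero Newtonian $2$-capacity of $\partial\Phi(\widetilde M)$. The capacity--dimension implication turns this only into $\dim_{\mathcal H}\leq n-2$. That suffices for the non-separation argument identifying the developing image with $\Omega(\Gamma)$, but it does not give $\dim_{\mathcal H}\Lambda(\Gamma)\leq\frac{n-2}{2}$, on which your final paragraph relies. You need an estimate that actually uses the scalar curvature, and there are two ways to get one. The paper uses Izeki's Theorem~4, which identifies the Schoen--Yau invariant with the critical exponent and gives $\delta(\Gamma)\leq\frac{n-2}{2}$. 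Sullivan's theorem \cite{zbMATH03903608} then converts this into $\dim_{\mathcal H}\Lambda(\Gamma)=\delta(\Gamma)\leq\frac{n-2}{2}$, which is legitimate once you know $\Gamma$ is non-elementary and convex cocompact. Alternatively, apply Theorem~\ref{Ma} (Ma--Qing) on $(S^n,g_{st})$ with $S=S^n\setminus\Omega$ and the complete transported metric on $\Omega$, exactly as in the proof of Theorem~\ref{contractible}; this yields the $\frac{n-2}{2}$ bound directly from $\mathrm{Sc}\geq0$. With either replacement your argument is complete.
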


\begin{proof}
Since $n\geq7$, $\mathrm{Sc}_g\geq0$, and $M^n$ is not finitely covered
by a torus, Izeki's theorem \cite[Theorem~2(2)]{zbMATH01785298} shows that the
developing map is injective and that the holonomy image, identified with
$\Gamma=\pi_1(M)$, is a convex cocompact Kleinian group. Izeki's
Theorem~4 identifies the developing image with $\Omega(\Gamma)$, so
$M\cong\Omega(\Gamma)/\Gamma$. Izeki's Theorem~3 shows that $\Gamma$ is
Gromov-hyperbolic.
If $\Gamma$ were elementary, then it would be virtually abelian. Since it is
infinite and Gromov-hyperbolic, it would therefore be virtually cyclic.
Taking the core of a finite-index infinite cyclic subgroup yields a normal
infinite cyclic subgroup of finite index and hence a short exact sequence
\[
1\longrightarrow\mathbb Z\longrightarrow\Gamma\longrightarrow F
\longrightarrow1,
\]
where $F$ is finite. Lemma~\ref{Abel} would then imply
$\Gamma^{\mathrm{ab}}\neq0$, contradicting
$H_1(M^n;\mathbb Z)=0$. Hence $\Gamma$ is non-elementary.

Assume now that $\Gamma$ is torsion-free. Since
$\Gamma^{\mathrm{ab}}=0$, Lemma~\ref{lem:strict-lower} gives
$\dim_{\mathcal H}\!\bigl(\Lambda(\Gamma)\bigr)>1.$ Izeki's result
\cite[Theorem~4]{zbMATH01785298} identifies the
Schoen--Yau invariant with the critical exponent and gives
\(\delta(\Gamma)\leq(n-2)/2\). Since \(\Gamma\) is non-elementary and
convex cocompact, Sullivan's theorem~\cite{zbMATH03903608} gives
$\dim_{\mathcal H}\!\bigl(\Lambda(\Gamma)\bigr)=\delta(\Gamma)\leq\frac{n-2}{2}$,
which proves the assertion.
\end{proof}

\begin{remark}
In Corollary~\ref{nonnegative} the two cases separate. Let
$L_g=-\frac{4(n-1)}{n-2}\Delta_g+\mathrm{Sc}_g$ be the conformal
Laplacian and $\lambda_1(L_g)$ its first eigenvalue, whose sign is a conformal
invariant. If $\mathrm{Sc}_g\not\equiv0$, then
$\lambda_1(L_g)>0$: indeed, the Rayleigh quotient gives
$\lambda_1(L_g)\geq0$, while equality would let a positive first eigenfunction
$u$ satisfy $L_gu=0$. In that case,
\[
0=\int_MuL_gu\,dV_g
=\frac{4(n-1)}{n-2}\int_M|\nabla u|^2\,dV_g
 +\int_M\mathrm{Sc}_g u^2\,dV_g.
\]
Both terms are nonnegative, so $u$ is constant and
$\mathrm{Sc}_g\equiv0$, a contradiction. Thus the conformal
class of $g$ contains a metric of positive scalar curvature with the same
holonomy group. Proposition~\ref{prop:standing} therefore gives
$\dim_{\mathcal H}\Lambda(\Gamma)<\frac{n-2}{2}$.
If $\mathrm{Sc}_g\equiv0$, then $\lambda_1(L_g)=0$. Choose a
torsion-free finite-index subgroup $\Gamma_0\leq\Gamma$ by Selberg's lemma and
pull $g$ back to $M_0:=\Omega(\Gamma_0)/\Gamma_0$.
Here $\Lambda(\Gamma_0)=\Lambda(\Gamma)$ and
$\Omega(\Gamma_0)=\Omega(\Gamma)$ by Proposition~\ref{virtual}; moreover,
$\Gamma_0$ is non-elementary and convex cocompact by
Propositions~\ref{virtual} and~\ref{geo finite}. The pullback metric is
scalar-flat and lies in the canonical conformal class on
$\Omega(\Gamma_0)/\Gamma_0$. The finite-coset argument in the proof of
Proposition~\ref{prop:standing} gives
$\delta(\Gamma_0)=\delta(\Gamma)$. If
$\delta(\Gamma)<(n-2)/2$, Nayatani's criterion
\cite[Corollary~3.4]{zbMATH01028179} would therefore imply that this conformal
class contains a metric of positive scalar curvature, contradicting the
conformal invariance of the sign of $\lambda_1$. Since
$\delta(\Gamma)\leq(n-2)/2$ by \cite[Theorem~4]{zbMATH01785298}, it follows
that
$\delta(\Gamma)=\dim_{\mathcal H}\Lambda(\Gamma)=\frac{n-2}{2}$.
No torsion-freeness assumption on $\Gamma$ is needed for this equality.
\end{remark}

\begin{remark}
Suppose, in the setting of Corollary~\ref{nonnegative} but with the
torus-cover exclusion removed, that $M$ is finitely covered by a torus $T^n$.
Then $M$ is flat. Indeed, scalar-curvature rigidity for the
torus~\cite{MR720933} implies that the pullback of $g$ to $T^n$ is flat, and
hence $g$ is flat. After the developing map is postcomposed with a M\"obius
transformation, the classical Euclidean developing description identifies
$\widetilde M$ with $\mathbb R^n=S^n\setminus\{\infty\}$ and $\Gamma$ with
its Bieberbach deck group. It has a finite-index translation subgroup
$L\cong\mathbb Z^n$. In the isometric upper-half-space model of
$\mathbb H^{n+1}$, with $o=(0,1)$ and $T_v(x,t)=(x+v,t)$, one has
\[
d_H(o,T_vo)=2\operatorname{arsinh}\!\left(\frac{|v|}{2}\right)
=2\log|v|+O(1)
\qquad\text{as }|v|\to\infty.
\]
Consequently, the Poincar\'e series of $L$ has the same convergence behavior
as $\sum_{v\in\mathbb Z^n}(1+|v|)^{-2s}$,
whose critical exponent is $n/2$. Finite-index invariance therefore gives
$\delta(\Gamma)=n/2$. The orbit of the translation lattice $L$ accumulates
only at $\infty$, so $\Lambda(L)=\{\infty\}$. Proposition~\ref{virtual}
then gives
$\Lambda(\Gamma)=\{\infty\}$ and $\dim_{\mathcal H}\Lambda(\Gamma)=0$.
Thus the equality between the critical exponent and the Hausdorff dimension
of the limit set does not extend to elementary groups.
\end{remark}

\begin{remark}
By Bieberbach's theorem, every closed flat manifold is finitely covered by a
torus. Thus the hypothesis that \(M\) is not finitely covered by a torus in
Corollary~\ref{nonnegative} excludes the flat case described above.
\end{remark}

On a closed manifold, positive scalar curvature implies
$\mathrm{Sc}_g\geq c>0$ for some constant $c$. The results of Schoen and
Yau~\cite{zbMATH04075988} used above hold in dimensions at least four
under this assumption. In that paper, they also remark that their result
\cite[Proposition 4.4$^{\prime}$]{zbMATH04075988} would remain valid under the
assumption of nonnegative scalar curvature, provided the positive energy
theorem can be extended to the case of complete manifolds---namely, when
\(M\) has one asymptotically flat end and other ends that are merely complete.
Since every orientable \(3\)-manifold is spin, Witten's proof of the positive
mass theorem applies to the orientable \(3\)-dimensional case as well.
Consequently, Schoen and Yau's result~\cite{zbMATH04075988} also holds for
orientable \(3\)-manifolds; see the discussion in
\cite[Appendix~A, p.~805]{zbMATH05042496} for further details.

Recently, Lesourd, Unger, and Yau~\cite[Theorem~1.2]{MR4836036} use minimal hypersurfaces to prove that there does not exist a complete Riemannian metric with PSC on \( T^3 \# X \), where \( X \) is an arbitrary (possibly noncompact) manifold.  
Furthermore, they show that the nonexistence of a complete smooth metric with PSC on \( T^n \# X \) implies the following Liouville theorem~\cite[Theorem~1.7]{MR4836036}, see also~\cite[Corollary~4]{zbMATH07817078}:

\begin{theorem}[Liouville theorem]\label{Liouville theorem}
Let \((M^n,g)\), \(n\geq3\), be a complete, locally conformally flat
manifold with nonnegative scalar curvature. If
\(\Phi\colon M^n\to S^n\) is a conformal map, then \(\Phi\) is injective and
the boundary \(\partial\Phi(M)\) has zero Newtonian capacity (i.e.,
$2$-capacity).
\end{theorem}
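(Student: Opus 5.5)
The plan is to derive the theorem from the nonexistence result of Lesourd, Unger, and Yau, namely that no complete metric of positive scalar curvature exists on \(T^n\# X\). The route is the Schoen--Yau mass argument, with the positive mass theorem replaced by this nonexistence statement. Both conclusions are proved by contradiction.

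\emph{Step 1: Green's function and the asymptotically flat end.} Fix \(p\in M\) and let \(L_g=-\frac{4(n-1)}{n-2}\Delta_g+\mathrm{Sc}_g\). Since \(\mathrm{Sc}_g\ge0\) and \(g\) is complete, exhaust \(M\) by compact domains with Dirichlet data, use the maximum principle, and take a monotone limit. This gives the minimal positive Green's function \(G_p\) of \(L_g\) with pole at \(p\). On \(M\setminus\{p\}\) set \(\hat g:=G_p^{4/(n-2)}g\). This metric is scalar flat and complete away from \(p\). Because \(g\) is locally conformally flat near \(p\), inverting a conformally flat chart centred at \(p\) makes \(\hat g\) asymptotically flat near \(p\). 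It is in fact conformally flat there, with conformal factor harmonic in the Euclidean sense, say \(1+A|y|^{2-n}+O(|y|^{1-n})\). Its ADM mass is a positive multiple of \(A\).

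\emph{Step 2: Comparison with the model, showing \(A<0\) if the theorem fails.} Put \(x_0=\Phi(p)\) and take stereographic coordinates from \(-x_0\), so that \(\Phi(p)\) corresponds to the origin. Pull back the Euclidean Green's function of the round conformal Laplacian at \(x_0\) via \(\Phi\). This gives a positive \(L_g\)-harmonic function \(H\) on \(M\setminus\Phi^{-1}(x_0)\). If \(\Phi\) is injective, \(H\) has a single pole at \(p\) with the same normalization as \(G_p\), and the conformal metric \(H^{4/(n-2)}g\) is exactly Euclidean, so it has zero mass. Minimality of \(G_p\) gives \(G_p\le H\), and the constant term of \(H-G_p\) at \(p\) equals \(-A\) up to a positive factor.

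\begin{itemize}
\item If \(\Phi\) is not injective, \(H\) has additional poles at the other preimages of \(x_0\) (choose \(p\) to be one of them). Then \(H-G_p\) is a positive \(L_g\)-supersolution, in fact a solution with extra poles, and it is not identically zero.
\item If \(\partial\Phi(M)\) has positive Newtonian capacity, the harmonic measure, or equivalently the equilibrium potential, of that set seen from \(\Phi(M)\) is nontrivial. Transported back, \(H-G_p\) is a nonzero positive \(L_g\)-harmonic function. The point is that the minimal Green's function does not "see" the missing boundary, while \(H\) does.
\end{itemize}

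In both cases the strong maximum principle gives \(H-G_p>0\) near \(p\), hence \(A<0\): the end has negative mass.

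\emph{Step 3: Contradiction.} An asymptotically flat, scalar-flat end of negative mass can be modified, using Lohkamp's compactification in the form used by Lesourd--Unger--Yau, to produce a metric of positive scalar curvature. One first makes the metric exactly Euclidean outside a large ball while keeping \(\mathrm{Sc}\ge0\) and \(\mathrm{Sc}>0\) somewhere; then one quotients the Euclidean region by a lattice. The result is a complete PSC metric on \(T^n\# X\), where \(X\) is \(M\) minus a point, with the remaining ends merely complete. This contradicts Lesourd--Unger--Yau. The same route shows directly that, in the injective case, zero capacity is forced.

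The main obstacle is Step 2 in the positive-capacity case. There one must show carefully that positive capacity of \(\partial\Phi(M)\) forces \(G_p\neq H\), i.e.\ that \(H-G_p\) is not identically zero, and one must track the constant term of the expansion at \(p\). The plan is to use the characterization of zero capacity by removability of bounded harmonic functions, equivalently the Brownian-motion hitting probability being zero. The difference \(H-G_p\) is identified with the harmonic extension of \(H\)'s boundary values on \(\partial\Phi(M)\), and this vanishes iff the capacity is zero.
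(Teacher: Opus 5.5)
Your route is the one the paper relies on. The paper does not prove this theorem. It cites Lesourd--Unger--Yau, who deduce it from the nonexistence of complete PSC metrics on \(T^n\#X\). They do this via the Schoen--Yau comparison of the minimal Green's function \(G_p\) with the pulled-back spherical Green's function \(H\), followed by Lohkamp's compactification. Your Step~2 is a sound sketch of that comparison.

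However, Step~1 contains a false claim on which Step~3 depends: \(\hat g=G_p^{4/(n-2)}g\) is in general \emph{not} complete away from \(p\). The minimal Green's function is a monotone limit of Dirichlet Green's functions and need not stay bounded below along the other ends. For \(M=\mathbb R^n\) one gets \(\hat g=|x|^{-4}g_E\), the inverted flat metric, which is incomplete at the end of \(M\). More generally, in the equality case \(G_p=H\), the metric \(\hat g\) is the Euclidean metric on \(\mathbb R^n\) minus the stereographic image (from \(\Phi(p)\)) of \(S^n\setminus\Phi(M)\). Step~3 invokes the nonexistence of \emph{complete} PSC metrics on \(T^n\#X\), so the ends other than the one at \(p\) must be complete, and the contradiction cannot be drawn from \(\hat g\).

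The standard repair is to use \((G_p+\delta)^{4/(n-2)}g\) with a small constant \(\delta>0\):
\begin{itemize}
\item \(L_g(G_p+\delta)=\delta\,\mathrm{Sc}_g\ge0\) keeps the scalar curvature nonnegative.
\item The bound \((G_p+\delta)^{4/(n-2)}g\ge\delta^{4/(n-2)}g\) makes every end other than the one at \(p\) complete.
\item In a chart \(g=\varphi^{4/(n-2)}g_E\) with \(\varphi(p)=1\) and \(\varphi G_p=|y|^{2-n}+A+O(|y|)\), the constant term becomes \(A+\delta\). So the mass stays negative for \(0<\delta<-A\).
\end{itemize}

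Three further points need attention.

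First, Lohkamp's construction yields \(\mathrm{Sc}\ge0\) with \(\mathrm{Sc}>0\) only on an open set, and the ends of \(M\) carry only \(\mathrm{Sc}\ge0\). So you do not yet have a PSC metric. Either quote a version of the \(T^n\#X\) obstruction that allows this, or add a conformal step. On \(N=T^n\#M\) (this, not \(T^n\#(M\setminus\{\mathrm{pt}\})\), is what the compactification produces), the conformal Laplacian \(L\) is subcritical, because its quadratic form dominates \(\int_N\mathrm{Sc}\,\psi^2\) with \(\mathrm{Sc}\not\equiv0\). Take a smooth \(f>0\) decaying fast enough that \(\int_N G_N(\cdot,y)f(y)\,dV(y)\) is finite, and set \(v:=1+\varepsilon\int_N G_N(\cdot,y)f(y)\,dV(y)\ge1\). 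Then \(Lv=\mathrm{Sc}+\varepsilon f>0\), and \(v^{4/(n-2)}\) times the metric is complete with positive scalar curvature.

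Second, existence of \(G_p\) does not follow from \(\mathrm{Sc}_g\ge0\) and completeness alone: if \(\mathrm{Sc}_g\equiv0\) and \(M\) is parabolic, your monotone limit diverges. It does follow from \(\Phi\). Write \(\Phi^*g_{st}=w^{4/(n-2)}g\). Conformal covariance gives \(\int_M\psi L_g\psi\,dV_g\ge n(n-1)\int_M w^{4/(n-2)}\psi^2\,dV_g\) for \(\psi\in C_c^\infty(M)\). Hence \(L_g\) is subcritical and the minimal positive Green's function exists.

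Third, the paper credits Lesourd--Unger--Yau with the \(T^n\#X\) obstruction only for \(n=3\). For \(n\ge4\) you should state which version of that obstruction you are invoking.
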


In another paper, Lesourd, Unger, and Yau~\cite[Theorem~1.2]{MR4773185} use $\nu$-buddles to prove the above Schoen–Yau conjecture and obtain a new proof of Liouville theorem, which is completely in the spirit of the approach outlined in~\cite{zbMATH04075988}.

\begin{remark}
Corollary~\ref{nonnegative} also holds for \( n \ge 5 \), because
the Liouville theorem implies that Izeki’s theorem~\cite[Theorem~3]{zbMATH01785298} 
holds under the assumptions \( \dim M \ge 3 \) and nonnegative scalar curvature.  
Namely, if \( (M^n, g') \)  \( (n\geq 3) \) is a compact, connected, locally conformally 
flat manifold that is not covered by a torus, and if there exists a metric 
\( g \in [g'] \) with nonnegative scalar curvature, then \( \pi_1(M) \) is a hyperbolic group.  Indeed, the fact that \( \partial \Phi(\widetilde M) \) has zero Newtonian capacity implies that \( \Phi(\widetilde M) \) is dense in \( S^n \), and that 
$\Omega(\Gamma) = \Phi(\widetilde M),$ where \( \Gamma \) is the image of the holonomy representation.  Thus $(M, g') = \Omega(\Gamma)/\Gamma.$
Since the scalar curvature of $g$ in \( g' \) is nonnegative, one has
$\delta(\Gamma) \le \frac{n-2}{2}.$ Hence \( \Gamma \) is a convex cocompact Kleinian group.  Moreover, every convex cocompact Kleinian group is hyperbolic~\cite[Proposition~7]{zbMATH01785298} \footnote{I thank H.~Izeki for explaining this argument to me.}.  
\end{remark}

\subsection{The Macroscopic Dimension of the Universal Riemannian Cover}

The notion of macroscopic dimension was introduced by Gromov to study manifolds with positive scalar curvature~\cite{zbMATH00867495}. He conjectured that if $(M^n,g)$ is a closed Riemannian manifold with positive scalar curvature, then its universal cover satisfies
$\dim_{mc}(\widetilde M)\le n-2.$
Here, $\dim_{mc}(X)\le k$ means that there exist a $k$-dimensional simplicial complex $K$ and a continuous map $f:X\to K$ such that
$\operatorname{diam}\bigl(f^{-1}(y)\bigr)\le b$
for all $y\in K$ and for some constant $b>0$.  We call such maps \emph{uniformly cobounded}. The macroscopic dimension of $X$, denoted by $\dim_{mc}(X)$, is the least integer $k$ such that there exists a continuous uniformly cobounded map $f\colon X\to K$, where $K$ is a $k$-dimensional simplicial complex.

Gromov's conjecture is proved for $3$-manifolds by Bolotov~\cite{zbMATH01985808}. In higher dimensions, some partial results are known; see, for example,~\cite{zbMATH06371564,zbMATH08005926}. As an application of the results established in Subsection~\ref{Hausdorff}, we verify Gromov's conjecture for LCF manifolds.

\begin{theorem}[Macroscopic dimension]\label{macdim}
Let $(M^n,g)$, $n\geq5$, be a closed, connected, oriented, locally conformally flat Riemannian manifold with positive scalar curvature and infinite fundamental group. Then its universal Riemannian cover $(\widetilde M,\widetilde g)$ has macroscopic dimension at most $\left\lfloor\frac{n-1}{2}\right\rfloor$, that is,
\begin{equation*}
\dim_{mc}(\widetilde M,\widetilde g)
\le
\left\lfloor\frac{n-1}{2}\right\rfloor
\le
n-2.
\end{equation*}
\end{theorem}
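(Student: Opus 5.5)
The plan is to bound the asymptotic dimension of $\Gamma\cong\pi_1(M)$ and transfer it to $\widetilde M$. By Proposition~\ref{prop:standing}, $\Gamma$ is Gromov-hyperbolic and convex cocompact. By the \v{S}varc--Milnor lemma, $\widetilde M$ with $\widetilde g$ is quasi-isometric to $\Gamma$, since $\Gamma$ acts cocompactly and properly by isometries and $M$ is closed. Asymptotic dimension is a quasi-isometry invariant, and for a uniformly contractible or at least bounded-geometry proper metric space one has $\dim_{mc}\le\operatorname{asdim}$. Concretely, an anti-\v{C}ech cover with Lebesgue number exceeding $1$ and multiplicity at most $k+1$ gives, through a partition of unity, a uniformly cobounded map to the nerve, which is a $k$-complex. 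So it suffices to show
\[
\operatorname{asdim}\Gamma\le\left\lfloor\tfrac{n-1}{2}\right\rfloor .
\]

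The argument splits into two cases. If $\Gamma$ is elementary, it is infinite virtually cyclic, as in the proof of Theorem~\ref{Kleinian}, so $\operatorname{asdim}\Gamma=1\le\lfloor(n-1)/2\rfloor$ since $n\ge5$.

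If $\Gamma$ is non-elementary, the action on the convex hull $\mathrm{CH}(\Gamma)$ is geometric. Hence the Gromov boundary $\partial\Gamma$ is homeomorphic to $\Lambda(\Gamma)$ via the orbit map, and in fact quasi-symmetrically so with respect to a visual metric. For hyperbolic groups one has the Buyalo--Lebedeva formula $\operatorname{asdim}\Gamma=\dim_{\mathrm{top}}\partial\Gamma+1$. Then
\[
\dim_{\mathrm{top}}\Lambda(\Gamma)\le\dim_{\mathcal H}\Lambda(\Gamma)<\frac{n-2}{2}
\]
by Proposition~\ref{prop:standing}. Since topological dimension is an integer, $\dim_{\mathrm{top}}\Lambda(\Gamma)\le\lceil (n-2)/2\rceil-1$, which equals $\lfloor (n-3)/2\rfloor$. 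Therefore $\operatorname{asdim}\Gamma\le\lfloor(n-3)/2\rfloor+1=\lfloor(n-1)/2\rfloor$. Both parities check: for even $n$, strictness gives $d\le(n-4)/2$, and for odd $n$, $d\le(n-3)/2$. The final inequality $\lfloor(n-1)/2\rfloor\le n-2$ is trivial for $n\ge3$.

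The main obstacle is making the passage from asymptotic to macroscopic dimension and the boundary formula precise. I would either cite Buyalo--Lebedeva directly, or, as a fallback, avoid the equality by using Kapovich's formula $\operatorname{cd}=\dim_{\mathrm{top}}\Lambda+1$. That route needs a torsion-free finite-index subgroup via Selberg's lemma, which is harmless because $\operatorname{asdim}$ and the limit set are unchanged under finite index by Proposition~\ref{virtual}. One then uses the known inequality $\operatorname{asdim}\le\dim\partial+1$ for hyperbolic groups. For the macroscopic step, I would check the elementary nerve construction, with the Lipschitz partition-of-unity map having preimages of uniformly bounded diameter, and state it for $\widetilde M$ directly using the quasi-isometry.
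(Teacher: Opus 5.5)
Your proposal is correct and follows essentially the paper's argument. Both use Proposition~\ref{prop:standing}, the identification of the Gromov boundary of $\mathrm{CH}(\Gamma)$ (hence of $\Gamma$) with $\Lambda(\Gamma)$, the Buyalo--Lebedeva formula, the strict bound $\dim_{\mathcal H}\Lambda(\Gamma)<\frac{n-2}{2}$ together with integrality of topological dimension, Milnor--\v{S}varc, and $\dim_{mc}\le\dim_{as}$.

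The only deviation is in the elementary case. There the paper (Lemma~\ref{lem:elementary}) shows $\#\Lambda(\Gamma)=2$ and writes down the cobounded map $x\mapsto\log|x|$ on $\widetilde M\cong\mathbb R^n\setminus\{0\}$, whereas you quote $\operatorname{asdim}=1$ for infinite virtually cyclic groups; both work. Your Kapovich/Selberg fallback is unnecessary, since only the inequality $\operatorname{asdim}\le\dim_{\mathrm{top}}\partial+1$ is ever used.
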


\medskip
\noindent\textit{Strategy of the proof.}
Izeki's theorem makes \(\Gamma=\pi_1(M)\) a convex cocompact,
Gromov-hyperbolic Kleinian group.  If \(\Gamma\) is non-elementary, then
Nayatani's criterion~\cite[Corollary~3.4]{zbMATH01028179} and Sullivan's
theorem~\cite{zbMATH03903608} give
\(\dim_{\mathcal H}\Lambda(\Gamma)<\frac{n-2}{2}\); if \(\Gamma\) is
elementary, its limit set has topological dimension zero.  In either case,
the integrality of topological dimension gives the required floor function.
For \(\Gamma\) non-elementary, the Buyalo--Lebedeva theorem \cite{zbMATH05232578} applied to the
convex hull \(C=\mathrm{CH}(\Gamma)\), whose
Gromov boundary is identified with \(\Lambda(\Gamma)\) in
Lemmas~\ref{lem:hull} and~\ref{lem:boundary}, computes
\(\dim_{as}C=\dim_{\mathrm{top}}\Lambda(\Gamma)+1\), which the Milnor--\v{S}varc
lemma transports to \((\widetilde M,\widetilde g)\) and the comparison
\(\dim_{mc}\leq\dim_{as}\) converts into the stated bound; the elementary
case is settled directly in Lemma~\ref{lem:elementary} by the proper
uniformly cobounded map \(x\mapsto\log|x|\) on \(\widetilde M\cong\mathbb R^n\setminus\{0\}\).

\begin{lemma}\label{lem:elementary}
Let $(M^n,g)$ satisfy the hypotheses of Theorem~\ref{macdim}, and identify
$\Gamma=\pi_1(M)$ with its Kleinian holonomy group. If \(\Gamma\) is
elementary, then \(\#\Lambda(\Gamma)=2\) and
$\dim_{mc}(\widetilde M,\widetilde g)=1$.
 \end{lemma}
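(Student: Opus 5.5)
First I will show that $\#\Lambda(\Gamma)=2$. Since $\Gamma$ is infinite and elementary, $\Lambda(\Gamma)$ has one or two points. If it were a single point, then $\Omega=S^n\setminus\{pt\}\cong\mathbb R^n$ would be contractible, so $M$ would be a closed oriented aspherical manifold of dimension $n\ge5$. This contradicts Corollary~\ref{LCF aspherical}. Hence $\Lambda(\Gamma)=\{p,q\}$. After conjugating by a M\"obius transformation, take $p=0$ and $q=\infty$. Then $\widetilde M\cong\Omega=\mathbb R^n\setminus\{0\}$, and $\Gamma$ preserves $\{0,\infty\}$.

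For the upper bound, I use the facts recalled earlier. $\Gamma$ has an infinite cyclic subgroup of finite index generated by a loxodromic element $\gamma_0$. The index-two subgroup fixing $0$ and $\infty$ acts by similarities $x\mapsto\lambda Ax$ with $A\in\mathrm O(n)$. The elements that may swap the two points act by compositions with the inversion $x\mapsto x/|x|^2$. Define $f\colon\Omega\to\mathbb R$ by $f(x)=\log|x|$. This map is $\Gamma$-equivariant for an action of $\Gamma$ on $\mathbb R$ by isometries $t\mapsto\pm t+\log\lambda$. Give $\Omega$ the lifted metric $\widetilde g$. Because $\Gamma$ acts cocompactly on $\Omega$, there is a compact $K\subset\Omega$ with $\Gamma K=\Omega$. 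The key check is that $f$ is \emph{uniformly cobounded}: the fiber $f^{-1}(t)$ is a round sphere $\{|x|=e^t\}$.

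To bound the diameter of this fiber, I will use the subgroup $\langle\gamma_0\rangle$, whose quotient $\Omega/\langle\gamma_0\rangle$ is compact, so $\widetilde g$ is comparable to the cylinder metric $|dx|^2/|x|^2$. Concretely, let $F=\{1\le|x|\le\lambda_0\}$ be a fundamental annulus for $\gamma_0$, where $\lambda_0>1$ is the dilation factor of $\gamma_0$. On the compact set $F$, the two metrics $\widetilde g$ and $|dx|^2/|x|^2$ are bi-Lipschitz equivalent with some constant $L$. Both metrics are invariant under $\gamma_0$: $\widetilde g$ because $\gamma_0$ is a deck transformation, and the cylinder metric because $\gamma_0$ is a similarity. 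Therefore the bi-Lipschitz comparison holds on all of $\Omega$. In the cylinder metric, $\Omega$ is isometric to $\mathbb R\times S^{n-1}$ and $f$ is the projection to $\mathbb R$. So each fiber has $\widetilde g$-diameter at most $L\pi$. This shows $\dim_{mc}\le1$, because $\mathbb R$ is a $1$-dimensional simplicial complex.

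For the lower bound $\dim_{mc}\ge1$, note that $\widetilde M$ is unbounded, since the bi-Lipschitz comparison with the cylinder makes it bi-Lipschitz to $\mathbb R\times S^{n-1}$. Suppose there were a uniformly cobounded continuous map to a $0$-dimensional complex, i.e.\ a discrete set. Since $\widetilde M$ is connected, the image would be a single point, so $\widetilde M$ would have bounded diameter, a contradiction. The only step needing care is the global bi-Lipschitz comparison between $\widetilde g$ and the cylinder metric, and the $\gamma_0$-invariance argument above handles it.
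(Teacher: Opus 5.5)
Your proof is correct, but it takes a different route from the paper's in two places.

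\emph{Excluding a single limit point.} You invoke Corollary~\ref{LCF aspherical}. This is legitimate: that corollary rests on Theorem~\ref{Kleinian}, which does not use the present lemma, and the paper itself makes this argument in a remark after Theorem~\ref{Kleinian}. The paper's proof here is more elementary, however. A non-unit dilation $x\mapsto\lambda Ax+b$ with $\lambda\neq1$ would be loxodromic and would have a second fixed point in $\Lambda(\Gamma)$. Hence a single limit point forces $\Gamma\le\operatorname{Isom}(\mathbb R^n)$, so $\Gamma$ is crystallographic and, by Bieberbach, virtually $\mathbb Z^n$. This contradicts the Gromov-hyperbolicity supplied by Proposition~\ref{prop:standing}, and it avoids the Kapovich/Schoen--Yau machinery behind the corollary.

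\emph{Uniform coboundedness.} The paper makes $\tau=\log|x|$ equivariant under the whole group through a homomorphism $\rho\colon\Gamma\to\operatorname{Isom}(\mathbb R)$. It then uses a compact $F$ with $\Gamma F=\widetilde M$ and the properness of $\tau$ to bound every fiber by $\operatorname{diam}\tau^{-1}(J)$. Its lower bound goes through Hopf--Rinow and the finiteness of a deck group acting on a compact cover.

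You instead use only $\langle\gamma_0\rangle$ and a fundamental annulus. This yields a global bi-Lipschitz equivalence between $\widetilde g$ and the cylinder metric $|dx|^2/|x|^2$, a stronger structural statement: $(\widetilde M,\widetilde g)$ is bi-Lipschitz to $\mathbb R\times S^{n-1}$. It gives both the explicit fiber bound $L\pi$ and unboundedness at once.

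The cost is that you need the finite-index cyclic subgroup to be generated by a loxodromic element fixing $0$ and $\infty$, and you should justify this in a line:
\begin{itemize}
\item an infinite-order elliptic element cannot lie in a discrete group;
\item a parabolic generator would give a one-point limit set by Proposition~\ref{virtual};
\item a loxodromic element's fixed points lie in $\Lambda(\Gamma)=\{0,\infty\}$.
\end{itemize}
The paper's argument needs nothing about individual elements beyond their form on the pair $\{0,\infty\}$.
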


\begin{proof}
By Proposition~\ref{prop:standing}, $\Gamma$ is Gromov-hyperbolic and convex
cocompact.
Since \(\Gamma\) is infinite and discrete, \(\Lambda(\Gamma)\neq\emptyset\).
Suppose \(\Lambda(\Gamma)=\{\xi\}\). Normalizing \(\xi=\infty\), the
group \(\Gamma\) consists of M\"obius transformations of \(S^n\) fixing
\(\infty\), that is, of Euclidean similarities \(x\mapsto\lambda Ax+b\)
with \(\lambda>0\) and \(A\in\mathrm O(n)\).  If some \(\gamma\in\Gamma\) had \(\lambda\neq1\), then \(I-\lambda A\) would be invertible, so \(\gamma\) would have a unique fixed point in \(\mathbb R^n\). Together with \(\infty\), this would give
the two distinct fixed points of a loxodromic element. Both fixed points
of a loxodromic element belong to its limit set and hence to
\(\Lambda(\Gamma)\), contradicting
\(\Lambda(\Gamma)=\{\infty\}\). Therefore every element has
\(\lambda=1\), and consequently $\Gamma\leq\operatorname{Isom}(\mathbb R^n).$
   The developing map identifies \(\widetilde M\)
with \(\Omega(\Gamma)=\mathbb R^n\), so \(M=\mathbb R^n/\Gamma\) is
closed and $\Gamma$ is crystallographic and hence virtually $\mathbb{Z}^n$ by Bieberbach's theorem. For \(n\geq2\) such a group is not Gromov-hyperbolic. Thus \(\Lambda(\Gamma)\) is nonempty and is not a singleton; elementarity therefore gives \(\#\Lambda(\Gamma)=2\).

Normalize \(\Lambda(\Gamma)=\{0,\infty\}\). The developing map then
identifies \(\widetilde M\) conformally with
$\Omega(\Gamma) =S^n\setminus\{0,\infty\} =\mathbb R^n\setminus\{0\}.$
Under this identification, define
$\tau\colon\widetilde M \to \mathbb R,$
$\tau(x)=\log |x|.$
This map is continuous and surjective. Every \(\gamma\in\Gamma\) preserves the unordered pair
\(\{0,\infty\}\).  If \(\gamma\) fixes both points, then
\[
 \gamma(x)=\lambda Ax,
 \qquad \lambda>0,\quad A\in\mathrm O(n),
\]
whereas if \(\gamma\) interchanges them, then
\[
 \gamma(x)=\frac{\lambda Ax}{|x|^2},
 \qquad \lambda>0,\quad A\in\mathrm O(n).
\]
Define \(\rho(\gamma)\in\operatorname{Isom}(\mathbb R)\) by
\[
 \rho(\gamma)(t)=
 \begin{cases}
  t+\log\lambda,
   &\text{if }\gamma(x)=\lambda Ax,\\[2mm]
  \log\lambda-t,
   &\text{if }\displaystyle
     \gamma(x)=\frac{\lambda Ax}{|x|^2}.
 \end{cases}
\]
Then $\tau\circ\gamma=\rho(\gamma)\circ\tau$. Moreover, for
\(\gamma,\gamma'\in\Gamma\),
$\rho(\gamma\gamma')\circ\tau
 =\tau\circ(\gamma\gamma')
=\rho(\gamma)\circ\rho(\gamma')\circ\tau.$
Since \(\tau\) is surjective, it follows that
$\rho(\gamma\gamma')=\rho(\gamma)\circ\rho(\gamma').$ Thus $\rho\colon\Gamma\to \operatorname{Isom}(\mathbb R)
$ is a homomorphism. Because \(M=\widetilde M/\Gamma\) is compact, there exists a compact
set \(F\subset\widetilde M\) such that
$\Gamma F=\widetilde M.$ Choose a compact interval \(J=[a,b]\subset\mathbb R\) containing
\(\tau(F)\).  We claim that $ \rho(\Gamma)J=\mathbb R.$
Indeed, given \(t\in\mathbb R\), choose \(x\in\widetilde M\) with
\(\tau(x)=t\).  Write \(x=\gamma y\) for some
\(\gamma\in\Gamma\) and \(y\in F\).  Then
$t=\tau(\gamma y)=\rho(\gamma)\tau(y),
 \qquad \tau(y)\in J,$ which proves
the claim.

The map \(\tau\) is proper.  Indeed, if \(L\subset\mathbb R\) is
compact and \(L\subset[r,s]\), then
$\tau^{-1}(L)
 \subset
\{x\in\mathbb R^n:e^r\leq |x|\leq e^s\}.$ The left-hand side is closed,
while the annulus on the right is compact
in \(\mathbb R^n\setminus\{0\}\).  Hence \(\tau^{-1}(L)\) is compact.

In particular,
$\tau^{-1}(J)
 =
\{x\in\mathbb R^n:e^a\leq |x|\leq e^b\}$ is compact. Set
$B:=\operatorname{diam}_{\widetilde g}\tau^{-1}(J)<\infty.$
Given \(t\in\mathbb R\), choose \(\gamma\in\Gamma\) and \(t'\in J\)
such that \(t=\rho(\gamma)t'\).  Equivariance, applied also to
\(\gamma^{-1}\), gives $\tau^{-1}(t)=\gamma\,\tau^{-1}(t').$
Because every deck transformation is a
\(\widetilde g\)-isometry,
\[
 \operatorname{diam}_{\widetilde g}\tau^{-1}(t)
 =
 \operatorname{diam}_{\widetilde g}\tau^{-1}(t')
 \leq
 \operatorname{diam}_{\widetilde g}\tau^{-1}(J)
 =B.
\]
Thus \(\tau\) is uniformly cobounded.  Endowing \(\mathbb R\) with its
standard locally finite simplicial structure with vertex set
\(\mathbb Z\), we obtain a continuous uniformly cobounded map from
\(\widetilde M\) to a \(1\)-dimensional simplicial complex.  Therefore
$ \dim_{mc}(\widetilde M,\widetilde g)\leq1.$
Suppose \(\varphi\colon\widetilde M\to K^{0}\) were a uniformly
cobounded continuous map to a \(0\)-dimensional simplicial complex.
Such a complex is discrete and \(\widetilde M\) is connected, so
\(\varphi\) is constant and its single nonempty fiber is all of
\(\widetilde M\); uniform coboundedness would give
\(\operatorname{diam}_{\widetilde g}\widetilde M<\infty\).  But
\((\widetilde M,\widetilde g)\) is complete, being the Riemannian
universal cover of a closed manifold, so by Hopf--Rinow a bounded
\(\widetilde M\) would be compact. A properly discontinuous deck group acting
on a compact universal cover is finite, contradicting the infiniteness of
\(\pi_1(M)\). Hence \(\dim_{mc}(\widetilde M,\widetilde g)=1\).
\end{proof}

We next establish the asymptotic-dimension identity used in the proof of
Theorem~\ref{macdim}. Let $\Gamma$ be a non-elementary convex cocompact
Kleinian group and put $C:=\mathrm{CH}(\Gamma)$. We identify precisely the
boundary that appears in \cite[Theorem~6.4]{zbMATH05232578}. For a closed
subset \(A\subset\mathbb H^{n+1}\), write
$\partial_\infty^{i}A
 :=
\overline A^{\,\mathbb H^{n+1}\cup S^n}\cap S^n$ for its ideal boundary,
and write \(\partial_\infty^{G}X\) for the
sequence-defined Gromov boundary of a hyperbolic space \(X\), as in
\cite[\S~6.1]{zbMATH05232578}. We shall show that
\[
 \partial_\infty^{G}C\cong\partial_\infty^{i}C=\Lambda(\Gamma).
\]

\begin{lemma}\label{lem:hull}
Let \(\Gamma\) be a non-elementary convex cocompact Kleinian group, and let
$ C:=\mathrm{CH}(\Gamma)$
be the closed convex hull of \(\Lambda(\Gamma)\).  Then \(C\) is a
nonempty closed convex subset of \(\mathbb H^{n+1}\).  Equipped with its
intrinsic metric, \(C\) is a proper geodesic
\(\operatorname{CAT}(-1)\) space, hence a Gromov-hyperbolic space, and
\(\Gamma\) acts on \(C\) properly, cocompactly, and isometrically.
Moreover, \(\partial_\infty^{i}C=\Lambda(\Gamma)\).
\end{lemma}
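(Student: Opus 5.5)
We argue in the order the statement lists its claims. The hull $C$ is nonempty because $\Gamma$ is non-elementary, so $\#\Lambda(\Gamma)\ge 3$ and at least one complete geodesic with distinct ideal endpoints in $\Lambda(\Gamma)$ exists. By definition $C$ is the closed convex hull of such geodesics, hence a closed convex subset of $\mathbb H^{n+1}$. Convexity gives the following standard facts.
\begin{itemize}
\item The intrinsic length metric of $C$ coincides with the restriction of $d_H$, because ambient geodesics between points of $C$ lie in $C$.
\item $C$ is complete and proper, being a closed subset of the proper space $\mathbb H^{n+1}$.
\item $C$ is geodesic.
\item Geodesic triangles in $C$ are geodesic triangles of $\mathbb H^{n+1}$, so $C$ inherits the $\operatorname{CAT}(-1)$ comparison inequality.
\end{itemize}
A $\operatorname{CAT}(-1)$ space is $\delta$-hyperbolic with the same constant as $\mathbb H^2$, so $C$ is Gromov-hyperbolic.

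For the action, note that $\Lambda(\Gamma)$ is $\Gamma$-invariant and each $\gamma$ acts by hyperbolic isometries. Hence $\gamma$ maps geodesics with endpoints in $\Lambda(\Gamma)$ to such geodesics, and maps closed convex sets to closed convex sets, so $\gamma C=C$. The restricted action is isometric for the restricted (= intrinsic) metric. It is proper because $\Gamma$ is discrete and acts properly on $\mathbb H^{n+1}$, and $C$ carries the subspace metric. Cocompactness is exactly the convex-cocompactness criterion recalled before Proposition~\ref{geo finite}: since $\#\Lambda(\Gamma)\ge 2$, $\mathrm{CH}(\Gamma)/\Gamma$ is compact. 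Lifting a compact fundamental region then gives a compact $K\subset C$ with $\Gamma K=C$.

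It remains to show $\partial^i_\infty C=\Lambda(\Gamma)$.

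For $\supseteq$, take $\xi\in\Lambda(\Gamma)$ and pick $\eta\neq\xi$ in $\Lambda(\Gamma)$. The geodesic from $\eta$ to $\xi$ lies in $C$, and its points converge to $\xi$ in $\overline{B^{n+1}}$. So $\xi\in\partial_\infty^iC$.

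For $\subseteq$, use cocompactness. Fix $o\in C$ and let $R=\operatorname{diam}(K\cup\{o\})$. Every $x\in C$ then lies within hyperbolic distance $R$ of some orbit point $\gamma o$. If $x_k\in C$ converges to $\zeta\in S^n$, choose $\gamma_k$ with $d_H(x_k,\gamma_k o)\le R$. Bounded hyperbolic distance forces the Euclidean distance in the ball model to tend to $0$ as points approach the sphere, so $\gamma_k o\to\zeta$. The $\gamma_k$ are eventually pairwise distinct, since $x_k$ leaves every compact set. Therefore $\zeta\in\Lambda(\Gamma)$ by the orbit definition of the limit set.

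The only step requiring care is the Euclidean-versus-hyperbolic comparison: hyperbolic balls of fixed radius $R$ centred at points near $S^n$ have Euclidean diameter tending to $0$. I would verify this directly from $g_H=4(1-|x|^2)^{-2}|dx|^2$: the Euclidean diameter of such a ball is $O\bigl((1-|x|)e^{R}\bigr)$. This is routine, and everything else is bookkeeping with the definitions recalled earlier.
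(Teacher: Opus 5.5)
Your proposal is correct and follows essentially the same route as the paper: convexity gives the metric and $\operatorname{CAT}(-1)$ properties, invariance of $\Lambda(\Gamma)$ gives the action, and coarse density $C\subset N_R(\Gamma o)$ together with the Euclidean smallness of hyperbolic $R$-balls near $S^n$ gives $\partial^i_\infty C\subseteq\Lambda(\Gamma)$. The only differences are minor. You obtain $\supseteq$ from a geodesic in $C$ ending at $\xi$, whereas the paper uses $\Gamma o\subset C$. Your remark that the $\gamma_k$ are eventually pairwise distinct is unnecessary, since the paper defines $\Lambda(\Gamma)=\overline{\Gamma o}\cap S^n$; it is also accurate only after passing to a subsequence.
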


\begin{proof}
The set \(C\) is closed and convex by construction.  Since \(\Gamma\) is
non-elementary, we may choose distinct \(\xi,\eta\in\Lambda(\Gamma)\);
the complete hyperbolic geodesic with ideal endpoints \(\xi\) and \(\eta\)
lies in \(C\), so
\(C\neq\varnothing\).

Convexity implies that the ambient hyperbolic geodesic joining any two
points of \(C\) lies in \(C\).  Thus \(C\) is geodesic and
$d_C=d_{\mathbb H^{n+1}}\big|_{C\times C}.$
Since \(C\) is closed in the proper space \(\mathbb H^{n+1}\), it is
proper.  Since it is convex in the \(\operatorname{CAT}(-1)\) space
\(\mathbb H^{n+1}\), it is itself \(\operatorname{CAT}(-1)\) and hence
Gromov-hyperbolic.  

The limit set is \(\Gamma\)-invariant, so \(\Gamma\) preserves \(C\) and
acts on it isometrically.  Since \(\Gamma\) is discrete, it acts
properly on \(\mathbb H^{n+1}\), and therefore properly on the closed
invariant subset \(C\).  Its action on \(C\) is cocompact by convex
cocompactness.

Fix \(o\in C\).  Since \(C\) is \(\Gamma\)-invariant, \(\Gamma o\subset C\).
Conversely, cocompactness supplies a compact \(K\subset C\) with
\(\Gamma K=C\); writing \(R:=\max_{k\in K}d_{\mathbb H^{n+1}}(o,k)<\infty\),
we see that every \(x\in C\) is of the form \(x=\gamma k\) with \(\gamma\in\Gamma\),
\(k\in K\), whence
\(d_{\mathbb H^{n+1}}(x,\gamma o)=d_{\mathbb H^{n+1}}(k,o)\leq R\).  Thus, $C$ lies in the $R$-neighborhood of the orbit $\Gamma o$, that is,
\begin{equation}\label{eq:orbit-coarse-dense}
\Gamma o\subset C\subset N_R(\Gamma o).
\end{equation}
We use that hyperbolic \(R\)-balls are Euclidean-small near the sphere. In
the ball model, if \(t=\tanh(R/2)\), then
\(B_{\mathbb H^{n+1}}(x,R)\) is a Euclidean ball of radius
\[
 r_E(x,R)=\frac{t(1-|x|^2)}{1-t^2|x|^2}.
\]
Consequently,
\[
\operatorname{diam}_{\mathrm{eucl}}B_{\mathbb H^{n+1}}(x,R)
\leq\frac{4t}{1-t^2}(1-|x|).
\]
Thus two sequences at hyperbolic distance at most \(R\), one of which tends
to the sphere, have the same ideal limit.

Let \(\xi\in\partial_\infty^{i}C\) and choose \(x_j\in C\) with
\(x_j\to\xi\) in \(\mathbb H^{n+1}\cup S^n\); as \(\xi\in S^n\) we have
\(|x_j|\to1\).  By \eqref{eq:orbit-coarse-dense} pick
\(\gamma_j\in\Gamma\) with
\(d_{\mathbb H^{n+1}}(x_j,\gamma_j o)\leq R\); by the previous
paragraph \(\gamma_j o\to\xi\), so \(\xi\in\Lambda(\Gamma)\).
Conversely, by definition, $\Lambda(\Gamma)$ is the set of ideal accumulation points of the orbit $\Gamma o$, independently of the choice of base point. Since $\Gamma o\subset C$, every such accumulation point belongs to
$\overline C^{\,\mathbb H^{n+1}\cup S^n}\cap S^n=\partial_\infty^{i}C.$
Therefore \(\partial_\infty^{i}C=\Lambda(\Gamma)\).
\end{proof}

Fix a point $o\in C$. We write the Gromov product as
\((x\mid y)_o:=\tfrac12\bigl(d(o,x)+d(o,y)-d(x,y)\bigr)\geq0\).
Every \(\operatorname{CAT}(-1)\) space is
\(\delta\)-hyperbolic for a universal \(\delta\geq0\); by
Lemma~\ref{lem:hull} both \(\mathbb H^{n+1}\) and \(C\) are
\(\operatorname{CAT}(-1)\), so we fix one such \(\delta\) valid for both,
and both then satisfy
\((x\mid y)_o\geq\min\{(x\mid z)_o,(z\mid y)_o\}-\delta\).

For a $\delta$-hyperbolic space \(X\), the product extends to
\(\partial_\infty^{G}X\) by
\[
 (\xi\mid\eta)_o:=\inf\liminf_j(x_j\mid y_j)_o,
\]
where the infimum ranges over representing sequences
\((x_j)\in\xi\) and \((y_j)\in\eta\). For \emph{any} such pair,
\begin{equation}\label{eq:BL-boundary-estimate}
 (\xi\mid\eta)_o
 \;\leq\;\liminf_{j\to\infty}(x_j\mid y_j)_o
 \;\leq\;(\xi\mid\eta)_o+2\delta .
\end{equation}
The topology of \(\partial_\infty^{G}X\) is that in which
\(\xi_k\to\xi\) iff \((\xi_k\mid\xi)_o\to\infty\), equivalently the one
induced by any visual metric \(\rho\), i.e., one satisfying
\(c_1a^{-(\xi\mid\eta)_o}\leq\rho(\xi,\eta)\leq c_2a^{-(\xi\mid\eta)_o}\)
for some \(a>1\), \(c_1,c_2>0\).

We use ball-model coordinates with \(o\) at the center, normalize
\(d_{\mathrm{ch}}(\alpha,\beta)=|\alpha-\beta|\) on
\(S^n\subset\mathbb R^{n+1}\), and let
\(\mathcal I\colon\partial_\infty^{G}\mathbb H^{n+1}\to S^n\) send the
class of a Gromov sequence to its ideal limit. Since
\(\mathbb H^{n+1}\) is a proper \(\operatorname{CAT}(-1)\) space, Bourdon's
boundary theory~\cite[\S\S~1.4 and~2.4--2.5]{MR1341941} shows that, under
this identification, every sequence representing \(\alpha\) converges to
\(\mathcal I\alpha\) in \(\mathbb H^{n+1}\cup S^n\), and that the Gromov
product extends continuously to the complement of the boundary diagonal in
\((\mathbb H^{n+1}\cup S^n)^2\).
Consequently, if
\(\alpha\neq\beta\) and \((x_j)\in\alpha\), \((y_j)\in\beta\) are any
representing sequences, then
\(\lim_{j\to\infty}(x_j\mid y_j)_o=(\alpha\mid\beta)_o\)
exists and is independent of the representatives. To compute this value, let
\(\theta\) be the round angle between \(\mathcal I\alpha\) and
\(\mathcal I\beta\), and use the unit-speed radial representatives
\[
x_t=\tanh(t/2)\,\mathcal I\alpha,
\qquad
y_t=\tanh(t/2)\,\mathcal I\beta.
\]
The hyperbolic law of cosines gives
\[
\cosh d(x_t,y_t)
=\cosh^2t-\sinh^2t\cos\theta
=1+2\sinh^2t\sin^2(\theta/2).
\]
Hence
\[
d(x_t,y_t)=2t+2\log\sin(\theta/2)+o(1),
\qquad t\to\infty,
\]
and therefore
\((\alpha\mid\beta)_o=-\log\sin(\theta/2)\). Since the chordal distance is
\(2\sin(\theta/2)\), we obtain
\begin{equation}\label{eq:chordal-gromov-product}
 d_{\mathrm{ch}}\bigl(\mathcal I\alpha,\mathcal I\beta\bigr)
 =2e^{-(\alpha\mid\beta)^{\mathbb H^{n+1}}_o} .
\end{equation}

\begin{lemma}\label{lem:boundary}
The inclusion \(C\hookrightarrow\mathbb H^{n+1}\) induces
\(\iota_\infty\colon\partial_\infty^{G}C\to
\partial_\infty^{G}\mathbb H^{n+1}\), and \(\mathcal I\circ\iota_\infty\)
is a homeomorphism of \(\partial_\infty^{G}C\) onto \(\Lambda(\Gamma)\)
with its round subspace topology.
\end{lemma}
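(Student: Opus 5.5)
The plan has four parts: show that the inclusion induces a well-defined injective map on Gromov boundaries, use Lemma~\ref{lem:hull} to identify its image with \(\Lambda(\Gamma)\), check continuity from the Gromov-product estimates, and conclude by compactness.

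\emph{Step 1: the map and its injectivity.} By Lemma~\ref{lem:hull}, \(C\) is convex in \(\mathbb H^{n+1}\), so \(d_C=d_{\mathbb H^{n+1}}|_{C\times C}\). Hence for \(x,y\in C\) and \(o\in C\) the Gromov products \((x\mid y)_o\) computed in \(C\) and in \(\mathbb H^{n+1}\) coincide.
\begin{itemize}
\item A sequence \((x_j)\subset C\) converges at infinity in \(C\), i.e.\ \((x_i\mid x_j)_o\to\infty\), if and only if it does so in \(\mathbb H^{n+1}\).
\item Two such sequences are equivalent in \(C\) if and only if they are equivalent in \(\mathbb H^{n+1}\).
\end{itemize}
So the assignment \([(x_j)]_C\mapsto[(x_j)]_{\mathbb H^{n+1}}\) is a well-defined map \(\iota_\infty\), and it is injective. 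For the equivalence relation I will use the characterization \(\liminf(x_j\mid y_j)_o=\infty\), which is legitimate in \(\delta\)-hyperbolic spaces. By \eqref{eq:BL-boundary-estimate}, applied in both spaces to the same representing sequences, the extended products satisfy
\[
\bigl|(\xi\mid\eta)^C_o-(\iota_\infty\xi\mid\iota_\infty\eta)^{\mathbb H^{n+1}}_o\bigr|\le 2\delta
\]
for all \(\xi,\eta\in\partial_\infty^G C\).

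\emph{Step 2: the image is \(\Lambda(\Gamma)\).} By Bourdon's theory as recalled above, every representing sequence of \(\iota_\infty\xi\) converges to \(\mathcal I\iota_\infty\xi\) in the closed ball. Since the sequence lies in \(C\), this point lies in \(\partial_\infty^i C=\Lambda(\Gamma)\) by Lemma~\ref{lem:hull}. Conversely, let \(\zeta\in\Lambda(\Gamma)\) and pick a second point \(\zeta'\ne\zeta\) in \(\Lambda(\Gamma)\), which exists by non-elementarity. The complete geodesic from \(\zeta'\) to \(\zeta\) lies in \(C\). Its points \(x_t\), as \(t\to\infty\), form a Gromov sequence in \(C\), because along a geodesic ray the Gromov product based near the ray tends to infinity. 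Since \(x_t\to\zeta\), we get \(\mathcal I\iota_\infty[(x_t)]=\zeta\). Thus \(\mathcal I\circ\iota_\infty\) is a bijection onto \(\Lambda(\Gamma)\); injectivity of \(\mathcal I\) is part of Bourdon's identification.

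\emph{Step 3: continuity.} Suppose \(\xi_k\to\xi\) in \(\partial_\infty^G C\), i.e.\ \((\xi_k\mid\xi)^C_o\to\infty\). The Step~1 estimate gives \((\iota_\infty\xi_k\mid\iota_\infty\xi)^{\mathbb H^{n+1}}_o\to\infty\). Then \eqref{eq:chordal-gromov-product} shows that the chordal distance to \(\mathcal I\iota_\infty\xi\) tends to \(0\); when \(\xi_k=\xi\) there is nothing to prove. So the map is continuous, and by the same estimate read backwards its inverse is continuous as well.

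\emph{Step 4: conclusion, and the main obstacle.} Since \(C\) is proper geodesic hyperbolic, \(\partial_\infty^G C\) is compact, and \(\Lambda(\Gamma)\) is Hausdorff, so a continuous bijection between them is already a homeomorphism. The main obstacle I anticipate is Step~1. The difficulty is making sure that sequential convergence at infinity and the equivalence of sequences really are detected by one and the same Gromov product in both spaces. This needs the isometric, convex embedding together with the \(2\delta\) comparison \eqref{eq:BL-boundary-estimate} applied with a common \(\delta\), as was arranged just before \eqref{eq:BL-boundary-estimate}.
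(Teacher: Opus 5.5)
Your proposal is correct and follows the paper's proof. Coincidence of Gromov products on $C$ gives well-definedness and injectivity, the two-sided estimate \eqref{eq:BL-boundary-estimate} gives the $2\delta$ comparison, and \eqref{eq:chordal-gromov-product} then transfers convergence in both directions; the paper packages your Step~3 as the statement that the pullback of $d_{\mathrm{ch}}$ is a visual metric on $\partial_\infty^{G}C$.

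The only deviation is in surjectivity. You use the complete geodesic between two distinct limit points, which lies in $C$ by the definition of the hull. The paper instead uses a ball-model estimate showing that every sequence in $C$ converging to a point of $\partial_\infty^{i}C$ is a Gromov sequence. Both arguments work. Your compactness argument in Step~4 is harmless but redundant, since Step~3 already gives continuity of the inverse.
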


\begin{proof}
The map \(\iota_\infty\) is well defined and injective.
Since \(d_C\) is the restriction of \(d_{\mathbb H^{n+1}}\), the product
\((x\mid y)_o\) of points of \(C\) depends only on three mutual
distances that agree in either space, so
\begin{equation}\label{eq:products-agree}
 (x\mid y)^C_o=(x\mid y)^{\mathbb H^{n+1}}_o
 \qquad(x,y\in C).
\end{equation}
Hence a sequence in \(C\) is a Gromov sequence for \(C\) iff it is one
for \(\mathbb H^{n+1}\), and two such are equivalent for \(C\) iff they
are equivalent for \(\mathbb H^{n+1}\).  The first gives
well-definedness, the second injectivity.

Let \(\xi\neq\eta\) in \(\partial_\infty^{G}C\), with representing
sequences \((x_j)\in\xi\), \((y_j)\in\eta\). Then the same pair
represents \(\iota_\infty\xi,\iota_\infty\eta\), and the
estimate \eqref{eq:BL-boundary-estimate} holds in both spaces with the
same \(\delta\).  When this estimate is applied in both \(C\) and \(\mathbb H^{n+1}\) to the
same pair, both enclosing intervals contain
\(\liminf_j(x_j\mid y_j)_o\), whence
\begin{equation}\label{eq:boundary-product-comparison}
 \Bigl|
 (\xi\mid\eta)^C_o
 -(\iota_\infty\xi\mid\iota_\infty\eta)^{\mathbb H^{n+1}}_o
 \Bigr|\leq2\delta .
\end{equation}

Substituting \eqref{eq:boundary-product-comparison} into
\eqref{eq:chordal-gromov-product} gives, for \(\xi\neq\eta\),
\begin{equation}\label{eq:chordal-is-visual}
 2e^{-2\delta}e^{-(\xi\mid\eta)^C_o}
 \;\leq\;
 d_{\mathrm{ch}}\bigl(
 \mathcal I(\iota_\infty\xi),\mathcal I(\iota_\infty\eta)\bigr)
 \;\leq\;
 2e^{2\delta}e^{-(\xi\mid\eta)^C_o} .
\end{equation}
Thus the pullback of \(d_{\mathrm{ch}}\) is a visual metric on
\(\partial_\infty^{G}C\) with parameter \(a=e\) and constants
\(c_1=2e^{-2\delta}\), \(c_2=2e^{2\delta}\). It therefore induces
the topology of \(\partial_\infty^{G}C\).  Hence
\(\mathcal I\circ\iota_\infty\) is a homeomorphism onto its image for
the topology induced by \(d_{\mathrm{ch}}\), which is the round subspace
topology, \(d_{\mathrm{ch}}\) and the round metric being bi-Lipschitz
equivalent on \(S^n\).

We now prove \(\operatorname{im}(\mathcal I\circ\iota_\infty)
=\partial_\infty^{i}C\).  Let \(\xi\in\partial_\infty^{G}C\) be represented by
\((x_j)\subset C\). Viewed as a sequence in \(\mathbb H^{n+1}\), it
represents \(\iota_\infty\xi\). By the standard ball-model identification
used to define \(\mathcal I\),
\(x_j\to \mathcal I(\iota_\infty\xi)\in S^n\).
Since \(x_j\in C\) for every
\(j\), that limit lies in
\(\overline C^{\,\mathbb H^{n+1}\cup S^n}\cap S^n=\partial_\infty^{i}C\).

Let \(\zeta\in\partial_\infty^{i}C\) and choose
\(x_j\in C\) with \(x_j\to\zeta\) in \(\mathbb H^{n+1}\cup S^n\).  In the
ball model with \(o\) at the center one has
\(e^{-d(o,x)}=\frac{1-|x|}{1+|x|}\) and
\(\sinh^2\!\bigl(\tfrac{d(x,y)}{2}\bigr)
=\frac{|x-y|^2}{(1-|x|^2)(1-|y|^2)}\), so \(e^{d(x,y)}\leq4\sinh^2(d(x,y)/2)+2\)
gives
\[
 e^{-2(x\mid y)_o}
 =e^{d(x,y)}e^{-d(o,x)}e^{-d(o,y)}
 \leq 4|x-y|^{2}+2\bigl(1-|x|\bigr)\bigl(1-|y|\bigr).
\]
As \(i,j\to\infty\), we have \(|x_i-x_j|\to0\) and
\((1-|x_i|)(1-|x_j|)\to0\). Hence
\((x_i\mid x_j)_o\to\infty\). Thus \((x_j)\) is a Gromov sequence in
\(\mathbb H^{n+1}\), and its class has ideal limit \(\zeta\). By
\eqref{eq:products-agree}, \((x_j)\) is a Gromov sequence in \(C\) as well;
let \(\xi\in\partial_\infty^{G}C\) be its class there.  By construction
\(\iota_\infty\) sends the \(C\)-class of a sequence to the
\(\mathbb H^{n+1}\)-class of the same sequence, so
\(\iota_\infty\xi=[(x_j)]_{\mathbb H^{n+1}}\) and hence
\(\mathcal I(\iota_\infty\xi)=\zeta\).

Hence \(\operatorname{im}(\mathcal I\circ\iota_\infty)
=\partial_\infty^{i}C\), which equals \(\Lambda(\Gamma)\) by
Lemma~\ref{lem:hull}. Therefore \(\mathcal I\circ\iota_\infty\)
is a homeomorphism of \(\partial_\infty^{G}C\) onto \(\Lambda(\Gamma)\)
with its round subspace topology.
\end{proof}

In particular, there is a canonical homeomorphism
$\partial_\infty^{G}C\cong\Lambda(\Gamma)$, where the limit set carries its
round subspace topology. Thus,
$\dim_{\mathrm{top}}(\partial_\infty^{G}C)=\dim_{\mathrm{top}}\Lambda(\Gamma)$.

The Buyalo--Lebedeva theorem \cite[Theorem~6.4]{zbMATH05232578} says that the asymptotic dimension of every cobounded, Gromov-hyperbolic, proper, geodesic space $X$ equals the topological dimension of its boundary at infinity plus 1. Here a metric space $X$ is cobounded if there is a bounded subset $A \subset X$ such that the orbit of $A$ under the isometry group of $X$ covers $X$.

We verify that $C$ satisfies the four hypotheses of the Buyalo--Lebedeva theorem. Recall from Lemma~\ref{lem:hull} that $C$ is a proper, geodesic, Gromov-hyperbolic space. Moreover, the action of $\Gamma$ on $C$ is cocompact. Hence there exists a compact, and therefore bounded, subset $K\subset C$ such that $\Gamma K=C$. Since $\Gamma$ acts by isometries, its image is contained in $\operatorname{Isom}(C)$, and the $\operatorname{Isom}(C)$-orbit of $K$ also covers $C$. Thus the action of $\operatorname{Isom}(C)$ on $C$ is cobounded.
Therefore, the Buyalo--Lebedeva theorem \cite[Theorem~6.4]{zbMATH05232578}, together with Lemmas~\ref{lem:hull} and~\ref{lem:boundary}, yields
\[
\dim_{as}(C)
=
\dim_{\mathrm{top}}(\partial_\infty^{G}C)+1
=
\dim_{\mathrm{top}}\Lambda(\Gamma)+1.
\]

By Lemma~\ref{lem:hull}, \(C\) is proper and geodesic, hence a length
space, and \(\Gamma\) acts on it properly, cocompactly, and
isometrically. The Milnor--\v{S}varc lemma applies: \(\Gamma\)
is finitely generated, and for any \(o\in C\) the orbit map
\(\gamma\mapsto\gamma o\) is a quasi-isometry from \(\Gamma\) to \(C\) for the word
metric associated with a finite generating set supplied by the
Milnor--\v{S}varc lemma. Word metrics of any two finite generating sets are
quasi-isometric, so for any finite generating set \(S\), $(\Gamma,d_S)$ is
quasi-isometric to $C$.

A quasi-isometry is a coarse equivalence, and asymptotic dimension is
invariant under coarse equivalence \cite[Proposition~22]{zbMATH05292205}; thus,
\[
\dim_{as}(\Gamma,d_S)=
\dim_{as}(C)
=
\dim_{\mathrm{top}}(\partial_\infty^{G}C)+1
=
\dim_{\mathrm{top}}\Lambda(\Gamma)+1.
\]

\begin{proof}[Proof of Theorem~\ref{macdim}]
By Proposition~\ref{prop:standing}, $\Gamma=\pi_1(M)$ is Gromov-hyperbolic
and its Kleinian action is convex cocompact, hence geometrically finite. The
elementary case follows from Lemma~\ref{lem:elementary}, since $n\geq5$.

Suppose now that $\Gamma$ is non-elementary. Proposition~\ref{prop:standing}
gives
$\dim_{\mathcal H}\Lambda(\Gamma)=\delta(\Gamma)<\frac{n-2}{2}$.
The limit set is a compact metric space, and hence
$d:=\dim_{\mathrm{top}}\Lambda(\Gamma)
\leq\dim_{\mathcal H}\Lambda(\Gamma)<\frac{n-2}{2}$.
The integer-valuedness of $d$ now gives
$d+1\leq\left\lfloor\frac{n-1}{2}\right\rfloor$.
Indeed, if $n=2m$, then $d<m-1$, so $d\leq m-2$; if $n=2m+1$,
then $d<m-\frac12$, so $d\leq m-1$.

We use asymptotic dimension $\dim_{as}$, in the sense of
\cite{zbMATH05292205}, to complete the non-elementary case, since
$\dim_{mc}(X)\leq \dim_{as}(X)$ for every metric space $X$
\cite[p.~230]{zbMATH06371564}. The identity established above gives
$\dim_{as}(\Gamma,d_S)=\dim_{\mathrm{top}}\Lambda(\Gamma)+1$,
where $d_S$ is a word metric.
The deck transformation action of $\Gamma$ on
$(\widetilde M,\widetilde g)$ is proper, cocompact, and isometric. Since
$M$ is closed, $(\widetilde M,\widetilde g)$ is complete, proper, and
geodesic. By the Milnor--\v{S}varc lemma,
$(\widetilde M,\widetilde g)$ is quasi-isometric to $\Gamma$, equipped with
any word metric. Since asymptotic dimension is invariant under
quasi-isometry \cite[Proposition~22]{zbMATH05292205}, we obtain
$\dim_{as}(\widetilde M,\widetilde g)=\dim_{as}(\Gamma,d_S).$ Therefore,
for $n\geq5$, we have
\[
\dim_{mc}(\widetilde M,\widetilde g)
\leq
\dim_{as}(\widetilde M,\widetilde g)
=
\dim_{as}(\Gamma,d_S)
=
\dim_{\mathrm{top}}\Lambda(\Gamma)+1
\leq
\left\lfloor\frac{n-1}{2}\right\rfloor
\leq
n-2.
\]
\end{proof}

\begin{remark}
The assumption of PSC in Theorem~\ref{macdim} cannot be weakened to
nonnegative scalar curvature, since the flat torus $T^n$ has universal cover
$\mathbb R^n$, and $\dim_{mc}(\mathbb R^n)=n$.  
Furthermore, since $\dim_{mc}(\mathbb H^n)=n$, taking the metric product of a round sphere with a closed hyperbolic manifold shows that the upper bound is sharp. Indeed, every integer in the range
$\{0,1,\ldots,\left\lfloor\frac{n-1}{2}\right\rfloor\}$
is realized as the upper bound for $\dim_{mc}$.
\end{remark}

\begin{remark}
For a manifold $M$ as in Theorem~\ref{macdim}, we have
$
\dim_{mc}(\widetilde M,\widetilde g)
\le \left\lfloor\frac{n-1}{2}\right\rfloor
\le n-2<n.
$
Hence $M$ is md-small in the sense of Dranishnikov~\cite{zbMATH06371564}, and \cite[Theorem~5.4(2)]{zbMATH06371564} implies that the locally finite fundamental class of $M$ maps to zero in the integral coarse homology group $HX_n(M;\mathbb Z)$.
\end{remark}

\begin{proof}[Proof of Theorem~\ref{thm:intro-A}]
The macroscopic dimension estimate follows immediately from Theorem~\ref{macdim}, while the remaining assertions are consequences of Theorem~\ref{Kleinian}.
\end{proof}

\section{Maps between LCF manifolds with PSC}\label{4}

In this section, we study the geometric and topological constraints imposed by
nonzero-degree maps between closed LCF manifolds with PSC. We first show that
the vanishing of the Hausdorff dimension of the holonomy limit set is preserved
under such maps. We then exploit the dichotomy between elementary and
non-elementary holonomy Kleinian groups to establish Llarull-type rigidity
results. In the elementary case, smooth \(1\)-Lipschitz maps of nonzero degree to
the standard sphere are rigid; whenever the fundamental group is infinite---in
particular, in the non-elementary case---the developing map expands some
tangent vector at some point.

\begin{proposition}\label{degree}
Let $(M^n,g_M)$ and $(N^n,g_N)$, $n\geq4$, be closed, connected,
oriented, locally conformally flat Riemannian manifolds with positive
scalar curvature. Let $\Gamma_M$ and $\Gamma_N$ be their holonomy
Kleinian groups. Assume that
\(
\dim_{\mathcal H}\Lambda(\Gamma_M)=0.
\)
If there exists a continuous map $f\colon M^n\to N^n$ of nonzero degree, then
$\Gamma_N$ is elementary. In particular,
$\dim_{\mathcal H}\Lambda(\Gamma_N)=0.$
\end{proposition}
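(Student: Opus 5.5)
The plan is to translate the Hausdorff-dimension hypothesis on $\Gamma_M$ into an algebraic statement: $\pi_1(M)$ is virtually cyclic. Then push this through the nonzero-degree map, using the classical fact that such a map has image of finite index on fundamental groups. Finally, translate back, using that a Kleinian group is elementary if and only if it is virtually abelian.

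\emph{Step 1: $\Gamma_M$ is elementary and virtually cyclic.} If $\Gamma_M$ were non-elementary, then $\pi_1(M)$ would be infinite. Proposition~\ref{prop:standing} would then give $\dim_{\mathcal H}\Lambda(\Gamma_M)=\delta(\Gamma_M)>0$, contradicting the hypothesis. Hence $\Gamma_M$ is elementary.
\begin{itemize}
\item If $\Gamma_M$ is finite, it is trivially virtually cyclic.
\item If $\Gamma_M$ is infinite, Proposition~\ref{prop:standing} shows that $\Gamma_M$ is Gromov-hyperbolic. A virtually abelian hyperbolic group is virtually cyclic, as in the proof of Theorem~\ref{Kleinian}; alternatively, Lemma~\ref{lem:elementary} gives $\#\Lambda(\Gamma_M)=2$ directly.
\end{itemize}
In all cases $\pi_1(M)\cong\Gamma_M$ is virtually cyclic, possibly finite.

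\emph{Step 2: finite-index image.} Let $H:=f_*\pi_1(M)\le\pi_1(N)$, and let $p\colon\widehat N\to N$ be the covering corresponding to $H$. By the lifting criterion, $f$ lifts to $\widehat f\colon M\to\widehat N$ with $f=p\circ\widehat f$. If $[\pi_1(N):H]=\infty$, then $\widehat N$ is a connected noncompact $n$-manifold, so $H_n(\widehat N;\mathbb Z)=0$. Then $f_*[M]=p_*\widehat f_*[M]=0$, contradicting $\deg f\neq0$. Hence $[\pi_1(N):H]<\infty$.

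I expect this to be the main point requiring care. It is standard, but one must use that $M$ and $N$ are closed and oriented, so that degree is defined via fundamental classes. One must also note that the top homology of a noncompact connected manifold vanishes.

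\emph{Step 3: conclusion.} The group $H$ is a quotient of the virtually cyclic group $\pi_1(M)$. The image of a finite-index infinite cyclic subgroup has finite index in $H$ and is cyclic, so $H$ is virtually cyclic. Since $H$ has finite index in $\pi_1(N)\cong\Gamma_N$, the group $\Gamma_N$ is virtually cyclic, hence virtually abelian.

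By the standard facts recalled before Proposition~\ref{geo finite}, together with Proposition~\ref{virtual}, a virtually abelian Kleinian group is elementary. Its limit set therefore has at most two points, so $\dim_{\mathcal H}\Lambda(\Gamma_N)=0$ (interpreting the empty limit set in the finite case as having dimension zero, consistent with the statement).
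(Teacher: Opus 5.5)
Your proof is correct and follows essentially the same route as the paper. Both arguments rule out a non-elementary $\Gamma_M$ via Proposition~\ref{prop:standing}, show that $f_*\pi_1(M)$ has finite index in $\pi_1(N)$ by lifting to the corresponding cover and using the vanishing top homology of noncompact manifolds, and then conclude that $\Gamma_N$ is virtually abelian, hence elementary by Proposition~\ref{virtual}.

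The only difference is your detour through ``virtually cyclic'' via Gromov-hyperbolicity, which is unnecessary. The paper simply pushes a finite-index abelian subgroup $A\le\Gamma_M$ forward: $\varphi(A)$ is abelian and of finite index in $\Gamma_N$. Your alternative appeal to Lemma~\ref{lem:elementary} also has a small citation issue, since that lemma is stated under the $n\ge5$ hypotheses of Theorem~\ref{macdim}; you do not need it.
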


\begin{proof}
For each $X\in\{M,N\}$, let $\widetilde X$ be the universal cover. Then the developing map
$\mathrm{dev}_X\colon\widetilde X\to S^n$
is injective, with image a $\Gamma_X$-invariant domain
$\Omega_X\subset S^n$. Its holonomy representation is an isomorphism,
$\rho_X\colon\pi_1(X)\stackrel{\cong}{\to}\Gamma_X,$
and the developing map gives an equivariant diffeomorphism
$\widetilde X\cong\Omega_X$. Thus
$X\cong\Omega_X/\Gamma_X$, and the action of $\Gamma_X$ on $\Omega_X$
is free.

Choose basepoints and put
\[
\varphi:=\rho_N\circ f_\#\circ\rho_M^{-1}
\colon\Gamma_M\longrightarrow\Gamma_N,
\qquad H:=\varphi(\Gamma_M).
\]
Let
\(q\colon\Omega_N/H\to \Omega_N/\Gamma_N\cong N\)
be the connected covering associated with the subgroup
\(\rho_N^{-1}(H)\leq\pi_1(N)\). Since
\(
\rho_N^{-1}(H)=f_\#\bigl(\pi_1(M)\bigr),
\)
the lifting criterion gives a map
\(\bar f\colon M\Omega_N/H\)
with \(q\circ\bar f=f\).
If $[\Gamma_N:H]=\infty$, then $q$ is infinite-sheeted. Its total space
cannot be compact, since a covering of the compact manifold \(N\) with
compact total space has finite fibers. Thus \(\Omega_N/H\) is a connected
noncompact $n$-manifold, and hence
$H_n(\Omega_N/H;\mathbb Z)=0$. Therefore
\(
f_*[M]=q_*\bar f_*[M]=0,
\)
contradicting $f_*[M]=\deg(f)[N]\neq0$. Therefore
$[\Gamma_N:H]<\infty$.

We claim that $\Gamma_M$ is elementary. If it were non-elementary,
Proposition~\ref{prop:standing} would give
$\dim_{\mathcal H}\Lambda(\Gamma_M)=\delta(\Gamma_M)>0$,
a contradiction. Hence $\Gamma_M$ is elementary and therefore virtually
abelian, by the structure of elementary Kleinian groups recalled above.
Choose an abelian subgroup $A\leq\Gamma_M$ of finite index. Then
$\varphi(A)$ is abelian and
$[H:\varphi(A)]\leq[\Gamma_M:A]<\infty$.
Consequently,
$[\Gamma_N:\varphi(A)]
=[\Gamma_N:H]\,[H:\varphi(A)]
\leq[\Gamma_N:H]\,[\Gamma_M:A]<\infty$.

Since $\varphi(A)$ is an abelian Kleinian group, it is elementary, so its
limit set contains at most two points. Proposition~\ref{virtual}, applied
to the finite-index inclusion $\varphi(A)\leq\Gamma_N$, gives
$\Lambda(\Gamma_N)=\Lambda(\varphi(A))$. Hence $\Lambda(\Gamma_N)$
contains at most two points, so $\Gamma_N$ is elementary, and
consequently $\dim_{\mathcal H}\Lambda(\Gamma_N)=0$.
\end{proof}

\begin{remark}
The Hausdorff dimension of the limit set is not determined by the abstract
isomorphism type of a Kleinian group. Let \(\Sigma_h\) be a closed,
orientable surface of genus \(h\geq2\), and let
$
\rho_0\colon\pi_1(\Sigma_h)\to \mathrm{PSL}(2,\mathbb R)
$
be a cocompact Fuchsian representation. Set
\(\Gamma_0:=\rho_0(\pi_1(\Sigma_h))\). The group \(\Gamma_0\) preserves a
totally geodesic copy of \(\mathbb H^2\subset\mathbb H^3\), and its limit
set is the ideal boundary of this plane, hence a round circle. Therefore
$\dim_{\mathcal H}\Lambda(\Gamma_0)=1.$ One can choose a quasi-Fuchsian representation
$\rho_1\colon\pi_1(\Sigma_h)\to\mathrm{PSL}(2,\mathbb C)$
which is not conjugate into \(\mathrm{PSL}(2,\mathbb R)\), and set
\(\Gamma_1:=\rho_1(\pi_1(\Sigma_h))\).
The limit set \(\Lambda(\Gamma_1)\) is a quasicircle, and hence a Jordan
curve in \(\mathbb{CP}^1\); consequently, $1\leq\dim_{\mathcal H}\Lambda(\Gamma_1)\leq2.$
Bowen's theorem~\cite[Theorem~2]{MR556580}, in the standard formulation
recalled by Farre--Pozzetti--Viaggi
\cite[\S~1, p.~2]{2024arXiv240720071F}, states that the limit set of
a quasi-Fuchsian representation has Hausdorff dimension \(1\) if and only
if the representation is Fuchsian, that is, conjugate into
\(\mathrm{PSL}(2,\mathbb R)\). Since \(\rho_1\) is non-Fuchsian, it follows
that $\dim_{\mathcal H}\Lambda(\Gamma_1)>1.$

Both \(\rho_0\) and \(\rho_1\) are faithful, so
$\rho_1\circ\rho_0^{-1}\colon\Gamma_0\to \Gamma_1$
is an abstract group isomorphism. Finally, for every \(n\geq2\), the standard inclusion
\(\operatorname{Conf}(S^2)\hookrightarrow\operatorname{Conf}(S^n)\)
regards \(\Gamma_0\) and \(\Gamma_1\) as Kleinian groups in
\(\operatorname{Conf}(S^n)\), without changing their limit sets or their
Hausdorff dimensions.
\end{remark}

\begin{proposition}\label{Llarull}
Let \((M^{n},g)\), \(n\geq4\), be a closed, connected, oriented,
locally conformally flat manifold with scalar curvature
\(\mathrm{Sc}_{g}\geq n(n-1)\). Identify
\(\Gamma:=\pi_1(M)\) with its holonomy Kleinian group, and suppose that
\(\Gamma\) is elementary. If there exists a smooth distance-nonincreasing (i.e.,
\(1\)-Lipschitz) map
\(
f\colon(M^n,g)\to (S^{n},g_{st})
\)
of nonzero degree, then \(f\) is a Riemannian isometry.
\end{proposition}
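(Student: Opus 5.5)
The plan is to reduce to Llarull's rigidity theorem on a spin cover. Llarull's theorem states that if \((X^n,h)\) is a closed spin manifold with \(\mathrm{Sc}_h\geq n(n-1)\) and \(F\colon X\to S^n\) is a smooth \(1\)-Lipschitz map of nonzero degree, then \(F\) is an isometry. The issue is that \(M\) need not be spin, so we pass to a finite cover, or to the universal cover, that is spin. We split into cases according to \(\Gamma\).

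\emph{Case \(\Gamma\) finite.} The universal cover \(p\colon\widetilde M\to M\) is then a finite-sheeted covering. By Kuiper's theorem, as in the proof of Theorem~\ref{nLCF}, \(\widetilde M\) is diffeomorphic to \(S^n\), so it is spin. The pullback \(\widetilde g=p^*g\) still satisfies \(\mathrm{Sc}\geq n(n-1)\). The composite \(f\circ p\) is \(1\)-Lipschitz, and its degree is \(\deg(p)\deg(f)\neq0\). Llarull's theorem then makes \(f\circ p\) an isometry, in particular a diffeomorphism. Hence \(p\) is injective, so \(\Gamma\) is trivial and \(f\) itself is an isometry.

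\emph{Case \(\Gamma\) infinite elementary.} We show this cannot happen under the hypotheses. By Lemma~\ref{lem:elementary} (its argument uses only PSC, \(n\geq4\), and Proposition~\ref{prop:standing}), \(\#\Lambda(\Gamma)=2\). Hence \(\widetilde M\cong S^n\setminus\{0,\infty\}\cong S^{n-1}\times\mathbb R\), and \(\Gamma\) has a finite-index infinite cyclic subgroup generated by a loxodromic element. Replace this subgroup by a normal finite-index subgroup \(\Gamma_1\) that is torsion-free and acts orientation-preservingly. The quotient \(M_1=\Omega/\Gamma_1\) is then a finite cover of \(M\) that is a mapping torus of \(S^{n-1}\); it is diffeomorphic to \(S^{n-1}\times S^1\) after possibly passing to a further double cover, and so it is spin.

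The lifted map \(f_1=f\circ(M_1\to M)\) is \(1\)-Lipschitz, has nonzero degree, and \(M_1\) has \(\mathrm{Sc}\geq n(n-1)\). Llarull's theorem forces \(f_1\) to be an isometry onto \(S^n\). That is impossible, since \(M_1\) is not simply connected. Hence this case does not occur, and only the finite case remains, which gives the conclusion.

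The main obstacle is the spin reduction in the infinite elementary case. One must check that a suitable finite cover of \(M\) is genuinely diffeomorphic to a spin manifold such as \(S^{n-1}\times S^1\), or at least that its second Stiefel--Whitney class vanishes. I would argue this directly: a closed orientable manifold that fibers over \(S^1\) with fiber \(S^{n-1}\) has a stably trivial tangent bundle, because the tangent bundle of \(S^{n-1}\) is stably trivial and the monodromy can be taken orientation-preserving.

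An alternative that avoids spin altogether is to invoke a version of Llarull's theorem that holds for arbitrary closed manifolds, for instance via \(\mu\)-bubbles in low dimensions. I would use the covering argument above as the primary route.
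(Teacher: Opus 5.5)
Your proof is correct and follows the paper's strategy. You split according to $\#\Lambda(\Gamma)\in\{0,1,2\}$, apply Llarull's theorem to the spin universal cover $\widetilde M\cong S^n$ in the finite case, and exclude the two-point case by applying Llarull to a spin finite cover with $\pi_1\cong\mathbb Z$. The differences are minor: you rule out a single limit point via the argument of Lemma~\ref{lem:elementary} rather than via asphericity, and you descend by noting that injectivity of $f\circ p$ forces $p$ to be injective, which is quicker than the paper's volume--degree inequality. Your general claim that every orientable $S^{n-1}$-fibration over $S^1$ is stably parallelizable needs more than the reason you give, but it is not needed. The monodromy here is some $A\in\mathrm{SO}(n)$, since deck transformations preserve orientation, and $A$ is isotopic to the identity, so $M_1\cong S^{n-1}\times S^1$ without any double cover. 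Alternatively, $H^2(M_1;\mathbb Z_2)=0$ already gives $w_2=0$, which is the paper's route.
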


\begin{proof}

 Identifying \(\widetilde{M}\) with
$\Omega:=\mathrm{dev}(\widetilde{M})\subset S^n$ and transporting the
lifted metric to $\Omega$, we write
\(\tilde g=u^{\frac{4}{n-2}}g_{st}\big|_{\Omega}\)
for some positive smooth function \(u\colon\Omega\to\mathbb R_{>0}\).
By the Kleinian realization established in Section~\ref{3},
\(\Omega=\Omega(\Gamma)\), and hence
\(\partial\Omega=\Lambda(\Gamma)\). Since \(\Gamma\) is elementary,
\(\partial\Omega\) contains at most two points.

The boundary $\partial\Omega$ cannot consist of a single point. Indeed,
if \(\partial\Omega=\{\xi\}\), then
\(\widetilde M\cong\Omega=S^n\setminus\{\xi\}\cong\mathbb R^n\),
so \(\widetilde M\) is contractible and \(M\) is aspherical. If $n=4$,
this contradicts the Schoen--Yau obstruction to positive scalar curvature
on closed aspherical $4$-manifolds~\cite[Theorem~6]{zbMATH04075990}. If
$n\geq5$, it contradicts Corollary~\ref{LCF aspherical}.

Assume that \( \partial \Omega \) consists of exactly two points. Then
\(\widetilde M\) is diffeomorphic to \(S^{n-1}\times\mathbb R\) and
has two ends. Choose a finite CW structure on $M$ and lift it to
$\widetilde M$. The resulting complex is a path-connected free
$\pi_1(M)$-CW complex with finite quotient, so
\cite[Corollary~13.5.12]{zbMATH05189637} implies that $\pi_1(M)$ has two
ends. It therefore contains a finite-index subgroup isomorphic to
$\mathbb Z$ by \cite[Theorem~13.5.9]{zbMATH05189637}. Hence $M$ admits a
connected finite cover
\(\widehat M\) such that
\(\pi_1(\widehat M)\cong\mathbb Z\). Since \(M\) is closed and
oriented, so is \(\widehat M\).

We claim that \(\widehat M\) is spin. Its universal cover satisfies
\(\widetilde{\widehat M}\cong S^{n-1}\times\mathbb R\simeq S^{n-1}\).
Since \(n\geq4\), it follows that
\(\pi_2(\widehat M)=\pi_2(S^{n-1})=0\). The Hopf exact sequence
\[
\pi_2(\widehat M)\longrightarrow H_2(\widehat M;\mathbb Z)
\longrightarrow H_2(\pi_1(\widehat M);\mathbb Z)\longrightarrow0
\]
and the equality
\(H_2(\pi_1(\widehat M);\mathbb Z)
=H_2(\mathbb Z;\mathbb Z)=0\) imply that
\(H_2(\widehat M;\mathbb Z)=0\). The universal coefficient theorem
gives the exact sequence
\[
0\longrightarrow
\operatorname{Ext}_{\mathbb Z}
\bigl(H_1(\widehat M;\mathbb Z),\mathbb Z_2\bigr)
\longrightarrow H^2(\widehat M;\mathbb Z_2)
\longrightarrow
\operatorname{Hom}_{\mathbb Z}
\bigl(H_2(\widehat M;\mathbb Z),\mathbb Z_2\bigr)
\longrightarrow0.
\]
Since
\(H_1(\widehat M;\mathbb Z)
\cong\pi_1(\widehat M)^{\mathrm{ab}}
\cong\mathbb Z\),
we have
\(\operatorname{Ext}_{\mathbb Z}(\mathbb Z,\mathbb Z_2)=0\), while
\(H_2(\widehat M;\mathbb Z)=0\) gives
\(\operatorname{Hom}_{\mathbb Z}(0,\mathbb Z_2)=0\). Therefore
\(
H^2(\widehat M;\mathbb Z_2)=0.
\)
Thus \(w_2(T\widehat M)=0\), and since
\(w_1(T\widehat M)=0\), the manifold \(\widehat M\) is spin. In fact,
for \(n\geq5\), one may choose \(\widehat M\) to be diffeomorphic to
\(S^{n-1}\times S^1\); see~\cite{504170}.

Give \(\widehat M\) the pullback orientation, let
\(p\colon\widehat M\to M\) denote the finite covering, and equip
\(\widehat M\) with the lifted metric \(\widehat g\). Set
\(\widehat f:=f\circ p\colon(\widehat M,\widehat g)
\to (S^n,g_{st})\).
Then \(\widehat f\) is smooth and distance-nonincreasing, and
\(
\deg(\widehat f)=\deg(p)\deg(f)\neq0.
\)
Since \(\mathrm{Sc}_{\widehat g}\geq n(n-1)\), Llarull's rigidity
theorem~\cite{MR1600027} implies that \(\widehat f\) is a Riemannian
isometry. This would make
\(\widehat M\) simply connected, contradicting
\(\pi_1(\widehat M)\cong\mathbb Z\). Thus \(\partial\Omega\) cannot
consist of two points.

Consequently, $\partial\Omega=\varnothing$, so $\Omega=S^n$. Thus
$\widetilde M$ is compact and spin, and the universal covering
\(\pi\colon\widetilde M\to M\) is finite-sheeted. Give
\(\widetilde M\) the pullback orientation and set
\(\widetilde f:=f\circ\pi\colon(\widetilde M,\tilde g)
\to (S^n,g_{st})\).
Then \(\widetilde f\) is smooth and distance-nonincreasing, and
\(
\deg(\widetilde f)=\deg(\pi)\deg(f)\neq0.
\)
Llarull's rigidity theorem implies that
\(\widetilde f\) is a Riemannian isometry. Hence
\((\widetilde M,\tilde g)\) is isometric to \((S^n,g_{st})\).
Conjugation by $\widetilde f$ identifies $\Gamma=\pi_1(M)$ with a finite
subgroup of $\mathrm{O}(n+1)$ acting freely on $S^n$. With this
identification,
$(M,g)\cong(S^n/\Gamma,g_{st})$.

Because the hypotheses do not guarantee that the spherical space form \(M\)
is spin, we cannot apply Llarull's rigidity theorem directly to \(M\). We
instead use the degree formula to descend the rigidity conclusion to $M$.
Since $f$ is smooth and $1$-Lipschitz,
\(\|df_p\|\leq1\) and \(|\operatorname{Jac}f(p)|\leq1\)
for every \(p\in M\).
The degree formula gives
\[
\begin{aligned}
|\deg(f)|\,\operatorname{Vol}_{g_{st}}(S^n)
&=
\left|\int_M f^*(d\operatorname{vol}_{g_{st}})\right| \\
&\leq
\int_M|\operatorname{Jac}f|\,d\operatorname{vol}_g \\
&\leq
\operatorname{Vol}_g(M)
=\frac{\operatorname{Vol}_{g_{st}}(S^n)}{|\Gamma|}.
\end{aligned}
\]
Since $|\deg(f)|$ and $|\Gamma|$ are positive integers, this forces
\(
|\deg(f)|=|\Gamma|=1.
\)
Thus \(\Gamma\) is trivial. Hence the universal covering
\(\pi\colon(\widetilde M,\tilde g)\to(M,g)\) is one-sheeted and is
therefore a Riemannian isometry. Since
\(\widetilde f=f\circ\pi\) is a Riemannian isometry, so is
\(f=\widetilde f\circ\pi^{-1}\).
\end{proof}

\begin{remark}
Proposition~\ref{Llarull} also holds in dimension three without assuming either local conformal flatness or that $\pi_1(M)$ is elementary. Indeed, in 2022 Lee and Tam \cite{LeeTam2026} extend  Cecchini, Hanke, and Schick's result \cite{zbMATH08176144} from even dimensions to all dimensions, and every oriented $3$-manifold is spin. These two results do not require local conformal flatness, and the smoothness assumptions on the metric and the map can be relaxed to less than \(C^{2}\).  The weighted Llarull rigidity theorem for weighted Riemannian manifolds is established by the author in \cite[Theorem~1.3]{zbMATH07342230}. These results rely on the spin condition, and the proofs follow Llarull's index-theoretic rigidity argument, adapted to the present geometric setting.

Even without assuming local conformal flatness or the spin condition, in 2021 the author proves a Llarull-type rigidity theorem (see \cite[Theorem~A]{zbMATH07544449}) under the additional hypothesis that the map is harmonic and satisfies Condition~C.
\end{remark}

We next record the conformal minimum estimate used below.

\begin{proposition}\label{minimum}
Let \(\Omega\subsetneq S^n\), \(n\geq3\), be a connected open subset
carrying a complete metric
\(\tilde g=u^{4/(n-2)}g_{st}\big|_\Omega\), where
\(u\colon\Omega\to\mathbb{R}_{>0}\) is smooth. Then
\(\sup_\Omega u=+\infty\). If, moreover,
\(\mathrm{Sc}_{\tilde g}\geq n(n-1)\), then \(u\)
attains its minimum on $\Omega$, and $\min_\Omega u<1$.
\end{proposition}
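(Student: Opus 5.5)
The plan has three steps: first show that completeness forces \(\sup_\Omega u=+\infty\); then show that the minimum is attained by treating \(u\) as a supersolution of the round conformal Laplacian on all of \(S^n\); finally bound the minimum value using the Yamabe equation at the minimum point.

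\emph{Step 1: \(\sup_\Omega u=+\infty\).} Since \(\Omega\subsetneq S^n\) is open and connected, \(\partial\Omega\neq\varnothing\). Pick \(\xi\in\partial\Omega\) and a point \(x_0\in\Omega\). I would choose a rectifiable curve \(c\colon[0,1)\to\Omega\) starting at \(x_0\) and leaving every compact subset of \(\Omega\) (accumulating at \(\xi\)), with finite round length \(\ell\).

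Completeness of \(\tilde g\) forces
\[
\int_c u^{2/(n-2)}\,ds_{g_{st}}=+\infty,
\]
because otherwise the \(\tilde g\)-Cauchy curve \(c\) would converge in \(\Omega\), contradicting that it leaves every compact set. If instead \(u\le K\), this integral would be at most \(K^{2/(n-2)}\ell<\infty\). Hence \(\sup_\Omega u=+\infty\).

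\emph{Step 2: the minimum is attained.} This is where I expect the main difficulty. Write \(p=\frac{n+2}{n-2}\) and \(L_{st}=-\frac{4(n-1)}{n-2}\Delta_{g_{st}}+n(n-1)\). The conformal transformation law gives
\[
L_{st}u=\mathrm{Sc}_{\tilde g}\,u^{p}\ \ge\ n(n-1)u^{p}>0 \qquad\text{on }\Omega,
\]
so \(u\) is a positive supersolution of \(L_{st}\) on \(\Omega\).

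The hypothesis \(\mathrm{Sc}_{\tilde g}\ge n(n-1)>0\) together with completeness puts us in the setting of the Liouville theorem (Theorem~\ref{Liouville theorem}). Applied to the conformal embedding \(\Omega\hookrightarrow S^n\), it shows that \(\partial\Omega\) has zero Newtonian capacity; in particular \(\partial\Omega\) is closed with empty interior.

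Sets of zero capacity are removable for supersolutions that are bounded below. So I would extend \(u\) to \(S^n\) by its lower semicontinuous regularization \(\bar u(\xi):=\liminf_{x\to\xi,\,x\in\Omega}u(x)\in(0,\infty]\). The aim is to show that \(\bar u\) is an \(L_{st}\)-supersolution on all of \(S^n\).

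Since \(S^n\) is compact and \(\bar u\) is lower semicontinuous, \(\bar u\) attains its minimum \(m\), and \(m\le\inf_\Omega u<\infty\). If \(m\) were attained only at some \(\xi\in\partial\Omega\), I would apply the strong minimum principle for supersolutions of \(-\Delta+c\) (with \(c>0\)) at a nonnegative interior minimum. It forces \(\bar u\equiv m\) near \(\xi\), and by connectedness on all of \(S^n\), which contradicts Step~1. Hence \(\min_\Omega u\) is attained at some \(x_*\in\Omega\). The delicate point is justifying the removable-singularity extension for supersolutions across a capacity-zero compact set; I would do this with standard cutoff functions \(\eta_k\) whose \(W^{1,2}\)-energy tends to \(0\) near \(\partial\Omega\).

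\emph{Step 3: \(\min_\Omega u<1\).} At \(x_*\) we have \(\Delta u(x_*)\ge0\), so the supersolution inequality gives \(n(n-1)u(x_*)\ge n(n-1)u(x_*)^{p}\), that is, \(u(x_*)\le1\).

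Suppose \(u(x_*)=1\). Then \(u\ge1\) on \(\Omega\), so \(u^{p}\ge u\), and therefore
\[
-\tfrac{4(n-1)}{n-2}\Delta u\ \ge\ n(n-1)\bigl(u^{p}-u\bigr)\ \ge\ 0 .
\]
Thus \(u\) is superharmonic on the connected set \(\Omega\) with an interior minimum. The strong maximum principle gives \(u\equiv1\), contradicting \(\sup_\Omega u=+\infty\). Therefore \(\min_\Omega u<1\).
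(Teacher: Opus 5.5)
Your Steps 1 and 3 are essentially the paper's arguments. In Step 1, take $c$ to be the minimizing round geodesic from $x_0$ to a nearest point of $S^n\setminus\Omega$, as the paper does; an arbitrary $\xi\in\partial\Omega$ need not be reachable by a rectifiable curve in $\Omega$.

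The gap is in Step 2. The strong minimum principle you invoke is false. For $-\Delta+c$ with $c>0$, a supersolution is forced to be constant only at a \emph{nonpositive} interior minimum; this is the sign restriction in Hopf's strong maximum principle. At a positive minimum nothing follows: $\cosh$ solves $-w''+w=0$ and has a positive interior minimum. The proposition itself rules out your principle: $u$ is a positive $L_{st}$-supersolution on $\Omega$ that attains its minimum at $x_*\in\Omega$, an interior point of $S^n$, and is not constant.

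Hence you have not excluded that $\bar u$ attains its minimum $m$ only on $\partial\Omega$. Your ingredients handle only the case $m\ge 1$. There $u^{(n+2)/(n-2)}\ge u$ makes $u$ genuinely superharmonic, and a superharmonic extension across the polar set would be constant on $S^n$. For $m<1$, which is the case that matters, the right-hand side of $-\tfrac{4(n-1)}{n-2}\Delta u\ge n(n-1)\bigl(u^{(n+2)/(n-2)}-u\bigr)$ is negative near the minimum, so you get no minimum principle at all.

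The missing ingredient is the boundary behaviour $u(x)\to+\infty$ as $x\to\partial\Omega$, i.e.\ $\bar u\equiv+\infty$ on $\partial\Omega$. This does not follow from completeness alone: a complete conformal factor can stay bounded on small balls accumulating at $\partial\Omega$. Nor does it follow from the capacity and removability theory; it genuinely uses the curvature bound.

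The paper takes it from Ma--Qing's lemma \cite[Lemma~3.1]{zbMATH07456625}. That lemma says a complete metric $e^{2\eta}g_{st}$ on a domain with $\mathrm{Sc}^-\in L^q$ for some $q>n/2$ satisfies $\eta\to+\infty$ at the boundary; here $\mathrm{Sc}^-_{\tilde g}\equiv 0$, so it applies. Then $v=1/u$ extends continuously by $0$ to the compact set $\overline\Omega$, attains a positive maximum at a point of $\Omega$, and $u$ attains its minimum there. No removable-singularity argument is needed, and your Step 3 then finishes the proof.
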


\begin{proof}
We first show that completeness forces \(\sup_{\Omega}u=+\infty\).
Suppose, to the contrary, that
\(A:=\sup_{\Omega}u<\infty\).
Fix \(y_0\in\Omega\). Since \(\Omega\subsetneq S^n\) is open,
\(K:=S^n\setminus\Omega\) is nonempty and compact. Choose \(p\in K\) such that
\(\ell:=d_{g_{st}}(y_0,p)=d_{g_{st}}(y_0,K)>0\).
Let \(\gamma\colon[0,\ell]\to S^n\) be a unit-speed minimizing
\(g_{st}\)-geodesic from \(y_0\) to \(p\). The minimality of \(\ell\)
implies that \(\gamma([0,\ell))\subset\Omega\). Hence, for
\(0\le s<t<\ell\),
\[
d_{\tilde g}\bigl(\gamma(s),\gamma(t)\bigr)
\leq
\int_s^t u(\gamma(r))^{\frac{2}{n-2}}\,dr
\leq A^{\frac{2}{n-2}}(t-s).
\]
Thus, if \(t_j\uparrow\ell\), then \(\bigl(\gamma(t_j)\bigr)\) is
\(d_{\tilde g}\)-Cauchy. By completeness and the Hopf--Rinow theorem,
it converges to some point of \(\Omega\). Since the topology induced by
\(d_{\tilde g}\) is the manifold topology, this would also be its limit
in \(S^n\). On the other hand,
\(\gamma(t_j)\to p\notin\Omega\) in \(S^n\), a contradiction.
Therefore \(\sup_{\Omega}u=+\infty\).

Now assume that \(\mathrm{Sc}_{\tilde g}\geq n(n-1)\). By hypothesis,
\((\Omega,\tilde g)\) is complete. Write
\[
\tilde g=e^{2\psi}g_{st}\big|_\Omega,
\qquad
\psi:=\frac{2}{n-2}\log u.
\]
We use the following result of Ma and Qing
\cite[Lemma~3.1]{zbMATH07456625}: if \((X^n,h)\) is a compact
Riemannian manifold, \(U\subset X\) is a domain, and
\(\widehat h=e^{2\eta}h\big|_U\) is complete with
\[
\mathrm{Sc}_{\widehat h}^{-}
:=\max\{-\mathrm{Sc}_{\widehat h},0\}
\in L^p(U,\widehat h)
\]
for some \(p>n/2\), then
\(\eta(x)\to+\infty\) as \(x\to\partial U\).

In the present setting,
\(\mathrm{Sc}_{\tilde g}^{-}\equiv0\), and hence
\(\mathrm{Sc}_{\tilde g}^{-}\in L^p(\Omega,\tilde g)\) for every
\(p>n/2\). Applying the result with
\[
(X,h)=(S^n,g_{st}),\qquad
U=\Omega,\qquad
\widehat h=\tilde g,\qquad
\eta=\psi,
\]
we obtain \(\psi(x)\to+\infty\) as \(x\to\partial\Omega\).
Equivalently,
\[
v:=u^{-1}\longrightarrow0
\qquad\text{as }x\to\partial\Omega.
\]

Define \(\bar v\colon\overline\Omega\to[0,\infty)\) by
\(\bar v:=v\) on \(\Omega\) and \(\bar v:=0\) on \(\partial\Omega\).
This extension is continuous.
Indeed, if \(z_j\in\overline\Omega\) and
\(z_j\to p\in\partial\Omega\), then the terms lying in
\(\partial\Omega\) have value zero, while the subsequence of terms
lying in \(\Omega\), if infinite, has values tending to zero by the
preceding boundary limit. Since \(\overline\Omega\) is compact,
\(\bar v\) attains its maximum. This maximum is positive and cannot
occur on \(\partial\Omega\); hence it occurs at some
\(x_{\min}\in\Omega\). Consequently, \(u\) attains its global minimum
at \(x_{\min}\).

It remains to prove that this minimum is strictly less than \(1\).
For the remainder of the proof, we use the Laplacian
\(\Delta:=-\operatorname{div}_{g_{st}}\nabla\).
Thus $\Delta$ has nonnegative spectrum.
Suppose, to the contrary, that \(\min_{\Omega}u\geq1\). The conformal
scalar-curvature equation is
\[
\frac{4(n-1)}{n-2}\Delta u+n(n-1)u
=
\mathrm{Sc}_{\tilde g}\,
u^{\frac{n+2}{n-2}}.
\]
It follows that
\[
\Delta u
\geq
\frac{n(n-2)}4
\left(u^{\frac{n+2}{n-2}}-u\right)
\geq0
\qquad\text{on }\Omega.
\]
Equivalently,
\(\operatorname{div}_{g_{st}}\nabla u\leq0\). Since \(u\) attains its
global minimum at a point of the connected open set \(\Omega\), the
strong minimum principle implies that \(u\) is constant. This
contradicts \(\sup_{\Omega}u=+\infty\). Therefore
\(\min_{\Omega}u<1\).
\end{proof}

For a continuous map \(F\colon(X,g_X)\to(Y,g_Y)\) between Riemannian
manifolds, define its pointwise Lipschitz constant by
\[
\mathrm{Lip}_x(F):=\limsup_{y\to x,\,y\neq x}
\frac{d_{g_Y}(F(y),F(x))}{d_{g_X}(y,x)}.
\]
If $F$ is smooth, then $\mathrm{Lip}_x(F)=\|dF_x\|$. In particular, if
$X$ is connected and $\mathrm{Lip}_x(F)\leq1$ for every $x\in X$, then
integrating $\|dF\|\leq1$ along piecewise smooth curves and taking the
infimum of their lengths shows that $F$ is globally $1$-Lipschitz.
Consequently, Llarull's rigidity theorem implies that, if
$(X^n,g)$ is a closed, connected spin manifold with
$\mathrm{Sc}_g\geq n(n-1)$ and is not isometric to the standard round
sphere, then every smooth nonzero-degree map
\(f\colon(X^n,g)\to(S^n,g_{st})\) satisfies
$\mathrm{Lip}_{x_0}(f)>1$ at some point $x_0\in X$.
The next theorem gives the corresponding conclusion for developing maps.

\begin{theorem}\label{dev}
Let \((M^{n},g)\), \(n\geq3\), be a closed, connected, locally
conformally flat manifold satisfying
\(\mathrm{Sc}_{g}\geq n(n-1)\). Assume that $\pi_1(M)$ is infinite. Let
\(\mathrm{dev}\colon(\widetilde M,\tilde g)\to (S^n,g_{st})\)
denote a developing map, where \(\tilde g\) is the lifted metric. Then
there exists \(x\in\widetilde M\) such that
\(\mathrm{Lip}_{x}(\mathrm{dev})>1\).
\end{theorem}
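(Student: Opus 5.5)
The plan is to reduce the statement to Proposition~\ref{minimum} via the injectivity of the developing map. First, I will show that $\mathrm{dev}$ is injective and that its image $\Omega:=\mathrm{dev}(\widetilde M)$ is a proper open subset of $S^n$.
\begin{itemize}
\item For $n\geq4$ this is the Schoen--Yau theorem recalled in Section~\ref{3}.
\item For $n=3$ I would invoke the Liouville theorem (Theorem~\ref{Liouville theorem}). The metric $\tilde g$ is complete, being lifted from a closed manifold, and has $\mathrm{Sc}_{\tilde g}\geq 6>0$. Hence the conformal map $\mathrm{dev}$ is injective.
\end{itemize}
Properness of $\Omega$ comes from the infinite fundamental group. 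If $\Omega=S^n$, then $\widetilde M\cong S^n$ would be compact. A properly discontinuous deck group acting on a compact space is finite, contradicting $|\pi_1(M)|=\infty$. So $\Omega\subsetneq S^n$, and $\Omega$ is connected, being the image of a connected space.

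Next, I transport the metric to $\Omega$. Via $\mathrm{dev}$, the metric $(\mathrm{dev}^{-1})^*\tilde g$ is conformal to $g_{st}$, so I write it as $u^{4/(n-2)}g_{st}|_\Omega$ with $u>0$ smooth. Since $\mathrm{dev}$ is an isometry from $(\widetilde M,\tilde g)$ onto $(\Omega,u^{4/(n-2)}g_{st})$, this metric is complete with scalar curvature at least $n(n-1)$. Proposition~\ref{minimum} then applies and gives a point $y_0\in\Omega$ with $u(y_0)=\min_\Omega u<1$.

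Finally, I compute the pointwise Lipschitz constant. Since $\mathrm{dev}$ is smooth, $\mathrm{Lip}_x(\mathrm{dev})=\|d\,\mathrm{dev}_x\|$. For $v\in T_x\widetilde M$ we have $|d\,\mathrm{dev}(v)|_{g_{st}}=u(\mathrm{dev}(x))^{-2/(n-2)}|v|_{\tilde g}$, so $\|d\,\mathrm{dev}_x\|=u(\mathrm{dev}(x))^{-2/(n-2)}$. Taking $x:=\mathrm{dev}^{-1}(y_0)$ gives $\mathrm{Lip}_x(\mathrm{dev})=u(y_0)^{-2/(n-2)}>1$.

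I expect the main obstacle to be the injectivity step in dimension $3$, where the Schoen--Yau argument is not directly quoted. The point to check is that Theorem~\ref{Liouville theorem} needs only completeness and nonnegative scalar curvature, both of which hold here. The analytic content of the argument, namely boundary blow-up and the minimum principle, is already contained in Proposition~\ref{minimum}.
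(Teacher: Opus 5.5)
Your proposal is correct and follows essentially the same route as the paper: injectivity of $\mathrm{dev}$, properness of $\Omega$ from the infinite fundamental group, transport of $\tilde g$ to $u^{4/(n-2)}g_{st}$ on $\Omega$, Proposition~\ref{minimum} to obtain $\min_\Omega u<1$, and the identity $\|d\,\mathrm{dev}_x\|=u(\mathrm{dev}(x))^{-2/(n-2)}$. The only difference is that the paper invokes the Liouville theorem (Theorem~\ref{Liouville theorem}) for all $n\geq3$ at once, so your separate appeal to Schoen--Yau when $n\geq4$ is valid but not needed.
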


\begin{proof}
Since \(M\) is closed, the lifted metric \(\tilde g\) is complete, and
$\mathrm{Sc}_{\tilde g}\geq n(n-1)$. By
Theorem~\ref{Liouville theorem}, the developing map is injective. Hence, with
\(\Omega:=\mathrm{dev}(\widetilde M)\), the map
\(\varphi:=\mathrm{dev}\colon\widetilde M\to\Omega\)
is a conformal diffeomorphism. The connected domain \(\Omega\) is proper:
if \(\Omega=S^n\), then \(\widetilde M\cong S^n\) would be compact, so its
properly discontinuous deck-transformation group
\(\pi_1(M)\) would be finite, a contradiction.

Transporting \(\tilde g\) by \(\varphi\), write
\(g_{\Omega}:=(\varphi^{-1})^*\tilde g
=u^{\frac{4}{n-2}}g_{st}\big|_{\Omega}\)
for some smooth \(u\colon\Omega\to(0,\infty)\). Then \(\varphi\) is an
isometry, \((\Omega,g_{\Omega})\) is complete, and
\(\mathrm{Sc}_{g_{\Omega}}\geq n(n-1)\).

The developing map decomposes as $\mathrm{dev} = \iota \circ \varphi,$
where $\iota:\Omega \hookrightarrow S^{n}$ is the inclusion. We compute the pointwise operator norm of 
$d(\mathrm{dev})_{p} : (T_{p}\widetilde{M},\tilde{g}) \to (T_{\mathrm{dev}(p)} S^{n}, g_{st}).$ Since $\varphi$ is an isometry $(\widetilde{M},\tilde{g})\cong(\Omega,g_{\Omega})$, one has $\| d(\mathrm{dev})_{p} \|_{(\tilde{g}\to g_{st})}
=\| d\iota_{\varphi(p)} \|_{(g_{\Omega}\to g_{st})}.$ Thus it suffices to study the inclusion $\iota : (\Omega, g_{\Omega}) \hookrightarrow (S^{n}, g_{st}).$

Fix $x\in \Omega$ and $v\in T_{x}\Omega$. Since $d\iota_{x}$ is the identity on $T_{x}\Omega$, its operator norm is
\[
\| d\iota_{x} \|_{(g_{\Omega}\to g_{st})}
=
\sup_{v\neq 0}
\frac{|v|_{g_{st}}}{|v|_{g_{\Omega}}}
=
u(x)^{-\frac{2}{n-2}}.
\]

Since $g_{\Omega}$ is complete and $\operatorname{Sc}_{g_{\Omega}} \ge n(n-1)$,  Proposition \ref{minimum} implies that there exists a point $x_0\in \Omega$ such that $u(x_0)=\min_{\Omega} u < 1$. Thus, 
$\| d\iota_{x_{0}} \|=u_{\min}^{-\frac{2}{n-2}}> 1.$

Setting $p_{0} := \varphi^{-1}(x_{0}) \in \widetilde{M}$, we conclude that
$\| d(\mathrm{dev})_{p_{0}} \|_{(\tilde{g}\to g_{st})}
=
\| d\iota_{x_{0}} \|_{(g_{\Omega}\to g_{st})}
=
u_{\min}^{-\frac{2}{n-2}}
> 1$.
This shows that the developing map strictly expands lengths at $p_{0}$, completing the proof.

\end{proof}

\begin{proof}[Proof of Theorem~\ref{Lipschitz}]
Suppose first that $\pi_1(M)$ is elementary and that the conclusion
fails for a smooth nonzero-degree map
\(f\colon(M^n,g)\to(S^n,g_{st})\). Then
$\mathrm{Lip}_p(f)\leq1$ for every $p\in M$. Since $f$ is smooth,
\(\|df_p\|=\mathrm{Lip}_p(f)\leq1\)
for every \(p\in M\).

Fix distinct points \(x,y\in M\), set \(\ell:=d_g(x,y)\), and let
\(\gamma\colon[0,\ell]\to M\) be a unit-speed minimizing geodesic from
\(x\) to \(y\). Then
\[
\begin{aligned}
d_{g_{st}}(f(x),f(y))
&\leq L_{g_{st}}(f\circ\gamma)\\
&=\int_0^\ell |df_{\gamma(t)}(\dot\gamma(t))|_{g_{st}}\,dt\\
&\leq\int_0^\ell |\dot\gamma(t)|_g\,dt
=\ell=d_g(x,y).
\end{aligned}
\]
Thus $f$ is globally $1$-Lipschitz. Proposition~\ref{Llarull} therefore
implies that $f$ is a Riemannian isometry, contradicting the assumption
that $(M^n,g)$ is not
isometric to $(S^n,g_{st})$.

If $\pi_1(M)$  is infinite,  the conclusion follows from
Theorem~\ref{dev}.
\end{proof}

\section{Moduli spaces of LCF metrics with PSC}\label{5}
In this section, we answer the question of Bamler and
Kleiner~\cite[Question~1.8]{2019arXiv190908710B} affirmatively for closed,
oriented $3$-manifolds with finite fundamental group and for
$S^2\times S^1$. Retaining their notation, the question is:
\begin{quote}
Is $\mathrm{Met}_{\mathrm{PSC}}(M) \cap \mathrm{Met}_{\mathrm{CF}}(M)$ always empty or contractible?
Equivalently, is the space of conformally flat metrics with positive Yamabe constant
always empty or contractible?
\end{quote}
We then extend the method to smooth $n$-manifolds, $n\ge4$, that are
homeomorphic to spherical space forms but not diffeomorphic to $S^n$.
Throughout, $\mathrm{Met}(M^n)$ and $\mathrm{Diff}(M^n)$ are equipped with
their $C^\infty$-topologies, and all subspaces of $\mathrm{Met}(M^n)$ carry
the subspace topology. We write $g_{st}$ for the round metric of sectional
curvature $1$ on the sphere whose dimension is clear from context. We use the
sign convention
\(\Delta_g=\operatorname{div}_g\nabla\),
so that $-\Delta_g$ has nonnegative spectrum. Let
\[
X(M^n) \ :=\ \Bigl\{\, g \in \mathrm{Met}(M^n) \ \Bigm| \ g \text{ is locally conformally flat and } \mathrm{Sc}_g > 0 \,\Bigr\},
\]
equipped with the natural action of $\mathrm{Diff}(M^n)$ given by
$\psi\cdot g:=(\psi^{-1})^*g$. Thus, in the notation of the quotation above,
$X(M^3)
=
\mathrm{Met}_{\mathrm{PSC}}(M^3)
\cap
\mathrm{Met}_{\mathrm{CF}}(M^3)$.
We define the moduli space
$\mathcal{M}(M^n) := X(M^n) / \mathrm{Diff}(M^n)$, endowed with the quotient topology.

We shall repeatedly use the following fact. Let $(N,h)$ be a closed, connected
Riemannian $3$-manifold locally isometric to
\((S^2\times\mathbb R,g_{\mathrm{cyl}})\),
where \(g_{\mathrm{cyl}}:=g_{st}+dt^2\),
and let $(\widetilde N,\widetilde h)$ be its universal Riemannian covering.
Then:
\begin{enumerate}
\item[(i)] $(\widetilde N,\widetilde h)$ is isometric to
      $(S^2\times\mathbb R,g_{\mathrm{cyl}})$;
\item[(ii)] $(N,h)$ is isometric to a quotient of
      $(S^2\times\mathbb R,g_{\mathrm{cyl}})$ by a group of cylinder
      isometries.
\end{enumerate}
Indeed, since $N$ is closed, $(N,h)$ is complete; hence
$(\widetilde N,\widetilde h)$ is complete and simply connected. The Ricci
endomorphism of $\widetilde h$ is parallel, because this is true in every
cylindrical chart. Its eigendistributions corresponding to the eigenvalues
$1$ and $0$ have ranks $2$ and $1$, respectively, and are parallel. The de
Rham decomposition theorem therefore gives an isometric splitting
\[
(\widetilde N,\widetilde h)
\cong
(\Sigma^2,h_\Sigma)\times(\mathbb R,dt^2),
\]
where $(\Sigma,h_\Sigma)$ is complete, simply connected, and has constant
sectional curvature $1$. By the Killing--Hopf theorem,
$(\Sigma,h_\Sigma)$ is isometric to $(S^2,g_{st})$, proving~(i). Given an
isometry
\(J\colon(\widetilde N,\widetilde h)
\to (S^2\times\mathbb R,g_{\mathrm{cyl}})\),
the group
\(J\,\mathrm{Deck}(\widetilde N/N)\,J^{-1}
\subset
\mathrm{Isom}(S^2\times\mathbb R,g_{\mathrm{cyl}})\)
acts freely and properly discontinuously, and $J$ descends to the quotient
isometry asserted in~(ii).

\begin{proposition}\label{X3-contractible}
Let $M^3$ be a closed, connected, oriented smooth $3$-manifold with finite
fundamental group. Then both $X(M^3)$ and $\mathcal{M}(M^3)$ are contractible.
\end{proposition}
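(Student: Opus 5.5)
The plan follows the outline given in the Introduction: build a canonical, $\mathrm{Diff}(M)$-equivariant retraction of $X(M^3)$ onto the round metrics, and then use Bamler--Kleiner to show that the round locus is contractible.

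\emph{Step 1: the conformal class is round.} If $X(M^3)=\varnothing$ there is nothing to prove. We note, however, that $M$ is a spherical space form by geometrization, so the round quotient metric lies in $X(M)$ and $X(M)\neq\varnothing$.

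Fix $g\in X(M)$. Pull it back to the universal cover $\widetilde M$, which is closed because $\pi_1(M)$ is finite. By Kuiper's theorem $\widetilde M$ is conformal to $(S^3,g_{st})$. The argument in the proof of Theorem~\ref{nLCF} (Cartan fixed point in $\mathbb H^4$) then conjugates the deck group into $\mathrm O(4)$. Hence $[g]$ contains a metric of constant curvature $1$.

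This representative is not unique, since the round metrics in $[\widetilde g]$ form a copy of $\mathbb H^4$. To make a canonical choice, I would apply the normalization of Bamler--Kleiner~\cite[Lemma~9.2(a)]{2019arXiv190908710B} on $\widetilde M$. It selects a unique round metric $\widetilde g_0=\widetilde u^{4}\widetilde g$ minimizing a strictly convex functional over this $\mathbb H^4$-family; morally this is the barycenter of the measure $dV_{\widetilde g}$ pushed to $\partial\mathbb H^4$.

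\emph{Step 2: descent and equivariance.} Uniqueness of the minimizer implies that the minimizer is invariant under the deck group. So $\widetilde u$ descends to $u_g$ on $M$, and $R(g):=u_g^{4}g$ is a round metric on $M$.

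The same uniqueness gives naturality: $R(\psi^*g)=\psi^*R(g)$ for every $\psi\in\mathrm{Diff}(M)$. Continuity of $g\mapsto u_g$ in $C^\infty$ follows from the nondegeneracy of the minimum, via the implicit function theorem as in Bamler--Kleiner.

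Now define the homotopy
\[
H_s(g):=\bigl((1-s)+s\,u_g\bigr)^{4}g .
\]
Since $H_s(g)$ is conformal to $g$, it stays LCF. It stays PSC because the Yamabe invariant of $[g]$ is positive: I would check this by writing the scalar curvature of $v^4g$ with $v$ a convex combination of $1$ and $u_g$. The obstacle is that PSC is not obviously convex in $v$.

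If that fails, I would instead interpolate along a conformally invariant path, namely through the first eigenfunctions of the conformal Laplacian $L_g$, whose first eigenvalue is positive. Concretely, I would first flow $g$ to $\phi_g^4 g$ with $\phi_g$ the positive first eigenfunction normalized in $L^2$; this metric has scalar curvature $\lambda_1\phi_g^{-4}>0$. I would then use the fact that for $v=(1-s)\phi+s\,w$ with both endpoints PSC, one gets $L_g v=(1-s)L_g\phi+s\,L_g w>0$. Since $v^4g$ has scalar curvature $v^{-5}L_gv$, this linear interpolation does preserve PSC. So the interpolation of the factors $1$ and $u_g$ is PSC, because $L_g1=\mathrm{Sc}_g>0$ and $L_gu_g=6u_g^5>0$.

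The resulting map is a strong deformation retraction of $X(M)$ onto the space $\mathrm{Met}_{K\equiv1}(M)$ of round metrics, and it fixes that space pointwise because $u_g\equiv1$ for round $g$. By \cite[Theorem~1.2]{2019arXiv190908710B}, $\mathrm{Met}_{K\equiv1}(M)$ is contractible, so $X(M)$ is contractible.

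\emph{Step 3: the moduli space.} Equivariance makes $H_s$ descend to a deformation retraction of $\mathcal M(M)$ onto the image of the round metrics. By de Rham rigidity \cite{MR43468}, any two round metrics on $M$ are isometric, so this image is a single point. Hence $\mathcal M(M)$ is contractible.

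The main obstacle I expect is Step 2: transferring the Bamler--Kleiner normalization from their setting to ours with continuity in the $C^\infty$ topology, and confirming that the minimizer is unique so that deck-invariance and $\mathrm{Diff}$-equivariance follow.
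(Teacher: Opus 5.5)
Your proposal is correct and follows essentially the same route as the paper. The paper also uses the Bamler--Kleiner minimizer on the universal cover, gets descent and $\mathrm{Diff}(M)$-equivariance from its uniqueness, and interpolates the conformal factors between $1$ and $u_g$, keeping PSC by linearity of $L_g$; it then invokes Bamler--Kleiner's Theorem~1.2 for the round locus and de Rham rigidity to collapse its image in the moduli space to a point. The eigenfunction detour is unnecessary. Your assertion that $u_g\equiv 1$ for round $g$ is something the paper actually proves, using equality of volumes and the pointwise inequality $3e^{-s}+e^{3s}\ge 4$; for the contractibility conclusions alone it is not even needed, since the time-one map already lands in a contractible set (respectively, a single point of $\mathcal M(M)$).
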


\begin{proof}
By the elliptization theorem of Perelman~\cite{2002math.....11159P,2003math......3109P}, $M$ is diffeomorphic to a spherical space form
$S^3/\Gamma$, where $\Gamma<\mathrm{SO}(4)$ is finite and acts freely on $S^3$.
In particular, $M$ carries a metric of constant sectional curvature $1$.

Let
\[
\mathrm{Met}^{CC}(M):=
\Bigl\{\, h\in \mathrm{Met}(M)\ \Bigm|\ h \text{ is locally isometric to } (S^3,g_{st})\,\Bigr\}.
\]
In particular, the curvature scale in $\mathrm{Met}^{CC}(M)$ is fixed: every metric in
this space has constant sectional curvature exactly $1$.
Moreover,
\(\mathrm{Met}^{CC}(M)\neq\varnothing\)
and
\(\mathrm{Met}^{CC}(M)\subset X(M)\),
because every metric locally isometric to $(S^3,g_{st})$ is locally
conformally flat and has scalar curvature $6$.

We use the superscript $CC$ for the space of normalized locally spherical
metrics above,
whereas $\mathrm{Met}_{CC}(M)$ in
\cite[Theorem~1.2]{2019arXiv190908710B} consists of metrics locally isometric
to either the unit round sphere $(S^3,g_{st})$ or the unit round cylinder
$(S^2\times\mathbb R,g_{st}+dt^2)$. On the present manifold the two spaces
coincide. Indeed, if $M$ carried a locally cylindrical metric, part~(i) of the
local-to-global cylinder observation would identify its universal cover with
$S^2\times\mathbb R$, which is noncompact. Since $\pi_1(M)$ is finite,
however, the universal cover is a finite-sheeted cover of the compact
manifold $M$ and is therefore compact, a contradiction. Thus the cylindrical
alternative cannot occur. Hence
\cite[Theorem~1.2]{2019arXiv190908710B} implies that
$\mathrm{Met}^{CC}(M)$ is contractible.

For each $g\in X(M)$, define
\[
V_g:=\Bigl\{\phi\in C^\infty(M)\ \Bigm|\ e^{2\phi}g\in \mathrm{Met}^{CC}(M)\Bigr\}.
\]
Fix the universal covering
\[
p\colon\widetilde M\longrightarrow M,
\qquad
\mathcal D:=\mathrm{Deck}(p),
\qquad
|\mathcal D|=|\pi_1(M)|,
\]
and write $\widetilde g:=p^*g$. Since $M$ is a spherical space form,
$\widetilde M$ is diffeomorphic to $S^3$. Define
\[
\widetilde V_{\widetilde g}
:=
\Bigl\{\widetilde\phi\in C^\infty(\widetilde M)
\ \Bigm|\
(\widetilde M,e^{2\widetilde\phi}\widetilde g)
\text{ is isometric to }(S^3,g_{st})
\Bigr\}.
\]
Pullback identifies the downstairs admissible set with the deck-invariant
part of the upstairs admissible set:
\[
p^*(V_g)
=
\widetilde V_{\widetilde g}
\cap C^\infty(\widetilde M)^{\mathcal D}.
\]
Indeed, if $\phi\in V_g$, then
$p^*(e^{2\phi}g)=e^{2p^*\phi}\widetilde g$ is a complete metric of constant
sectional curvature $1$ on the simply connected manifold $\widetilde M$; by
the Killing--Hopf theorem it is isometric to $(S^3,g_{st})$. Conversely, every
$\mathcal D$-invariant
$\widetilde\phi\in\widetilde V_{\widetilde g}$ descends to a unique
$\phi\in C^\infty(M)$, and the descended metric $e^{2\phi}g$ is locally
isometric to $(S^3,g_{st})$, so $\phi\in V_g$.

Apply the proof of \cite[Lemma~9.2(a)]{2019arXiv190908710B} to
$(\widetilde M,\widetilde g)$. It gives a unique minimizer
$\widetilde\phi_{\widetilde g}\in\widetilde V_{\widetilde g}$ of
\[
\widetilde{\mathcal F}_{\widetilde g}(\widetilde\phi)
:=
\int_{\widetilde M}e^{-\widetilde\phi}\,d\mu_{\widetilde g}.
\]
For every $\kappa\in\mathcal D$, one has $\kappa^*\widetilde g=\widetilde g$
and hence
\(e^{2(\widetilde\phi_{\widetilde g}\circ\kappa)}\widetilde g
=
\kappa^*\!\left(e^{2\widetilde\phi_{\widetilde g}}\widetilde g\right)\),
so
$\widetilde\phi_{\widetilde g}\circ\kappa
\in\widetilde V_{\widetilde g}$; indeed, one precomposes the associated
isometry to $(S^3,g_{st})$ with $\kappa$. Moreover,
\(\widetilde{\mathcal F}_{\widetilde g}
(\widetilde\phi_{\widetilde g}\circ\kappa)
=
\widetilde{\mathcal F}_{\widetilde g}(\widetilde\phi_{\widetilde g})\).
Thus $\widetilde\phi_{\widetilde g}\circ\kappa$ is another minimizer, and
uniqueness implies that $\widetilde\phi_{\widetilde g}$ is
$\mathcal D$-invariant. It therefore descends to a function
$\phi(g)\in V_g$. Define the downstairs functional by
\(\mathcal F_g(\phi):=\int_M e^{-\phi}\,d\mu_g\).
Because $\widetilde g=p^*g$, the Riemannian volume densities satisfy
$d\mu_{\widetilde g}=p^*(d\mu_g)$. Hence, for every $\phi\in V_g$, the
covering-integration formula gives
\[
\widetilde{\mathcal F}_{\widetilde g}(p^*\phi)
=
\int_{\widetilde M}e^{-p^*\phi}\,d\mu_{\widetilde g}
=
\int_{\widetilde M}p^*\!\left(e^{-\phi}\,d\mu_g\right)
=
|\mathcal D|\int_M e^{-\phi}\,d\mu_g
=
|\mathcal D|\,\mathcal F_g(\phi).
\]
Since the unique upstairs minimizer is deck-invariant, this identity shows
that $\phi(g)$ minimizes $\mathcal F_g$ on $V_g$. If $\phi\in V_g$ were
another minimizer, then $p^*\phi$ would also minimize
$\widetilde{\mathcal F}_{\widetilde g}$; upstairs uniqueness would give
$p^*\phi=\widetilde\phi_{\widetilde g}=p^*\phi(g)$, and hence
$\phi=\phi(g)$.

Finally, the map $g\mapsto p^*g$ is continuous in the
$C^\infty$-topology, and the proof of
\cite[Lemma~9.2(a)]{2019arXiv190908710B} gives the $C^\infty$-continuity of
the upstairs minimizer
$\widetilde g\mapsto\widetilde\phi_{\widetilde g}$. The pullback map
\(p^*\colon C^\infty(M)
\to C^\infty(\widetilde M)^{\mathcal D}\)
is a linear homeomorphism for the Fr\'echet $C^\infty$-topologies; continuity
of its inverse follows, for example, from a finite collection of evenly
covered coordinate charts. Therefore
\(g\longmapsto
\phi(g)
=
(p^*)^{-1}\!\bigl(\widetilde\phi_{p^*g}\bigr)\)
is continuous in the $C^\infty$-topology. Define
\(r(g):=e^{2\phi(g)}g\in \mathrm{Met}^{CC}(M)\).
Then $r\colon X(M)\to \mathrm{Met}^{CC}(M)$ is continuous.

We claim that $r(g)=g$ for every $g\in \mathrm{Met}^{CC}(M)$. Fix such a metric
$g$, and let $\phi\in V_g$. Since both $g$ and $e^{2\phi}g$ belong to
$\mathrm{Met}^{CC}(M)$, both are metrics of constant sectional curvature
$1$ on the same manifold $M$. Their universal covers are therefore isometric to
$(S^3,g_{st})$, and the covering degree is $|\pi_1(M)|$ in both cases.
Hence
\[
\operatorname{Vol}(M,g)=\operatorname{Vol}(M,e^{2\phi}g)
=\frac{\operatorname{Vol}(S^3,g_{st})}{|\pi_1(M)|}.
\]
Since $d\mu_{e^{2\phi}g}=e^{3\phi}\,d\mu_g$, it follows that
\(\int_M e^{3\phi}\,d\mu_g=\operatorname{Vol}(M,e^{2\phi}g)=\operatorname{Vol}(M,g)\).

Now consider the function $f(s):=3e^{-s}+e^{3s}-4$, $s\in \mathbb R$. We have
\(
f''(s)=3e^{-s}+9e^{3s}>0,
\)
so $f$ is strictly convex, and
\(f'(s)=-3e^{-s}+3e^{3s}=0
\Longleftrightarrow s=0\).
Therefore $f(s)\ge 0$ for all $s$, with equality if and only if $s=0$.
Applying this pointwise to $s=\phi(x)$ and integrating, we obtain
\[
\int_M \bigl(3e^{-\phi}+e^{3\phi}\bigr)\,d\mu_g \ge 4\,\operatorname{Vol}(M,g),
\]
with equality if and only if $\phi\equiv 0$. Using the preceding identity for
$\int_M e^{3\phi}\,d\mu_g$, we get
\(3\mathcal F_g(\phi)+\operatorname{Vol}(M,g)\ge 4\,\operatorname{Vol}(M,g)\),
hence
\(\mathcal F_g(\phi)\ge \operatorname{Vol}(M,g)=\mathcal F_g(0)\),
with equality if and only if $\phi\equiv 0$. Thus $0$ is the unique minimizer of
$\mathcal F_g$ on $V_g$. By the uniqueness established above, we conclude that
$\phi(g)\equiv 0$ and $r(g)=g$ for every $g\in \mathrm{Met}^{CC}(M)$.

We now construct a strong deformation retraction of $X(M)$ onto
$\mathrm{Met}^{CC}(M)$. For a metric $g\in X(M)$, set
$u(g):=e^{\phi(g)/2}>0,$ so that $r(g)=u(g)^4\,g.$
Let
\(
L_g:=-8\Delta_g+\mathrm{Sc}_g
\)
be the conformal Laplacian. In dimension three, if $h=u^4g$, then
$\mathrm{Sc}_h=u^{-5}L_g(u)$.
For $t\in[0,1]$, define
\[
u_t(g):=(1-t)+t\,u(g),
\qquad
H(g,t):=u_t(g)^4\,g.
\]
Since $g\mapsto \phi(g)$ is continuous in the $C^\infty$-topology, the same is
true for $g\mapsto u(g)$. Affine interpolation, pointwise powers, and tensor
multiplication are continuous in the $C^\infty$ Fr\'echet topology. Hence the
preceding formula defines a jointly continuous map
\(H\colon X(M)\times[0,1]\to \mathrm{Met}(M)\).
We claim that its image is contained in $X(M)$.
First, local conformal flatness is a conformal invariant, and
$H(g,t)$ is conformal to $g$, so $H(g,t)$ is LCF.
Second, since $g\in X(M)$, $L_g(1)=\mathrm{Sc}_g>0.$
Since $r(g)\in \mathrm{Met}^{CC}(M)$, it has PSC, and hence
\(
L_g(u(g))=u(g)^5\,\mathrm{Sc}_{r(g)}>0
\)
pointwise. By linearity of $L_g$,
\(
L_g(u_t(g))=(1-t)L_g(1)+tL_g(u(g))>0
\)
pointwise for all $t\in[0,1]$. Therefore
\(\mathrm{Sc}_{H(g,t)}=u_t(g)^{-5}L_g(u_t(g))>0\).
Thus $H(g,t)\in X(M)$ for all $(g,t)$.

Consequently,
\(H\colon X(M)\times[0,1]\to X(M)\)
is continuous,  $H(g,0)=g,$ and  $H(g,1)=r(g).$

Finally, if $g\in \mathrm{Met}^{CC}(M)$, then $r(g)=g$, hence $u(g)\equiv 1$,
so $u_t(g)\equiv 1$ and therefore
\(
H(g,t)=g\qquad\text{for all }t\in[0,1].
\)
It follows that $H$ is a strong deformation retraction of $X(M)$ onto
$\mathrm{Met}^{CC}(M)$. Since $\mathrm{Met}^{CC}(M)$ is contractible, so is $X(M)$.

It remains to prove that $\mathcal{M}(M)=X(M)/\mathrm{Diff}(M)$ is contractible.
We first verify that $H$ is $\mathrm{Diff}(M)$-equivariant. Let $\psi\in\mathrm{Diff}(M)$, and write
\(
\psi\cdot g:=(\psi^{-1})^*g.
\)
If $\phi\in V_g$, then
\(e^{2(\phi\circ\psi^{-1})}(\psi\cdot g)
=(\psi^{-1})^*(e^{2\phi}g)\in \mathrm{Met}^{CC}(M)\),
so $\phi\circ\psi^{-1}\in V_{\psi\cdot g}$. Moreover,
\[
\mathcal F_{\psi\cdot g}(\phi\circ\psi^{-1})
=\int_M e^{-(\phi\circ\psi^{-1})}\,d\mu_{(\psi^{-1})^*g}
=\int_M e^{-\phi}\,d\mu_g
=\mathcal F_g(\phi).
\]
Thus the bijection
\(V_g\to V_{\psi\cdot g}\),
\(\phi\longmapsto \phi\circ\psi^{-1}\),
preserves the functional $\mathcal F$. By uniqueness of the minimizer,
$\phi(\psi\cdot g)=\phi(g)\circ\psi^{-1}.$ Consequently,
$r(\psi\cdot g)=\psi\cdot r(g).$ Since $u(\psi\cdot g)=u(g)\circ\psi^{-1}$, it follows that $u_t(\psi\cdot g)=u_t(g)\circ\psi^{-1}$ and hence
\(H(\psi\cdot g,t)=\psi\cdot H(g,t)\)
for all \((g,t)\in X(M)\times[0,1]\).

Let
\(q_M\colon X(M)\to \mathcal{M}(M):=X(M)/\mathrm{Diff}(M)\)
be the quotient map. Since \(H\) is \(\mathrm{Diff}(M)\)-equivariant, the map
$(g,t)\mapsto q_M(H(g,t))$ is constant on the fibers of
$q_M\times \mathrm{id}_{[0,1]}$. Indeed, if
\((q_M\times \mathrm{id}_{[0,1]})(g,t)=(q_M\times \mathrm{id}_{[0,1]})(g',t')\),
then \(t=t'\) and \(g'=\psi\cdot g\) for some \(\psi\in \mathrm{Diff}(M)\). Hence, by
\(\mathrm{Diff}(M)\)-equivariance of \(H\),
\[
q_M(H(g',t'))=q_M(H(\psi\cdot g,t))
=q_M(\psi\cdot H(g,t))=q_M(H(g,t)).
\]

Since $\mathrm{Diff}(M)$ acts on $X(M)$ by homeomorphisms, $q_M$ is open. Indeed,
for every open set $U\subset X(M)$,
\(q_M^{-1}(q_M(U))
=
\bigcup_{\psi\in\mathrm{Diff}(M)}\psi\cdot U\)
is open. Hence $q_M\times\mathrm{id}_{[0,1]}$ is an open continuous
surjection, and therefore a quotient map. Thus $(g,t)\mapsto q_M(H(g,t))$
factors uniquely through a continuous map
\[
\overline H\colon\mathcal{M}(M)\times[0,1]\longrightarrow \mathcal{M}(M),
\qquad
\overline H([g],t)=[H(g,t)].
\]
Because $\mathrm{Met}^{CC}(M)$ is $\mathrm{Diff}(M)$-invariant, the
equivariant retraction
$r\colon X(M)\to \mathrm{Met}^{CC}(M)$ induces a continuous map
on orbit spaces, denoted by $\overline r$, with
$\overline r([g])=[r(g)]$. The inclusion of $\mathrm{Met}^{CC}(M)$ in
$X(M)$ induces a continuous injection on orbit spaces, denoted by $j$, with
$j([h])=[h]$. Moreover,
$\overline r\circ j=\mathrm{id}$. Hence $j$ is a topological embedding, and
we identify $\mathrm{Met}^{CC}(M)/\mathrm{Diff}(M)$ with its image in
$\mathcal{M}(M)$. Since $H$ fixes $\mathrm{Met}^{CC}(M)$ pointwise,
$\overline H$ is a strong deformation retraction of $\mathcal{M}(M)$ onto
$\mathrm{Met}^{CC}(M)/\mathrm{Diff}(M)$.

We now show that $\mathrm{Met}^{CC}(M)/\mathrm{Diff}(M)$ is a point.
Let $h_1,h_2\in \mathrm{Met}^{CC}(M)$. Then $(M,h_1)$ and $(M,h_2)$ are closed
spherical space forms on the same smooth manifold $M$, hence they are
diffeomorphic. By de Rham rigidity for spherical space forms
\cite[pp.~66--67]{MR43468}, diffeomorphic spherical space forms are
isometric, so there
exists an isometry $\Psi\colon(M,h_1)\to (M,h_2)$.
In particular, $\Psi\in \mathrm{Diff}(M)$ and
$h_1=\Psi^*h_2.$ Thus any two elements of $\mathrm{Met}^{CC}(M)$ lie in the same
$\mathrm{Diff}(M)$-orbit. Therefore $\mathrm{Met}^{CC}(M)/\mathrm{Diff}(M)$
is a singleton. Since $\overline H$ is a strong deformation retraction onto a
point, $\mathcal{M}(M)$ is contractible.
\end{proof}

\begin{proposition}\label{prop:X-and-M-S2S1-contractible}
  Both $X(S^2\times S^1)$ and $\mathcal{M}(S^2\times S^1)$ are contractible.
\end{proposition}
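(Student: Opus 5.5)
We follow the scheme of Proposition~\ref{X3-contractible}, with the round space forms replaced by the locally cylindrical metrics. Let
\[
\mathrm{Met}^{cyl}(M):=\bigl\{h\in\mathrm{Met}(M)\;\big|\;h\text{ is locally isometric to }(S^2\times\mathbb R,g_{\mathrm{cyl}})\bigr\},\qquad M:=S^2\times S^1 .
\]
Every such metric is LCF with $\mathrm{Sc}=2>0$, so $\mathrm{Met}^{cyl}(M)\subset X(M)$. Because $\pi_1(M)\cong\mathbb Z$ is infinite, no locally spherical metric exists on $M$. Hence Bamler--Kleiner's space $\mathrm{Met}_{CC}(M)$ equals $\mathrm{Met}^{cyl}(M)$, and \cite[Theorem~1.2]{2019arXiv190908710B} makes it contractible.

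The first step is a canonical, continuous, $\mathrm{Diff}(M)$-equivariant conformal projection $r\colon X(M)\to\mathrm{Met}^{cyl}(M)$ with $r|_{\mathrm{Met}^{cyl}}=\mathrm{id}$. For $g\in X(M)$, the lift to $\widetilde M$ is complete with PSC, so the Liouville theorem (Theorem~\ref{Liouville theorem}) embeds $\widetilde M$ as a domain $\Omega\subset S^3$. Its complement $\Lambda$ is the limit set of the infinite cyclic holonomy, hence two points, which we normalize to $\{0,\infty\}$. On $\Omega=\mathbb R^3\setminus\{0\}$ the metric $|x|^{-2}g_{\mathrm{eucl}}$ is the cylinder metric, and it is invariant under the normalizer of $\{0,\infty\}$. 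The holonomy is generated by $x\mapsto\lambda Ax$ or $x\mapsto\lambda Ax/|x|^2$, so this metric descends to $M$.

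The descended metric is not yet canonical, because the cylindrical metrics conformal to $g$ form a one-parameter family: conjugating by dilations $x\mapsto cx$ does not change $|x|^{-2}g_{\mathrm{eucl}}$, but the normalization of the M\"obius structure adds freedom. I expect the conformal class to fix the cylindrical representative up to isometry, but the actual conformal factor may still vary along the translation flow of the $\mathbb R$-factor. To remove this, I would again minimize $\mathcal F_g(\phi)=\int_M e^{-\phi}\,d\mu_g$ over $V_g=\{\phi: e^{2\phi}g\in\mathrm{Met}^{cyl}\}$, copying \cite[Lemma~9.2]{2019arXiv190908710B}, which treats the cylindrical case as well. The same convexity computation with $f(s)=3e^{-s}+e^{3s}-4$ and fixed volume should show $r(h)=h$ on $\mathrm{Met}^{cyl}$. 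Continuity in $C^\infty$ comes from that lemma. Equivariance follows from uniqueness exactly as before.

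The second step is the interpolation $H(g,t)=((1-t)+tu(g))^4g$, which stays in $X(M)$ by linearity of $L_g$. This gives a strong, equivariant deformation retraction of $X(M)$ onto the contractible $\mathrm{Met}^{cyl}(M)$, so $X(M)$ is contractible. As in Proposition~\ref{X3-contractible}, $H$ descends to a strong deformation retraction of $\mathcal M(M)$ onto $\mathrm{Met}^{cyl}(M)/\mathrm{Diff}(M)$.

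The last step, which I expect to be the main obstacle, is to identify $\mathrm{Met}^{cyl}(M)/\mathrm{Diff}(M)$ and show it is contractible. By the local-to-global cylinder fact (ii), each such metric is a quotient of $(S^2\times\mathbb R,g_{\mathrm{cyl}})$ by an infinite cyclic group. This group acts freely, preserves orientation, and has quotient diffeomorphic to $S^2\times S^1$ rather than to the nonorientable bundle or to $\mathbb{RP}^3\#\mathbb{RP}^3$. Its generator is therefore $(x,t)\mapsto(Ax,t+\ell)$ with $\ell>0$ and $A\in\mathrm{SO}(3)$, up to conjugation in $\mathrm{Isom}$. The isometry class is determined by $\ell\in(0,\infty)$ and the rotation angle $\theta\in[0,\pi]$ of $A$, since $A$ and $A^{-1}$ are conjugate. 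The map $[h]\mapsto(\ell,\theta)$ should be a homeomorphism $\mathrm{Met}^{cyl}/\mathrm{Diff}\cong(0,\infty)\times[0,\pi]$, which is contractible. The delicate points are the following:
\begin{itemize}
\item verifying that every pair $(\ell,\theta)$ is realized on the fixed smooth manifold $M$ (the mapping torus of a rotation isotopic to the identity);
\item proving that isometric metrics lie in one $\mathrm{Diff}(M)$-orbit;
\item checking that the quotient topology agrees with the product topology, via continuity of $(\ell,\theta)$ in $C^\infty$ and local continuous sections.
\end{itemize}
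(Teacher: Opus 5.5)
Your architecture matches the paper's. You retract $X(S^2\times S^1)$ onto the locally cylindrical metrics via $r(g)=e^{2\phi(g)}g$ from \cite[Lemma~9.2(b)]{2019arXiv190908710B}. You keep the interpolation $((1-t)+tu(g))^4g$ in $X$ by linearity of $L_g$, descend equivariantly to $\mathcal M$, and classify the cylindrical moduli by $(\ell,\theta)\in(0,\infty)\times[0,\pi]$.

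\textbf{The non-uniqueness worry is false.} The cylindrical representative in a conformal class does not form a one-parameter family, so your minimization of $\mathcal F_g$ is vacuous. Suppose $e^{2\phi_1}g$ and $e^{2\phi_2}g$ are both locally cylindrical, and let $J_i$ be isometries of their lifts onto $(S^2\times\mathbb R,g_{\mathrm{cyl}})\cong(\mathbb R^3\setminus\{0\},|x|^{-2}g_{\mathrm{eucl}})$. Then $J_2\circ J_1^{-1}$ is a conformal self-map of $\mathbb R^3\setminus\{0\}$. By the classical Liouville theorem it is a M\"obius map preserving $\{0,\infty\}$, so it has the form $x\mapsto\lambda Ax$ or $x\mapsto\lambda Ax/|x|^2$. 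Every such map is an isometry of $|x|^{-2}g_{\mathrm{eucl}}$, so the two lifted metrics coincide and $\phi_1=\phi_2$. This is the uniqueness in Lemma~9.2(b), which the paper uses directly. With it, $r(h)=h$ on locally cylindrical $h$ is immediate because $\phi\equiv0$ is admissible, and equivariance follows from uniqueness. Your alternative argument for $r(h)=h$, via $3e^{-s}+e^{3s}-4\ge0$ ``and fixed volume,'' does not transfer as stated. A locally cylindrical metric with holonomy $(x,t)\mapsto(Ax,t+\ell)$ has volume $4\pi\ell$, so the volume is not constant on this space of metrics, unlike the spherical case.

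\textbf{The final step is only a plan.} The paper's Lemma~\ref{lem:cylinder-moduli} supplies exactly the items you flag.
\begin{itemize}
\item \emph{Realization.} The map $\widetilde F_{L,\theta}(x,s)=(R_{s\theta}x,Ls)$ conjugates the fixed deck generator $\tau(x,s)=(x,s+1)$ to $\gamma_{L,\theta}(x,t)=(R_\theta x,t+L)$. This gives metrics $h_{L,\theta}$ on the fixed $S^2\times S^1$ whose lift $g_{st}(V+a\theta Z,W+b\theta Z)+L^2ab$ is smooth in $(L,\theta)$, including $\theta\in\{0,\pi\}$. So one has a global continuous section $Q$, and no local sections are needed.
\item \emph{Continuity and $\mathrm{Diff}$-invariance of $(\ell,\theta)$.} This is the idea missing from your sketch. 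Use the displacement function $f_h(p)=d_{\varpi^*h}(p,\tau p)$ on the fixed cover. One has $\min f_h=L$ and $\max f_h=\sqrt{L^2+\theta^2}$, since $d_{g_{st}}(x,Ax)$ ranges over $[0,\theta]$. A bound $(1-\varepsilon)h\le h'\le(1+\varepsilon)h$ makes $f_{h'}$ uniformly close to $f_h$. A lift of $\psi\in\mathrm{Diff}$ conjugates $\tau$ to $\tau^{\pm1}$, so $f_{\psi^*h}=f_h\circ\widetilde\psi$.
\item \emph{Equal invariants give an isometry.} Conjugate $A$ to $R_\theta$ inside $\mathrm{SO}(3)$ and descend the resulting map of covers.
\end{itemize}
Finally, $\delta=1$ in the normal form follows from freeness: otherwise $\gamma^2=(A^2x,t)$ fixes a point. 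It does not follow from comparison with $\mathbb{RP}^3\#\mathbb{RP}^3$, whose fundamental group is not cyclic.
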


The proof follows the strategy of Proposition~\ref{X3-contractible}, except
that the case of the moduli space
$\mathcal{M}(S^2\times S^1)$ requires the independent classification in
Lemma~\ref{lem:cylinder-moduli}, whose proof follows the proposition.

\begin{proof}
Write $X:=X(S^2\times S^1)$. Following
\cite{2019arXiv190908710B}, let
$\mathrm{Met}^{CC}(S^2\times S^1)\subset\mathrm{Met}(S^2\times S^1)$ denote
the subspace of metrics locally isometric to the standard round cylinder
$(S^2\times\mathbb R,g_{\mathrm{cyl}})$. Thus the sectional curvature of the
spherical factor is normalized to be $1$. In
\cite[Theorem~1.2]{2019arXiv190908710B}, the notation $\mathrm{Met}_{CC}$ also
includes locally spherical metrics. On $S^2\times S^1$ the spherical
alternative cannot occur, since a closed locally spherical manifold has finite
fundamental group, whereas $\pi_1(S^2\times S^1)\cong\mathbb Z$. Thus the
space used here coincides with the space in that theorem. Identify
$S^1=\mathbb R/\mathbb Z$, and let $g_{\mathbb R/\mathbb Z}$ denote the
metric on $S^1$ induced by $dt^2$. The product metric
$g_{st}+g_{\mathbb R/\mathbb Z}$ is locally isometric to the round cylinder;
hence
$\mathrm{Met}^{CC}(S^2\times S^1)\neq\varnothing$. Moreover, every such metric is LCF and has
PSC, so $\mathrm{Met}^{CC}(S^2\times S^1)\subset X$.
By definition, $\mathrm{Met}^{CC}(S^2\times S^1)$ is $\mathrm{Diff}(S^2\times S^1)$-invariant.

Since every $g\in X(S^2\times S^1)$ is conformally flat and
$S^2\times S^1$ is diffeomorphic to the quotient
\((S^2\times\mathbb R)/\langle(x,t)\mapsto(x,t+1)\rangle\),
\cite[Lemma~9.2(b)]{2019arXiv190908710B} applies to any
$g\in X(S^2\times S^1)$ and yields a unique function
$\phi(g)\in C^\infty(S^2\times S^1)$ such that
$e^{2\phi(g)} g$ is isometric to a quotient of the standard round cylinder.
Every such quotient metric is locally isometric to
$(S^2\times\mathbb R,g_{\mathrm{cyl}})$ and therefore belongs to
$\mathrm{Met}^{CC}(S^2\times S^1)$. Moreover, the assignment
\(g \mapsto \phi(g)\) is continuous in the \(C^\infty\)-topology.
 
Hence, we obtain a continuous map
\(r\colon X \to \mathrm{Met}^{CC}(S^2\times S^1) \subset X\),
\(r(g) = e^{2\phi(g)}\, g\).
If $g\in \mathrm{Met}^{CC}(S^2\times S^1)$, part~(ii) of the
local-to-global cylinder observation shows that $g$ is itself isometric to a
quotient of the standard round cylinder. Thus $\phi\equiv0$ has the property required in
\cite[Lemma~9.2(b)]{2019arXiv190908710B}; uniqueness gives
$\phi(g)\equiv0$, and hence $r(g)=g$. Thus
$r|_{\mathrm{Met}^{CC}(S^2\times S^1)}=\mathrm{id}$.

Following the argument of Proposition~\ref{X3-contractible}, we construct a strong deformation retraction of $X$ onto $\mathrm{Met}^{CC}(S^2\times S^1)$. Fix $g\in X$ and set $u(g):=e^{\phi(g)/2}>0$, so that $r(g)=e^{2\phi(g)}g=u(g)^4\,g$.
Let $L_g:=-8\Delta_g+\mathrm{Sc}_g$ be the conformal Laplacian in dimension $3$; then the conformal change formula gives $\mathrm{Sc}_{u^4g}=u^{-5}L_g u$.
Since $g\in X$, we have $L_g(1)=\mathrm{Sc}_g>0$, and since $r(g)\in \mathrm{Met}^{CC}(S^2\times S^1)\subset X$, we have
$\mathrm{Sc}_{r(g)}>0$, hence
\(L_g(u(g))=u(g)^5\,\mathrm{Sc}_{r(g)}>0\)
pointwise on \(S^2\times S^1\).
For $t\in[0,1]$, define
\[
u_t(g):=(1-t)\cdot 1+t\cdot u(g)\ >\ 0,
\qquad
H(g,t):=u_t(g)^4\,g.
\]
Linearity of $L_g$ gives
\(
L_g(u_t(g))=(1-t)L_g(1)+tL_g(u(g))>0,
\)
so $\mathrm{Sc}_{H(g,t)}=u_t(g)^{-5}L_g(u_t(g))>0$ for all $t$.
Moreover, local conformal flatness is preserved under conformal changes, so $H(g,t)$ is LCF.
Since $g\mapsto\phi(g)$, and hence $g\mapsto u(g)$, is continuous in the
$C^\infty$-topology, and since affine interpolation, pointwise powers, and
tensor multiplication are continuous in the $C^\infty$ Fr\'echet topology,
the map $(g,t)\mapsto u_t(g)^4g$ is jointly continuous.
Thus $H\colon X\times[0,1]\to X$ is a continuous homotopy with
$H(g,0)=g$ and $ H(g,1)=r(g).$
If $g\in \mathrm{Met}^{CC}(S^2\times S^1)$, then $u(g)\equiv 1$, hence $H(g,t)=g$ for all $t$.
Therefore $H$ is a strong deformation retraction of $X$ onto $\mathrm{Met}^{CC}(S^2\times S^1)$.

By \cite[Theorem~1.2]{2019arXiv190908710B}, $\mathrm{Met}^{CC}(S^2\times S^1)$ is contractible. Since $X$ strongly deformation retracts onto $\mathrm{Met}^{CC}(S^2\times S^1)$, it follows that $X$ is contractible.

We claim that $r$ and $H$ are $\mathrm{Diff}(S^2\times S^1)$-equivariant, hence descend to the quotient $\mathcal{M}(S^2\times S^1)=X/\mathrm{Diff}(S^2\times S^1)$.
Let $\psi\in \mathrm{Diff}(S^2\times S^1)$ and $\widehat g:=\psi\cdot g=(\psi^{-1})^*g$.
Define
\(\widehat\phi:=\phi(g)\circ\psi^{-1}\in C^\infty(S^2\times S^1)\).
We compute
\[
\begin{aligned}
e^{2\widehat\phi}\widehat g
&=e^{2(\phi(g)\circ\psi^{-1})}(\psi^{-1})^*g\\
&=(\psi^{-1})^*\bigl(e^{2\phi(g)}g\bigr)\\
&=\psi\cdot\bigl(e^{2\phi(g)}g\bigr).
\end{aligned}
\]
Since $e^{2\phi(g)}g$ is isometric to a quotient of the standard round
cylinder, so is its pullback $e^{2\widehat\phi}\widehat g$. Thus
$\widehat\phi$ has the property required in
\cite[Lemma~9.2(b)]{2019arXiv190908710B} for the metric
$\widehat g=\psi\cdot g$.

The same lemma asserts that such a function is unique.
Therefore $\widehat\phi=\phi(\widehat g)$, i.e.
\(
\phi(\psi\cdot g)=\phi(g)\circ\psi^{-1}.
\)
Hence,
\[
\begin{aligned}
\psi\cdot r(g)
&=(\psi^{-1})^*(e^{2\phi(g)}g)\\
&=e^{2(\phi(g)\circ\psi^{-1})}(\psi\cdot g)\\
&=e^{2\phi(\psi\cdot g)}(\psi\cdot g)
=r(\psi\cdot g).
\end{aligned}
\]
It also follows that $u(\psi\cdot g)=u(g)\circ\psi^{-1}$. Hence $u_t(\psi\cdot g)=u_t(g)\circ\psi^{-1}$ and therefore
$H(\psi\cdot g,t)=\psi\cdot H(g,t)$.
Let $q_M\colon X\to \mathcal{M}(S^2\times S^1)$ be the quotient
map. Since $\mathrm{Diff}(S^2\times S^1)$ acts on $X$ by homeomorphisms,
$q_M$ is
open: for every open $U\subset X$,
\(q_M^{-1}(q_M(U))
=
\bigcup_{\psi\in\mathrm{Diff}(S^2\times S^1)}\psi\cdot U\)
is open. Hence $q_M\times\mathrm{id}_{[0,1]}$ is an open continuous
surjection, and therefore a quotient map. Thus the equivariant homotopy $H$
descends to a continuous strong deformation retraction
\(\overline H\colon
\mathcal{M}(S^2\times S^1)\times[0,1]
\to \mathcal{M}(S^2\times S^1)\)
of $\mathcal{M}(S^2\times S^1)$ onto the image of $\mathrm{Met}^{CC}(S^2\times S^1)$ in the quotient. As in the proof of Proposition~\ref{X3-contractible}, the equivariant retraction $r$ shows that the natural map
\[
\mathcal{M}^{CC}(S^2\times S^1)
:=
\mathrm{Met}^{CC}(S^2\times S^1)/\mathrm{Diff}(S^2\times S^1)
\longrightarrow
\mathcal{M}(S^2\times S^1)
\]
is a topological embedding. We therefore identify its image with
$\mathcal{M}^{CC}(S^2\times S^1)$.

Lemma~\ref{lem:cylinder-moduli} shows that
$\mathcal{M}^{CC}(S^2\times S^1)$ is homeomorphic to
$(0,\infty)\times[0,\pi]$, which is convex and hence contractible. Since
$\mathcal{M}(S^2\times S^1)$ deformation retracts onto this subspace,
$\mathcal{M}(S^2\times S^1)$ is contractible.
\end{proof}

\begin{lemma}\label{lem:cylinder-moduli}
\(\mathcal{M}^{CC}(S^2 \times S^1)\) is homeomorphic to \((0,\infty) \times [0,\pi]\).
\end{lemma}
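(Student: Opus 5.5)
The plan is to classify the metrics in $\mathrm{Met}^{CC}(S^2\times S^1)$ up to isometry by two explicit invariants, a length $L>0$ and a twist angle $\theta\in[0,\pi]$. I would then check that the resulting bijection $\mathcal{M}^{CC}(S^2\times S^1)\to(0,\infty)\times[0,\pi]$ is a homeomorphism for the quotient topology.

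\emph{Step 1 (every such metric is a twisted quotient).} By part~(ii) of the local-to-global cylinder observation, each $h\in\mathrm{Met}^{CC}(S^2\times S^1)$ is isometric to $(S^2\times\mathbb R)/D$ for a group $D\cong\mathbb Z$ of cylinder isometries acting freely. Here $\mathrm{Isom}(S^2\times\mathbb R,g_{\mathrm{cyl}})=\mathrm O(3)\times\mathrm{Isom}(\mathbb R)$. Let $\gamma=(A,\tau)$ generate $D$. If $\tau$ reversed $t$, then $\gamma^2=(A^2,\mathrm{id})$ would be an element of the infinite cyclic group $D$ lying in the compact group $\mathrm O(3)$. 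Discreteness would force $\gamma^2=\mathrm{id}$, contradicting $D\cong\mathbb Z$. So $\tau$ is a translation $t\mapsto t+L$ with $L\neq0$. Replacing $\gamma$ by $\gamma^{-1}$ we may take $L>0$.

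The translation preserves orientation, so orientability of $S^2\times S^1$ forces $A\in\mathrm{SO}(3)$. If $A\notin\mathrm{SO}(3)$, the quotient is the nonorientable $S^2$-bundle over $S^1$, which is not diffeomorphic to $S^2\times S^1$. Conjugating within $\mathrm{SO}(3)$, $A$ is a rotation by an angle $\theta\in[0,\pi]$. Thus every element of $\mathrm{Met}^{CC}$ is isometric to a model $g_{L,\theta}$.

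\emph{Step 2 (the invariants are complete).} Since all metrics live on the same manifold, two of them lie in one $\mathrm{Diff}$-orbit if and only if they are isometric. This holds if and only if their deck groups are conjugate in $\mathrm O(3)\times\mathrm{Isom}(\mathbb R)$, because isometries lift to the universal covers, which are both $(S^2\times\mathbb R,g_{\mathrm{cyl}})$ by part~(i).

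Conjugation by $(B,t\mapsto\pm t+s)$ sends $\langle(A,t+L)\rangle$ to $\langle(BAB^{-1},t\pm L)\rangle$. Passing to the positive-translation generator shows that $L$ is preserved. The rotation angle of $A^{\pm1}$ is preserved as well. Conversely, equal $(L,\theta)$ give conjugate groups. Intrinsically, $L$ equals $\operatorname{Vol}(h)/(4\pi)$, and $\theta$ is read off from the minimal and maximal displacement of $\gamma$ on the cover, $\min d(p,\gamma p)=L$ and $\max d(p,\gamma p)=\sqrt{L^2+\theta^2}$. So the map $\Phi\colon[h]\mapsto(L,\theta)$ is a well-defined bijection.

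\emph{Step 3 (topology).} For continuity of $\Phi^{-1}$, I would realize all models on the fixed manifold $S^2\times\mathbb R/\mathbb Z$. Pull $g_{L,\theta}$ back by the diffeomorphism $(x,s)\mapsto(R_{s\theta}x,sL)$ of $S^2\times\mathbb R$, where $R_{\alpha}$ denotes rotation by angle $\alpha$ about a fixed axis. This intertwines $s\mapsto s+1$ with $\gamma$ and gives an explicit metric $h_{L,\theta}$ on $S^2\times S^1$ depending smoothly on $(L,\theta)$. Composing with $q_M$ gives continuity of $\Phi^{-1}$.

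For continuity of $\Phi$, it suffices to show that $h\mapsto(L(h),\theta(h))$ is continuous on $\mathrm{Met}^{CC}$ in the $C^\infty$ topology, since this map is $\mathrm{Diff}$-invariant and $q_M$ is a quotient map. Continuity of $L$ is immediate from the volume. For $\theta$, I would use $\cos\theta=\tfrac12(\operatorname{tr}\mathrm{Hol}-1)$, where $\mathrm{Hol}$ is the holonomy around a closed geodesic of minimal length $L$ in the generator class. Equivalently, I would use the maximal displacement formula above.

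\emph{Main obstacle.} The main obstacle is this last continuity, especially near $\theta=0$, where the minimizing geodesics are no longer isolated. I would try to handle it with the displacement characterization, since displacement functions on the lifted metrics vary continuously under $C^\infty$-convergence of metrics, uniformly on a compact fundamental domain. This yields a continuous bijection with continuous inverse, hence a homeomorphism onto the convex set $(0,\infty)\times[0,\pi]$.
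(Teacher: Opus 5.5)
Your proposal is correct and follows essentially the same route as the paper. Both normalize the holonomy to $(x,t)\mapsto(Ax,t+L)$ with $A\in\mathrm{SO}(3)$ and $L>0$, realize the models $h_{L,\theta}$ on the fixed manifold via $(x,s)\mapsto(R_{s\theta}x,Ls)$, and obtain continuity on the moduli side from the minimal and maximal displacement $d_{\widetilde h}(p,\tau p)$, which converge uniformly under $C^0$-convergence of metrics by bi-Lipschitz comparison of the lifted distances. The differences are minor: you rule out the reflection case (and, implicitly, $L=0$) by discreteness of $\langle\gamma\rangle$ against the compact factor $\mathrm{O}(3)$ rather than via a fixed point of a rotation plus freeness, and you read off $L=\operatorname{Vol}(h)/(4\pi)$, which makes its continuity immediate.
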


\begin{proof}
We construct continuous maps
\(Q\colon(0,\infty)\times[0,\pi]\to  \mathcal{M}^{CC}(S^2\times S^1)\)
and
\(\overline P\colon\mathcal{M}^{CC}(S^2\times S^1)\to (0,\infty)\times[0,\pi]\)
and prove that they are inverse homeomorphisms.

Fix once and for all the universal covering
\[
\varpi\colon S^2\times \mathbb R\longrightarrow S^2\times S^1,
\qquad
\varpi(x,s)=(x,[s]),
\]
with deck generator
\[
\tau\colon S^2\times \mathbb R\longrightarrow S^2\times \mathbb R,
\qquad
\tau(x,s)=(x,s+1).
\]
Thus \(\mathrm{Deck}(\varpi)=\langle \tau\rangle\cong \mathbb Z\).

Let \(h\in \mathrm{Met}^{CC}(S^2\times S^1)\), and write
$\widetilde h:=\varpi^*h$. By part~(i) of the local-to-global cylinder
observation, there is an isometry
\(J_h\colon(S^2\times\mathbb R,\widetilde h)
\to (S^2\times\mathbb R,g_{\mathrm{cyl}})\).
Set
\(\gamma:=J_h\circ\tau\circ J_h^{-1}
\in\mathrm{Isom}(S^2\times\mathbb R,g_{\mathrm{cyl}})\).
Because \(\gamma\) is conjugate to \(\tau\), the action of \(\langle\gamma\rangle\) is free and \(\gamma\) has infinite order.

The Ricci endomorphism of the round cylinder has eigendistributions $TS^2$
and $T\mathbb R$, with the distinct eigenvalues $1$ and $0$. Hence every
isometry preserves both distributions. Writing an isometry as
$\Psi=(\Psi_1,\Psi_2)$, this gives $\partial_t\Psi_1=0$ and
$d_x\Psi_2=0$. Since both factors are connected, it follows that
$\Psi(x,t)=(A(x),b(t))$. Bijectivity of $\Psi$ makes $A$ and $b$
diffeomorphisms, while $\Psi^*g_{\mathrm{cyl}}=g_{\mathrm{cyl}}$ gives
$A^*g_{st}=g_{st}$ and $b^*dt^2=dt^2$. Consequently,
\[
\mathrm{Isom}(S^2\times \mathbb R, g_{\mathrm{cyl}})
\cong \mathrm{Isom}(S^2, g_{st}) \times \mathrm{Isom}(\mathbb{R}, dt^2)
\cong \mathrm{O}(3) \times (\mathbb{R} \rtimes \{\pm 1\}).
\]
Thus
\(
\gamma(x,t)=(Ax,\delta t+L)
\)
for some $A\in\mathrm{O}(3)$, $\delta\in\{\pm1\}$, and $L\in\mathbb R$.

We claim that \(\delta=1\). If \(\delta=-1\), then
$\gamma^2(x,t)=(A^2x,t)$.
Since \(A^2\in \mathrm{SO}(3)\), there exists \(x_0\in S^2\) such that
\(A^2x_0=x_0\). Hence $\gamma^2(x_0,t)=(x_0,t)$ for all $t\in\mathbb R$.
Since \(\gamma\) has infinite order, \(\gamma^2\neq\mathrm{id}\), contradicting
the freeness of the \(\langle\gamma\rangle\)-action. Thus \(\delta=1\).

We next claim that \(A\in \mathrm{SO}(3)\). The manifold
\(S^2\times S^1\) is oriented, and \(\tau\) preserves the lifted orientation
on \(S^2\times\mathbb R\). Since orientation preservation is invariant under
conjugation, the conjugate $\gamma=J_h\tau J_h^{-1}$ also preserves the product orientation.
The translation \(t\mapsto t+L\) preserves the orientation of $\mathbb R$;
hence $A$ preserves the orientation of $S^2$. Therefore
$A\in\mathrm{SO}(3)$.

We have \(L\neq 0\). Indeed, if \(L=0\), then $\gamma(x,t)=(Ax,t).$
Since \(A\in \mathrm{SO}(3)\), there exists \(x_0\in S^2\) such that \(Ax_0=x_0\). Hence
$\gamma(x_0,t)=(x_0,t)$ for all $t\in \mathbb R$, again contradicting freeness. Thus \(L\neq 0\).

Finally, after postcomposing $J_h$ with the reflection
$(x,t)\mapsto(x,-t)$ if necessary, we may assume $L>0$. After this
normalization, the parameters do not depend on the choices. Indeed, if
$J_h'=\Psi\circ J_h$, where
$\Psi(x,t)=(Bx,\varepsilon t+c)$, then
\((\Psi\gamma\Psi^{-1})(x,t)
=(BAB^{-1}x,t+\varepsilon L)\).
After restoring the convention that the translation length is positive,
$L$ is unchanged and $A$ is replaced by an $\mathrm O(3)$-conjugate. If the
deck generator $\tau$ is replaced by $\tau^{-1}$, then the holonomy element becomes
$\gamma^{-1}(x,t)=(A^{-1}x,t-L)$; conjugating by $(x,t)\mapsto(x,-t)$ to
restore the normalization gives $(A^{-1}x,t+L)$. Thus neither operation
changes the positive translation length $L$ or the rotation angle
$\theta\in[0,\pi]$ of $A$. Hence
$h$ determines the well-defined parameters $(L,\theta)$, and its holonomy
admits the normalized representative
\[
\gamma_{L,A}(x,t):=(Ax,t+L),
\qquad
A\in \mathrm{SO}(3),\quad L>0.
\]

We now construct the continuous map
$Q\colon (0,\infty) \times [0,\pi] \to \mathcal{M}^{CC}(S^2 \times S^1)$.
For \((L,\theta)\in (0,\infty)\times[0,\pi]\), let
\(
\gamma_{L,\theta}(x,t):=(R_\theta x,t+L),
\)
where \(R_\theta\in\mathrm{SO}(3)\) denotes the standard rotation by angle \(\theta\) about the \(x_3\)-axis. Let
$(S^2\times \mathbb R)/\langle \gamma_{L,\theta}\rangle$
carry the quotient metric \(\bar g_{L,\theta}\).

To identify this quotient with \(S^2\times S^1\), define
\[
\widetilde F_{L,\theta}\colon S^2\times \mathbb R\longrightarrow S^2\times \mathbb R,
\qquad
\widetilde F_{L,\theta}(x,s)=(R_{s\theta}x,Ls).
\]
This is a diffeomorphism, with inverse
\(\widetilde F_{L,\theta}^{-1}(y,t)=\bigl(R_{-(t/L)\theta}y,t/L\bigr)\).
Moreover,
\(\widetilde F_{L,\theta}\circ \tau=\gamma_{L,\theta}\circ \widetilde F_{L,\theta}\).
Hence \(\widetilde F_{L,\theta}\) descends to a diffeomorphism
\(F_{L,\theta}\colon S^2\times S^1\to
(S^2\times \mathbb R)/\langle \gamma_{L,\theta}\rangle\).
Define
\(h_{L,\theta}:=F_{L,\theta}^*\bar g_{L,\theta}\in \mathrm{Met}^{CC}(S^2\times S^1)\).

On the fixed universal cover one has
\(\varpi^*h_{L,\theta}=\widetilde F_{L,\theta}^*g_{\mathrm{cyl}}\).
Let $Z$ be the Killing field on $S^2$ generating $a\mapsto R_a$. For
$(V,a),(W,b)\in T_xS^2\oplus\mathbb R\partial_s$, direct differentiation
gives
\[
(\varpi^*h_{L,\theta})\bigl((V,a),(W,b)\bigr)
=g_{st}(V+a\theta Z_x,W+b\theta Z_x)+L^2ab.
\]
This tensor is $\tau$-invariant and depends smoothly on $(L,\theta)$,
including at $\theta=0$ and $\theta=\pi$. Hence the map
$(L,\theta)\longmapsto h_{L,\theta}$
is continuous in the \(C^\infty\)-topology. Thus there is a continuous map
\[
Q\colon(0,\infty)\times[0,\pi]\longrightarrow \mathcal{M}^{CC}(S^2\times S^1),
\qquad
Q(L,\theta):=[h_{L,\theta}].
\]

We now construct the map \(\overline P\).
Let \(h\in \mathrm{Met}^{CC}(S^2\times S^1)\), and let
$\widetilde h:=\varpi^*h$.
Define
\[
f_h\colon S^2\times \mathbb R\longrightarrow \mathbb R,
\qquad
f_h(p):=d_{\widetilde h}(p,\tau p).
\]
Since \(\tau\) is an isometry of \((S^2\times \mathbb R,\widetilde h)\), one has
$f_h(\tau p)=f_h(p),$
so \(f_h\) descends to a continuous function on the compact quotient \(S^2\times S^1\), still denoted \(f_h\).

Set
\[
L(h):=\min_{S^2\times S^1} f_h,
\qquad
D(h):=\max_{S^2\times S^1} f_h,
\qquad
\Theta(h):=\sqrt{D(h)^2-L(h)^2}.
\]
Since $\tau$ acts freely, $f_h(p)>0$ for every $p$. Since $f_h$ is continuous
on the compact quotient, it attains a positive minimum. Hence $L(h)>0$.

We now compute these invariants from the normalized holonomy. Let $J_h$ be the
isometry above satisfying $J_h\tau J_h^{-1}=\gamma_{L,A}$, where
$A\in\mathrm{SO}(3)$ has rotation angle $\theta\in[0,\pi]$ and $L>0$.
Because isometries preserve the distance function, the displacement $f_h(p)$
transforms as
\[
f_h(p)
=d_{g_{\mathrm{cyl}}}(J_h(p),J_h(\tau p))
=d_{g_{\mathrm{cyl}}}((x,t),\gamma_{L,A}(x,t)),
\]
where $(x,t):=J_h(p)$. Thus we obtain
\[
f_h(p)^2 = d_{g_{st}}(x, Ax)^2 + d_{\mathbb{R}}(t, t+L)^2 = d_{g_{st}}(x, Ax)^2 + L^2.
\]
Let $v$ be a unit vector on the rotation axis of $A$, chosen arbitrarily if
$\theta=0$, and put $a:=\langle x,v\rangle$. Then
\(\langle x,Ax\rangle=a^2+(1-a^2)\cos\theta\),
so $d_{g_{st}}(x,Ax)$ ranges over $[0,\theta]$. If $\theta>0$, its minimum
is attained on the rotation axis and its maximum on the orthogonal equator;
if $\theta=0$, it vanishes everywhere. It follows that
\(L(h)^2 = L^2\) and \(D(h)^2 = \theta^2 + L^2\).
Therefore
\(L(h)=L\), \(D(h)=\sqrt{L^2+\theta^2}\), and \(\Theta(h)=\theta\).

Thus
\[
P\colon\mathrm{Met}^{CC}(S^2\times S^1)\longrightarrow (0,\infty)\times[0,\pi],
\qquad
P(h):=\bigl(L(h),\Theta(h)\bigr)
\]
is well defined.

We next prove that \(P\) is continuous. Because the $C^\infty$-topology on
$\mathrm{Met}^{CC}(S^2\times S^1)$ is metrizable, it suffices to prove
sequential continuity. Let \(h_i\to h\) in the \(C^\infty\)-topology. In
particular, \(h_i\to h\) in \(C^0\). Since \(h\) is positive definite on the compact manifold \(S^2\times S^1\), after discarding finitely many terms and renumbering, we may choose
$\varepsilon_i\in(0,1),$ $\varepsilon_i\to 0,$ such that
\[
(1-\varepsilon_i)h\le h_i\le (1+\varepsilon_i)h
\]
as bilinear forms on \(T(S^2\times S^1)\). Pulling back by \(\varpi\), we obtain
\[
(1-\varepsilon_i)\widetilde h\le \widetilde h_i\le (1+\varepsilon_i)\widetilde h
\]
on \(T(S^2\times \mathbb R)\), where \(\widetilde h_i:=\varpi^*h_i\).

For every piecewise \(C^1\) curve \(c\) in \(S^2\times \mathbb R\),
\[
\sqrt{1-\varepsilon_i}\,\ell_{\widetilde h}(c)
\le
\ell_{\widetilde h_i}(c)
\le
\sqrt{1+\varepsilon_i}\,\ell_{\widetilde h}(c).
\]
Taking infima over curves joining fixed endpoints yields
\[
\sqrt{1-\varepsilon_i}\,d_{\widetilde h}
\le
d_{\widetilde h_i}
\le
\sqrt{1+\varepsilon_i}\,d_{\widetilde h}.
\]
Hence
\[
\sqrt{1-\varepsilon_i}\,f_h\le f_{h_i}\le \sqrt{1+\varepsilon_i}\,f_h
\]
pointwise on \(S^2\times S^1\). If
\(\eta_i:=\max\!\left\{\sqrt{1+\varepsilon_i}-1,\ 1-\sqrt{1-\varepsilon_i}\right\}\),
then \(\eta_i\to 0\) and
\(\|f_{h_i}-f_h\|_{C^0(S^2\times S^1)}\le \eta_i\,D(h)\to 0\).
Therefore
\(L(h_i)\to L(h)\), \(D(h_i)\to D(h)\), and \(\Theta(h_i)\to \Theta(h)\).
Thus \(P\) is continuous.

We next establish that $P$ is invariant under pullback by
$\mathrm{Diff}(S^2\times S^1)$. Let $\psi\in\mathrm{Diff}(S^2\times S^1)$,
and let $\widetilde\psi\colon S^2\times\mathbb R\to S^2\times\mathbb R$ be a
lift. Conjugation by $\widetilde\psi$ induces an automorphism of
$\mathrm{Deck}(\varpi)\cong\mathbb Z$. Hence there exists
$\varsigma\in\{\pm1\}$ such that
\(\widetilde\psi\circ\tau\circ\widetilde\psi^{-1}=\tau^\varsigma\).
By naturality of pullback,
\(\varpi^*(\psi^*h)=\widetilde\psi^*(\varpi^*h)
=\widetilde\psi^*\widetilde h\).
Therefore, for every $p\in S^2\times\mathbb R$,
\[
\begin{aligned}
f_{\psi^*h}(p)
&=d_{\widetilde\psi^*\widetilde h}(p,\tau p)\\
&=d_{\widetilde h}(\widetilde\psi p,\widetilde\psi\tau p)\\
&=d_{\widetilde h}(\widetilde\psi p,\tau^\varsigma\widetilde\psi p).
\end{aligned}
\]
If $\varsigma=1$, the last expression equals $f_h(\widetilde\psi p)$. If
$\varsigma=-1$, the symmetry of the distance function and the fact that
$\tau$ is an isometry of $\widetilde h$ give
\(d_{\widetilde h}(\widetilde\psi p,\tau^{-1}\widetilde\psi p)
=d_{\widetilde h}(\tau\widetilde\psi p,\widetilde\psi p)
=f_h(\widetilde\psi p)\).
Thus $f_{\psi^*h}=f_h\circ\widetilde\psi$ in both cases. The two functions
have the same range, and hence
\(L(\psi^*h)=L(h)\), \(D(\psi^*h)=D(h)\), and \(\Theta(\psi^*h)=\Theta(h)\).
It follows that $P$ is $\mathrm{Diff}(S^2\times S^1)$-invariant. By the
universal property of the quotient, $P$ descends to a unique continuous map
\(\overline P\colon\mathcal{M}^{CC}(S^2\times S^1)
\to (0,\infty)\times[0,\pi]\).

We conclude by showing that $Q$ and $\overline P$ are inverse homeomorphisms.
For any $(L,\theta)\in(0,\infty)\times[0,\pi]$, the construction of the
model metric $h_{L,\theta}$ provides an isometry
\(\widetilde F_{L,\theta}\colon
(S^2\times\mathbb R,\varpi^*h_{L,\theta})
\to (S^2\times\mathbb R,g_{\mathrm{cyl}})\)
that conjugates $\tau$ to
$\gamma_{L,\theta}(x,t)=(R_\theta x,t+L)$. For
$p\in S^2\times\mathbb R$, write
\(
(x,t):=\widetilde F_{L,\theta}(p).
\)
Then
\(f_{h_{L,\theta}}(p)
=\sqrt{d_{g_{st}}(x,R_\theta x)^2+L^2}\).
By the displacement computation above,
$d_{g_{st}}(x,R_\theta x)$ ranges over $[0,\theta]$; in particular, this
statement also covers $\theta=0$. It follows that
$L(h_{L,\theta})=L$ and $\Theta(h_{L,\theta})=\theta$. Thus,
\(\overline P(Q(L, \theta)) = \overline P([h_{L, \theta}]) = (L, \theta)\).
Conversely, let $h\in\mathrm{Met}^{CC}(S^2\times S^1)$ have invariants
$P(h)=(L,\theta)$. By the holonomy normalization established above, there is
an isometry
\(J_h\colon(S^2\times\mathbb R,\varpi^*h)
\to (S^2\times\mathbb R,g_{\mathrm{cyl}})\)
such that $J_h\tau J_h^{-1}=\gamma_{L,A}$, where $A\in\mathrm{SO}(3)$
has rotation angle $\theta$. Choose $C\in\mathrm{SO}(3)$ such that
$CAC^{-1}=R_\theta$, and set $K_C(x,t):=(Cx,t)$. Then
\((K_C\circ J_h)\tau(K_C\circ J_h)^{-1}
=K_C\gamma_{L,A}K_C^{-1}=\gamma_{L,\theta}\).
Hence $K_C\circ J_h$ descends to an isometry
\[
\overline K\colon
(S^2\times S^1,h)
\longrightarrow
\bigl((S^2\times\mathbb R)/\langle\gamma_{L,\theta}\rangle,
\bar g_{L,\theta}\bigr).
\]
Composing with $F_{L,\theta}^{-1}$ gives an isometry
\(F_{L,\theta}^{-1}\circ\overline K\colon
(S^2\times S^1,h)
\to (S^2\times S^1,h_{L,\theta})\).
Consequently,
\(Q(\overline P([h]))=Q(L,\theta)=[h_{L,\theta}]=[h]\).

The identities $\overline P\circ Q=\mathrm{id}$ and
$Q\circ\overline P=\mathrm{id}$, together with the continuity of $\overline P$
and $Q$,
show that $\overline P$ and $Q$ are inverse homeomorphisms. Thus
\(\mathcal{M}^{CC}(S^2\times S^1)\cong(0,\infty)\times[0,\pi]\).
\end{proof}

We now consider a closed, oriented smooth manifold $M^n$, $n\geq3$, that is
homeomorphic to a spherical space form but not diffeomorphic to $S^n$. We do
not assume that the given smooth structure is the quotient smooth structure;
in particular, $M$ need not carry a round metric. If $M$ admits an LCF metric,
then Kuiper's theorem~\cite{zbMATH03061883}, followed by the Cartan
fixed-point argument used below, shows that $M$ is diffeomorphic to a quotient
$S^n/\Gamma$, where $\Gamma\subset\mathrm{O}(n+1)$ is finite and acts freely
on $S^n$. This implication is used only under the standing assumption
$X(M)\neq\varnothing$.

\begin{proposition}\label{spherical space}
Assume $n\geq3$. Let $M^n$ be a closed, oriented smooth manifold homeomorphic
to a spherical space form but not diffeomorphic to $S^n$. Define
\[
\mathrm{Met}^{CC}(M)
:=
\left\{
h\in\mathrm{Met}(M)
\ \middle|\ 
h\text{ is locally isometric to }(S^n,g_{st})
\right\}.
\]
If $X(M)$ is nonempty, then $\mathrm{Met}^{CC}(M)$ is nonempty, there is a
$\mathrm{Diff}(M)$-equivariant strong deformation retraction of $X(M)$ onto
$\mathrm{Met}^{CC}(M)$, and $\mathrm{Met}^{CC}(M)$ is a single
$\mathrm{Diff}(M)$-orbit. Consequently, the moduli space
\(
\mathcal{M}(M):=X(M)/\mathrm{Diff}(M)
\)
is either empty or contractible.
\end{proposition}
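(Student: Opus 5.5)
Assuming $X(M)\neq\varnothing$, I would begin by fixing $g\in X(M)$ and rerunning the first half of the proof of Theorem~\ref{nLCF}. Since $\pi_1(M)$ is finite, the universal cover is closed. Kuiper's theorem then gives a conformal diffeomorphism $D\colon\widetilde M\to S^n$, and the Cartan fixed-point argument in $\mathbb H^{n+1}$ conjugates the deck group to a finite subgroup $\Gamma\subset\mathrm O(n+1)$ acting freely. The resulting diffeomorphism $M\cong S^n/\Gamma$ carries $g$ conformally to the round quotient metric, so $\mathrm{Met}^{CC}(M)\neq\varnothing$. More importantly, this construction attaches to every $g\in X(M)$ a conformal factor $\phi$ with $e^{2\phi}g\in\mathrm{Met}^{CC}(M)$. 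For $n=3$ the hypotheses force $M\cong S^3/\Gamma$, and the statement already follows from Proposition~\ref{X3-contractible}; I would therefore concentrate on $n\ge4$, although the argument below is dimension-free.

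Next I would make the choice of round representative canonical. The set $V_g$ of admissible factors corresponds, after lifting to $S^n$, to pulling $g_{st}$ back by conformal maps $a\in\operatorname{Conf}(S^n)$ that normalize $\Gamma$, taken modulo isometries. Two options suggest themselves. The first is Bamler--Kleiner's functional $\int e^{-\phi}d\mu_g$, whose unique minimizer is the natural candidate. The second is to invoke Obata's theorem: a curvature-one metric conformal to a round metric differs from it by a conformal transformation, and on a space form with $\Gamma\neq\{e\}$ the normalizer condition typically forces the factor to be unique. This is exactly where the hypothesis ``not diffeomorphic to $S^n$'' enters, since it rules out $\Gamma$ trivial, where the Möbius group would make $V_g$ noncompact. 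If $\Gamma$ has a fixed point in $\mathbb H^{n+1}$ that is not unique (for example, if it is contained in a proper totally geodesic subspace), uniqueness may fail. In that case I would fall back on the strict convexity argument with $f(s)=ne^{-s}+e^{ns}-(n+1)$ from Proposition~\ref{X3-contractible}, restricted to the orbit, which is a convex subset of $\mathbb H^{n+1}$. This yields a unique minimizer $\phi(g)$, and by uniqueness $\phi(g)=0$ whenever $g\in\mathrm{Met}^{CC}(M)$.

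With $r(g)=e^{2\phi(g)}g$ in hand, the homotopy is built exactly as before. Set $u=e^{(n-2)\phi/4}$ and $u_t=(1-t)+tu$. Linearity of the conformal Laplacian gives $L_g u_t>0$, so $H(g,t)=u_t^{4/(n-2)}g$ stays in $X(M)$. Uniqueness of the minimizer gives $\phi(\psi\cdot g)=\phi(g)\circ\psi^{-1}$, and hence $\mathrm{Diff}(M)$-equivariance. The single-orbit claim then comes from de Rham rigidity: any two elements of $\mathrm{Met}^{CC}(M)$ are diffeomorphic spherical space forms and are therefore isometric. The quotient argument of Proposition~\ref{X3-contractible} then shows that $\mathcal M(M)$ deformation retracts onto a point.

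The main obstacle is proving that $g\mapsto\phi(g)$ is continuous in the $C^\infty$-topology. My plan is to characterize $\phi(g)$ as the solution of the Euler--Lagrange system given by the constant-curvature equation together with the minimality condition, and to apply the implicit function theorem. The linearization at a round metric is governed by $-\Delta-n$ acting on $\Gamma$-invariant functions. Its kernel consists of the first spherical harmonics, which correspond to conformal Killing fields; $\Gamma$-invariance together with the minimization normalization removes these directions, and the Lichnerowicz--Obata gap, i.e.\ the fact that the next eigenvalue is strictly above $n$, then gives invertibility on the complement. Elliptic regularity upgrades this to $C^\infty$-continuity, and equivariance transports the local statement to every point.
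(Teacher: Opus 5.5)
Your outline matches the paper's: a canonical round representative in each conformal class, the implicit function theorem for continuity, convex interpolation of conformal factors using linearity of $L_g$, equivariance, and de Rham rigidity. Obtaining the round metric in $[g]$ from the Kuiper--Cartan argument of Theorem~\ref{nLCF}, rather than from Marques' corollary, is a fine substitute.

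\textbf{First gap: uniqueness of the round representative.} The definition of $r$, its equivariance, and $r|_{\mathrm{Met}^{CC}(M)}=\mathrm{id}$ all rest on uniqueness of the normalized round representative, and you do not establish it. You assert it only ``typically'' and offer a fallback for a case that cannot occur. The missing observation is this. $\Gamma\subset\mathrm O(n+1)$ is nontrivial and acts freely on $S^n$, so no $\gamma\neq e$ fixes a nonzero vector of $\mathbb R^{n+1}$. Hence the fixed set of $\Gamma$ in the Poincar\'e ball is exactly the origin $0$. A curvature-one metric in the lifted conformal class has the form $a^*g_{st}$ with $a\in\operatorname{Conf}(S^n)$. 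It is $\Gamma$-invariant iff $a\Gamma a^{-1}\subset\mathrm O(n+1)$, i.e.\ iff $a^{-1}(0)$ is fixed by $\Gamma$. Therefore $a\in\mathrm O(n+1)$, the admissible set is a single point, and no functional is needed; the paper gets this from Obata's theorem.

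Your fallback would not have worked anyway. The inequality for $f(s)=ne^{-s}+e^{ns}-(n+1)$ uses $\int_M e^{n\phi}\,d\mu_g=\operatorname{Vol}(M,g)$, which holds only when $g$ is itself round. So it proves $r(g)=g$ on $\mathrm{Met}^{CC}(M)$ but gives no uniqueness for general $g$. In Proposition~\ref{X3-contractible}, uniqueness came from Bamler--Kleiner's three-dimensional Lemma~9.2(a).

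\textbf{Second gap: continuity.} The same fixed-vector observation shows that there are no $\Gamma$-invariant first spherical harmonics. Hence $n\notin\operatorname{spec}(-\Delta_h)$, and $-\Delta_h-n$ is invertible on all of $C^{k+2,\alpha}(M)$; no minimization normalization is needed.

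The Banach implicit function theorem must be applied to the scalar equation $L_{g'}v=n(n-1)v^{(n+2)/(n-2)}$. The full constant-sectional-curvature system will not do: its linearization $\dot\phi\mapsto\nabla^2\dot\phi+\dot\phi\,h$ is not onto symmetric $2$-tensors. Applied to the scalar equation, the theorem only produces a locally unique \emph{constant-scalar-curvature} factor $\sigma(g')$ near $v_g$. You cannot identify $\sigma(g')$ with the canonical factor $v_{g'}$ by local uniqueness. That would presuppose $v_{g'}$ close to $v_g$, which is exactly the continuity being proved.

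The paper closes this with Obata's theorem. The class $[g']$ contains the Einstein metric $r(g')$, and $(M,r(g'))$ is not conformal to the round sphere. So every constant-scalar-curvature metric in $[g']$ is a constant multiple of $r(g')$, whence $\sigma(g')^{4/(n-2)}g'=r(g')$.

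\textbf{Two smaller points.} The theorem has to be applied at every $g\in X(M)$, where the linearization is $-\Delta_{r(g)}-n$ conjugated by multiplication by powers of $v_g$. $\mathrm{Diff}(M)$-equivariance only moves you within orbits and cannot carry a statement proved at round metrics to non-round ones. Finally, $e^{2\phi}g=u^{4/(n-2)}g$ requires $u=e^{(n-2)\phi/2}$, not $e^{(n-2)\phi/4}$.
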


\paragraph*{Proof strategy.}
Assuming $X(M)\neq\varnothing$, the strategy of the proof is to construct a
canonical, $\mathrm{Diff}(M)$-equivariant strong deformation retraction from
$X(M)$ onto the subspace of round metrics $\mathrm{Met}^{CC}(M)$. Marques'
result \cite[Corollary~3.2]{MR2950765} gives a round representative in every
conformal class in $X(M)$, and, because $M$ is not diffeomorphic to $S^n$,
Obata's theorem \cite{zbMATH03374588} makes this representative unique after
normalizing its scalar curvature to $n(n-1)$. This defines a projection
$r\colon X(M)\to\mathrm{Met}^{CC}(M)$. We establish the continuity of $r$ by
applying the Banach-space implicit function theorem to the Yamabe-type equation
for the conformal factor; the crucial invertibility of the linearized conformal
Laplacian follows directly from the Lichnerowicz--Obata spectral gap
($\lambda_1>n$). A simple convex interpolation of these continuous conformal
factors yields the required retraction. Finally, de Rham rigidity implies that
$\mathrm{Met}^{CC}(M)$ is a single $\mathrm{Diff}(M)$-orbit, so the equivariant
homotopy descends to the quotient and contracts $\mathcal{M}(M)$ to a point.

\begin{proof}
If $X(M)=\varnothing$, there is nothing to prove. Assume
$X(M)\neq\varnothing$, and choose $g_*\in X(M)$. Since $\pi_1(M)$ is finite,
the universal cover is a finite-sheeted cover of $M$ and hence compact. The
lift of $g_*$ makes it a closed, simply connected LCF manifold. Kuiper's
theorem~\cite{zbMATH03061883} identifies the universal cover conformally and
diffeomorphically with $S^n$. Under this identification, the deck group acts
freely by conformal transformations of $S^n$. This group is finite and, after
a conformal conjugation, is contained in $\mathrm{O}(n+1)$; for example, this
follows by applying the Cartan fixed-point theorem to its isometric action on
$\mathbb H^{n+1}$. Thus $M$ is diffeomorphic to $S^n/\Gamma$ for a finite
subgroup $\Gamma\subset\mathrm{O}(n+1)$ acting freely on $S^n$. Since $M$ is
not diffeomorphic to $S^n$, the group $\Gamma$ is nontrivial.

Moreover, $n$ is odd. Indeed, if $n$ were even, then
\(2=\chi(S^n)=|\Gamma|\,\chi(S^n/\Gamma)\)
would force $|\Gamma|=2$. The only free orthogonal involution of $S^n$ is the
antipodal map, whose quotient $\mathbb{RP}^n$ is nonorientable when $n$ is
even, contradicting the orientability of $M$.

For any $g\in X(M)$, Marques' result
\cite[Corollary~3.2]{MR2950765}, applied through the above diffeomorphism,
shows that the conformal class $[g]$ contains a metric of constant sectional
curvature. This curvature is positive because the path in Marques' corollary
consists of metrics with positive scalar curvature. After multiplying this metric by a positive constant, we
obtain a metric $g_0\in[g]$ of constant sectional curvature $+1$. Since $M$ is
not diffeomorphic to $S^n$, $(M,g_0)$ is not conformally equivalent to the
round sphere. Obata's theorem \cite{zbMATH03374588} therefore implies that any metric of
constant scalar curvature in $[g]$ is a constant multiple of $g_0$.
Normalizing the scalar curvature to $n(n-1)$ therefore gives a unique round
representative
\(
h\in[g]\cap\mathrm{Met}^{CC}(M).
\)
We define
\(r\colon X(M)\to \mathrm{Met}^{CC}(M)\),
\(r(g):=h\).
This uniqueness immediately gives
\(r(\psi\cdot g)=\psi\cdot r(g)\)
for \(\psi\in\mathrm{Diff}(M)\).

For each $\eta\in X(M)$, let $v_\eta>0$ be the unique smooth function such
that \(r(\eta)=v_\eta^{\frac{4}{n-2}}\eta\).
We show that $r$ is continuous. Fix \(g\in X(M)\), and write
$h:=r(g)=u^{\frac{4}{n-2}}g$, where $u:=v_g$. Set
\[
a_n:=\frac{4(n-1)}{n-2},
\qquad
q:=\frac{n+2}{n-2},
\qquad
L_{g'}:=-a_n\Delta_{g'}+\mathrm{Sc}_{g'}.
\]
By the conformal covariance of the conformal Laplacian,
\[
L_{g'}(vw)=v^qL_{\widehat g'}w,
\qquad
\widehat g'=v^{\frac{4}{n-2}}g';
\]
in particular,
\(L_{g'}(v)=\mathrm{Sc}_{\widehat g'}\,v^q\).
Since \(h\) is round, \(\mathrm{Sc}_h=n(n-1)\), and therefore
\(
L_g(u)=n(n-1)u^q.
\)

Fix \(k\ge 0\) and \(0<\alpha<1\). Let
\[
\mathcal G^{k+2,\alpha}
:=
\bigl\{g'\in C^{k+2,\alpha}(\operatorname{Sym}^2(T^*M))
\mid g' \text{ is positive definite}\bigr\},
\]
and
\[
\mathcal P^{k+2,\alpha}
:=
\bigl\{v\in C^{k+2,\alpha}(M)\mid v>0\bigr\}.
\]
Since pointwise positive definiteness and pointwise positivity are open conditions,
\(\mathcal G^{k+2,\alpha}\) and \(\mathcal P^{k+2,\alpha}\) are open subsets of Banach spaces.
Define
\(\mathcal Y\colon\mathcal G^{k+2,\alpha}\times
\mathcal P^{k+2,\alpha}\to  C^{k,\alpha}(M)\)
by
\(
\mathcal Y(g',v):=L_{g'}v-n(n-1)v^q.
\)
The inversion map $g'\mapsto(g')^{-1}$ is $C^\infty$ on
$\mathcal G^{k+2,\alpha}$, differentiation is bounded between the relevant
H\"older spaces, and multiplication and tensor contraction are continuous
multilinear, hence smooth, operations on these spaces. In local coordinates,
$L_{g'}v$ is a finite sum of contractions
of $(g')^{-1}$, derivatives of $g'$ of order at most two, and derivatives of
$v$ of order at most two. It follows, by localization, that
\((g',v)\longmapsto L_{g'}v\)
is a $C^\infty$ map from
$\mathcal G^{k+2,\alpha}\times C^{k+2,\alpha}(M)$ to $C^{k,\alpha}(M)$.
Moreover, for every $v_0\in\mathcal P^{k+2,\alpha}$, compactness of $M$ gives
a neighborhood of $v_0$ on which all functions are bounded above and bounded
away from zero. For $j\geq1$, the derivatives of the power map on this
neighborhood are
\[
D^j(v\mapsto v^q)_v(\varphi_1,\ldots,\varphi_j)
=q(q-1)\cdots(q-j+1)v^{q-j}\varphi_1\cdots\varphi_j.
\]
The H\"older algebra estimates therefore show that $v\mapsto v^q$ is
$C^\infty$ from $\mathcal P^{k+2,\alpha}$ to $C^{k+2,\alpha}(M)$, and hence
also to $C^{k,\alpha}(M)$. Thus $\mathcal Y$ is $C^\infty$ (in particular,
$C^1$), and $\mathcal Y(g,u)=0$.

The derivative of \(\mathcal Y\) in the \(v\)-variable at \((g,u)\) is
\[
D_v\mathcal Y_{(g,u)}(\varphi)
= \frac{d}{dt}\bigg|_{t=0} \mathcal Y(g,u+t\varphi)
=
L_g\varphi-q\,n(n-1)u^{q-1}\varphi.
\]
Since \(u>0\), every \(\varphi\in C^{k+2,\alpha}(M)\) can be written uniquely as
\(\varphi=uw\)
with \(w=u^{-1}\varphi\in C^{k+2,\alpha}(M)\).

Since $L_g(uw)=u^qL_h(w)$ and $L_h=-a_n\Delta_h+n(n-1)$, we obtain
\[
\begin{aligned}
D_v\mathcal Y_{(g,u)}(uw)
&=L_g(uw)-qn(n-1)u^qw\\
&=u^q\bigl(L_h(w)-qn(n-1)w\bigr)\\
&=u^q\bigl(-a_n\Delta_h w+(1-q)n(n-1)w\bigr).
\end{aligned}
\]
Since
\[
q=\frac{n+2}{n-2},
\qquad
a_n=\frac{4(n-1)}{n-2},
\qquad
(1-q)n(n-1)=-na_n,
\]
we have
\(
D_v\mathcal Y_{(g,u)}(uw)=-a_n\,u^q(\Delta_h+n)w.
\)
Equivalently,
\[
D_v\mathcal Y_{(g,u)}=M_{u^q}\circ A_h\circ M_{u^{-1}},
\qquad
A_h:=-a_n(\Delta_h+n),
\]
where \(M_f\) denotes multiplication by \(f\). We claim that
\(D_v\mathcal Y_{(g,u)}:C^{k+2,\alpha}(M)\to  C^{k,\alpha}(M)\)
is an isomorphism. Let $\Gamma_h$ denote the deck group of the round universal
covering
\(
(S^n,g_{st})\to (M,h).
\)
This group is nontrivial, because otherwise $M$ would be diffeomorphic to
$S^n$. Since $h$ has constant sectional curvature $1$, one has
$\mathrm{Ric}_h=(n-1)h$. The Lichnerowicz estimate~\cite{zbMATH03156874}
therefore gives
\(
\lambda_1(-\Delta_h)\ge n.
\)
If equality held, a first eigenfunction would
lift to a $\Gamma_h$-invariant first spherical harmonic on $S^n$, hence to a
nonzero linear function $x\mapsto\langle v,x\rangle$ with
$\zeta v=v$ for every $\zeta\in\Gamma_h$. Thus every element of $\Gamma_h$
would fix $v/|v|\in S^n$. Since $\Gamma_h\neq\{1\}$, choosing
$\zeta\in\Gamma_h\setminus\{1\}$ contradicts the freeness of the
$\Gamma_h$-action. Therefore
\(
\lambda_1(-\Delta_h)>n.
\)
Since $\lambda_0(-\Delta_h)=0$ and every nonzero eigenvalue is at least
$\lambda_1(-\Delta_h)>n$, the number $n$ is not in the spectrum of
$-\Delta_h$. Hence $\ker A_h=\{0\}$.

Since $n\notin\operatorname{spec}(-\Delta_h)$, the spectral theorem and the
identification of the domain of $-\Delta_h$ with $W^{2,2}(M)$ show that
\(A_h=a_n(-\Delta_h-n)\colon W^{2,2}(M)\to L^2(M)\)
is an isomorphism. Hence, for every $\rho\in C^{k,\alpha}(M)$, there is a
unique $w\in W^{2,2}(M)$ satisfying $A_h w=\rho$. Equivalently,
\(
\Delta_h w=-nw-a_n^{-1}\rho.
\)
Starting from $w\in W^{2,2}(M)$, repeated application of global $L^p$
regularity~\cite[Theorem~2.5(a)]{zbMATH04030435}, together with the Sobolev
embedding theorem, gives
\(w\in W^{2,p}(M)\)
for every \(p<\infty\).
Indeed, one iterates the Sobolev embedding and the equation until
$w\in W^{2,p_0}(M)$ for some $p_0>n/2$; the Sobolev exponent strictly
increases at each subcritical step. At a critical step $p=n/2$, one uses
$W^{2,n/2}(M)\hookrightarrow L^r(M)$ for every finite $r$ and chooses
$r>n/2$; the equation and global $L^r$ regularity then give
$w\in W^{2,r}(M)$. Thus in all cases $w$ is bounded once this threshold is
crossed. The right-hand side then belongs to $L^p(M)$ for every $p<\infty$,
and another
application of global $L^p$ regularity gives the displayed conclusion. Choose
$p$ sufficiently large and then choose $\beta$ with
\(\alpha<\beta<1-\frac np\).
The Sobolev embedding gives $w\in C^{1,\beta}(M)$, so the right-hand side belongs to
$C^{0,\alpha}(M)$. Global H\"older regularity
\cite[Theorem~2.5(b)]{zbMATH04030435} then gives $w\in C^{2,\alpha}(M)$.
Iterating the same regularity theorem in the equation yields
\(
w\in C^{k+2,\alpha}(M).
\)

Since $A_h$ is a second-order linear differential operator with smooth
coefficients on the closed manifold $M$, the map
\(A_h\colon C^{k+2,\alpha}(M)\to  C^{k,\alpha}(M)\)
is bounded. It is injective because its kernel is already trivial in
$W^{2,2}(M)$,
and it is surjective by the preceding argument. The bounded inverse theorem
therefore implies that
\(A_h^{-1}\colon C^{k,\alpha}(M)\to C^{k+2,\alpha}(M)\)
is bounded.
Since multiplication by each of the positive smooth functions \(u\) and
\(u^q\) is an isomorphism on H\"older spaces,
\(D_v\mathcal Y_{(g,u)}\) is an isomorphism as well.

By the Banach-space implicit function theorem, there exist an open
neighborhood
$\mathcal{U}_g^{k+2,\alpha}\subset\mathcal{G}^{k+2,\alpha}$ of $g$, an open
neighborhood $\mathcal{W}_u^{k+2,\alpha}\subset\mathcal{P}^{k+2,\alpha}$
of $u$, and a $C^1$ map
\(\sigma_{k,\alpha}\colon
\mathcal{U}_g^{k+2,\alpha}\to\mathcal{W}_u^{k+2,\alpha}\)
such that $\sigma_{k,\alpha}(g)=u$ and the zero set of $\mathcal Y$ in
$\mathcal{U}_g^{k+2,\alpha}\times\mathcal{W}_u^{k+2,\alpha}$ is precisely
the graph of $\sigma_{k,\alpha}$. In particular,
\(\mathcal Y\bigl(g',\sigma_{k,\alpha}(g')\bigr)=0\)
for \(g'\in\mathcal{U}_g^{k+2,\alpha}\).

Now let \(g'\in \mathcal{U}_g^{k+2,\alpha}\cap X(M)\), and set
$u_{g'}:=\sigma_{k,\alpha}(g')$. Then
\(u_{g'}\in C^{k+2,\alpha}(M)\) satisfies
\(
L_{g'}(u_{g'})=n(n-1)u_{g'}^q.
\)
Rewriting this equation as
\[
-a_n\Delta_{g'}u_{g'}
=n(n-1)u_{g'}^q-\mathrm{Sc}_{g'}u_{g'}
\]
and using that $g'$ is smooth and $u_{g'}>0$, we see that the
right-hand side lies in $C^{k+2,\alpha}(M)$. Hence the global Schauder
estimate \cite[Theorem~2.5(b)]{zbMATH04030435}, applied with $k$ replaced by
$k+2$, gives
$u_{g'}\in C^{k+4,\alpha}(M)$. Iterating this estimate gives
$u_{g'}\in C^\infty(M)$.

Define
\(\widehat h(g'):=u_{g'}^{\frac{4}{n-2}}g'\).
Then \(\widehat h(g')\) is smooth and has scalar curvature \(n(n-1)\). By the
existence and uniqueness results established earlier, \(r(g')\in[g']\cap
\mathrm{Met}^{CC}(M)\) is the unique round representative of \([g']\). In particular,
\(r(g')\) is Einstein and has scalar curvature \(n(n-1)\). Since
\(\widehat h(g')\in[g']=[r(g')]\) also has constant scalar curvature,
Obata's theorem implies that $\widehat h(g')=c\,r(g')$
for some \(c>0\). Since both metrics have scalar curvature \(n(n-1)\), we have \(c=1\). Hence
\(\widehat h(g')=r(g')\)
for \(g'\in \mathcal{U}_g^{k+2,\alpha}\cap X(M)\).
Since both conformal factors are positive, this identity also gives
$u_{g'}=v_{g'}$ on $\mathcal{U}_g^{k+2,\alpha}\cap X(M)$.

We now prove continuity in the Fr\'echet $C^\infty$-topology. Let
$g_i\in X(M)$ and suppose that $g_i\to g$ in
$C^\infty(\operatorname{Sym}^2(T^*M))$. Fix $m\geq0$ and
$\alpha\in(0,1)$. For all sufficiently large $i$, one has
$g_i\in\mathcal{U}_g^{m+2,\alpha}$. The identity just proved shows that the
implicit-function branch agrees on
$\mathcal{U}_g^{m+2,\alpha}\cap X(M)$ with the canonical conformal factor.
Hence
\(v_{g_i}=\sigma_{m,\alpha}(g_i)
\to \sigma_{m,\alpha}(g)=v_g\)
in \(C^{m+2,\alpha}(M)\).
Since $m$ is arbitrary, we have
\(v_{g_i}\to v_g\)
in \(C^\infty(M)\).
Consequently,
\(r(g_i)=v_{g_i}^{\frac{4}{n-2}}g_i
\to
v_g^{\frac{4}{n-2}}g=r(g)\)
in \(C^\infty(\operatorname{Sym}^2(T^*M))\).
Thus both $g\mapsto v_g$ and $r$ are sequentially continuous. Since the
relevant $C^\infty$ topologies are metrizable, both maps are continuous. In
particular,
\(r\colon X(M)\to \mathrm{Met}^{CC}(M)\)
is continuous.

For $g\in X(M)$, set
\[
H(g,t):=\bigl((1-t)+t\,v_g\bigr)^{\frac{4}{n-2}}g,
\qquad t\in[0,1].
\]
Since affine interpolation, pointwise positive powers, and tensor
multiplication are continuous operations in the $C^\infty$ Fr\'echet
topology, the map $(g,t)\mapsto H(g,t)$ is jointly continuous.
Since $L_g(1)=\mathrm{Sc}_g>0$ and
\(L_g(v_g)=v_g^{\frac{n+2}{n-2}}\mathrm{Sc}_{r(g)}>0\),
linearity of $L_g$ gives
$L_g\bigl((1-t)+t\,v_g\bigr)>0$. Hence
$\mathrm{Sc}_{H(g,t)}>0$. Moreover, $H(g,t)$ is conformal to $g$, so it is
locally conformally flat. Therefore $H(g,t)\in X(M)$ for every $t$.
By construction, $H(g,0) = g$ and $H(g,1) = r(g) \in \mathrm{Met}^{CC}(M)$. Moreover, if $g \in \mathrm{Met}^{CC}(M)$, the uniqueness of the round representative yields $r(g) = g$, whence the conformal factor $v_g$ is identically $1$. It follows that $H(g,t)=g,$ $t\in[0,1]).$
Because $H$ continuously deforms $X(M)$ into $\mathrm{Met}^{CC}(M)$ while keeping $\mathrm{Met}^{CC}(M)$ pointwise fixed at all times, $H$ is a strong deformation retraction of $X(M)$ onto $\mathrm{Met}^{CC}(M)$.

Since $r(\psi\cdot g)=\psi\cdot r(g)$, it follows that
$v_{\psi\cdot g}=v_g\circ\psi^{-1}$, and hence
\(H(\psi\cdot g,t)=\psi\cdot H(g,t)\)
for \(\psi\in\mathrm{Diff}(M)\).
Let $q_M\colon X(M)\to\mathcal{M}(M)$ be the quotient map. Since
$\mathrm{Diff}(M)$ acts on $X(M)$ by homeomorphisms, $q_M$ is open:
for every open $U\subset X(M)$,
\(q_M^{-1}(q_M(U))
=
\bigcup_{\psi\in\mathrm{Diff}(M)}\psi\cdot U\)
is open. Hence $q_M\times\mathrm{id}_{[0,1]}$ is an open continuous
surjection, and therefore a quotient map. Consequently, $H$ descends to a
well-defined continuous homotopy
\(\overline H\colon\mathcal{M}(M)\times[0,1]\to \mathcal{M}(M)\)
from the identity map to the map induced by $r$.

It remains to observe that the time-one map $\overline H(\cdot,1)$ is
constant. Any two metrics $h_1,h_2\in\mathrm{Met}^{CC}(M)$ are round metrics on the same
smooth manifold $M$, which has already been shown to be diffeomorphic to a
spherical space form. Since $n$ is odd, de Rham's rigidity theorem
\cite[pp.~66--67]{MR43468} implies that diffeomorphic spherical space forms
are isometric. Hence
$\mathrm{Met}^{CC}(M)$ is a single $\mathrm{Diff}(M)$-orbit, and
\(
\mathrm{Met}^{CC}(M)/\mathrm{Diff}(M)
\)
is a singleton. The homotopy $\overline H$ therefore contracts
$\mathcal{M}(M)$ to a point.
\end{proof}

\begin{remark}
The difficulty in extending Proposition~\ref{spherical space} to the case
\(M=S^n\) is not the existence of a path to a round metric within each conformal
class: this already follows from \cite[Corollary~3.2]{MR2950765}.
Rather, the Obata normalization used above does not provide a canonical choice of a round representative.

Indeed, when the deck group of a round representative is nontrivial, the proof
above uses the non-spherical case of Obata's rigidity theorem together with the
strict Lichnerowicz--Obata spectral gap
to single out a unique normalized round metric in each conformal class. This makes
the retraction map \(r\colon X(M) \to \mathrm{Met}^{CC}(M)\) canonical and allows one to prove
its continuity. In the sphere case \(M=S^n\), this mechanism breaks down: the round
conformal class is exceptional, since it contains the entire
\(\mathrm{Conf}(S^n,g_{st})\)-orbit of \(g_{st}\). Thus, even after fixing the normalization \(\mathrm{Sc}_h=n(n-1)\), there is no unique round representative in a given conformal class.

Consequently, the above argument does not directly extend to \(S^n\).
\end{remark}

\begin{proof}[Proof of Theorem~\ref{thm:intro-D}]
Part~(i) follows from Propositions~\ref{X3-contractible} and
\ref{prop:X-and-M-S2S1-contractible}.

For part~(ii), recall that $X(M)$ is the space of LCF metrics with positive
scalar curvature on $M$. If $X(M)=\varnothing$, then
$\mathcal{M}(M)=\varnothing$. If $X(M)\neq\varnothing$, Proposition~
\ref{spherical space} gives a $\mathrm{Diff}(M)$-equivariant strong
deformation retraction of $X(M)$ onto $\mathrm{Met}^{CC}(M)$, which is a
single $\mathrm{Diff}(M)$-orbit. The induced homotopy therefore contracts
$\mathcal{M}(M)$. Hence $\mathcal{M}(M)$ is either empty or contractible.
\end{proof}

\section{Euclidean Rigidity for LCF Manifolds}\label{Euclidean 1}

This section establishes Euclidean rigidity for complete, open, simply connected LCF manifolds with nonnegative scalar curvature. In dimension three, any such contractible manifold is conformally diffeomorphic to Euclidean space: the Ma--Qing dimension bound and \v{C}ech--Alexander duality force the limit set to be a single point. In higher dimensions, \(\lfloor(n-2)/2\rfloor\)-connectivity at infinity together with the small loops condition on the limit set implies, through shape-theoretic arguments, that the manifold is homeomorphic to \(\mathbb{R}^n\). We also obtain conformal Euclidean rigidity under several analytic hypotheses and a smooth rigidity theorem in dimension four under bounded geometry.

\subsection{Contractible LCF 3-manifolds}
Since Schoen and Yau proved that every complete, noncompact \(3\)-manifold
with positive Ricci curvature is diffeomorphic to
\(\mathbb{R}^3\)~\cite[Theorem~3, p.~217]{MR0645740}, it is natural to ask which complete noncompact Riemannian \(3\)-manifolds with PSC are diffeomorphic to \(\mathbb{R}^3\). Consider the smooth metric
\[
g_0:=g_E+\Bigl(\sum_{i=1}^3 x_i\,dx_i\Bigr)^2
\]
on \(\mathbb R^3\). In spherical coordinates,
\[
g_0=(1+r^2)\,dr^2+r^2\,d\theta^2+r^2\sin^2\theta\,d\phi^2.
\]
Since \(g_0\geq g_E\), every \(d_{g_0}\)-Cauchy sequence is
\(d_{g_E}\)-Cauchy and hence converges in the Euclidean topology. The local
uniform equivalence of the two smooth metrics then gives convergence in
\(d_{g_0}\). Thus \(g_0\) is complete. A
warped-product computation after the change of variables
\(dt=\sqrt{1+r^2}\,dr\) gives
\(\mathrm{Sc}_{g_0}=2(r^2+3)/(r^2+1)^2>0\). In addition,
\((S^2\times\mathbb{R},g_{st}\oplus dt^2)\) also has PSC. The round
cylinder is LCF, since, with \(r=e^\tau\),
\[
g_E=dr^2+r^2g_{st}=r^2\bigl(d\tau^2+g_{st}\bigr).
\]
The metric \(g_0\) is also LCF. Indeed, with \(t\) as above, on \(r>0\),
\[
g_0=dt^2+r(t)^2g_{st}
   =r(t)^2\bigl(ds^2+g_{st}\bigr),
\qquad ds=\frac{dt}{r(t)}.
\]
Thus \(g_0\) is conformal to the round cylinder away from the origin. Since local conformal flatness is
conformally invariant, \(g_0\) is LCF on \(\mathbb R^3\setminus\{0\}\). In
dimension three, local conformal flatness is equivalent to the vanishing of
the Cotton tensor. Hence the Cotton tensor of \(g_0\) vanishes on
\(\mathbb R^3\setminus\{0\}\), and, by continuity, also at the origin.
Therefore \(g_0\) is LCF on all of \(\mathbb R^3\).

These two examples show that additional topological or geometric conditions
are required to settle the question completely. Gromov and Lawson~\cite[Corollary~10.9]{MR720933} proved that every complete noncompact \(3\)-manifold with uniformly positive scalar curvature and finitely generated fundamental group is simply connected at infinity. Hence, if such a manifold is contractible, it is homeomorphic to \(\mathbb{R}^3\).
Chang, Weinberger, and Yu later gave an index-theoretic proof~\cite[Theorem~1]{MR2721617}.

This leads to the following conjecture, which has circulated in the field for decades: every complete, connected, open, contractible \(3\)-manifold with nonnegative scalar curvature is diffeomorphic to \(\mathbb{R}^3\). The conjecture has been verified using inverse mean curvature flow under the assumption of bounded geometry (i.e., \(|\mathrm{Rm}|\leq C\) and the injectivity radius is bounded below by \(C^{-1}\)); see~\cite[p.~2]{2026arXiv260301887C}.
Here we replace the condition of bounded geometry with that of local conformal flatness and establish the following result.

\begin{theorem}\label{contractible}
Let \( M^3 \) be a connected, open, contractible \(3\)-manifold admitting a complete, locally conformally flat metric \(g\) with nonnegative scalar curvature.
Then \( (M^3,g) \) is conformally diffeomorphic to \( (\mathbb{R}^3, g_E) \).
\end{theorem}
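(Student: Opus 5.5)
Since $M$ is contractible, it is simply connected, so the developing map $\Phi\colon M\to S^3$ is defined on $M$ itself. By the Liouville theorem (Theorem~\ref{Liouville theorem}), which holds in dimension $n\ge3$ for complete LCF metrics with nonnegative scalar curvature, $\Phi$ is injective. Therefore $M$ is conformally diffeomorphic to the domain $\Omega:=\Phi(M)\subset S^3$, and we set $\Lambda:=S^3\setminus\Omega$.

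The set $\Lambda$ is nonempty, since otherwise $M\cong S^3$ would be compact. It is compact, and $\partial\Omega$ has zero Newtonian capacity. Zero $2$-capacity in $S^3$ forces $\dim_{\mathcal H}\Lambda\le 1$; alternatively, one invokes the Ma--Qing estimate~\cite[Theorem~1.3]{zbMATH08016940}, which gives $\dim_{\mathcal H}\Lambda\le (n-2)/2=1/2$. Either way $\dim_{\mathrm{top}}\Lambda\le \dim_{\mathcal H}\Lambda<1$, so $\Lambda$ is totally disconnected (zero-dimensional). We also note that $\Omega=S^3\setminus\Lambda$ is connected by the non-separation theorem.

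Next, Alexander duality for Čech cohomology in $S^3$ gives
\[
\widetilde H_2(\Omega;\mathbb Z)\cong \check H^0(\Lambda;\mathbb Z)\big/\mathbb Z ,
\]
that is, reduced $\check H^0$. Contractibility of $M$ gives $H_2(\Omega;\mathbb Z)=0$, so reduced $\check H^0(\Lambda)$ vanishes. For a compactum, this is equivalent to connectedness: any clopen splitting $\Lambda=A\sqcup B$ with both pieces nonempty produces a nonconstant locally constant integer function, which is a nonzero reduced class. Hence $\Lambda$ is connected. A nonempty connected totally disconnected space is a single point, so $\Lambda=\{\xi\}$.

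Finally, a Möbius map sending $\xi$ to $\infty$ identifies $\Omega$ conformally with $\mathbb R^3$. The stereographic round metric is conformal to $g_E$, so $(M,g)$ is conformally diffeomorphic to $(\mathbb R^3,g_E)$. The only delicate step is securing the dimension bound $\dim_{\mathcal H}\Lambda<1$ in dimension three for nonnegative (not positive) scalar curvature. I would rely on the zero-capacity conclusion of the Liouville theorem together with the standard fact that sets of zero $2$-capacity in $\mathbb R^3$ have Hausdorff dimension at most $1$; these have $\mathcal H^{1+\epsilon}$-measure zero, and topological dimension $1$ would require positive capacity only in borderline cases. I would cite Ma--Qing as the cleaner route if needed.
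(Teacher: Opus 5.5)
Your outline is the paper's: Liouville injectivity, then Alexander duality to make $\Lambda$ connected, then a dimension bound below $1$ to make $\Lambda$ totally disconnected. However, the step you call delicate has a real gap on the route you say you would rely on. Zero Newtonian capacity only gives $\dim_{\mathcal H}\Lambda\le 1$, and this cannot be upgraded. A straight segment in $\mathbb R^3$ has finite $\mathcal H^1$-measure, hence zero Newtonian capacity, yet it is a nondegenerate continuum of topological dimension $1$. So the sentence ``Either way $\dim_{\mathrm{top}}\Lambda\le\dim_{\mathcal H}\Lambda<1$'' is false for the capacity alternative. The ``borderline case'' you wave away is exactly the case that occurs. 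Note also that the Liouville theorem asserts zero capacity of $\partial\Omega$, not a priori of $\Lambda=S^3\setminus\Omega$. The paper uses the bound $\dim\partial\Omega\le 1$ only to show, via \v{C}ech--Alexander duality, that $\operatorname{int}\Lambda=\varnothing$, i.e.\ $\Lambda=\partial\Omega$.

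The Ma--Qing estimate $\dim_{\mathcal H}\Lambda\le\frac12$ is therefore the indispensable input, not an optional cleaner route, and it is precisely what the paper uses. To invoke it, you must check its hypothesis that the conformal metric $(\Phi^{-1})^*g=u^4g_{st}$ on $D\setminus\Lambda$ is geodesically complete near $\Lambda$, where $D$ is the complement of a small closed round ball inside $\Omega$. This follows from completeness of $(\Omega,(\Phi^{-1})^*g)$. A locally rectifiable curve of finite length converging in $S^3$ to some $p\in\Lambda$ would be Cauchy for that complete metric, hence would converge to a point of $\Omega$, contradicting $p\notin\Omega$. With this verification in place of the capacity heuristic, your argument coincides with the paper's proof.
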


The proof is as follows. By Theorem~\ref{Liouville theorem}, \(M\) embeds conformally as a contractible domain \(\Omega\subset S^3\), and \(\partial\Omega\) has zero Newtonian \(2\)-capacity. \v{C}ech--Alexander duality first shows that \(\Omega\) is dense in \(S^3\), so that \(S^3\setminus\Omega=\partial\Omega\), and a second application shows that \(\partial\Omega\) is connected. The Ma--Qing estimate gives \(\dim_{\mathcal H}(\partial\Omega)\leq\frac12\). Since every nonsingleton connected metric space has Hausdorff dimension at least \(1\), the boundary consists of a single point. Hence \(\Omega\cong\mathbb R^3\), and the developing map followed by stereographic projection is the required conformal diffeomorphism.

Improving Schoen--Yau's estimate~\cite[Theorem~2.7]{zbMATH04075988}, Ma and Qing proved the following special case of their theorem~\cite[Theorem~1.3]{zbMATH08016940}.
\begin{theorem}[Ma--Qing]\label{Ma}
Let \((M^n,\bar g)\), \(n\geq3\), be a complete Riemannian manifold, and let \(S\subset M\) be compact. Let \(D\subset M\) be a bounded open neighborhood of \(S\). Suppose that
\(g=u^{\frac{4}{n-2}}\bar g\) is a conformal metric on \(D\setminus S\) that is geodesically complete near \(S\). If the scalar curvature of \(g\) is nonnegative, then
\[
\dim_{\mathcal{H}}^{\bar g}(S) \le \frac{n-2}{2}.
\]
\end{theorem}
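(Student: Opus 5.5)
We do not reprove Theorem~\ref{Ma}; we cite it from \cite[Theorem~1.3]{zbMATH08016940}. If one wanted to reconstruct it, the plan would follow Schoen and Yau \cite{zbMATH04075988} and has three ingredients.

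\emph{Analytic input.} Write $L_{\bar g}=-a_n\Delta_{\bar g}+\mathrm{Sc}_{\bar g}$. Since $\mathrm{Sc}_g\ge0$, we have $L_{\bar g}u=\mathrm{Sc}_g\,u^{\frac{n+2}{n-2}}\ge0$ on $D\setminus S$. Because $D$ is bounded in the complete manifold $(M,\bar g)$, we may shrink $D$ and work in finitely many charts where $\bar g$ is uniformly equivalent to the Euclidean metric. So $u$ is a positive supersolution of a uniformly elliptic operator with bounded zeroth-order term.

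\emph{Geometric input: completeness.} Geodesic completeness of $g$ near $S$ means that every $\bar g$-rectifiable curve $\gamma$ in $D\setminus S$ accumulating at $S$ has
\[
\int_\gamma u^{\frac2{n-2}}\,ds_{\bar g}=\infty .
\]
This is the same mechanism used in Proposition~\ref{minimum}.

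\emph{Integrability estimate.} We aim to show that $u^{\frac2{n-2}}$ is integrable near $S$ to a power as large as possible. The key is to test the inequality $L_{\bar g}u\ge0$ against $\varphi^2u^{-\beta}$ with $\beta>0$ and $\varphi$ a cutoff. This should give a reverse energy bound of the form
\[
\int|\nabla(u^{\frac{1-\beta}{2}})|^2\varphi^2\lesssim\int u^{1-\beta}|\nabla\varphi|^2 .
\]
Combined with Sobolev, this yields $u\in L^p_{\mathrm{loc}}$ across $S$ for every $p<\frac n{n-2}$. This is the weak-Harnack range for supersolutions, and it requires first checking that $S$ is removable for the $L^1$-weak formulation. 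That check uses the case $\dim_{\mathcal H}S<n-2$, obtained from a crude first pass.

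\emph{Hausdorff dimension bound.} Argue by contradiction. Suppose $\mathcal H^s(S)>0$ for some $s>\frac{n-2}2$. Frostman's lemma gives a measure $\mu$ on $S$ with $\mu(B_\rho)\le\rho^s$. Average the divergent line integrals over the family of segments from points $y$ in a fixed small ball to points $z\in\operatorname{supp}\mu$. By Fubini, this average is bounded by
\[
\int u^{\frac2{n-2}}(x)\,I(x)\,dx,\qquad I(x)=\int|x-z|^{1-n}\,d\mu(z).
\]
Since every line integral is infinite, this integral must diverge. On the other hand, Hölder's inequality together with the integrability of $u^{\frac2{n-2}}$ and of the Riesz-type potential $I$ of the Frostman measure would make it finite. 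That contradiction gives $\dim_{\mathcal H}^{\bar g}S\le\frac{n-2}2$.

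\emph{Main obstacle.} The hard step is the sharp exponent. A plain count using only $u^{\frac2{n-2}}\in L^{r}$ for $r<\frac n2$ yields $\dim_{\mathcal H}S\le\frac n2$ (or worse), not $\frac{n-2}{2}$. To recover the missing power of the distance, I would first try to exploit $\mathrm{Sc}_g\ge0$ beyond mere superharmonicity, in Schoen--Yau's manner. The idea is to integrate $\mathrm{Sc}_g\,u^{\frac{n+2}{n-2}-\beta}\varphi^2\ge0$ with a negative power of $u$, giving an improved decay of the averages of $u^{-1}$ on small balls $B_\rho(z)$ centred on $S$. Schematically, completeness should force
\[
\fint_{B_\rho(z)}u^{\frac2{n-2}}\gtrsim\rho^{-1}
\]
on a positive proportion of dyadic scales. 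A covering argument then compares $\sum\rho_i^{n-\frac{n}{2}\cdot\frac{2}{n-2}\cdot(\cdots)}$ against $\mathcal H^s$, and this is where the exact threshold $\frac{n-2}{2}$ should emerge. If that fails, the fallback is to quote the statement directly from \cite{zbMATH08016940}.
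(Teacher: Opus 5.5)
Your proposal matches the paper. The paper gives no proof of this statement: it simply quotes it as a special case of Ma--Qing's Theorem~1.3, exactly as you do.

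Your optional reconstruction is not needed, and, as you note, it does not close as written. Integrating $L_{\bar g}u=\mathrm{Sc}_g\,u^{\frac{n+2}{n-2}}$ controls only $\int\mathrm{Sc}_g\,u^{\frac{n+2}{n-2}}$. When $\mathrm{Sc}_g\geq c>0$ this gives $u\in L^{\frac{n+2}{n-2}}_{\mathrm{loc}}$, and your Frostman--H\"older averaging then yields exactly $\frac{n-2}{2}$; when merely $\mathrm{Sc}_g\geq0$ it gives nothing. In that case completeness must be used more sharply. For example, for $n\geq4$, average $u^{2/(n-2)}$ over spheres centred at $z\in S$ and apply Jensen. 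This shows that $\int_0^1\nu(B_r(z))^{2/(n-2)}r^{-2}\,dr=\infty$ at every $z\in S$, where $\nu$ is the Riesz measure of $u$, and that divergence is incompatible with a Frostman measure of exponent $s>\frac{n-2}{2}$ on $S$.
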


Here the superscript in \(\dim_{\mathcal H}^{\bar g}\) records the background
metric; when the background is \(g_{st}\), we suppress it.

In the applications below, we verify the following sufficient condition for
geodesic completeness near \(S\): every locally rectifiable curve in
\(D\setminus S\) that converges in the background topology to a point of \(S\)
has infinite length with respect to \(g\).

We shall also use the following capacity--dimension implication. Let
\(E\subsetneq S^n\) be compact with zero Newtonian \(2\)-capacity, and choose a
stereographic projection
\(\sigma\colon S^n\setminus\{p\}\to\mathbb R^n\) with \(p\notin E\). On a
relatively compact neighborhood of \(E\) in this chart, the round and
Euclidean metrics and volume forms are uniformly comparable. Consequently,
the corresponding local Sobolev \(2\)-capacities have the same null sets, so
\(\sigma(E)\) has zero Euclidean Newtonian \(2\)-capacity. The latter agrees,
up to normalization, with the Riesz \((1,2)\)-capacity. Since \(\sigma(E)\)
is bounded, \cite[Proposition~5.1.4(b)]{zbMATH00824833} gives
\(C_{1,2}(\sigma(E))=0\). For every \(0<\varepsilon<2\),
\cite[Theorem~5.1.13]{zbMATH00824833}, with
\((\alpha,p)=(1,2)\) and \(h(r)=r^{n-2+\varepsilon}\), applies because
\[
\int_0^1\frac{h(r)}{r^{n-2}}\,\frac{dr}{r}
=\int_0^1r^{\varepsilon-1}\,dr<\infty.
\]
It gives \(\Lambda_h(\sigma(E))=0\), where \(\Lambda_h\) denotes the
associated ball-gauge Hausdorff measure. For this power gauge, vanishing of
\(\Lambda_h\) is equivalent to
\(\mathcal H^{n-2+\varepsilon}(\sigma(E))=0\). It follows that
\(\dim_{\mathcal H}\sigma(E)\leq n-2\). Finally, \(\sigma\) is bi-Lipschitz
on a neighborhood of \(E\), and hence \(\dim_{\mathcal H}E\leq n-2\).

\begin{proof}[Proof of Theorem~\ref{contractible}]
Since \( (M^3, g) \) is a simply connected, complete, LCF manifold with nonnegative scalar curvature, its developing map is defined on \( M \) itself, and
$\Phi\colon M\to S^3$ is conformal. By Theorem~\ref{Liouville theorem}, \(\Phi\) is injective and the boundary \(\partial\Phi(M)\) has zero Newtonian \(2\)-capacity.
Because \(\Phi\) is an injective local diffeomorphism, it is a conformal
diffeomorphism from \(M\) onto the open domain
\(\Omega:=\Phi(M)\subset S^3\). In particular, \(\Omega\) is contractible.

We now show that \( \partial \Omega = S^3 \setminus \Omega \).
Set \(K:=S^3\setminus\Omega\). Since \(M\) is noncompact and \(\Phi\) is a diffeomorphism onto \(\Omega\), we have \(\Omega\neq S^3\), and hence \(K\neq\varnothing\). Since \(\partial\Omega\) has zero Newtonian \(2\)-capacity, the capacity--dimension implication above gives
$\dim_{\mathcal H}(\partial\Omega)\le 1.$
By the Szpilrajn inequality~\cite[Chapter~VII]{MR0006493}, the
covering dimension of \(\partial\Omega\) is at most \(1\), so
$\check H^{\,2}(\partial\Omega;\mathbb Z)=0.$
By \v{C}ech--Alexander duality,
\(\widetilde H_0(S^3\setminus \partial\Omega;\mathbb Z)
\cong
\widetilde{\check H}^{\,2}(\partial\Omega;\mathbb Z)=0\).
Therefore \(S^3\setminus \partial\Omega\) is connected. On the other hand,
$S^3\setminus \partial\Omega=\Omega\sqcup \operatorname{int}(K),$
a disjoint union of open sets. Since \(\Omega\neq\varnothing\), connectedness forces
$\operatorname{int}(K)=\varnothing.$
Thus \(\Omega\) is dense in \(S^3\), and hence
\(\partial\Omega=\overline{\Omega}\setminus\Omega=S^3\setminus\Omega=K.\)

Since \(\partial\Omega=S^3\setminus\Omega\) and \(\Omega\) is contractible,
\v{C}ech--Alexander duality yields
\(\widetilde{\check H}^{\,0}(\partial\Omega;\mathbb Z)
\cong
\widetilde H_{\,2}(\Omega;\mathbb Z)=0\).
Because \(\partial\Omega\) is nonempty and compact, this implies that
\(\partial\Omega\) is connected.

Set \(S:=S^3\setminus\Omega=\partial\Omega\). Since \(S\) is closed in the
compact manifold \(S^3\), it is compact. Choose \(x_0\in\Omega\) and
\(\varepsilon>0\) such that
\(\overline{B_{g_{st}}(x_0,\varepsilon)}\subset\Omega,\) and set
\(D:=S^3\setminus\overline{B_{g_{st}}(x_0,\varepsilon)}.\)
Then \(D\) is a bounded open neighborhood of \(S\) in \(S^3\).

Let \(\tilde g:=(\Phi^{-1})^*g\) be the metric transported from \(M\) to
\(\Omega\). Since \(\Phi\colon M\to\Omega\) is a conformal diffeomorphism,
the restriction of \(\tilde g\) to \(D\setminus S\) has the
form \(\tilde g=u^4g_{st}\) for some smooth positive function \(u\), and
\(\mathrm{Sc}_{\tilde g}=\mathrm{Sc}_g\circ\Phi^{-1}\geq0.\)

It remains to verify that \(\tilde g\) is geodesically complete near \(S\).
Since \(\Phi\colon(M,g)\to(\Omega,\tilde g)\) is an isometry and \((M,g)\) is complete,
the metric space \((\Omega,d_{\tilde g})\) is complete.

Suppose, for contradiction, that there exists a locally rectifiable curve
\(\gamma\colon[0,1)\to D\setminus S\subset\Omega\)
such that \(\gamma(t)\to p\in S\) as \(t\uparrow 1\) and
$L_{\tilde g}(\gamma)<\infty.$ Then for \(0\le s<t<1\),
\(d_{\tilde g}(\gamma(s),\gamma(t))
\le L_{\tilde g}\bigl(\gamma|_{[s,t]}\bigr)\).
Since the total length of \(\gamma\) is finite, the right-hand side tends to \(0\)
as \(s,t\uparrow 1\). Thus \(\gamma(t)\) is a Cauchy curve in
\((\Omega,d_{\tilde g})\). By completeness, there exists \(q\in \Omega\) such that
\(\gamma(t)\to q\) with respect to \(d_{\tilde g}\).

But the distance topology of a smooth Riemannian metric agrees with the manifold
topology. Thus \(\gamma(t)\to q\) in \(\Omega\), and continuity of the inclusion
\(\Omega\hookrightarrow S^3\) gives \(\gamma(t)\to q\) in \(S^3\). Since \(S^3\)
is Hausdorff and \(\gamma(t)\to p\in S=S^3\setminus\Omega\) in the background
topology, uniqueness of limits would give \(p=q\), a contradiction.
Therefore every locally rectifiable curve in \(D\setminus S\) converging in the
background topology to a point of \(S\) has infinite \(\tilde g\)-length.
Hence \(\tilde g\) is geodesically complete near \(S\).

We may therefore apply Theorem~\ref{Ma} with background manifold
\((S^3,g_{st})\), singular set \(S\), and the bounded open
neighborhood \(D\). It follows that
\(\dim_{\mathcal H}(S)\le \frac{3-2}{2}=\frac12.\) Since
\(S=\partial\Omega\), it follows that
$\dim_{\mathcal H}(\partial\Omega)\le \frac12.$

Hence the topological dimension of \(\partial\Omega\) is zero; in particular, \(\partial\Omega\) is totally disconnected. On the other hand, \(\partial\Omega\) is connected, which forces \(\partial\Omega\) to consist of a single point. Consequently, $\Omega$ is diffeomorphic to $\mathbb{R}^3$. Because stereographic projection provides a conformal diffeomorphism from the punctured sphere to Euclidean space, we conclude that $(M,g)$ is conformally diffeomorphic to $(\mathbb R^3, g_E)$.
\end{proof}

\begin{remark}
The nonnegativity assumption on the scalar curvature in
Theorem~\ref{contractible} cannot be removed. Indeed, let
$\Omega\subset S^3$ be a round ball. By the Loewner--Nirenberg
theorem~\cite{MR358078}, $\Omega$ admits a complete metric $g$, conformal
to $g_{st}$, with $\mathrm{Sc}_g=-1$. Although $\Omega$ is contractible,
it is not conformally diffeomorphic to
$S^3\setminus\{q\}\cong\mathbb R^3$. Otherwise, by Liouville theorem \ref{Liouville theorem},
such a conformal diffeomorphism would extend to a M\"obius transformation
of $S^3$, which would map the round sphere $\partial\Omega$ to the single
point $q$, a contradiction. More generally, let \(F^d\subset S^n\) be a closed smooth submanifold. Then
there exists a smooth positive function \(u\) on \(S^n\setminus F\) such that
$g=u^{\frac{4}{n-2}}g_{st}$
is complete and satisfies \(\mathrm{Sc}_g=-1\) if and only if
$d>\frac{n-2}{2}$; see~\cite{zbMATH04052477}.
\end{remark}

This argument does not extend directly to higher dimensions $n \ge 4$, since Theorem~\ref{Ma} only yields the bound $\dim_{\mathcal{H}}(\partial \Omega) \le \frac{n-2}{2}$, which is no longer sufficient to force $\partial \Omega$ to be totally disconnected. However, under additional assumptions, the same strategy remains valid in higher dimensions, with Theorem~\ref{Ma} replaced by results implying that $\dim_{\mathcal{H}}(\partial \Omega) < 1$.

\begin{corollary}\label{Contractible for}
Let \(M^n\) (\(n\ge 4\)) be a connected, open, simply connected \(n\)-manifold
such that $\widetilde H_{n-1}(M;\mathbb Z)=0.$
Assume that \(M\) admits a complete locally conformally flat metric \(g\) with
$\mathrm{Sc}_g \ge 0.$
Assume in addition that at least one of the following holds:
\begin{enumerate}
\item [(i)] \(\mathrm{Ric}_g \ge 0\) outside a compact subset of \(M\);
\item [(ii)]
\[
\mathrm{Ric}_g^- \in L^1(M,g) \cap L^\infty(M,g),\qquad
\mathrm{Sc}_g \in L^\infty(M,g),\qquad
|\nabla_g \mathrm{Sc}_g| \in L^\infty(M,g),
\]
where
\[
\mathrm{Ric}_g^-(x):=
\max\bigl\{0,-\lambda_{\min}(g^{-1}\mathrm{Ric}_g)(x)\bigr\};
\]
\item [(iii)] The Ricci tensor satisfies the critical integrability condition $|\mathrm{Ric}_g| \in L^{n/2}(M,g)$;

\item [(iv)] There exists a point $p \in M$ such that
\[
\lim_{r \to \infty} \frac{\mathrm{Vol}_g\bigl(B_p(r)\bigr)}{r^{n} (\log r)^{\,n-1}} = 0;
\]
\item [(v)] There exists \( p_0 \in (n-2,n) \) such that
\[
A_g^{(p_0)} := \frac{1}{n-2} \left( (p_0-2)\,\mathrm{Ric}_g \;+\; \frac{n-p_0}{2(n-1)}\,\mathrm{Sc}_g\, g \right) \ge 0.
\]

\end{enumerate}
Then \((M,g)\) is conformally diffeomorphic to \((\mathbb R^n,g_E)\).
\end{corollary}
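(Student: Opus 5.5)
The plan follows the proof of Theorem~\ref{contractible}: first show that the conformal boundary is a nonempty connected compactum, then use each of the hypotheses (i)--(v) to show $\dim_{\mathcal H}(\partial\Omega)<1$.

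\emph{Connectedness of the boundary.} Since $M$ is simply connected, complete, LCF and $\mathrm{Sc}_g\ge0$, the developing map $\Phi\colon M\to S^n$ is defined on $M$. By Theorem~\ref{Liouville theorem}, $\Phi$ is a conformal diffeomorphism onto a domain $\Omega\subsetneq S^n$, and $\partial\Omega$ has zero Newtonian $2$-capacity. The capacity--dimension implication recorded before the proof of Theorem~\ref{contractible} gives $\dim_{\mathcal H}\partial\Omega\le n-2$. Hence $\dim_{\mathrm{top}}\partial\Omega\le n-2$ and $\check H^{\,n-1}(\partial\Omega;\mathbb Z)=0$. \v{C}ech--Alexander duality then makes $S^n\setminus\partial\Omega$ connected, so $\Omega$ is dense and $\partial\Omega=S^n\setminus\Omega$, exactly as in dimension three. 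A second application of duality gives
\[
\widetilde{\check H}^{\,0}(\partial\Omega;\mathbb Z)\cong\widetilde H_{n-1}(\Omega;\mathbb Z)=\widetilde H_{n-1}(M;\mathbb Z)=0 .
\]
Since $\partial\Omega$ is nonempty (because $M$ is open), it is a nonempty connected compactum. Completeness of $(\Omega,(\Phi^{-1})^*g)$ gives geodesic completeness near $S:=\partial\Omega$ by the same Cauchy-curve argument as before. This verifies the hypotheses of Theorem~\ref{Ma} and of its refinements.

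\emph{Reduction to a dimension bound.} If $\dim_{\mathcal H}\partial\Omega<1$, then $\partial\Omega$ has topological dimension zero and is therefore totally disconnected. Being connected, it is a single point. Then $\Omega\cong S^n\setminus\{pt\}$, and composing $\Phi$ with stereographic projection gives the required conformal diffeomorphism onto $(\mathbb R^n,g_E)$. So everything reduces to proving $\dim_{\mathcal H}(S^n\setminus\Omega)<1$, or directly that this set is a point, under each of (i)--(v).

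\emph{Case (v).} I would use the general form of the Ma--Qing theorem \cite[Theorem~1.3]{zbMATH08016940}. Theorem~\ref{Ma} is its case $p=2$, where $A^{(2)}_g=\frac{1}{2(n-1)}\mathrm{Sc}_g\,g\ge0$. The general statement says that if $A^{(p)}_g\ge0$ on a metric geodesically complete near $S$, then $\dim_{\mathcal H}S\le\frac{n-p}{2}$. With $p_0\in(n-2,n)$ this gives $\dim_{\mathcal H}\partial\Omega\le\frac{n-p_0}{2}<1$.

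\emph{Cases (i)--(iv).} These are analytic and asymptotic conditions of the kind under which the literature on complete conformally flat manifolds proves that the developing image misses exactly one point. I would invoke the corresponding results:
\begin{itemize}
\item[(i)] the Carron--Herzlich/Zhu-type theorem that a complete LCF manifold with $\mathrm{Ric}\ge0$ outside a compact set has boundary of dimension zero, using the Bishop--Gromov-type growth control on the end;
\item[(ii)] the Ma--Qing integrability criteria, applied with the $L^1\cap L^\infty$ bound on $\mathrm{Ric}^-$ and the bounds on $\mathrm{Sc}_g$ and $|\nabla_g\mathrm{Sc}_g|$;
\item[(iii)] Carron's $L^{n/2}$-Ricci results, which force finitely many ends that are conformally Euclidean; then $\partial\Omega$ is finite, hence a single point by connectedness;
\item[(iv)] a volume-growth argument: each nondegenerate connected boundary component produces volume growth at least of order $r^n(\log r)^{n-1}$, which is excluded by the hypothesis.
\end{itemize}
In cases (ii)--(iv) one could alternatively try to prove that $\partial\Omega$ has zero $p$-capacity for some $p>n-2$, which gives $\dim_{\mathcal H}<1$ via the Adams--Hedberg estimate used earlier.

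\emph{Main obstacle.} I expect the main work to lie in matching each of (i)--(iv) precisely to a published boundary estimate and checking that its hypotheses hold in our setting: completeness near the boundary, and sign or integrability transported by $\Phi$. Case (v) and the topological reduction are essentially immediate from results already in the paper.
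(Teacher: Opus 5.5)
Your topological part is exactly the paper's argument: the Liouville theorem, the capacity--dimension bound $\dim_{\mathcal H}\partial\Omega\le n-2$, two applications of \v{C}ech--Alexander duality, completeness near the boundary, and the reduction to showing that $\partial\Omega$ is finite or has $\dim_{\mathcal H}<1$. The duality step gives that $\partial\Omega=S^n\setminus\Omega$ is nonempty and connected. Case (v) also matches the paper, with one correction. The estimate $\dim_{\mathcal H}\le (n-p_0)/2$ under $A^{(p_0)}_g\ge0$ is the theorem of Liu--Ma--Qing--Zhong \cite[Theorem~5.1.1]{zbMATH08029235}, not Theorem~\ref{Ma}, which as recorded here treats only $\mathrm{Sc}\ge0$. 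That theorem is stated for $p_0\in[2,n)$, which contains $(n-2,n)$ because $n\ge4$.

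The gap is in cases (i)--(iv). There you only point to ``results of this kind'', so these cases are not yet proved, and the pointer you give for (i) does not apply. The paper uses the following inputs:
\begin{itemize}
\item for (i) and (ii), Ma--Qing's finite-point conformal compactification \cite[Corollary~1.1]{zbMATH07456625};
\item for (iii), Chen--Li \cite[Theorem~1.4]{zbMATH07576884};
\item for (iv), Carron--Herzlich \cite[Proposition~2.2]{zbMATH01756816}.
\end{itemize}
The first two give $S^n\setminus\Omega$ finite, hence a single point by connectedness. Carron--Herzlich gives $\dim_{\mathcal H}\partial\Omega=0$. Your sentence that a nondegenerate component forces growth of order $r^n(\log r)^{n-1}$ is the heuristic behind that proposition, not a proof of it.

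In (i), the Zhu and Carron--Herzlich classifications assume $\mathrm{Ric}\ge0$ everywhere, so they do not apply. Your growth-control idea can be completed, though. Use Riccati comparison along minimizing geodesics from $p$ after they leave a ball containing the compact set; a lower Ricci bound on that ball controls the initial data. This gives $\mathrm{Vol}_g(B_p(r))=O(r^n)=o\bigl(r^n(\log r)^{n-1}\bigr)$, so (i) is in fact a special case of (iv).

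Finally, your alternative capacity route is off by one. Vanishing $(1,p)$-capacity only yields $\dim_{\mathcal H}\le n-p$, so you would need $p>n-1$, not $p>n-2$.
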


\begin{proof}
As in the proof of Theorem~\ref{contractible}, since $(M^n,g)$ is simply connected and LCF with $\mathrm{Sc}_g \ge 0$, Theorem~\ref{Liouville theorem} yields an injective conformal map $\Phi\colon M\to S^n$. Moreover, the boundary $\partial \Phi(M)$ has vanishing Newtonian $2$-capacity.
Thus \(\Phi\) is a conformal diffeomorphism from \(M\) onto an open
domain $\Omega:=\Phi(M)\subset S^n.$ Since \(\partial\Omega\) has zero Newtonian \(2\)-capacity, the capacity--dimension implication above gives
$\dim_{\mathcal H}(\partial\Omega)\le n-2.$ In particular, the covering dimension of \(\partial\Omega\) is at most \(n-2\), so
$\widetilde{\check H}^{\,n-1}(\partial\Omega;\mathbb Z)=0.$

By \v{C}ech--Alexander duality,
\(\widetilde H_0(S^n\setminus \partial\Omega;\mathbb Z)
\cong
\widetilde{\check H}^{\,n-1}(\partial\Omega;\mathbb Z)=0\).
Therefore \(S^n\setminus \partial\Omega\) is connected.
Set $K:=S^n\setminus \Omega.$
Since \(M\) is noncompact whereas \(S^n\) is compact, we have \(K\neq\varnothing\). Thus,
$S^n\setminus \partial\Omega=\Omega\sqcup \operatorname{int}(K),$
a disjoint union of open sets. Since \(\Omega\neq\varnothing\), connectedness forces
$\operatorname{int}(K)=\varnothing.$
Thus \(\Omega\) is dense in \(S^n\), and hence
\(\partial\Omega=\overline{\Omega}\setminus\Omega=S^n\setminus\Omega=K.\)

Now \(\Omega\cong M\), so our topological hypothesis gives
$\widetilde H_{n-1}(\Omega;\mathbb Z)=0.$ By \v{C}ech--Alexander duality,
\(\widetilde{\check H}^{\,0}(\partial\Omega;\mathbb Z)
\cong
\widetilde H_{n-1}(\Omega;\mathbb Z)=0\).
Hence \(\partial\Omega\) is connected.

Next define the pushforward metric
$\widetilde g:=(\Phi^{-1})^*g$
on \(\Omega\). Then \(\widetilde g\) is conformal to the standard round metric
on \(S^n\), and \(\Phi\colon(M,g)\to(\Omega,\widetilde g)\)
is an isometry. Therefore \((\Omega,\widetilde g)\) is complete. All five
additional hypotheses are preserved by this isometry; under condition~(iv),
the distinguished point becomes \(\widetilde p:=\Phi(p)\).

Hence, if any one of conditions~(i), (ii), or (iii) holds, the finite-point conformal compactification theorem applies: for conditions~(i) and~(ii), this is due to Ma--Qing~\cite[Corollary~1.1]{zbMATH07456625}, while for condition~(iii), it follows from Chen--Li~\cite[Theorem~1.4]{zbMATH07576884}. In all three cases, one concludes that $\partial \Omega = S^n \setminus \Omega$ is a finite set. Since \(\partial\Omega=S^n\setminus\Omega\) is nonempty and connected, it follows that \(\partial\Omega\) consists of a single point, say \(\partial\Omega=\{q\}\).

If condition~(iv) holds, then there exists a point $\widetilde p \in \Omega$ such that
\[
\lim_{r \to \infty} \frac{\mathrm{Vol}_{\widetilde g}\bigl(B_{\widetilde p}(r)\bigr)}{r^{n} (\log r)^{\,n-1}} = 0.
\]
Carron and Herzlich~\cite[Proposition~2.2]{zbMATH01756816} then imply that $\dim_{\mathcal{H}}(\partial \Omega)=0$.

If condition~(v) is satisfied, we apply Liu--Ma--Qing--Zhong's theorem~\cite[Theorem~5.1.1]{zbMATH08029235}. We verify its hypotheses as follows. The set $\partial\Omega$ is closed in $S^n$, and the metric $\widetilde g$ on $S^n \setminus \partial\Omega = \Omega$ is conformal to $g_{st}$. Moreover, $\widetilde g$ is complete, hence geodesically complete near $\partial\Omega$ as in the proof of Theorem~\ref{contractible}. Condition~(v) gives $p_0 \in (n-2,n)$ such that $A_{\widetilde g}^{(p_0)} \ge 0$. Since $n \ge 4$, we have $(n-2,n) \subset [2,n)$. Therefore, Liu--Ma--Qing--Zhong's theorem yields
$\dim_{\mathcal{H}}(\partial\Omega) \le (n-p_0)/2 < 1.$

Thus conditions (iv) and (v) each imply that $\partial\Omega$ consists of a single point,
since a nonempty connected metric space of Hausdorff dimension $<1$ is a singleton.
Consequently, under any one of the five additional conditions, there exists
\(q\in S^n\) such that \(\partial\Omega=\{q\}\), and hence
\(\Omega=S^n\setminus\{q\}\).
By stereographic projection from \(q\), \(\Omega\) is conformally diffeomorphic
to \((\mathbb R^n,g_E)\). Composing with \(\Phi\), we conclude that
\((M,g)\) is conformally diffeomorphic to \((\mathbb R^n,g_E)\).
\end{proof}

\subsection{LCF 4-manifolds}

In dimension four, the author proved in~\cite[Theorem~5.3]{2025arXiv251213528D} that
if \((M^4,g)\) is closed, LCF, and scalar-flat, with
\(\pi_2(M)\neq 0\), then its Riemannian universal cover
\((\widetilde M,\widetilde g)\) is, up to homothety, isometric to
\(\bigl(\mathbb H^2\times S^2,g_{\mathbb H}\oplus g_{st}\bigr)\), where
\(g_{\mathbb H}\) denotes the hyperbolic metric of curvature \(-1\). In the present subsection, we use
bounded geometry to prove a Euclidean rigidity statement for complete
LCF \(4\)-manifolds. The argument uses Huang's Ricci-flow classification
theorem and his analysis of infinite connected sums
\cite[Theorem~1.1 and \S\S~2--3]{zbMATH07110928}.

\begin{theorem}[Euclidean rigidity for LCF \(4\)-manifolds]\label{LCF-4-manifold}
Let \((M^4,g)\) be a complete, smooth, connected, open, locally
conformally flat Riemannian \(4\)-manifold. Assume that \((M,g)\) has
bounded geometry, in the sense that \(|\mathrm{Rm}_g|\leq C\) and
\(\mathrm{inj}_g(x)\geq C^{-1}\) for all \(x\in M\) and some constant
\(C\geq1\), and has uniformly positive scalar curvature, in the sense that
\(\mathrm{Sc}_g\geq \sigma>0\) for some constant \(\sigma\). If
\(\pi_1(M)=0\) and \(H_3(M;\mathbb R)=0\), then \(M\) is diffeomorphic
to the standard \(\mathbb R^4\).
\end{theorem}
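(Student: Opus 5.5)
The argument combines Huang's classification with the developing-map picture. Huang's theorem \cite[Theorem~1.1]{zbMATH07110928} says that a complete $4$-manifold with bounded geometry and uniformly positive isotropic curvature is diffeomorphic to a possibly infinite connected sum of $S^4$, $\mathbb{RP}^4$, $S^3\times\mathbb R$, $S^3\times S^1$, and their quotients by standard isometries, glued along a locally finite graph. The plan is to check his hypotheses, then use $\pi_1(M)=0$ and $H_3(M;\mathbb R)=0$ to force the simplest possible decomposition.

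\emph{Step 1: uniformly PIC.} In dimension four an LCF metric has $W\equiv0$, so the curvature operator is determined by the Schouten tensor. The PIC condition then reduces to $\mathrm{Sc}_g>0$ together with a condition on the traceless Ricci part that is automatic when $W=0$. Indeed, for $W=0$ one has $\mathcal R(\omega,\bar\omega)=(\mathrm{Sc}/12)|\omega|^2$ on isotropic $2$-forms, since $\mathring{\mathrm{Ric}}$ pairs $\Lambda^+$ with $\Lambda^-$ and drops out of isotropic evaluation. Thus $\mathrm{Sc}_g\ge\sigma$ gives uniform PIC with constant comparable to $\sigma$. Together with the assumed bounded geometry, this places $(M,g)$ in Huang's setting. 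I would verify the algebraic identity carefully, because Huang's uniform PIC is phrased with a normalization relative to $|\mathrm{Rm}|$, which is bounded by $C$.

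\emph{Step 2: topological reduction.} Simple connectivity rules out every summand with nontrivial fundamental group, namely $\mathbb{RP}^4$, $S^3\times S^1$, and the other quotients. It also forces the gluing graph to be a tree, since each cycle in the graph contributes a free generator of $\pi_1$. So $M$ is a locally finite tree-shaped connected sum of copies of $S^4$ and $S^3\times\mathbb R$, the latter being necks to infinity. Each edge of the tree corresponds to an embedded separating $S^3$.

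\emph{Step 3: $H_3$ forces one end.} Each separating $S^3$ bounding two noncompact pieces gives a nonzero class in $H_3(M;\mathbb R)$: it is detected by intersecting with a proper line running from one end to the other, that is, through the pairing with $H^{1}_c$ by Poincaré duality. Hence $H_3(M;\mathbb R)=0$ implies that every edge separates off a compact side, so $M$ has exactly one end. A locally finite tree of $S^4$'s and cylinders with one end is a ray. Capping by $S^4$ summands does nothing up to diffeomorphism, so $M\cong S^4\#(S^3\times\mathbb R)$ with one end capped. This is $S^4$ minus a point, i.e.\ the standard $\mathbb R^4$. One point needs care here: an infinite ray of $S^4$ summands glued by $S^3$-necks, as in Huang's \S\S~2--3, must be identified with the standard $\mathbb R^4$. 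Huang's construction shows it is an increasing union of standard $4$-balls attached along boundary collars, and such a union is diffeomorphic to standard $\mathbb R^4$.

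The main obstacle is Step 3 together with this last identification. Arbitrary unions of balls can be exotic in dimension four, so one must use that Huang's neck surgeries are performed on standard round necks, making each successive inclusion a standard ball embedding. That standardness is what gives the diffeomorphism with standard $\mathbb R^4$. As a consistency check, the developing map and Theorem~\ref{thm:intro-E} independently give homeomorphism to $\mathbb R^4$.
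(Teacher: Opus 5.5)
Your proposal is correct and follows the paper's proof essentially step for step. The steps are: uniform PIC from $W=0$ (your $\Lambda^{\pm}$ identity is the invariant form of the paper's frame computation $R_{1313}+R_{1414}+R_{2323}+R_{2424}-2R_{1234}=\mathrm{Sc}_g/3$); Huang's classification; $\pi_1(M)=0$ forcing a tree of simply connected pieces; $H_3(M;\mathbb R)\cong H^1_c(M;\mathbb R)$ forcing one end; absorption of the finite branches (a one-ended locally finite tree is a ray \emph{with} finite branches attached, not literally a ray); and identification of the resulting ray sum with standard $\mathbb R^4$.

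For that last identification the paper invokes Huang's well-definedness theorem for infinite connected sums, while your nested-ball description works equally well. It also needs no special ``round neck'' input: by the Palais--Cerf disc theorem, any increasing union of smooth closed $4$-balls, each contained in the interior of the next, is standard $\mathbb R^4$, so the worry about exotic unions is unnecessary.

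The one claim to drop is the closing ``consistency check''. Theorem~\ref{thm:intro-E}(ii) additionally assumes $1$-connectedness at infinity and the small loops condition on $\Lambda$. Neither is among the present hypotheses, so that theorem does not independently give a homeomorphism to $\mathbb R^4$.
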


\begin{proof}
Since \(\pi_1(M)=0\), the manifold \(M\) is orientable. The curvature
bound gives, after changing constants, a two-sided bound
\(|\mathrm{sec}_g|\leq C_0\), while
\(\operatorname{inj}_g\geq C^{-1}>0\). Thus \((M,g)\) has bounded geometry
in the sense required by Huang's theorem.

Since \((M^4,g)\) is LCF, its Weyl tensor vanishes. In an orthonormal frame,
with \(R_{ijij}\) denoting the sectional curvature of the plane spanned by
\(e_i\) and \(e_j\), the curvature tensor satisfies
\[
R_{ijkl}
=
\frac{1}{2}
\bigl(
\mathrm{Ric}_{ik}\delta_{jl}
+\mathrm{Ric}_{jl}\delta_{ik}
-\mathrm{Ric}_{il}\delta_{jk}
-\mathrm{Ric}_{jk}\delta_{il}
\bigr)
-
\frac{\mathrm{Sc}_g}{6}
\bigl(
\delta_{ik}\delta_{jl}
-\delta_{il}\delta_{jk}
\bigr).
\]
In particular, \(R_{1234}=0\). Consequently, for every point \(x\in M\)
and every orthonormal \(4\)-frame \(\{e_1,e_2,e_3,e_4\}\subset T_xM\),
\[
\begin{aligned}
&R_{1313}+R_{1414}+R_{2323}+R_{2424}-2R_{1234}\\
&\qquad
=\sum_{i=1}^{4}\mathrm{Ric}_{ii}-\frac{2}{3}\mathrm{Sc}_g
=\mathrm{Sc}_g-\frac{2}{3}\mathrm{Sc}_g
=\frac{1}{3}\mathrm{Sc}_g
\geq\frac{\sigma}{3}>0.
\end{aligned}
\]
Hence \((M,g)\) has uniformly positive isotropic curvature.

Moreover, \(M\) contains no essential incompressible space form. Indeed, every
such space form is, in particular, an embedded \(N^3\cong S^3/\Gamma\subset M\) with \(\Gamma\) finite
and nontrivial whose fundamental group injects into that of the ambient manifold,
so that \(\Gamma\cong\pi_1(N)\hookrightarrow\pi_1(M)=0\), which is impossible.

Huang's classification theorem~\cite[Theorem~1.1 and \S~1]{zbMATH07110928}
applies. Thus \(M\) is diffeomorphic to an infinite connected sum of
\(S^4\),
\(\mathbb{RP}^4\), \(S^3\times S^1\), and
\(S^3\widetilde{\times}S^1\) along a countably infinite, connected,
locally finite graph \(G\), with loops
and multiple edges allowed. Here the four vertex manifolds carry their standard
smooth structures. Huang states
in the proof of his Theorem~1.2 that the triviality of \(\pi_1(M)\) forces \(G\)
to be a tree and every vertex piece to be the standard \(S^4\)
\cite[\S~3]{zbMATH07110928}. We give the details. Working in the equivalent
boundary-gluing model obtained from Huang's quotient model by inserting collars,
for each vertex \(v\), let \(Y_v\) be
the corresponding vertex manifold \(X_v\) with the interior of one \(4\)-ball
removed for each incident half-edge (so a loop contributes two balls). Since
local finiteness makes this a finite family and the boundary spheres are simply
connected, Seifert--van Kampen gives \(\pi_1(Y_v)\cong\pi_1(X_v)\).
Choose a nested exhaustion of \(G\) by finite connected subgraphs. Local
finiteness ensures that the corresponding partial sums are compact and that
their interiors exhaust \(M\). Applying Seifert--van Kampen to these partial
sums and using that fundamental groups commute with this directed union gives
\[
        \pi_1(M)\cong
        \Bigl(*_{v\in V(G)}\pi_1(Y_v)\Bigr)*\pi_1(G)
        \cong
        \Bigl(*_{v\in V(G)}\pi_1(X_v)\Bigr)*\pi_1(G).
\]
Since a free product is trivial only when every factor is trivial, every
vertex group is trivial, and \(\pi_1(G)\) is trivial.
Among Huang's four pieces, only \(S^4\) has trivial fundamental group.
Thus every vertex piece is the standard \(S^4\), and \(G\) is a tree.

In the proof of Huang's Theorem~1.2, the one-endedness of \(G\) follows from
the assumption that the manifold is homeomorphic to \(\mathbb R^4\). That
assumption is unavailable here; instead, we use \(H_3(M;\mathbb R)=0\).

Since \(G\) is an infinite locally finite tree, K\"onig's lemma implies that
\(G\) contains a ray and hence has at least one end. We claim that \(G\) has
exactly one end. Suppose otherwise. Then \(G\) contains a bi-infinite
geodesic. Let \(e\) be an edge on this geodesic. Then \(G\setminus e\) has two infinite
components. In Huang's quotient model for the infinite connected sum, the
edge \(e\) is represented by a smooth neck; its central sphere
\(S_e\cong S^3\) separates \(M\) into two connected noncompact open sets
\(M_-\) and \(M_+\). Choose a smooth function \(\chi\colon M\to[0,1]\) which is
identically \(0\) on \(M_-\) outside this neck, identically \(1\) on \(M_+\)
outside the same neck, and depends only on the neck coordinate inside the
neck. Although \(\chi\) is not compactly supported, its differential is
supported in a compact subcylinder of the neck. Hence \(\alpha=d\chi\) is a
closed compactly supported \(1\)-form. Choose rays in the two infinite
components of \(G\setminus e\), beginning at the endpoints of \(e\), and
concatenate the corresponding rays in \(M_-\) and \(M_+\) across the
neck. The local finiteness of \(G\) and the compactness of the vertex blocks
imply that the resulting block decomposition of \(M\) is locally finite.
Consequently, the inverse image of a compact subset of \(M\) meets only a
bounded subarc of the concatenated double ray. After smoothing inside the
necks, we therefore obtain a proper smooth curve
\(\gamma\colon\mathbb R\to M\) crossing the neck once and joining ends
represented by rays in \(M_-\) and \(M_+\), respectively. Therefore
\(\int_\gamma\alpha=1\).
Thus \([\alpha]\neq0\) in \(H^1_c(M;\mathbb R)\): if
\(\alpha=d\varphi\) with \(\varphi\in C^\infty_c(M)\), then
\(\varphi(\gamma(t))\to0\) as \(t\to\pm\infty\), and hence
\(\int_\gamma d\varphi=0\), a contradiction. By the compactly supported de
Rham theorem and Poincar\'e duality for the oriented manifold \(M\),
\(0\neq [\alpha] \in H^1_c(M;\mathbb R)\cong H_3(M;\mathbb R),\) contradicting
\(H_3(M;\mathbb R)=0\). Hence
\(G\) is a one-ended infinite tree.

It remains to identify the resulting infinite connected sum. We have shown
that \(G\) is a locally finite one-ended tree and that every vertex piece is
the standard \(S^4\). Choose a proper ray in \(G\). Every branch off this ray
is finite; otherwise, K\"onig's lemma would produce a second end of \(G\).
The final reduction in Huang's proof of Theorem~1.2 then applies: after
absorbing these finite branches, the
connected sum becomes an infinite connected sum of standard \(S^4\)'s along
the ray \([0,\infty)\)~\cite[\S~3]{zbMATH07110928}. The standard
\(\mathbb R^4\) is such a ray sum, obtained from the decomposition of
\(\mathbb R^4\) into the unit \(4\)-ball and the closed annuli between
successive concentric \(3\)-spheres. Huang's well-definedness theorem for
infinite connected sums, together with the fact that the standard \(S^4\)
admits an orientation-reversing diffeomorphism, shows that the diffeomorphism
type is independent of the gluing embeddings and orientation choices
\cite[Theorem~2.2, Remark~1, and \S~3]{zbMATH07110928}. Hence \(M\) is
diffeomorphic to the standard \(\mathbb R^4\).
\end{proof}

\begin{remark}
For \(n\geq7\), a closed Riemannian \(n\)-manifold equipped with an LCF
metric of PSC does not necessarily have positive isotropic curvature. Indeed, for a  Riemannian \(n\)-manifold, positive isotropic
curvature is equivalent to positive \((n-4)\)-curvature
\cite[Proposition, p.~1470]{LabbiPIC2000}.  A closed LCF $n$-manifold with positive isotropic curvature implies \(b_p=0\) for all \(2\leq p\leq n-2\)
\cite[Corollary~I, p.~1471]{LabbiPIC2000}. Thus, if the manifold is connected
and oriented and also satisfies \(H_1(M;\mathbb Z)=0\), Poincar\'e duality
implies that it is a rational homology sphere. On the other hand, let $(N^3,g_{\mathrm{hyp}})$ be a closed, connected, hyperbolic integer homology $3$-spheres. Then the product manifold
$(N^3\times S^m,g_{\mathrm{hyp}}\oplus g_{st})$ is LCF, and
$\mathrm{Sc}_{g_{\mathrm{hyp}}\oplus g_{st}}=-6+m(m-1)$.
Thus its scalar curvature is negative when $m=2$, zero when $m=3$, and
positive when $m\geq4$. Furthermore,
$H_1(N^3\times S^m;\mathbb Z)=0$ and
$\pi_1(N^3\times S^m)$ is infinite. However, $N^3\times S^m$ is not a
rational homology sphere.
This example demonstrates that an LCF metric with PSC need not itself have positive isotropic curvature.
\end{remark}

\subsection{Applying shape theory to limit sets}
In this subsection, we use topological conditions on the conformal boundary
\(\Lambda\) to characterize \(\Omega\), thereby addressing Yau's
Problem~36~\cite[Problem~36]{MR1216573}.
Throughout this subsection, \(\dim\) denotes covering dimension.

Since the limit set $\Lambda$ of a developing map may be a fractal, we use shape theory instead of classical homotopy theory to study it. An important shape invariant, introduced by Borsuk, is \emph{movability}. A compact metrizable space $X$, embedded in the Hilbert cube $Q$, is said to be \emph{movable} if for every neighborhood $U$ of $X$ there exists a neighborhood $U' \subset U$ of $X$ such that, for any neighborhood $U'' \subset U$ of $X$, there exists a homotopy $H \colon U' \times I \to U$ satisfying $H(x,0)=x$ and $H(x,1) \in U''$ for all $x \in U'$~\cite[Definition~4]{zbMATH07436589}. This definition is independent of the embedding of $X$ into $Q$.

Inspired by the turning (diameter) condition in the definition of
homotopically \((1,c)\)-uniform domains due to Alestalo and
V\"ais\"al\"a~\cite{MR1404095}, we introduce the following weaker spherical
loop-filling condition.
\begin{definition}
Let \(\Omega\subset (S^n,g_{st})\) be a domain, and let \(c\ge 1\).
We say that \(\Omega\) has the \emph{diameter-controlled loop-filling property
with constant \(c\)} if every continuous map $f\colon S^1\to \Omega$
extends to a continuous map
$F\colon B^2\to \Omega$
such that
\[
\operatorname{diam}_{g_{st}}\bigl(F(B^2)\bigr)
\le c\,\operatorname{diam}_{g_{st}}\bigl(f(S^1)\bigr).
\]
\end{definition}

Recall that a \emph{topological \(n\)-cell} is a space homeomorphic to the standard closed unit cube \(I^n\). A compact subset \(X\) of an \(n\)-manifold \(N\) is \emph{cellular in \(N\)} if there exist \(n\)-cells \(B_1,B_2,\dots\) in \(N\) such that
\[
B_{i+1}\subset\operatorname{Int}_N(B_i),
\qquad
X=\bigcap_{i=1}^{\infty}B_i.
\]
A compact metric space \(X\) is called \emph{cell-like} if there exist a manifold \(N\) and an embedding \(\varphi\colon X\to N\) such that \(\varphi(X)\) is cellular in \(N\). For any nonempty finite-dimensional compact metric space \(X\), Lacher~\cite[Theorem~1.1]{zbMATH03290989}
shows that being cell-like is
equivalent to having the shape of a point, and also equivalent to the
statement that every embedding of \(X\) into any absolute neighborhood retract (ANR) has \emph{Property \(UV^{\infty}\)}. Here a compact subset \(X\) of an ANR space \(Y\) is said to have Property \(UV^{\infty}\) if, for every neighborhood \(U\subset Y\) of \(X\), there exists a neighborhood \(V\subset U\) of \(X\) such that the inclusion \(V\hookrightarrow U\) is null-homotopic in \(U\).

Following Venema~\cite{zbMATH03519593}, we make the following definitions. Let \(X\) be a compact subset of an \(n\)-manifold \(N^n\) equipped with a compatible metric. We say that \(X\) satisfies the
\emph{cellularity criterion} if, for every neighborhood \(U\) of \(X\), there
exists a neighborhood \(V\subset U\) of \(X\) such that every loop in
\(V\setminus X\) is null-homotopic in \(U\setminus X\). We say that \(X\)
satisfies the \emph{small loops condition} (SLC) if, for every neighborhood
\(U\) of \(X\), there exist a neighborhood \(V\subset U\) of \(X\) and a number
\(\varepsilon>0\) such that every loop in \(V\setminus X\) of diameter \(<\varepsilon\)
is null-homotopic in \(U\setminus X\). Finally, \(X\) satisfies the
\emph{inessential loops condition} (ILC) if, for every neighborhood \(U\) of
\(X\), there exists a neighborhood \(V\subset U\) of \(X\) such that every loop
in \(V\setminus X\) that is null-homotopic in \(V\) is also null-homotopic in
\(U\setminus X\).

Although SLC is stated using a compatible metric, it is independent of this
choice. Indeed, let \(d_1\) and \(d_2\) be compatible metrics and suppose that
SLC holds for \(d_1\). Given a neighborhood \(U\) of \(X\), choose, as in the
definition, a neighborhood \(V_0\subset U\) and \(\varepsilon_1>0\) for
\(d_1\). Since \(X\) is compact and the ambient manifold is locally compact,
there is a neighborhood \(V\) of \(X\) such that
\(\overline V\) is compact and \(\overline V\subset V_0\). Uniform continuity
of the identity map
\((\overline V,d_2)\to(\overline V,d_1)\) gives \(\varepsilon_2>0\) such that
every subset of \(\overline V\) of \(d_2\)-diameter less than
\(\varepsilon_2\) has \(d_1\)-diameter less than \(\varepsilon_1\). Hence every
loop in \(V\setminus X\) of \(d_2\)-diameter less than \(\varepsilon_2\) is
null-homotopic in \(U\setminus X\). Thus SLC for \(d_1\) implies SLC for
\(d_2\); interchanging the metrics proves equivalence.

Consequently, SLC is invariant under homeomorphism. Indeed, after pulling back a
compatible metric by the homeomorphism, small loops and null-homotopies in the
complement correspond exactly. The preceding metric independence then gives the
claim for arbitrary compatible metrics.

These loop conditions are closely related. For a proper compact subset
\(X\subset S^n\), choose \(p\in S^n\setminus X\) and use stereographic
projection to regard \(X\) as a compact subset of \(\mathbb R^n\); all three
loop conditions, as well as covering dimension, are preserved. Venema's
results~\cite[p.~444]{zbMATH03519593} then show that, if
\(\dim X\leq n-2\), ILC is equivalent to the small loops condition. Moreover,
if \(X\) has the shape of a point, then ILC is equivalent to the cellularity
criterion.

\begin{proposition}\label{lower dim}
Let \(n=4\) or \(5\), and let \((M^n,g)\) be a connected, simply connected,
complete, open, locally conformally flat manifold with nonnegative scalar
curvature.
Assume
\[
\widetilde H_{n-1}(M;\mathbb Z)
=
\widetilde H_{n-2}(M;\mathbb Z)
=
0.
\]
Let \(\Phi\colon M\to S^n\) be the developing map, set
\(\Omega:=\Phi(M)\), and let
\(\Lambda:=S^n\setminus\Omega\). If \(\Lambda\) is movable and \(\Omega\)
has the diameter-controlled loop-filling property with some constant
\(c\geq1\), then \(\Lambda\) is cellular in \(S^n\). Consequently,
\(M\) is homeomorphic to \(\mathbb R^n\).
\end{proposition}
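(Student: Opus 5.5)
Embed and bound dimension first. By Theorem~\ref{Liouville theorem}, \(\Phi\) is injective, so \(\Omega\cong M\) is an open domain in \(S^n\). Exactly as in the proof of Theorem~\ref{contractible}, completeness of the transported metric gives geodesic completeness near \(\Lambda\). Theorem~\ref{Ma} then yields \(\dim_{\mathcal H}\Lambda\le\frac{n-2}{2}\), so \(\dim\Lambda\le\lfloor\frac{n-2}{2}\rfloor=1\) for \(n=4,5\).

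\v{C}ech--Alexander duality first gives \(\operatorname{int}\Lambda=\varnothing\), as in Corollary~\ref{Contractible for}. It then gives
\[
\widetilde{\check H}^{\,0}(\Lambda)\cong\widetilde H_{n-1}(\Omega)=0,\qquad
\check H^{1}(\Lambda)\cong\widetilde H_{n-2}(\Omega)=0,
\]
and \(\check H^{k}(\Lambda)=0\) for \(k\ge2\) by the dimension bound. Thus \(\Lambda\) is a connected, \v{C}ech-acyclic continuum of dimension at most \(1\).

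Next, show that \(\Lambda\) has the shape of a point. For \(\dim\le1\), shape is governed by the pro-fundamental group: \(\Lambda\) is the inverse limit of graphs, and a movable one-dimensional continuum has the shape of a wedge of circles (Trybulec/Borsuk's theorem that movable curves have the shape of a finite or Hawaiian-type wedge \(\bigvee S^1\)). Movability together with \(\check H^1(\Lambda;\mathbb Z)=0\) forces that wedge to be trivial, hence \(\operatorname{Sh}(\Lambda)=\operatorname{Sh}(\mathrm{pt})\). Alternatively, I will show the pro-\(\pi_1\) is pro-trivial: movability makes it Mittag-Leffler, and a Mittag-Leffler tower of free groups with trivial \(\check H^1\) is pro-trivial. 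Morita's finite-dimensional Whitehead theorem then finishes this step. By Lacher's theorem, \(\Lambda\) is cell-like and has Property \(UV^\infty\).

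Then verify the cellularity criterion via Venema's equivalences. Since \(\dim\Lambda\le1\le n-2\), ILC is equivalent to SLC, and for point-shaped \(\Lambda\), ILC is equivalent to the cellularity criterion; so it suffices to prove SLC.

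SLC comes from the diameter-controlled loop filling. Given a neighborhood \(U\), choose \(V\subset U\) and \(\varepsilon\) such that \(c\varepsilon\) is less than the distance from \(V\) to \(S^n\setminus U\). A loop in \(V\setminus\Lambda=V\cap\Omega\) of diameter \(<\varepsilon\) bounds a disk in \(\Omega\) of diameter \(<c\varepsilon\). That disk stays inside \(U\), so the loop is null-homotopic in \(U\setminus\Lambda\).

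Finally, conclude cellularity and the topology of \(M\). For \(n=4\), use Repov\v{s}'s theorem; for \(n\ge5\), McMillan's cellularity criterion in its cell-like form. Either gives that \(\Lambda\) is cellular in \(S^n\). Then \(S^n/\Lambda\cong S^n\), so \(\Omega\cong S^n\setminus\mathrm{pt}\cong\mathbb R^n\), and hence \(M\cong\mathbb R^n\).

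The main obstacle is the shape step, deducing pro-triviality of \(\pi_1\) from movability and acyclicity in dimension one. I would first try the structure theorem for movable curves. If it is unavailable in the needed form, I would argue directly with inverse sequences of graphs and the Mittag-Leffler property.
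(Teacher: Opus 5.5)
Your proposal is correct and follows the paper's proof essentially step for step: the Liouville embedding and the Ma--Qing bound giving $\dim\Lambda\le1$, \v{C}ech--Alexander duality giving connectedness and $\check H^{1}(\Lambda;\mathbb Z)=0$, Trybulec's classification of movable curves giving the shape of a point, Lacher's theorem, the small loops condition from diameter-controlled loop filling, Venema's equivalences, then Repov\v{s}/McMillan and Brown's cellular quotient theorem. The only cosmetic difference is in how Trybulec's theorem is applied. The paper first converts $\check H^{1}(\Lambda;\mathbb Z)=0$ into $\check H_{1}(\Lambda;\mathbb Z)=0$ via Lemma~\ref{Cech}, to match the homological Betti-number formulation of that theorem. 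You instead read the vanishing directly off $\check H^{1}$ of the model bouquets, which works equally well.
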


The strategy of the proof is to combine the geometric and homological assumptions with movability to show that $\Lambda$ has the shape of a point. Using the diameter-controlled loop-filling property, we then verify the cellularity criterion. The conclusion follows from the $4$- and $5$-dimensional cellularity theorems.

\begin{proof}
By Theorem~\ref{Liouville theorem}, the developing map is injective and
therefore identifies \(M\) conformally and diffeomorphically with the domain
\(\Omega\subset S^n\), and \(\partial\Omega\) has zero Newtonian
\(2\)-capacity. Since \(M\) is noncompact, \(\Omega\neq S^n\), and
\(\Lambda:=S^n\setminus\Omega\) is a nonempty compact subset of \(S^n\).

We first show that \(\Lambda=\partial\Omega\). The capacity--dimension
implication above gives
\(\dim_{\mathcal H}(\partial\Omega)\leq n-2\). Hence the covering dimension
of \(\partial\Omega\) is at most \(n-2\), and therefore
\(\widetilde{\check H}^{\,n-1}(\partial\Omega;\mathbb Z)=0.\) By
\v{C}ech--Alexander duality,
\(\widetilde H_0(S^n\setminus\partial\Omega;\mathbb Z)=0.\) Thus
\(S^n\setminus\partial\Omega\) is connected. But
\(S^n\setminus\partial\Omega =\Omega\sqcup\operatorname{int}(\Lambda).\)
Since \(\Omega\neq\varnothing\), it follows that
\(\operatorname{int}(\Lambda)=\varnothing\), and hence
\(\Lambda=\partial\Omega\).

Choose \(x_0\in\Omega\) and \(\varepsilon>0\) such that
\(\overline{B_{g_{st}}(x_0,\varepsilon)}\subset\Omega,\) and set
\(D:=S^n\setminus\overline{B_{g_{st}}(x_0,\varepsilon)}.\)
Then \(D\) is a bounded open neighborhood of \(\Lambda\).

Transfer \(g\) to \(\Omega\) by setting \(h:=(\Phi^{-1})^*g.\) Then \(h\) is
complete and, on \(D\setminus\Lambda\),
\(h=u^{\frac{4}{n-2}}g_{st}\) for some smooth positive function \(u\). Moreover,
\(\mathrm{Sc}_h=\mathrm{Sc}_g\circ\Phi^{-1}\geq0.\)
As in the proof of Theorem~\ref{contractible}, completeness implies that \(h\)
is geodesically complete near \(\Lambda\). Applying Theorem~\ref{Ma} with
background manifold \((S^n,g_{st})\), singular set \(\Lambda\), and the
bounded open neighborhood \(D\), we obtain
\(\dim_{\mathcal H}(\Lambda)\leq\frac{n-2}{2}\).
Since covering dimension does not exceed Hausdorff dimension and is
integer-valued, \(n=4\) or \(5\) gives
\(\dim\Lambda\leq1\).

For \(j=0,1\), \v{C}ech--Alexander duality and \(\Omega\cong M\) give
\[
\widetilde{\check H}^{\,j}(\Lambda;\mathbb Z)
\cong
\widetilde H_{n-j-1}(\Omega;\mathbb Z)
\cong
\widetilde H_{n-j-1}(M;\mathbb Z)
=0.
\]
In particular, \(\Lambda\) is connected. First consider the case
\(\dim\Lambda=0\). A connected compact
\(0\)-dimensional metric space is a point. In particular, \(\Lambda\) is
cellular in \(S^n\), and
\(\Omega=S^n\setminus\Lambda \cong S^n\setminus\{\mathrm{pt}\}
\cong \mathbb R^n.\)

We may therefore assume that \(\dim\Lambda=1\). Since
\(\widetilde{\check H}^{\,1}(\Lambda;\mathbb Z)=0\),
Lemma~\ref{Cech} gives
\(\check H_1(\Lambda;\mathbb Z)=0\).

Since \(\Lambda\) is a connected one-dimensional compact metric space, it is
a curve. In the terminology used by Borsuk--Dydak, the first Betti number is
the rank of the first integral homology group in the sense of Vietoris or
\v{C}ech~\cite[p.~163]{zbMATH03707364}. Thus Lemma~\ref{Cech} shows that the
first Betti number of \(\Lambda\) is zero. The consequence of Trybulec's
theorem recorded in~\cite[Theorem~8.2 and the ensuing paragraph,
p.~180]{zbMATH03707364} states that the shape of a movable curve is determined
by this number. The interval \([0,1]\) is a
movable curve with first Betti number \(0\), and it has the shape of a
point. Therefore
\(\operatorname{Sh}(\Lambda)
=
\operatorname{Sh}([0,1])
=
\operatorname{Sh}(*)\).
Thus \(\Lambda\) has the shape of a point. Since \(\Lambda\) is a nonempty
finite-dimensional compact metric space, Lacher's theorem
\cite[Theorem~1.1]{zbMATH03290989} implies that \(\Lambda\) is cell-like and
that its embedding in \(S^n\) has Property \(UV^\infty\).

We next prove that \(\Lambda\) satisfies the small loops condition
\(\mathrm{SLC}\). Let \(U\) be an arbitrary open neighborhood of
\(\Lambda\) in \(S^n\). Since \(\Lambda\subsetneq S^n\), choose proper
open neighborhoods \(U_0\) and \(V\) of \(\Lambda\) such that
\(\overline V\subset U_0\) and \(\overline{U_0}\subset U\). Set
\(\eta:=d_{g_{st}}\bigl(\overline V,S^n\setminus U_0\bigr)>0\)
and \(\varepsilon:=\frac{\eta}{c}\).
Let \(\gamma\colon S^1\to V\setminus\Lambda\) be any loop satisfying
\(\operatorname{diam}_{g_{st}}\bigl(\gamma(S^1)\bigr)<\varepsilon\). Since
\(V\setminus\Lambda\subset\Omega\), the loop \(\gamma\) lies in \(\Omega\).
By the diameter-controlled loop-filling property, \(\gamma\) extends to a map
\(G\colon B^2\to\Omega\) such that
\[
    \operatorname{diam}_{g_{st}}\bigl(G(B^2)\bigr)
    \le c\,\operatorname{diam}_{g_{st}}\bigl(\gamma(S^1)\bigr)
    <\eta.
\]
Because \(G(B^2)\) contains \(\gamma(S^1)\subset\overline V\), every point
of \(G(B^2)\) lies within distance \(<\eta\) of \(\overline V\). By the
definition of \(\eta\), this implies
\(G(B^2)\cap(S^n\setminus U_0)=\varnothing\). Hence
\(G(B^2)\subset U_0\setminus\Lambda\subset U\setminus\Lambda\).
Thus every loop of sufficiently small diameter in \(V\setminus\Lambda\)
bounds a disk in \(U\setminus\Lambda\). This is precisely the small loops
condition.

Because \(\dim\Lambda\le 1\) and \(n\in\{4,5\}\), we have
\(\dim\Lambda\le n-2\). By Venema~\cite[p.~444]{zbMATH03519593}, for a compact metric space \(X\subset N^n\) with \(\dim X\le n-2\), the small loops condition is equivalent to the inessential loops condition. Therefore \(\Lambda\)
satisfies \(\mathrm{ILC}\).

Because \(\Lambda\) has the shape of a point and satisfies \(\mathrm{ILC}\),
Venema's equivalence of loop conditions implies that \(\Lambda\) satisfies
the cellularity criterion \cite[p.~444]{zbMATH03519593}.

We now apply the appropriate cellularity theorem by dimension.

If \(n=4\), then Repov\v{s}'s \(4\)-dimensional cellularity criterion~\cite[Theorem, p.~564]{zbMATH04011439} applies:
a compact metric space with Property \(UV^\infty\) in the interior of a topological \(4\)-manifold
is cellular if and only if it satisfies the cellularity criterion. Since \(\Lambda\) has Property \(UV^\infty\) and satisfies the cellularity criterion, it follows that \(\Lambda\) is cellular in \(S^4\).

If \(n=5\), then \(S^5\) is a PL \(5\)-manifold. The cell-like form of
McMillan's cellularity criterion
\cite[Theorem~3.2.3, p.~107]{zbMATH05624618}, originating in
\cite[Theorem~1, p.~327]{zbMATH03190942}, states that a compact cell-like subset in
the interior of a PL \(n\)-manifold, \(n\ge 5\), is cellular if and only if it
satisfies the cellularity criterion. Therefore \(\Lambda\) is cellular in
\(S^5\).

Thus, in both cases, \(\Lambda\subset S^n\) is cellular. We now apply Morton
Brown's cellular quotient theorem, in the form proved by
Uspenskij~\cite[Theorem~1.3]{zbMATH05002651}. If \(K\subset S^n\) is
cellular, then the quotient space \(S^n/K\) is homeomorphic to \(S^n\).
Applying this to \(K=\Lambda\), we obtain
\(S^n/\Lambda\cong S^n\).
The quotient map
\(\mathfrak q\colon S^n\to S^n/\Lambda\)
collapses exactly \(\Lambda\) to a single point. Since
\(S^n\setminus\Lambda\) is saturated and open, the quotient-map criterion
shows that its restriction is a homeomorphism
\(S^n\setminus\Lambda \cong
(S^n/\Lambda)\setminus\{\mathfrak q(\Lambda)\}.\) Since
\(S^n/\Lambda\cong S^n\), this gives
\(S^n\setminus\Lambda \cong S^n\setminus\{\mathrm{pt}\}
\cong \mathbb R^n.\)
Since \(\Omega=S^n\setminus\Lambda\), we have
\(M\cong\Omega\cong\mathbb R^n\).
\end{proof}

We record the following elementary relation between \v{C}ech cohomology and
\v{C}ech--Vietoris homology in covering dimension at most one.
\begin{lemma}\label{Cech}
Let \(X\) be a compact metrizable space with \(\dim X\le 1\). Suppose that $\check H^{1}(X;\mathbb{Z}) = 0$. Then
$\check H_{1}(X;\mathbb{Z}) = 0,$ where $\check H_{*}$ denotes \v{C}ech--Vietoris homology.
\end{lemma}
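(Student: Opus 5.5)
The plan is to realize \(X\) as an inverse limit of finite polyhedra of dimension at most one and to compute both \v{C}ech invariants from that single inverse system. By the Freudenthal expansion theorem (equivalently, by taking nerves of a cofinal sequence of finite open covers of order at most two, which exist because \(\dim X\le 1\)), we write \(X=\varprojlim(P_k,p_k)\), where each \(P_k\) is a finite graph, that is, a compact polyhedron of dimension at most one. Continuity of \v{C}ech cohomology then gives
\[
\check H^1(X;\mathbb Z)\cong\varinjlim H^1(P_k;\mathbb Z),
\]
and \v{C}ech--Vietoris homology of a compactum is computed by the same inverse system, with a Milnor-type exact sequence
\[
0\to\varprojlim{}^1 H_2(P_k;\mathbb Z)\to \check H_1(X;\mathbb Z)\to \varprojlim H_1(P_k;\mathbb Z)\to 0 .
\]
For compact metric spaces the strong (Steenrod) and \v{C}ech versions agree in the relevant degree up to this \(\varprojlim^1\) term; if only the naive \v{C}ech homology \(\varprojlim H_1(P_k)\) is intended, the extra term simply disappears. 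Since each \(P_k\) is a graph, \(H_2(P_k)=0\), so the \(\varprojlim^1\) term vanishes and \(\check H_1(X)\cong\varprojlim H_1(P_k)\).

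Next we use the graph structure. Here \(H_1(P_k)\cong\mathbb Z^{r_k}\) is free abelian of finite rank, and \(H^1(P_k)\cong\operatorname{Hom}(H_1(P_k),\mathbb Z)\) is its dual, because \(H_0\) is free and so the Ext term vanishes. The bonding maps in cohomology are the duals \(p_k^*=\operatorname{Hom}(p_{k*},\mathbb Z)\). The hypothesis \(\varinjlim H^1(P_k)=0\) says that every class in \(H^1(P_k)\) dies after finitely many steps. Because \(H^1(P_k)\) is finitely generated, there is a single \(m>k\) such that the composite
\[
(p_{k,m})^*\colon H^1(P_k)\to H^1(P_m)
\]
is zero. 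Dualizing, \(\operatorname{Hom}\bigl((p_{k,m})_*,\mathbb Z\bigr)=0\), so the image of \((p_{k,m})_*\colon H_1(P_m)\to H_1(P_k)\) is a subgroup of the free group \(\mathbb Z^{r_k}\) on which every homomorphism to \(\mathbb Z\) vanishes. A nonzero subgroup of a finitely generated free abelian group admits a nonzero functional to \(\mathbb Z\), so this image is zero, that is, \((p_{k,m})_*=0\).

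Finally, an element of \(\varprojlim H_1(P_k)\) is a compatible sequence \((a_k)\) with \(a_k=(p_{k,m})_*a_m=0\) for every \(k\). Hence \(\varprojlim H_1(P_k)=0\), and therefore \(\check H_1(X;\mathbb Z)=0\).

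The main obstacle is the bookkeeping in the first step: justifying that \v{C}ech--Vietoris homology of a compactum of dimension at most one is computed by an inverse sequence of finite graphs, and controlling the \(\varprojlim^1\) term. Once dimension at most one gives \(H_2(P_k)=0\), both issues are resolved, and the rest is duality for finitely generated free abelian groups.
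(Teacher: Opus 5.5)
Your proof is correct and follows essentially the same route as the paper. Both compute the two \v{C}ech groups from nerves of multiplicity-\(\le 2\) covers, which are finite graphs, and both use the natural universal-coefficient duality \(H^1(N)\cong\operatorname{Hom}(H_1(N),\mathbb Z)\) for graphs, whose \(H_1\) is free.

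The difference is only in the last step. The paper works over the full directed set of such covers: it pairs one nonzero compatible class \(x_{\mathcal U_0}\) with a cocycle \(\phi\) satisfying \(\langle\phi,x_{\mathcal U_0}\rangle\neq0\), and shows that \(\phi\) cannot die in the colimit. You instead use finite generation of \(H^1(P_k)\) to find a single stage \(m\) with \((p_{k,m})_*=0\). That is pro-triviality of \(\{H_1(P_k)\}\), which is slightly stronger. It also disposes of the \(\varprojlim^1\) term you flagged, though that term does not arise anyway, since \v{C}ech--Vietoris homology is by definition the plain inverse limit.
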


\begin{proof}
Let \(\mathcal C\) be the set of all finite open covers of \(X\) of multiplicity
at most \(2\), ordered by refinement. Since \(\dim X\le 1\), every finite open
cover of \(X\) has a finite open refinement in \(\mathcal C\), so \(\mathcal C\)
is cofinal among all finite open covers. It is also directed: if
\(\mathcal U,\mathcal V\in\mathcal C\), then the finite open cover
\(\{U\cap V: U\in\mathcal U,\ V\in\mathcal V,\ U\cap V\neq\varnothing\}\)
has a refinement in \(\mathcal C\).
Hence, by cofinality,
\[
\check H_1(X;\mathbb Z)\cong
\varprojlim_{\mathcal U\in\mathcal C} H_1(N(\mathcal U);\mathbb Z),
\qquad
\check H^1(X;\mathbb Z)\cong
\varinjlim_{\mathcal U\in\mathcal C} H^1(N(\mathcal U);\mathbb Z).
\]

If \(\mathcal V\succeq\mathcal U\), choose a refinement projection
\(p_{\mathcal U\mathcal V}\colon N(\mathcal V)\to N(\mathcal U)\). Different
choices are contiguous, hence induce the same maps on homology and cohomology;
similarly, if \(\mathcal W\succeq\mathcal V\succeq\mathcal U\), then
\(p_{\mathcal U\mathcal V}\circ p_{\mathcal V\mathcal W}\) and
\(p_{\mathcal U\mathcal W}\) are contiguous. Thus the induced maps form the
inverse and direct systems above.

For each \(\mathcal U\in\mathcal C\), since \(\mathcal U\) has multiplicity at most \(2\), its nerve \(N(\mathcal U)\) is a finite simplicial complex of dimension at most \(1\), hence a finite graph. In particular, \(H_1(N(\mathcal U);\mathbb Z)\) is finitely generated free abelian, and \(H_0(N(\mathcal U);\mathbb Z)\) is free abelian. Therefore the universal coefficient theorem gives a natural
isomorphism
\(H^1(N(\mathcal U);\mathbb Z)\cong
\operatorname{Hom}(H_1(N(\mathcal U);\mathbb Z),\mathbb Z)\),
under which
\(p_{\mathcal U\mathcal V}^*\) corresponds to precomposition with
\((p_{\mathcal U\mathcal V})_*\). Equivalently,
\(\langle p_{\mathcal U\mathcal V}^*\alpha,y\rangle
=
\langle \alpha,(p_{\mathcal U\mathcal V})_*y\rangle\).

We prove the contrapositive. Assume \(\check H_1(X;\mathbb Z)\neq 0\). Then
there exists a nonzero compatible family
\(x=(x_{\mathcal U})_{\mathcal U\in\mathcal C}
\in
\varprojlim_{\mathcal U\in\mathcal C} H_1(N(\mathcal U);\mathbb Z)\).
Choose \(\mathcal U_0\in\mathcal C\) with \(x_{\mathcal U_0}\neq 0\). Since
\(H_1(N(\mathcal U_0);\mathbb Z)\) is free abelian, there exists
\(\phi\in H^1(N(\mathcal U_0);\mathbb Z)\) such that
\(\langle \phi,x_{\mathcal U_0}\rangle\neq0.\)

We claim that the image of \(\phi\) in
\(\varinjlim_{\mathcal U\in\mathcal C} H^1(N(\mathcal U);\mathbb Z)
\cong
\check H^1(X;\mathbb Z)\)
is nonzero. Otherwise, since the index category is directed, an element of this
filtered colimit is zero if and only if it becomes zero after passing to some refinement;
thus there exists \(\mathcal V\succeq\mathcal U_0\) such that
$p_{\mathcal U_0\mathcal V}^*(\phi)=0.$
Pairing with \(x_{\mathcal V}\) and using compatibility of \(x\), we get
\[
0
=
\langle p_{\mathcal U_0\mathcal V}^*(\phi),x_{\mathcal V}\rangle
=
\langle \phi,(p_{\mathcal U_0\mathcal V})_*(x_{\mathcal V})\rangle
=
\langle \phi,x_{\mathcal U_0}\rangle,
\]
a contradiction. Thus \(\check H_1(X;\mathbb Z)\neq0\) implies
\(\check H^1(X;\mathbb Z)\neq0\). The contrapositive proves the lemma.
\end{proof}

To replace the additional analytic or geometric hypotheses in Corollary~\ref{Contractible for}
by topological conditions, and to extend Proposition~\ref{lower dim} to higher
dimensions, we recall several notions from shape theory, beginning with the
following definition; see~\cite[Section~7]{zbMATH07436589}.

\begin{definition}[Homotopy pro-groups]
Let \((X,x_0)\) be a pointed compact metric space embedded in an absolute
neighborhood retract \(P\). Let
\(V_1\supset V_2\supset V_3\supset\cdots\supset X\)
be a nested cofinal sequence of open neighborhoods of \(X\) in \(P\). For
\(q\ge 1\), the inclusions \(V_j\hookrightarrow V_i\), \(j\ge i\), induce
homomorphisms
\(p_{ij}\colon\pi_q(V_j,x_0)\to \pi_q(V_i,x_0)\).
The \(q\)-th homotopy pro-group of \((X,x_0)\), denoted
$\operatorname{pro}\text{-}\pi_q(X,x_0),$
is the inverse system
$\bigl\{\pi_q(V_i,x_0),p_{ij}\bigr\}_{j\ge i},$
viewed as an object of the pro-category \(\operatorname{pro}\text{-}\mathbf{Grp}\).
\end{definition}

Since open subsets of an ANR are ANRs, the \(V_i\) form an associated
ANR-neighborhood system for \(X\). The resulting pro-isomorphism class is
independent of the chosen cofinal neighborhood system. Indeed, if
\(\{U_i\}\) and \(\{U'_a\}\) are two nested cofinal systems in the same ambient
ANR, then cofinality gives, after passing to subsequences, alternating
inclusions
\[
U_{i_1}\supset U'_{a_1}\supset U_{i_2}\supset U'_{a_2}\supset\cdots\supset X,
\]
which induce the usual pro-isomorphism between the two inverse systems.

Independence of the ambient ANR is part of the standard well-definedness of
shape invariants~\cite[Sections~2.2 and~7]{zbMATH07436589}. Any two associated ANR-neighborhood systems for \(X\), even
arising from different ANR embeddings of \(X\), are pro-homotopy equivalent.
Consequently, their homotopy pro-groups are pro-isomorphic. Thus
\(\operatorname{pro}\text{-}\pi_q(X,x_0)\) is an invariant of the pointed shape
of \((X,x_0)\).

We write $\operatorname{pro}\text{-}\pi_q(X,x_0)=0$
if this inverse system is pro-isomorphic to the trivial system. For a nested
sequential representative, this is equivalent to saying that for every \(i\)
there exists \(j\ge i\) such that the bonding homomorphism $\pi_q(V_j,x_0)\to \pi_q(V_i,x_0)$ is the zero homomorphism.

For a contractible open $n$-manifold $(n\geq 3)$, simple connectivity at infinity is the
classical end condition appearing in the recognition of Euclidean space.
In the present setting, the open manifold need not be contractible.
Thus we require a higher-dimensional vanishing condition on the homotopy
groups at infinity.

\begin{definition}[\(d\)-connected at infinity]
Let \(d\geq1\) be an integer, and let \(M\) be a noncompact, path-connected
manifold. We say that \(M\) is
\emph{\(d\)-connected at infinity} if \(M\) is one-ended and, for every compact
set \(C\subset M\), there exists a compact set \(K\subset M\), with
\(C\subset K\), such that every map
$S^q\to M\setminus K$
extends to a map $B^{q+1}\to M\setminus C$
for every \(1\le q\le d\).
\end{definition}

With these definitions, we obtain the following Euclidean rigidity theorem
when \(n\geq4\).

\begin{theorem}\label{d-connect}
Let \((M^n,g)\), \(n\geq4\), be a complete, open, simply connected, locally
conformally flat \(n\)-manifold with nonnegative scalar curvature. Let
\(\Phi\colon M\to S^n\) be the developing map, set
\(\Omega:=\Phi(M)\), and let \(\Lambda:=S^n\setminus\Omega\).
Assume that \(M\) is \(\left\lfloor \frac{n-2}{2}\right\rfloor\)-connected at infinity and that \(\Lambda\) satisfies the small loops condition in \(S^n\). Then
\(M\) is homeomorphic to \(\mathbb{R}^n\).
\end{theorem}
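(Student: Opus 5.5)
We follow the intro outline and the template of Proposition~\ref{lower dim}, replacing the curve-theoretic step by Morita's finite-dimensional Whitehead theorem.

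\emph{Step 1: $\Lambda$ is a small, connected boundary.} By Theorem~\ref{Liouville theorem}, $\Phi$ is a conformal diffeomorphism onto $\Omega$, and $\partial\Omega$ has zero $2$-capacity. As in the proof of Proposition~\ref{lower dim}, the capacity--dimension implication and \v{C}ech--Alexander duality give $\Lambda=\partial\Omega$. Completeness gives geodesic completeness near $\Lambda$, so Theorem~\ref{Ma} yields $\dim_{\mathcal H}\Lambda\le\frac{n-2}{2}$. Hence
\[
\dim\Lambda\le d:=\left\lfloor\tfrac{n-2}{2}\right\rfloor\le n-2 .
\]
By definition, $d$-connectedness at infinity includes one-endedness. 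Ends of $\Omega=S^n\setminus\Lambda$ correspond to components of $\Lambda$: since $\dim\Lambda\le n-2$, small neighborhoods of each component have connected complements near $\Lambda$. Therefore $\Lambda$ is connected.

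\emph{Step 2: vanishing of the homotopy pro-groups.} Choose a nested cofinal sequence of open neighborhoods $V_i$ of $\Lambda$ with $V_i\setminus\Lambda=\Omega\setminus C_i$ for compacta $C_i$ exhausting $M$. Fix $1\le q\le d$ and a map $f\colon S^q\to V_j$. Since $\dim\Lambda+q\le 2d<n-1$, general position pushes $f$ off $\Lambda$ into $V_j\setminus\Lambda=M\setminus C_j$, keeping the base point and staying inside $V_j$. Choosing $C_j\supset K(C_i)$ as in the definition, $f$ then extends to $B^{q+1}\to M\setminus C_i\subset V_i$. The pushing-off has to be controlled by a small homotopy inside $V_j$; I would arrange this by general position relative to a fine triangulation, as in Schoen--Yau's argument used in Theorem~\ref{Kleinian}. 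Consequently $\pi_q(V_j)\to\pi_q(V_i)$ is zero for $q\le d$, so $\operatorname{pro}\text{-}\pi_q(\Lambda)=0$ for $1\le q\le d$, and $\Lambda$ is connected by Step 1. Since $\dim\Lambda\le d$, Morita's finite-dimensional Whitehead theorem~\cite[Theorem~1.2]{zbMATH03497097} (vanishing pro-$\pi_q$ for $q\le\dim+1$ suffices in the movable case) gives $\operatorname{Sh}(\Lambda)=\operatorname{Sh}(*)$. We also need $q=d+1$. This is handled by general position in dimension $d+2$, using $d+(d+2)\le n$, or by noting that Morita's criterion requires only $q\le\dim\Lambda$ together with pro-$\pi_1$ control.

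\emph{Step 3: cellularity and conclusion.} By Lacher, $\Lambda$ is cell-like and has Property $UV^\infty$. By hypothesis $\Lambda$ satisfies SLC, and $\dim\Lambda\le n-2$, so Venema gives ILC, and then the cellularity criterion holds since $\Lambda$ has the shape of a point. Repov\v{s}'s theorem applies for $n=4$, and McMillan's criterion in cell-like form for $n\ge5$; either way $\Lambda$ is cellular in $S^n$. Brown's cellular quotient theorem then gives $S^n/\Lambda\cong S^n$, hence $M\cong\Omega\cong\mathbb R^n$, exactly as at the end of Proposition~\ref{lower dim}.

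The main obstacle is Step~2. General position must push spheres off the possibly wild compactum $\Lambda$ with only a dimension bound available, and the resulting pro-vanishing has to reach the full range of degrees required by Morita's theorem. The borderline degree $q=d+1$ is where I expect the most care to be needed.
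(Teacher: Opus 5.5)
Step~1 matches the paper and Step~3 is the argument of Proposition~\ref{lower dim}. Step~2, however, rests on a false general-position principle. Covering dimension alone does not let you push maps of $q$-dimensional polyhedra off a compactum once $q\geq2$, and $q=2$ is needed here as soon as $n\geq6$. For example, let $A\subset\mathbb R^3$ be Antoine's necklace, so $\dim A=0$, and let $\gamma$ be a loop that is essential in $\mathbb R^3\setminus A$. Then no singular disk spanning $\gamma$ (nor the sphere obtained by folding it) can be moved off $A$ by a small homotopy, even though $2+0<3$. From $\dim\Lambda\le n-2$ you only get the case $q=1$, via non-separation in small balls.

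What actually makes the push-off work is the Hausdorff bound $\dim_{\mathcal H}\Lambda\le\frac{n-2}{2}$ that you already obtained from Ma--Qing, used as in Schoen--Yau:
\begin{itemize}
\item In a stereographic chart, replace $f$ by a Lipschitz map $f_0$ via a small homotopy inside $V_j$.
\item The translations $v$ for which $(f_0+v)(S^q)$ meets $\Lambda$ form a Lipschitz image of $S^q\times\Lambda$. This set has Hausdorff dimension at most $q+\frac{n-2}{2}\le n-2<n$, so it is Lebesgue-null.
\item A small generic $v$ therefore gives a map into $V_j\setminus\Lambda$ that is homotopic to $f$ inside $V_j$.
\end{itemize}
The base point is no obstacle: a free null-homotopy in the path-connected $V_i$ already kills the based class.

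You should also drop the discussion of $q=d+1$. Morita's theorem, in the form the paper uses, requires pro-$\pi_k$ isomorphisms only for $1\le k\le\max\{\dim\Lambda,\dim\{*\}\}=\dim\Lambda\le d$. In any case, your proposed general position for $(d+2)$-disks would not be available: for even $n$ one has $(d+2)+\frac{n-2}{2}=n$, so the measure argument gives nothing.

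With this repair your route is correct, and it differs from the paper's in where the small loops condition is used. The paper spends SLC in Step~2. Hollingsworth--Rushing gives $\pi_j(V_i,V_i\setminus\Lambda)=0$ for $j\le d+1$, so the towers $\{\pi_q(V_i\setminus\Lambda)\}$ and $\{\pi_q(V_i)\}$ are isomorphic for $q\le d$. The pro-triviality coming from $d$-connectedness at infinity then transfers across this isomorphism. The paper reads the cellularity criterion directly off the $q=1$ case of connectedness at infinity, without Venema.

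You need only that every map $S^q\to V_j$ deforms within $V_j$ into $V_j\setminus\Lambda$, which the Hausdorff bound supplies, and you use SLC only in Step~3. A side benefit is that your version needs no SLC at all. The cellularity criterion follows from $1$-connectedness at infinity exactly as in the paper. Moreover, the same translation trick applied to small cones over loops shows that $\Lambda$ is locally $1$-co-connected, since $2+\frac{n-2}{2}<n$. So SLC already holds automatically under the Ma--Qing bound.
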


The strategy of the proof closely follows that of Proposition~\ref{lower dim}. First, it utilizes \(\left\lfloor \frac{n-2}{2}\right\rfloor\)-connectedness at infinity and the dimension bounds to establish that the neighborhood system around the compact set $\Lambda$ exhibits vanishing homotopy pro-groups. This vanishing permits the use of the shape-theoretic Whitehead theorem to deduce that $\Lambda$ is cell-like. Finally, by verifying the cellularity criterion, we guarantee that collapsing $\Lambda$ to a point yields a space homeomorphic to the ambient sphere $S^n$; consequently, its complement $\Omega$ is forced to be homeomorphic to $\mathbb R^n$.

\begin{proof}
Theorem~\ref{Liouville theorem} implies that the developing map identifies
\(M\) conformally and diffeomorphically with the domain \(\Omega\subset S^n\),
and \(\partial\Omega\) has zero Newtonian \(2\)-capacity.
Since \(M\) is noncompact, \(\Omega\neq S^n\), so
\(\Lambda:=S^n\setminus\Omega\) is a nonempty compact subset of \(S^n\).

We first show that \(\Lambda=\partial\Omega\). The capacity--dimension
implication above gives
\(\dim_{\mathcal H}(\partial\Omega)\leq n-2\), so the covering dimension of
\(\partial\Omega\) is at most \(n-2\), and hence
\(\widetilde{\check H}^{\,n-1}(\partial\Omega;\mathbb Z)=0.\) By
\v{C}ech--Alexander duality,
\(\widetilde H_0(S^n\setminus\partial\Omega;\mathbb Z)=0.\) Thus
\(S^n\setminus\partial\Omega\) is connected. Since
\(S^n\setminus\partial\Omega =\Omega\sqcup\operatorname{int}(\Lambda)\)
and \(\Omega\neq\varnothing\), we obtain
\(\operatorname{int}(\Lambda)=\varnothing\). Therefore
\(\Lambda=\partial\Omega\).

Choose \(x_0\in\Omega\) and \(\varepsilon>0\) such that
\(\overline{B_{g_{st}}(x_0,\varepsilon)}\subset\Omega,\) and set
\(D:=S^n\setminus\overline{B_{g_{st}}(x_0,\varepsilon)}.\)
Then \(D\) is a bounded open neighborhood of \(\Lambda\).

The metric \(h:=(\Phi^{-1})^*g\) on \(\Omega\) is complete and, on
\(D\setminus\Lambda\), has the form \(h=u^{\frac{4}{n-2}}g_{st}\) for some
smooth positive function \(u\). Moreover,
\(\mathrm{Sc}_h=\mathrm{Sc}_g\circ\Phi^{-1}\geq0.\)
As in the proof of Theorem~\ref{contractible}, \(h\) is geodesically complete
near \(\Lambda\). Applying Theorem~\ref{Ma} with background manifold
\((S^n,g_{st})\), singular set \(\Lambda\), and the bounded open
neighborhood \(D\) gives
\(\dim_{\mathcal H}(\Lambda)\leq\frac{n-2}{2}\).
Since topological dimension is bounded above by Hausdorff dimension, we obtain
\(\dim(\Lambda)\leq\left\lfloor\frac{n-2}{2}\right\rfloor=:d\).

Since \(\Phi\colon M\to\Omega\) is a diffeomorphism, the assumed
\(d\)-connectedness at infinity of \(M\) transfers to \(\Omega\). In
particular, \(\Omega\) is noncompact, path-connected, and one-ended. We claim
that \(\Lambda\) is connected. For a space \(A\), let \(\mathcal C(A)\) denote
its set of connected components. Since
\(\dim\Lambda\leq d=\lfloor(n-2)/2\rfloor\) and \(n\geq4\),
we have \(d\leq n-3\). Let \(W\subset S^n\) be connected and open. If
\(W=S^n\), then \(W\setminus\Lambda=\Omega\) is connected. Suppose that
\(W\neq S^n\). Choose \(a\in S^n\setminus W\) and identify
\(S^n\setminus\{a\}\) with \(\mathbb R^n\). Then \(W\) is a region in
\(\mathbb R^n\), and
\(\dim(\Lambda\cap W)\leq\dim\Lambda\leq d\leq n-3\).
Corollary~1 to Hurewicz--Wallman's non-separation theorem
\cite[p.~48, Corollary~1 to Theorem~IV.4]{MR0006493} therefore gives that
\(W\setminus\Lambda=W\setminus(\Lambda\cap W)\)
is connected.

Let \(U\) be an open neighborhood of \(\Lambda\). Because \(U\) is locally
connected, every component \(W\) of \(U\) is open. Moreover,
\(W\setminus\Lambda\neq\varnothing\): otherwise the nonempty open subset
\(W\) of \(S^n\) would be contained in \(\Lambda\), contrary to
\(\dim\Lambda<n\). The preceding non-separation argument shows that
\(W\setminus\Lambda\) is connected. It follows that the inclusion
\(U\setminus\Lambda\hookrightarrow U\) induces a natural bijection
\[
\mathcal C(U\setminus\Lambda)\longrightarrow\mathcal C(U).
\tag{\(*\)}
\]
These bijections commute with the bonding maps arising from inclusions of
neighborhoods.

Let \(\mathcal N_0\) be the set, ordered by reverse inclusion, of open
neighborhoods \(U\) of \(\Lambda\) for which every component of \(U\)
meets \(\Lambda\). This family is cofinal: for any open neighborhood \(U\)
of \(\Lambda\), the union \(U'\) of the components of \(U\) that meet
\(\Lambda\) belongs to \(\mathcal N_0\) and satisfies \(U'\subset U\).
Applying the same construction to \(U_1\cap U_2\), where
\(U_1,U_2\in\mathcal N_0\), shows that \(\mathcal N_0\) is directed. If
\(U\in\mathcal N_0\) and \(W\) is a component of \(U\), then the closure in
\(S^n\) of \(W\setminus\Lambda\) meets \(W\cap\Lambda\), because
\(\operatorname{int}_{S^n}\Lambda=\varnothing\) and hence
\(W\setminus\Lambda\) is dense in \(W\). Thus
\(W\setminus\Lambda\) is not relatively compact in \(\Omega\); in the
terminology of ends, it is an unbounded component of the corresponding
neighborhood of infinity. Thus every component of \(U\setminus\Lambda\), for
\(U\in\mathcal N_0\), is unbounded in \(\Omega\). Guilbault's description of the
Freudenthal end set \cite[Section~3.3.1]{zbMATH07206284} uses an efficient
exhaustion \(\{K_i\}\) of \(\Omega\) by compacta and identifies an end with a
coherent choice of a component of \(\Omega\setminus K_i\). Since
\(\{K_i\}\) is cofinal among the compact subsets of \(\Omega\) and
restriction to a cofinal subsystem does not change the inverse limit, that
description, together with \((*)\), gives
natural bijections
\[
\begin{aligned}
\mathcal E(\Omega)
&\cong
\varprojlim_{K\subset\Omega\,\mathrm{compact}}
   \mathcal C(\Omega\setminus K)\\
&\cong
\varprojlim_{U\in\mathcal N_0}
   \mathcal C(U\setminus\Lambda)
\cong
\varprojlim_{U\in\mathcal N_0}\mathcal C(U).
\end{aligned}
\tag{\(**\)}
\]
Here the second bijection uses the correspondence
\(U=S^n\setminus K\) between neighborhoods of \(\Lambda\) and compact
subsets \(K\) of \(\Omega\), followed by cofinality of \(\mathcal N_0\).

For completeness, we identify the last inverse limit in \((**)\). For
\(x\in\Lambda\), let \(C_U(x)\) denote the component of \(U\) containing
\(x\). The coherent family \((C_U(x))_{U\in\mathcal N_0}\) depends only on
the connected component of \(x\) in \(\Lambda\). If \(x,y\in\Lambda\) lie
in distinct components, then, because components and quasi-components agree
in a compact Hausdorff space, a clopen partition of \(\Lambda\) separates
\(x\) from \(y\). Disjoint open neighborhoods in \(S^n\) of the two compact
parts of that partition, followed by passage to \(\mathcal N_0\), give a
neighborhood \(U\) for which \(C_U(x)\neq C_U(y)\). Thus the induced map
\(\mathcal C(\Lambda)\to
\varprojlim_{U\in\mathcal N_0}\mathcal C(U)\)
is injective.

To prove surjectivity, let \((C_U)_{U\in\mathcal N_0}\) be a coherent
element of the inverse limit. Each
\(A_U:=C_U\cap\Lambda\) is a nonempty clopen, hence compact, subset of
\(\Lambda\). The family \(\{A_U\}_{U\in\mathcal N_0}\) has the finite
intersection property: given \(U_1,\dots,U_r\), choose
\(V\in\mathcal N_0\) with \(V\subset\bigcap_{a=1}^r U_a\); coherence gives
\(A_V\subset\bigcap_{a=1}^r A_{U_a}\). Compactness of \(\Lambda\) therefore
gives a point \(x\in\bigcap_U A_U\), and then \(C_U=C_U(x)\) for every
\(U\). Hence the map is surjective. Combining this identification with
\((**)\), the ends of \(\Omega\) correspond bijectively to the connected
components of \(\Lambda\). Since \(\Omega\) is one-ended, \(\Lambda\) is
connected.

Let \(m:=\dim\Lambda\).
By the dimension bound above, \(m\leq d=\left\lfloor\frac{n-2}{2}\right\rfloor\).
If \(m=0\), then \(\Lambda\) is a connected compact zero-dimensional space,
hence a point. Therefore $M\cong\Omega=S^n\setminus\{\mathrm{pt}\}\cong \mathbb R^n.$

Thus, for the rest of the proof, assume \(m\geq1\). We use the preceding
observation that SLC is independent of the compatible metric and invariant
under homeomorphism. Choose \(p\in S^n\setminus\Lambda\) and identify $S^n\setminus\{p\}\cong \mathbb R^n$
by stereographic projection. Let \(X\subset\mathbb R^n\) be the image of
\(\Lambda\). Since \(p\notin\Lambda\), the set \(X\) is compact and
homeomorphic to \(\Lambda\). Therefore $\dim X=\dim\Lambda,$
and \(X\) satisfies SLC in \(\mathbb R^n\).

By Hollingsworth--Rushing~\cite[Lemma~1, p.~105]{zbMATH03513016}, if \(X\subset\mathbb R^n\) is compact, satisfies SLC, and $\dim X=k\le n-3,$
then \(X\) has arbitrarily small neighborhoods \(W\) such that
\[
\pi_i(W,W\setminus X,x)=0
\qquad
(1\le i\le n-k-1,\ x\in W\setminus X).
\]
Applying this with \(k=m\) and using the dimensional hypothesis together with $n\geq 4$, we have
$m\le d=\left\lfloor\frac{n-2}{2}\right\rfloor\le n-3,$
and hence we obtain arbitrarily small neighborhoods \(W\) of \(X\) satisfying
\[
\pi_i(W,W\setminus X,x)=0
\qquad
(1\le i\le n-m-1,\ x\in W\setminus X).
\]

Pull these neighborhoods back to \(S^n\setminus\{p\}\). We now choose a nested
cofinal sequence of open neighborhoods of \(\Lambda\),
\(V_1\supset V_2\supset V_3\supset\cdots\supset \Lambda\),
as follows. Let \(V_0=S^n\setminus\{p\}\). Inductively choose a
Hollingsworth--Rushing neighborhood \(W_i\) of \(\Lambda\), small enough that
$W_i\subset V_{i-1}$
and small enough to lie in a prescribed neighborhood basis of \(\Lambda\). Let
\(V_i\) be the connected component of \(W_i\) containing \(\Lambda\). This
component is well defined because \(\Lambda\) is connected, and it is open
because \(W_i\) is an open subset of a manifold.

Set $N_i:=V_i\setminus\Lambda.$ The relative vanishing passes to the component pair. Fix
\(x\in N_i=V_i\setminus \Lambda\) and \(1\le j\le d+1\). Let
$[f]\in \pi_j(V_i,N_i,x)$ be represented by a based relative map
\(f\colon(B^j,S^{j-1},*)\to (V_i,N_i,x).\)
Composing with the inclusion of pairs $(V_i,N_i)\hookrightarrow (W_i,W_i\setminus \Lambda),$ we regard \(f\) as a class in
$\pi_j(W_i,W_i\setminus\Lambda,x).$
By the Hollingsworth--Rushing vanishing for \(W_i\), this class is trivial.
Hence there is a based relative null-homotopy
\(H\colon B^j\times I\to W_i\) such that
\[
\begin{aligned}
H(\,\cdot\,,0)&=f,
& H(*,t)&=x &&(t\in I),\\
H(S^{j-1}\times I)&\subset W_i\setminus\Lambda,
& H(B^j\times\{1\})&\subset W_i\setminus\Lambda.
\end{aligned}
\]
Since \(B^j \times I\) is connected and \(H(B^j \times I)\) meets \(V_i\), for instance at the basepoint \(x\), the image \(H(B^j \times I)\) is a connected subset of \(W_i\) meeting the component \(V_i\). Therefore $H(B^j\times I)\subset V_i.$
Consequently
\(H(S^{j-1}\times I)\subset V_i\setminus\Lambda=N_i\)
and
\(H(B^j\times\{1\})\subset V_i\setminus\Lambda=N_i\).
Thus the original class \([f]\) is already trivial in
\(\pi_j(V_i,N_i,x)\). Hence this relative homotopy object is trivial for
\(1\leq j\leq d+1\); for \(j=1\), this means that the relative homotopy
pointed set is a singleton. With this convention, we write
\[
\pi_j(V_i,N_i,x)=0
\qquad
(1\le j\le d+1,\ x\in N_i).
\]
Here we use $d+1\le n-m-1,$ which follows from \(m\le d=\lfloor(n-2)/2\rfloor\).

Each \(V_i\) is a connected open subset of \(S^n\). Since
\(\dim(\Lambda\cap V_i)\leq d\leq n-3\), the same non-separation theorem
used above shows that
\(N_i=V_i\setminus\Lambda\) is connected. It is an open subset of a
manifold and hence locally path connected; consequently, \(N_i\) is path
connected.

The sequence \((N_i)\) is cofinal among neighborhoods of infinity in \(\Omega\).
Indeed, if \(C\subset \Omega\) is compact, then $S^n\setminus C$
is a neighborhood of \(\Lambda\) in \(S^n\). Hence, for all sufficiently large
\(i\), $V_i\subset S^n\setminus C,$
and therefore $N_i=V_i\setminus\Lambda\subset \Omega\setminus C.$

We now show directly that the compact-set definition of \(d\)-connectedness at
infinity gives the based pro-triviality needed below. Choose a proper ray
\(r\colon[0,\infty)\to\Omega\); such rays exist in every connected open
manifold~\cite[Section~3.3.3]{zbMATH07206284}.
Since each \(N_i\) is a neighborhood of infinity, choose
\(0<t_1<t_2<\cdots\), with \(t_i\to\infty\), such that
\(r([t_i,\infty))\subset N_i\), and set \(x_i:=r(t_i)\in N_i\).
The path segment \(r|_{[t_i,t_j]}\) from \(x_i\) to \(x_j\) lies in \(N_i\)
whenever \(j\ge i\),
and gives the usual change-of-basepoint path for the based end tower.

Fix \(i\). Since \(N_i\) is a neighborhood of infinity,
$C_i:=\Omega \setminus N_i=S^n\setminus V_i$
is compact. By \(d\)-connectedness at infinity, there exists a compact set
$K_i\subset \Omega,$ $C_i\subset K_i,$
such that every map $S^q\to \Omega\setminus K_i$
extends to a map
$B^{q+1}\to \Omega\setminus C_i=N_i$ for every \(1\le q\le d\).

Since the sequence \((N_i)\) is cofinal among neighborhoods of infinity, choose
\(j\ge i\) such that $N_j\subset \Omega\setminus K_i.$
We claim that, for every \(1\le q\le d\), the bonding homomorphism
$\pi_q(N_j,x_j)\to \pi_q(N_i,x_i)$ is the zero homomorphism.

Indeed, let $[f]\in \pi_q(N_j,x_j)$
be represented by a based map
$f\colon S^q\to N_j.$ Since $N_j\subset \Omega\setminus K_i,$
the compact-set hypothesis gives an extension \(F\colon B^{q+1}\to N_i\)
of \(f\). Hence the image of \([f]\) in \(\pi_q(N_i,x_j)\) is trivial. The
bonding map to \(\pi_q(N_i,x_i)\) is obtained by inclusion followed by the
standard change-of-basepoint isomorphism along the ray inside \(N_i\), and
change of basepoint sends the zero element to the zero element. Therefore
$\pi_q(N_j,x_j)\to \pi_q(N_i,x_i)$ is zero.

Thus, for every \(i\), there exists \(j\ge i\) such that the bonding map to the
\(i\)-th term is zero. Consequently, for every \(1\le q\le d\), the tower
\(\{\pi_q(N_i,x_i)\}_{i\ge1}\) is pro-trivial.

Next consider the based long exact homotopy sequence of the pair $(V_i,N_i,x_i).$
Since
\[
\pi_j(V_i,N_i,x_i)=0
\qquad
(1\le j\le d+1),
\]
exactness gives, for each \(1\le q\le d\), an isomorphism induced by inclusion,
\[
\pi_q(N_i,x_i)\xrightarrow{\cong}\pi_q(V_i,x_i).
\]

These isomorphisms are compatible with the based bonding maps. Indeed, if
\(j\ge i\), then the inclusion of pairs $(V_j,N_j)\hookrightarrow (V_i,N_i)$
induces a morphism of the corresponding long exact homotopy sequences. The
bonding maps are obtained from these inclusions together with change of
basepoint along the ray segment from \(x_i\) to \(x_j\). Since this segment lies
in \(N_i\subset V_i\), the change-of-basepoint maps for the \(N\)-tower and the
\(V\)-tower commute with the maps induced by \(N_\ell\hookrightarrow V_\ell\).
Thus the levelwise isomorphisms above define an isomorphism of pro-systems
\[
\{\pi_q(N_i,x_i)\}\cong \{\pi_q(V_i,x_i)\}
\qquad
(1\le q\le d).
\]
Since the first pro-system is pro-trivial, it follows that, for every
\(1\le q\le d\), the tower \(\{\pi_q(V_i,x_i)\}_{i\ge1}\) is pro-trivial.

Choose a point \(*\in\Lambda\). Since each \(V_i\) is path connected, choose
a path \(\beta_i\) in \(V_i\) from \(*\) to \(x_i\). For \(j\geq i\), let
\(\alpha_{ij}=r|_{[t_i,t_j]}\), regarded as a path in \(N_i\subset V_i\)
from \(x_i\) to \(x_j\). After changing basepoints by \(\beta_i\) and
\(\beta_j\), the fixed-basepoint bonding homomorphism differs from the
ray-based bonding homomorphism by the automorphism determined by the loop
\(\beta_i*\alpha_{ij}*\overline{\beta_j}\) in \(V_i\): this is an inner
automorphism for \(q=1\) and the standard \(\pi_1(V_i,*)\)-action for
\(q\geq2\). In either case the identity element is fixed. Hence whenever the
ray-based bonding homomorphism is zero, so is the corresponding
fixed-basepoint bonding homomorphism. Therefore, for every \(1\le q\le d\),
the tower
\(\{\pi_q(V_i,*)\}_{i\ge1}\) is pro-trivial.

The nested ANR-neighborhood system \((V_i,*)\) is an associated pointed
ANR-neighborhood system for \((\Lambda,*)\). Hence the inverse system
$\left\{\pi_q(V_i,*),(V_j\hookrightarrow V_i)_\#\right\}_{j\ge i}$
represents
$\operatorname{pro}\text{-}\pi_q(\Lambda,*).$
Since this representative system is pro-trivial, we obtain
\[
\operatorname{pro}\text{-}\pi_q(\Lambda,*)=0
\qquad
(1\le q\le d).
\]
Since
\(m=\dim\Lambda\le d\), we have
\[
\operatorname{pro}\text{-}\pi_q(\Lambda,*)=0
\qquad
(1\le q\le m).
\]

We now apply Morita's finite-dimensional Whitehead theorem
\cite[Theorem~1.2, p.~394]{zbMATH03497097}. In the form needed here, it says that if
\(f\colon(X,x_0)\to(Y,y_0)\) is a shape morphism of pointed connected finite-dimensional topological spaces, and if the induced morphism of homotopy pro-groups
\[
\operatorname{pro}\text{-}\pi_k(f)\colon
\operatorname{pro}\text{-}\pi_k(X,x_0)
\longrightarrow
\operatorname{pro}\text{-}\pi_k(Y,y_0)
\]
is an isomorphism for every \(1\leq k\leq r\), where
\(r:=\max\{\dim X,\dim Y\}\),
then \(f\) is a shape equivalence.

The ordinary constant map
\(c\colon(\Lambda,*)\to (\{*\},*)\)
induces a pointed shape morphism. Indeed, every continuous map between pointed compact metric spaces induces a morphism in the pointed shape category. Concretely, if
\(\{V_i\}\) is an associated pointed ANR-neighborhood system for
\((\Lambda,*)\), then the constant maps $V_i\to \{*\}$
are compatible with the bonding maps and represent the induced shape morphism.

We apply Morita's theorem to this morphism. The spaces \((\Lambda,*)\) and
\((\{*\},*)\) are pointed, connected, and finite-dimensional. Moreover,
$\max\{\dim\Lambda,\dim\{*\}\}=m.$
For every \(1\le k\le m\), we have already shown that $\operatorname{pro}\text{-}\pi_k(\Lambda,*)=0,$
and clearly $\operatorname{pro}\text{-}\pi_k(\{*\},*)=0.$
Therefore
\[
\operatorname{pro}\text{-}\pi_k(c)\colon
\operatorname{pro}\text{-}\pi_k(\Lambda,*)
\longrightarrow
\operatorname{pro}\text{-}\pi_k(\{*\},*)
\]
is an isomorphism for every \(1\le k\le m\). By Morita's theorem, \(c\) is a
shape equivalence. Hence \(\operatorname{Sh}(\Lambda)=\operatorname{Sh}(*)\).

The set \(\Lambda\) is a nonempty finite-dimensional compact metric space.
Lacher's theorem~\cite[Theorem~1.1]{zbMATH03290989} now implies both that
\(\Lambda\) is cell-like and that its embedding
\(\Lambda\hookrightarrow S^n\) has Property \(UV^\infty\). Explicitly, for
every neighborhood \(U\) of \(\Lambda\) in \(S^n\), there is a neighborhood
\(V\subset U\) of \(\Lambda\) such that the inclusion
\(V\hookrightarrow U\) is null-homotopic.

It remains to prove cellularity. Since \(n\ge4\), we have
$d=\left\lfloor\frac{n-2}{2}\right\rfloor\ge1.$
Thus the preceding pro-triviality includes the pro-fundamental group at
infinity.

Let \(U\) be any neighborhood of \(\Lambda\) in \(S^n\). Choose \(i\) such that
$V_i\subset U.$
From the pro-triviality of the tower \(\{\pi_1(N_i,x_i)\}\), choose \(j\ge i\)
such that the bonding homomorphism
$\pi_1(N_j,x_j)\to \pi_1(N_i,x_i)$ is trivial. Put $V:=V_j.$
Let $\gamma\colon S^1\to V\setminus\Lambda=N_j$
be any loop. Since \(N_j\) is path connected, join the basepoint of \(\gamma\)
to \(x_j\) by a path in \(N_j\). This conjugates \(\gamma\) to a loop based at
\(x_j\). Its image in \(\pi_1(N_i,x_i)\) is trivial, so the conjugate loop, and
hence \(\gamma\), is null-homotopic in $N_i\subset U\setminus\Lambda.$

Therefore, for every neighborhood \(U\) of \(\Lambda\), there exists a smaller
neighborhood \(V\subset U\) such that every loop in
$V\setminus\Lambda$ is null-homotopic in $U\setminus\Lambda.$
This is the cellularity criterion.

The remainder is exactly as in the proof of Proposition~\ref{lower dim}. Namely,
the cellularity criterion obtained above, together with Property
\(UV^\infty\) and the cell-like conclusion, implies that \(\Lambda\) is
cellular in \(S^n\): for \(n=4\) this
follows from Repov\v{s}'s theorem~\cite[Theorem, p.~564]{zbMATH04011439}, and for \(n\ge 5\)
from the cell-like form of McMillan's cellularity criterion
\cite[Theorem~3.2.3, p.~107]{zbMATH05624618}, originating in
\cite[Theorem~1, p.~327]{zbMATH03190942}.

Finally, collapse \(\Lambda\) to a point:
\(\mathfrak q\colon S^n\to S^n/\Lambda\).
Since \(\Lambda\) is cellular in \(S^n\), Uspenskij's form of Brown's
cellular quotient theorem~\cite[Theorem~1.3]{zbMATH05002651} shows that the
quotient space \(S^n/\Lambda\) is homeomorphic to \(S^n\). The complement
\(S^n\setminus\Lambda\) is saturated and open for \(\mathfrak q\), so the
quotient-map criterion shows that \(\mathfrak q\) restricts to a homeomorphism
\(S^n\setminus\Lambda \cong
(S^n/\Lambda)\setminus\{\mathfrak q(\Lambda)\}.\)
Therefore
\[
M\cong \Omega=S^n\setminus\Lambda
\cong
(S^n/\Lambda)\setminus\{\mathfrak q(\Lambda)\}
\cong
S^n\setminus\{\mathrm{pt}\}
\cong
\mathbb R^n .
\]
\end{proof}

\begin{proof}[Proof of Theorem~\ref{thm:intro-E}]
For \(n=3\), a connected open \(3\)-manifold \(M^3\) with
\(\pi_1(M)=0\) and \(H_2(M;\mathbb Z)=0\) is contractible. Indeed,
\(\pi_1(M)=0\) implies that \(M\) is orientable and that
\(H_1(M;\mathbb Z)=0\). Poincar\'e duality for the connected noncompact
oriented \(3\)-manifold \(M\) gives
\(H_3(M;\mathbb Z)\cong H_c^0(M;\mathbb Z)=0\).
Moreover, a smooth \(3\)-manifold has the homotopy type of a
\(3\)-dimensional CW complex, so \(H_i(M;\mathbb Z)=0\) for all \(i>3\).
Together with the assumption
\(H_2(M;\mathbb Z)=0\), this gives
\(\widetilde H_i(M;\mathbb Z)=0\) for all \(i\geq0\). If some
\(\pi_k(M)\) were nonzero, choose the least such \(k\geq2\). The Hurewicz
theorem would give \(\pi_k(M)\cong H_k(M;\mathbb Z)=0\), a contradiction.
Thus \(\pi_k(M)=0\) for all \(k\geq1\). Since \(M\) has the homotopy type of
a CW complex, Whitehead's theorem implies that
\(M\) is contractible.

Theorem~\ref{contractible} for \(n=3\) and Theorem~\ref{d-connect} for
\(n\geq4\) complete the proof.
\end{proof}

\section[Non-Euclidean contractible LCF manifolds with PSC]{\texorpdfstring{Non-Euclidean contractible LCF $n$-manifolds\\ with PSC for $n\geq 4$}{Non-Euclidean contractible LCF n-manifolds with PSC for n >= 4}}\label{counterexample psc}

The aim of this section is to show that, for $n\geq4$, the conclusion of
Theorem~\ref{thm:intro-E}(ii) need not hold if both of its additional topological
hypotheses are omitted, and that the disjunction of conditions
\textup{(i)}--\textup{(v)}
in Corollary~\ref{Contractible for} cannot be omitted, even at the level of
homeomorphism type.  These two sharpness conclusions are recorded explicitly
in Corollary~\ref{cor:sharpness} below.

The construction below does not separate the two hypotheses in
Theorem~\ref{thm:intro-E}(ii).  Indeed, since $n\geq4$,
$\lfloor(n-2)/2\rfloor\geq1$, so
$\lfloor(n-2)/2\rfloor$-connectedness at infinity implies simple
connectivity at infinity, whereas the manifold constructed below is not
simply connected at infinity.  Thus it fails the hypothesis in
Theorem~\ref{thm:intro-E}(ii) of being $\lfloor(n-2)/2\rfloor$-connected at infinity.
We do not determine whether its limit continuum $K$ satisfies the small
loops condition.  Consequently, the examples show only that the two
hypotheses in Theorem~\ref{thm:intro-E}(ii) cannot be omitted simultaneously; they do
not establish that either hypothesis can be omitted while the other is
retained.

\begin{theorem}\label{thm:counterexample-psc}
For every integer $n\geq4$, there exists a nondegenerate continuum
$K\subset S^n$ with $\dim_{\mathcal{H}}K=1$ such that, with
$M:=S^n\setminus K$, the following hold.
\begin{enumerate}
\item $M$ is a contractible smooth open manifold that is not simply connected at infinity.

\item There exists a smooth function $\phi:M\to(0,\infty)$ with
$m_0:=\inf_M\phi>0$ such that, for every $t\in(-m_0,\infty)$,
\[
g^{(t)}:=(\phi+t)^{4/(n-2)}g_{st}|_M
\]
is smooth, complete, and locally conformally flat, with
\[
\mathrm{Sc}_{g^{(t)}}=n(n-1)t(\phi+t)^{-(n+2)/(n-2)}.
\]
Hence $\mathrm{Sc}_{g^{(t)}}$ is negative, zero, or positive according as
$t<0$, $t=0$, or $t>0$, and
\[
\lim_{t\to0}\int_M|\mathrm{Sc}_{g^{(t)}}|^{n/2}\,dV_{g^{(t)}}=0.
\]
For $t>0$, the rescaled metric
\[
\bar g^{(t)}
:=t^{-4/(n-2)}g^{(t)}
=\left(1+\frac{\phi}{t}\right)^{4/(n-2)}g_{st}|_M
\]
satisfies
\[
0<\mathrm{Sc}_{\bar g^{(t)}}<n(n-1),
\]
and its scalar curvature decays uniformly to zero at $K$ in the sense
that
\[
 \lim_{\delta\downarrow0}
 \sup_{\substack{x\in M\\ d_{g_{st}}(x,K)<\delta}}
 \mathrm{Sc}_{\bar g^{(t)}}(x)=0.
\]
\end{enumerate}
\end{theorem}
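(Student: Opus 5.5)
The proof splits into a topological part, which builds the continuum $K$, and an analytic part, which builds the conformal factor $\phi$.

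\emph{Topological part.} We realize $K$ as the intersection of a nested tower of compact regular neighborhoods $T_1\supset T_2\supset\cdots$ in $S^n$. Each $T_i$ is a thickened finite $2$-complex, or a handlebody of $1$-handles with $2$-handles attached. The bonding maps $T_{i+1}\hookrightarrow T_i$ are chosen, in the spirit of Whitehead and Newman, so that:
\begin{itemize}
\item[(a)] they kill the integral homology of the complements in the limit;
\item[(b)] the complement tower $\{S^n\setminus T_i\}$ has nontrivial pro-fundamental group.
\end{itemize}
A concrete model is a Whitehead-type tower. Take $T_{i+1}$ to be a solid-torus-like neighborhood $S^1\times B^{n-1}$ embedded in $T_i$ as a null-homotopic but non-trivially clasped curve. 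Iterating gives a compact contractible-shape continuum whose complement $M$ satisfies $\widetilde H_*(M)=0$ by \v{C}ech--Alexander duality, since $K$ is \v{C}ech-acyclic.

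We then show $\pi_1(M)=0$. Every loop misses $K$ and lies in some $S^n\setminus T_i$. By general position, since $\dim K\le1\le n-3$ when $n\ge4$, it bounds a disk avoiding $K$. Hurewicz and Whitehead then give contractibility. The failure of simple connectivity at infinity comes from the standard injectivity argument for the tower $\pi_1(T_i\setminus T_{i+1})$, as in Guilbault's treatment of Whitehead-type manifolds; this is item~1.

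Throughout, the diameters of the pieces of $T_i$ shrink geometrically, so that $K$ is a nondegenerate continuum. We choose the number $N_i$ and the size $r_i$ of the pieces of $T_i$ so that $N_ir_i\lesssim1$, which gives $\mathcal H^1(K)<\infty$ and hence $\dim_{\mathcal H}K=1$.

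\emph{Analytic part.} Since $\dim_{\mathcal H}K=1\le (n-2)/2$ for $n\ge4$, the Bessel capacity at exponent $n-2$ vanishes once the tower is thin enough. Following Karakhanyan's characterization, we build $\phi$ as a positive $g_{st}$-harmonic function for the conformal Laplacian, that is,
\[
-\tfrac{4(n-1)}{n-2}\Delta\phi+n(n-1)\phi=0\quad\text{on }M,
\]
with $\phi\to+\infty$ at $K$. Concretely, set $\phi=\sum_i c_i\,G*\mu_i$, where $G$ is the Green function of the operator $L_{g_{st}}=-\frac{4(n-1)}{n-2}\Delta+n(n-1)$ on $S^n$ and $\mu_i$ are probability measures on $T_i$ spread along the cores. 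The weights $c_i$ are tuned so that the series converges on $M$, while $\liminf\phi=\infty$ at $K$ with enough growth that the metric $\phi^{4/(n-2)}g_{st}$ is complete. Equivalently, every curve tending to $K$ has infinite length; we check this by a scale-by-scale estimate, since near $T_i\setminus T_{i+1}$ one has $\phi\gtrsim\sum_{k\le i}c_kr_k^{2-n}$.

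\emph{Curvature formulas.} These follow from conformal covariance. For constant $t$ we have $L_{g_{st}}(\phi+t)=n(n-1)t$, so
\[
\mathrm{Sc}_{g^{(t)}}=(\phi+t)^{-\frac{n+2}{n-2}}\,n(n-1)\,t,
\]
and completeness is inherited because $\phi+t\ge c\phi$ near $K$. Local conformal flatness is automatic. We compute the $L^{n/2}$ norm as
\[
\int|\mathrm{Sc}|^{n/2}\,dV=(n(n-1)|t|)^{n/2}\int(\phi+t)^{-n}\,dV_{st}.
\]
This tends to $0$ as $t\to0$ by dominated convergence, because $(\phi+t)^{-n}\le (m_0/2)^{-n}$ for $|t|<m_0/2$ and $S^n$ has finite volume. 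For the rescaled metric,
\[
\mathrm{Sc}_{\bar g^{(t)}}=n(n-1)\,(1+\phi/t)^{-\frac{n+2}{n-2}},
\]
which lies in $(0,n(n-1))$ and tends to $0$ uniformly at $K$ because $\phi\to\infty$ uniformly.

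\emph{Main obstacle.} The hard step is the simultaneous tuning of the tower. It must be topologically nontrivial at infinity, with clasped and non-contractible bonding maps, yet thin enough that $\dim_{\mathcal H}K=1$ and the capacity vanishes. In addition, $\phi$ must blow up uniformly along all of $K$, including at points that are limits of the clasp regions, for instance points where the cores bend. I would first try to handle this by choosing each $T_{i+1}$ inside $T_i$ as a union of boundedly many straight tube segments of radius $r_{i+1}\ll r_i$. The potential estimates then reduce to those for thin cylinders, where $G*\mu$ behaves like $r^{2-n}\times$length along the tube.
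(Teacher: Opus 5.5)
\emph{Topology.} Your concrete topological model fails for every $n\geq4$, and item~1 fails with it.

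In dimension $n\geq4$, homotopic embedded circles are isotopic, so a null-homotopic core admits no ``clasping''. More decisively, a $1$-dimensional spine has codimension at least three. Hence $\pi_1(T_i\setminus\operatorname{int}T_{i+1})\cong\pi_1(T_i)\cong\mathbb Z$, and the generator of $\pi_1(\partial T_{i+1})\cong\mathbb Z$ (a parallel of the core of $T_{i+1}$) maps to the class of that core, which is trivial. Van Kampen over the shells shows that the end tower $\{\pi_1(\operatorname{int}T_i\setminus K)\}$ is just $\{\pi_1(T_i)\}$ with the inclusion-induced maps, and here these maps are all zero. So your $M$, which you show is contractible, is one-ended and simply connected at infinity, hence homeomorphic to $\mathbb R^n$ (Stallings for $n\geq5$, Freedman for $n=4$). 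The three-dimensional injectivity argument for Whitehead link complements has no analogue here.

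In codimension $\geq3$, the bonding homomorphisms $\pi_1(T_{i+1})\to\pi_1(T_i)$ must kill $H_1$ in the limit, to make $M$ acyclic, yet must not be pro-trivial on $\pi_1$. When $\pi_1(T_i)\cong\mathbb Z=H_1(T_i)$, these two requirements are incompatible. For such towers, a $K$ of trivial shape is likewise incompatible with item~1. This is the missing idea.

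The paper uses rank-two roses and the injective endomorphism $\omega(a)=[a,b]$, $\omega(b)=[a,b^{-1}]$ of $F_2$, which satisfies $H_1(\omega)=0$ (Lemma~\ref{lem:endo}). As a result $K$ is \v{C}ech-acyclic but not cell-like, and a loop on $\partial V_k$ representing $\mathfrak m_k(a)$ stays essential in $V_1$ (Proposition~\ref{prop:sci}). Your fallback of thickened $2$-complexes is left unspecified, and it cannot work at $n=4$, where a $2$-dimensional spine has codimension two. Your condition (b) also names the wrong tower: the sets $S^n\setminus T_i$ exhaust $M$ and are simply connected, whereas the neighborhoods of infinity are $\operatorname{int}T_i\setminus K$.

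\emph{Analysis.} The analytic construction also fails as written. The cores of the $T_i$ are not contained in $K$. If the $\mu_i$ are spread along these cores, then $G*\mu_i=+\infty$ at points of $M$, since a measure of linear density has infinite Newtonian potential on its support when $n\geq3$. If instead they are spread over $T_i$, then $L_{g_{st}}\phi=\sum_ic_i\mu_i\neq0$ on $M\cap T_i$. Either way you lose the identity $L_{g_{st}}(\phi+t)=n(n-1)t$, on which your curvature formulas rest.

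The measure must live on $K$. The real difficulty, which your proposal only names, is then to find one probability measure on $K$ whose potential diverges at \emph{every} point of $K$, quantitatively enough to give infinite length to every curve approaching $K$. Pointwise divergence does not survive weak-$*$ limits (Remark~\ref{rem:B-compactness}). The paper obtains such a measure in Theorem~\ref{thm:evans-wolff}, starting from $\operatorname{cap}(K)=0$ and using the Hedberg--Wolff inequality, Sion's minimax theorem for smoothed truncated Wolff potentials, and a convex series. It then converts $\mathcal W^\mu\equiv+\infty$ into completeness by the dyadic-annulus estimate of Lemma~\ref{lem:onechart}, which uses $(n-2)\theta=2$.

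At $n=4$ one has $\dim_{\mathcal H}K=1=\frac{n-2}{2}$, so Hausdorff dimension gives no capacity information. Your appeal to $N_ir_i\lesssim1$ is doubtful: nontrivial bonding forces each spine to wind several times around its predecessor, so spine lengths grow geometrically. The paper instead imposes $\operatorname{cap}(V_j)<2^{-j}$ at each stage, using outer regularity of Bessel capacity (Lemma~\ref{lem:thin}). Your shift computations are correct and agree with Lemma~\ref{lem:shifts}.
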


The construction of the metric is inspired by Karakhanyan's existence
theorem \cite[Theorem~1.1]{zbMATH07873599}, which states that
$S^n\setminus K$ admits a complete scalar-flat metric conformal to the
round metric if and only if the corresponding spherical Bessel capacity
vanishes; see \cite[Section~2.2]{zbMATH07873599}. We first fix the notation
for this section and then outline the proof. A \emph{continuum} is a
nonempty compact connected metric space; it is
\emph{nondegenerate} if it contains more than one point.  All manifolds
and metrics are smooth unless stated otherwise.
For \(\beta>0\), \(1<p<\infty\), and \(E\subset\mathbb R^d\), the Bessel
capacity of \(E\) is
\[
\mathcal{C}_{\beta,p}^{\mathbb R^d}(E)
:=
\inf\left\{
\|f\|_{L^p(\mathbb R^d)}^p:
f\in L^p(\mathbb R^d),\ f\geq0,\
G_\beta*f\geq1\ \text{everywhere on }E
\right\},
\]
where
\(G_\beta:=\mathcal{F}^{-1}((1+|\xi|^2)^{-\beta/2})\) is the Bessel
kernel; see
\cite[\S1.2.4]{zbMATH00824833}.  The displayed infimum is the
specialization of \cite[Definition~2.3.3]{zbMATH00824833} to the
kernel \(G_\beta\).
Fix \(n\geq4\) and set
\begin{equation}\label{eq:parameters}
\alpha:=1+\frac2n,
\qquad
q:=\frac n2,
\qquad
s:=n-\alpha q=\frac{n-2}{2},
\qquad
\theta:=\frac1{q-1}=\frac2{n-2}.
\end{equation}
Then
$q\geq2,\qquad\alpha q=\frac{n+2}{2}<n.$
The estimates below repeatedly use
\begin{equation}\label{eq:driving-identities}
n-2-s=s,
\qquad
s\theta=(n-2-s)\theta=1,
\qquad
(n-2)\theta=2.
\end{equation}
Throughout this section we use the unnormalized Hausdorff convention
\[
 \mathcal{H}^t_\rho(A)
 :=\inf\left\{\sum_i(\operatorname{diam}E_i)^t:
 A\subset\bigcup_iE_i,\ \operatorname{diam}E_i\leq\rho\right\},
 \qquad
 \mathcal{H}^t(A):=\lim_{\rho\downarrow0}\mathcal{H}^t_\rho(A),
\]
where the infimum is over countable covers.  Thus no dimensional
normalizing constant is included in \(\mathcal{H}^t\).
Fix $p_\sigma\in S^n$ and a stereographic chart
$\sigma: S^n\setminus\{p_\sigma\}\to \mathbb R^n$ with pole
\(p_\sigma\), and choose a closed round ball
\(B\Subset S^n\setminus\{p_\sigma\}\).  All compact sets in the tower
construction lie in \(\operatorname{int}B\).  For \(E\subset B\), write
$\operatorname{cap}(E):=
\mathcal{C}_{\alpha,q}^{\mathbb R^n}\bigl(\sigma(E)\bigr)$, the Bessel capacity
of \(\sigma(E)\) computed in this fixed chart.
For compact $E\subset B$, the numerical value of $\operatorname{cap}(E)$
may depend on the chart,
but its vanishing does not.  Indeed, if $\widetilde\sigma$ is any
other stereographic chart defined on a neighborhood of $E$, then the
transition diffeomorphism $T:=\widetilde\sigma\circ\sigma^{-1}$ is
smooth on a neighborhood of the compact set $\sigma(E)$.  If
$E=\varnothing$, the assertion below is immediate, so assume that
$E\neq\varnothing$.  Choose $\delta_0>0$ so that the closed
$\delta_0$-neighborhood of $\sigma(E)$ is contained in the domain of
$T$, and set
\(M_0:=\sup\bigl\{\lVert DT(z)\rVert:
\operatorname{dist}(z,\sigma(E))\leq\delta_0\bigr\}<\infty\).
If $x,y\in\sigma(E)$ and $|x-y|<\delta_0$, then the segment joining
$x$ to $y$ remains in this neighborhood, so the mean-value estimate
applies.  If $|x-y|\geq\delta_0$, then
$|T(x)-T(y)|\leq\operatorname{diam}\widetilde\sigma(E)
\leq\delta_0^{-1}\operatorname{diam}\widetilde\sigma(E)\,|x-y|$.
Consequently,
\[
 |T(x)-T(y)|
 \leq
 \max\!\left\{M_0,
 \delta_0^{-1}\operatorname{diam}\widetilde\sigma(E)\right\}|x-y|,
\]
and hence $T|_{\sigma(E)}$ is Lipschitz.  The same argument applied to
$T^{-1}$ shows that $T^{-1}|_{\widetilde\sigma(E)}$ is Lipschitz.
By the Lipschitz-image theorem for Bessel capacity
\cite[Theorem~5.2.1]{zbMATH00824833}, if
$A\subset\mathbb R^n$, $\Phi:A\to\mathbb R^n$ is Lipschitz,
$\beta>0$, and $1<p<n/\beta$ (equivalently, $0<\beta p<n$), then
\[
\mathcal{C}_{\beta,p}^{\mathbb R^n}\bigl(\Phi(A)\bigr)
\leq C\mathcal{C}_{\beta,p}^{\mathbb R^n}(A)
\]
for some $C=C(n,\beta,p,\operatorname{Lip}(\Phi))<\infty$.  Since
$\alpha>0$ and
\(1<q<\frac n\alpha\)
(equivalently, \(0<\alpha q=\tfrac{n+2}{2}<n\)),
the theorem applies with $(\beta,p)=(\alpha,q)$.
The theorem is formulated for a Lipschitz map whose domain is the
arbitrary subset $A$; no extension to all of $\mathbb R^n$ is required.
Applying the estimate first to $T|_{\sigma(E)}$ and then to
$T^{-1}|_{\widetilde\sigma(E)}$ proves
\[
\mathcal{C}_{\alpha,q}^{\mathbb R^n}\bigl(\sigma(E)\bigr)=0
\quad\Longleftrightarrow\quad
\mathcal{C}_{\alpha,q}^{\mathbb R^n}\bigl(\widetilde\sigma(E)\bigr)=0.
\]
This is precisely the spherical capacity-zero condition in
\cite[Section~2.2 and the proof of Theorem~1.1]{zbMATH07873599}: after
choosing a stereographic pole in $S^n\setminus E$, that condition is
expressed by the vanishing of the Euclidean Bessel capacity of the
stereographic image.  The displayed equivalence shows that it is
independent of the allowed pole.  Thus $\operatorname{cap}(E)=0$ is
precisely the fixed-chart expression of the spherical capacity-zero
condition used above.
For noncompact subsets of \(B\), the notation \(\operatorname{cap}\)
always refers to this fixed chart; below we use only its monotonicity and
make no chart-independence assertion for such sets.

The proof is divided into five steps. Steps~1--2 are topological and
construct $K$ and the complement $M:=S^n\setminus K$, while
Steps~3--5 are analytic and construct the metrics.  Once $K$ is known
to be nonempty and compact, the analytic part uses only the
connectedness of $M$, the inclusion $K\subset\operatorname{int}B$, and
the assumption $\operatorname{cap}(K)=0$.

\noindent\emph{Step 1 (seed and tower).}
Let \(F_2=\langle a,b\rangle\).  The endomorphism
\(\omega:F_2\to F_2\) determined by
\(\omega(a)=[a,b]\) and \(\omega(b)=[a,b^{-1}]\) is
injective and induces the zero map on \(H_1(F_2;\mathbb Z)\)
(Lemma~\ref{lem:endo}).  We build nested
closed regular neighborhoods
\(V_{j+1}\Subset\operatorname{int}V_j\subset\operatorname{int}B\) of embedded rank-two
roses with a common wedge point \(x\), marked by isomorphisms
\(\mathfrak{m}_j:F_2\to\pi_1(V_j,x)\), such that
\[
(i_j)_*\circ\mathfrak{m}_{j+1}=\mathfrak{m}_j\circ\omega,
\qquad
\operatorname{cap}(V_j)<2^{-j},
\]
where \(i_j:V_{j+1}\hookrightarrow V_j\); we also choose ball covers of
\(V_j\) of small \((1+\frac1j)\)-content
(Lemma~\ref{successor} and Proposition~\ref{induction}).  The marking
constrains only the homotopy class of each successor rose, whereas the
capacity bound constrains only the open set into which its regular
neighborhood is squeezed.  Thus the two conditions can be imposed
independently.  The continuum
\(K:=\bigcap_jV_j\) satisfies \(\operatorname{cap}(K)=0\) and
\(\dim_{\mathcal{H}}K=1\) (Proposition~\ref{prop:limitset}).

\noindent\emph{Step 2 (topology of the complement).}
Since \(H_1(\omega)=0\), the inclusion
\(i_j:V_{j+1}\hookrightarrow V_j\) induces the zero homomorphism
\((i_j)^*:H^1(V_j;\mathbb Z)\to  H^1(V_{j+1};\mathbb Z)\)
for every \(j\).  Thus \v{C}ech
continuity makes \(K\) \v{C}ech-acyclic
(Proposition~\ref{prop:cech}), while natural Alexander duality for the
finite stages, followed by passage to the direct limit, makes \(M\)
acyclic (Proposition~\ref{prop:acyclic}).
General position for \(2\)-disks against the one-dimensional spines
(\(2+1-n<0\)) makes the stages
\(C_j:= S^n\setminus\operatorname{int}V_j\) simply connected, so
\(M\) is contractible (Proposition~\ref{prop:contract}).  Injectivity of
\(\omega\), on the other hand, yields loops on \(\partial V_k\),
arbitrarily far out in \(M\), that stay essential in \(V_1\); so \(M\) is
not simply connected at infinity and \(M\not\cong\mathbb R^n\)
(Proposition~\ref{prop:sci}).  The nondegeneracy of \(K\) is part of
Proposition~\ref{prop:limitset}.

\noindent\emph{Step 3 (the measure).}
Put \(F:=\sigma(K)\).  By the Hedberg--Wolff inequality,
\(\operatorname{cap}(K)=0\) forces every probability measure on \(F\) to
have infinite Wolff self-energy.  This averaged statement does not pass
pointwise to weak-\(*\) limits.  Applying Sion's minimax theorem to
smoothed truncated potentials, whose dependence on the measure is
concave because \(\theta\leq1\), and then taking a convex series, we
obtain a probability measure \(\mu\) with
\(\operatorname{supp}\mu=F\) and \(\mathcal{W}^\mu\equiv+\infty\) on \(F\)
(Theorem~\ref{thm:evans-wolff}).

\noindent\emph{Step 4 (the scalar-flat metric).}
The Newtonian potential \(u(x):=\int_F|x-y|^{2-n}\,d\mu(y)\) is harmonic
off \(F\) and, because \(s\theta=1\), identically \(+\infty\) on \(F\).
With \(U_{\mathrm{rd}}\) the round conformal factor in the chart,
\(\phi:=u/U_{\mathrm{rd}}\) extends
smoothly and positively across the pole \(p_\sigma\) by Kelvin inversion.
The probability normalization makes the inverted potential bounded there,
and \(\phi(p_\sigma)=2^{-(n-2)/2}\).  Hence
\(h:=\phi^{4/(n-2)}g_{st}|_M\) is smooth and scalar-flat, and a
dyadic annulus estimate, via \((n-2)\theta=2\), converts
\(\mathcal{W}^\mu\equiv+\infty\) into infinite length of every divergent
curve: \(h\) is complete (Lemma~\ref{lem:onechart}).

\noindent\emph{Step 5 (the shifts).}
Since \(\mu\) is a probability measure with bounded support,
\(m_0:=\inf_M\phi>0\).  For \(t\in(-m_0,\infty)\) the metric
\(g^{(t)}:=(\phi+t)^{4/(n-2)}g_{st}|_M\) is uniformly comparable
to \(h\), hence complete, and linearity of the conformal Laplacian gives
\[
\mathrm{Sc}_{g^{(t)}}=n(n-1)\,t\,(\phi+t)^{-(n+2)/(n-2)},
\]
so the sign of the scalar curvature is the sign of \(t\), and
\(\int_M|\mathrm{Sc}_{g^{(t)}}|^{n/2}\,dV_{g^{(t)}}\to0\) as
\(t\to0\); for \(t>0\) the rescaling \(t^{-4/(n-2)}g^{(t)}\) has
scalar curvature in \((0,n(n-1))\) tending to \(0\) at \(K\)
(Lemma~\ref{lem:shifts}).  One harmonic function thus yields complete
locally conformally flat metrics of all three signs.

The restriction \(n\geq4\) enters through three thresholds, each
equivalent to it: polarity of finite graphs, \(s=\frac{n-2}2\geq1\)
(Lemma~\ref{lem:thin}); general position, \(2+1-n<0\)
(Lemma~\ref{lem:sc}); and Sion concavity, \(\theta=\frac2{n-2}\leq1\)
(Theorem~\ref{thm:evans-wolff}).

\subsection{The thin marked tower}

This subsection constructs the nested marked tower and its limit
continuum \(K\), which provide the topological and capacity-theoretic
input for the metric construction carried out later in the section.
The construction is organized as follows:
Lemma~\ref{lem:endo} supplies the group-theoretic input,
Lemma~\ref{lem:thin} provides the analytic input for the tower construction,
Lemma~\ref{lem:sublevel} supplies the level-uniform regular-neighborhood
construction,
Lemma~\ref{successor} combines these inputs,
Proposition~\ref{induction} iterates the construction, and
Proposition~\ref{prop:limitset} records the properties of the limiting
continuum needed below.

Let $\Delta^{n+1}\subset\mathbb R^{n+1}$ be a regular Euclidean simplex
centered at the origin.  Radial projection
$x\mapsto x/|x|$ transports the boundary complex
$\partial\Delta^{n+1}$ to a smooth triangulation of $S^n$; fix the
compatible PL structure induced by this triangulation.  Whenever a
finite PL graph is considered, we pass to a common
adapted subdivision containing it as a subcomplex.  A \emph{rank-two rose} is
a CW graph $R=R_a\vee R_b$, where $R_a$ and $R_b$ are oriented petal
circles with a specified common wedge vertex and labels $a,b$.  Whenever
$R$ is treated as a PL graph, each petal is triangulated by at least three
simplicial edges; regular neighborhoods are taken in the sense of
\cite[\S3]{zbMATH01714503}.  Write $F_2=\langle a,b\rangle$ and
$[x,y]=xyx^{-1}y^{-1}$.

\begin{lemma}[Algebraic seed]\label{lem:endo}
The endomorphism $\omega:F_2\to F_2$,
$\omega(a)=[a,b]$, $\omega(b)=[a,b^{-1}]$, is injective and
induces the zero map on $H_1(F_2;\mathbb{Z})$.
\end{lemma}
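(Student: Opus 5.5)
The zero map on $H_1$ is immediate: $H_1(F_2;\mathbb Z)\cong\mathbb Z^2$ is the abelianization, and both $\omega(a)=[a,b]$ and $\omega(b)=[a,b^{-1}]$ are commutators. They therefore lie in $[F_2,F_2]$ and map to zero in $F_2^{\mathrm{ab}}$. Since the induced map $H_1(\omega)$ is determined by the images of the generators, $H_1(\omega)=0$.

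For injectivity I would use the Hopfian/rank argument for free groups. Set $u:=[a,b]=aba^{-1}b^{-1}$ and $v:=[a,b^{-1}]=ab^{-1}a^{-1}b$. The image $\omega(F_2)=\langle u,v\rangle$ is a subgroup of a free group, hence free by Nielsen--Schreier. If $\langle u,v\rangle$ has rank exactly $2$, then $\omega$ is a surjection $F_2\to\langle u,v\rangle\cong F_2$. Since finitely generated free groups are Hopfian, such a surjection is an isomorphism, so $\omega$ is injective.

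It therefore suffices to show that $\langle u,v\rangle$ is not cyclic; a two-generated subgroup of a free group has rank at most $2$, so non-cyclic forces rank exactly $2$. Two elements of a free group generate a cyclic subgroup if and only if they commute. So the plan reduces to checking $uv\neq vu$. Both words are cyclically reduced of length $4$. Freely reducing the products gives
\[
uv = aba^{-1}b^{-1}ab^{-1}a^{-1}b,\qquad vu = ab^{-1}a^{-1}bab a^{-1}b^{-1}.
\]
Each is already reduced, and they differ at the second letter ($b$ versus $b^{-1}$), so $uv\neq vu$.

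As an alternative or cross-check, I would run Nielsen reduction on $\{u,v\}$. Neither $u$, $v$, nor $u^{\pm1}v^{\pm1}$ cancels more than half of either word, so the set is Nielsen reduced and hence a free basis. The only real care needed is the reduced-word computation, which I expect to be routine; I foresee no substantive obstacle.
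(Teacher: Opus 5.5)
Your proposal is correct and follows essentially the same route as the paper. It shows that $[a,b]$ and $[a,b^{-1}]$ do not commute, because the reduced products differ in their second letter; it then uses Nielsen--Schreier and two-generation to get a free rank-two image, and Hopficity of $F_2$ to conclude injectivity, while the vanishing on $H_1$ comes from both images being commutators.
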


\begin{proof}
Put $w_a:=aba^{-1}b^{-1}$ and $w_b:=ab^{-1}a^{-1}b$.  The products
$w_aw_b$ and $w_bw_a$ are freely reduced with second letters $b$ and
$b^{-1}$, respectively, so $w_aw_b\ne w_bw_a$.  Thus $w_a$ and $w_b$ do
not commute.  Since every cyclic group is abelian,
$G:=\langle w_a,w_b\rangle$ is noncyclic.
By the Nielsen--Schreier theorem, $G$ is free.  Since it is noncyclic and
generated by two elements, it has rank two.  Let
$\omega_G:F_2\twoheadrightarrow G$ be the corestriction of $\omega$,
so that $\omega_G(a)=w_a$ and $\omega_G(b)=w_b$,
and choose an isomorphism $\psi:G\xrightarrow{\sim}F_2$.  The composite
$\psi\circ\omega_G$ is a surjective endomorphism of the Hopfian group
$F_2$ and is therefore injective.  Hence $\omega_G$ is injective.  If
$\iota_G:G\hookrightarrow F_2$ denotes the inclusion, then
$\omega=\iota_G\circ\omega_G$, so $\omega$ is injective.  Finally,
$w_a=[a,b]$ and $w_b=[a,b^{-1}]$ lie in $[F_2,F_2]$, so the map
induced by $\omega$ on $H_1(F_2;\mathbb Z)$ is zero.
\end{proof}

The next lemma is the entire analytic input of the construction.
Part (a) controls Hausdorff content and gives
$\dim_{\mathcal{H}}K\le1$ in the limit; part (b) says that both
capacity conclusions hold: the graph has zero capacity, and its open
neighborhoods can be chosen with arbitrarily small capacity.  This is the
outer regularity that lets smallness be imposed on the codimension-zero
stages of the tower, not merely on their spines.

\begin{lemma}[Thin neighborhoods of finite graphs]\label{lem:thin}
Let $\Gamma_0$ be a finite embedded PL graph and let
$W$ be an open set such that
$\Gamma_0\subset W\subset\operatorname{int}B$.  Let
$\delta,\eta>0$ and $\tau>1$.  Then:
\begin{enumerate}
\item[(a)] there is a finite family $\{U_\ell\}$ of open
spherical balls with
\[
 \Gamma_0\subset U:=\bigcup_\ell U_\ell\Subset W,
 \qquad
 \operatorname{diam}U_\ell<\delta,
 \qquad
 \sum_\ell(\operatorname{diam}U_\ell)^\tau<\eta;
\]
\item[(b)] if $n\geq4$, then $\operatorname{cap}(\Gamma_0)=0$.  Moreover,
for every family satisfying part~\textup{(a)}, with
$U=\bigcup_\ell U_\ell$, and every $\varepsilon>0$, there is an open set
$O$ such that
$\Gamma_0\subset O\Subset U$ and
$\operatorname{cap}(O)<\varepsilon$.
\end{enumerate}
\end{lemma}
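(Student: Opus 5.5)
Part~(a) reduces to covering a compact set of finite length, since $\tau>1$. I would first pass to the stereographic chart $\sigma$, in which round and Euclidean distances are bi-Lipschitz comparable on $B$; it therefore suffices to produce Euclidean balls and then replace them by slightly larger spherical balls. Since $\Gamma_0$ is a finite PL graph, it is a finite union of compact segments of total length $L<\infty$. Set $\rho_0:=\operatorname{dist}(\Gamma_0,S^n\setminus W)>0$ and fix $r<\min\{\delta,\rho_0\}/4$. Subdividing each edge into pieces of length $\le r$ and placing an open ball of radius $r$ at each subdivision point gives at most $N\le L/r+E$ balls, where $E$ is the number of edges. These balls cover $\Gamma_0$, have closures inside $W$, and satisfy
\[
\sum_\ell(\operatorname{diam}U_\ell)^\tau\le (L/r+E)(2r)^\tau\lesssim L\,r^{\tau-1}+E\,r^\tau .
\]
The right side tends to $0$ as $r\downarrow0$ because $\tau>1$. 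The union $U$ is then compactly contained in $W$.

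For part~(b), I would treat the capacity of the graph itself and then the outer approximation. With $(\alpha,q)=(1+\frac2n,\frac n2)$ we have $n-\alpha q=s=\frac{n-2}{2}\ge1$ exactly when $n\ge4$. Sets of finite $\mathcal H^{n-\alpha q}$-measure have zero $(\alpha,q)$ Bessel capacity when $\alpha q<n$ and $q>1$; this is the Hausdorff-measure criterion of Adams--Hedberg, [Theorem~5.1.9] in \cite{zbMATH00824833}. A segment has finite $\mathcal H^1$ measure, so for $s>1$ it has $\mathcal H^s$-measure zero. In the borderline case $s=1$, i.e.\ $n=4$, I would invoke the sharper statement that $\mathcal H^{n-\alpha q}(E)<\infty$ already forces $C_{\alpha,q}(E)=0$ when $q\le 2$; here $q=2$. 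If the cited theorem only allowed $\mathcal H^{n-\alpha q}(E)=0$, I would instead estimate directly, testing against a potential $G_\alpha*f$ built from $f=\sum_k k^{-1}\chi_{\{2^{-k-1}<d(\cdot,\Gamma_0)<2^{-k}\}}d(\cdot,\Gamma_0)^{-\alpha}$. This $f$ has $\|f\|_q^q\sim\sum_k k^{-q}<\infty$ with $q=2$, while $G_\alpha*f\gtrsim\sum_k k^{-1}=\infty$ on $\Gamma_0$. Scaling $f$ down then makes the capacity arbitrarily small. This borderline case $n=4$ is the main obstacle, and the logarithmic test function is the fallback for it.

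Applying monotonicity and subadditivity over finitely many edges, we get $\operatorname{cap}(\Gamma_0)=0$, using the fixed chart $\sigma$.

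For the outer approximation I would use outer regularity of Bessel capacity (Adams--Hedberg, Proposition~2.3.5): for compact $\sigma(\Gamma_0)$,
\[
\mathcal C_{\alpha,q}(\sigma(\Gamma_0))=\inf\{\mathcal C_{\alpha,q}(G): G\supset\sigma(\Gamma_0)\text{ open}\}.
\]
So there is an open $G\supset\sigma(\Gamma_0)$ with $\mathcal C_{\alpha,q}(G)<\varepsilon$. Since $\Gamma_0$ is compact and $U$ is an open neighborhood of it, I would choose an open $O'$ with $\Gamma_0\subset O'\Subset U$, for example a small metric neighborhood. Then I set $O:=\sigma^{-1}(G)\cap O'$, which gives $\Gamma_0\subset O\Subset U$. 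By monotonicity of capacity in the fixed chart, $\operatorname{cap}(O)\le\mathcal C_{\alpha,q}(G)<\varepsilon$.
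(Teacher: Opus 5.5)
Your proposal is correct and follows the paper's proof. For (a) you use the same arclength subdivision and the bound $L\,r^{\tau-1}+E\,r^{\tau}$. For (b) you use the same finite-$\mathcal H^{s}$ criterion of Adams--Hedberg Theorem~5.1.9, followed by outer regularity, which you cite while the paper derives it directly from the open superlevel set $\{G_\alpha*f>\tfrac12\}$ and the doubled admissible function. Note that this finite-measure criterion already holds for every $q>1$ with $\alpha q<n$, so the borderline case $n=4$ needs neither a special ``$q\le 2$'' version nor the logarithmic test function, although your fallback computation is also correct.
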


\begin{proof}
\emph{(a)} Let $\Gamma_0$ consist of closed edges
$e_1,\dots,e_m$ and isolated vertices $v_1,\dots,v_{m'}$. Since
$\Gamma_0$ is compact and $W$ is open, choose $\rho_W>0$ such that the
closed $\rho_W$-neighborhood of $\Gamma_0$ is contained in $W$. Let
$L_i$ be the length of $e_i$ in the round metric. For $r>0$,
parametrize $e_i$ by arclength and divide it into at most
$\lfloor L_i/r\rfloor+1$ subarcs of length less than $r$. A spherical
ball of radius $r$ centered at an endpoint of such a subarc contains
that subarc, because round distance is bounded above by arclength.
Cover each edge in this way and each isolated vertex by one open ball
of radius $r$. Denote the resulting family by
$\{U_\ell\}$ and put $U:=\bigcup_\ell U_\ell$.

Each ball has diameter at most $2r$, and
\[
 \sum_\ell (\operatorname{diam} U_\ell)^\tau
 \le \sum_{i=1}^m \left( \frac{L_i}{r} + 1 \right) (2r)^\tau
      + m'(2r)^\tau
 = 2^\tau \Bigl( \sum_{i=1}^m L_i \Bigr) r^{\tau-1}
      + 2^\tau (m+m')r^\tau.
\]
Since $\tau>1$, both exponents $\tau-1$ and $\tau$ are positive, so the
right-hand side tends to zero as $r\to0$. Choose
$r<\rho_W$ sufficiently small that $2r<\delta$ and the displayed sum is
less than $\eta$. Every closed ball in the family is then contained in
$W$; since the family is finite, $\overline U\subset W$. Thus
$U\Subset W$.

\emph{(b)} The argument proceeds in four steps. The hypothesis $n\ge4$
is used at the junction of Steps~1 and~2, where the critical dimension
$s=\frac{n-2}{2}$ from \eqref{eq:parameters} must satisfy $s\ge1$.
All spherical neighborhoods constructed below are contained in
$W\subset\operatorname{int}B$.

\smallskip\noindent\emph{Step 1 (finite critical measure for the spine).}
The restriction $\sigma|_B$ is smooth on the compact set
$B\subset S^n\setminus\{p_\sigma\}$ and is therefore Lipschitz. Since
$\Gamma_0$ is a finite one-dimensional PL graph,
$\sigma(\Gamma_0)$ is a finite union of rectifiable curves, and hence
$\mathcal{H}^1(\sigma(\Gamma_0))<\infty$. We also claim that
$\mathcal{H}^t(\sigma(\Gamma_0))=0$ for every $t>1$. For $0<\rho\le1$,
choose a countable cover $\{E_i\}$ of $\sigma(\Gamma_0)$ such that
$\operatorname{diam}E_i\le\rho$ and
$\sum_i\operatorname{diam}E_i\le
\mathcal{H}^1_\rho(\sigma(\Gamma_0))+1$.
The same cover gives
\[
\begin{aligned}
 \mathcal{H}^t_\rho(\sigma(\Gamma_0))
 &\le \sum_i(\operatorname{diam}E_i)^t \\
 &\le \rho^{t-1}\sum_i\operatorname{diam}E_i \\
 &\le \rho^{t-1}
 \bigl(\mathcal{H}^1(\sigma(\Gamma_0))+1\bigr).
\end{aligned}
\]
Letting $\rho\downarrow0$ proves the claim.  Since $s=(n-2)/2$, we obtain
$\mathcal{H}^s(\sigma(\Gamma_0))<\infty$: when $n=4$, one has $s=1$, and
this follows from the finite $\mathcal{H}^1$-measure; when $n\ge5$, one
has $s>1$, and the preceding claim gives the stronger identity
$\mathcal{H}^s(\sigma(\Gamma_0))=0$.  This is precisely
where the dimensional hypothesis $n\ge4$, which ensures $s\ge1$, is
used.

\smallskip\noindent\emph{Step 2 (polarity of the spine: checking the hypotheses of Adams--Hedberg).}
We apply the finite-critical-measure theorem \cite[Theorem~5.1.9]{zbMATH00824833}:
if $p>1$ and $0<\beta p<n$, and if $\Lambda_h(E)<\infty$ for the ball-gauge
Hausdorff measure with gauge $h(r)=r^{\,n-\beta p}$, then
$\mathcal{C}_{\beta,p}^{\mathbb R^n}(E)=0$.
Apply the theorem with
$(\beta,p)=(\alpha,q)=(1+2/n,n/2)$.  Then $p=q=n/2\ge2>1$, while
$\beta p=\alpha q=((n+2)/n)(n/2)=(n+2)/2<n$ because $n>2$.  Thus the parameters
lie in the admissible range, and the corresponding critical gauge
exponent is $n-\beta p=s$.

The cited theorem uses the ball-gauge measure $\Lambda_h$ for
$h(r)=r^s$, whereas Step~1 bounds the unnormalized Hausdorff measure
$\mathcal{H}^s$ fixed above.  For $A\subset\mathbb R^n$ and $\rho>0$, let
$\Lambda_h^{(\rho)}(A)$ denote the
infimum in the definition of $\Lambda_h(A)$ over ball covers of $A$
whose radii are at most $\rho$.  Let $\{E_i\}_{i\geq1}$ be a countable
cover of $A$ by nonempty sets with
$d_i:=\operatorname{diam}E_i\leq\rho/2$ and
$\sum_i d_i^s<\infty$, and choose $x_i\in E_i$.  Given $\zeta>0$, if
$d_i>0$, choose $d_i<r_i\leq\rho$ so close to $d_i$ that
$r_i^s<d_i^s+2^{-i}\zeta$; if $d_i=0$, choose instead
$0<r_i\leq\rho$ with $r_i^s<2^{-i}\zeta$.  Then the balls
$\{B(x_i,r_i)\}_{i\geq1}$ cover $A$ and
$\sum_{i\geq1}h(r_i)<\sum_{i\geq1}d_i^s+\zeta$, because
$\sum_{i\geq1}2^{-i}=1$.  Covers with infinite $s$-sum are
irrelevant to the infimum.  Taking infima over the covers
$\{E_i\}_{i\geq1}$, and then letting $\zeta\downarrow0$, gives
$\Lambda_h^{(\rho)}(A)\leq\mathcal{H}^s_{\rho/2}(A)$.  Letting
$\rho\downarrow0$ yields
$\Lambda_h(A)\leq\mathcal{H}^s(A)$.  Thus
Step~1 yields $\Lambda_h(\sigma(\Gamma_0))
 \le \mathcal{H}^s(\sigma(\Gamma_0))<\infty.$  The
finite-critical-measure theorem now gives
$\mathcal{C}_{\alpha,q}^{\mathbb R^n}(\sigma(\Gamma_0))=0,
 \qquad\text{or equivalently}\qquad
 \operatorname{cap}(\Gamma_0)=0.$

\smallskip\noindent\emph{Step 3 (admissible potentials of small norm).}
By the defining formula \cite[Definition~2.3.3]{zbMATH00824833}, the capacity of a
set $E$ is the infimum of $\|f\|_{L^p}^p$ over all nonnegative
$f\in L^p(\mathbb R^n)$ such that $G_\beta*f\ge1$ everywhere on $E$.
The convolution is understood as an extended nonnegative Lebesgue
integral; it is pointwise well defined and independent of the chosen
nonnegative measurable representative of $f$. Since
$\operatorname{cap}(\Gamma_0)=0$ by Step~2, there is a nonnegative
function $f_{\alpha,q}\in L^q(\mathbb R^n)$ such that
\[
G_\alpha*f_{\alpha,q}\ge1\quad\text{on }\sigma(\Gamma_0),
\qquad
\|f_{\alpha,q}\|_{L^q}^q<2^{-q}\varepsilon.
\]
The scale factor $2^{-q}$ is reserved to offset the doubling operation
in Step~4.

\smallskip\noindent\emph{Step 4 (outer regularity: from the spine to an open set).}
The Bessel kernel $G_\alpha$ is nonnegative and lower semicontinuous.
Thus Fatou's lemma, applied along every convergent sequence of base
points, shows that the potential $G_\alpha*f_{\alpha,q}$ is lower
semicontinuous on $\mathbb R^n$; cf.\
\cite[Proposition~2.3.2]{zbMATH00824833}.
Hence its strict superlevel set
$\{G_\alpha*f_{\alpha,q}>\tfrac12\}$ is open and contains
$\sigma(\Gamma_0)$, where the potential is at least $1$.  Since
$\sigma(\Gamma_0)$ is compact and $\sigma(U)$ is an open neighborhood
of it, choose an open set $U'$ such that
$\sigma(\Gamma_0)\subset U'\Subset\sigma(U)$.  The intersection
$O'\ :=\ U'\cap\bigl\{x:\ (G_\alpha*f_{\alpha,q})(x)>\tfrac12\bigr\}$ is an open
neighborhood of $\sigma(\Gamma_0)$. On $O'$, the doubled function
$2f_{\alpha,q}$ has potential $>1$ and is therefore admissible.  Hence,
by the definition of capacity,
\(\mathcal{C}_{\alpha,q}^{\mathbb R^n}(O')
\le 2^{q}\|f_{\alpha,q}\|_{L^q}^{q}<\varepsilon\).
Finally, $O:=\sigma^{-1}(O')$ is open with $\Gamma_0\subset O\Subset U$, because
$O'\Subset\sigma(U)$ and $\sigma$ is a homeomorphism onto its image; and since
$O\subset U\subset B$, its capacity is computed in the fixed chart:
$\operatorname{cap}(O)=\mathcal{C}_{\alpha,q}^{\mathbb R^n}(O')<\varepsilon$.
\end{proof}

\begin{lemma}[Sublevel regular neighborhoods]\label{lem:sublevel}
Let $T$ be a finite triangulation of the fixed PL sphere $S^n$, let $L$
be a nonempty, proper, full subcomplex of $T$, and let
$f_L:|T|\to[0,1]$ be the simplicial map that takes value $0$ at the
vertices of $L$ and value $1$ at all other vertices.  Then
$f_L^{-1}(0)=|L|$.  Define the simplicial complement of $L$ in $T$ by
$C(L,T):=\{\Delta\in T:|\Delta|\cap|L|=\varnothing\}$.
Then $C(L,T)$ is a nonempty, proper, full subcomplex of $T$, and its
$0$--$1$-level function is $f_{C(L,T)}=1-f_L$.  Moreover, for every
$c\in(0,1)$, the set $N_c:=f_L^{-1}([0,c])$
is a closed regular neighborhood of $|L|$ in $S^n$ and a compact PL
$n$-manifold, with $\partial N_c=f_L^{-1}(c)$.  Its topological
interior in $S^n$ is $\operatorname{int}N_c=f_L^{-1}([0,c))$.
There is also a homeomorphism of pairs
\begin{equation}\label{eq:punctured-sublevel-product}
 \Phi_c:
 \bigl(N_c\setminus|L|,\partial N_c\bigr)
 \xrightarrow{\ \cong\ }
 \bigl(\partial N_c\times(0,1],
       \partial N_c\times\{1\}\bigr)
\end{equation}
whose restriction to $\partial N_c$ is $z\mapsto(z,1)$.  Consequently,
$\partial N_c$ is a strong deformation retract of $N_c\setminus|L|$.
\end{lemma}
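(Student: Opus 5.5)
The statement splits into combinatorial facts about full subcomplexes and the standard simplicial-neighborhood theory of regular neighborhoods. I will establish it in that order.

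\emph{The zero set of \(f_L\).} A point of \(|T|\) lies in the interior of a unique simplex \(\Delta\), and \(f_L\) is affine there with barycentric weights. So \(f_L(x)=0\) iff every vertex of \(\Delta\) with positive weight lies in \(L\). Those vertices span \(\Delta\), and fullness of \(L\) puts \(\Delta\) in \(L\). Hence \(f_L^{-1}(0)=|L|\).

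\emph{The complement \(C(L,T)\).} It is closed under faces, so it is a subcomplex. Its vertices are exactly the vertices of \(T\) not in \(L\). It is full: if all vertices of a simplex \(\Delta\) lie outside \(L\), then \(|\Delta|\cap|L|=\varnothing\). Otherwise some point of \(\Delta\) would lie in \(|L|\), hence in a simplex of \(L\) that is a face of \(\Delta\), and that face has vertices in \(L\).
\begin{itemize}
\item It is nonempty because \(L\) is proper and full. If every vertex were in \(L\), fullness would give \(L=T\).
\item It is proper because \(L\) is nonempty.
\end{itemize}
Its level function takes value \(0\) exactly on the vertices outside \(L\). Both \(1-f_L\) and \(f_{C(L,T)}\) are simplicial and agree on vertices, so \(f_{C(L,T)}=1-f_L\).

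\emph{The sublevel set \(N_c\).} I will invoke the derived-neighborhood theory for full subcomplexes (Rourke--Sanderson, \cite[\S3]{zbMATH01714503}). For a full subcomplex \(L\), the set \(f_L^{-1}([0,c])\) with \(0<c<1\) is a derived neighborhood. After a derived subdivision of \(T\) near level \(c\), it is the simplicial neighborhood of \(|L|\). So it is a regular neighborhood, hence a compact PL \(n\)-manifold since \(S^n\) is a PL manifold without boundary.

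The interior and boundary come from the join structure. Each simplex \(\Delta\) meeting both \(L\) and \(C(L,T)\) is the join \(\Delta_L*\Delta_C\) of its faces in the two full subcomplexes. Every point not in \(|L|\cup|C|\) is written uniquely as \((1-\lambda)a+\lambda b\) with \(a\in\Delta_L\), \(b\in\Delta_C\) and \(\lambda=f_L\in(0,1)\). Therefore \(f_L\) is an open map to \([0,1]\) near level \(c\); more concretely, \(f_L^{-1}((c-\epsilon,c+\epsilon))\cong f_L^{-1}(c)\times(c-\epsilon,c+\epsilon)\) through these join coordinates. This gives \(\operatorname{int}N_c=f_L^{-1}([0,c))\) and \(\partial N_c=f_L^{-1}(c)\).

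\emph{The product structure \(\Phi_c\).} The same join coordinates give it on all of \(N_c\setminus|L|=f_L^{-1}((0,c])\). On this set \(b\) is defined, since \(\lambda>0\). Send \(x=(1-\lambda)a+\lambda b\) to
\[
\Bigl(\bigl(1-c\bigr)a+c\,b,\ \lambda/c\Bigr)\in\partial N_c\times(0,1].
\]
The map is continuous across simplices because the join decomposition is compatible with faces, and it is the identity times \(1\) on \(\partial N_c\). Its inverse is \(((1-c)a+cb,r)\mapsto(1-rc)a+rc\,b\), which is well defined because a point of \(\partial N_c\) determines \((a,b)\).

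Sliding \(r\) up to \(1\) gives the strong deformation retraction onto \(\partial N_c\).

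\emph{Main obstacle.} The delicate step is checking that the join coordinates \((a,b,\lambda)\) are globally well defined and continuous across simplices, which is where fullness of both \(L\) and \(C(L,T)\) is essential. Once that is in hand, identifying \(N_c\) as a regular neighborhood is a citation of the standard derived-neighborhood theorem.
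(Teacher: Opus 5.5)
Your proposal is correct and follows essentially the paper's route: the same carrier/fullness arguments give $f_L^{-1}(0)=|L|$ and the properties of $C(L,T)$, $N_c$ is identified with a derived neighborhood, and the same normalized-barycentric (join) coordinates give $\Phi_c(z)=\bigl((1-c)z_0+cz_1,\,f_L(z)/c\bigr)$ with inverse $(w,\vartheta)\mapsto(1-\vartheta c)w_0+\vartheta c\,w_1$. The difference is only in detail: the paper explicitly builds the derived subdivision $\widehat T_c$ modulo $L\cup C(L,T)$ with derived vertices on $f_L^{-1}(c)$ and checks simplexwise that $f_L^{-1}([0,c])=|N(L,\widehat T_c)|$ and $\partial N_c=|\dot N(L,\widehat T_c)|=f_L^{-1}(c)$, whereas you cite this identification and read off the boundary from the product structure near level $c$, which implicitly uses invariance of domain to equate manifold boundary with frontier in $S^n$.
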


\begin{proof}
Let $y\in|T|$, and let $\Delta_y$ be its carrier.  If $f_L(y)=0$, then
every vertex of $\Delta_y$ has $f_L$-value $0$; fullness of $L$ therefore
gives $\Delta_y\in L$, and hence $y\in|L|$.  The reverse inclusion is
immediate, so $f_L^{-1}(0)=|L|$.

The collection $C(L,T)$ is a subcomplex: if $\Delta\in C(L,T)$ and
$\tau$ is a face of $\Delta$, then $|\tau|\subset|\Delta|$, and hence
$|\tau|\cap|L|=\varnothing$.  Its vertex set is
\begin{equation}\label{eq:complementary-vertices}
 C(L,T)^{(0)}=T^{(0)}\setminus L^{(0)}.
\end{equation}
Indeed, a vertex of $L$ does not belong to $C(L,T)$; conversely, if
$v\notin L^{(0)}$, then $f_L(v)=1$, so
$v\notin f_L^{-1}(0)=|L|$, and the $0$-simplex $\{v\}$ belongs to
$C(L,T)$.  Because $L$ is nonempty, it has a vertex, so
$C(L,T)\neq T$.  Because $L$ is proper and full, some vertex of $T$
does not belong to $L$: otherwise fullness would imply that every
simplex of $T$ belongs to $L$, contrary to $L\neq T$.  Thus $C(L,T)$
is nonempty and proper.

To prove fullness, let $\Delta\in T$ have all its vertices in
$C(L,T)^{(0)}$.  Equation~\eqref{eq:complementary-vertices} gives
$f_L=1$ at every vertex of $\Delta$, and affineness gives
$f_L\equiv1$ on $|\Delta|$.  Since $|L|=f_L^{-1}(0)$, the simplex
$\Delta$ is disjoint from $|L|$ and therefore belongs to $C(L,T)$.
Finally, $f_{C(L,T)}$ and $1-f_L$ agree at every vertex by
\eqref{eq:complementary-vertices}; both are affine on each simplex of
$T$, so uniqueness of the affine extension gives
$f_{C(L,T)}=1-f_L$ on $|T|$.

Fix $c\in(0,1)$.  Call a simplex $\Delta\in T$ mixed if
its vertices occur at both $f_L$-levels $0$ and $1$.  A mixed simplex
belongs to neither $L$ nor $C(L,T)$.  Conversely, if
$\Delta\notin L\cup C(L,T)$, fullness of $L$ gives a vertex outside
$L^{(0)}$, while fullness of $C(L,T)$ gives a vertex outside
$C(L,T)^{(0)}$, hence in $L^{(0)}$ by
\eqref{eq:complementary-vertices}.  Thus the
mixed simplices are precisely the simplices not belonging to
$L\cup C(L,T)$.  For every mixed simplex $\Delta$, choose a derived vertex
$\widehat\Delta_c\in\mathring\Delta\cap f_L^{-1}(c)$, which
is possible by choosing positive barycentric coordinates whose
total weight on the level-$1$ vertices is $c$.  Let $\widehat T_c$ be
the resulting derived subdivision
of $T$ modulo $L\cup C(L,T)$.  Set
\[
N(L,\widehat T_c)
:=\bigcup_{v\in L^{(0)}}\operatorname{St}(v,\widehat T_c),
\qquad
\dot N(L,\widehat T_c)
:=\bigl\{\Xi\in N(L,\widehat T_c):
|\Xi|\cap|L|=\varnothing\bigr\},
\]
where $\operatorname{St}$ denotes the closed simplicial star.

Every simplex of $\widehat T_c$ that is not a simplex of
$L\cup C(L,T)$ has the form
\[
\Xi=\upsilon*\widehat\Delta_{0,c}*\cdots*
\widehat\Delta_{r,c},
\]
where $\Delta_0\subsetneq\cdots\subsetneq\Delta_r$ are mixed simplices
and $\upsilon$ is either empty or a simplex of $L\cup C(L,T)$ contained
in $\Delta_0$.  Consequently, the vertex levels of such a simplex are
contained in either $\{0,c\}$ or $\{c,1\}$.  A simplex of $L$ has all
its vertices at level $0$.  A simplex of $C(L,T)$ has all its vertices
at level $1$, since a vertex belonging to $L^{(0)}$ would give a point
of its intersection with $|L|$.  Thus the vertex levels of every
simplex of $\widehat T_c$ are contained in either $\{0,c\}$ or
$\{c,1\}$; in particular, no simplex has both a level-$0$ and a
level-$1$ vertex.

The same chain description shows that $L$ is full in $\widehat T_c$.
Indeed, suppose that every vertex of a simplex
$\Xi\in\widehat T_c$ belongs to $L$.  Every such vertex has level
$0$.  A simplex of $C(L,T)$ has only level-$1$ vertices, whereas every
simplex of
$\widehat T_c\setminus\bigl(L\cup C(L,T)\bigr)$ contains at least one
derived vertex $\widehat\Delta_{i,c}$ of level $c$.  Hence $\Xi$ is a
simplex of $L$, which proves fullness in the subdivided complex.
Thus $L$ is the full subcomplex of $\widehat T_c$ induced by the
level-$0$ vertices, and the complex $N(L,\widehat T_c)$ defined above
is its simplicial neighborhood.  Since $L$ is full in $T$ and
$\widehat T_c$ is, by construction, a derived subdivision of $T$
modulo $L\cup C(L,T)$,
\cite[\S3, pp.~227--228]{zbMATH01714503} shows that
$N(L,\widehat T_c)$ is a derived neighborhood of $L$ in $T$.
Because every derived vertex $\widehat\Delta_c$ was chosen on
$f_L^{-1}(c)$, this is precisely Bryant's $c$-neighborhood.

We claim that a simplex $\Xi$ of $\widehat T_c$ belongs to
$N(L,\widehat T_c)$ if and only if it has no level-$1$ vertex.  Suppose
first that $\Xi$ has no level-$1$ vertex.  If it has a level-$0$ vertex
$v$, then $v\in L^{(0)}$ and
$\Xi\in\operatorname{St}(v,\widehat T_c)$.  If all the vertices of
$\Xi$ have level $c$, then $\Xi$ is represented by a chain of mixed
simplices as above with $\upsilon=\varnothing$.  Choose
$v\in L^{(0)}\cap\Delta_0$.  The chain description of the derived
subdivision shows that $v*\Xi$ is a simplex of $\widehat T_c$.
Therefore $\Xi\in\operatorname{St}(v,\widehat T_c)$.

Conversely, suppose that $\Xi\in N(L,\widehat T_c)$.  By the definition
of the closed star, there exist $v\in L^{(0)}$ and a simplex
$\Xi'\in\widehat T_c$ such that $\Xi$ is a face of $\Xi'$ and $v$ is a
vertex of $\Xi'$.  The simplex $\Xi'$ therefore has a level-$0$ vertex.
By the preceding level dichotomy it has no level-$1$ vertex, and hence
neither does its face $\Xi$.  This proves the claim.

Every simplex of $\widehat T_c$ is contained in a simplex of $T$; hence
the restriction of $f_L$ to every simplex $\Xi$ of $\widehat T_c$ is
affine.  If $\Xi$ has no level-$1$ vertex, then $f_L\leq c$ on $|\Xi|$
and $\Xi\in N(L,\widehat T_c)$.  If $\Xi$ has a level-$1$ vertex, then
the level dichotomy implies that it has no level-$0$ vertex.  In that
case, $f_L^{-1}([0,c])\cap|\Xi|$ is the face spanned by the level-$c$
vertices of $\Xi$, with the empty face allowed.  The same is true of
$|N(L,\widehat T_c)|\cap|\Xi|$, because the faces of $\Xi$ belonging to
$N(L,\widehat T_c)$ are, by the claim, precisely those having no
level-$1$ vertex.  It follows simplexwise that
$f_L^{-1}([0,c])=|N(L,\widehat T_c)|$.
Together with the preceding identification, this equality identifies
$N_c$ with the realization of a derived neighborhood of $L$ in $T$;
hence $N_c$ is a regular neighborhood of $|L|$ in $S^n$.  Since
$\widehat T_c$ is finite, $N(L,\widehat T_c)$ is a finite subcomplex,
so $N_c$ is compact and therefore closed in $S^n$.  Moreover, since
$S^n$ is a PL manifold without boundary,
\cite[Theorem~3.6]{zbMATH01714503} shows that $N_c$ is a PL
$n$-manifold and that $\partial N_c=|\dot N(L,\widehat T_c)|$.

By the preceding characterization of $N(L,\widehat T_c)$, its
simplices disjoint from $|L|$ are precisely the simplices all of whose
vertices have level $c$.  Indeed, a simplex of the neighborhood that
has a level-$0$ vertex meets $|L|$, whereas, if all the vertices of a
simplex $\Xi$ have level $c$, then $f_L\equiv c>0$ on $|\Xi|$ and hence
$|\Xi|\cap|L|=\varnothing$, because $|L|=f_L^{-1}(0)$.  On every simplex $\Xi$ of
$\widehat T_c$, the intersection $f_L^{-1}(c)\cap|\Xi|$ is exactly the
face spanned by its level-$c$ vertices, again with the empty face
allowed.  Consequently,
$\partial N_c=|\dot N(L,\widehat T_c)|=f_L^{-1}(c)$.

Since $[0,c)$ is open in $[0,1]$ with its relative topology,
$f_L^{-1}([0,c))$ is open in $S^n$ and is contained in $N_c$.
Conversely, let $z\in f_L^{-1}(c)$.  Its carrier $\Delta$ in $T$ is
mixed.  Normalizing separately the barycentric coordinates on the
level-$0$ and level-$1$ vertices gives unique points $z_i$ in the
corresponding faces of $\Delta$ such that $z=(1-c)z_0+cz_1$.  Choose
$c_m\in(c,1)$ with $c_m\to c$ and put
$z_m:=(1-c_m)z_0+c_mz_1$.  Then $z_m$ has the same carrier as $z$,
$z_m\to z$, and $f_L(z_m)=c_m>c$.  Thus no point of
$f_L^{-1}(c)$ is interior to $N_c$, and consequently
$\operatorname{int}N_c=f_L^{-1}([0,c))$.

It remains to prove the product assertion.  Let
$z\in N_c\setminus|L|$ and put $r:=f_L(z)\in(0,c]$.  The carrier
$\Delta$ of $z$ in $T$ is mixed.  Let $\Delta_0$ and $\Delta_1$ be the
faces of $\Delta$ spanned by its level-$0$ and level-$1$ vertices,
respectively.  Normalizing separately the two groups of barycentric
coordinates gives unique points $z_i\in|\Delta_i|$ such that
$z=(1-r)z_0+rz_1$.
Define
\[
 q_c(z):=(1-c)z_0+cz_1\in f_L^{-1}(c)=\partial N_c,
 \qquad
 \Phi_c(z):=\bigl(q_c(z),r/c\bigr).
\]
Conversely, if $w\in\partial N_c$ and $\vartheta\in(0,1]$, write, by the same
normalized-barycentric-coordinate construction,
$w=(1-c)w_0+cw_1$, and set
$\Psi_c(w,\vartheta):=(1-\vartheta c)w_0+\vartheta c w_1$.
These definitions are independent of the choice of a mixed simplex containing the
point: both are determined by the barycentric coordinates in its unique
carrier, and the formulas restrict compatibly to every common face.
On each mixed simplex the formulas are continuous.  The sets
\[
 |\Delta|\cap f_L^{-1}((0,c])
 \quad\text{and}\quad
 \bigl(|\Delta|\cap f_L^{-1}(c)\bigr)\times(0,1],
\]
as $\Delta$ ranges over the finitely many mixed simplices of $T$, are
finite covers of the domain and codomain by relatively closed subsets,
respectively.  The
pasting lemma therefore shows that $\Phi_c$ and $\Psi_c$ are continuous.
The displayed formulas show directly that
$\Psi_c\circ\Phi_c=\operatorname{id}$ and
$\Phi_c\circ\Psi_c=\operatorname{id}$, so $\Phi_c$ is the claimed
homeomorphism.  If $z\in\partial N_c$, then $r=c$ and
$\Phi_c(z)=(z,1)$.  Finally,
\[
 R_u(z):=\Psi_c\bigl(q_c(z),(1-u)r/c+u\bigr),
 \qquad 0\leq u\leq1,
\]
defines a strong deformation retraction
$(R_u)_{0\leq u\leq1}$ of $N_c\setminus|L|$ onto
$\partial N_c$.  The construction uses only positive levels; no product
structure across the generally degenerate spine $f_L^{-1}(0)=|L|$ is
asserted.

Because $c\in(0,1)$ was arbitrary, the conclusion holds for every $c$
while $T$, $L$, and $f_L$ remain fixed.
\end{proof}

The next lemma constructs one stage of the tower from the preceding stage.
Its topological and analytic requirements are imposed successively.  First,
an embedded rose is chosen whose petals represent $\mathfrak{m}(\omega(a))$ and
$\mathfrak{m}(\omega(b))$.  A sufficiently small regular neighborhood of that
rose is then chosen inside an open set of controlled capacity supplied by
Lemma~\ref{lem:thin}.

\begin{lemma}[Thin marked successor]\label{successor}
Assume \(n\geq4\).  Let $H\subset\operatorname{int}B$ be a closed regular
neighborhood of an embedded rank-two rose with wedge vertex \(x\), assume
$x\in\operatorname{int}H$, and let
$\mathfrak{m}:F_2\xrightarrow{\cong}\pi_1(H,x)$ be a marking.  For every
$\varepsilon>0$, $\tau>1$, $\delta>0$, $\eta>0$, there are an embedded
rank-two PL rose $\Gamma'\subset\operatorname{int}H$ with wedge vertex
$x$, a connected closed regular neighborhood
$H'\Subset\operatorname{int}H$ of $\Gamma'$ satisfying
$\Gamma'\subset\operatorname{int}H'$
(in particular $x\in\operatorname{int}H'$), a marking
$\mathfrak{m}':F_2\xrightarrow{\cong}\pi_1(H',x)$, and a
finite family $\{U'_\ell\}$ of open balls with
\[
 H'\subset\bigcup_\ell U'_\ell\Subset\operatorname{int}H,
 \qquad
 \operatorname{diam}U'_\ell<\delta,
 \qquad
 \sum_\ell(\operatorname{diam}U'_\ell)^\tau<\eta,
\]
and, for the inclusion $i:H'\hookrightarrow H$,
\[
 i_*\circ\mathfrak{m}'=\mathfrak{m}\circ\omega,
 \qquad
 \operatorname{cap}(H')<\varepsilon.
\]
The petals of $\Gamma'$ may be oriented and labeled $a$ and $b$ so
that $\mathfrak{m}'(a)$ and $\mathfrak{m}'(b)$ are represented by the
corresponding oriented petal loops based at $x$.  Moreover, the data may
be chosen together with a triangulation $\mathcal{T}'$ of $S^n$ in which
$\Gamma'$ is the polyhedron of a nonempty, proper, full subcomplex, the simplicial map
$f':|\mathcal{T}'|\to[0,1]$ that takes value $0$ on the vertices of this
subcomplex and value $1$ on all other vertices, and a number
$t'\in(0,1)$ such that
\[
(f')^{-1}(0)=\Gamma',\qquad H'=(f')^{-1}([0,t']),
\]
and, for every $c\in(0,1)$, $(f')^{-1}([0,c])$ is a closed regular
neighborhood of $\Gamma'$ with
\[
\partial\bigl((f')^{-1}([0,c])\bigr)=(f')^{-1}(c).
\]
In particular, $i_*$ is injective.
\end{lemma}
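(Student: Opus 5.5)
We first realize the topological data inside $\operatorname{int}H$ and only afterwards impose the analytic smallness conditions. Since $H$ is a regular neighborhood of a rose $\Gamma$ with wedge vertex $x$, the inclusion $\Gamma\hookrightarrow H$ is a homotopy equivalence, so $\pi_1(H,x)\cong F_2$ via $\mathfrak m$. Choose based PL loops in $\operatorname{int}H$ representing $\mathfrak m(\omega(a))$ and $\mathfrak m(\omega(b))$; for instance, push the petal words $[a,b]$ and $[a,b^{-1}]$ of $\Gamma$ slightly into $\operatorname{int}H$. Because $n\geq4$, general position for $1$-dimensional PL maps into the $n$-manifold $\operatorname{int}H$ ($1+1<n$) lets us perturb these loops, rel the basepoint, to disjoint PL embedded circles meeting only at $x$. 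This yields an embedded rank-two PL rose $\Gamma'\subset\operatorname{int}H$ whose oriented, labeled petals represent $\mathfrak m\omega(a)$ and $\mathfrak m\omega(b)$. The perturbation changes the loops only within homotopies in $\operatorname{int}H$, so the petal classes are unchanged.

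Next we fix the PL data. Take a common subdivision $\mathcal T'$ of the fixed triangulation of $S^n$ in which $\Gamma'$ is a subcomplex, and then pass to a derived subdivision so that $\Gamma'$ becomes full. The subcomplex is nonempty, and it is proper for dimensional reasons. Let $f'$ be the $0$--$1$ level function. Lemma~\ref{lem:sublevel} then gives all the asserted properties of the sublevel sets: $(f')^{-1}(0)=\Gamma'$, each $(f')^{-1}([0,c])$ is a closed regular neighborhood of $\Gamma'$ with boundary $(f')^{-1}(c)$, and its interior is $(f')^{-1}([0,c))$.

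For the analytic conditions, apply Lemma~\ref{lem:thin}(a) with $\Gamma_0=\Gamma'$ and $W=\operatorname{int}H$. This produces balls $U'_\ell$ with the required diameter and $\tau$-content bounds and with $U:=\bigcup U'_\ell\Subset\operatorname{int}H$. Lemma~\ref{lem:thin}(b), which uses $n\geq4$, then gives an open $O$ with $\Gamma'\subset O\Subset U$ and $\operatorname{cap}(O)<\varepsilon$. Since $\Gamma'=(f')^{-1}(0)$ is compact, $f'$ is continuous, and $O$ is open, there is $t'\in(0,1)$ with $(f')^{-1}([0,t'])\subset O$. To see this, note that $\min f'$ on the compact set $S^n\setminus O$ is positive; take $t'$ smaller than it. Set $H':=(f')^{-1}([0,t'])$. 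Then $H'\subset O\Subset U\Subset\operatorname{int}H$, and by monotonicity $\operatorname{cap}(H')\leq\operatorname{cap}(O)<\varepsilon$. Also $\Gamma'\subset(f')^{-1}([0,t'))=\operatorname{int}H'$, and $H'$ is connected because it deformation retracts to the connected set $\Gamma'$.

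Finally we define the marking and check compatibility. Since $H'$ is a regular neighborhood of $\Gamma'$, the inclusion $\Gamma'\hookrightarrow H'$ induces an isomorphism on $\pi_1$. Define $\mathfrak m'$ to send $a,b$ to the petal classes of $\Gamma'$; this is an isomorphism because $\pi_1(\Gamma',x)$ is free on its petals. Then $i_*\mathfrak m'(a)$ is the class of the $a$-petal in $H$, which equals $\mathfrak m(\omega(a))$, and similarly for $b$. Hence $i_*\circ\mathfrak m'=\mathfrak m\circ\omega$. Injectivity of $i_*$ follows from Lemma~\ref{lem:endo}, since $i_*=\mathfrak m\circ\omega\circ(\mathfrak m')^{-1}$ with $\omega$ injective and $\mathfrak m,\mathfrak m'$ isomorphisms. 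We expect the main obstacle to be the bookkeeping in the general-position step: obtaining an embedded rose with prescribed petal homotopy classes at a common wedge point, and making the triangulation choices compatible so that $\Gamma'$ is full.
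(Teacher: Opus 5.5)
Your proposal is correct and follows the paper's proof essentially step for step. You realize $\mathfrak m(\omega(a))$ and $\mathfrak m(\omega(b))$ by an embedded rose in $\operatorname{int}H$ via relative general position for a $1$-complex; the paper cites Bryant's relative embedding approximation here and explicitly checks that interior returns of the loops to $x$ are also removed, which your sketch asserts without comment. You then thin the rose using Lemma~\ref{lem:thin} with a small sublevel set from Lemma~\ref{lem:sublevel}, and transfer the marking through the collapse of $H'$ onto $\Gamma'$. Taking $W=\operatorname{int}H$ in Lemma~\ref{lem:thin} instead of the paper's half-distance neighborhood is harmless. Your ``for instance'' using the words $[a,b]$, $[a,b^{-1}]$ in the petals of the old rose is valid only when $\mathfrak m$ is the petal marking, but any based loops in the old rose representing the required classes work in general.
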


The proof is divided into three steps.  Step~1 uses relative embedding approximation
to realize the prescribed classes by an embedded one-complex.  Step~2
applies Lemma~\ref{lem:thin} to obtain a
cover of arbitrarily small $\tau$-content and a regular neighborhood of
arbitrarily small capacity.  Since this neighborhood deformation retracts
onto its spine, it retains the topological data fixed in Step~1.  Step~3
transfers the marking through this deformation retraction and uses the
injectivity of $\omega$ from Lemma~\ref{lem:endo}.

\begin{proof}
We work throughout in a fixed compatible PL structure on $S^n$, refining
subdivisions as necessary so that the finite PL polyhedra under
consideration are subcomplexes and the base point $x$ is a vertex.  Let
$R=R_a\vee R_b$ denote the standard rank-two rose with wedge point $*$ and
oriented petal circles labeled $a$ and $b$, triangulated so that $*$ is a
vertex and each petal contains at least three simplicial edges.

\smallskip\noindent\emph{Step 1 (construction of an embedded PL rose realizing the algebraic data).}
Since $H$ is a closed regular neighborhood of a compact polyhedron in the
interior of a PL manifold, it is a compact PL manifold with boundary
\cite[Theorem~3.6]{zbMATH01714503}.  Consequently, $\partial H$ has a
collar in $H$ by the PL collaring theorem
\cite[Corollary~2.5]{zbMATH01714503}.  Choose a collar
$c_H \colon \partial H \times [0,1] \hookrightarrow H$
whose image is disjoint from the interior point \(x\); this is possible after
restricting and reparametrizing an initial collar to a sufficiently short
subinterval.  Put
$H_0:=H\setminus c_H\bigl(\partial H\times[0,1)\bigr)
   \subset\operatorname{int}H$.
Define a homotopy $(r_u)_{u\in[0,1]}$ on $H$ by
\[
r_u\bigl(c_H(y,t)\bigr):=c_H\bigl(y,(1-u)t+u\bigr)
\quad (y\in\partial H,\ 0\leq t\leq1),
\qquad
r_u(z):=z\quad(z\in H_0).
\]
The two formulas agree on $c_H(\partial H\times\{1\})$, so
$(r_u)_{u\in[0,1]}$ is a strong deformation retraction of $H$ onto
$H_0$.  The homotopy fixes $x$, and its restriction to
$\operatorname{int}H$ is a strong deformation retraction onto the same
set $H_0$, because
$r_u(c_H(y,t))\in c_H(\partial H\times(0,1])$ whenever $t>0$.  Consequently,
the inclusion $\operatorname{int}H\hookrightarrow H$ is a based homotopy
equivalence and induces an isomorphism
$\pi_1(\operatorname{int}H,x)\xrightarrow{\cong}\pi_1(H,x)$.
In particular, the classes $\mathfrak{m}(\omega(a))$ and
$\mathfrak{m}(\omega(b))$ admit continuous based representatives
\(\gamma_a,\gamma_b:([0,1],\{0,1\})\to
(\operatorname{int}H,x)\)
satisfying $[\gamma_a]=\mathfrak{m}(\omega(a))$ and
$[\gamma_b]=\mathfrak{m}(\omega(b))$ in $\pi_1(H,x)$.  Let
$q_0:[0,1]\to[0,1]/(0\sim1)$ be the quotient map, and, for
$c\in\{a,b\}$, fix an orientation-preserving based homeomorphism
$\vartheta_c:\bigl([0,1]/(0\sim1),q_0(0)\bigr)\to(R_c,*)$.  Since
$\gamma_c(0)=\gamma_c(1)=x$, there is a unique continuous based loop
$\overline\gamma_c:(R_c,*)\to(\operatorname{int}H,x)$ satisfying
$\overline\gamma_c\circ\vartheta_c\circ q_0=\gamma_c$.  By the universal
property of the wedge sum, there is a unique continuous based map
$f_0:(R,*)\to(\operatorname{int}H,x)$ whose restrictions to $R_a$ and
$R_b$ are $\overline\gamma_a$ and $\overline\gamma_b$, respectively.  Thus
\[
[f_0|_{R_a}]=[\overline\gamma_a]=[\gamma_a]=\mathfrak{m}(\omega(a)),
\qquad
[f_0|_{R_b}]=[\overline\gamma_b]=[\gamma_b]=\mathfrak{m}(\omega(b))
\]
in $\pi_1(H,x)$.  The map $f_0$ may have self-intersections away from the
wedge point $x$.

We record the precise relative approximation statement being used.
Bryant's Corollary~4.4
\cite[p.~235]{zbMATH01714503} states that, if $X_0\subset X$ is a
subpolyhedron of a $p$-dimensional polyhedron, $M_0$ is a PL
$n$-manifold with $2p+1\leq n$, and $f:X\to M_0$ is continuous with
$f|_{X_0}$ a PL embedding, then, for every continuous control function
$\varepsilon:X\to(0,\infty)$, the map $f$ is
$\varepsilon$-homotopic relative to $X_0$ to a PL embedding.
Apply
this statement with
\[
 X=R,\qquad X_0=\{*\},\qquad p=1,\qquad
 M_0=\operatorname{int}H,
 \qquad \varepsilon\equiv\varepsilon_0>0.
\]
Here $\operatorname{int}H$ is an open PL $n$-manifold,
$f_0|_{\{*\}}$ is a PL embedding, and $3=2p+1\leq n$.  Consequently,
there is a PL embedding
$e:R\hookrightarrow\operatorname{int}H$ that is
$\varepsilon_0$-homotopic to $f_0$ relative to $\{*\}$.  Hence
$e(*)=x$ and $e\simeq f_0$ relative to $*$ in
$\operatorname{int}H$.  No condition on
$f_0^{-1}(f_0(*))$ occurs among the hypotheses of this corollary.  The
absence of such a hypothesis is not an oversight.  Corollary~4.4 rests
on \cite[Theorem~4.3]{zbMATH01714503}.  The existence and PL character
of $e$ are supplied directly by Corollary~4.4; we use Theorem~4.3 only
to explain why no additional hypothesis
$f^{-1}(f(X_0))=X_0$ is required.  Put
$p_{\mathrm{rel}}:=\dim(X\setminus X_0),$
which is the dimension denoted by $p$ in Theorem~4.3.  In the present
application,
$p_{\mathrm{rel}}=\dim(R\setminus\{*\})=1=p,$
where $p=\dim R$ is the parameter in Corollary~4.4.  Thus
$p_{\mathrm{rel}}\leq n$, and $f_0|_{\{*\}}$ is PL and
nondegenerate on the one-vertex triangulation of $\{*\}$.  Theorem~4.3
therefore gives an approximating map $\widetilde f$, relative to
$X_0=\{*\}$, satisfying
\[
 \dim\bigl(S(\widetilde f)\setminus X_0\bigr)
 \leq2p_{\mathrm{rel}}-n=2-n<0,
 \qquad
 S(\widetilde f):=\overline{
 \{z\in X:\widetilde f^{-1}(\widetilde f(z))\neq\{z\}\}}.
\]
Here the closure is taken in $X$.
Bryant denotes this approximating map by $f'$.\footnote{In the two
dimension estimates following the introduction of $f'$, Bryant prints
$S_f$; in context the intended singular set is $S_{f'}$.}  Because the
right-hand side of the dimension estimate in the preceding display is negative,
Bryant's dimension estimate asserts that
$S(\widetilde f)\setminus X_0=\varnothing$.  If
$\widetilde f(z_1)=\widetilde f(z_2)$ for two distinct points, then both
$z_1$ and $z_2$ belong to $S(\widetilde f)$ and hence to $X_0$,
contradicting the injectivity of
$\widetilde f|_{X_0}=f_0|_{X_0}$.  Thus no point of $X\setminus X_0$
can be identified either with another such point or with a point of
$X_0$; this is precisely why no hypothesis
$f^{-1}(f(X_0))=X_0$ is needed.  In particular, possible interior
returns of the original petal loops to $x$ do not affect the
application.  No metric bound on $e$ is needed; Step~2 constrains its
neighborhood.

Consequently, $\Gamma':=e(R)$ is an embedded PL rank-two rose with wedge
vertex $x\in\operatorname{int}H$, and its oriented petal circles, regarded as
based loops at $x$, satisfy
\begin{equation*}
[e|_{R_a}]=[f_0|_{R_a}]=\mathfrak{m}(\omega(a)),
\qquad
[e|_{R_b}]=[f_0|_{R_b}]=\mathfrak{m}(\omega(b))
\qquad\text{in }\pi_1(H,x),
\end{equation*}
because a based homotopy in $\operatorname{int}H$ restricts to each petal
circle as a based homotopy of loops, and the inclusion induces an isomorphism
$\pi_1(\operatorname{int}H,x)\cong\pi_1(H,x)$.

\smallskip\noindent\emph{Step 2 (construction of a thin regular neighborhood).}
Since $\Gamma'$ is compact and lies in the open set
$\operatorname{int}H$, the distance
$d_1:=\operatorname{dist}_{g_{st}}
 \bigl(\Gamma',S^n\setminus\operatorname{int}H\bigr)$ is positive.  The open set
$W:=\{z\in S^n:d_{g_{st}}(z,\Gamma')<d_1/2\}$ satisfies
$\Gamma'\subset W\Subset\operatorname{int}H$.

Apply Lemma~\ref{lem:thin} to the finite graph
$\Gamma'\subset W\subset\operatorname{int}B$.  For
the given parameters $\tau>1$, $\delta>0$, and $\eta>0$, it produces a
finite family of open balls $\{U'_\ell\}$ such that
\(\Gamma' \subset U' := \bigcup_\ell U'_\ell \Subset W\),
\(\operatorname{diam}U'_\ell<\delta\),
and
\(\sum_\ell(\operatorname{diam}U'_\ell)^\tau<\eta\).
The lemma also gives an open set $O$ satisfying
\(\Gamma'\subset O\Subset U'\)
and
\(\operatorname{cap}(O)<\varepsilon\).

We construct a closed regular neighborhood of $\Gamma'$ inside $O$.
Choose a triangulation $T$ of $S^n$, in the fixed PL structure, such
that $\Gamma'$ is the underlying polyhedron of a full subcomplex $L$;
fullness can be achieved by a derived subdivision.  Since $\Gamma'$ is
a rank-two rose, $L$ is nonempty; and since
$\dim|L|=\dim\Gamma'=1<n=\dim S^n$, one has $L\ne T$.  Thus $L$ is a
nonempty, proper, full subcomplex, so Lemma~\ref{lem:sublevel} applies.  Let
$f_L:|T|=S^n\to[0,1]$ be the simplicial map that takes value $0$ at the
vertices of $L$ and value $1$ at all other vertices.  By that lemma,
$f_L^{-1}(0)=\Gamma'$, and, for every $c\in(0,1)$, the set
$f_L^{-1}([0,c])$ is a closed regular neighborhood of $\Gamma'$ and a
compact PL $n$-manifold, with
$\partial\bigl(f_L^{-1}([0,c])\bigr)=f_L^{-1}(c).$
For $\varepsilon'\in(0,1)$ put
$N_{\varepsilon'}:=f_L^{-1}([0,\varepsilon'])$.

The sets $N_{\varepsilon'}$ are
nested compacta with
$\bigcap_{0<\varepsilon'<1}N_{\varepsilon'}=f_L^{-1}(0)=\Gamma'$.
Since
$O\Subset U'\Subset W\Subset\operatorname{int}H
 \subset\operatorname{int}B\subsetneq S^n$,
the set $S^n\setminus O$ is nonempty and compact.  It is disjoint from
$f_L^{-1}(0)=\Gamma'$, and hence
$m_O:=\min_{S^n\setminus O}f_L>0$.  Choose
$0<\varepsilon'<\min\{1,m_O\}$ and set $H':=N_{\varepsilon'}$.  A
point with $f_L\le\varepsilon'<m_O$ cannot lie in $S^n\setminus O$, so
\(\Gamma'\subset\operatorname{int}H'
=f_L^{-1}([0,\varepsilon'))\subset H'\subset O\),
where the equality follows from Lemma~\ref{lem:sublevel}.  In particular,
$\Gamma'\subset\operatorname{int}H'$ and $x\in\operatorname{int}H'$, as
required.

Retain the auxiliary data
$\mathcal{T}':=T,\qquad f':=f_L,\qquad t':=\varepsilon'.$
Then $(f')^{-1}(0)=\Gamma'$ and
$H'=(f')^{-1}([0,t'])$.

Since $H'\subset O\Subset U'$, one has
$H'\subset\bigcup_\ell U'_\ell$.  The diameter and $\tau$-content bounds
are properties of the fixed family $\{U'_\ell\}$.  Monotonicity of
capacity with respect to inclusion gives
$\operatorname{cap}(H')\leq\operatorname{cap}(O)<\varepsilon$.

\smallskip\noindent\emph{Step 3 (the marking identity and injectivity of the inclusion).}
Since a regular neighborhood collapses onto its spine
\cite[Theorem~3.16]{zbMATH01714503}, the resulting deformation retraction of $H'$
onto $\Gamma'$ fixes $\Gamma'$ (and therefore the base point $x$) pointwise;
in particular $H'$ is connected, retracting onto the connected rose
$\Gamma'$.  Consequently, the inclusion
$\iota:(\Gamma',x)\hookrightarrow(H',x)$ is a based homotopy equivalence
and induces an isomorphism
$\iota_*:\pi_1(\Gamma',x)\xrightarrow{\cong}\pi_1(H',x)$.
The embedding $e:(R,*)\to(\Gamma',x)$ is a based PL homeomorphism, so
$e_*$ is an isomorphism.  By the Seifert--van Kampen theorem, the
fundamental group of the standard rose is free of rank two on its two
oriented petal circles:
$\pi_1(R,*) \cong \langle a,b \rangle \cong F_2.$
Thus $\pi_1(\Gamma',x)$ is likewise free of rank two, generated by the
images of the two oriented petal circles of $\Gamma'$.

Define
$\mathfrak{m}':=\iota_*\circ e_*:F_2\to\pi_1(H',x)$.
Both factors are isomorphisms, so $\mathfrak{m}'$ is a marking.  Explicitly, in
$\pi_1(H',x)$,
$\mathfrak{m}'(a) = [\iota \circ e|_{R_a}], \qquad
\mathfrak{m}'(b) = [\iota \circ e|_{R_b}].$

Let $i:H'\hookrightarrow H$ be the inclusion.  For the generator $a$,
\(i_* \mathfrak{m}'(a) = (i \circ \iota \circ e)_*(a) = [e|_{R_a}]\)
in $\pi_1(H,x)$, because $i\circ\iota\circ e$ regards $e|_{R_a}$ as a
loop in $H$.  By Step~1,
$[e|_{R_a}]=\mathfrak{m}(\omega(a))$.  The same identity holds for $b$.  Thus the
homomorphisms $i_*\circ\mathfrak{m}'$ and $\mathfrak{m}\circ\omega$ agree on the free
generators of $F_2$ and are therefore equal:
\(i_* \circ \mathfrak{m}' = \mathfrak{m} \circ \omega\).

Rearranging this identity yields the factorization
\(i_* = \mathfrak{m} \circ \omega \circ (\mathfrak{m}')^{-1}\).
Here $\mathfrak{m}$ and $(\mathfrak{m}')^{-1}$ are isomorphisms, while $\omega$ is
injective by Lemma~\ref{lem:endo}.  Hence $i_*$ is injective.
\end{proof}

For the iteration, we use the rates
$\varepsilon=\delta=\eta=2^{-(j+1)}$ and
$\tau=1+\frac1{j+1}$; these yield the estimates in
Proposition~\ref{induction}\textup{(2)}--\textup{(3)} that are used in
Proposition~\ref{prop:limitset}.

\begin{proposition}[The tower]\label{induction}
There are closed regular neighborhoods $V_j\subset\operatorname{int}B$ of
embedded rank-two roses $\Gamma_j$ whose petals are oriented and labeled
$a$ and $b$, a common wedge point
$x\in\bigcap_j\operatorname{int}V_j$, finite ball families
$\mathcal{U}_j=\{U_{j,\ell}\}$, and markings
$\mathfrak{m}_j:F_2\xrightarrow{\cong}\pi_1(V_j,x)$ such that for all
$j\ge1$:
\begin{enumerate}
\item $V_{j+1}\Subset\operatorname{int}V_j$;
\item $\operatorname{cap}(V_j)<2^{-j}$;
\item $V_j\subset\bigcup_\ell U_{j,\ell}$ with
$\operatorname{diam}U_{j,\ell}<2^{-j}$ and
$\sum_\ell(\operatorname{diam}U_{j,\ell})^{1+1/j}<2^{-j}$;
\item $(i_j)_*\circ\mathfrak{m}_{j+1}=\mathfrak{m}_j\circ\omega$ for
$i_j:V_{j+1}\hookrightarrow V_j$;
\item $\mathfrak{m}_j(a)$ and $\mathfrak{m}_j(b)$ are represented by the
corresponding oriented petal loops of $\Gamma_j$ based at $x$;
\item there are a triangulation $\mathcal{T}_j$ of $S^n$ in which
$\Gamma_j$ is the polyhedron of a nonempty, proper, full subcomplex, the simplicial map
$f_j:|\mathcal{T}_j|\to[0,1]$ taking value $0$ on the vertices of this
subcomplex and value $1$ on all other vertices, and a number
$t_j\in(0,1)$ such that
\[
f_j^{-1}(0)=\Gamma_j,\qquad V_j=f_j^{-1}([0,t_j]),
\]
and, for every $c\in(0,1)$, $f_j^{-1}([0,c])$ is a closed regular
neighborhood of $\Gamma_j$ with
\[
\partial\bigl(f_j^{-1}([0,c])\bigr)=f_j^{-1}(c).
\]
\end{enumerate}
Consequently, for $k>j$ the inclusion $V_k\hookrightarrow V_j$
induces $\mathfrak{m}_j\circ\omega^{\,k-j}\circ\mathfrak{m}_k^{-1}$ on $\pi_1$; in
particular it is injective.
\end{proposition}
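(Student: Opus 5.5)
The proof is an induction on $j$, with each new stage supplied by Lemma~\ref{successor}. The base case $j=1$ needs a starting stage, and I will build it by hand. Fix a point $x\in\operatorname{int}B$ and choose a small embedded PL rank-two rose $\Gamma_1\subset\operatorname{int}B$ with wedge vertex $x$. Two small disjoint PL circles meeting at $x$, inside a chart ball, will do. Next, triangulate $S^n$ so that $\Gamma_1$ is the polyhedron of a full subcomplex $L_1$; fullness can be obtained by a derived subdivision, and $L_1$ is then nonempty and proper. Let $f_1:=f_{L_1}$. Lemma~\ref{lem:sublevel} then shows that $f_1^{-1}([0,c])$ is a closed regular neighborhood of $\Gamma_1$ with boundary $f_1^{-1}(c)$ for every $c\in(0,1)$. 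This gives property~(6) at level one.

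To get properties~(2) and~(3) at level one, I repeat Step~2 of the proof of Lemma~\ref{successor} verbatim. Apply Lemma~\ref{lem:thin} to $\Gamma_1\subset W:=\operatorname{int}B$ with $\delta=\eta=\varepsilon=2^{-1}$ and $\tau=2$. This produces a ball family $\mathcal U_1$ and an open set $O_1$ with $\Gamma_1\subset O_1\Subset\bigcup\mathcal U_1$ and $\operatorname{cap}(O_1)<1/2$. Choose $t_1<\min_{S^n\setminus O_1}f_1$ and set $V_1:=f_1^{-1}([0,t_1])$. Then $V_1\subset O_1$, and monotonicity of capacity gives $\operatorname{cap}(V_1)<1/2$. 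Also $\Gamma_1\subset\operatorname{int}V_1$, so $x\in\operatorname{int}V_1$. For the marking, $V_1$ deformation retracts onto $\Gamma_1$ by \cite[Theorem~3.16]{zbMATH01714503}, and Seifert--van Kampen identifies $\pi_1(\Gamma_1,x)$ with the free group on the two oriented petals. So I define $\mathfrak m_1(a),\mathfrak m_1(b)$ to be the classes of the petal loops, which gives~(5). Properties~(1) and~(4) only concern pairs of consecutive stages, so nothing is needed for them yet.

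For the inductive step, assume the data have been built through stage $j$. Apply Lemma~\ref{successor} with $H=V_j$ and $\mathfrak m=\mathfrak m_j$, using the rates $\varepsilon=\delta=\eta=2^{-(j+1)}$ and $\tau=1+\frac1{j+1}$. The hypotheses of the lemma hold: $V_j$ is a closed regular neighborhood of the embedded rose $\Gamma_j$, $V_j\subset\operatorname{int}B$, and $x\in\operatorname{int}V_j$. Set $V_{j+1}:=H'$, $\Gamma_{j+1}:=\Gamma'$, $\mathfrak m_{j+1}:=\mathfrak m'$, $\mathcal U_{j+1}:=\{U'_\ell\}$, $\mathcal T_{j+1}:=\mathcal T'$, $f_{j+1}:=f'$ and $t_{j+1}:=t'$.

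The conclusions of the lemma then give properties~(1)--(6) at level $j+1$:
\begin{itemize}
\item The lemma gives $H'\Subset\operatorname{int}H$, which is~(1).
\item Since $\operatorname{int}H\subset\operatorname{int}B$, we get $V_{j+1}\subset\operatorname{int}B$.
\item The wedge vertex is still $x$, and $x\in\operatorname{int}V_{j+1}$.
\item The lemma's capacity and ball-cover bounds at these rates are exactly~(2) and~(3).
\item The marking identity $i_*\circ\mathfrak m'=\mathfrak m\circ\omega$ is~(4).
\item The statements about petal representatives and about $f'$ are~(5) and~(6).
\end{itemize}
The common wedge point lies in $\bigcap_j\operatorname{int}V_j$, since each step keeps $x$ in the interior.

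For the consequence, let $k>j$. The inclusion $V_k\hookrightarrow V_j$ factors as $i_j\circ\cdots\circ i_{k-1}$. Composing the identities $(i_\ell)_*=\mathfrak m_\ell\circ\omega\circ\mathfrak m_{\ell+1}^{-1}$, which follow from~(4) by solving for $(i_\ell)_*$, the intermediate markings cancel. This gives $\mathfrak m_j\circ\omega^{k-j}\circ\mathfrak m_k^{-1}$. That map is injective because the markings are isomorphisms and $\omega$ is injective by Lemma~\ref{lem:endo}.

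The only step that is not pure bookkeeping is the base case, where properties~(2), (3) and~(6) must be produced without the successor lemma. My plan handles it by rerunning Step~2 of that lemma's proof. The one thing to check is that $W=\operatorname{int}B$ is an admissible open set for Lemma~\ref{lem:thin}, which it is.
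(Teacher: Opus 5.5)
Your proposal is correct and follows the paper's proof essentially step for step. In the base stage, the paper likewise takes an embedded rose, applies Lemma~\ref{lem:thin} with $(\tau,\delta,\eta,\varepsilon)=(2,2^{-1},2^{-1},2^{-1})$, uses a full triangulation, and takes a sublevel set $f_1^{-1}([0,t_1])\subset O_1$. The inductive stage is Lemma~\ref{successor} with the rates $2^{-(j+1)}$ and $\tau=1+\frac1{j+1}$, and the composite formula follows by telescoping~(4). The only difference is cosmetic: the paper applies Lemma~\ref{lem:thin} inside an auxiliary open PL ball $W_1$ with $\overline{W_1}\subset\operatorname{int}B$ rather than in $\operatorname{int}B$ itself, and this changes nothing in the argument.
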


The proof is inductive.  At each stage, Lemma~\ref{successor} constructs
in the preceding neighborhood a marked rose whose oriented petals
represent $\mathfrak{m}_j(\omega(a))$ and $\mathfrak{m}_j(\omega(b))$.
Equivalently, its marking satisfies
$(i_j)_*\circ\mathfrak{m}_{j+1}=\mathfrak{m}_j\circ\omega$.  The lemma also
provides capacity, diameter, and Hausdorff-content bounds for a sufficiently
thin regular neighborhood.
The exponentially decaying parameters yield the estimates used for the
limit in Proposition~\ref{prop:limitset}.

\begin{proof}
Fix a compatible PL structure on $S^n$ and pass to suitable subdivisions
as necessary so that each finite PL polyhedron under consideration is a
subcomplex and $x$ is a vertex.  Let $(R,*)$ denote the standard rank-two
rose $R=R_a\vee R_b$, with oriented petal circles labeled $a,b$ and with
$*$ as their common vertex.  Triangulate each petal by at least three simplicial
edges.  We identify $\pi_1(R,*)$, freely generated by the two oriented
petal loops, with $F_2=\langle a,b\rangle$.

\smallskip\noindent\emph{Step 1 (the base stage).}
We prove the assertion for $j=1$.
Fix an open PL $n$-ball $W_1$ with $\overline{W_1}\subset\operatorname{int}B$
and a PL embedding $e_1:(R,*)\hookrightarrow W_1$.  For example, under
a PL homeomorphism $W_1\cong\mathbb R^n$, realize $R$ as the union of
the boundaries of two $2$-simplices in
$\mathbb R^2\subset\mathbb R^n$ meeting at a single common vertex, and
transport this compact $1$-polyhedron back to $W_1$.  Set
$\Gamma_1:=e_1(R)$ and $x:=e_1(*)$.
Lemma~\ref{lem:thin}, applied to $(\Gamma_1,W_1)$ with
$(\tau,\delta,\eta)=(2,2^{-1},2^{-1})$, and with
$\varepsilon=2^{-1}$ in part~\textup{(b)}, produces a finite family
$\mathcal{U}_1=\{U_{1,\ell}\}$ of open balls and an open set $O_1$ with
\[
\Gamma_1\subset O_1\Subset U^{(1)}:=\bigcup_\ell U_{1,\ell}\Subset W_1,
\qquad
\operatorname{diam}U_{1,\ell}<2^{-1},
\qquad
\sum_\ell(\operatorname{diam}U_{1,\ell})^{2}<2^{-1},
\]
together with $\operatorname{cap}(O_1)<2^{-1}$.  Choose a triangulation
of $S^n$ in which $\Gamma_1$ is the polyhedron of a nonempty proper
subcomplex, and pass to a derived subdivision.  Denote the resulting
triangulation by $\mathcal{T}_1$; the corresponding subcomplex with
polyhedron $\Gamma_1$ is nonempty, proper, and full.
Let
$f_1:|\mathcal{T}_1|\to[0,1]$ take value $0$ on the vertices
of that subcomplex and value $1$ on all other vertices.  By
Lemma~\ref{lem:sublevel}, $f_1^{-1}(0)=\Gamma_1$ and, for
every $c\in(0,1)$, $f_1^{-1}([0,c])$ is a closed regular neighborhood of
$\Gamma_1$ with
\(\partial\bigl(f_1^{-1}([0,c])\bigr)=f_1^{-1}(c)\).
Since $m_{O_1}:=\min_{S^n\setminus O_1}f_1>0$, choose
$0<t_1<\min\{1,m_{O_1}\}$ and set $V_1:=f_1^{-1}([0,t_1])$.  Then
$\Gamma_1\subset \operatorname{int}V_1\subset V_1\subset O_1$.
Thus $\mathcal{T}_1$, $f_1$, and $t_1$ are the data required in
property~\textup{(6)} at $j=1$.
Moreover, $V_1$ collapses onto $\Gamma_1$ by
\cite[Theorem~3.16]{zbMATH01714503}; hence the inclusion
$(\Gamma_1,x)\hookrightarrow(V_1,x)$ is a based homotopy equivalence.
In particular, $V_1\subset W_1\subset\operatorname{int}B$, $x\in\operatorname{int}V_1$,
and $V_1$ is connected.  Property~(2) at
$j=1$ follows from $V_1\subset O_1$ by monotonicity of the capacity;
property~(3) at $j=1$ holds because $V_1\subset O_1\subset U^{(1)}$, while
the diameter and content bounds are properties of the fixed family
$\mathcal{U}_1$. Define
\[
\mathfrak{m}_1:=(\iota_1)_*\circ(e_1)_*:F_2\longrightarrow\pi_1(V_1,x),
\qquad
\iota_1:(\Gamma_1,x)\hookrightarrow(V_1,x);
\]
here $(e_1)_*$ is an isomorphism because $e_1$ is a based PL homeomorphism
onto $\Gamma_1$, and $(\iota_1)_*$ is an isomorphism, so $\mathfrak{m}_1$ is
a marking.  By its definition, $\mathfrak{m}_1(a)$ and
$\mathfrak{m}_1(b)$ are represented by the correspondingly oriented
$a$- and $b$-petals of $\Gamma_1$, which proves property~\textup{(5)} at
$j=1$.  The triple $(V_1,x,\mathfrak{m}_1)$ therefore satisfies every hypothesis of
Lemma~\ref{successor}: $V_1\subset\operatorname{int}B$ is a closed regular neighborhood of the
embedded rank-two rose $\Gamma_1$, the point $x$ lies in
$\operatorname{int}V_1$, and $\mathfrak{m}_1:F_2\xrightarrow{\cong}\pi_1(V_1,x)$
is a marking.

\smallskip\noindent\emph{Step 2 (the inductive stage).}
Let $j\ge1$ and suppose stage $j$ has been constructed: an embedded PL
rank-two rose $\Gamma_j$ with wedge vertex $x$, a closed regular neighborhood
$V_j\subset\operatorname{int}B$ of $\Gamma_j$ with $x\in\operatorname{int}V_j$, and a marking
$\mathfrak{m}_j:F_2\xrightarrow{\cong}\pi_1(V_j,x)$.  Properties~(2) and~(3) at
index $j$ are verified when the corresponding stage is constructed: in
Step~1 for $j=1$ and in the present step for $j\geq2$.  These data are
exactly the hypotheses
of Lemma~\ref{successor} for $(H,x,\mathfrak{m})=(V_j,x,\mathfrak{m}_j)$; apply it with the
parameters
\(\varepsilon=\delta=\eta=2^{-(j+1)}\)
and \(\tau=1+\tfrac1{j+1}\ (>1)\),
and name its output
\[
\begin{aligned}
\Gamma_{j+1}&:=\Gamma',
&\qquad V_{j+1}&:=H',
&\qquad \mathfrak{m}_{j+1}&:=\mathfrak{m}',\\
\mathcal{U}_{j+1}&:=\{U_{j+1,\ell}\}:=\{U'_\ell\},
&\qquad \mathcal{T}_{j+1}&:=\mathcal{T}',\\
f_{j+1}&:=f',
&\qquad t_{j+1}&:=t'.
\end{aligned}
\]
The conclusions of the lemma then read: $\Gamma_{j+1}\subset
\operatorname{int}V_j$ is an embedded rank-two PL rose with wedge vertex $x$;
$V_{j+1}$ is a closed regular neighborhood of $\Gamma_{j+1}$ with
$V_{j+1}\Subset\operatorname{int}V_j$, which is property~(1);
\[
\begin{aligned}
V_{j+1}&\subset\bigcup_\ell U_{j+1,\ell}
          \Subset\operatorname{int}V_j,\\
\operatorname{diam}U_{j+1,\ell}&<2^{-(j+1)},\\
\sum_\ell(\operatorname{diam}U_{j+1,\ell})^{1+\frac1{j+1}}
  &<2^{-(j+1)},
\end{aligned}
\]
which is property~(3) at index $j+1$, with the required content exponent
$1+\frac1{j+1}$.  Moreover,
$\operatorname{cap}(V_{j+1})<2^{-(j+1)}$, which is property~(2) at index
$j+1$.  Finally, for the inclusion
$i_j:V_{j+1}\hookrightarrow V_j$,
\((i_j)_*\circ\mathfrak{m}_{j+1}=\mathfrak{m}_j\circ\omega\),
which is property~(4) at index $j$ (the lemma's final assertion, injectivity
of $(i_j)_*$, is subsumed in Step~3 of the present proof).  The petal assertion of
Lemma~\ref{successor} gives property~\textup{(5)} at index $j+1$, and
its final auxiliary-data assertion gives property~\textup{(6)} at index
$j+1$.  To continue the recursion, we verify
that stage $j+1$ again satisfies the hypotheses of Lemma~\ref{successor}:
$V_{j+1}$ is a closed regular neighborhood of the embedded rose
$\Gamma_{j+1}$, and
$V_{j+1}\subset V_j\subset\operatorname{int}B$; the marking
$\mathfrak{m}_{j+1}$ is supplied by the lemma; and the lemma also gives
$x\in\Gamma_{j+1}\subset\operatorname{int}V_{j+1}$.  Hence
$x\in\bigcap_{j\geq1}\operatorname{int}V_j$.  Each $V_j$ is connected:
this was shown for $j=1$ in Step~1, and for $j\geq2$ it follows because
$V_j$ collapses onto the connected graph $\Gamma_j$
\cite[Theorem~3.16]{zbMATH01714503}.  This completes the recursion.

\smallskip\noindent\emph{Step 3 (composites and injectivity).}
Fix $j\ge1$ and write $\iota_j^k:V_k\hookrightarrow V_j$ for the inclusion whenever
$k>j$. We prove $(\iota_j^k)_*=\mathfrak{m}_j\circ\omega^{\,k-j}\circ\mathfrak{m}_k^{-1}$ by
induction on $k-j$. For $k=j+1$ this is property~(4), rearranged using the
invertibility of $\mathfrak{m}_{j+1}$: $(i_j)_*=\mathfrak{m}_j\circ\omega\circ\mathfrak{m}_{j+1}^{-1}$.
Assuming the identity for $k$, functoriality of $\pi_1$ applied to
$\iota_j^{k+1}=\iota_j^{k}\circ i_k$ gives
\[
(\iota_j^{k+1})_*
=(\iota_j^{k})_*\circ(i_k)_*
=\bigl(\mathfrak{m}_j\circ\omega^{\,k-j}\circ\mathfrak{m}_k^{-1}\bigr)\circ
 \bigl(\mathfrak{m}_k\circ\omega\circ\mathfrak{m}_{k+1}^{-1}\bigr)
=\mathfrak{m}_j\circ\omega^{\,k-j+1}\circ\mathfrak{m}_{k+1}^{-1}.
\]
Since $\omega$ is injective (Lemma~\ref{lem:endo}), so is every power
$\omega^{\,k-j}$, and pre- and post-composition with the isomorphisms
$\mathfrak{m}_k^{-1}$ and $\mathfrak{m}_j$ preserves injectivity; hence each $(\iota_j^k)_*$ is
injective.
\end{proof}

For the remainder of this section, fix such a tower and set
\[
 K:=\bigcap_{j\geq1}V_j,
 \qquad
 M:=S^n\setminus K.
\]

The analytic arguments below use only the properties of $K$ stated in the
next proposition.  The topological arguments additionally use the nesting
and the markings.

\begin{proposition}[The limit continuum]\label{prop:limitset}
$K$ is a nondegenerate continuum with
$K\subset V_{j+1}\subset\operatorname{int}V_j$ for all $j$, and
$\operatorname{cap}(K)=0$, $\mathcal{H}^1(K)>0$, and
$\dim_{\mathcal{H}}K=1$.
\end{proposition}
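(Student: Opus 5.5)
We prove the assertions in the order: continuum and nesting, capacity, upper dimension bound, and finally nondegeneracy together with $\mathcal{H}^1(K)>0$; the last step is the only delicate one.

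\emph{Continuum and nesting.} By Proposition~\ref{induction}(1), $V_{j+1}\Subset\operatorname{int}V_j$, so $K\subset V_{j+1}\subset\operatorname{int}V_j$ for every $j$. Each $V_j$ is compact and connected, since it collapses onto the connected rose $\Gamma_j$. Hence $K$ is a nested intersection of continua and therefore a nonempty continuum; it also contains the common wedge point $x$.

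\emph{Capacity.} Monotonicity of the fixed-chart capacity and Proposition~\ref{induction}(2) give $\operatorname{cap}(K)\le\operatorname{cap}(V_j)<2^{-j}$ for every $j$. Letting $j\to\infty$ yields $\operatorname{cap}(K)=0$.

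\emph{Upper dimension bound.} Fix $t>1$. For every $j$ with $1+1/j\le t$, the cover $\mathcal{U}_j$ of Proposition~\ref{induction}(3) has diameters $<2^{-j}\le1$. Consequently
\[
\mathcal{H}^t_{2^{-j}}(K)\le\sum_\ell(\operatorname{diam}U_{j,\ell})^{t}\le\sum_\ell(\operatorname{diam}U_{j,\ell})^{1+1/j}<2^{-j}.
\]
Letting $j\to\infty$ gives $\mathcal{H}^t(K)=0$, and hence $\dim_{\mathcal H}K\le1$.

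\emph{Nondegeneracy and the lower bound.} A nondegenerate continuum $K$ satisfies $\mathcal{H}^1(K)\ge\operatorname{diam}K>0$: the $1$-Lipschitz map $z\mapsto d(z,x)$ sends $K$ onto an interval of length comparable to $\operatorname{diam}K$, and Lipschitz maps do not increase $\mathcal{H}^1$. This gives $\dim_{\mathcal H}K\ge1$, and combined with the upper bound, $\dim_{\mathcal H}K=1$. Everything therefore reduces to showing $K\neq\{x\}$, which is the main obstacle, because the neighborhoods $V_j$ are squeezed arbitrarily thin.

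We obtain nondegeneracy from the markings. Suppose $K=\{x\}$. Then for every $j$ there is $k>j$ with $V_k$ contained in a small round ball $D\subset\operatorname{int}V_j$ about $x$. The inclusion $V_k\hookrightarrow V_j$ then factors through the contractible ball $D$, so it induces the trivial map on $\pi_1$. By Proposition~\ref{induction}, however, this map equals $\mathfrak{m}_j\circ\omega^{k-j}\circ\mathfrak{m}_k^{-1}$, which is injective on the nontrivial group $F_2$ by Lemma~\ref{lem:endo}. This contradiction shows that $K$ contains more than one point. The same argument gives a quantitative version: if $K$ lay in a small ball about $x$, that ball would eventually contain some $V_k$ and the same factorization would apply.
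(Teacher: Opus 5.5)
Your proof is correct and follows the paper's argument essentially step for step: nested continua, monotonicity of capacity, the content bound from Proposition~\ref{induction}(3) giving $\dim_{\mathcal H}K\le1$, and nondegeneracy by factoring $V_k\hookrightarrow V_j$ through a contractible ball, contradicting the injectivity of $\mathfrak{m}_j\circ\omega^{k-j}\circ\mathfrak{m}_k^{-1}$, followed by a $1$-Lipschitz distance map for $\mathcal{H}^1(K)>0$. One cosmetic point: basing the distance map at $x$ only yields $\mathcal{H}^1(K)\ge\tfrac12\operatorname{diam}K$; to get $\mathcal{H}^1(K)\ge\operatorname{diam}K$ as you state it, base the distance at an endpoint of a diameter, as the paper does (either bound suffices).
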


\begin{proof}
The sets $V_j$ are nonempty compact connected sets nested by inclusion,
so their intersection $K$ is nonempty and compact.  To prove
connectedness, suppose that $K=A\sqcup A'$ is a separation into nonempty
closed subsets.  The compact sets $A$ and $A'$ have disjoint open
neighborhoods $U$ and $U'$ in $S^n$.  The compact sets
$V_j\setminus(U\cup U')$ decrease with $j$ and have empty intersection, because
$K\subset U\cup U'$.  Hence the finite-intersection property gives
$V_j\setminus(U\cup U')=\varnothing$ for some $j$, so
$V_j\subset U\cup U'$.  Since $V_j$ contains $K$, it meets
both $U$ and $U'$, contrary to the connectedness of $V_j$.  Thus $K$ is
connected.

We next prove that $K$ is nondegenerate.  By
Proposition~\ref{induction}, $x\in\bigcap_j\operatorname{int}V_j\subset K$.
Suppose, to the contrary, that $K=\{x\}$.  Choose a contractible
coordinate ball $Q$ such that
$x\in Q\Subset\operatorname{int}V_1$.  The compact sets
$V_j\setminus Q$ decrease to $K\setminus Q=\varnothing$.  By the finite
intersection property, $V_k\setminus Q=\varnothing$ for some $k$.
Replacing $k$ by $\max\{k,2\}$ and using the nesting of the $V_j$, we
may assume that $k>1$.  Thus $V_k\subset Q$.  The based inclusion
$(V_k,x)\hookrightarrow(V_1,x)$ therefore factors through $(Q,x)$ and
hence induces the zero homomorphism on fundamental groups.  This
contradicts Proposition~\ref{induction}, because the induced
homomorphism is injective and $\pi_1(V_k,x)\cong F_2\neq1$.  Thus $K$
is nondegenerate.  Choose $z_0,z_1\in K$ realizing its positive
spherical diameter
$D_K:=\operatorname{diam}_{g_{st}}K$.  The image of the connected set
$K$ under the $1$-Lipschitz function
$z\mapsto d_{g_{st}}(z_0,z)$ is a connected subset of $[0,D_K]$
containing both endpoints, and hence is $[0,D_K]$.  Since Hausdorff
measure is nonincreasing under $1$-Lipschitz maps,
\(0<D_K=\mathcal{H}^1([0,D_K])\leq\mathcal{H}^1(K)\),
so $\dim_{\mathcal{H}}K\geq1$.

By monotonicity and Proposition~\ref{induction}(2),
\(0\leq\operatorname{cap}(K)\leq\operatorname{cap}(V_j)<2^{-j}\)
for every $j$, and therefore $\operatorname{cap}(K)=0$.  Fix
$\tau>1$ and take $j$ sufficiently large that
$p_j:=1+\frac1j<\tau$ and $2^{-j}<1$.  Put
$d_{j,\ell}:=\operatorname{diam}U_{j,\ell}$.  Since
$d_{j,\ell}<2^{-j}<1$, one has
$d_{j,\ell}^{\tau}\leq d_{j,\ell}^{p_j}$.  Proposition~\ref{induction}(3)
therefore gives
\[
\mathcal{H}^\tau_{2^{-j}}(K)
 \leq\sum_\ell d_{j,\ell}^{\tau}
 \leq\sum_\ell d_{j,\ell}^{p_j}
 <2^{-j}.
\]
Thus, for all sufficiently large $j$,
$\mathcal{H}^\tau_{2^{-j}}(K)<2^{-j}$.  Since $2^{-j}\downarrow0$, the
definition of Hausdorff measure gives $\mathcal{H}^\tau(K)=0$.  This holds for
every $\tau>1$, and hence $\dim_{\mathcal{H}}K\leq1$.  Together with
the preceding lower bound, this gives $\dim_{\mathcal{H}}K=1$.
\end{proof}

For $n\geq5$, the capacity conclusion also follows from the dimension
conclusion.  Indeed, then $s=(n-2)/2>1$, and the Lipschitz map
$\sigma|_B$ sends $K$ to a set satisfying
$\mathcal{H}^s(\sigma(K))=0$.  The comparison
$\Lambda_h\leq\mathcal{H}^s$ and the finite-critical-measure theorem used
in Step~2 of Lemma~\ref{lem:thin} therefore give
$\operatorname{cap}(K)=0$.  At the endpoint $n=4$, where $s=1$, the
Hausdorff-content estimate above gives no upper bound for
$\mathcal{H}^1(K)$ and, in particular, does not show that this measure is
finite.  Thus the identity $\dim_{\mathcal{H}}K=1$ alone gives no
capacity conclusion at the critical exponent.  The capacity-thinning
condition in Proposition~\ref{induction}(2) is therefore needed there.

\subsection{Topology of the pair \texorpdfstring{$(K,M)$}{(K,M)}}
This subsection proves that $M$ is contractible but not simply connected
at infinity, as required in Theorem~\ref{thm:counterexample-psc}, and
additionally records that $K$ is \v{C}ech-acyclic.  Two properties of the tower are
used.  Its nesting implies both the cofinality of the $V_j$ among
neighborhoods of $K$ and the exhaustion of $M$ by the $C_j$, as shown in
Lemma~\ref{cofinal}.  The two properties of $\omega$ established in
Lemma~\ref{lem:endo} are used, through the marking identity in
Proposition~\ref{induction}(4), for distinct purposes: the equality
$H_1(\omega)=0$ yields the vanishing results, whereas the injectivity of
$\omega$ yields the obstruction to simple connectivity at infinity.

\begin{lemma}[Cofinality and exhaustion]\label{cofinal}
Put $E_j:=S^n\setminus V_j$ and
$C_j:=S^n\setminus\operatorname{int}V_j$.  Then:
\textup{(a)} every open $U\supset K$ contains some $V_j$; the $V_j$
are cofinal among compact neighborhoods of $K$;
\textup{(b)} the $E_j$ are open and increase to $M$, and every compact
$D\subset M$ satisfies $D\subset E_j\subset C_j$ for some $j$;
\textup{(c)} the $C_j$ are compact and nested, exhaust $M$, and
$C_j\subset E_{j+1}\subset\operatorname{int}_M C_{j+1}$.
\end{lemma}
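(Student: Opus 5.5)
The plan is to prove the three parts in order, using only the nesting $V_{j+1}\Subset\operatorname{int}V_j$ from Proposition~\ref{induction}(1) together with the definition $K=\bigcap_j V_j$; the markings play no role here.

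For (a), let $U\supset K$ be open. The compact sets $V_j\setminus U$ decrease and their intersection is $K\setminus U=\varnothing$. By the finite intersection property some $V_j\setminus U$ is empty, so $V_j\subset U$. Cofinality among compact neighborhoods then follows from two facts: each $V_j$ is a compact neighborhood of $K$, because $K\subset V_{j+1}\subset\operatorname{int}V_j$; and every compact neighborhood $N$ of $K$ contains the open set $\operatorname{int}N\supset K$, hence contains some $V_j$.

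For (b), each $E_j$ is open because $V_j$ is compact. The sets $E_j$ increase because the $V_j$ decrease. Their union is $S^n\setminus\bigcap_jV_j=M$. Now let $D\subset M$ be compact. Then $S^n\setminus D$ is an open set containing $K$, so part (a) gives some $V_j\subset S^n\setminus D$, that is, $D\subset E_j$. The inclusion $E_j\subset C_j$ is immediate from $\operatorname{int}V_j\subset V_j$.

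For (c), each $C_j$ is closed in $S^n$, hence compact. The nesting $C_j\subset C_{j+1}$ follows from $\operatorname{int}V_{j+1}\subset\operatorname{int}V_j$. Each $C_j$ lies in $M$ because $K\subset\operatorname{int}V_j$. The inclusion $C_j\subset E_{j+1}$ holds because $V_{j+1}\subset\operatorname{int}V_j$. Next, $E_{j+1}$ is open in $S^n$, lies in $M$, and is contained in $C_{j+1}$. An open subset of $M$ contained in $C_{j+1}$ lies in $\operatorname{int}_MC_{j+1}$, so $E_{j+1}\subset\operatorname{int}_MC_{j+1}$. Finally, exhaustion follows from (b), since $M=\bigcup_jE_j\subset\bigcup_jC_j\subset M$.

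There is no real obstacle. The only point needing care is the compactness argument in (a), which every other part reduces to.
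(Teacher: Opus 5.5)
Your proposal is correct and follows the paper's proof essentially step for step. The paper likewise derives (a) from a Cantor/finite-intersection argument on the compact sets $V_j\setminus U$, obtains (b) by applying (a) to $S^n\setminus D$, and gets the inclusions in (c) from $V_{j+1}\subset\operatorname{int}V_j$ together with the openness of $E_{j+1}$ in $M$.
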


\begin{proof}
(a) Let $U\supset K$ be open.  Each $V_j\setminus U=V_j\cap(S^n
\setminus U)$ is a closed subset of the compact set $V_j$, hence compact,
and these sets decrease in $j$ because the $V_j$ do; their intersection is
$\bigl(\bigcap_jV_j\bigr)\setminus U=K\setminus U=\varnothing$.  By Cantor's
intersection theorem, a decreasing sequence of nonempty compacta has
nonempty intersection, so some $V_{j_0}\setminus U=\varnothing$, that is,
$V_{j_0}\subset U$.  Since $K\subset V_{j+1}\subset\operatorname{int}V_j$, each
$V_j$ is a compact neighborhood of $K$, and every neighborhood of $K$
contains an open set $U\supset K$ and hence some $V_{j_0}$.  Thus the
$V_j$ are cofinal.

(b) Each $E_j$ is open with $E_j\subset E_{j+1}$, and
$\bigcup_jE_j=S^n\setminus\bigcap_jV_j=M$.  Given compact
$D\subset M$, apply (a) to the open set
$U=S^n\setminus D\supset K$: some $V_j\subset S^n\setminus
D$, that is, $D\subset E_j$; and $E_j\subset C_j$ because
$\operatorname{int}V_j\subset V_j$.

(c) Each $C_j$ is compact, $C_j\subset C_{j+1}$ because
$\operatorname{int}V_{j+1}\subset\operatorname{int}V_j$, and $C_j\subset M$ because
$K\subset\operatorname{int}V_j$; by part~(b), the $C_j$ exhaust $M$.  Finally,
$V_{j+1}\subset\operatorname{int}V_j$ gives $C_j\cap V_{j+1}=\varnothing$, that
is, $C_j\subset E_{j+1}$; and $E_{j+1}$ is open in $S^n$, contained
in $M$ (as $K\subset V_{j+1}$) and in $C_{j+1}$, whence
$E_{j+1}\subset\operatorname{int}_M C_{j+1}$.
\end{proof}

The next lemma is where $H_1(\omega)=0$ enters the topology of
actual spaces: the universal-coefficient isomorphism is natural, so
the abstract algebra becomes a statement about restriction maps.

\begin{lemma}[Cohomology of the stages]\label{transfer}
$V_j\simeq S^1\vee S^1$, so
$\widetilde H^{\,r}(V_j;\mathbb{Z})=0$ for $r\ne1$ and
$H^1(V_j;\mathbb{Z})\cong\mathbb{Z}^2$; and every restriction
$(i_j)^*:H^1(V_j;\mathbb{Z})\to H^1(V_{j+1};\mathbb{Z})$ is zero.
\end{lemma}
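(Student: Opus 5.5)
The plan is to read everything off the spine. By Proposition~\ref{induction}(6), each $V_j$ is a closed regular neighborhood of the embedded rank-two rose $\Gamma_j$. A regular neighborhood collapses onto its spine \cite[Theorem~3.16]{zbMATH01714503}, so the inclusion $(\Gamma_j,x)\hookrightarrow(V_j,x)$ is a based homotopy equivalence. Since $\Gamma_j\cong R=S^1\vee S^1$, this gives $V_j\simeq S^1\vee S^1$. The cellular cohomology of the wedge of two circles then gives $\widetilde H^{\,r}(V_j;\mathbb Z)=0$ for $r\neq1$ and $H^1(V_j;\mathbb Z)\cong\mathbb Z^2$.

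For the restriction maps, I will use the natural universal coefficient isomorphism. Every $H_0$ here is free, so the Ext term vanishes and
\[
H^1(V;\mathbb Z)\cong\operatorname{Hom}\bigl(H_1(V;\mathbb Z),\mathbb Z\bigr),
\]
naturally in $V$. Under this identification, $(i_j)^*$ becomes precomposition with $(i_j)_*\colon H_1(V_{j+1};\mathbb Z)\to H_1(V_j;\mathbb Z)$. It therefore suffices to show that $(i_j)_*=0$ on $H_1$.

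Since each $V_j$ is path connected, the Hurewicz theorem identifies $H_1(V_j;\mathbb Z)$ naturally with $\pi_1(V_j,x)^{\mathrm{ab}}$. Thus $(i_j)_*$ on $H_1$ is the abelianization of $(i_j)_*$ on $\pi_1$. Proposition~\ref{induction}(4) gives
\[
(i_j)_*=\mathfrak m_j\circ\omega\circ\mathfrak m_{j+1}^{-1}
\]
on $\pi_1$. Abelianization is a functor, so on $H_1$ we obtain $\mathfrak m_j^{\mathrm{ab}}\circ H_1(\omega)\circ(\mathfrak m_{j+1}^{\mathrm{ab}})^{-1}$. Lemma~\ref{lem:endo} gives $H_1(\omega)=0$, so this composite is zero, and hence $(i_j)^*=0$.

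The only point requiring care is naturality: both the Hurewicz and universal coefficient identifications must be compatible with the maps induced by inclusions, with the fixed base point $x\in V_{j+1}$ used throughout. Both are standard naturality statements, so I expect no genuine obstacle.
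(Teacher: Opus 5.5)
Your proposal is correct and follows essentially the same route as the paper. You collapse $V_j$ onto its spine to get $V_j\simeq S^1\vee S^1$, apply naturality of the Hurewicz map to the marking identity of Proposition~\ref{induction}(4) together with $H_1(\omega)=0$, and then use the natural evaluation isomorphism $H^1\cong\operatorname{Hom}(H_1,\mathbb Z)$ (the Ext term vanishes since $H_0$ is free) to pass from $(i_j)_*=0$ to $(i_j)^*=0$. The only difference is cosmetic: you invert $\mathfrak m_{j+1}$ directly, while the paper instead uses surjectivity of the induced map on abelianizations.
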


\begin{proof}
Each $V_j$ is a closed regular neighborhood of the embedded rank-two rose
$\Gamma_j$, and a regular neighborhood collapses onto its spine
\cite[Theorem~3.16]{zbMATH01714503}.  The deformation retraction associated
with the collapse fixes $\Gamma_j$ pointwise, so the inclusion
$\Gamma_j\hookrightarrow V_j$ is a homotopy equivalence.  Since $\Gamma_j$
is PL homeomorphic to the standard rose, $V_j\simeq S^1\vee S^1$.  Thus
$H^1(V_j;\mathbb{Z})\cong\mathbb{Z}^2$,
$\widetilde H^{\,0}(V_j;\mathbb{Z})=0$ by connectedness, and
$H^r(V_j;\mathbb{Z})=0$ for $r\ge2$.

For a path-connected space, the Hurewicz homomorphism
$h:\pi_1(X,x)\to  H_1(X;\mathbb Z)$ is the abelianization and is
natural in based maps.  Write $h_j$ for
this map on $V_j$.  Since $H_1(V_j;\mathbb Z)$ is abelian, the composite
$h_j\circ\mathfrak{m}_j$ kills $[F_2,F_2]$ and descends to the isomorphism
\(\overline{\mathfrak{m}}_j:\mathbb Z^2=F_2^{\mathrm{ab}}
\xrightarrow{\ \cong\ }H_1(V_j;\mathbb Z)\)
induced on abelianizations by the group isomorphism $\mathfrak{m}_j$.  Apply
$h_j$ to the marking identity
$(i_j)_*\circ\mathfrak{m}_{j+1}=\mathfrak{m}_j\circ\omega$ of
Proposition~\ref{induction}(4): by naturality of $h$, the left side becomes
$(i_j)_*\circ h_{j+1}\circ\mathfrak{m}_{j+1}$, and both sides factor through the
surjection $F_2\to F_2^{\mathrm{ab}}$, giving
\((i_j)_*\circ\overline{\mathfrak{m}}_{j+1}=\overline{\mathfrak{m}}_j\circ H_1(\omega)\),
where $H_1(\omega):\mathbb{Z}^2\to\mathbb{Z}^2$ is the map induced by
$\omega$ on abelianizations.  By Lemma~\ref{lem:endo}, the images
$\omega(a)$ and $\omega(b)$ are commutators, so $H_1(\omega)=0$;
since $\overline{\mathfrak{m}}_{j+1}$ is surjective, $(i_j)_*=0$ on
$H_1(V_{j+1};\mathbb{Z})$.

The universal-coefficient sequence
\[
 0\to\operatorname{Ext}\bigl(H_0(X;\mathbb{Z}),\mathbb{Z}\bigr)
 \to H^1(X;\mathbb{Z})
 \xrightarrow{\ \mathrm{ev}\ }
 \operatorname{Hom}\bigl(H_1(X;\mathbb{Z}),\mathbb{Z}\bigr)\to0
\]
is natural in $X$; only the naturality of the
evaluation map $\mathrm{ev}$ is used, not the (non-natural) splitting.
Since $H_0(V_j;\mathbb{Z})$ and $H_0(V_{j+1};\mathbb{Z})$ are free, both
$\operatorname{Ext}$ terms vanish and $\mathrm{ev}$ is an isomorphism for
both spaces; under these identifications $(i_j)^*$ is precomposition with
$(i_j)_*$,
\[
 (i_j)^*=\operatorname{Hom}\bigl((i_j)_*,\mathbb{Z}\bigr):
 \operatorname{Hom}\bigl(H_1(V_j;\mathbb{Z}),\mathbb{Z}\bigr)
 \longrightarrow
 \operatorname{Hom}\bigl(H_1(V_{j+1};\mathbb{Z}),\mathbb{Z}\bigr).
\]
Since $(i_j)_*=0$, it follows that $(i_j)^*=0$.
\end{proof}

We next compute the cohomology groups of $K$.  For a compact subset
$A\subset S^n$, we use the standard neighborhood realization of \v{C}ech
cohomology
\[
 \check H^{\,r}(A;\mathbb Z)
 \cong
 \varinjlim_{U\supset A}H^r(U;\mathbb Z),
\]
where $U$ ranges over the open neighborhoods of $A$ in $S^n$, ordered by
reverse inclusion, and the transition homomorphisms are restrictions; see
\cite[p.~257]{zbMATH02103273}.  Thus the relevant construction is a
direct limit, and no inverse-limit derived term occurs.

\begin{proposition}[\v{C}ech-acyclicity of the limit continuum]\label{prop:cech}
$\widetilde{\check H}^{\,r}(K;\mathbb{Z})=0$ for all $r\ge0$.
\end{proposition}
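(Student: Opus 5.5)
The plan is to compute $\check H^{\,r}(K;\mathbb Z)$ through the neighborhood realization recalled just before the statement, using the tower $(V_j)$ as a cofinal system. By Lemma~\ref{cofinal}(a), every open $U\supset K$ contains some $V_j$, and each $V_j$ contains $K$ in its interior. Consequently the open neighborhoods $\operatorname{int}V_j$ are cofinal among open neighborhoods of $K$, so the direct limit over all $U$ may be computed along the sequence $\operatorname{int}V_1\supset\operatorname{int}V_2\supset\cdots$.

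To replace $\operatorname{int}V_j$ by $V_j$, I would use the sublevel description in Proposition~\ref{induction}(6). There $V_j=f_j^{-1}([0,t_j])$ and $\operatorname{int}V_j=f_j^{-1}([0,t_j))$. For $t_j<c<1$ with $c$ close to $t_j$, the sets $f_j^{-1}([0,c))$ are open regular neighborhoods. The same collapse argument applies to every level, since the punctured product structure of Lemma~\ref{lem:sublevel} pushes radially onto $\Gamma_j$. Hence the inclusions $\Gamma_j\hookrightarrow\operatorname{int}V_j\hookrightarrow V_j$ are homotopy equivalences. Alternatively, I would interleave $V_{j+1}\subset\operatorname{int}V_j\subset V_j$ and note that the composite restriction $H^r(V_j)\to H^r(V_{j+1})$ factors through $H^r(\operatorname{int}V_j)$. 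Either way, $\check H^{\,r}(K)\cong\varinjlim_j H^r(V_j;\mathbb Z)$ with restriction maps $(i_j)^*$, because a direct limit is unchanged under passage to an interleaved cofinal subsequence.

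The computation then follows from Lemma~\ref{transfer}. For $r\ge2$, every $H^r(V_j)$ vanishes, so the limit is zero. For $r=0$, each $V_j$ is connected, so the reduced groups vanish and the limit of copies of $\mathbb Z$ with identity maps gives $\check H^0(K)\cong\mathbb Z$; this is consistent with $K$ being a continuum, by Proposition~\ref{prop:limitset}. For $r=1$, every bonding map $(i_j)^*$ is zero. Hence any element represented at stage $j$ dies at stage $j+1$, and the direct limit is $0$.

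The main obstacle is the cofinality and interleaving bookkeeping: the neighborhood realization uses open sets, while Lemma~\ref{transfer} is stated for the compact $V_j$. I would handle this through the factorization $V_{j+1}\subset\operatorname{int}V_j\subset V_j$, which requires no homotopy identification of $\operatorname{int}V_j$. The direct system over the open sets $\operatorname{int}V_j$ then has bonding maps that factor through the zero maps $(i_j)^*$ in degree one. In degrees $r\ge2$ those bonding maps factor through $H^r(V_{j+1})=0$. In degree zero they factor through the isomorphisms $H^0(V_j)\cong\mathbb Z$, because each $\operatorname{int}V_j$ is connected, being an open set squeezed between connected compacta that all retract onto $\Gamma_j$.
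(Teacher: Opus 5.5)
Your proof is correct and follows the paper's argument: cofinality of the $\operatorname{int}V_j$, passage to $\varinjlim_j H^r(V_j;\mathbb Z)$ along the restrictions $(i_j)^*$, and then Lemma~\ref{transfer} degree by degree. The paper treats degree zero directly through the connectedness of $K$ rather than through the colimit, which is equivalent.

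The one real difference is in how you identify the two direct systems. You interleave $V_{j+1}\subset\operatorname{int}V_j\subset V_j$ and take a common cofinal subsequence, whereas the paper proves the homotopy equivalence $\operatorname{int}V_j\simeq V_j$ by a PL collar. The interleaving is a legitimate simplification. Two points in your last paragraph are imprecise, though neither matters for the argument. Without the homotopy identification, the bonding maps over the $\operatorname{int}V_j$ are only known to factor through a zero map as two-step composites, which factor through $(i_{j+1})^*$; that suffices for the colimit. Also, once you interleave, you do not need connectivity of $\operatorname{int}V_j$ at all.
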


\begin{proof}
Let $i_j:V_{j+1}\hookrightarrow V_j$ denote the bonding inclusion and
let $u_j:\operatorname{int}V_{j+1}\hookrightarrow
\operatorname{int}V_j$ be its restriction.  By construction,
$K\subset\operatorname{int}V_j$ for every $j$, and
Lemma~\ref{cofinal}\textup{(a)} shows that the decreasing sequence
$(\operatorname{int}V_j)_{j\geq1}$ is cofinal among the open neighborhoods
of $K$: if $U\supset K$ is open, then some $V_j\subset U$, and hence
$\operatorname{int}V_j\subset U$.  Since $V_j$ is a compact PL manifold
with boundary, the PL collaring theorem
\cite[Corollary~2.5]{zbMATH01714503} allows its boundary to be pushed
inward along a collar.  It follows that the inclusion
$\lambda_j:\operatorname{int}V_j\hookrightarrow V_j$ is a homotopy
equivalence.  Consequently the restriction
\(\rho_j^r:=\lambda_j^*:
H^r(V_j;\mathbb Z)\xrightarrow{\ \cong\ }
H^r(\operatorname{int}V_j;\mathbb Z)\)
is an isomorphism.  Functoriality of restriction gives
\(u_j^*\circ\rho_j^r=\rho_{j+1}^r\circ i_j^*\).
Thus the direct systems defined by the $V_j$ and by their interiors are
naturally isomorphic.  The neighborhood realization above and cofinality
therefore give
\[
\check H^{\,r}(K;\mathbb Z)
\cong
\varinjlim_j\Bigl(
H^{\,r}(V_1;\mathbb Z)\xrightarrow{\,i_1^*\,}
H^{\,r}(V_2;\mathbb Z)\xrightarrow{\,i_2^*\,}\cdots\Bigr).
\]
By Lemma~\ref{transfer}, every $V_j$ is connected, $H^{\,r}(V_j;\mathbb Z)=0$ for $r\geq2$, and $i_j^*:H^1(V_j;\mathbb Z)\to H^1(V_{j+1};\mathbb Z)$ is the zero
homomorphism. For $r\geq2$ the direct system consists of zero groups, so
$\check H^{\,r}(K;\mathbb Z)=0$.  For $r=1$ the system is
$\mathbb Z^2\xrightarrow{0}\mathbb Z^2\xrightarrow{0}\cdots$, and its
colimit is zero: the class in the colimit of an element at the $j$th stage
equals the class of its image at the $(j+1)$st stage, which is zero.  Thus
$\check H^{\,1}(K;\mathbb Z)=0$.

For $r=0$, Proposition~\ref{prop:limitset} shows that $K$ is nonempty and
connected.  Since $\check H^{\,0}(K;\mathbb Z)$ is the group of locally
constant integer-valued functions on $K$, and on a connected space these
are the constants, we have $\check H^{\,0}(K;\mathbb Z)\cong\mathbb Z$
and $\widetilde{\check H}^{\,0}(K;\mathbb Z)=0$.  Combining the three
cases proves $\widetilde{\check H}^{\,r}(K;\mathbb Z)=0$ for every
$r\geq0$.
\end{proof}

Proposition~\ref{prop:cech} records an intrinsic property of the limit
continuum $K$, but it is not needed for Theorem~\ref{thm:counterexample-psc}: the proof of
Proposition~\ref{prop:acyclic} below instead uses natural Alexander
duality at the finite stages and then passes to the direct limit.
Alternatively, compactum Alexander duality
\cite[pp.~255--257]{zbMATH02103273}, applied directly to
Proposition~\ref{prop:cech}, would also imply the acyclicity of $M$.  We
retain the finite-stage argument because it makes the transition maps
explicit.

In the next lemma the content is the naturality, not the isomorphism:
under the Alexander-duality identifications, the complement map
corresponds, up to the conventional sign, to the zero restriction map.
Thus the complements inherit vanishing transitions from
Lemma~\ref{transfer}.

\begin{lemma}[Natural Alexander duality]\label{duality}
Let
\(i_j:V_{j+1}\hookrightarrow V_j\)
and
\(\jmath_j:E_j\hookrightarrow E_{j+1}\)
denote the inclusions.  Fix an orientation class
$\mathfrak o\in H_n(S^n;\mathbb Z)$.  Alexander duality gives isomorphisms
\[
D_{j,i}:
\widetilde H_i(E_j;\mathbb Z)
\longrightarrow
\widetilde H^{\,n-i-1}(V_j;\mathbb Z),
\qquad i\geq0,
\]
such that, for signs $\varepsilon_i\in\{\pm1\}$ depending only on the
degree and the cap-product convention,
\[
D_{j+1,i}\circ(\jmath_j)_*
=
\varepsilon_i\,i_j^*\circ D_{j,i}.
\]
Consequently,
\begin{equation}\label{Ej-homology}
\widetilde H_i(E_j;\mathbb Z)\cong
\begin{cases}
\mathbb Z^2,&i=n-2,\\
0,&i\neq n-2,
\end{cases}
\end{equation}
and every transition
$(\jmath_j)_*:
\widetilde H_i(E_j;\mathbb Z)\to\widetilde H_i(E_{j+1};\mathbb Z)$
vanishes for $i\geq1$.
\end{lemma}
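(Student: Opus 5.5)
The plan is to derive everything from Alexander duality for compact polyhedra in $S^n$, applied to the pair $(V_j,E_j)$, and to use its naturality with respect to the inclusion $V_{j+1}\subset V_j$. Each $V_j$ is a compact PL submanifold, hence a compact ENR in $S^n$, so \v{C}ech and singular cohomology of $V_j$ agree. I would define $D_{j,i}$ as the standard composite
\[
\widetilde H_i(E_j)\xrightarrow{\ \partial^{-1}\ } H_{i+1}(S^n,E_j)\xleftarrow{\ \cong\ } H_{i+1}(U_j,U_j\setminus V_j)\xrightarrow{\ \cap\,\mathfrak o\ \text{dual}\ } \widetilde H^{\,n-i-1}(V_j).
\]
Here $U_j$ is a neighborhood of $V_j$, the middle arrow is excision, and the last arrow is the inverse of the duality isomorphism $\check H^{\,n-i-1}(V_j)\to H_{i+1}(S^n,S^n\setminus V_j)$ given by capping with the fundamental class. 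This is the form found in Hatcher, Theorem 3.44, and in Spanier.

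For naturality, the inclusion $V_{j+1}\subset V_j$ induces $E_j=S^n\setminus V_j\subset S^n\setminus V_{j+1}=E_{j+1}$. The long exact sequences of the pairs $(S^n,E_j)\to(S^n,E_{j+1})$ commute with the boundary maps. The duality map $H^{k}(V)\to H_{n-k}(S^n,S^n\setminus V)$, obtained by capping with the restriction of $\mathfrak o$, is natural for $V'\subset V$. Concretely, restriction $H^k(V)\to H^k(V')$ corresponds to the map of pairs $(S^n,S^n\setminus V)\to(S^n,S^n\setminus V')$, because cap product satisfies the projection formula and the restrictions of $\mathfrak o$ are compatible. Chasing the diagram gives $D_{j+1,i}\circ(\jmath_j)_*=\varepsilon_i\, i_j^*\circ D_{j,i}$. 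The sign $\varepsilon_i$ comes only from the sign convention relating $\partial$ and cap products, so it depends on the degree alone. I would cite the naturality statement in Spanier (or Dold, VIII.8) rather than rederive it, and say explicitly that only the sign bookkeeping is convention-dependent.

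The homology computation then follows from Lemma~\ref{transfer}. We have $\widetilde H^{\,r}(V_j)=\mathbb Z^2$ for $r=1$ and $0$ otherwise, with reduced cohomology in degree $0$ vanishing by connectedness. Therefore $\widetilde H_i(E_j)\cong\widetilde H^{\,n-i-1}(V_j)$ is $\mathbb Z^2$ exactly when $n-i-1=1$, that is, when $i=n-2$, and is $0$ otherwise; this is \eqref{Ej-homology}.

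For the transitions, all groups except $i=n-2$ vanish, so only that case needs argument. There, naturality gives $(\jmath_j)_*=\varepsilon\, D_{j+1}^{-1}\circ i_j^*\circ D_j$, and $i_j^*=0$ on $H^1$ by Lemma~\ref{transfer}; hence the transition is zero. The hypothesis $i\ge1$ is automatic here, since $n-2\ge2$.

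The main obstacle is verifying naturality with a consistent sign. The cleanest route is to use duality in its relative-homology form, so that both sides are maps induced by inclusions of pairs and the only sign arises from the connecting homomorphism.
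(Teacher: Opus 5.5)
Your proposal is correct and follows essentially the same route as the paper. You define $D_{j,i}$ as the inverse connecting homomorphism of $(S^n,E_j)$, followed by the cap-product duality isomorphism onto the direct-limit cohomology of $V_j$ and its identification with singular cohomology. Naturality then comes from naturality of $\partial$ and of the cap product for the compact pair $V_{j+1}\subset V_j$, and the group computation and vanishing of transitions come from Lemma~\ref{transfer}.

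The paper differs only in two places. It identifies $\bar H^{*}(V_j)$ with $H^{*}(V_j)$ concretely, via the cofinal sublevel neighborhoods of Lemma~\ref{lem:sublevel}, rather than by citing the ENR property. It also treats the degrees $i\geq n-1$ separately, and you should too: $\partial\colon H_{i+1}(S^n,E_j)\to\widetilde H_i(E_j)$ is not injective for $i=n-1$, so there $D_{j,i}$ should be taken to be the zero map between zero groups. With your definitions no sign actually arises in the chase, so $\varepsilon_i=+1$.
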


\begin{proof}
Each $V_j$ is a nonempty, locally contractible compact polyhedron and a
proper subset of $S^n$.  Thus the reduced-complement form of Alexander
duality \cite[Corollary~3.45]{zbMATH02103273} applies and, together with
Lemma~\ref{transfer}, gives
\eqref{Ej-homology}, where reduced cohomology in negative degrees is
understood to vanish.  In particular, taking $i=0$, each $E_j$ is
path-connected.  No naturality is needed for this step, as the groups are
computed from the isomorphism statement alone.

Fix $0\leq i\leq n-2$.  Since $n-i-1\geq1$, ordinary and reduced
cohomology agree canonically in the degree under consideration; we use
this identification below.  Since
$E_j=S^n\setminus V_j\subset E_{j+1}$ and $E_j$ is nonempty, the reduced
long exact sequence of the pair $(S^n,E_j)$ contains
\[
\widetilde H_{i+1}(S^n)\longrightarrow
H_{i+1}(S^n,E_j;\mathbb Z)
\xrightarrow{\ \partial_{j,i}\ }
\widetilde H_i(E_j;\mathbb Z)\longrightarrow
\widetilde H_i(S^n).
\]
Since both outer groups vanish, $\partial_{j,i}$ is an isomorphism.  The
cap-product construction in the proof of the compact-set pair-duality
theorem, together with the cap-product naturality formula
\cite[proof of Theorem~3.44 and pp.~248--249]{zbMATH02103273},
supplies an isomorphism
\[
\bar\gamma_{\mathfrak o,j,i+1}:
H_{i+1}(S^n,E_j;\mathbb Z)
\longrightarrow
\bar H^{\,n-i-1}(V_j;\mathbb Z),
\]
determined by the fixed class $\mathfrak o$ and natural with respect to
inclusions of compact pairs.  Here
\[
 \bar H^{\,r}(V_j;\mathbb Z)
 :=\varinjlim_{U\supset V_j}H^r(U;\mathbb Z),
\]
where the open neighborhoods of $V_j$ are ordered by reverse
inclusion and, for $U'\subset U$, the transition homomorphism is the
restriction $H^r(U;\mathbb Z)\to H^r(U';\mathbb Z)$.

Choose a finite triangulation $\mathcal S_j^0$ of $S^n$ in which $V_j$ is the
polyhedron of a subcomplex $\mathcal{L}_j^0$.  Put
\[
 \mathcal S_j:=\operatorname{sd}\mathcal S_j^0,
 \qquad
 \mathcal{L}_j:=\operatorname{sd}\mathcal{L}_j^0.
\]
Then $|\mathcal{L}_j|=V_j$, and $\mathcal{L}_j$ is full in $\mathcal S_j$.
Indeed, a simplex of $\operatorname{sd}\mathcal S_j^0$ is represented by a
chain of simplices of $\mathcal S_j^0$; if all its vertices belong to
$\operatorname{sd}\mathcal{L}_j^0$, every simplex in the chain belongs
to $\mathcal{L}_j^0$.  The subcomplex $\mathcal{L}_j$ is nonempty
because $\Gamma_j\subset V_j$, and it is proper because
$V_j\subset\operatorname{int}B\subsetneq S^n$.

Let $f_{\mathcal{L}_j}:|\mathcal S_j|=S^n\to [0,1]$ be the level
function of Lemma~\ref{lem:sublevel}; thus
$f_{\mathcal{L}_j}^{-1}(0)=V_j$.  For $k\geq1$, set
$c_k:=2^{-k},\qquad N_k:=f_{\mathcal{L}_j}^{-1}([0,c_k]).$
Lemma~\ref{lem:sublevel} shows that every $N_k$ is a closed regular
neighborhood of $V_j$, with
$\operatorname{int}N_k=f_{\mathcal{L}_j}^{-1}([0,c_k))$.
Since $c_{k+1}<c_k$,
\(V_j\subset\operatorname{int}N_{k+1}
\subset N_{k+1}
\subset\operatorname{int}N_k
\subset N_k\).
Moreover,
\(\bigcap_{k\geq1}N_k
=f_{\mathcal{L}_j}^{-1}\!\left(\bigcap_{k\geq1}[0,c_k]\right)
=f_{\mathcal{L}_j}^{-1}(0)=V_j\).

The open sets $\operatorname{int}N_k$ are cofinal among the open
neighborhoods of $V_j$.  To see this, let $U$ be such a neighborhood.
If $U\neq S^n$, then $S^n\setminus U$ is a nonempty compact set
disjoint from $f_{\mathcal{L}_j}^{-1}(0)$, and hence
$m_U:=\min_{S^n\setminus U}f_{\mathcal{L}_j}>0$.
For every $k$ with $c_k<m_U$, one has
$N_k\subset U$, and therefore $\operatorname{int}N_k\subset U$; the
case $U=S^n$ is immediate.

Each $N_k$ collapses onto $V_j$ by
\cite[Theorem~3.16]{zbMATH01714503}, so
$V_j\hookrightarrow N_k$ is a homotopy equivalence.  Since $N_k$ is a
compact PL manifold with boundary, a PL collar of $\partial N_k$
\cite[Corollary~2.5]{zbMATH01714503} shows that
$\operatorname{int}N_k\hookrightarrow N_k$ is also a homotopy
equivalence.  The two-out-of-three property applied to
\(V_j\hookrightarrow\operatorname{int}N_k\hookrightarrow N_k\)
therefore shows that, for every $r\geq0$, restriction induces an
isomorphism
\(\rho_k^r:
H^r(\operatorname{int}N_k;\mathbb Z)
\xrightarrow{\ \cong\ }
H^r(V_j;\mathbb Z)\).
For $k\geq1$, denote the transition restriction by
\(\operatorname{res}_{k,k+1}^r:
H^r(\operatorname{int}N_k;\mathbb Z)
\to H^r(\operatorname{int}N_{k+1};\mathbb Z)\).
Functoriality gives
\(\rho_{k+1}^r\circ\operatorname{res}_{k,k+1}^r=\rho_k^r\).
Since $\rho_k^r$ and $\rho_{k+1}^r$ are isomorphisms,
$\operatorname{res}_{k,k+1}^r$ is an isomorphism.  Cofinality
identifies $\bar H^{\,r}(V_j;\mathbb Z)$ with the colimit of the
sequential system
\(\bigl(H^r(\operatorname{int}N_k;\mathbb Z),
\operatorname{res}_{k,k+1}^r\bigr)_{k\geq1}\).
Under this
identification, the canonical restriction map
\(\tau_j^r:
\bar H^{\,r}(V_j;\mathbb Z)
\to  H^r(V_j;\mathbb Z)\)
is induced by the compatible family $(\rho_k^r)_k$ and is therefore
an isomorphism.  Taking $r=n-i-1$, write this map as
\(\tau_j:
\bar H^{\,n-i-1}(V_j;\mathbb Z)
\xrightarrow{\ \cong\ }
H^{\,n-i-1}(V_j;\mathbb Z)\).
Define
\[
D_{j,i}:=\tau_j\circ\bar\gamma_{\mathfrak o,j,i+1}\circ\partial_{j,i}^{-1},
\qquad 0\leq i\leq n-2.
\]
For $i\geq n-1$ both groups in the statement vanish, and we set
$D_{j,i}:=0$, the unique homomorphism, which is an isomorphism of zero
groups.

Let $\kappa_j:(S^n,E_j)\to(S^n,E_{j+1})$ denote the map
of pairs that is the identity on $S^n$, and let
$\bar\imath_j^{\;*}:\bar H^{\,r}(V_j;\mathbb Z)\to
\bar H^{\,r}(V_{j+1};\mathbb Z)$ be the canonical restriction: every
neighborhood of $V_j$ is a neighborhood of $V_{j+1}$.  For
$0\leq i\leq n-2$,
\begin{align*}
D_{j+1,i}\circ(\jmath_j)_*
&=\tau_{j+1}\circ\bar\gamma_{\mathfrak o,j+1,i+1}\circ\partial_{j+1,i}^{-1}
  \circ(\jmath_j)_*\\
&=\tau_{j+1}\circ\bar\gamma_{\mathfrak o,j+1,i+1}\circ(\kappa_j)_*
  \circ\partial_{j,i}^{-1}
  &&\text{(naturality of $\partial$)}\\
&=\varepsilon_i\,\tau_{j+1}\circ\bar\imath_j^{\;*}\circ\bar\gamma_{\mathfrak o,j,i+1}
  \circ\partial_{j,i}^{-1}
  &&\text{(cap-product naturality, same $\mathfrak o$)}\\
&=\varepsilon_i\,i_j^*\circ\tau_j\circ\bar\gamma_{\mathfrak o,j,i+1}\circ\partial_{j,i}^{-1}
  &&\text{(naturality of $\tau$)}\\
&=\varepsilon_i\,i_j^*\circ D_{j,i}.
\end{align*}
Here the first nontrivial equality is
$\partial_{j+1,i}\circ(\kappa_j)_*=(\jmath_j)_*\circ\partial_{j,i}$,
the naturality of connecting homomorphisms for the map of pairs
$\kappa_j$.  For $i\geq n-1$ every square consists of zero groups and
commutes trivially.  This proves the signed naturality identity for all
$i\geq0$.  Replacing $\mathfrak o$ by $-\mathfrak o$ multiplies
$\bar\gamma_{\mathfrak o}$, and hence both
vertical maps in each square, by $-1$, and therefore affects neither the
signed identity nor the conclusions below.

For $i\geq1$ with $i\neq n-2$ the source of $(\jmath_j)_*$ vanishes by
\eqref{Ej-homology}.  In the remaining degree the signed naturality
identity gives
\((\jmath_j)_*
=\varepsilon_{n-2}\,D_{j+1,n-2}^{-1}\circ i_j^*\circ D_{j,n-2}\),
and $i_j^*:H^1(V_j;\mathbb Z)\to H^1(V_{j+1};\mathbb Z)$ is zero by
Lemma~\ref{transfer}.  Hence $(\jmath_j)_*=0$ in every degree
$i\geq1$.
\end{proof}

\begin{proposition}[Acyclicity of the complement]\label{prop:acyclic}
For every $i\geq0$, $\widetilde H_i(M;\mathbb Z)=0$.
\end{proposition}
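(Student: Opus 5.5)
The plan is to compute $\widetilde H_i(M;\mathbb Z)$ as a direct limit over the exhaustion by the open sets $E_j=S^n\setminus V_j$, and then use the vanishing transitions from Lemma~\ref{duality}. By Lemma~\ref{cofinal}\textup{(b)}, the sets $E_j$ are open, increase with $j$, and have union $M$. Moreover, every compact subset of $M$ lies in some $E_j$. Since singular chains have compact support, the natural map
\[
\varinjlim_j\widetilde H_i(E_j;\mathbb Z)\longrightarrow\widetilde H_i(M;\mathbb Z),
\]
whose transition maps are $(\jmath_j)_*$, is an isomorphism. The argument is standard. For surjectivity, a cycle in $M$ has compact image and therefore lies in some $E_j$. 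For injectivity, a cycle in $E_j$ that bounds in $M$ bounds a chain with compact image, and that image lies in some $E_k$ with $k\ge j$. The same holds in reduced homology, using the augmented complexes.

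For $i\ge1$, Lemma~\ref{duality} shows that every transition $(\jmath_j)_*\colon\widetilde H_i(E_j)\to\widetilde H_i(E_{j+1})$ is zero. Hence every element of the colimit equals the class of its image at the next stage, which is $0$. So the colimit vanishes and $\widetilde H_i(M;\mathbb Z)=0$.

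For $i=0$, each $E_j$ is path-connected by \eqref{Ej-homology}, so $\widetilde H_0(E_j)=0$, and the colimit is again zero. Equivalently, an increasing union of path-connected open sets is path-connected, so $M$ is path-connected.

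There is one degree to check carefully. Because $n\ge4$, we have $n-2\ge2\ge1$, so the only nonzero group $\widetilde H_{n-2}(E_j)\cong\mathbb Z^2$ falls under the vanishing-transition clause of Lemma~\ref{duality}. The only mildly delicate step is the compact-support colimit identification, which rests on the exhaustion property in Lemma~\ref{cofinal}\textup{(b)}. Everything else is already contained in Lemma~\ref{duality}.
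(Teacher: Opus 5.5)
Your proposal is correct and follows essentially the same route as the paper. Both identify $H_i(M)$ with the direct limit over the exhaustion $E_j$ via the compact-support argument (the paper cites Hatcher's Proposition~3.33), kill positive degrees using the vanishing transitions of Lemma~\ref{duality}, and handle degree zero through path-connectedness of the $E_j$; the paper works in unreduced homology and then applies the augmentation, whereas you pass directly to reduced homology, and these are equivalent.
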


\begin{proof}
By Lemma~\ref{cofinal}(b), the sets $E_j$ form an
increasing sequence such that $M=\bigcup_{j\geq1}E_j$,
and every compact subset of $M$ is contained in some $E_j$.  Hence the
inclusions $E_j\hookrightarrow M$ and
$\jmath_j:E_j\hookrightarrow E_{j+1}$ induce a canonical isomorphism
\begin{equation*}
\Theta_i:
\varinjlim_j
\bigl(H_i(E_j;\mathbb Z),(\jmath_j)_*\bigr)
\longrightarrow
H_i(M;\mathbb Z)
\end{equation*}
for every $i\geq0$, by the direct-limit theorem for singular homology
\cite[Proposition~3.33]{zbMATH02103273}.

Now fix $i\geq1$.  In positive degrees reduced and unreduced homology
agree, and Lemma~\ref{duality} states that every transition
$(\jmath_j)_*: H_i(E_j;\mathbb Z)\to H_i(E_{j+1};\mathbb Z)$
is zero.  Consequently, the direct limit vanishes.  Hence
$H_i(M;\mathbb Z)=0$ for every $i\geq1$.

It remains to treat degree zero.  By \eqref{Ej-homology},
$\widetilde H_0(E_j;\mathbb Z)=0$, so every $E_j$ is path-connected.
Choose $p\in E_1$.  The inclusions are nested, so $p\in E_j$ for every
$j$, and $H_0(E_j;\mathbb Z)=\mathbb Z[p]$.
Since $\jmath_j$ fixes $p$, the induced map $(\jmath_j)_*$ sends $[p]$
to $[p]$.  The degree-zero direct system is therefore literally
\[
\mathbb Z[p]\xrightarrow{\mathrm{id}}\mathbb Z[p]
\xrightarrow{\mathrm{id}}\mathbb Z[p]
\xrightarrow{\mathrm{id}}\cdots,
\]
and its colimit is $\mathbb Z[p]$.  The canonical isomorphism $\Theta_0$
is induced by inclusions fixing $p$, so it identifies
$H_0(M;\mathbb Z)$ with $\mathbb Z[p]$.  The
augmentation $\varepsilon:H_0(M;\mathbb Z)\to\mathbb Z$
sends $[p]$ to $1$ and is therefore an isomorphism.  Hence its kernel,
which by definition is $\widetilde H_0(M;\mathbb Z)$, vanishes.  In
particular, $M$ is path connected.

Combining degree zero with the positive-degree calculation proves
$\widetilde H_i(M;\mathbb Z)=0$
for every $i\geq0$.

\end{proof}

The assumption $n\geq4$ enters sharply in the following lemma: pushing
a singular $2$-disk off a $1$-dimensional spine by
general position requires $2+1-n<0$.  We use the lemma to prove that
$M$ is contractible.

\begin{lemma}[Simply connected finite stages]\label{lem:sc}
Assume $n\geq4$.  For every $j$, the spaces
$S^n\setminus\Gamma_j$ and $C_j$ are path-connected and
simply connected.
\end{lemma}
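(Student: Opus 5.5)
The plan is to treat $S^n\setminus\Gamma_j$ first and then pass to $C_j$ through the product structure of Lemma~\ref{lem:sublevel}.

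\emph{Path-connectedness of $S^n\setminus\Gamma_j$.} Since $\Gamma_j$ is a finite $1$-dimensional polyhedron and $1\le n-2$, the non-separation theorem already used in the paper \cite[p.~48]{MR0006493} shows that $S^n\setminus\Gamma_j$ is connected. Being open in a manifold, it is path-connected.

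\emph{Simple connectivity of $S^n\setminus\Gamma_j$.} Fix a base point and a loop $\gamma\colon S^1\to S^n\setminus\Gamma_j$, and make $\gamma$ PL after a small homotopy inside the open set $S^n\setminus\Gamma_j$. Since $S^n$ is simply connected, $\gamma$ extends to a map $F\colon B^2\to S^n$. Approximate $F$ by a PL map relative to $\partial B^2$ and put it in general position with respect to the subpolyhedron $\Gamma_j$ of a common triangulation. General position gives
\[
\dim\bigl(F(B^2)\cap\Gamma_j\bigr)\le 2+1-n<0,
\]
so the image of $F$ misses $\Gamma_j$. To justify moving only the interior of the disk, I would apply a relative general-position statement of the type of Bryant's Theorem~4.3, already used in the proof of Lemma~\ref{successor}. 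Alternatively, one can shift the vertices of $\operatorname{int}B^2$ by small generic displacements while keeping $\partial B^2$ fixed; the boundary already avoids $\Gamma_j$ with positive distance, so a small perturbation preserves this. The resulting disk lies in $S^n\setminus\Gamma_j$, and hence $\pi_1(S^n\setminus\Gamma_j)=1$.

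\emph{Transfer to $C_j$.} By Proposition~\ref{induction}(6), $V_j=f_j^{-1}([0,t_j])$ and $\Gamma_j=f_j^{-1}(0)$. The homeomorphism \eqref{eq:punctured-sublevel-product} identifies $V_j\setminus\Gamma_j$ with $\partial V_j\times(0,1]$, with $\partial V_j$ corresponding to the level $1$. Using the reparametrization $R_u$ of that lemma, extended by the identity on $C_j=S^n\setminus\operatorname{int}V_j$, I obtain a strong deformation retraction of $S^n\setminus\Gamma_j$ onto $C_j$. The two pieces agree on $\partial V_j=f_j^{-1}(t_j)$, and the pasting lemma applies to the closed cover $\{f_j\le t_j\}\setminus\Gamma_j$, $\{f_j\ge t_j\}$. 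It follows that $C_j\simeq S^n\setminus\Gamma_j$, so $C_j$ is path-connected and simply connected.

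The main obstacle is making the relative general-position step rigorous. I expect a careful citation to handle it, namely Bryant's general position for maps from a $2$-dimensional polyhedron relative to a subpolyhedron on which the map already avoids the target. This is where the hypothesis $n\ge4$, equivalently $2+1-n<0$, enters.
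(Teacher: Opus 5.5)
Your proof is correct and follows essentially the paper's route. You obtain simple connectivity of $S^n\setminus\Gamma_j$ by relative PL general position with $2+1-n<0$, and then transfer it to $C_j$ through the punctured-sublevel product of Lemma~\ref{lem:sublevel}. For the general-position step, the paper uses Zeeman's relative simplicial approximation followed by Bryant's Theorem~4.2, an ambient push relative to $\gamma(S^1)$; that is the precise citation you are looking for, whereas in Lemma~\ref{successor} Theorem~4.3 is invoked only to control self-intersections of a map.

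The remaining differences are minor. You get path-connectedness from the Hurewicz--Wallman non-separation theorem rather than from general position for arcs. You also paste a single global strong deformation retraction of $S^n\setminus\Gamma_j$ onto $C_j$, exactly as the paper itself later does for \eqref{eq:newman-retraction}, instead of retracting each compact set $S^n\setminus\operatorname{int}N_\varepsilon$ onto $C_j$ and arguing by compactness of spheres and homotopies. This is a legitimate simplification.
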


\begin{proof}
Fix $j$ and set $X_j:=S^n\setminus\Gamma_j$.  All distances are
measured in the round metric on $S^n$.

We first record a compactly controlled retraction construction.  By
Proposition~\ref{induction}(6), there are a triangulation $\mathcal{T}_j$
of $S^n$ in which $\Gamma_j$ is the polyhedron of a nonempty, proper, full
subcomplex, the simplicial map
$f_j:|\mathcal{T}_j|=S^n\to[0,1]$ taking value $0$ on the vertices of
this subcomplex and value $1$ on all other vertices, and a number
$t_j\in(0,1)$ such that
$f_j^{-1}(0)=\Gamma_j,\qquad V_j=f_j^{-1}([0,t_j]),$
and, for every $c\in(0,1)$, the sublevel set
$N_c:=f_j^{-1}([0,c])$ is a closed regular neighborhood of $\Gamma_j$
with boundary $f_j^{-1}(c)$.  Lemma~\ref{lem:sublevel} also gives
$\operatorname{int}N_c=f_j^{-1}([0,c))
 \qquad(0<c<1).$
In particular,
$\operatorname{int}V_j=f_j^{-1}([0,t_j)),
 \qquad
 C_j=f_j^{-1}([t_j,1]).$
Fix $0<\varepsilon<t_j$, so that
$N_\varepsilon\subset f_j^{-1}([0,t_j))=\operatorname{int}V_j$, with
$\partial N_\varepsilon=f_j^{-1}(\varepsilon)$ and
$\partial V_j=f_j^{-1}(t_j)$.  Put
$W_\varepsilon:=f_j^{-1}\bigl([\varepsilon,t_j]\bigr),$ the compact slab bounded
by the levels $\varepsilon$ and $t_j$.  Apply
the homeomorphism $\Phi_{t_j}$ of
\eqref{eq:punctured-sublevel-product} to $f_j$.  Since its second
coordinate at $z$ is $f_j(z)/t_j$, its restriction gives a
homeomorphism of triples
\[
\bigl(W_\varepsilon;\,\partial N_\varepsilon,\,\partial V_j\bigr)
\cong
\bigl(\partial V_j\times[\varepsilon/t_j,1];\,
\partial V_j\times\{\varepsilon/t_j\},\,
\partial V_j\times\{1\}\bigr).
\]
Transporting the linear contraction of the second coordinate onto $1$
through this homeomorphism gives a strong deformation retraction of
$W_\varepsilon$ onto $\partial V_j$, fixing $\partial V_j$ pointwise.
Since
\[
S^n\setminus\operatorname{int}N_\varepsilon
=f_j^{-1}\bigl([\varepsilon,1]\bigr)
=W_\varepsilon\cup C_j,
\qquad
W_\varepsilon\cap C_j=f_j^{-1}(t_j)=\partial V_j,
\]
we may glue this homotopy to the identity homotopy on $C_j$: the two
agree on the overlap $(W_\varepsilon\cap C_j)\times I=\partial V_j\times I$,
because the first fixes $\partial V_j$ pointwise at every time; and
$W_\varepsilon\times I$ and $C_j\times I$ are closed in
$(S^n\setminus\operatorname{int}N_\varepsilon)\times I$, being
products of preimages of closed intervals under $f_j$ with $I$, so the
pasting lemma yields a continuous homotopy
$(R_{\varepsilon,u})_{u\in[0,1]}$ on
$S^n\setminus\operatorname{int}N_\varepsilon$.  It satisfies
\[
 R_{\varepsilon,0}=\mathrm{id},
 \qquad
 R_{\varepsilon,u}|_{C_j}=\mathrm{id}_{C_j}\quad(0\leq u\leq1),
 \qquad
 R_{\varepsilon,1}(W_\varepsilon)=\partial V_j\subset C_j.
\]
Hence $(R_{\varepsilon,u})_{u\in[0,1]}$ is a strong deformation
retraction of $S^n\setminus\operatorname{int}N_\varepsilon$ onto $C_j$; denote
its terminal retraction by $r_\varepsilon$, which fixes $C_j$
pointwise.  Finally, if $D\subset X_j$ is compact, then $f_j>0$ on $D$,
because $D\cap f_j^{-1}(0)=D\cap\Gamma_j=\varnothing$; compactness
gives $m_D:=\min_{x\in D}f_j(x)>0$, and any
$0<\varepsilon<\min\{t_j,m_D\}$ yields
$D\cap N_\varepsilon=\varnothing$, hence
$D\subset S^n\setminus\operatorname{int}N_\varepsilon$.

We first prove path connectivity of $X_j$.  Let $p_0,p_1\in X_j$; if
$p_0=p_1$, there is nothing to prove.  Otherwise choose a PL arc
$\zeta:I\to S^n$ with $\zeta(0)=p_0$ and $\zeta(1)=p_1$, and put
$A:=\zeta(\partial I)=\{p_0,p_1\}$.  Since $A\subset X_j$, one has
$A\cap\Gamma_j=\varnothing$.  Pass to a common subdivision in which
$A\subset\zeta(I)$ and $\Gamma_j$ are subpolyhedra, and apply the
general-position theorem \cite[Theorem~4.2]{zbMATH01714503} in the
ambient PL manifold $S^n$, with
$X=\zeta(I),\qquad X_0=A,\qquad Y=\Gamma_j,$
and with the constant control function $\eta_0\equiv1$.  Since
$\dim(\zeta(I)\setminus A)=1$ and $\dim\Gamma_j=1$, it gives an ambient
PL isotopy $\Psi_t:S^n\to S^n$, fixed on $A$, such that
\[
\begin{aligned}
 \dim\bigl(\Psi_1(\zeta(I)\setminus A)\cap\Gamma_j\bigr)
 &\leq\dim(\zeta(I)\setminus A)+\dim\Gamma_j-n\\
 &\leq1+1-n<0.
\end{aligned}
\]
Hence $\Psi_1(\zeta(I)\setminus A)\cap\Gamma_j=\varnothing$.  Because
$\Psi_1|_A=\mathrm{id}_A$ and $A\cap\Gamma_j=\varnothing$, we obtain
$\Psi_1(\zeta(I))\cap\Gamma_j=\varnothing$.  Thus
$\widetilde\zeta:=\Psi_1\circ\zeta$ is an arc in $X_j$ from $p_0$ to $p_1$.
This part uses only $1+1-n<0$, hence only $n\geq3$.

We next prove simple connectivity of $X_j$.  Let
$\gamma_0:S^1\to X_j$ be a loop and set
$d:=\operatorname{dist}\bigl(\gamma_0(S^1),\Gamma_j\bigr)>0$, by compactness
and disjointness.  Choose triangulations of \(S^1\) and \(S^n\), with
\(\Gamma_j\) a subcomplex of the latter.  After subdivision, the
controlled simplicial-approximation theorem
\cite[p.~222]{zbMATH01714503},
applied with control $d/2$, gives a free homotopy
$\mathcal A:S^1\times[0,1]\to S^n$ from $\gamma_0$ to a simplicial
(hence PL) loop $\gamma$.  Since every point track of $\mathcal A$ has
diameter less than $d/2$, the homotopy lies in $X_j$, and
$\operatorname{dist}\bigl(\gamma(S^1),\Gamma_j\bigr)\geq d/2>0$.

Since $n\geq2$, $\pi_1(S^n)=0$, so $\gamma$ extends to a
continuous map $F_0:D^2\to S^n$ with
$F_0|_{\partial D^2}=\gamma$.  With respect to the triangulations produced by the preceding step,
$\gamma$ is already simplicial. After extending the triangulation of $S^1$
over $D^2$, Zeeman's relative simplicial approximation theorem
\cite{zbMATH03195022} deforms
$F_0$, relative to $\partial D^2$, to a PL map
$F:D^2\to S^n$ such that $F|_{\partial D^2}=\gamma$.  No control on this
homotopy is needed, because only the terminal map is used.  Set
$Q:=F(D^2),\qquad Q_0:=\gamma(S^1).$
Then $Q\supseteq Q_0$, with both sets compact polyhedra,
$\dim(Q\setminus Q_0)\leq2$, and
$Q_0\cap\Gamma_j=\varnothing$.  Pass to a common subdivision of the
ambient PL structure in which $Q_0\subset Q$ and $\Gamma_j$ are
subpolyhedra.  If $Q=Q_0$, then
$Q\cap\Gamma_j=\varnothing$ already and we take
$\Psi_t:=\mathrm{id}$ for all $t\in[0,1]$;
otherwise apply \cite[Theorem~4.2]{zbMATH01714503} in the ambient PL
manifold $S^n$, with $X=Q$, $X_0=Q_0$, $Y=\Gamma_j$, and with the
constant control function $\eta_0\equiv1$.  It gives an ambient PL isotopy $\Psi_t$ of
$S^n$ which, being a push rel $Q_0$, fixes $Q_0$ pointwise
\cite[\S4]{zbMATH01714503}, and satisfies
\[
\begin{aligned}
 \dim\bigl(\Psi_1(Q\setminus Q_0)\cap\Gamma_j\bigr)
 &\leq\dim(Q\setminus Q_0)+\dim\Gamma_j-n\\
 &\leq2+1-n<0.
\end{aligned}
\]
Hence $\Psi_1(Q\setminus Q_0)\cap\Gamma_j=\varnothing$.
This is precisely where the hypothesis $n\geq4$ is used.  Since
$\Psi_1(Q)=\Psi_1(Q\setminus Q_0)\cup Q_0$ and
$Q_0\cap\Gamma_j=\varnothing$, we
conclude $\Psi_1(Q)\cap\Gamma_j=\varnothing$.  Consequently,
$G:=\Psi_1\circ F:D^2\to X_j$ is well defined and satisfies
$G|_{\partial D^2}=\Psi_1\circ\gamma=\gamma$.

The controlled homotopy $\mathcal A$ constructed above takes values in
$X_j$.  Identify $\partial D^2$ with $S^1\times\{1\}$ via the
identification $S^1=\partial D^2$ used in
$F|_{\partial D^2}=\gamma$.  The maps $\mathcal A$ and $G$ then agree on
their common domain.
They therefore paste to a continuous map
\[
 \bigl(S^1\times[0,1]\bigr)
 \cup_{\,S^1\times\{1\}=\partial D^2}D^2
 \longrightarrow X_j.
\]
The adjunction space on the left is homeomorphic to $D^2$, with
boundary corresponding to $S^1\times\{0\}$, and the restriction of the
pasted map to that boundary is $\gamma_0$.  Hence $\gamma_0$ is
null-homotopic in $X_j$.  Since $\gamma_0$ was arbitrary,
$\pi_1(X_j)$ is trivial; together with path connectivity, this proves
that $X_j$ is simply connected.

Finally, let $i:C_j\hookrightarrow X_j$ be the inclusion.  The
simplicial function $f_j$ attains the value $1$: it equals $1$ at
every vertex outside the full subcomplex triangulating $\Gamma_j$, and
such vertices exist, since otherwise fullness would force
$\Gamma_j=S^n$, which is impossible because
$\dim\Gamma_j=1<n$.  Hence
$\varnothing\neq f_j^{-1}(1)\subset f_j^{-1}([t_j,1])=C_j;$
fix a basepoint $x_0\in C_j$.

Every compact subset of $X_j$ lies in
$S^n\setminus\operatorname{int}N_\varepsilon$ for some
$0<\varepsilon<t_j$.  By the retraction construction established at the
beginning of this proof, $(R_{\varepsilon,u})_{u\in[0,1]}$ is a strong
deformation retraction of
$S^n\setminus\operatorname{int}N_\varepsilon$ onto $C_j$, with terminal
retraction $r_\varepsilon$, and
$R_{\varepsilon,u}|_{C_j}=\mathrm{id}_{C_j}$ for every $u\in[0,1]$.
In particular, $R_{\varepsilon,u}(x_0)=x_0$ for every $u\in[0,1]$.

Applying these retractions to representatives and homotopies, whose
images are compact, yields the following.  If
$g:(S^k,*)\to(X_j,x_0)$ is a based map with
$g(S^k)\subset S^n\setminus\operatorname{int}N_\varepsilon$, then
$(z,u)\mapsto R_{\varepsilon,u}(g(z))$ is a based homotopy in $X_j$
from $g$ to $r_\varepsilon\circ g$, which maps into $C_j$; hence
$i_*:\pi_k(C_j,x_0)\to\pi_k(X_j,x_0)$ is surjective for every
$k\geq1$.  If a based map $h:(S^k,*)\to(C_j,x_0)$ is null-homotopic in
$X_j$, composing the null-homotopy with $r_\varepsilon$, for
$\varepsilon$ so small that its compact image lies in
$S^n\setminus\operatorname{int}N_\varepsilon$, gives a based
null-homotopy of $r_\varepsilon\circ h=h$ in $C_j$; hence $i_*$ is
injective.  The same two arguments, applied to points and paths, show
that the inclusion induces a bijection
$\pi_0(C_j)\to\pi_0(X_j)$ of sets of path components.  Since $X_j$ is
path-connected and $C_j$ is nonempty, this bijection implies that
$C_j$ is path-connected.  The already established isomorphism
$\pi_1(C_j,x_0)\to\pi_1(X_j,x_0)$ then implies that $C_j$ is simply
connected, because $X_j$ is simply connected.
\end{proof}

\begin{proposition}[Contractibility]\label{prop:contract}
The manifold $M$ is contractible.
\end{proposition}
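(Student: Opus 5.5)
The plan is to combine simple connectivity of $M$ with the acyclicity from Proposition~\ref{prop:acyclic}, and then apply the Hurewicz and Whitehead theorems. Since $M=S^n\setminus K$ is an open subset of $S^n$, it is a smooth manifold and hence has the homotopy type of a CW complex. So it suffices to show that $M$ is weakly contractible.

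\emph{Step 1: $\pi_1(M)=0$.} Proposition~\ref{prop:acyclic} already shows that $M$ is path connected (its degree-zero part). Fix a basepoint $x_0\in C_1$. Let $\gamma$ be a loop in $M$ based at $x_0$. Its image is compact, so by Lemma~\ref{cofinal}(b),(c) it lies in some $C_j$, and since the $C_j$ are nested, $x_0\in C_j$. Lemma~\ref{lem:sc} says $C_j$ is simply connected. Hence $\gamma$ is null-homotopic in $C_j\subset M$, and therefore $\pi_1(M,x_0)=0$.

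\emph{Step 2: higher homotopy.} Suppose some $\pi_k(M)\neq0$, and take the least such $k\geq2$. Then $M$ is $(k-1)$-connected, so the Hurewicz theorem gives $\pi_k(M)\cong H_k(M;\mathbb Z)$. But Proposition~\ref{prop:acyclic} gives $\widetilde H_k(M;\mathbb Z)=0$, a contradiction. Hence all homotopy groups of $M$ vanish. Whitehead's theorem, applied to the map from $M$ to a point, shows that $M$ is contractible. This is the same argument used in the proof of Theorem~\ref{thm:intro-E} for $n=3$.

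The only real obstacle is Step 1, specifically the fact that every loop in $M$ lies in a single finite stage and can be contracted there. Lemma~\ref{cofinal} supplies the first part and Lemma~\ref{lem:sc} the second; note that $C_j$ is simply connected even though $V_j$ is not. Once Step 1 is in place, the rest is formal.
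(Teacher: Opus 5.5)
Your proposal is correct and follows the paper's proof essentially step for step: every loop lies in some simply connected stage $C_j$ (Lemmas~\ref{cofinal} and~\ref{lem:sc}), and then Hurewicz plus Whitehead, using the acyclicity of Proposition~\ref{prop:acyclic}, give contractibility. The only cosmetic difference is that the paper deduces path-connectedness from $M=\bigcup_j C_j$ with each $C_j$ path-connected, whereas you take it from $\widetilde H_0(M;\mathbb Z)=0$; both work.
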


\begin{proof}
Since $M=\bigcup_{j\geq1}C_j$
and every $C_j$ is path-connected by Lemma~\ref{lem:sc}, the space
$M$ is path-connected.  The image of every loop in $M$ is compact and
therefore lies in some $C_j$ by Lemma~\ref{cofinal}(b).  Since $C_j$ is
simply connected, the loop contracts in $C_j\subset M$.  Hence
$\pi_1(M)=0$.  Proposition~\ref{prop:acyclic} gives
$\widetilde H_k(M;\mathbb Z)=0$ for $k\geq0$.  If some higher
homotopy group of $M$ were nonzero, let $k\geq2$ be the least index
with $\pi_k(M)\ne0$.  Then $M$ would be $(k-1)$-connected, and the
Hurewicz theorem \cite[Theorem~4.32]{zbMATH02103273} would give an isomorphism
$\pi_k(M)\cong H_k(M;\mathbb Z)=0$, a contradiction.  Thus $M$ is
weakly contractible.  Since a smooth manifold has the homotopy type
of a CW complex, choose a CW complex \(Z_M\) and a homotopy equivalence
$Z_M\to M$.  Then the constant map $Z_M\to *$ induces isomorphisms on every
homotopy group, including \(\pi_0\).  Whitehead's theorem
\cite[Theorem~4.5]{zbMATH02103273} makes this map a homotopy equivalence.
Thus \(Z_M\), and hence $M$, is contractible.
\end{proof}

We now distinguish $M$ from $\mathbb R^n$ by examining its topology
at infinity.

\begin{proposition}[Not simply connected at infinity]\label{prop:sci}
The manifold $M$ is not simply connected at infinity.  Consequently,
$M\not\cong\mathbb R^n$.
\end{proposition}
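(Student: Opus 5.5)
We argue directly from the definition, using the exhaustion of Lemma~\ref{cofinal} and the markings of Proposition~\ref{induction}. Suppose, for contradiction, that $M$ is simply connected at infinity. Take the compact set $C:=C_1$. Simple connectivity at infinity then gives a compact $D\subset M$ such that every loop in $M\setminus D$ is null-homotopic in $M\setminus C_1$. Note that $M\setminus C_1=\operatorname{int}V_1\setminus K$. By Lemma~\ref{cofinal}(b), choose $k>1$ with $D\subset E_k$; then $\partial V_k\subset V_k\setminus K\subset M\setminus D$. It therefore suffices to exhibit a loop on $\partial V_k$, or in $\operatorname{int}V_k\setminus K$, that is not null-homotopic in $\operatorname{int}V_1\setminus K$.

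\textbf{Constructing the loop.} Let $\beta$ be the petal loop of $\Gamma_{k+1}$ representing $\mathfrak m_{k+1}(a)$. Push it off $\Gamma_{k+1}$ onto $\partial V_{k+1}$ using the product structure of Lemma~\ref{lem:sublevel}. The retraction $V_{k+1}\setminus\Gamma_{k+1}\to\partial V_{k+1}$ is a strong deformation retraction inside $V_{k+1}$, so the pushed loop $\beta'$ is homotopic to $\beta$ in $V_{k+1}$; a general-position shift makes it disjoint from the spine. Then $\beta'\subset\partial V_{k+1}\subset V_k\setminus K$, which lies far out in $M$. By the marking identity, its class in $\pi_1(V_1)$ is $\mathfrak m_1\omega^{k}(a)$. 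This class is nontrivial because $\omega$ is injective (Lemma~\ref{lem:endo}) and $a\ne1$.

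\textbf{The obstruction.} If $\beta'$ bounded a disk in $\operatorname{int}V_1\setminus K\subset V_1$, its class in $\pi_1(V_1)$ would be trivial, contradicting the previous step. Since a null-homotopy in $\operatorname{int}V_1\setminus K$ is in particular a null-homotopy in $V_1$, this contradiction is immediate: no argument about avoiding $K$ is needed, only the factoring $\operatorname{int}V_1\setminus K\hookrightarrow V_1$. Basepoint issues are harmless, because nontriviality of a conjugacy class in $\pi_1(V_1)$ is preserved under free homotopy and injectivity handles the rest. Hence $M$ is not simply connected at infinity. On the other hand, $\mathbb R^n$ is simply connected at infinity for $n\ge3$, since complements of large balls are $S^{n-1}\times(0,\infty)$. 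This property is a homeomorphism invariant, so $M\not\cong\mathbb R^n$.

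The main point to check carefully is the push of the petal loop into $\partial V_{k+1}\subset M$ while preserving its class in $\pi_1(V_{1})$. The map $\Phi_c$ from Lemma~\ref{lem:sublevel} gives the needed deformation once the loop is moved off $\Gamma_{k+1}$. Moving it off is possible by general position, because $1+1-n<0$; alternatively, one can take a nearby PL loop in $V_{k+1}\setminus\Gamma_{k+1}$ that is homotopic in $V_{k+1}$.
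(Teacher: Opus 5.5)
Your proof is correct and follows essentially the same route as the paper. You fix $C=C_1$ and use $M\setminus C=\operatorname{int}V_1\setminus K\subset V_1$. You move a petal loop off the spine by general position, which must precede the sublevel retraction, as you correctly note in your last paragraph. You then retract it onto the boundary of a deep stage and use injectivity of $\omega$ to show that its free homotopy class in $V_1$ is nontrivial.

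The only difference from the paper is a harmless index shift. You work with $\Gamma_{k+1}$ and $\partial V_{k+1}$, obtaining the class $\mathfrak m_1(\omega^{k}(a))$. The paper uses $\Gamma_k$ and $\partial V_k$ with $D\cap V_k=\varnothing$, obtaining $\mathfrak m_1(\omega^{k-1}(a))$.
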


The proof constructs, beyond each compact subset of $M$, a loop on the
boundary of a sufficiently deep neighborhood $V_k$.  The injectivity of
the bonding endomorphism $\omega$ implies that this loop remains
essential in the fixed outer neighborhood $V_1$.  Thus these loops cannot
be contracted outside a fixed compact subset of $M$.

\begin{proof}
Our convention includes one-endedness in simple connectivity at infinity.
Only the following necessary loop condition is used:
for every compact set \(C\subset X\), there is a compact set
\(D\subset X\), with \(C\subset D\), such that every loop in
\(X\setminus D\) is null-homotopic in \(X\setminus C\).  To prove that
\(M\) is not simply connected at infinity, it is enough to exhibit one
compact set \(C\subset M\) for which no such \(D\) exists.

Set $C:=C_1=S^n\setminus\operatorname{int}V_1$.  Since
$K\subset V_2\subset\operatorname{int}V_1$, the sets $C$ and $K$ are
disjoint.  Hence $C$ is a compact subset of $M=S^n\setminus K$.
Moreover, $M\setminus C=\operatorname{int}V_1\setminus K$.
Let \(D\subset M\) be an arbitrary compact set containing \(C\).
By Lemma~\ref{cofinal}(b), there is an index \(j\) such that
$D\subset S^n\setminus V_j$.
Because the sets $S^n\setminus V_j$ are increasing, we may
increase \(j\), if necessary, and choose \(k\geq2\) such that
$D\subset S^n\setminus V_k$.  Equivalently, $D\cap V_k=\varnothing$.
In particular, $\partial V_k\cap D=\varnothing$; below we construct the
required loop on $\partial V_k$.

Let $\alpha_k:(S^1,*)\to(\Gamma_k,x)\subset(V_k,x)$ parametrize the
oriented $a$-petal of the rank-two rose $\Gamma_k$.  By
Proposition~\ref{induction}(5),
$[\alpha_k]=\mathfrak{m}_k(a)\in\pi_1(V_k,x).$
The point $x$ belongs to $K$, because $x\in V_j$ for every $j$.
Consequently, $\alpha_k(S^1)\cap K\neq\varnothing$, so $\alpha_k$ is not
a loop in $M$.

We therefore first use
PL general position to move \(\alpha_k\) away from the spine
\(\Gamma_k\), and then use the punctured-sublevel product structure of
the regular neighborhood \(V_k\) to move the resulting loop onto
\(\partial V_k\), which is disjoint from \(K\).

Set $Z:=\alpha_k(S^1),\qquad Z_0:=\varnothing.$
By Proposition~\ref{induction}(6), $\Gamma_k$ is the polyhedron of a
nonempty, proper, full subcomplex of $\mathcal{T}_k$.  Hence
Lemma~\ref{lem:sublevel} applies and gives
\(Z=\alpha_k(S^1)\subset\Gamma_k=f_k^{-1}(0)
\subset f_k^{-1}([0,t_k))=\operatorname{int}V_k\).
The space $W:=\operatorname{int}V_k$ is a PL $n$-manifold without
boundary and therefore equals its own manifold interior.  Moreover,
$Z\supset Z_0$ and $\Gamma_k$ are compact PL polyhedra in $W$, with
$p_k:=\dim(Z\setminus Z_0)\leq1,
 \qquad
 q_k:=\dim\Gamma_k=1.$

For clarity, we record the special case of the controlled PL
general-position theorem used here.  Let
$\mathsf Z\supset\mathsf Z_0$ and $\mathsf Y$ be compact polyhedra
contained in the interior of a PL $n$-manifold $\mathsf W$, and fix a
metric $d_{\mathsf W}$ inducing the topology of $\mathsf W$.
For every $\varepsilon>0$, there is an $\varepsilon$-push
$(\Psi_t)_{0\leq t\leq1}$ of $\mathsf Z$ in $\mathsf W$, relative
to $\mathsf Z_0$, such that
\[
 \dim\bigl(\Psi_1(\mathsf Z\setminus\mathsf Z_0)
                 \cap\mathsf Y\bigr)
 \leq\dim(\mathsf Z\setminus\mathsf Z_0)+\dim\mathsf Y-n
\]
\cite[Theorem~4.2]{zbMATH01714503}.  Bryant states the theorem for
arbitrary polyhedra; compactness is imposed here only because it holds
in the application.  By the definition of a push
\cite[\S4]{zbMATH01714503}, $(\Psi_t)_{0\leq t\leq1}$ is an ambient
PL isotopy of $\mathsf W$ satisfying
\[
 \Psi_0=\operatorname{id}_{\mathsf W},
 \qquad
 \Psi_t|_{\mathsf Z_0}=\operatorname{id}_{\mathsf Z_0}
 \quad(0\leq t\leq1).
\]
For every $w\in\mathsf W$, the track
$\{\Psi_t(w):0\leq t\leq1\}$ has $d_{\mathsf W}$-diameter less than
$\varepsilon$, and $\Psi_t$ is the identity outside
\(\bigl\{w\in\mathsf W:
d_{\mathsf W}(w,\mathsf Z)<\varepsilon\bigr\}\).
In particular, the theorem does not require $\mathsf Z$ and
$\mathsf Y$ to be disjoint.

Apply this statement with
\[
\begin{aligned}
 \mathsf W&=\operatorname{int}V_k,\\
 d_{\mathsf W}
 &=d_{g_{st}}|_{\operatorname{int}V_k\times\operatorname{int}V_k},\\
 \mathsf Z&=\alpha_k(S^1),\qquad
 \mathsf Z_0=\varnothing,\\
 \mathsf Y&=\Gamma_k,\qquad \varepsilon=1.
\end{aligned}
\]
Let $(\Psi_t)_{0\leq t\leq1}$ denote the resulting ambient PL isotopy
of $\operatorname{int}V_k$.  Then $\Psi_0=\operatorname{id}$ and
\(\dim\bigl(\Psi_1(\alpha_k(S^1))\cap\Gamma_k\bigr)
\leq p_k+q_k-n\leq2-n<0\).
Thus $\Psi_1(\alpha_k(S^1))\cap\Gamma_k=\varnothing$.

Let $\beta_k:=\Psi_1\circ\alpha_k$.  Then
$\beta_k(S^1)\subset\operatorname{int}V_k\setminus\Gamma_k$.  The map
$(z,t)\mapsto \Psi_t(\alpha_k(z))$ is a free homotopy in
$\operatorname{int}V_k$ from $\alpha_k$ to $\beta_k$.  Consequently, the
free homotopy class of $\beta_k$ in $V_k$ is the conjugacy class of
$\mathfrak{m}_k(a)$.

Proposition~\ref{induction}(6) gives
$V_k=f_k^{-1}([0,t_k])$, $\Gamma_k=f_k^{-1}(0)$, and
$\partial V_k=f_k^{-1}(t_k)$ for the $0$--$1$ simplicial level function
$f_k$.  Applying the punctured-sublevel homeomorphism
\eqref{eq:punctured-sublevel-product} from Lemma~\ref{lem:sublevel}, with
$c=t_k$, gives a homeomorphism of pairs
\[
 (V_k\setminus\Gamma_k,\partial V_k)
 \cong
 (\partial V_k\times(0,1],\partial V_k\times\{1\}).
\]
Transporting the factor homotopy
$(z,\vartheta)\mapsto(z,(1-u)\vartheta+u)$ therefore gives a strong deformation
retraction
\[
R_{k,u}:V_k\setminus\Gamma_k
\longrightarrow
V_k\setminus\Gamma_k,
\qquad
0\leq u\leq1,
\]
such that
\[
R_{k,0}=\operatorname{id},
\qquad
R_{k,u}\big|_{\partial V_k}
=
\operatorname{id}_{\partial V_k},
\qquad
R_{k,1}(V_k\setminus\Gamma_k)=\partial V_k.
\]

Define $\delta_k:=R_{k,1}\circ\beta_k$.  Then
$\delta_k(S^1)\subset\partial V_k$.
Concatenating the free homotopy \(\Psi_t\circ\alpha_k\) from
\(\alpha_k\) to \(\beta_k\) with the homotopy
\(R_{k,t}\circ\beta_k\) from \(\beta_k\) to \(\delta_k\), we conclude
that \(\delta_k\) is freely homotopic to \(\alpha_k\) in \(V_k\).
Hence the free homotopy class of \(\delta_k\) in \(V_k\) is the
conjugacy class of $\mathfrak{m}_k(a)$.

We next verify that $\delta_k$ lies in $M\setminus D$.  Since
$\delta_k(S^1)\subset\partial V_k\subset V_k$
and \(D\cap V_k=\varnothing\), we have
$\delta_k(S^1)\cap D=\varnothing$.
On the other hand, the nesting of the tower gives
$K\subset V_{k+1}\subset\operatorname{int}V_k$, whereas
$\delta_k(S^1)\subset\partial V_k$.
Therefore $\delta_k(S^1)\cap K=\varnothing$.
Combining the two conclusions yields
$\delta_k(S^1)\subset S^n\setminus(D\cup K)=M\setminus D$.

Suppose, toward a contradiction, that $\delta_k$ were null-homotopic in
$M\setminus C$.  Since
$M\setminus C=\operatorname{int}V_1\setminus K\subset V_1$, the same null-homotopy
would make the free homotopy class of $\delta_k$ trivial in $V_1$.
Let $\iota_1^k:V_k\hookrightarrow V_1$ denote the inclusion.  By
Proposition~\ref{induction},
$(\iota_1^k)_*\circ\mathfrak{m}_k
=\mathfrak{m}_1\circ\omega^{\,k-1}.$
It follows that the free homotopy class of \(\delta_k\), when viewed
in \(V_1\), is the conjugacy class of
$\mathfrak{m}_1\bigl(\omega^{\,k-1}(a)\bigr)$.  By Lemma~\ref{lem:endo}, the
endomorphism $\omega:F_2\to F_2$ is injective.  Since $a\neq1$,
injectivity gives $\omega^{\,k-1}(a)\neq1$.
As $\mathfrak{m}_1$ is an isomorphism, it follows that
$\mathfrak{m}_1\bigl(\omega^{\,k-1}(a)\bigr)\neq1$.
The identity element is conjugate only to itself, so the conjugacy
class of this element is nontrivial.  Hence the free homotopy class of
$\delta_k$ in $V_1$ is nontrivial.  This contradicts the assumed
null-homotopy of \(\delta_k\) in \(M\setminus C\).

Therefore, for the fixed compact set $C=C_1$, every compact set
$D\subset M$ containing $C$ admits a loop
$\delta_k:S^1\to M\setminus D$ that is not null-homotopic in
$M\setminus C$.  Consequently, $M$ is not simply connected at infinity.

Simple connectivity at infinity, with the convention fixed above, is
invariant under homeomorphism.  Indeed, a homeomorphism carries compact
sets to compact sets and restricts to a homeomorphism of the corresponding
complements; hence it preserves both the inverse system of components of
complements, and therefore one-endedness, and the loop null-homotopy
condition.  The closed balls $\overline B(0,m)$ form a cofinal compact
exhaustion of $\mathbb R^n$, and
$\mathbb R^n\setminus\overline B(0,m)$ is connected for $n\geq2$; thus
$\mathbb R^n$ is one-ended.  If $n\geq3$ and
$C'\subset\mathbb R^n$ is compact, choose a closed ball $D'$ containing
$C'$.  Since $\mathbb R^n\setminus D'$ deformation retracts onto
$S^{n-1}$ and $\pi_1(S^{n-1})=0$, every loop in
$\mathbb R^n\setminus D'$ is null-homotopic there, hence also in
$\mathbb R^n\setminus C'$.  Therefore $\mathbb R^n$ is simply connected
at infinity for $n\geq3$.  Since $n\geq4$ here, it follows that
$M\not\cong\mathbb R^n$.
\end{proof}

\subsection{The metric and its shifts}

Our metric construction follows the scheme of Karakhanyan
\cite[Theorem~1.1]{zbMATH07873599}, who shows that $S^n\setminus K$ carries a
complete scalar-flat metric conformal to the round metric if and only
if the critical Bessel capacity of $K$ vanishes.  In the fixed chart of
this section, this condition is $\operatorname{cap}(K)=0$.  After
stereographic projection, our topological construction yields a
compact set $F\subset\mathbb R^n$ with
$\mathcal{C}_{1+2/n,\,n/2}^{\mathbb R^n}(F)=0$.  To construct the metric in
Theorem~\ref{thm:counterexample-psc}, we need a probability measure on $F$ whose
Newtonian potential gives infinite conformal length to every curve
approaching $F$.  This is governed by the Wolff potential of the
measure: its divergence at every point of $F$ will imply completeness,
whereas capacity zero initially provides only averaged divergence.

The following Evans-type theorem converts this averaged information
into pointwise divergence.  For the parameters of the conformal
problem, the statement appears in \cite[Proposition~2.1]{zbMATH07873599}.
The decisive compactness step there is only sketched; since pointwise
divergence of the Wolff potential is not, in general, preserved by
weak-\(*\) convergence (Remark~\ref{rem:B-compactness}), we give a
complete argument for arbitrary $(d,\alpha,q)$ with $q\ge2$, the range
needed here.

\begin{theorem}\label{thm:evans-wolff}
Let $d\ge1$, $\alpha>0$, and $2\le q<\infty$, with $\alpha q\le d$.
Set $p:=q/(q-1)$, $\theta:=p-1=1/(q-1)\in(0,1]$, and
$s:=d-\alpha q\ge0$.  For a finite positive Radon measure $\nu$ on
$\mathbb{R}^d$ write
\[
 \mathcal{W}^\nu(x)
 :=
 \int_0^1\left(\frac{\nu(B(x,r))}{r^s}\right)^{\theta}\frac{dr}{r}.
\]
If $E\subset\mathbb{R}^d$ is nonempty and compact with
$\mathcal{C}_{\alpha,q}^{\mathbb R^d}(E)=0$, then there is a Radon
probability measure $\mu$ with
\[
 \operatorname{supp}\mu=E,
 \qquad
 \mathcal{W}^\mu(x)=+\infty\quad\text{for every }x\in E.
\]
\end{theorem}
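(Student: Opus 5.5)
The plan follows the outline already announced in Step~3: first convert capacity zero into an averaged statement, then use a minimax argument to produce, for each truncation level, a single measure whose truncated potential is large on all of $E$ simultaneously, and finally sum a convex series. The averaged statement comes from the Hedberg--Wolff inequality. For a finite positive measure $\nu$, the dual energy $\int \mathcal{W}^\nu\,d\nu$ is comparable to $\|G_\alpha*\nu\|_{L^p}^p$. Since $\mathcal{C}_{\alpha,q}^{\mathbb R^d}(E)=0$, every nonzero measure $\nu$ on $E$ has infinite energy, so $\int\mathcal{W}^\nu\,d\nu=+\infty$ for every probability measure $\nu$ on $E$. The energy is also infinite when restricted to any relatively open piece $E\cap B(x_0,\rho)$ with $x_0\in E$, because capacity is monotone.

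To pass from averaged to pointwise divergence, I truncate and smooth. For $0<\epsilon<1$ set
\[
\mathcal{W}^\nu_\epsilon(x):=\int_\epsilon^1\Bigl(\frac{\int\chi_r(x-y)\,d\nu(y)}{r^s}\Bigr)^{\theta}\frac{dr}{r},
\]
where $\chi_r$ is a continuous cutoff with $1_{B(0,r/2)}\le\chi_r\le 1_{B(0,r)}$. This quantity is continuous in $x$ and weak-$*$ continuous in $\nu$. Because $\theta\le1$, the map $t\mapsto t^\theta$ is concave, so $\nu\mapsto\mathcal{W}^\nu_\epsilon(x)$ is concave on the simplex $\mathcal{P}(E)$ of probability measures on $E$. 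Consider the payoff $\Phi(\nu,\lambda):=\int\mathcal{W}^\nu_\epsilon\,d\lambda$ on the compact convex set $\mathcal{P}(E)\times\mathcal{P}(E)$. It is concave and upper semicontinuous in $\nu$, and linear and continuous in $\lambda$. Sion's minimax theorem therefore gives
\[
\max_\nu\min_\lambda\Phi=\min_\lambda\sup_\nu\Phi\ \ge\ \min_\lambda\Phi(\lambda,\lambda).
\]
As $\epsilon\downarrow0$, the quantity $\min_\lambda\Phi(\lambda,\lambda)$ tends to $+\infty$. I would argue this by contradiction using compactness of $\mathcal{P}(E)$ and lower semicontinuity in $\lambda$ of the monotone limit. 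The half-radius cutoff introduces only a constant change of scale in $\mathcal{W}$, so infinite energy still applies to any weak-$*$ limit $\lambda_*$. Consequently, for each $N$ there are $\epsilon_N$ and $\nu_N\in\mathcal{P}(E)$ with $\mathcal{W}^{\nu_N}_{\epsilon_N}\ge N$ at every point of $E$. Taking the minimum over Dirac masses $\lambda=\delta_x$ turns this into a pointwise bound, since $\min_\lambda\Phi(\nu,\lambda)=\min_{x\in E}\mathcal{W}^\nu_\epsilon(x)$. Finally, since $\mathcal{W}^\nu\ge c\,\mathcal{W}^\nu_\epsilon$, the same pointwise lower bound holds for the true Wolff potential.

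Next I assemble the measure. Set
\[
\mu:=\sum_N c_N\nu_N+\sum_k b_k\kappa_k,
\]
where the $\kappa_k$ are atoms at a countable dense subset of $E$, and the positive weights satisfy $\sum_N c_N+\sum_k b_k=1$. The atoms force $\operatorname{supp}\mu=E$. They do no harm to divergence, because the Wolff potential is monotone in the measure. On the divergence side, concavity cuts the wrong way for a sum, but homogeneity and monotonicity give
\[
\mathcal{W}^\mu\ \ge\ \mathcal{W}^{c_N\nu_N}\ =\ c_N^{\theta}\,\mathcal{W}^{\nu_N}\ \ge\ c_N^{\theta}N.
\]
I would choose $c_N:=2^{-N}\cdot$normalization and pass to the subsequence $N_j=4^{j}$, so that $c_{N_j}^\theta N_j\to\infty$ because $\theta\le1$. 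This gives $\mathcal{W}^\mu=+\infty$ on $E$.

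The main obstacle is the step showing $\min_\lambda\Phi(\lambda,\lambda)\to\infty$ as the truncation is removed. This is exactly where averaged infinite energy has to survive the weak-$*$ limit. I would handle it with the diagonal functional $\lambda\mapsto\int\mathcal{W}^\lambda_\epsilon\,d\lambda$, which increases as $\epsilon$ decreases. Each such functional is weak-$*$ continuous for fixed $\epsilon$, because the truncated kernel is continuous in both variables. So if the minima stayed bounded by $A$, compactness would produce a limit $\lambda_*$ with $\int\mathcal{W}^{\lambda_*}_{\epsilon}\,d\lambda_*\le A$ for every $\epsilon$. Monotone convergence would then give finite Wolff energy for $\lambda_*$, contradicting the Hedberg--Wolff consequence of capacity zero.
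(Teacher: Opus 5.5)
Your route is the paper's: Hedberg--Wolff gives $\int\mathcal{W}^\nu\,d\nu=+\infty$ on $\mathcal{P}(E)$, then continuous smoothed truncations, divergence of the diagonal minima by compactness and monotonicity in the truncation, Sion's theorem with Dirac masses for a bound uniform on $E$, and finally a convex series. Everything up to the last step is sound.

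\textbf{Gap: the assembly fails as written.} Suppose the measure $\nu_{N_j}$ that reaches level $N_j=4^j$ receives the weight $c_{N_j}=2^{-N_j}$. Then
\[
c_{N_j}^{\theta}N_j=2^{-\theta 4^j}\,4^j\longrightarrow 0 .
\]
More generally $\sup_N 2^{-N\theta}N<\infty$, so $\sum_N 2^{-N}\nu_N$ only yields a bounded lower bound for $\mathcal{W}^\mu$. No passage to a subsequence can rescue weights that decay exponentially in the very level they are attached to. The weight must decay more slowly than the attained level grows.

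\textbf{Fix.} Either of the following works.
\begin{itemize}
\item Put weight $2^{-j}$ on $\nu_{4^j}$. Then $2^{-j\theta}4^j=2^{(2-\theta)j}\ge 2^j$ since $\theta\le 1$.
\item As the paper does, use the minimax bound (which is arbitrarily large) to choose $\nu_k$ with $\mathcal{W}^{\nu_k}\ge k\,2^{k\theta}$ on $E$, and set $\mu=\sum_k 2^{-k}\nu_k$. Monotonicity and $\theta$-homogeneity then give $\mathcal{W}^\mu\ge 2^{-k\theta}\mathcal{W}^{\nu_k}\ge k$ for every $k$.
\end{itemize}
With this correction your argument is complete.

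The atoms at a dense subset are harmless but unnecessary. Once $\mathcal{W}^\mu\equiv+\infty$ on $E$, any $x\in E\setminus\operatorname{supp}\mu$ would satisfy $\mu(B(x,r))=0$ for small $r$, hence $\mathcal{W}^\mu(x)<\infty$. So full support is automatic, and this is how the paper concludes.
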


The full-support conclusion is included as part of the sharp
measure-theoretic statement.  The metric construction below uses only
$\mu(\mathbb{R}^d\setminus E)=0$ and
$\mathcal{W}^{\mu}(x)=+\infty$ for every $x\in E$.

The idea of the proof is as follows. By the Hedberg--Wolff inequality \cite[Theorem~1]{zbMATH03801897}, zero capacity forces every probability measure on $E$ to have infinite Wolff self-energy. Since the full Wolff potential is not stable under weak-\(*\) convergence, we replace it by finite jointly continuous smoothed truncations. Compactness then promotes pointwise divergence of the truncated self-energies to uniform divergence of their minima. Sion's minimax theorem \cite[Theorem~3.4]{zbMATH03133049}, using the concavity in the measure afforded by $\theta\leq1$, produces measures whose truncated potentials are uniformly large on $E$. A convex series of these measures then yields a probability measure whose full Wolff potential diverges at every point of $E$.

\begin{proof}
Throughout, $B(x,r)$ denotes the open ball in $\mathbb{R}^d$, and
$\mathcal{P}(E)$ denotes the set of Borel probability measures on the
compact set $E$; each $\nu\in\mathcal{P}(E)$ is also regarded as a
measure on $\mathbb{R}^d$ vanishing off $E$.

\smallskip\noindent\emph{Step 1 (the space $\mathcal{P}(E)$ and the
self-energy).}
We equip $\mathcal{P}(E)$ with the weak topology of
\cite[Ch.~II, \S6]{zbMATH03245885}: $\nu_k\to\nu$ means
$\int_Eg\,d\nu_k\to\int_Eg\,d\nu$ for every $g\in C(E)$.  By
\cite[Ch.~II, Thm.~5.8]{zbMATH03245885}, $\nu\mapsto\bigl(g\mapsto\int_Eg\,d\nu\bigr)$
identifies $\mathcal{P}(E)$ with the set of positive linear functionals
$\ell$ on $C(E)$ with $\ell(1)=1$, and under this identification the
weak topology is the restriction of the weak-\(*\) topology of $C(E)^*$.
Thus $\mathcal{P}(E)$ is a convex subset of the locally convex Hausdorff
space $C(E)^*$ endowed with its weak-\(*\) topology, and since $E$ is a compact metric
space, it is compact and metrizable
\cite[Ch.~II, Thm.~6.4]{zbMATH03245885}.  In particular, every sequence
in $\mathcal{P}(E)$ has a weak-\(*\) convergent subsequence, and
continuity of functions on $\mathcal{P}(E)$ may be tested along
sequences.

For a finite positive Radon measure $\nu$ on $\mathbb{R}^d$, the map
$(x,r)\longmapsto\nu(B(x,r))$ is lower semicontinuous on
$\mathbb{R}^d\times(0,\infty)$: if
$(x_k,r_k)\to(x,r)$ and
$L\subset B(x,r)$ is compact, then $L\subset B(x_k,r_k)$ for all large
$k$, so $\liminf_k\nu(B(x_k,r_k))\ge\nu(L)$, and inner regularity gives
$\liminf_k\nu(B(x_k,r_k))\ge\nu(B(x,r))$.  Consequently the integrand
of $\mathcal{W}^\nu$ is Borel on $\mathbb{R}^d\times(0,1]$,
$\mathcal{W}^\nu:\mathbb{R}^d\to[0,\infty]$ is Borel by Tonelli's
theorem, and the self-energy
$J(\nu):=\int_E\mathcal{W}^\nu\,d\nu\in[0,\infty]$ is well defined.
Three elementary properties of
$\nu\mapsto\mathcal{W}^\nu$ are used repeatedly.  It is
\emph{monotone}: $\nu\le\nu'$ as measures implies
$\nu(B(x,r))\le\nu'(B(x,r))$ for every ball, hence
$\mathcal{W}^\nu\le\mathcal{W}^{\nu'}$ pointwise.  It is
\emph{$\theta$-homogeneous}: $\mathcal{W}^{c\nu}=c^\theta\mathcal{W}^\nu$
for $c>0$.  Finally, it has \emph{finite tails}: since
$\nu(B(x,r))\le\nu(\mathbb{R}^d)$, for $0<a\le1$
\begin{equation}\label{eq:B-tail}
 \int_{a}^{1}\Bigl(\frac{\nu(B(x,r))}{r^s}\Bigr)^{\theta}\frac{dr}{r}
 \ \le\ \nu(\mathbb{R}^d)^\theta\,c(a),
 \qquad
 c(a):=\int_a^1r^{-s\theta-1}\,dr<\infty.
\end{equation}

\smallskip\noindent\emph{Step 2 (capacity zero forces infinite
self-energy).}
Suppose that $J(\nu)<\infty$ for some $\nu\in\mathcal{P}(E)$.  In the
notation of Hedberg--Wolff
\cite[pp.~161, 164]{zbMATH03801897}, their capacity exponent $q$ is
our $q$, and their conjugate exponent $p$, determined by
$\frac1p+\frac1q=1$, is our $p$.  In that notation,
\[
 W^{\nu}_{\alpha,q}(x)
 =\int_0^1\Bigl(\frac{\nu(B(x,r))}{r^{\,d-\alpha q}}\Bigr)^{p-1}\frac{dr}{r};
\]
since $s=d-\alpha q$ and $\theta=p-1$, this is $W^{\nu}_{\alpha,q}=\mathcal{W}^\nu$.
The Hedberg--Wolff inequality \cite[Theorem~1, p.~165]{zbMATH03801897}
applies, because $\nu$ is a
positive Radon measure and $1<q\le d/\alpha$, and gives a constant
$A=A(d,\alpha,q)$ with
\[
 \|G_\alpha*\nu\|_{L^p(\mathbb{R}^d)}^{p}
 \le A\int_{\mathbb{R}^d}W^{\nu}_{\alpha,q}\,d\nu
 =A\int_E\mathcal{W}^\nu\,d\nu
 =A\,J(\nu)<\infty,
\]
the middle equality because $\nu$ vanishes off $E$.  Since $G_\alpha>0$
and $\nu\neq0$, also $\|G_\alpha*\nu\|_{L^p}>0$.

Now let $f$ be admissible in the definition of
$\mathcal{C}_{\alpha,q}^{\mathbb R^d}(E)$, that is, $f\ge0$, $f\in L^q(\mathbb{R}^d)$,
and $G_\alpha*f\ge1$ everywhere on $E$.  By Tonelli's theorem (all
integrands are nonnegative), the symmetry $G_\alpha(x-y)=G_\alpha(y-x)$,
and H\"older's inequality,
\[
\begin{aligned}
1=\nu(E)
&\le \int_E(G_\alpha*f)\,d\nu
 =\int_E\int_{\mathbb R^d}G_\alpha(x-y)f(y)\,dy\,d\nu(x)\\
&=\int_{\mathbb R^d}f\,(G_\alpha*\nu)\,dy
 \le \|f\|_{L^q}\,\|G_\alpha*\nu\|_{L^p}.
\end{aligned}
\]
Consequently, $\|f\|_{L^q}^q\ge\|G_\alpha*\nu\|_{L^p}^{-q}$, and taking
the infimum over all admissible $f$,
\(\mathcal{C}_{\alpha,q}^{\mathbb R^d}(E)\ge\|G_\alpha*\nu\|_{L^p}^{-q}>0\),
contrary to the hypothesis.  Hence
\begin{equation}
 \mathcal{C}_{\alpha,q}^{\mathbb R^d}(E)=0
 \quad\Longrightarrow\quad
 J(\nu)=+\infty\qquad\text{for every }\nu\in\mathcal{P}(E).
 \label{eq:B-all-energies}
\end{equation}
The restriction $\alpha q\le d$ is invoked here to apply the
Hedberg--Wolff inequality; in the remaining steps we use only its
consequence $s=d-\alpha q\ge0$.

\smallskip\noindent\emph{Step 3 (smoothed truncated potentials).}
Fix once and for all a continuous $\chi:[0,\infty)\to[0,1]$ with
$\chi\equiv1$ on $[0,1]$ and $\chi\equiv0$ on $[2,\infty)$; being
continuous and constant outside a compact set, $\chi$ is uniformly
continuous.  For $\nu\in\mathcal{P}(E)$, $x\in\mathbb{R}^d$ and $r>0$
put
\[
 P_r\nu(x):=\int_E\chi\!\left(\frac{|x-y|}{r}\right)d\nu(y).
\]
Then:

\textup{(3a)} $\nu(B(x,r))\le P_r\nu(x)\le\nu(B(x,2r))\le1$, because
$\chi(|x-y|/r)=1$ for $|x-y|<r$, $=0$ for $|x-y|\ge2r$, and
$0\le\chi\le1$.

\textup{(3b)} $\nu\mapsto P_r\nu(x)$ is the restriction to
$\mathcal{P}(E)$ of a linear functional, hence affine.

\textup{(3c)} $(\nu,x,r)\mapsto P_r\nu(x)$ is jointly continuous on
$\mathcal{P}(E)\times\mathbb{R}^d\times(0,\infty)$.  Indeed, let
$(\nu_k,x_k,r_k)\to(\nu,x,r)$ and put $g_k(y):=\chi(|x_k-y|/r_k)$,
$g(y):=\chi(|x-y|/r)$.  Since $E$ is bounded and $r_k\to r>0$,
$\sup_{y\in E}\bigl||x_k-y|/r_k-|x-y|/r\bigr|\to0$, hence
$\|g_k-g\|_{C(E)}\to0$ by uniform continuity of $\chi$, and
\[
 |P_{r_k}\nu_k(x_k)-P_r\nu(x)|
 \le\|g_k-g\|_{C(E)}\,\nu_k(E)
 +\Bigl|\int_Eg\,d\nu_k-\int_Eg\,d\nu\Bigr|\longrightarrow0,
\]
the second term by weak-\(*\) convergence, as $g\in C(E)$.

For $m\in\mathbb{N}$, $\nu,\lambda\in\mathcal{P}(E)$ and
$x\in\mathbb{R}^d$ define
\[
 T^\nu_m(x):=\int_{2^{-m-2}}^{1/2}
 \left(\frac{P_r\nu(x)}{r^s}\right)^{\theta}\frac{dr}{r},
 \qquad
 F_m(\nu,\lambda):=\int_ET^\nu_m\,d\lambda,
 \qquad
 I_m(\nu):=F_m(\nu,\nu).
\]
These have the following properties.

\textup{(3d)} \emph{Boundedness:}
$0\le T^\nu_m(x)\le\int_{2^{-m-2}}^{1/2}r^{-s\theta-1}dr<\infty$, by
(3a).

\textup{(3e)} \emph{Monotonicity in $m$:} $T^\nu_m\le T^\nu_{m+1}$
pointwise, since the interval of integration grows and the integrand
is nonnegative; hence also $F_m\le F_{m+1}$ and $I_m\le I_{m+1}$.

\textup{(3f)} \emph{Joint continuity:} $(\nu,x)\mapsto T^\nu_m(x)$ is
continuous on $\mathcal{P}(E)\times\mathbb{R}^d$.  Indeed, the integrand
$(P_r\nu(x)/r^s)^\theta r^{-1}$ is jointly continuous in $(\nu,x,r)$ by
(3c) and the continuity of $t\mapsto t^\theta$ on $[0,\infty)$, and it
is bounded by $r^{-s\theta-1}\le2^{(m+2)(s\theta+1)}$ on the compact
interval $[2^{-m-2},\tfrac12]$; dominated convergence applies.

\textup{(3g)} \emph{Concavity in $\nu$:} for fixed $x$ and $m$, the map
$\nu\mapsto T^\nu_m(x)$ is concave on $\mathcal{P}(E)$.  Indeed, since $0<\theta\leq1$, the map $t\mapsto t^\theta$ is concave and nondecreasing on $[0,\infty)$; this is the only point at which $q\geq2$ is used. Hence, by (3b), its composition with the affine nonnegative map $\nu\mapsto P_r\nu(x)$ is concave in $\nu$, and integration against the positive measure $dr/r$ preserves concavity.

\textup{(3h)} \emph{Comparison with $\mathcal{W}$:} for all $\nu$, $m$,
$x$,
\begin{equation}\label{eq:B-TleW}
 T^\nu_m(x)\ \le\ 2^{s\theta}\,\mathcal{W}^\nu(x).
\end{equation}
Indeed, by the upper half of (3a) and the substitution $\rho=2r$ (so
that $r^s=2^{-s}\rho^s$ and $dr/r=d\rho/\rho$),
\[
 T^\nu_m(x)
 \le\int_{2^{-m-2}}^{1/2}\Bigl(\frac{\nu(B(x,2r))}{r^s}\Bigr)^{\theta}\frac{dr}{r}
 =2^{s\theta}\int_{2^{-m-1}}^{1}\Bigl(\frac{\nu(B(x,\rho))}{\rho^s}\Bigr)^{\theta}\frac{d\rho}{\rho}
 \le2^{s\theta}\,\mathcal{W}^\nu(x).
\]
This is the reason the truncation stops at $r=\tfrac12$: after
doubling, the range of $\rho$ stays inside $(0,1]$, the range of
integration of $\mathcal{W}^\nu$.

\textup{(3i)} \emph{Lower bound in the limit:} for every $\nu$ and
$x$, by the lower half of (3a), monotone convergence in $m$, and
\eqref{eq:B-tail} with $a=\tfrac12$,
\[
 \mathcal{W}^\nu(x)
 \ \le\ \int_0^{1/2}\Bigl(\frac{\nu(B(x,r))}{r^s}\Bigr)^{\theta}\frac{dr}{r}+c(\tfrac12)
 \ \le\ \lim_{m\to\infty}T^\nu_m(x)+c(\tfrac12).
\]

\smallskip\noindent\emph{Step 4 (joint continuity of $F_m$; the
minima of $I_m$ diverge).}
We first show that $F_m$ is jointly continuous on
$\mathcal{P}(E)\times\mathcal{P}(E)$.  Let
$(\nu_k,\lambda_k)\to(\nu,\lambda)$.  Then $T^{\nu_k}_m\to T^\nu_m$
uniformly on $E$: otherwise there would be $\varepsilon>0$ and
$x_k\in E$ with $|T^{\nu_k}_m(x_k)-T^\nu_m(x_k)|\ge\varepsilon$; after passing
to a subsequence with $x_k\to x\in E$, we obtain from (3f) that
$T^{\nu_k}_m(x_k)\to T^\nu_m(x)$ and $T^\nu_m(x_k)\to T^\nu_m(x)$, a
contradiction.  Hence
\[
 |F_m(\nu_k,\lambda_k)-F_m(\nu,\lambda)|
 \le\|T^{\nu_k}_m-T^\nu_m\|_{C(E)}
 +\Bigl|\int_ET^\nu_m\,d\lambda_k-\int_ET^\nu_m\,d\lambda\Bigr|
 \longrightarrow0,
\]
the last term by weak-\(*\) convergence, since $T^\nu_m\in C(E)$ by
(3f).  By (3g), $\nu\mapsto F_m(\nu,\lambda)$ is concave for fixed
$\lambda$; by definition, $\lambda\mapsto F_m(\nu,\lambda)$ is affine
for fixed $\nu$.  In particular, $I_m$ is continuous on the compact set
$\mathcal{P}(E)$, so
$a_m:=\min_{\nu\in\mathcal{P}(E)}I_m(\nu)$ is attained, and
$a_m\le a_{m+1}$ by (3e).  We claim that
\begin{equation}
 a_m\longrightarrow+\infty\qquad(m\to\infty).
 \label{eq:B-am}
\end{equation}
For each fixed $\nu\in\mathcal{P}(E)$, integration of (3i) against
$\nu$ gives
\(J(\nu)\le\int_E\lim_{m\to\infty}T^\nu_m\,d\nu+c(\tfrac12)\).
Since $J(\nu)=+\infty$ by \eqref{eq:B-all-energies}, it follows that
$\int_E\lim_mT^\nu_m\,d\nu=+\infty$, and the monotone convergence
theorem together with (3e) yields
\(I_m(\nu)\nearrow\int_E\lim_{m\to\infty}T^\nu_m\,d\nu=+\infty\);
thus $I_m\to+\infty$ pointwise on $\mathcal{P}(E)$.  Suppose
\eqref{eq:B-am} fails.  Since $(a_m)$ is nondecreasing, it is then
bounded, say $a_m\le A<\infty$ for all $m$, and there are
$\nu_m\in\mathcal{P}(E)$ with $I_m(\nu_m)\le A$.  By compactness and
metrizability, a subsequence $(\nu_{m_j})$ converges weak-\(*\) to some
$\nu\in\mathcal{P}(E)$.  Fix $l$.  For $m_j\ge l$, (3e) gives
$I_l(\nu_{m_j})\le I_{m_j}(\nu_{m_j})\le A$, and the continuity of
$I_l$ gives $I_l(\nu)=\lim_jI_l(\nu_{m_j})\le A$.  Letting
$l\to\infty$ contradicts $I_l(\nu)\to+\infty$.  This proves
\eqref{eq:B-am}.  Observe that compactness is applied only to
the continuous functionals $I_l$ with $l$ fixed, never to
$\mathcal{W}$.

\smallskip\noindent\emph{Step 5 (minimax: from the diagonal to a
uniform lower bound).}
For fixed $\lambda\in\mathcal{P}(E)$ the map $\nu\mapsto F_m(\nu,\lambda)$
is concave: $P_r\nu(x)$ depends affinely on $\nu$, $z\mapsto z^\theta$
is concave on $[0,\infty)$ because $0<\theta=\frac1{q-1}\le1$, and
concavity is preserved by the integrations against $dr/r$ and $d\lambda$
(this is (3g)).  For fixed $\nu$ the map
$\lambda\mapsto F_m(\nu,\lambda)=\int_ET^\nu_m\,d\lambda$ is affine,
hence convex.  Moreover, $F_m$ is real-valued and jointly continuous
on the compact convex set $\mathcal{P}(E)\times\mathcal{P}(E)$ (Step~4).
In particular, $F_m$ is upper semicontinuous and quasi-concave in $\nu$
and lower semicontinuous and quasi-convex in $\lambda$, so Sion's
minimax theorem \cite[Theorem~3.4]{zbMATH03133049} applies with $\nu$ as
the maximizing and $\lambda$ as the minimizing variable:
\[
 \sup_{\nu\in\mathcal{P}(E)}\,\inf_{\lambda\in\mathcal{P}(E)}F_m(\nu,\lambda)
 =\inf_{\lambda\in\mathcal{P}(E)}\,\sup_{\nu\in\mathcal{P}(E)}F_m(\nu,\lambda).
\]
All extrema are attained: the inner ones because $F_m$ is continuous
on the compact set $\mathcal{P}(E)$.  For completeness, joint continuity
on the compact product implies uniform continuity, and
\[
\left|\min_\lambda F_m(\nu,\lambda)
      -\min_\lambda F_m(\nu',\lambda)\right|
\leq
\sup_\lambda|F_m(\nu,\lambda)-F_m(\nu',\lambda)|.
\]
By uniform continuity, the right-hand side tends to zero as
\(\nu'\to\nu\).  The analogous estimate, with \(\max_\nu\) and the roles of
the variables reversed, proves continuity of the other value function.
Compactness therefore gives the outer extrema as well.  Thus
\[
 \max_{\nu\in\mathcal{P}(E)}\,\min_{\lambda\in\mathcal{P}(E)}F_m(\nu,\lambda)
 =\min_{\lambda\in\mathcal{P}(E)}\,\max_{\nu\in\mathcal{P}(E)}F_m(\nu,\lambda).
\]
For fixed $\nu$, continuity of $T^\nu_m$ and compactness of $E$ give a
point $x_\nu\in E$ with $T^\nu_m(x_\nu)=\min_{x\in E}T^\nu_m(x)$.  For
every $\lambda\in\mathcal{P}(E)$,
\(F_m(\nu,\lambda)=\int_ET^\nu_m\,d\lambda\ge\min_{x\in E}T^\nu_m(x)\),
with equality for $\lambda=\delta_{x_\nu}$.  Consequently
$\min_{\lambda\in\mathcal{P}(E)}F_m(\nu,\lambda)=\min_{x\in E}T^\nu_m(x)$,
and therefore
\begin{equation}\label{eq:B-minimax}
\begin{aligned}
 \max_{\nu\in\mathcal{P}(E)}\,\min_{x\in E}T^\nu_m(x)
 &=\min_{\lambda\in\mathcal{P}(E)}\,\max_{\nu\in\mathcal{P}(E)}F_m(\nu,\lambda)\\
 &\ge\min_{\lambda\in\mathcal{P}(E)}F_m(\lambda,\lambda)
 =\min_{\lambda\in\mathcal{P}(E)}I_m(\lambda)=a_m,
\end{aligned}
\end{equation}
the inequality because $\max_\nu F_m(\nu,\lambda)\ge F_m(\lambda,\lambda)$
for each $\lambda$.  Thus the diagonal quantity $a_m$, which diverges by
\eqref{eq:B-am}, bounds from below a quantity that is uniform in $x\in E$.

\smallskip\noindent\emph{Step 6 (assembly by a convex series).}
For each integer $k\geq1$, choose by \eqref{eq:B-am} an index $m(k)$ with
$a_{m(k)}\ge2^{s\theta}\,k\,2^{k\theta}$, and let
$\nu_k\in\mathcal{P}(E)$ attain the maximum in \eqref{eq:B-minimax}
for $m=m(k)$.  Then $T^{\nu_k}_{m(k)}(x)\ge2^{s\theta}k\,2^{k\theta}$
for every $x\in E$, and \eqref{eq:B-TleW} gives
\begin{equation}\label{eq:B-nuk}
 \mathcal{W}^{\nu_k}(x)\ \ge\ 2^{-s\theta}\,T^{\nu_k}_{m(k)}(x)
 \ \ge\ k\,2^{k\theta}
 \qquad\text{for every }x\in E.
\end{equation}
Set $\mu:=\sum_{k\ge1}2^{-k}\nu_k$.
The series converges in total variation, so $\mu$ is a positive Radon
measure with $\mu(E)=\sum_{k\geq1}2^{-k}=1$ and $\mu(\mathbb{R}^d\setminus E)=0$;
thus $\mu\in\mathcal{P}(E)$ and $\operatorname{supp}\mu\subset E$.
Since $\mu\ge2^{-k}\nu_k$ as measures, the monotonicity and
$\theta$-homogeneity of $\nu\mapsto\mathcal{W}^\nu$ (Step~1) and
\eqref{eq:B-nuk} give, for every $x\in E$ and every integer $k\geq1$,
\[
 \mathcal{W}^\mu(x)\ \ge\ \mathcal{W}^{2^{-k}\nu_k}(x)
 =2^{-k\theta}\,\mathcal{W}^{\nu_k}(x)\ \ge\ 2^{-k\theta}\,k\,2^{k\theta}=k.
\]
Hence $\mathcal{W}^\mu\equiv+\infty$ on $E$.  This termwise lower bound
avoids attempting to pass pointwise divergence through weak-\(*\)
convergence.

Finally, $\operatorname{supp}\mu=E$.  If $x\in E\setminus\operatorname{supp}\mu$,
there is $\rho>0$ with $\mu(B(x,\rho))=0$, hence $\mu(B(x,r))=0$ for
$0<r\le\rho$, and \eqref{eq:B-tail} with $a=\min\{\rho,1\}$ gives
$\mathcal{W}^\mu(x)\le c(\min\{\rho,1\})<\infty$, the integral over
$(0,a]$ being zero; this contradicts $\mathcal{W}^\mu(x)=+\infty$.
The proof is complete.
\end{proof}

\begin{remark}[Why compactness alone cannot work]\label{rem:B-compactness}
If $\nu_k\to\nu$ weak-\(*\) in $\mathcal{P}(E)$, then
$\nu(B(x,r))\le\liminf_k\nu_k(B(x,r))$ for every open ball
\cite[Ch.~II, Thm.~6.1(d)]{zbMATH03245885}, hence by Fatou's lemma
$\mathcal{W}^\nu(x)\le\liminf_k\mathcal{W}^{\nu_k}(x)$.  This one-sided
inequality does not transfer pointwise divergence to the weak-\(*\) limit.
For instance, suppose that $s\ge1$ and let
$E=[0,1]\times\{0\}^{d-1}\subset\mathbb{R}^d$.  Since
$\mathcal{H}^s(E)<\infty$, the finite-critical-measure theorem used in
the proof of Lemma~\ref{lem:thin} gives
$\mathcal{C}_{\alpha,q}^{\mathbb R^d}(E)=0$.  Let $\lambda$ be the normalized length
measure on $E$ and $\nu$ the normalized length measure on
$[\tfrac12,1]\times\{0\}^{d-1}$.  Since $\lambda(B(x,r))\ge r$ for $x\in E$ and
$r\le\tfrac12$, $\mathcal{W}^\lambda\ge\int_0^{1/2}r^{(1-s)\theta-1}dr=+\infty$
on $E$, so $\nu_k:=(1-2^{-k})\nu+2^{-k}\lambda$ satisfies
$\mathcal{W}^{\nu_k}\equiv+\infty$ on $E$ for every $k$; but $\nu_k\to\nu$,
and $\mathcal{W}^\nu(0)\le\int_{1/2}^1r^{-s\theta-1}dr<\infty$ because
$\nu(B(0,r))=0$ for $r\le\tfrac12$.  This is why Steps~3--6 apply
compactness only to the truncated functionals $T^\nu_m$ and assemble the
final measure by a convex series rather than by a limit.
\end{remark}

Theorem~\ref{thm:evans-wolff} is the sole measure-theoretic input: in the fixed
stereographic chart it yields a probability measure \(\mu\) carried by
\(F:=\sigma(K)\) such that
$\mathcal{W}^\mu\equiv+\infty$ on \(F\).
This divergence gives both the blow-up of the Newtonian potential on
\(F\) and the completeness of the conformal metric, while \(\mu(F)=1\)
controls the potential at infinity, yielding the smooth extension across
the pole and the global positive lower bound needed for negative shifts.
For the remainder of this section we use the sign convention
$\Delta_g:=\operatorname{div}_g\nabla$.  Thus the conformal Laplacian is
\[
 L_g:=-\frac{4(n-1)}{n-2}\Delta_g+\mathrm{Sc}_g,
\]
and its conformal covariance is
\begin{equation}\label{eq:conformal-covariance-metric}
\mathrm{Sc}_{w^{4/(n-2)}g}
=
w^{-(n+2)/(n-2)}L_gw
\qquad (w>0\ \text{smooth}).
\end{equation}
Karakhanyan's existence theorem \cite[Theorem~1.1]{zbMATH07873599} does
not provide these controls, and its existence direction relies on
\cite[Proposition~2.1]{zbMATH07873599}, whose decisive compactness step
is only sketched (Remark~\ref{rem:B-compactness}).  The following lemma
therefore adapts \cite[Theorem~1.1, implication
\textup{(ii)}\(\Rightarrow\)\textup{(i)},
equations~(4.1)--(4.7)]{zbMATH07873599}, retaining the arguments for
regularity at the pole, uniform blow-up at the singular set, and
completeness; the lemma after it bounds the conformal factor from below
through its potential representation and uses the linearity of \(L_g\)
to construct the shifted family.
It is stated for an arbitrary compact set \(E\) and pole \(p_\infty\);
it is applied with \((\Omega,E,p_\infty)=(M,K,p_\sigma)\), where
\(\sigma_{p_\sigma}=\sigma\) is the fixed chart.

\begin{lemma}[Controlled one-chart scalar-flat metric]\label{lem:onechart}
Let $n\ge4$, let $E\subset S^n$ be nonempty and compact, and
assume that $\Omega:=S^n\setminus E$
is connected.  Let $p_\infty\in\Omega$, and let
$\sigma_{p_\infty}:S^n\setminus\{p_\infty\}\to \mathbb{R}^n$ be the
stereographic projection with pole $p_\infty$.  Suppose that
$\mathcal{C}_{1+2/n,\,n/2}^{\mathbb R^n}\bigl(\sigma_{p_\infty}(E)\bigr)=0$.
Then there is a smooth function $\phi:\Omega\to(0,\infty)$ such that
$h:=\phi^{4/(n-2)}g_{st}\big|_\Omega$ is complete and scalar-flat.  Moreover,
\[
 \phi(x)\longrightarrow+\infty
 \quad\text{uniformly as }\;
 d_{g_{st}}(x,E)\longrightarrow0,
 \qquad\text{and}\qquad
 \phi(p_\infty)=2^{-(n-2)/2}.
\]
\end{lemma}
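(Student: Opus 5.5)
Set $F:=\sigma_{p_\infty}(E)\subset\mathbb R^n$, a nonempty compact set (since $p_\infty\notin E$). With the parameters \eqref{eq:parameters}, Theorem~\ref{thm:evans-wolff} (applied with $d=n$, $\alpha=1+2/n$, $q=n/2\ge2$, $\alpha q=(n+2)/2\le n$) gives a Radon probability measure $\mu$ carried by $F$ with $\mathcal W^\mu\equiv+\infty$ on $F$. Define the Newtonian potential $u(y):=\int_F|y-z|^{2-n}\,d\mu(z)$. It is smooth, positive and harmonic on $\mathbb R^n\setminus F$. Write $g_{st}=U_{\mathrm{rd}}^{4/(n-2)}g_E$ in the chart, with $U_{\mathrm{rd}}(y)=\bigl(2/(1+|y|^2)\bigr)^{(n-2)/2}$, and set $\phi:=u/U_{\mathrm{rd}}$ (pulled back by $\sigma_{p_\infty}$). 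Conformal covariance \eqref{eq:conformal-covariance-metric} gives $\phi^{4/(n-2)}g_{st}=u^{4/(n-2)}g_E$, which is scalar-flat because $\Delta u=0$.

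\textbf{Smooth extension across the pole.} The function $\phi$ extends smoothly and positively across $p_\infty$ via the Kelvin transform $\hat u(y):=|y|^{2-n}u(y/|y|^2)=\int_F|z|^{2-n}|y-z^*|^{2-n}\,d\mu(z)$, where $z^*=z/|z|^2$. Near $y=0$ this is smooth, since $F$ is compact and bounded away from the origin after inversion. Because $\mu(F)=1$, the asymptotics $u(y)|y|^{n-2}\to1$ together with $U_{\mathrm{rd}}(y)|y|^{n-2}\to2^{(n-2)/2}$ give $\phi(p_\infty)=2^{-(n-2)/2}$.

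\textbf{Blow-up and completeness.} Both follow from a dyadic lower bound. For $y\notin F$, let $x\in F$ be a nearest point and set $r:=|y-x|$. Then
\[
u(y)\gtrsim\sum_{k}\mu(B(x,2^kr))\,(2^kr)^{2-n}.
\]
Since $(n-2)\theta=2$, i.e.\ $\theta=2/(n-2)$, we have $s\theta=1$, and therefore
\[
u^{2/(n-2)}=u^{\theta}\ \gtrsim\ \frac{1}{r}\sup_{\rho\ge r}\Bigl(\frac{\mu(B(x,\rho))}{\rho^{s}}\Bigr)^{\theta}\frac{r}{\rho}\cdot(\text{dyadic factors}).
\]
For a curve $\gamma$ tending to $F$, the conformal length is $\int u^{2/(n-2)}|\dot\gamma|$. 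The plan is to cover $\gamma$ by the dyadic annuli it must cross, each of Euclidean width comparable to $2^{-k}$, and bound its length from below by a sum comparable to
\[
\sum_k\bigl(\mu(B(x,2^{-k}))\,2^{ks}\bigr)^{\theta}\approx\mathcal W^\mu(x)=\infty,
\]
using concavity, or subadditivity of $t^\theta$ when $\theta\le1$, to pass the sum inside. The main obstacle is that the relevant center moves along the curve: the curve approaches some limit point $x_0\in F$, but nearest points vary. I would first handle curves converging to a single point $x_0$ (a curve of finite length does converge, by Cauchy-ness in $g_E$, since $u$ is bounded below). Then I would compare balls through $B(\gamma(t),2\rho)\supset B(x_0,\rho)$ whenever $|\gamma(t)-x_0|\le\rho$. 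This gives $u(\gamma(t))\gtrsim\sum_{\rho\ge|\gamma(t)-x_0|}\mu(B(x_0,\rho))\rho^{2-n}$, and integrating over the crossings of the annuli around $x_0$ yields $\int u^{\theta}|d\gamma|\gtrsim\mathcal W^\mu(x_0)-C=\infty$. Hence every divergent curve has infinite length. Completeness then follows by Hopf--Rinow, since a Cauchy sequence escaping every compact set could be joined by a finite-length divergent curve.

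\textbf{Uniform blow-up.} Suppose $u(y_j)$ stays bounded while $d(y_j,F)\to0$. Pass to $y_j\to x_0\in F$. The same ball comparison gives $u(y_j)\gtrsim\int_{2|y_j-x_0|}^{1}\mu(B(x_0,\rho))\rho^{1-n}\,d\rho$. Since $\theta\le1$, finiteness of $\int\mu(B)\rho^{1-n}d\rho$ implies finiteness of the Wolff integral $\int(\mu(B)/\rho^s)^\theta d\rho/\rho$ (the Newtonian potential dominates the Wolff potential when $\theta\le1$ because $\mu(B)/\rho^s\le C\rho^{-s}$ and $t^\theta\ge t/\max t^{1-\theta}$). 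By monotone convergence this contradicts $\mathcal W^\mu(x_0)=\infty$. Since $U_{\mathrm{rd}}$ is bounded on $\sigma(B)$, uniform blow-up of $\phi$ follows.
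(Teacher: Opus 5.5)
Your construction and overall architecture match the paper's: the Evans--Wolff measure from Theorem~\ref{thm:evans-wolff}, the Newtonian potential $u$, $\phi=u/U_{\mathrm{rd}}$, Kelvin inversion at the pole, and a dyadic-annulus crossing estimate around a limit point $x_0\in F$ whose sum dominates $\mathcal W^\mu(x_0)$. Your variations are fine: an explicit smooth Kelvin integral instead of the paper's removable-singularity argument, first showing that a finite-length divergent curve converges to a single point, and concatenating a Cauchy sequence instead of using a maximal geodesic. If $0\in F$, write the Kelvin kernel as $(1-2y\cdot z+|y|^2|z|^2)^{(2-n)/2}$; this is smooth near $y=0$ uniformly in $z\in F$.

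One step, however, fails as you justify it. In the uniform blow-up paragraph you derive $\mathcal W^\mu(x_0)<\infty$ from $N:=\int_0^1\mu(B(x_0,\rho))\rho^{1-n}\,d\rho<\infty$, using $\mu(B)/\rho^s\le C\rho^{-s}$ and $t^\theta\ge t/T^{1-\theta}$.
\begin{itemize}
\item The second inequality is a lower bound for the Wolff integrand, so it points the wrong way.
\item The first only yields $(\mu(B)/\rho^s)^\theta\rho^{-1}\le C\rho^{-2}$, which is not integrable at $0$.
\end{itemize}
The implication is true, but for a different reason. Monotonicity of $\rho\mapsto\mu(B(x_0,\rho))$ gives
\[
N\ge\int_r^{2r}\mu(B(x_0,\rho))\rho^{1-n}\,d\rho\ge c_n\,\mu(B(x_0,r))\,r^{2-n},
\]
that is, the growth bound $\mu(B(x_0,r))\le CNr^{n-2}$ for $r\le\frac12$. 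Then $(n-2-s)\theta=1$ bounds the Wolff integrand on $(0,\frac12]$ by $(CN)^\theta$. The operative fact is $s<n-2$, not $\theta\le1$.

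This is exactly the paper's argument, run with $u(x_0)$ in place of $N$. Indeed $\mu(B(x,r))\le u(x)r^{n-2}$ gives $\mathcal W^\mu\le u^\theta$ pointwise. The paper then gets uniformity from $u\equiv+\infty$ on $F$ together with lower semicontinuity of $u$.

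Likewise, in your completeness plan, subadditivity gives $(\sum_j a_j)^\theta\le\sum_j a_j^\theta$, which is the wrong direction for a lower bound. Drop it: the single-scale bound $u\ge(2r_k)^{2-n}\mu(B(x_0,r_k))$ on the annulus $r_{k+1}<|y-x_0|<r_k$ already gives each crossing $h$-length at least $c\,\mu(B(x_0,r_k))^\theta/r_k$. That is precisely the sum you compare with $\mathcal W^\mu(x_0)$.
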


\begin{proof}
Put
$F:=\sigma_{p_\infty}(E),\qquad
R_F:=\max_{y\in F}|y|,\qquad
\rho_F:=\frac{1}{1+2R_F}>0.$
Then $F$ is a nonempty compact subset of $\mathbb{R}^n$.  Recall from
\eqref{eq:parameters} and \eqref{eq:driving-identities} that
\[
 s=\frac{n-2}{2},
 \qquad
 \theta=\frac{2}{n-2},
 \qquad
 s\theta=1,
 \qquad
 (n-2)\theta=2,
 \qquad
 n-2-s=s,
\]
and put $\beta_0:=\frac{n-2}{2}$ for the exponent of the round conformal
factor.  For
$(d,\alpha,q)=\left(n,1+\frac2n,\frac n2\right),$
the assumption $n\ge4$ gives $q=\frac n2\ge2$ and
$\alpha q=\frac{n+2}{2}\le n$.  Thus Theorem~\ref{thm:evans-wolff} applies and
yields a Radon probability measure $\mu$ such that
\[
 \mu(\mathbb R^n\setminus F)=0,
 \qquad
 \mathcal{W}^{\mu}(x)=+\infty
 \quad\text{for every }x\in F.
\]
Define
\[
 u(x):=\int_F|x-y|^{2-n}\,d\mu(y)
 \in(0,\infty],
 \qquad x\in\mathbb{R}^n.
\]

\smallskip\noindent
\emph{Step 1: the Newtonian potential.}
Let $Q\subset\mathbb{R}^n\setminus F$ be compact and put
$\delta:=\operatorname{dist}(Q,F)>0$.  Every $x$-derivative of order
$j$ of the kernel $|x-y|^{2-n}$ is bounded on $Q\times F$ by a constant
depending only on $j$ and $\delta$, so differentiation under the
integral sign is legitimate.  Hence $u$ is positive, smooth, and
harmonic on $\mathbb{R}^n\setminus F$.

We first prove that $u(x)=+\infty$ for every $x\in F$.
Fix $x_0\in F$ and suppose, to the contrary, that $u(x_0)<\infty$.
For $0<r\le1$, since $|x_0-y|^{2-n}>r^{2-n}$ on $B(x_0,r)$,
\[
\begin{aligned}
 \mu(B(x_0,r))\,r^{2-n}
 &\le
 \int_{B(x_0,r)}|x_0-y|^{2-n}\,d\mu(y)
 \le
 u(x_0),\\
 \text{and hence}\qquad
 \mu(B(x_0,r))
 &\le
 u(x_0)\,r^{n-2}.
\end{aligned}
\]
Using $(n-2-s)\theta=s\theta=1$, we obtain
\begin{align*}
 \mathcal{W}^{\mu}(x_0)
 &=
 \int_0^1
 \left(
 \frac{\mu(B(x_0,r))}{r^s}
 \right)^\theta\frac{dr}{r}
 \le
 u(x_0)^\theta
 \int_0^1
 r^{(n-2-s)\theta}\frac{dr}{r}
 =
 u(x_0)^\theta\int_0^1dr
 <\infty,
\end{align*}
contradicting the choice of $\mu$.

The function $u$ is lower semicontinuous on $\mathbb{R}^n$ by Fatou's
lemma, so $\{u>A\}$ is open for every $A>0$.  Since $u\equiv+\infty$ on
$F$, this open set contains the compact set $F$, and hence it contains
a uniform neighborhood of $F$: there exists $\varepsilon_A>0$ such that
$\bigl\{x:\operatorname{dist}(x,F)<\varepsilon_A\bigr\}\subset\{u>A\}$.
Thus
\begin{equation}\label{eq:u-divergence}
 u(x)\longrightarrow+\infty
 \quad\text{uniformly as }\;
 \operatorname{dist}(x,F)\longrightarrow0.
\end{equation}

We also record the behavior of $u$ at infinity.  If
$|x|\ge1+2R_F$ and $y\in F$, then
\(|x-y|\ge |x|-R_F\ge\frac{|x|}{2}\),
and hence \(u(x)\le2^{n-2}|x|^{2-n}\).
Moreover, $(|x|/|x-y|)^{n-2}\to1$ uniformly for $y\in F$ as
$|x|\to\infty$, and the preceding estimate bounds this integrand by
$2^{n-2}$ for all sufficiently large $|x|$.  Since $\mu(F)=1$,
dominated convergence gives
\begin{equation}\label{eq:u-infinity}
 \lim_{|x|\to\infty}|x|^{n-2}u(x)
 =\lim_{|x|\to\infty}\int_F\Bigl(\frac{|x|}{|x-y|}\Bigr)^{n-2}d\mu(y)
 =1.
\end{equation}

\smallskip\noindent
\emph{Step 2: construction of the metric and extension across the
pole.}
Set
$U_{\mathrm{rd}}(\xi):=\left(\frac{2}{1+|\xi|^2}\right)^{\beta_0}.$
Then $0<U_{\mathrm{rd}}\le U_{\mathrm{rd}}(0)=2^{\beta_0}$, and
$(\sigma_{p_\infty}^{-1})^*g_{st}
=U_{\mathrm{rd}}^{4/(n-2)}g_{\mathrm E},$
where $g_{\mathrm E}$ is the Euclidean metric.  Define $\phi$ on
$\Omega\setminus\{p_\infty\}$ by
\begin{equation}\label{eq:phi-chart}
 \phi\circ\sigma_{p_\infty}^{-1}:=\frac{u}{U_{\mathrm{rd}}}
 \qquad\text{on }\sigma_{p_\infty}(\Omega\setminus\{p_\infty\})=\mathbb{R}^n\setminus F.
\end{equation}
It follows that
\[
 (\sigma_{p_\infty}^{-1})^*
 \left(\phi^{4/(n-2)}g_{st}\right)
 =
 u^{4/(n-2)}g_{\mathrm E}
 =:\widetilde h.
\]
Since $u$ is positive and harmonic on $\mathbb{R}^n\setminus F$ and
$\mathrm{Sc}_{g_{\mathrm E}}=0$, formula
\eqref{eq:conformal-covariance-metric} gives
$\mathrm{Sc}_{\widetilde h}=0$.  Thus $h$ is smooth and scalar-flat on
$\Omega\setminus\{p_\infty\}$.

It remains to extend the metric across $p_\infty$.  Define
$I(z):=\frac{z}{|z|^2},\qquad
\psi(z):=\sigma_{p_\infty}^{-1}(I(z))\quad (z\ne0),\qquad
\psi(0):=p_\infty.$
Viewing $S^n\subset\mathbb R^{n+1}$ as the unit sphere, let
$\sigma_{-p_\infty}$ denote stereographic projection from the antipode
$-p_\infty$, using the same orthogonal identification of
$p_\infty^\perp$ with $\mathbb R^n$.  Since
$I\circ\sigma_{p_\infty}=\sigma_{-p_\infty}$ on
$S^n\setminus\{\pm p_\infty\}$, we have
$\psi=\sigma_{-p_\infty}^{-1}$, including at $z=0$; hence $\psi$ is a
smooth inverse chart about $p_\infty$.  For $0<|z|<\rho_F$, put
$\widehat\phi:=\phi\circ\psi$; this is well defined because
$|I(z)|>1+2R_F>R_F$, and hence $I(z)\notin F$.  Since
\[
 I^*g_{\mathrm E}=|z|^{-4}g_{\mathrm E}
 \qquad\text{and}\qquad
 U_{\mathrm{rd}}(I(z))
 =
 \left(\frac{2|z|^2}{1+|z|^2}\right)^{\beta_0}
 =
 |z|^{n-2}U_{\mathrm{rd}}(z),
\]
we have $\psi^*g_{st}=U_{\mathrm{rd}}(z)^{4/(n-2)}g_{\mathrm E}$: the round
metric has the same conformal factor in the $z$-coordinate.
Furthermore,
\[
 U_{\mathrm{rd}}(z)\widehat\phi(z)
 =
 \frac{U_{\mathrm{rd}}(z)\,u(I(z))}{U_{\mathrm{rd}}(I(z))}
 =
 |z|^{2-n}u(I(z))
 =:v(z).
\]
Since
$\Delta v(z)=|z|^{-n-2}(\Delta u)(I(z))$, the function $v$ is harmonic
on the punctured ball $0<|z|<\rho_F$.  For $0<|z|\le\rho_F$ we have
$|I(z)|\ge1+2R_F$, so Step~1 gives
\(0<v(z)
\le
|z|^{2-n}\,2^{n-2}|I(z)|^{2-n}
=
2^{n-2}\).
Hence $v$ is bounded near the origin and extends harmonically across
$z=0$, and by \eqref{eq:u-infinity},
\[
 v(0)
 =
 \lim_{z\to0}|z|^{2-n}u(I(z))
 =
 \lim_{|x|\to\infty}|x|^{n-2}u(x)
 =
 1.
\]
Consequently, $\widehat\phi=v/U_{\mathrm{rd}}$ extends smoothly and
positively across $z=0$, with
$\widehat\phi(0)=v(0)/U_{\mathrm{rd}}(0)=2^{-\beta_0}$.  Since $\psi$ is
a smooth inverse chart about $p_\infty$, this defines a smooth positive
extension of $\phi$ across $p_\infty$, with
$\phi(p_\infty)=2^{-\beta_0}=2^{-(n-2)/2}.$
In the $z$-coordinate,
\[
 \psi^*h
 =
 \widehat\phi^{\,4/(n-2)}U_{\mathrm{rd}}^{4/(n-2)}g_{\mathrm E}
 =
 v^{4/(n-2)}g_{\mathrm E}
 \qquad\text{on }|z|<\rho_F,
\]
which is smooth and scalar-flat by
\eqref{eq:conformal-covariance-metric}, because $v$ is positive and
harmonic.  Thus $h$ is smooth and scalar-flat on all of $\Omega$.

We next transfer \eqref{eq:u-divergence} to $\phi$.  Since
$U_{\mathrm{rd}}\le2^{\beta_0}$, equation \eqref{eq:phi-chart} gives
$\phi\ge2^{-\beta_0}\,u\circ\sigma_{p_\infty}
\qquad\text{on }\Omega\setminus\{p_\infty\}.$
Fix $A>0$ and apply \eqref{eq:u-divergence} with the threshold
$2^{\beta_0} A$: there is $\varepsilon_A>0$ such that
$\operatorname{dist}(x,F)<\varepsilon_A$ implies $u(x)>2^{\beta_0} A$.
The set
\[
 Q_A:=
 \{p_\infty\}\cup
 \left\{
 x\in S^n\setminus\{p_\infty\}:
 \operatorname{dist}(\sigma_{p_\infty}(x),F)\ge\varepsilon_A
 \right\}
\]
is closed in $S^n$ because its complement is the open set
\[
 \sigma_{p_\infty}^{-1}\!\left(
 \left\{z\in\mathbb R^n:\operatorname{dist}(z,F)<\varepsilon_A\right\}
 \right).
\]
Thus $Q_A$ is compact, and it is disjoint from $E$.  Therefore
$\eta_A:=d_{g_{st}}(Q_A,E)>0$.  If $d_{g_{st}}(x,E)<\eta_A$,
then $x\notin Q_A$, so $x\ne p_\infty$ and
$\operatorname{dist}(\sigma_{p_\infty}(x),F)<\varepsilon_A$, whence
$\phi(x)\ge2^{-\beta_0}u(\sigma_{p_\infty}(x))>A.$
It follows that
\begin{equation}\label{eq:phi-divergence}
 \phi(x)\longrightarrow+\infty
 \quad\text{uniformly as }\;
 d_{g_{st}}(x,E)\longrightarrow0.
\end{equation}

\smallskip\noindent
\emph{Step 3: completeness.}
For a Riemannian manifold $(Z,g)$, call a locally Lipschitz map
$c:[0,T)\to Z$, $0<T\le\infty$, \emph{divergent} if, for every compact
$Q\subset Z$, there is
$t_Q\in(0,T)$ such that $c(t)\notin Q$ for all $t>t_Q$.  Here local
Lipschitz continuity is measured with respect to the Riemannian
distance $d_g$.  By continuity and the Lindel\"of property of
$[0,T)$, there is a countable cover by parameter intervals, each of
which is mapped by $c$ into a single coordinate neighborhood on which
the coordinate distance and $d_g$ are bi-Lipschitz equivalent.
Rademacher's theorem applied to the corresponding coordinate
representatives therefore gives a vector $c'(t)\in T_{c(t)}Z$ for
almost every $t$.  On chart overlaps, the chain rule for the smooth
transition maps shows that this vector is independent of the chosen
chart.  For every $0<T'<T$, the restriction $c|_{[0,T']}$ is
Lipschitz and hence absolutely continuous.  We define its Riemannian
length by
\[
 \operatorname{Len}_g(c)
 :=\sup_{0<T'<T}\int_0^{T'}|c'(t)|_g\,dt
 =\int_0^T|c'(t)|_g\,dt\in[0,\infty].
\]
We use the same notation, with the integral taken over the given
parameter interval, for locally Lipschitz curves defined on other
intervals and for their restrictions.
We first record the
criterion that if $(Z,g)$ is connected and every divergent locally
Lipschitz curve in $Z$ has infinite $g$-length, then $(Z,g)$ is complete.
Indeed, if $(Z,g)$ were incomplete, Hopf--Rinow would produce a maximal
unit-speed geodesic $c:[0,T)\to Z$ with $T<\infty$.  This geodesic is
divergent.  Otherwise there would be a compact set $Q\subset Z$ and
times $t_\ell\uparrow T$ such that $c(t_\ell)\in Q$.  The vectors
$(c(t_\ell),c'(t_\ell))$ lie in the compact
unit-sphere bundle over $Q$, so, after passage to a subsequence, they
converge to some $v\in TZ$.  Local existence and continuous dependence
for the smooth geodesic vector field give an open neighborhood
$W_v\subset TZ$ of $v$ and a number $\delta>0$ such that its flow is
defined on $[0,\delta]\times W_v$.  For all sufficiently large $\ell$,
$(c(t_\ell),c'(t_\ell))\in W_v$ and $T-t_\ell<\delta$.  By uniqueness, the
projected flow line with this initial datum agrees with $c$ on
$[t_\ell,T)$ and remains defined beyond $T$, contradicting maximality.
Thus an incomplete connected Riemannian manifold admits a divergent
locally Lipschitz curve of finite length.  The contrapositive is the
criterion used below; compare \cite[Theorem~2.3]{zbMATH07873599}.

By this criterion, applied to $(Z,g)=(\Omega,h)$, it suffices to prove that
every divergent locally Lipschitz curve in $\Omega$ has infinite
$h$-length.

Since $E\ne\varnothing$
and $p_\infty\in\Omega$, the manifold $\Omega$ is a nonempty proper open subset
of $S^n$.  It is noncompact, since otherwise it would be both open and
closed in the connected sphere $S^n$.

For the completeness step specifically, the following dyadic-annulus
argument is adapted from
\cite[proof of Theorem~1.1, implication
\textup{(ii)}$\Rightarrow$\textup{(i)}, equations~(4.3)--(4.7)]{zbMATH07873599}.
Let $c:[0,T)\to\Omega$ be a divergent locally Lipschitz curve.  Choose $\rho_0>0$ with
$\overline B_{S^n}(p_\infty,\rho_0)\subset\Omega$.  Since this closed
ball is compact and $c$ is divergent, there is $t_0<T$ such that
$c(t)\notin\overline B_{S^n}(p_\infty,\rho_0)$ for $t\ge t_0$.
Therefore $\gamma:=\sigma_{p_\infty}\circ c|_{[t_0,T)}$
is a bounded locally Lipschitz curve in $\mathbb{R}^n\setminus F$, because
$S^n\setminus B_{S^n}(p_\infty,\rho_0)$ is a compact subset of the stereographic
chart.  The curve $\gamma$ is
also divergent there: if $Q\subset\mathbb{R}^n\setminus F$ is compact,
then $\sigma_{p_\infty}^{-1}(Q)$ is a compact subset of $\Omega$, which the tail
of $c$ eventually avoids.  Since
$(\sigma_{p_\infty}^{-1})^*h=\widetilde h=u^{4/(n-2)}g_{\mathrm E}$, the
$\widetilde h$-length element is
$ds_{\widetilde h}=u^{2/(n-2)}\,ds_{\mathrm E}=u^\theta\,ds_{\mathrm E},$
and it suffices to show that
$\operatorname{Len}_{\widetilde h}(\gamma)=\infty$.

Choose times $\varpi_m\uparrow T$.  Since $\gamma$ is bounded, after passing
to a subsequence we may assume that $\gamma(\varpi_m)\to x_0$ for some
$x_0\in\mathbb{R}^n$.  Necessarily $x_0\in F$: otherwise a closed
Euclidean ball of sufficiently small positive radius centered at $x_0$
would be a compact subset of
$\mathbb{R}^n\setminus F$ visited by $\gamma$ at arbitrarily late times,
contradicting divergence.  Set $r_k:=2^{-k}$ and choose $k_0\ge0$ such
that $r_{k_0}\le|\gamma(t_0)-x_0|$, which is possible because
$\gamma(t_0)\notin F\ni x_0$.  For $k\ge k_0$, define
\[
\begin{aligned}
 b_k
 &:=
 \inf\left\{
 t\in[t_0,T):
 |\gamma(t)-x_0|\le r_{k+1}
 \right\},\\
 a_k
 &:=
 \sup\left\{
 t\in[t_0,b_k]:
 |\gamma(t)-x_0|\ge r_k
 \right\}.
\end{aligned}
\]
The defining set for $b_k$ is nonempty because $\gamma(\varpi_m)\to x_0$.
Since $|\gamma(t_0)-x_0|\ge r_k>r_{k+1}$, continuity at $t_0$ gives
$t_0<b_k$, while nonemptiness of the defining set gives $b_k<T$.
Choose a sequence in that set converging to $b_k$.  Continuity gives
$|\gamma(b_k)-x_0|\le r_{k+1}$.  On the other hand, every
$t\in[t_0,b_k)$
satisfies $|\gamma(t)-x_0|>r_{k+1}$; letting $t\uparrow b_k$ gives the
reverse inequality.  Hence $|\gamma(b_k)-x_0|=r_{k+1}$.

The defining set for $a_k$ is a nonempty closed subset of
$[t_0,b_k]$, so its supremum is attained.  Since
$|\gamma(b_k)-x_0|=r_{k+1}<r_k$, one has $a_k<b_k$.  Moreover,
$|\gamma(a_k)-x_0|\ge r_k$.  If this inequality were strict,
continuity would give a point $t\in(a_k,b_k)$ with
$|\gamma(t)-x_0|\ge r_k$, contrary to the definition of $a_k$.
Thus $|\gamma(a_k)-x_0|=r_k$.  The definitions now give
\[
\begin{gathered}
 |\gamma(a_k)-x_0|=r_k,
 \qquad
 |\gamma(b_k)-x_0|=r_{k+1},
 \\
 \gamma((a_k,b_k))
 \subset
 A_k:=
 \left\{
 x:r_{k+1}<|x-x_0|<r_k
 \right\}.
\end{gathered}
\]
The annuli $A_k$ are pairwise disjoint, so the intervals $(a_k,b_k)$
are pairwise disjoint.  Indeed, if
$t\in(a_k,b_k)\cap(a_\ell,b_\ell)$ for $k\ne\ell$, then
$\gamma(t)\in A_k\cap A_\ell$, a contradiction.  Consequently, the
closed intervals $[a_k,b_k]$ have pairwise disjoint interiors and can
intersect only at endpoints.  Since finitely many endpoints have
Lebesgue measure zero, for every $K\ge k_0$,
\begin{equation}\label{eq:finite-annuli}
 \operatorname{Len}_{\widetilde h}(\gamma)
 \ge
 \sum_{k=k_0}^{K}
 \operatorname{Len}_{\widetilde h}
 \bigl(\gamma|_{[a_k,b_k]}\bigr).
\end{equation}
Each crossing has Euclidean length at least
$|\gamma(a_k)-\gamma(b_k)|\ge r_k-r_{k+1}=r_k/2$.  If $x\in A_k$ and
$y\in B(x_0,r_{k+2})$, then
\[
\begin{aligned}
 |x-y|
 &\le
 |x-x_0|+|x_0-y| \le
 r_k+r_{k+2}
 =
 \frac54\,r_k,\\
 \text{and hence}\qquad
 u(x)
 &\ge
 \left(\frac45\right)^{n-2}
 \mu(B(x_0,r_{k+2}))\,r_k^{2-n}.
\end{aligned}
\]
Using $(n-2)\theta=2$ and $\gamma((a_k,b_k))\subset A_k\setminus F$, we
obtain the following estimate.  The endpoints of the crossing lie on
$\partial A_k$ and do not affect the length integral, so the infimum over
$A_k\setminus F$ is valid:
\[
 \operatorname{Len}_{\widetilde h}
 \bigl(\gamma|_{[a_k,b_k]}\bigr)
 \ge
 \Bigl(\inf_{A_k\setminus F}u\Bigr)^\theta
 \frac{r_k}{2}
 \ge
 \frac{8}{25}\,
 \frac{\mu(B(x_0,r_{k+2}))^\theta}{r_k}.
\]
Letting $K\to\infty$ in \eqref{eq:finite-annuli} gives
\begin{equation}\label{eq:annuli}
 \operatorname{Len}_{\widetilde h}(\gamma)
 \ge
 \frac{8}{25}
 \sum_{k\ge k_0}
 \frac{\mu(B(x_0,r_{k+2}))^\theta}{r_k}
 =
 \frac{2}{25}
 \sum_{m\ge k_0+2}
 \frac{\mu(B(x_0,r_m))^\theta}{r_m}.
\end{equation}
On the other hand, since $r\mapsto\mu(B(x_0,r))$ is nondecreasing and
$s\theta=1$,
\begin{equation}\label{eq:dyadic}
\begin{aligned}
 \mathcal{W}^{\mu}(x_0)
 &=
 \sum_{m\ge0}
 \int_{r_{m+1}}^{r_m}
 \left(
 \frac{\mu(B(x_0,r))}{r^s}
 \right)^\theta
 \frac{dr}{r}
 \\
 &\le
 \sum_{m\ge0}
 \mu(B(x_0,r_m))^\theta
 \int_{r_{m+1}}^{r_m}\frac{dr}{r^2}
 \\
 &=
 \sum_{m\ge0}
 \frac{\mu(B(x_0,r_m))^\theta}{r_m}.
\end{aligned}
\end{equation}
Because $x_0\in F$, Theorem~\ref{thm:evans-wolff} gives
$\mathcal{W}^{\mu}(x_0)=+\infty$.  Every term in the series on the
right-hand side of \eqref{eq:dyadic} is finite, since $\mu(B)\leq1$ and
$r_m>0$.  Hence the series, and therefore each of its tails, diverges,
and \eqref{eq:annuli} yields
$\operatorname{Len}_{\widetilde h}(\gamma)=+\infty$.  Thus every
divergent locally Lipschitz curve in $\Omega$ has infinite $h$-length,
and the
criterion established at the beginning of Step~3 shows that $h$ is
complete.
\end{proof}

The preceding metric yields all three scalar-curvature signs after constant
shifts.  The only additional inputs are the lower bound $m_0>0$, which follows
from the fact that $\mu$ is a probability measure supported on a bounded set,
and the linearity of the conformal Laplacian, together with
$L_{g_{st}}\phi=0$ pointwise on \(\Omega\) and
$L_{g_{st}}1=n(n-1)$.  The completeness comparison
and the scalar-curvature computation are independent parts of the argument.

\begin{lemma}[Complete shifts of every sign]\label{lem:shifts}
In the setting of Lemma~\ref{lem:onechart}, let $\phi$ be the
function constructed there and put $m_0:=\inf_\Omega\phi$.  Then
$m_0>0$, and for every $t\in(-m_0,\infty)$ the metric
$g^{(t)}:=(\phi+t)^{4/(n-2)}\,g_{st}\big|_\Omega$ is smooth, complete,
and locally conformally flat, with
\begin{equation}\label{eq:curvature-identity}
 \mathrm{Sc}_{g^{(t)}}
 =n(n-1)\,t\,(\phi+t)^{-(n+2)/(n-2)},
 \qquad
 \lim_{t\to0}\int_\Omega|\mathrm{Sc}_{g^{(t)}}|^{n/2}\,dV_{g^{(t)}}=0.
\end{equation}
In particular, $\mathrm{Sc}_{g^{(t)}}$ is negative, zero, or positive
according as $t<0$, $t=0$, or $t>0$.  For $t>0$, the rescaled metric
$\bar g^{(t)}:=t^{-4/(n-2)}g^{(t)}
=\Bigl(1+\frac{\phi}{t}\Bigr)^{4/(n-2)}g_{st}\big|_\Omega$ satisfies
\[
 0<\mathrm{Sc}_{\bar g^{(t)}}<n(n-1),
 \qquad
 \lim_{\delta\downarrow0}
 \sup_{\substack{x\in\Omega\\d_{g_{st}}(x,E)<\delta}}
 \mathrm{Sc}_{\bar g^{(t)}}(x)=0.
\]
\end{lemma}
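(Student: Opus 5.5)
The plan is to derive everything from two facts supplied by Lemma~\ref{lem:onechart}: $L_{g_{st}}\phi=0$ on $\Omega$ (scalar-flatness of $h$ together with \eqref{eq:conformal-covariance-metric}), and $\phi\to+\infty$ uniformly as $d_{g_{st}}(x,E)\to0$. The argument has four parts.

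\emph{Step 1: the lower bound $m_0>0$.} On $\Omega\setminus\{p_\infty\}$ I would use the representation $\phi\circ\sigma_{p_\infty}^{-1}=u/U_{\mathrm{rd}}$ together with $U_{\mathrm{rd}}\le2^{\beta_0}$. Since $\mu$ is a probability measure on $F\subset\overline{B(0,R_F)}$, we have $u(x)\ge(|x|+R_F)^{2-n}$, while $U_{\mathrm{rd}}(x)\le C(1+|x|^2)^{-\beta_0}$. The quotient $u/U_{\mathrm{rd}}$ is therefore bounded below by a positive constant on all of $\mathbb R^n\setminus F$, because both factors decay like $|x|^{2-n}$ at infinity. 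At the pole itself, $\phi(p_\infty)=2^{-\beta_0}>0$. Hence $m_0>0$. Alternatively, one can argue directly: $\phi$ is continuous, positive, and tends to $+\infty$ near $E$, so its infimum is attained on a compact set away from $E$.

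\emph{Step 2: smoothness, LCF, and the curvature formula.} For $t>-m_0$ we have $\phi+t\ge m_0+t>0$, so $g^{(t)}$ is smooth, and it is LCF because it is conformal to $g_{st}$. By linearity of the conformal Laplacian, $L_{g_{st}}(\phi+t)=L_{g_{st}}\phi+t\,n(n-1)=t\,n(n-1)$. Then \eqref{eq:conformal-covariance-metric} gives the identity \eqref{eq:curvature-identity} and the sign trichotomy.

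\emph{Step 3: completeness.} Since $\phi\ge m_0$, we have
\[
\frac{\phi+t}{\phi}=1+\frac{t}{\phi}\in\Bigl[\,1+\min\{0,t/m_0\},\;1+\max\{0,t/m_0\}\Bigr],
\]
and this interval is bounded and bounded away from $0$. It follows that $g^{(t)}$ and $h$ are uniformly bi-Lipschitz equivalent. Consequently, Cauchy sequences for the two metrics coincide, and completeness of $h$ transfers to $g^{(t)}$.

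\emph{Step 4: the integral limit and the rescaled metric.} Since $dV_{g^{(t)}}=(\phi+t)^{2n/(n-2)}dV_{g_{st}}$, the integral in \eqref{eq:curvature-identity} equals
\[
\bigl(n(n-1)\bigr)^{n/2}\,|t|^{n/2}\int_\Omega(\phi+t)^{-n}\,dV_{g_{st}},
\]
where I have used $\tfrac{n+2}{n-2}\cdot\tfrac n2=\tfrac{n(n+2)}{2(n-2)}$ and $\tfrac{2n}{n-2}-\tfrac{n(n+2)}{2(n-2)}=-\tfrac n2$. Simplifying the exponent gives $(\phi+t)^{-n/2}\cdot$[correct power]; in any case, for $|t|\le m_0/2$ the integrand is bounded by $(m_0/2)^{-c}$ for a fixed exponent $c$, and the round sphere has finite volume. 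The prefactor $|t|^{n/2}$ therefore forces the integral to $0$ as $t\to0$.

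For $t>0$, scaling gives
\[
\mathrm{Sc}_{\bar g^{(t)}}=t^{4/(n-2)}\,\mathrm{Sc}_{g^{(t)}}=n(n-1)\Bigl(1+\frac{\phi}{t}\Bigr)^{-(n+2)/(n-2)},
\]
which lies strictly between $0$ and $n(n-1)$ because $\phi>0$. It tends to $0$ uniformly near $E$ by the uniform divergence of $\phi$.

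The main obstacle is Step~1: checking that the lower bound on $u/U_{\mathrm{rd}}$ holds uniformly out to infinity in the chart. This is exactly the statement that the extension of $\phi$ across $p_\infty$ is continuous and positive, which Lemma~\ref{lem:onechart} already provides. Everything else is linear algebra with the conformal Laplacian.
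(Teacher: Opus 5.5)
Your proposal is correct and follows essentially the same route as the paper: the explicit chart bound $\phi\circ\sigma_{p_\infty}^{-1}(x)\ge 2^{-\beta_0}(1+|x|^2)^{\beta_0}(|x|+R_F)^{2-n}$ for $m_0>0$ (your compactness alternative, using the uniform blow-up of $\phi$ at $E$, also works), linearity of $L_{g_{st}}$ for the curvature formula, bi-Lipschitz comparison with $h$ for completeness, and the prefactor $|t|^{n/2}$ for the integral limit. The only slip is cosmetic: by your own exponent computation the integrand is $[n(n-1)]^{n/2}|t|^{n/2}(\phi+t)^{-n/2}\,dV_{g_{st}}$, not $(\phi+t)^{-n}$. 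Your bound via $\phi+t\ge m_0/2$ works for any negative exponent, so the conclusion is unaffected.
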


\begin{proof}
Retain the notation $F$, $\mu$, and $R_F$ from the construction in
Lemma~\ref{lem:onechart}, and put $\beta_0:=\frac{n-2}{2}$.

\emph{The lower bound.}  In the chart, by \eqref{eq:phi-chart},
\[
 \phi\circ\sigma_{p_\infty}^{-1}(x)
 =2^{-\beta_0}(1+|x|^2)^{\beta_0}
  \int_F|x-y|^{-2\beta_0}\,d\mu(y).
\]
For $y\in F$, $|x-y|\leq|x|+R_F\leq(1+R_F)(1+|x|)$, and
$1+|x|^2\geq\frac12(1+|x|)^2$.  Since $\mu(F)=1$, for every
$x\in\mathbb R^n\setminus F$ one has
\begin{equation}\label{eq:m0}
\begin{aligned}
 \phi\circ\sigma_{p_\infty}^{-1}(x)
 &\geq 2^{-\beta_0}\cdot2^{-\beta_0}(1+|x|)^{2\beta_0}
 (1+R_F)^{-2\beta_0}(1+|x|)^{-2\beta_0}\\
 &=2^{-(n-2)}(1+R_F)^{-(n-2)}.
\end{aligned}
\end{equation}
Equation~\eqref{eq:m0} applies on
$\sigma_{p_\infty}^{-1}(\mathbb R^n\setminus F)
=\Omega\setminus\{p_\infty\}$.  At the remaining point,
$\phi(p_\infty)=2^{-\beta_0}$ and
$2^{-\beta_0}=(\sqrt2)^{-(n-2)}
>[2(1+R_F)]^{-(n-2)}
=2^{-(n-2)}(1+R_F)^{-(n-2)},$
because $2(1+R_F)\geq2>\sqrt2$.  Thus the same lower bound holds at
$p_\infty$, and hence
$m_0\geq2^{-(n-2)}(1+R_F)^{-(n-2)}>0$.

\emph{Smoothness and completeness.}  For $t>-m_0$, $\phi+t\ge m_0+t>0$,
so $g^{(t)}$ is a smooth metric conformal to $g_{st}$, hence
locally conformally flat.  With
$a_t:=\min\{1,1+t/m_0\}>0$ and $b_t:=\max\{1,1+t/m_0\}$, the bound
$\phi\ge m_0$ gives $a_t\le1+t/\phi\le b_t$; that is,
$a_t^{4/(n-2)}h\le g^{(t)}=(1+t/\phi)^{4/(n-2)}h\le b_t^{4/(n-2)}h$.
Taking lengths and then infima over curves gives
\(a_t^{2/(n-2)}d_h\le d_{g^{(t)}}\le b_t^{2/(n-2)}d_h\).
Thus $d_h$ and $d_{g^{(t)}}$ are bi-Lipschitz equivalent, so the
completeness of $h$ implies that of $g^{(t)}$.

\emph{Curvature.}
Since $\mathrm{Sc}_{g_{st}}=n(n-1)$ and
$\mathrm{Sc}_{w^{4/(n-2)}g_{st}}=w^{-(n+2)/(n-2)}L_{g_{st}}w$
for smooth $w>0$, scalar-flatness of $h$ means
$L_{g_{st}}\phi=0$ pointwise on \(\Omega\), and
$L_{g_{st}}1=n(n-1)$.  By
linearity, $L_{g_{st}}(\phi+t)=n(n-1)\,t$, and therefore
$\mathrm{Sc}_{g^{(t)}}
=n(n-1)\,t\,(\phi+t)^{-(n+2)/(n-2)},$
which is the first part of \eqref{eq:curvature-identity}; since
$\phi+t>0$, the sign of $\mathrm{Sc}_{g^{(t)}}$ is the sign of $t$.
For the second part, note that
$dV_{g^{(t)}}=(\phi+t)^{2n/(n-2)}\,dV_{g_{st}}$ and
$\frac{2n}{n-2}-\frac{n(n+2)}{2(n-2)}=-\frac n2$, so that
\[
\begin{aligned}
 \int_\Omega|\mathrm{Sc}_{g^{(t)}}|^{n/2}\,dV_{g^{(t)}}
 &=[n(n-1)]^{n/2}
 \int_\Omega\Bigl(\frac{|t|}{\phi+t}\Bigr)^{n/2}dV_{g_{st}}\\
 &\le[n(n-1)]^{n/2}\operatorname{Vol}_{g_{st}}(S^n)
 \Bigl(\frac{|t|}{m_0+t}\Bigr)^{n/2},
\end{aligned}
\]
where we use $\phi+t\ge m_0+t>0$.  The right-hand side tends to $0$
as $t\to0$.

\emph{The rescaled family.}
Under constant rescaling, scalar curvature transforms by
$\mathrm{Sc}_{\varrho g}=\varrho^{-1}\mathrm{Sc}_g$ for $\varrho>0$.  For
$t>0$, taking $\varrho=t^{-4/(n-2)}$ and using
$\frac4{n-2}+1=\frac{n+2}{n-2}$, we obtain
\[
 \mathrm{Sc}_{\bar g^{(t)}}
 =t^{\frac4{n-2}}\,\mathrm{Sc}_{g^{(t)}}
 =n(n-1)\,t^{\frac{n+2}{n-2}}(\phi+t)^{-\frac{n+2}{n-2}}
 =n(n-1)\Bigl(1+\frac{\phi}{t}\Bigr)^{-\frac{n+2}{n-2}}.
\]
Since $\phi>0$, this lies in $(0,n(n-1))$, and the uniform divergence
\eqref{eq:phi-divergence} of $\phi$ at $E$ gives
$\mathrm{Sc}_{\bar g^{(t)}}(x)\to0$ uniformly as
$d_{g_{st}}(x,E)\to0$.
\end{proof}

Finally, we assemble the preceding topological and analytic
constructions.

\begin{proof}[Proof of Theorem~\ref{thm:counterexample-psc}]
Fix $n\ge4$, fix the tower of Proposition~\ref{induction}, and put
$K:=\bigcap_{j\ge1}V_j$ and $M:=S^n\setminus K$.

\smallskip\noindent\emph{The set $K$.}
By Proposition~\ref{prop:limitset}, $K$ is a nondegenerate continuum
with $\dim_{\mathcal{H}}K=1$,
$K\subset V_1\subset\operatorname{int}B\Subset S^n\setminus\{p_\sigma\}$, and
$\operatorname{cap}(K)=
\mathcal{C}_{1+2/n,\,n/2}^{\mathbb R^n}\bigl(\sigma(K)\bigr)=0.$
In particular, $p_\sigma\in M$, and $M$, being an open subset of $S^n$,
is a smooth manifold.  It is noncompact: indeed, $M$ is nonempty and
proper because $p_\sigma\in M$ and $K\neq\varnothing$, and if $M$ were compact,
then it would be both open and closed in the connected sphere $S^n$.

\smallskip\noindent\emph{Assertion (1).}
$M$ is contractible by Proposition~\ref{prop:contract} and not simply
connected at infinity by Proposition~\ref{prop:sci}.

\smallskip\noindent\emph{Assertion (2): the scalar-flat metric.}
We apply Lemma~\ref{lem:onechart} with
$(\Omega,E,p_\infty):=(M,K,p_\sigma)$.  Its
hypotheses hold: $n\ge4$; $E=K$ is nonempty and compact; $\Omega=M$ is
connected, being contractible; $p_\infty=p_\sigma\in M$; and the
stereographic chart with pole $p_\sigma$ is the fixed chart $\sigma$, so the
capacity assumption
$\mathcal{C}_{1+2/n,\,n/2}^{\mathbb R^n}(\sigma(K))=0$ is exactly
$\operatorname{cap}(K)=0$.  The lemma provides a smooth function
$\phi:M\to(0,\infty)$ such that $h:=\phi^{4/(n-2)}g_{st}|_M$ is
complete and scalar-flat, with $\phi(x)\to+\infty$ uniformly as
$d_{g_{st}}(x,K)\to0$.

\smallskip\noindent\emph{Assertion (2): the shifted family.}
We apply Lemma~\ref{lem:shifts} to this $\phi$.  It gives
$m_0=\inf_M\phi>0$; for every $t\in(-m_0,\infty)$ the metric
$g^{(t)}=(\phi+t)^{4/(n-2)}g_{st}|_M$ is smooth, complete, and
locally conformally flat, with
$\mathrm{Sc}_{g^{(t)}}=n(n-1)\,t\,(\phi+t)^{-(n+2)/(n-2)},$
which has the sign of $t$ because $\phi+t>0$ on $M$ (and $g^{(0)}=h$).
Moreover,
$\int_M|\mathrm{Sc}_{g^{(t)}}|^{n/2}\,dV_{g^{(t)}}\to0$ as $t\to0$
by \eqref{eq:curvature-identity}.  Finally, for $t>0$,
Lemma~\ref{lem:shifts} gives
$\mathrm{Sc}_{\bar g^{(t)}}=n(n-1)(1+\phi/t)^{-(n+2)/(n-2)}\in(0,n(n-1))$
for $\bar g^{(t)}=t^{-4/(n-2)}g^{(t)}$, and the uniform divergence of
$\phi$ at $K$ yields
\[
 \lim_{\delta\downarrow0}
 \sup_{\substack{x\in M\\ d_{g_{st}}(x,K)<\delta}}
 \mathrm{Sc}_{\bar g^{(t)}}(x)=0.
\]
This proves (2) and completes the proof.
\end{proof}

\begin{corollary}[Failure of Euclidean rigidity without the
additional hypotheses]\label{cor:sharpness}
Fix $n\geq4$, and let $K$, $M=S^n\setminus K$, and
$(g^{(t)})_{t\in(-m_0,\infty)}$ be as in Theorem~\ref{thm:counterexample-psc}.  The
manifold $M$ is connected, smooth, open, and simply connected, and
\(\widetilde H_i(M;\mathbb Z)=0\) for \(i\geq0\).
For every $t\geq0$, the metric $g^{(t)}$ is complete and locally
conformally flat, with
$\mathrm{Sc}_{g^{(t)}}\geq0$.  Moreover, the inclusion
$\iota:M\hookrightarrow S^n$ may be chosen as a developing map of
$(M,g^{(t)})$.  The omitted set, and also the topological boundary of
the developing image, is
$\Lambda:=S^n\setminus\iota(M)=\partial\iota(M)=K$,
which is a nondegenerate continuum.  Nevertheless, $M$ is not
homeomorphic to $\mathbb R^n$.

Consequently, the conclusion of Theorem~\ref{thm:intro-E}\textup{(ii)} can fail
when its two additional topological hypotheses are omitted
simultaneously.  The disjunction of conditions
\textup{(i)}--\textup{(v)} in Corollary~\ref{Contractible for} likewise
cannot be omitted, even if its conclusion is weakened from conformal
equivalence to homeomorphism.
\end{corollary}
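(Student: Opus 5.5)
The proof will be assembled directly from Theorem~\ref{thm:counterexample-psc} and the propositions behind it; no new analysis is needed. First, the topological assertions. By Proposition~\ref{prop:contract}, $M$ is contractible. It is therefore connected and simply connected, and Proposition~\ref{prop:acyclic} gives $\widetilde H_i(M;\mathbb Z)=0$ for all $i\ge0$. The manifold $M$ is smooth and open, because it is a noncompact open subset of $S^n$. This was observed in the proof of Theorem~\ref{thm:counterexample-psc}: $p_\sigma\in M$ and $K\neq\varnothing$, so $M$ is neither empty nor all of $S^n$, and connectedness of $S^n$ excludes compactness. Proposition~\ref{prop:sci} gives $M\not\cong\mathbb R^n$, and Proposition~\ref{prop:limitset} says that $K$ is a nondegenerate continuum.

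Second, the metric assertions. For $t\ge0$ the range $(-m_0,\infty)$ contains $t$, since $m_0>0$. Lemma~\ref{lem:shifts} then gives completeness and local conformal flatness, and the curvature identity gives $\mathrm{Sc}_{g^{(t)}}=n(n-1)t(\phi+t)^{-(n+2)/(n-2)}\ge0$.

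For the developing map, note that $g^{(t)}=(\phi+t)^{4/(n-2)}g_{st}|_M$. Hence the inclusion $\iota\colon M\hookrightarrow S^n$ is a conformal immersion of the simply connected manifold $(M,g^{(t)})$ into $(S^n,g_{st})$. Developing maps of a simply connected LCF manifold are unique up to postcomposition with a M\"obius transformation, by Liouville's theorem applied on overlapping charts. So $\iota$ may be taken as the developing map, with image $M$. Then $\Lambda=S^n\setminus\iota(M)=K$.

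To show $\partial\iota(M)=K$, I will check that $K$ has empty interior. This follows from $\dim_{\mathcal H}K=1<n$ (Proposition~\ref{prop:limitset}). Alternatively, one can rerun the Alexander-duality argument used in Theorem~\ref{d-connect}. Therefore $\overline M=S^n$ and $\partial M=S^n\setminus M=K$.

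For the final sharpness sentences, I will check that $M$ satisfies every standing hypothesis of Theorem~\ref{thm:intro-E}(ii) and of Corollary~\ref{Contractible for}, yet fails their conclusions. The standing hypotheses are: complete, open, simply connected, LCF, $\mathrm{Sc}\ge0$. Corollary~\ref{Contractible for} additionally requires $\widetilde H_{n-1}=0$, which holds by acyclicity. The conclusion fails even at the level of homeomorphism type. For Theorem~\ref{thm:intro-E}(ii), the omitted hypotheses must indeed fail. Since $\lfloor(n-2)/2\rfloor\ge1$, $\lfloor(n-2)/2\rfloor$-connectedness at infinity would imply simple connectivity at infinity, which Proposition~\ref{prop:sci} excludes. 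For Corollary~\ref{Contractible for}, each of conditions (i)--(v) would force conformal equivalence with $\mathbb R^n$, hence homeomorphism; so by contraposition all of them fail.

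The only mildly delicate step is the developing-map identification. Its justification is the standard uniqueness of developing maps up to $\operatorname{Conf}(S^n)$. Everything else is bookkeeping of earlier results.
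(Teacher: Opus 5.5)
Your proposal is correct and follows essentially the same route as the paper. Both take contractibility and the failure of simple connectivity at infinity from Theorem~\ref{thm:counterexample-psc}\textup{(1)} and the metric properties from its part \textup{(2)}, use the inclusion as a conformal local diffeomorphism serving as the developing map, get empty interior of $K$ from $\dim_{\mathcal H}K=1<n$, and finish with a hypothesis-by-hypothesis check of the two sharpness claims. Your appeal to uniqueness of developing maps up to $\operatorname{Conf}(S^n)$ only spells out what the paper asserts directly. Your contrapositive remark that each of \textup{(i)}--\textup{(v)} must fail is a harmless extra consequence of Corollary~\ref{Contractible for} that the sharpness claim does not need.
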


\begin{proof}
Theorem~\ref{thm:counterexample-psc}\textup{(1)} states that $M$ is a contractible smooth
open manifold.  Hence $M$ is connected and simply connected and has
vanishing reduced integral homology; in particular,
$\widetilde H_{n-1}(M;\mathbb Z)=0$, as required in
Corollary~\ref{Contractible for}.  The same part of the theorem states
that $M$ is not simply connected at infinity.  Since $\mathbb R^n$ is
simply connected at infinity for $n\geq3$, and simple connectivity at
infinity is invariant under homeomorphism, $M$ is not homeomorphic to
$\mathbb R^n$.

For $t\geq0$, Theorem~\ref{thm:counterexample-psc}\textup{(2)} gives completeness, local
conformal flatness, and $\mathrm{Sc}_{g^{(t)}}\geq0$.  If
$\iota:M\hookrightarrow S^n$ is the inclusion, then
$\iota^*g_{st}=(\phi+t)^{-4/(n-2)}g^{(t)}.$
Thus $\iota$ is a conformal local diffeomorphism and is a developing map
for the locally conformally flat structure of $g^{(t)}$.  Its image is
$M=S^n\setminus K$.  Since $\dim_{\mathcal H}K=1<n$, the set $K$ has
empty interior in $S^n$; hence the omitted set and the topological
boundary of the developing image are both exactly $K$.  Since $M$ is
simply connected, the holonomy is trivial;
accordingly, $K$ is not the classical Kleinian limit set of the holonomy
group (which is empty).

After both additional hypotheses are removed from
Theorem~\ref{thm:intro-E}\textup{(ii)}, the preceding facts verify every remaining
hypothesis, whereas the conclusion $M\cong\mathbb R^n$ fails.  Moreover,
$\lfloor(n-2)/2\rfloor\geq1$, so the failure of simple connectivity at
infinity implies that $M$ is not
$\lfloor(n-2)/2\rfloor$-connected at infinity; as stated at the beginning
of this section, no assertion is made about the small loops condition.
Finally, all standing hypotheses of Corollary~\ref{Contractible for}
hold, including the required homology vanishing, while its conclusion
fails even topologically.  This proves both sharpness assertions.
\end{proof}

\subsection{\texorpdfstring{Uniformly PSC for $n\geq 6$}{Uniformly PSC for n >= 6}}

The construction underlying Theorem~\ref{thm:counterexample-psc} can be modified to
produce locally conformally flat metrics with uniformly positive scalar
curvature on contractible manifolds not homeomorphic to $\mathbb R^n$.
The rose is replaced by a $2$-dimensional Newman spine.  The topological
construction requires only $n\geq 5$, corresponding to codimension at
least three for a $2$-complex, while the analytic argument requires the
spine dimension $\mathfrak d=2$ to satisfy
$\mathfrak d\leq a=\frac{n-2}{2}$, and hence $n\geq 6$.

\begin{theorem}\label{thm:uniform-psc-counterexample}
For every integer $n\geq 6$ there exists a contractible smooth open
$n$-manifold $M$ which is not homeomorphic to $\mathbb R^n$ and which
admits a smooth complete locally conformally flat metric
$\widehat g$ such that
\[
  0<\inf_M\mathrm{Sc}_{\widehat g}
  \le \sup_M\mathrm{Sc}_{\widehat g}<\infty.
\]
\end{theorem}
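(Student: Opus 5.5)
The plan is to follow the outline in the introduction. Take $K\subset S^n$ to be a compact acyclic PL $2$-complex obtained from Newman's construction: a finite $2$-complex with perfect, nontrivial fundamental group and vanishing reduced homology, PL embedded in a small ball $B\Subset S^n\setminus\{p_\sigma\}$. Then set $M:=S^n\setminus K$.

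\textbf{Topology.} Let $N$ be a regular neighborhood of $K$. Since $K$ has codimension at least three ($n\ge5$), general position gives $\pi_1(N\setminus K)\cong\pi_1(\partial N)\cong\pi_1(K)\neq1$. Alexander duality shows that $M$ is acyclic. General position of $2$-disks against a $2$-complex, which needs $2+2-n<0$, shows that $M$ is simply connected. Hence $M$ is contractible, by the Hurewicz and Whitehead argument of Proposition~\ref{prop:contract}.

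Next, $M$ is not simply connected at infinity. Nested regular neighborhoods $N_j$ of $K$ are cofinal, and each $N_j\setminus K\simeq\partial N_j$ has fundamental group $\pi_1(K)$. The bonding maps are isomorphisms, so $\pi_1$ at infinity is pro-isomorphic to the constant nontrivial group $\pi_1(K)$. This is the argument of Proposition~\ref{prop:sci}, with the petal loop replaced by a nontrivial element of $\pi_1(K)$. Alternatively, one can cite \cite[Example~3.2.2]{zbMATH07206284} directly. It follows that $M\not\cong\mathbb R^n$.

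\textbf{Analysis.} Let $\nu$ be the $2$-dimensional area measure on $F:=\sigma(K)$, normalized to mass one. Because $K$ is a finite union of PL $2$-simplices, $\nu$ is Ahlfors $2$-regular:
\[
c^{-1}r^2\le\nu(B(x,r))\le c\,r^2,\qquad x\in F,\ 0<r\le1 .
\]
A plain Newtonian potential $\int|x-y|^{2-n}d\nu$ blows up like $\mathrm{dist}(x,F)^{-(n-4)}$. The scalar-flat completeness criterion, via $(n-2)\theta=2$, wants $u^{2/(n-2)}\gtrsim d^{-1}$, i.e.\ $u\gtrsim d^{-(n-2)/2}$. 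Since $n-4\ge (n-2)/2$ exactly when $n\ge6$, this is the threshold $\mathfrak d=2\le a$.

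Scalar-flat is not what we want, however, because we need $\mathrm{Sc}$ bounded below by a positive constant. So we choose the conformal factor to be homogeneous of the critical degree. Put
\[
w(x):=\int_F|x-y|^{-(n+2)/2}\,d\nu(y)\quad\text{or}\quad w(x):=\Bigl(\int_F|x-y|^{-\gamma}\,d\nu\Bigr)^{\kappa},
\]
with exponents chosen so that $w\asymp d^{-(n-2)/2}$ near $F$. Ahlfors regularity gives the two-sided estimate $w\asymp d(x)^{2-\gamma}$ and $|\nabla^k w|\lesssim d^{-k}w$ for $\gamma>2$. Then set $\widehat g:=(\psi+1)^{4/(n-2)}g_{st}$, where $\psi$ is $w$ divided by the round factor as in Lemma~\ref{lem:onechart}, extended across the pole by Kelvin inversion.

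Completeness follows as in Step~3 of Lemma~\ref{lem:onechart}, now pointwise: $\psi^{2/(n-2)}\gtrsim d^{-1}$ and $\int d^{-1}=\infty$ along any curve approaching $K$.

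For the curvature we use \eqref{eq:conformal-covariance-metric}:
\[
\mathrm{Sc}=(\psi+1)^{-(n+2)/(n-2)}\bigl(-a_n\Delta\psi+n(n-1)(\psi+1)\bigr).
\]
Near $K$, with $\psi\asymp d^{-(n-2)/2}$, the term $n(n-1)\psi\cdot\psi^{-(n+2)/(n-2)}=n(n-1)\psi^{-4/(n-2)}\asymp d^{2}\to0$. That is too small, so positivity must come from $-\Delta\psi$. We therefore need $-\Delta w\asymp d^{-2}w\asymp d^{-(n+2)/2}$ with a definite sign, so that $\mathrm{Sc}\asymp d^{-(n+2)/2}d^{(n+2)/2}\asymp1$. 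This is the reason for using the critical homogeneity: on $\mathbb R^n$,
\[
-\Delta|x|^{-\gamma}=\gamma(n-2-\gamma)|x|^{-\gamma-2},
\]
which is positive for $\gamma<n-2$. Superposition over $\nu$ keeps the sign and gives $-\Delta w=\gamma(n-2-\gamma)\int|x-y|^{-\gamma-2}d\nu\asymp d^{-\gamma}$ by Ahlfors regularity. We take $\gamma=(n+2)/2-2+2=\ldots$ matched so that $w\asymp d^{2-\gamma}=d^{-(n-2)/2}$, i.e.\ $\gamma=(n+2)/2<n-2$ for $n\ge7$. For $n=6$ the endpoint needs care, perhaps a logarithmic correction or a slightly adjusted exponent.

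Away from $K$ everything is smooth and positive on a compact set. Near the pole, the Kelvin transform of $w$ is superharmonic and smooth, so the sign persists. Boundedness above follows from the matching upper estimates.

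The main obstacle is the two-sided comparison $-\Delta w\asymp d^{-2}w$ near all of $K$, including near edges and vertices of the $2$-complex. Here Ahlfors regularity of $\nu$ (upper and lower bounds) must be used carefully to compare $\int|x-y|^{-\gamma-2}d\nu$ with $d^{-\gamma}$ and $\int|x-y|^{-\gamma}d\nu$ with $d^{2-\gamma}$. The $n=6$ endpoint, where $\gamma$ hits $n-2$, is a second difficulty. There I would try $\gamma$ slightly below $n-2$, accepting $w\asymp d^{2-\gamma}$ with $2-\gamma$ slightly above $-(n-2)/2$. Completeness then needs rechecking, which I expect still holds since $\int d^{-(\gamma-2)\cdot 2/(n-2)}$ diverges when the exponent is at least $1$; that condition forces $\gamma\ge n/2+1=n-2$, so the endpoint is genuinely delicate and may require logarithmic refinement.
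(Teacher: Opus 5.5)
Your overall architecture matches the paper's: a Newman spine $P$ in codimension $\geq3$, its Ahlfors $2$-regular area measure, a potential of homogeneity $b=\mathfrak d+a=\frac{n+2}{2}$ with $a=\frac{n-2}{2}$, the conformal factor $1+V$, and the matching $L_{g_{st}}(1+V)\asymp(1+V)^{\kappa}$ with $\kappa=\frac{n+2}{n-2}$, which gives $\mathrm{Sc}\asymp1$. The topological half is fine. The comparisons you call the main obstacle (near edges and vertices) are routine dyadic-annulus estimates from Ahlfors regularity, as in Lemma~\ref{lem:potential}. The analytic half, however, has two genuine gaps.

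\emph{First gap: $n=6$ is not proved, although the statement includes it.} At $n=6$ one has $b=n-2$, so the factor $\gamma(n-2-\gamma)$ vanishes and the pure power is conformally harmonic. You then only get $\mathrm{Sc}=n(n-1)(\psi+1)^{-\kappa}\to0$ at $P$, which is exactly the non-uniform situation of Theorem~\ref{thm:counterexample-psc}. Your fallback $\gamma<4$ fails on both counts. With $\kappa=2$ you get $\mathrm{Sc}\asymp d^{-\gamma}d^{2(\gamma-2)}=d^{\gamma-4}\to\infty$. Also $\psi^{1/2}\asymp d^{-(\gamma-2)/2}$ is integrable, so curves reaching $P$ have finite length and completeness fails. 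The missing idea is the logarithmically corrected kernel $\mathfrak q^{2-n}\ell(\mathfrak q)^{-\lambda}$ with $\ell(u)=\log(8/u)$, where $\lambda=a/2$ is forced by $\lambda\kappa=\lambda+1$. One then needs an explicit computation, as in \eqref{eq:critical}, showing that its conformal-Laplacian image is positive and comparable to $\mathfrak q^{-n}\ell(\mathfrak q)^{-a/2-1}$. With that, $V\asymp\rho^{-a}\ell(\rho)^{-a/2}$ and $L_{g_{st}}V\asymp V^{\kappa}$. Completeness then follows from $\int_0^{\rho_1}\frac{d\rho}{\rho\sqrt{\ell(\rho)}}=\infty$.

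\emph{Second gap: for $n\geq7$ the pole is mishandled.} Dividing by $U_{\mathrm{rd}}$ and Kelvin-inverting works in Lemma~\ref{lem:onechart} only because the Newtonian kernel has conformal weight exactly $n-2$. For $w=\int|x-y|^{-\gamma}\,d\nu$ with $\gamma=\frac{n+2}{2}<n-2$, the Kelvin transform behaves like $|z|^{\gamma+2-n}=|z|^{-(n-6)/2}$ near $z=0$. It is superharmonic but not smooth. Equivalently, $\psi=w/U_{\mathrm{rd}}\asymp|x|^{(n-6)/2}\to\infty$ at the pole $p_\sigma\in M$. In the inverted chart, $\widehat g\asymp|z|^{-2(n-6)/(n-2)}g_{\mathrm E}$, so $p_\sigma$ lies at finite distance and your $\widehat g$ is not a smooth complete metric on $M$. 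Moreover, $\mathrm{Sc}_{\widehat g}\asymp|x|^{8/(n-2)}$ is unbounded there.

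The paper avoids this by working intrinsically on $S^n$ with the chordal kernel $\mathfrak q(x,y)^{-b}$. For this kernel, $L_{g_{st},x}\mathfrak q^{-b}=A_{n,b}\mathfrak q^{-b}+B_{n,b}\mathfrak q^{-b-2}$, with both coefficients positive for $0<b<n-2$ (Lemma~\ref{lem:kernels}). Consequently $V$ is smooth and bounded away from $P$, including at any pole, and $L_{g_{st}}(1+V)>0$ everywhere. Alternatively, you could repair your chart version with $U_{\mathrm{rd}}+\varepsilon\chi w$, where $\chi\equiv1$ near $F$ is compactly supported and $\varepsilon$ is small. That modification, and the positivity check in the transition region, are currently missing.
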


\medskip
\noindent\textit{Strategy of the proof.}
We refine Newman's construction to obtain a finite pure PL
$2$-complex $P\subset S^n$ whose complement is contractible but not
homeomorphic to $\mathbb R^n$, and whose area measure is Ahlfors
$2$-regular.  Setting $a:=s=(n-2)/2$, we define
$\widehat g=(1+V)^{4/(n-2)}g_{st}$ using the potential of this measure
with chordal kernel $\mathfrak q^{-(2+a)}$ when $a>2$, and its logarithmically
corrected analogue when $a=2$.  Indeed, for a power kernel $\mathfrak q^{-b}$
with $2<b<n-2$, the $2$-regular potential estimate gives
$V\asymp\rho^{2-b}$, while the positive $\mathfrak q^{-b-2}$ term in
$L_{g_{st}}\mathfrak q^{-b}$ gives $L_{g_{st}}V\asymp\rho^{-b}$.  Thus the
matching condition
\(L_{g_{st}}V\asymp V^{(a+2)/a}\)
requires
\(-b=(2-b)\frac{a+2}{a}\),
equivalently \(b=a+2\).
Thus $b=a+2>2$, whereas $b<n-2=2a$ exactly when $a>2$.  At the
endpoint $a=2$, the matching exponent is $b=a+2=n-2$, the
conformal-Laplacian-harmonic exponent, and the logarithmic
correction restores a positive leading term.  In this critical case the
logarithmic exponent is forced by the same matching: if
$V\asymp\rho^{-a}\ell(\rho)^{-\lambda}$ and
$L_{g_{st}}V\asymp\rho^{-a-2}\ell(\rho)^{-\lambda-1}$, then
$L_{g_{st}}V\asymp V^\kappa$, where \(\kappa=(a+2)/a\), requires
$\lambda\kappa=\lambda+1$, and hence
$\lambda=1/(\kappa-1)=a/2$.
Ahlfors-regular potential estimates, combined with the conformal
scalar-curvature calculation,
then prove completeness and the required two-sided scalar-curvature
bound; the condition $2\leq a$ is exactly $n\geq 6$.

The construction in this subsection is independent of the chart
$\sigma$, its pole $p_\sigma$, the ball $B$, and the PL structure fixed
in the initial setup of this section.  For the proof, fix
$p_\infty\in S^n$ and a stereographic chart
$\sigma_\infty:S^n\setminus\{p_\infty\}\to\mathbb R^n$.  Whenever a compact
set or measure is first constructed in $\mathbb R^n$, we use the same
symbol for the image of the set or the push-forward of the measure under
$\sigma_\infty^{-1}$; in particular,
inclusions such as $\mathbb R^5\subset\mathbb R^n\subset S^n$ below
are understood through this identification.  Viewing $S^n$ as the unit
sphere in $\mathbb R^{n+1}$, define the chordal distance by
$\mathfrak q(x,y):=|x-y|_{\mathbb R^{n+1}}
=2\sin\!\left(\frac{d_{g_{st}}(x,y)}{2}\right)\in[0,2].$
Then $\frac{2}{\pi}d_{g_{st}}(x,y)\leq \mathfrak q(x,y)\leq d_{g_{st}}(x,y)$,
and $\mathfrak q(x,y)>0$ precisely when $x\neq y$.
Recall that $a=s=\frac{n-2}{2}$, and set
$\kappa=\frac{n+2}{n-2}=\frac{a+2}{a}$ and
$\ell(u)=\log\frac{8}{u}$,
so that $\ell(\mathfrak q)\geq\log4$ for $0<\mathfrak q\leq2$.  For positive functions on a
specified set, $f\asymp g$ means $cg\leq f\leq Cg$ for positive
constants independent of the variables under consideration; dependence
on fixed parameters is understood.  When a limiting regime is specified,
the comparison is asserted in that regime; otherwise it holds throughout
the indicated set.

A \emph{$\mathfrak d$-regular pair} in $S^n$ is a compact set
$\Lambda\subset S^n$ together with a Borel probability measure $\mu$
with $\mathrm{supp}\,\mu=\Lambda$ such that, for some $r_0,c,C>0$,
\begin{equation}\label{eq:regular}
 cr^{\mathfrak d}\le\mu(B_{\mathfrak q}(p,r))\le Cr^{\mathfrak d}
 \qquad(p\in\Lambda,\ 0<r<r_0).
\end{equation}

The next lemma is Newman's classical construction \cite{zbMATH03054974},
whose general form, starting from a balanced presentation of a perfect
group, is described in \cite[Example~3.2.2]{zbMATH07206284}; the group
used below is Newman's own.  We record only the refinements needed
later: a pure PL spine in an affine chart, its Ahlfors-regular area
measure, and the identification of its complement with the
corresponding open Newman manifold.

\begin{lemma}[A $2$-regular Newman spine]\label{lem:topological}
For every $n\geq 5$ there is a finite connected pure simplicial $2$-complex
$P\subset\mathbb R^5\subset\mathbb R^n\subset S^n$ such that
\(\widetilde H_*(P;\mathbb Z)=0\)
and \(\pi_1(P)\ne1\),
the complement $M=S^n\setminus P$ is contractible but not homeomorphic
to $\mathbb R^n$, and the push-forward of the probability measure
obtained by normalizing Euclidean area on the Euclidean realization of
$P$ makes $(P,\mu)$ a $2$-regular pair.
\end{lemma}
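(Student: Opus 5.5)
We follow Newman's construction. Take Newman's balanced presentation of a nontrivial perfect group, for instance $G=\langle x,y\mid x^5=(xy)^{-2},\ x^5=y^{-3}\rangle$, which presents the binary icosahedral group of order $120$ and is therefore perfect and nontrivial. Its presentation $2$-complex $P_0$ has one vertex, two edges and two $2$-cells. Hence $\chi(P_0)=1$; with $H_1=G^{\mathrm{ab}}=0$ this forces $H_2(P_0)$ to be free of rank zero, so $\widetilde H_*(P_0;\mathbb Z)=0$ while $\pi_1(P_0)\neq1$.

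\emph{Step 1: the PL spine.} Subdivide $P_0$ into a finite simplicial $2$-complex $P$ in which every simplex is a face of a $2$-simplex; the $1$-cells lie in the attaching boundaries of the $2$-cells, so $P$ is pure. Since $2\cdot2+1=5$, general position gives a PL (indeed linear) embedding of $P$ into $\mathbb R^5\subset\mathbb R^n\subset S^n$.

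\emph{Step 2: topology of the complement.} Let $N$ be a regular neighborhood of $P$ in $S^n$; it is a compact PL $n$-manifold collapsing onto $P$, and its complement $C:=S^n\setminus\operatorname{int}N$ is a deformation retract of $M=S^n\setminus P$, via the punctured-sublevel product structure of Lemma~\ref{lem:sublevel}.
\begin{itemize}
\item Since $\operatorname{codim}P\ge3$, general position (as in Lemma~\ref{lem:sc}, with $2+2-n<0$ replaced by $2+1-n<0$ for discs against a $2$-complex, valid for $n\ge4$) shows that $M$ is simply connected.
\item Alexander duality, as in Lemma~\ref{duality}, gives $\widetilde H_*(M)\cong\widetilde H^{n-*-1}(P)=0$.
\item Hurewicz and Whitehead then give contractibility, as in Proposition~\ref{prop:contract}.
\end{itemize}
For non-Euclideanity, write $M\setminus\operatorname{int}C\cong\partial N\times(0,1]$ by Lemma~\ref{lem:sublevel}, so that $\pi_1$ at infinity is stably $\pi_1(\partial N)$. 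Since $n\ge5$, the codimension is at least $3$, so general position in $N$ gives $\pi_1(\partial N)\cong\pi_1(N\setminus P)\cong\pi_1(N)\cong\pi_1(P)\ne1$. Every compact set of $M$ lies in some $C_c$, and loops in $\partial N_c$ representing a nontrivial element cannot die in $M\setminus C$, since $M\setminus C\subset N\setminus P$ is $\pi_1$-isomorphic to $\partial N$. Thus $M$ is not simply connected at infinity, and $M\not\cong\mathbb R^n$ by the final paragraph of Proposition~\ref{prop:sci}.

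\emph{Step 3: Ahlfors regularity.} Let $\mu_E$ be the normalized Euclidean area on $P$. Fix $p\in P$ and $r<r_0$, where $r_0$ is smaller than the minimum distance between disjoint simplices and the minimal simplex diameter. The ball $B(p,r)$ then meets only the boundedly many $2$-simplices of the star of $p$'s carrier. Each such simplex contributes at most $\pi r^2$. Purity guarantees at least one incident $2$-simplex, and its intersection with $B(p,r)$ contains a sector of area at least $c r^2$, where $c$ is controlled by the minimal angle. This gives the two-sided estimate \eqref{eq:regular} with $\mathfrak d=2$ in the Euclidean metric. Since $P\subset\mathbb R^5$ is compact in the chart $\sigma_\infty$, the map $\sigma_\infty^{-1}$ is bi-Lipschitz from a neighborhood of $P$ onto its image with respect to $\mathfrak q$. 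Balls therefore correspond up to constant radius factors, and the push-forward $\mu$ is $2$-regular after shrinking $r_0$; its support is $P$ because every point lies in a $2$-simplex.

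\emph{Main obstacle.} The main difficulty is the non-simple-connectivity at infinity, specifically justifying $\pi_1(\partial N)\cong\pi_1(P)$. We will prove this by PL general position of $1$- and $2$-dimensional maps in $N$ off $P$, using codimension at least $3$, which is exactly where $n\ge5$ enters.
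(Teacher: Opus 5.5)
Your overall route matches the paper: Newman's presentation complex, a pure triangulation, a linear embedding in $\mathbb R^5$, a sublevel regular neighbourhood, then Alexander duality with general position and Hurewicz--Whitehead for contractibility, $\pi_1(\partial N)\cong\pi_1(P)$ for the end, and area counting with the bi-Lipschitz chart for $2$-regularity. However, the first step fails as written, because the presentation $\langle x,y\mid x^5=(xy)^{-2},\ x^5=y^{-3}\rangle$ is not perfect. Its relators $x^5(xy)^2$ and $x^5y^3$ have exponent-sum matrix $\bigl(\begin{smallmatrix}7&2\\5&3\end{smallmatrix}\bigr)$, of determinant $11$, so $G^{\mathrm{ab}}\cong\mathbb Z/11$. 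The group still surjects onto $A_5$, so it is nontrivial, but it is not the binary icosahedral group. Consequently $H_1(P;\mathbb Z)\cong\mathbb Z/11$ and $P$ is not acyclic. Alexander duality then gives $\widetilde H_{n-3}(M;\mathbb Z)\cong H^2(P;\mathbb Z)\cong\operatorname{Ext}(\mathbb Z/11,\mathbb Z)\cong\mathbb Z/11\neq0$, so $M$ is not contractible.

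The fix is to use the sign-consistent relations $x^5=y^3=(xy)^2$, i.e.\ the relators $x^{-5}(xy)^2$ and $y^{-3}(xy)^2$. Their exponent-sum matrix $\bigl(\begin{smallmatrix}2&-1\\-3&2\end{smallmatrix}\bigr)$ has determinant $1$. This is how the paper proves perfectness, and hence acyclicity via the augmented cellular chain complex. It gets $G\neq1$ from the explicit surjection $x\mapsto(1\,2\,3\,4\,5)$, $y\mapsto(1\,4\,2)$ onto $A_5$, rather than from an identification with $I_{120}^*$.

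There is also a smaller slip in your simple-connectivity step. You write the general-position inequality as $2+1-n<0$, ``valid for $n\ge4$''. That is the inequality for discs against the one-dimensional spines of Lemma~\ref{lem:sc}. For discs against the $2$-complex $P$ you need $2+2-n<0$, i.e.\ $n\ge5$. The claim genuinely fails in $S^4$: the complement of an unknotted $2$-sphere there has $\pi_1\cong\mathbb Z$. Since the lemma assumes $n\ge5$, the correct inequality is available. With that correction and the corrected presentation, the rest of your argument goes through as in the paper.
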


\begin{proof}
Following Newman \cite{zbMATH03054974}, let
\[
 G=\langle x,y\mid r_1,r_2\rangle,
 \qquad
 r_1=y^{-3}(xy)^2,
 \qquad
 r_2=x^{-5}(xy)^2.
\]
Let $X$ be the associated presentation $2$-complex, with unique
$0$-cell $v_0$, oriented generator $1$-cells $x,y$, and one $2$-cell
attached along each of $r_1,r_2$.
Under the substitution $(x,y)=(B,A)$, the relators become Newman's words,
$A^{-3}(BA)^2,\qquad B^{-5}(BA)^2,$
so $X$ is his complex $Z$.  The only group-theoretic fact imported from
Newman at this point is that $G\neq1$.  As Newman observes,
this follows from the fact that the relations hold in the icosahedral
group $A_5$.  Indeed, the assignments
$x\longmapsto(1\,2\,3\,4\,5),\qquad
y\longmapsto(1\,4\,2)$
satisfy $x^5=y^3=(xy)^2=1$, since $xy$ is a product of two disjoint
transpositions, regardless of the convention for composition.  They
therefore define a homomorphism $G\to A_5$ with nontrivial image.  The
exponent-sum matrix
$E=\bigl(\begin{smallmatrix}2&-1\\-3&2\end{smallmatrix}\bigr)$ has
$\det E=1$.  The map $E^T$ is both the map whose cokernel is the
abelianization of $G$ and, by the cellular boundary formula
\cite[Section~2.2, pp.~140--141]{zbMATH02103273}, the cellular boundary
$\partial_2$ of $X$.  Since $X$ has a single $0$-cell and both
$1$-cells are loops at that cell, $\partial_1=0$.  Moreover, the
cellular augmentation $\varepsilon:C_0(X)=\mathbb Z\to\mathbb Z$ is
an isomorphism.  Thus the augmented cellular chain complex of $X$ is
\[
 0\longrightarrow\mathbb Z^2
 \xrightarrow{\ E^T\ }\mathbb Z^2
 \xrightarrow{\ 0\ }\mathbb Z
 \xrightarrow{\ \varepsilon\ }\mathbb Z
 \longrightarrow0.
\]
Because \(\det E=1\), the map $E^T$ is an isomorphism; the augmentation
is an isomorphism as well.  Hence the
cokernel, which is \(G^{\mathrm{ab}}\), is zero, and all reduced cellular
homology groups of \(X\) vanish.  Thus $G$ is perfect, $X$ is acyclic,
and $\pi_1(X)=G\ne1$.  In particular, $X$ is path connected.

The metric part of the lemma requires a pure simplicial realization of
$X$.  Subdivide the attaching circle of each relator $2$-cell into one
edge for each letter of the relator, map that edge affinely onto the
generator edge $x$ or $y$ with the orientation prescribed by the sign
of the letter, and cone the resulting polygonal circle to a new interior
vertex $w$, chosen separately for each relator.  Give every radial edge
the order from $w$ to $v_0$.  For a cone triangle over
a letter edge, label its two boundary
vertices $q_0,q_1$ so that the restriction to the ordered outer face
$[q_0,q_1]$ is the chosen characteristic map of the corresponding
generator edge, rather than its reverse, and order the triangle as
$[w,q_0,q_1]$.  This choice is also available for a letter with exponent
$-1$, since both boundary vertices are identified with $v_0$.  The
restrictions to the radial faces $[w,q_0]$ and $[w,q_1]$
are then the chosen radial characteristic maps.  Moreover, a radial edge
common to two adjacent triangles receives, from both triangles, the order
from $w$ to $v_0$.  Consequently, the restriction
of every triangle characteristic map to each face is exactly the
characteristic map of the corresponding $1$-simplex under the
order-preserving identification of that face with $\Delta^1$.  The
quotient maps embed the interiors of all the vertex, edge, and triangle
simplices just described, and these open simplices partition $X$; moreover,
$X$ carries the quotient topology by construction.  The quotient is
therefore a finite $2$-dimensional $\Delta$-complex.  Repeated
occurrences of a generator edge in an attaching word are permitted in a
$\Delta$-complex.
This $\Delta$-complex is pure, because every radial edge is a face of a
cone triangle and both generator edges occur in $r_1$.

The second barycentric subdivision of a $\Delta$-complex is a simplicial
complex; see \cite[Exercise~23, p.~133]{zbMATH02103273}.  Indeed, after the
first barycentric subdivision the vertices of each simplex correspond
to a strictly increasing flag of faces and are therefore distinct; in
the next barycentric subdivision each simplex is uniquely determined by
the set of vertices corresponding to its flag.  Barycentric subdivision
preserves finiteness, dimension, and purity.  Denote the resulting finite
pure simplicial $2$-complex by $P$; its underlying polyhedron is
canonically homeomorphic to $X$, so $P$ is acyclic and
$\pi_1(P)=G\neq1$.  Since $P$ is a finite polyhedron of dimension
$p=2$ and the target is $\mathbb R^5$,
\[
2p+1=5=\dim\mathbb R^5,
\]
the dimension hypothesis in
\cite[Corollary~4.4, p.~235]{zbMATH01714503} holds at equality.
Apply that corollary to the constant map $P\to\mathbb R^5$, with
$X_0=\varnothing$ and the constant control function $\varepsilon\equiv1$.
The restriction to $X_0$ is vacuously a PL embedding, so the corollary
gives a PL embedding $P\hookrightarrow\mathbb R^5$.
After subdivision, identify $P$ with its linear image in
$\mathbb R^5\subset\mathbb R^n$ and, according to the convention above,
with its inverse-stereographic image in $S^n$.

We make the PL structure used below explicit.  Choose a large affine
$n$-simplex $\mathcal{B}\subset\mathbb R^n$ with
$P\subset\operatorname{int}\mathcal{B}$.  A common subdivision gives a
finite triangulation of $\mathcal{B}$ in which both $P$ and
$\partial\mathcal{B}$ are subcomplexes.  Transport this triangulation to
$\sigma_\infty^{-1}(\mathcal{B})$ and triangulate the complementary
topological $n$-ball by coning the induced boundary triangulation to
$p_\infty$; equivalently, the required homeomorphism is obtained by
compactifying the rays from an interior point of $\mathcal{B}$.  We use
the resulting PL structure on $S^n$.  In particular, the specified
inverse-stereographic image of $P$ is a subpolyhedron of this PL sphere.

After a derived subdivision relative to $P$, assume that $P$ is the
polyhedron of a full subcomplex $\mathcal{L}_P$ of a finite triangulation
$\mathcal{T}_P$ of $S^n$.  The subcomplex $\mathcal{L}_P$ is nonempty
and proper, because $P$ is nonempty and
$P\subset S^n\setminus\{p_\infty\}$.  Let
$\xi_{\mathcal{N}}:|\mathcal{T}_P|\to[0,1]$ be the simplicial map taking
value $0$ on the vertices of $\mathcal{L}_P$ and value $1$ on all other
vertices, and choose the half-neighborhood model
$\mathcal{N}:=\xi_{\mathcal{N}}^{-1}([0,1/2]).$
Lemma~\ref{lem:sublevel}, with $c=1/2$, shows that $\mathcal{N}$ is a
compact regular neighborhood of $P$ and a PL $n$-manifold, with
$\operatorname{int}\mathcal{N}
=\xi_{\mathcal{N}}^{-1}([0,1/2)),\qquad
\partial\mathcal{N}=\xi_{\mathcal{N}}^{-1}(1/2).$
Thus the half-neighborhood is chosen from the outset; no arbitrary
regular neighborhood is being identified silently with Bryant's model.
Set $\mathcal{L}_P^{\mathrm c}:=C(\mathcal{L}_P,\mathcal{T}_P).$
By Lemma~\ref{lem:sublevel}, $\mathcal{L}_P^{\mathrm c}$ is a nonempty
proper full subcomplex of $\mathcal{T}_P$, and its level function is
$1-\xi_{\mathcal{N}}$.  Since the preceding interior formula gives
$S^n\setminus\operatorname{int}\mathcal{N}
=\xi_{\mathcal{N}}^{-1}([1/2,1])$, we may write
\[
C:=S^n\setminus\operatorname{int}\mathcal{N}
=\xi_{\mathcal{N}}^{-1}([1/2,1])
=(1-\xi_{\mathcal{N}})^{-1}([0,1/2]).
\]
A second application of Lemma~\ref{lem:sublevel}, now to
$(\mathcal{T}_P,\mathcal{L}_P^{\mathrm c},1-\xi_{\mathcal{N}})$ with
$c=1/2$, shows that $C$ is a closed regular neighborhood of
$|\mathcal{L}_P^{\mathrm c}|$ and a compact PL $n$-manifold, with
\[
\begin{aligned}
 \partial C
 &=(1-\xi_{\mathcal{N}})^{-1}(1/2)
   =\xi_{\mathcal{N}}^{-1}(1/2)
   =\partial\mathcal{N},\\
 \operatorname{int}C
 &=(1-\xi_{\mathcal{N}})^{-1}([0,1/2))
   =\xi_{\mathcal{N}}^{-1}((1/2,1]).
\end{aligned}
\]
Consequently,
\begin{equation}\label{eq:half-neighborhood-intersection}
 \mathcal{N}\cap C
 =\xi_{\mathcal{N}}^{-1}(1/2)
 =\partial\mathcal{N}
 =\partial C.
\end{equation}
Thus $\partial\mathcal{N}$ and $\partial C$ are the same subpolyhedron
$\xi_{\mathcal{N}}^{-1}(1/2)$ of the fixed PL sphere $S^n$ and hence
carry the same PL structure induced from $S^n$.  In particular, the
one-sided PL collars used below have the same PL base.
We give Newman's argument for this chosen embedding; see
\cite[\S\S~2--3, pp.~193--195]{zbMATH03054974} and
\cite[Example~3.2.2]{zbMATH07206284}.

By the punctured-sublevel conclusion of Lemma~\ref{lem:sublevel},
applied to $(\mathcal{T}_P,\mathcal{L}_P,\xi_{\mathcal{N}})$, there is a
homeomorphism of pairs
\[
 (\mathcal{N}\setminus P,\partial\mathcal{N})
 \cong
 (\partial\mathcal{N}\times(0,1],
  \partial\mathcal{N}\times\{1\}).
\]
The corresponding factor homotopy is a strong deformation retraction
of $\mathcal{N}\setminus P$ onto $\partial\mathcal{N}$; denote its terminal
retraction by
$r:\mathcal{N}\setminus P\to\partial\mathcal{N}$.  Every map in this
deformation fixes $\partial\mathcal{N}$ pointwise.  Since
$P=\xi_{\mathcal{N}}^{-1}(0)$ is contained in
$\operatorname{int}\mathcal{N}$, it is disjoint from $C$.  Hence the
two sets $\mathcal{N}\setminus P$ and $C$ are closed in
$S^n\setminus P$, cover it, and intersect in $\partial\mathcal{N}$ by
\eqref{eq:half-neighborhood-intersection}.
The closed-cover pasting lemma, applied to the products of these two
sets with the parameter interval, therefore pastes this deformation to
the identity deformation on $C$ and gives a strong deformation
retraction
\begin{equation}\label{eq:newman-retraction}
 S^n\setminus P\longrightarrow C.
\end{equation}
Finally, $\mathcal{N}$ collapses onto $P$ by
\cite[Theorem~3.16]{zbMATH01714503}; hence the associated collapse
deformation is a strong deformation retraction fixing $P$ pointwise.

The finite polyhedron $P$ is nonempty, compact, locally contractible, and
proper in $S^n$.  Since $P$ is acyclic, the universal coefficient theorem
gives $\widetilde H^*(P;\mathbb Z)=0$.  The reduced-complement form of
Alexander duality \cite[Corollary~3.45]{zbMATH02103273} and
\eqref{eq:newman-retraction} therefore give
\[
 \widetilde H_i(C;\mathbb Z)
 \cong \widetilde H_i(S^n\setminus P;\mathbb Z)
 \cong \widetilde H^{\,n-i-1}(P;\mathbb Z)=0
 \qquad(i\geq0).
\]
Thus $C$ is acyclic.
The boundary $\partial\mathcal{N}$ is nonempty.  Indeed, $\mathcal{N}$
is connected because it deformation retracts onto the connected
polyhedron $P$; if its boundary were empty, the codimension-zero
submanifold $\mathcal{N}\subset S^n$ would be both open and
closed and hence equal to $S^n$, which is impossible because $\mathcal{N}\simeq P$
and $H_n(P;\mathbb Z)=0$.  In particular, $C$ is nonempty.  The equality
$\widetilde H_0(C;\mathbb Z)=0$ also shows that $C$ is path connected.

We next prove simple connectivity.  Let
$\gamma_0:S^1\to S^n\setminus P$ be a loop and set
$d:=\operatorname{dist}_{g_{st}}(\gamma_0(S^1),P)>0$.  Choose
triangulations of $S^1$ and $S^n$ with $P$ a subcomplex.  After
subdivision, controlled simplicial approximation
\cite[p.~222]{zbMATH01714503}, with control $d/2$, gives a free homotopy
from $\gamma_0$ to a PL loop $\gamma$ for which every point track has
diameter less than $d/2$.  The homotopy therefore lies in
$S^n\setminus P$.  It is enough to contract $\gamma$, because the
homotopy annulus can then be pasted to a filling disk for $\gamma$ to
give a filling disk for $\gamma_0$.  Since $S^n$ is simply connected,
$\gamma$ extends to a continuous map $F_0:D^2\to S^n$.  Extend the
boundary triangulation over $D^2$.  Relative simplicial approximation
\cite{zbMATH03195022} gives a PL map
$F:D^2\to S^n$ with $F|_{\partial D^2}=\gamma$.  Put
$Q:=F(D^2)$ and $Q_0:=\gamma(S^1)$, and choose a common subdivision in
which $Q_0\subset Q$ and $P$ are subpolyhedra.  If $Q=Q_0$, the
map $F$ already misses $P$ and is the required null-homotopy.  Assume
therefore that $Q\ne Q_0$.  The relative PL general-position theorem
\cite[Theorem~4.2]{zbMATH01714503}, applied to
$(Q,Q_0,P)\subset S^n$, gives an ambient PL isotopy $\Psi_t$, fixed on
$Q_0$, such that
\[
 \dim\bigl(\Psi_1(Q\setminus Q_0)\cap P\bigr)
 \leq \dim(Q\setminus Q_0)+\dim P-n
 \leq 2+2-n<0.
\]
Since $Q_0\cap P=\varnothing$, the map $\Psi_1\circ F$ is a null-homotopy
of $\gamma$ in $S^n\setminus P$.  Hence $S^n\setminus P$, and therefore
$C$ by \eqref{eq:newman-retraction}, is simply connected.  Beyond the
embedding $P\hookrightarrow\mathbb R^5\subset\mathbb R^n$, the restriction
$n\geq5$ is used precisely through the inequality $2+2-n<0$, both in
the preceding simple-connectivity argument and in the injectivity argument
below.

If some $\pi_k(C)$ were nonzero, choose the least such $k\geq2$.
Then $C$ would be $(k-1)$-connected, and the Hurewicz theorem
\cite[Theorem~4.32]{zbMATH02103273} would give
$\pi_k(C)\cong H_k(C;\mathbb Z)=0$, a contradiction.  Thus $C$ is
weakly contractible.  As a compact PL manifold, $C$ has the homotopy
type of a finite CW complex.  Choose a finite CW complex \(Z_C\) and a
homotopy equivalence $Z_C\to C$.  The constant map $Z_C\to *$ is a weak
homotopy equivalence, so Whitehead's theorem
\cite[Theorem~4.5]{zbMATH02103273} makes it a homotopy equivalence.
Consequently, $C$ is contractible.

It remains to identify the fundamental group of the boundary.  Put
$\Sigma:=\partial\mathcal{N}=\partial C.$
By the half-neighborhood construction above, the two occurrences of $\Sigma$ carry the
same PL structure.  The PL collaring theorem
\cite[Corollary~2.5]{zbMATH01714503}, applied separately to
$\mathcal{N}$ and $C$, gives, after reparametrizing the collar
coordinates, PL collar embeddings
\[
 c_{\mathcal{N}}:\Sigma\times[-2,0]\longrightarrow\mathcal{N},
 \qquad
 c_C:\Sigma\times[0,2]\longrightarrow C
\]
such that
$c_{\mathcal{N}}(z,0)=c_C(z,0)=z\qquad(z\in\Sigma).$
The normalization at parameter $0$ and injectivity of the collar
embeddings imply
\begin{equation}\label{eq:collar-zero-sections}
 c_{\mathcal{N}}^{-1}(\Sigma)=\Sigma\times\{0\},
 \qquad
 c_C^{-1}(\Sigma)=\Sigma\times\{0\}.
\end{equation}
Indeed, if $c_{\mathcal{N}}(z,t)=w\in\Sigma$, then
$c_{\mathcal{N}}(w,0)=w$ by the normalization, so injectivity of
$c_{\mathcal{N}}$ gives $(z,t)=(w,0)$.  The reverse inclusion follows
from the normalization, and the proof for $c_C$ is identical.
Define
\[
 \overline\chi_\Sigma(z,t):=
 \begin{cases}
  c_{\mathcal{N}}(z,t),&-2\leq t\leq0,\\
  c_C(z,t),&0\leq t\leq2.
 \end{cases}
\]
The two formulas agree on the common subpolyhedron
$\Sigma\times\{0\}$, so the PL pasting lemma shows that
$\overline\chi_\Sigma$ is PL.  We next verify its injectivity.  If the
two parameters both lie in $[-2,0]$, or both lie in $[0,2]$, this
follows from injectivity of $c_{\mathcal{N}}$ or $c_C$, respectively.
In the only remaining case, after interchanging the two points, the
parameters satisfy $t<0<t'$.  Equality of their images would put the
common image in $\mathcal{N}\cap C=\Sigma$ by
\eqref{eq:half-neighborhood-intersection}, contrary to
\eqref{eq:collar-zero-sections}.  Hence $\overline\chi_\Sigma$ is
injective.  Since $\Sigma\times[-2,2]$ is compact and $S^n$ is
Hausdorff, $\overline\chi_\Sigma$ is an embedding.  Therefore
$\chi_\Sigma:=\overline\chi_\Sigma\big|_{\Sigma\times(-2,2)}
:\Sigma\times(-2,2)\to  S^n$ is a PL embedding.  Since
$\Sigma=\partial\mathcal{N}$ and
$\mathcal{N}$ is a compact PL $n$-manifold, $\Sigma$ is a closed PL
$(n-1)$-manifold.  PL invariance of domain therefore shows that the
image of $\chi_\Sigma$ is open.  Thus $\chi_\Sigma$ is a PL bicollar of
$\Sigma$, with
\[
 \chi_\Sigma(\Sigma\times(-2,0])\subset\mathcal{N},
 \qquad
 \chi_\Sigma(\Sigma\times[0,2))\subset C.
\]
More precisely, \eqref{eq:collar-zero-sections} and
$\partial\mathcal{N}=\partial C=\Sigma$ give
\begin{equation}\label{eq:bicollar-strict-sides}
 \chi_\Sigma(\Sigma\times(-2,0))
 \subset\operatorname{int}\mathcal{N},
 \qquad
 \chi_\Sigma(\Sigma\times(0,2))
 \subset\operatorname{int}C.
\end{equation}
The PL bicollar comes from the collaring theorem.  The homeomorphism
$\Phi_c$ of Lemma~\ref{lem:sublevel} is not used to construct
$\chi_\Sigma$.
Define
\[
 \mathcal{O}_{\mathcal{N}}
 :=\operatorname{int}\mathcal{N}\cup \chi_\Sigma(\Sigma\times(-2,1)),
 \qquad
\mathcal{O}_C
 :=\operatorname{int}C\cup \chi_\Sigma(\Sigma\times(-1,2)).
\]
They are open because $\chi_\Sigma$ is a homeomorphism onto an open
subset of $S^n$, and they cover $S^n$ because
$S^n=\operatorname{int}\mathcal{N}\sqcup\Sigma\sqcup\operatorname{int}C$.
For $0\leq u\leq1$, let $H^{\mathcal{N}}_u$ be the identity on
$\mathcal{N}$ and set
\[
 H^{\mathcal{N}}_u\bigl(\chi_\Sigma(z,t)\bigr)
 :=\chi_\Sigma\bigl(z,(1-u)t\bigr)
 \qquad(0\leq t<1).
\]
Likewise, let $H^C_u$ be the identity on $C$ and set
\[
 H^C_u\bigl(\chi_\Sigma(z,t)\bigr)
 :=\chi_\Sigma\bigl(z,(1-u)t\bigr)
 \qquad(-1<t\leq0).
\]
Since
$\mathcal{N}=\operatorname{int}\mathcal{N}\cup\Sigma$ and
$C=\operatorname{int}C\cup\Sigma$, the bicollar-side inclusions give
the covers
\[
 \mathcal{O}_{\mathcal{N}}
 =\mathcal{N}\cup\chi_\Sigma(\Sigma\times[0,1)),
 \qquad
 \mathcal{O}_C
 =C\cup\chi_\Sigma(\Sigma\times(-1,0]).
\]
These are closed covers in their respective ambient open sets.  Indeed,
$\mathcal{N}$ and $C$ are compact and hence closed in $S^n$.  Moreover,
\eqref{eq:half-neighborhood-intersection} and
\eqref{eq:bicollar-strict-sides} give
\[
\begin{aligned}
 \mathcal{N}\cap\chi_\Sigma(\Sigma\times[0,1))&=\Sigma,\\
 C\cap\chi_\Sigma(\Sigma\times(-1,0])&=\Sigma.
\end{aligned}
\]
Consequently,
\[
\begin{aligned}
 \mathcal{O}_{\mathcal{N}}
 \setminus\chi_\Sigma(\Sigma\times[0,1))
 &=\mathcal{N}\setminus\Sigma=\operatorname{int}\mathcal{N},\\
 \mathcal{O}_C
 \setminus\chi_\Sigma(\Sigma\times(-1,0])
 &=C\setminus\Sigma=\operatorname{int}C.
\end{aligned}
\]
The right-hand sides are open, so the two collar strips are closed
relative to $\mathcal{O}_{\mathcal{N}}$ and $\mathcal{O}_C$,
respectively.
The formulas agree on the common set $\Sigma$ in each cover.  Applied
to the products of these closed covers with $[0,1]$, the pasting lemma
therefore shows that
$\{H^{\mathcal{N}}_u\}_{0\leq u\leq1}$ and
$\{H^C_u\}_{0\leq u\leq1}$ are strong deformation retractions of
$\mathcal{O}_{\mathcal{N}}$ onto $\mathcal{N}$ and of
$\mathcal{O}_C$ onto $C$, respectively.  The same side-separation
identities also give
$\mathcal{O}_{\mathcal{N}}\cap\mathcal{O}_C
=\chi_\Sigma(\Sigma\times(-1,1))$
and the homotopy
$\chi_\Sigma(z,t)\mapsto\chi_\Sigma(z,(1-u)t)$ is a strong deformation
retraction of this intersection onto $\Sigma$.  Both $\mathcal{N}$ and
$C$ are path
connected: $\mathcal{N}$ deformation retracts onto the connected complex
$P$, while $C$ is contractible.  The reduced Mayer--Vietoris sequence
for the open cover $S^n=\mathcal{O}_{\mathcal{N}}\cup\mathcal{O}_C$
therefore contains
\[
 0=\widetilde H_1(S^n;\mathbb Z)
 \longrightarrow
 \widetilde H_0(\mathcal{O}_{\mathcal{N}}\cap\mathcal{O}_C;\mathbb Z)
 \longrightarrow
 \widetilde H_0(\mathcal{O}_{\mathcal{N}};\mathbb Z)
 \oplus\widetilde H_0(\mathcal{O}_C;\mathbb Z)=0.
\]
Thus $\mathcal{O}_{\mathcal{N}}\cap\mathcal{O}_C$, and hence $\Sigma$, is
path connected.

Fix $z\in\partial\mathcal{N}$ and let
$\iota:(\partial\mathcal{N},z)\hookrightarrow(\mathcal{N},z)$ be the
inclusion.  We claim that $\iota_*$ is an isomorphism on fundamental
groups.  Choose a PL collar
$c:\partial\mathcal{N}\times[0,2]\to\mathcal{N}$
\cite[Corollary~2.5]{zbMATH01714503}.  Because $P$ is compact and
disjoint from $\partial\mathcal{N}$, restrict and reparametrize this
collar so that its image is disjoint from $P$.  Put
$z'=c(z,1)$, and let $\delta(t)=c(z,t)$, $0\leq t\leq1$.  Pushing a
collar inward shows that
$\operatorname{int}\mathcal{N}\hookrightarrow\mathcal{N}$ is a homotopy equivalence.
Consequently, every element of $\pi_1(\mathcal{N},z)$ is represented by a
loop $\delta*\beta*\overline\delta$, where $\beta$ is a continuous loop
in $\operatorname{int}\mathcal{N}$ based at $z'$.  After relative
simplicial approximation fixing $z'$, we may and do take $\beta$ to be
PL.  Since $z'$ belongs to the
collar, it is disjoint from $P$.  Pass to a common subdivision in which
$\{z'\}\subset\beta(S^1)$ and $P$ are subpolyhedra.  Apply the relative PL
general-position theorem \cite[Theorem~4.2]{zbMATH01714503} in
$\operatorname{int}\mathcal{N}$ to the pair
$(\beta(S^1),\{z'\})$ and the polyhedron $P$.  It gives, in the
nonconstant case, an ambient PL isotopy $\Upsilon_t$ fixed at $z'$ and
satisfying
\[
 \dim\bigl(\Upsilon_1(\beta(S^1)\setminus\{z'\})\cap P\bigr)
 \leq \dim(\beta(S^1)\setminus\{z'\})+\dim P-n
 \leq1+2-n<0.
\]
Thus $\Upsilon_1(\beta(S^1))$ misses $P$.  Since $\Upsilon_t$ fixes
$z'$, replacing $\beta$ by $\Upsilon_1\circ\beta$ does not change its
based homotopy class.  The constant case already misses $P$.  Hence
$\delta*\beta*\overline\delta$ may be chosen in $\mathcal{N}\setminus P$.
The punctured-sublevel deformation retraction
$r:\mathcal{N}\setminus P\to\partial\mathcal{N}$ gives a based homotopy from this loop to
$\iota\circ r\circ(\delta*\beta*\overline\delta)$.  Hence its class lies
in the image of $\iota_*$, proving surjectivity.

For injectivity, let a PL loop $\alpha:S^1\to\partial\mathcal{N}$, based at $z$,
be null-homotopic in $\mathcal{N}$, and put
$\alpha'(w)=c(\alpha(w),1)$ for $w\in S^1$.  The collar homotopy
$(w,t)\mapsto c(\alpha(w),t)$ has basepoint track $\delta$, and hence
\([\alpha']=[\overline\delta*\alpha*\delta]\)
in \(\pi_1(\mathcal{N},z')\).
Since $\alpha$ is null-homotopic in $\mathcal{N}$, this class is trivial.
The isomorphism induced by inclusion
$\pi_1(\operatorname{int}\mathcal{N},z')
\xrightarrow{\ \cong\ }\pi_1(\mathcal{N},z')$
therefore shows that $\alpha'$ is null-homotopic in
$\operatorname{int}\mathcal{N}$.  After extending the boundary
triangulation over $D^2$, relative simplicial approximation
\cite{zbMATH03195022} supplies a PL null-homotopy
$F:(D^2,\partial D^2)\to(\operatorname{int}\mathcal{N},\alpha'(S^1))$ whose
boundary map is $\alpha'$.  Since the collar is disjoint from $P$,
$\alpha'(S^1)\cap P=\varnothing$.  Put
$Q:=F(D^2)$ and $Q_0:=\alpha'(S^1)$.  If $Q=Q_0$, then $F(D^2)$ already
misses $P$; set $\widetilde F:=F$.  If $Q\ne Q_0$, pass to a common
subdivision in which $Q_0\subset Q$ and $P$ are subpolyhedra.  Relative
PL general position in $\operatorname{int}\mathcal{N}$
\cite[Theorem~4.2]{zbMATH01714503}, applied to $(Q,Q_0,P)$, gives an
ambient PL isotopy $\Theta_t$ fixed on $Q_0$ and satisfying
\[
 \dim\bigl(\Theta_1(Q\setminus Q_0)\cap P\bigr)
 \leq\dim(Q\setminus Q_0)+\dim P-n
 \leq2+2-n<0.
\]
Since $Q_0\cap P=\varnothing$, set $\widetilde F:=\Theta_1\circ F$;
this is a null-homotopy of $\alpha'$ in
$\operatorname{int}\mathcal{N}\setminus P$.  In either case, attach to
$\widetilde F$ the
collar annulus
$(w,t)\mapsto c(\alpha(w),t)$, $w\in S^1$ and $0\leq t\leq1$.  Since the collar misses
$P$, the resulting disk is a null-homotopy of $\alpha$ in
$\mathcal{N}\setminus P$.  Composing it with $r$, which fixes $\partial\mathcal{N}$
pointwise, gives a null-homotopy in $\partial\mathcal{N}$.  Hence
$\iota_*$ is injective.
Since $\mathcal{N}$ deformation retracts onto $P$, we have proved
\begin{equation}\label{eq:newman-boundary-group}
 \pi_1(\partial C)=\pi_1(\partial\mathcal{N})
 \cong\pi_1(\mathcal{N})\cong\pi_1(P)=G\ne1.
\end{equation}

Finally, after replacing the factor coordinate $\vartheta\in(0,1]$ by
$1-\vartheta$,
the punctured-sublevel homeomorphism above writes $M=S^n\setminus P$
as $C$ with an open collar $\partial C\times[0,1)$ attached along
$\partial C\times\{0\}$.  Choose an inward PL collar
$c:\partial C\times[0,1]\to C$ with $c(z,0)=z$
\cite[Corollary~2.5]{zbMATH01714503}.  Give the union of the inward and
attached collars the coordinate $u\in[-1,1)$, with $u=-t$ on
$c(\partial C\times[0,1])$ and $u=t$ on the attached collar.  The
homeomorphism $u\mapsto(u-1)/2$ from $[-1,1)$ onto $[-1,0)$ fixes the
endpoint $-1$ and therefore extends by the identity off the inward
collar.  This gives a homeomorphism
$M\cong\operatorname{int}C$.  In particular, $M$ is contractible, as also
follows directly from \eqref{eq:newman-retraction}.  On the punctured
inward collar, introduce a new coordinate $t\in(0,1]$, with
$t\downarrow0$ corresponding to approach to $\partial C$, hence to
infinity in $\operatorname{int}C$.  Choose a homeomorphism
$(0,1]\to[0,\infty)$ with this orientation.  This reparametrization
identifies a cofinal sequence of
connected neighborhoods of infinity in
$\operatorname{int}C$ with
\(\partial C\times(j,\infty)\), \(j=1,2,\ldots\).
With base points chosen along the ray $t\mapsto(z,t)$, projection onto
$\partial C$ identifies the corresponding fundamental-group system
with
\[
 G\xleftarrow{\mathrm{id}}G\xleftarrow{\mathrm{id}}G
 \xleftarrow{\mathrm{id}}\cdots.
\]
By \eqref{eq:newman-boundary-group} this system is not pro-trivial, so
$M$ is not simply connected at infinity
\cite[Example~3.2.16 and Proposition~3.4.36(a)]{zbMATH07206284}.  Simple connectivity at infinity
is a homeomorphism invariant, whereas $\mathbb R^n$ is simply connected
at infinity for $n\geq3$; it follows that
$M\not\cong\mathbb R^n$; this is precisely the conclusion of
\cite[Theorem~3.5.2]{zbMATH07206284}.

It remains to verify \eqref{eq:regular} with $\mathfrak d=2$.  Let
$P_{\mathrm E}\subset\mathbb R^5\subset\mathbb R^n$ be the Euclidean
realization of $P$, and let $\Delta_1,\dots,\Delta_m$ be its
$2$-simplices, each a nondegenerate planar triangle.  In accordance with
the unnormalized Hausdorff convention fixed at the beginning of this
section, put $\mathcal{A}^2:=\frac{\pi}{4}\,\mathcal{H}^2$.
Then $\mathcal{A}^2$ agrees with Euclidean planar area on every affine
$2$-plane.  Distinct $\Delta_i$ meet in common faces, which are
$\mathcal{A}^2$-null, so
$\mathcal{A}^2(P_{\mathrm E})
=\sum_i\operatorname{area}\Delta_i\in(0,\infty)$.
Put
$\mu_{\mathrm E}
:=\frac{\mathcal{A}^2|_{P_{\mathrm E}}}{\mathcal{A}^2(P_{\mathrm E})},\qquad
\mu:=(\sigma_\infty^{-1})_\#\mu_{\mathrm E}$.
Thus the spherical complex already denoted by $P$ is
$\sigma_\infty^{-1}(P_{\mathrm E})$, and $\mu$ is its spherical probability measure.
The map $\sigma_\infty^{-1}$ is smooth and bi-Lipschitz on the compact set
$P_{\mathrm E}$ with respect to Euclidean and chordal distances: there is
$C_{\mathrm{bi}}\ge1$ with
$C_{\mathrm{bi}}^{-1}|x-y|\le \mathfrak q(\sigma_\infty^{-1}(x),\sigma_\infty^{-1}(y))\le C_{\mathrm{bi}}|x-y|$ for
$x,y\in P_{\mathrm E}$.
(For the chart obtained by projecting onto the equatorial hyperplane,
$\mathfrak q(\sigma_\infty^{-1}(x),\sigma_\infty^{-1}(y))
 =2|x-y|\,\bigl((1+|x|^2)(1+|y|^2)\bigr)^{-1/2}$, so
$C_{\mathrm{bi}}=1+R^2$
works if $P_{\mathrm E}\subset B(0,R)$, $R\ge1$.)  Writing $B_{\mathrm E}$
for Euclidean balls, we take $p_{\mathrm E}\in P_{\mathrm E}$, set
$p=\sigma_\infty^{-1}(p_{\mathrm E})$, and let $r>0$; then we obtain
\begin{equation}\label{eq:ball-comparison}
 P_{\mathrm E}\cap B_{\mathrm E}(p_{\mathrm E},r/C_{\mathrm{bi}})
 \subseteq\sigma_\infty\bigl(P\cap B_{\mathfrak q}(p,r)\bigr)
 \subseteq P_{\mathrm E}\cap B_{\mathrm E}(p_{\mathrm E},C_{\mathrm{bi}}r),
\end{equation}
so it suffices to find $c_0,C_0,r_0>0$ with
\[
 c_0r^2\le\mathcal{A}^2\bigl(P_{\mathrm E}\cap
 B_{\mathrm E}(p_{\mathrm E},r)\bigr)\le C_0r^2
 \qquad(p_{\mathrm E}\in P_{\mathrm E},\ 0<r\le r_0).
\]

\emph{Upper estimate.}  For each $i$, the set
$\Delta_i\cap B_{\mathrm E}(p_{\mathrm E},r)$ lies in the intersection
of $B_{\mathrm E}(p_{\mathrm E},r)$ with the affine plane of
$\Delta_i$, which is a planar disc of radius at most $r$.  Hence
$\mathcal{A}^2(\Delta_i\cap B_{\mathrm E}(p_{\mathrm E},r))\le\pi r^2$,
and summing over $i$ gives
$\mathcal{A}^2(P_{\mathrm E}\cap B_{\mathrm E}(p_{\mathrm E},r))
\le m\pi r^2$ for all $r>0$.

\emph{Lower estimate.}  Since $P_{\mathrm E}$ is pure,
$p_{\mathrm E}$ lies in some $\Delta=\Delta_i$.  For $0<t<1$ the
homothety $h_t(y)=p_{\mathrm E}+t(y-p_{\mathrm E})$ maps
the convex set $\Delta$ into itself, satisfies
$|h_t(y)-p_{\mathrm E}|\le t\operatorname{diam}\Delta$, and scales area
by $t^2$.  Thus
$h_t(\Delta)\subseteq\Delta\cap B_{\mathrm E}(p_{\mathrm E},r)$ whenever
$t\operatorname{diam}\Delta<r$, and letting
$t\uparrow r/\operatorname{diam}\Delta$ yields
\[
 \mathcal{A}^2\bigl(P_{\mathrm E}\cap B_{\mathrm E}(p_{\mathrm E},r)\bigr)
 \ge\mathcal{A}^2\bigl(\Delta\cap B_{\mathrm E}(p_{\mathrm E},r)\bigr)
 \ge\frac{\operatorname{area}\Delta}{(\operatorname{diam}\Delta)^2}\,r^2
 \qquad(0<r\le\operatorname{diam}\Delta).
\]
Since there are finitely many simplices, this is the required lower
estimate with
$c_0=\min_i\operatorname{area}\Delta_i/(\operatorname{diam}\Delta_i)^2>0$
and $r_0=\min_i\operatorname{diam}\Delta_i>0$.

Dividing by $\mathcal{A}^2(P_{\mathrm E})$ and using
\eqref{eq:ball-comparison}, we
obtain
\[
 \frac{c_0}{C_{\mathrm{bi}}^2\,\mathcal{A}^2(P_{\mathrm E})}\,r^2
 \le\mu\bigl(B_{\mathfrak q}(p,r)\bigr)
 \le\frac{m\pi C_{\mathrm{bi}}^2}{\mathcal{A}^2(P_{\mathrm E})}\,r^2
 \qquad(p\in P,\ 0<r\le r_0/C_{\mathrm{bi}}),
\]
which is \eqref{eq:regular} with $\mathfrak d=2$.  In particular,
$\mu(B_{\mathfrak q}(p,r))>0$ for every $p\in P$ and
$0<r\leq r_0/C_{\mathrm{bi}}$, and the same
holds for larger $r$ by monotonicity.  Thus
$P\subset\operatorname{supp}\mu$.  Conversely, by construction
$\mu(S^n\setminus P)=0$, and $P$ is closed, so
$\operatorname{supp}\mu\subset P$.  Hence
$\operatorname{supp}\mu=P$, and $(P,\mu)$ is a $2$-regular pair.
\end{proof}

Two lemmas provide the analytic input for Proposition~\ref{prop:metric}.
Lemma~\ref{lem:potential} gives the blow-up estimates needed for
completeness, while Lemma~\ref{lem:kernels} supplies the positivity and
matching growth needed for the scalar-curvature bounds.  In the critical
case $\mathfrak d=a$, the logarithmic correction compensates for the fact that the
pure power $\mathfrak q^{2-n}$ is $L_{g_{st}}$-harmonic off the diagonal.

\begin{lemma}[Ahlfors-regular potential estimate]\label{lem:potential}
Let $(\Lambda,\mu)$ be a $\mathfrak d$-regular pair, put
$\rho=\operatorname{dist}_{\mathfrak q}(\cdot,\Lambda)$, and let
$k:(0,2]\to(0,\infty)$ be continuous with
\[
 k(u)\asymp u^{-\alpha}\ell(u)^{-\lambda}\quad(u\downarrow0),
 \qquad \alpha>\mathfrak d,\quad \lambda\ge0.
\]
Then, for $x\in S^n\setminus\Lambda$,
\begin{equation}\label{eq:potential}
 \int_\Lambda k(\mathfrak q(x,y))\,d\mu(y)
 \asymp \rho(x)^{\mathfrak d-\alpha}\ell(\rho(x))^{-\lambda}
 \qquad(\rho(x)\downarrow0).
\end{equation}
\end{lemma}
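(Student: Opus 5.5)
We prove both bounds in \eqref{eq:potential} by a dyadic decomposition of $\Lambda$ around $x$, measured at the scale $\rho:=\rho(x)$. Assume throughout that $\rho<\min\{r_0,1\}/4$; this is the regime $\rho\downarrow0$. Choose $p\in\Lambda$ with $\mathfrak q(x,p)=\rho$, which is possible because $\Lambda$ is compact. Every $y\in\Lambda$ satisfies $\mathfrak q(x,y)\ge\rho$, and by the triangle inequality
\[
\mathfrak q(x,y)\le\rho+\mathfrak q(p,y)\le 2\max\{\rho,\mathfrak q(p,y)\}.
\]
First we fix the kernel comparison. Continuity and positivity of $k$ on compact subintervals of $(0,2]$, together with the asymptotics at $0$, give $k(u)\asymp u^{-\alpha}\ell(u)^{-\lambda}$ uniformly on all of $(0,2]$, because $\ell\geq\log4$ is bounded above and below on $[u_0,2]$. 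Two further facts are needed. The function $u\mapsto\ell(u)$ is doubling: $\ell(u)\asymp\ell(cu)$ for fixed $c\in[1/4,4]$, since $\ell(u/2)=\ell(u)+\log2$ and $\ell\ge\log 4$. Also $u\mapsto u^{-\alpha}\ell(u)^{-\lambda}$ is comparable to a nonincreasing function, because $\lambda\ge0$ and $\ell$ varies slowly.

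\emph{Lower bound.} Set $B:=B_{\mathfrak q}(p,\rho)$. For $y\in B$ we have $\rho\le\mathfrak q(x,y)<2\rho$, so $k(\mathfrak q(x,y))\gtrsim\rho^{-\alpha}\ell(\rho)^{-\lambda}$ by the doubling of $\ell$. Since $p\in\Lambda$ and $\rho<r_0$, the lower bound in \eqref{eq:regular} gives $\mu(B)\geq c\rho^{\mathfrak d}$. Integrating over $B$ alone yields $\gtrsim\rho^{\mathfrak d-\alpha}\ell(\rho)^{-\lambda}$.

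\emph{Upper bound.} Split $\Lambda$ into $A_0:=\Lambda\cap B_{\mathfrak q}(p,2\rho)$ and the shells $A_j:=\Lambda\cap\{2^j\rho\le\mathfrak q(p,y)<2^{j+1}\rho\}$ for $1\le j\le J$, where $2^{J}\rho\approx r_0$; the remaining part of $\Lambda$, where $\mathfrak q(p,y)\gtrsim r_0$, is $\Lambda_\infty$. On $A_0$ the kernel is at most a constant times $\rho^{-\alpha}\ell(\rho)^{-\lambda}$, because $\mathfrak q(x,y)\ge\rho$ and the kernel is comparable to a nonincreasing function. The upper bound in \eqref{eq:regular} gives $\mu(A_0)\lesssim\rho^{\mathfrak d}$. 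On $A_j$ we have $\mathfrak q(x,y)\asymp2^j\rho$, hence
\[
\int_{A_j}k\,d\mu\lesssim(2^j\rho)^{\mathfrak d-\alpha}\,\ell(2^j\rho)^{-\lambda}.
\]
Because $\lambda\ge0$ and $\ell$ is decreasing, $\ell(2^j\rho)^{-\lambda}\le\ell(2^{J}\rho)^{-\lambda}$, which is not the right comparison. The right step is the monotone one: $\ell(2^j\rho)\ge \ell(2^J\rho)$, so the $\lambda$ factor could grow with $j$. This is where care is needed. Since $\alpha>\mathfrak d$, the factor $2^{j(\mathfrak d-\alpha)}$ decays geometrically, and the ratio $\ell(\rho)/\ell(2^j\rho)=1+j\log2/\ell(2^j\rho)\lesssim 1+j$ grows only polynomially. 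Hence
\[
\sum_j 2^{j(\mathfrak d-\alpha)}\Bigl(\frac{\ell(\rho)}{\ell(2^j\rho)}\Bigr)^{\lambda}\lesssim\sum_j2^{-j(\alpha-\mathfrak d)}(1+j)^\lambda<\infty,
\]
and the shells contribute $\lesssim\rho^{\mathfrak d-\alpha}\ell(\rho)^{-\lambda}$. On $\Lambda_\infty$ the kernel is bounded, since $\mathfrak q(x,y)\gtrsim r_0$ there, and $\mu$ is a probability measure; this piece is $O(1)$. It is absorbed in the main term because $\rho^{\mathfrak d-\alpha}\ell(\rho)^{-\lambda}\to\infty$ as $\rho\downarrow0$, using $\alpha>\mathfrak d$.

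The main obstacle is the bookkeeping of the logarithmic factor across the shells, namely the polynomial-in-$j$ versus geometric-in-$j$ comparison above. This is the only point at which $\alpha>\mathfrak d$ is used as a strict inequality, and it is the reason the critical exponent $\alpha=\mathfrak d$ is excluded from the statement.
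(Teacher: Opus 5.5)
Your proof is correct and follows the same route as the paper: the lower bound comes from the ball $B_{\mathfrak q}(p,\rho)$ around a nearest point $p\in\Lambda$, and the upper bound from dyadic annuli, the Ahlfors upper bound, a geometric series that uses $\alpha>\mathfrak d$, and absorption of the bounded far contribution into the divergent main term. The differences are only technical: you center the annuli at $p$ rather than at $x$, and you control the logarithm through $\ell(\rho)/\ell(2^j\rho)\le 1+j/2$ against the decay $2^{-j(\alpha-\mathfrak d)}$, which is slightly cleaner than the paper's split of the index range at $(j+1)\log2\approx\ell(\rho)/2$. (Choose $J$ with $2^{J+1}\rho<\min\{r_0,2\}$, so that the regularity bound and $\ell\ge\log4$ hold on every shell.)
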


\medskip
\noindent\emph{Idea of proof.}
At distance $\delta=\rho(x)$ from $\Lambda$, the portion of $\Lambda$
within distance comparable to $\delta$ has measure comparable to
$\delta^{\mathfrak d}$, while the kernel there has size
$\delta^{-\alpha}\ell(\delta)^{-\lambda}$.  The remaining contribution
is controlled by dyadic annuli, whose estimates form a convergent
geometric series precisely because $\alpha>\mathfrak d$.

\begin{proof}
Recall that $\ell(u)=\log\frac{8}{u}$, so $\ell$ is decreasing,
$\ell\ge\log4$ on $(0,2]$, and
\begin{equation}\label{eq:ell-doubling}
 \ell(2^m u)=\ell(u)-m\log2
 \qquad(m\in\mathbb N_0,\ 2^m u\le2).
\end{equation}
By hypothesis, there are $r_1\in(0,2]$ and $0<c_1\le C_1$ with
\begin{equation}\label{eq:kernel-comparison}
 c_1u^{-\alpha}\ell(u)^{-\lambda}\le k(u)\le
 C_1u^{-\alpha}\ell(u)^{-\lambda}\qquad(0<u\le r_1).
\end{equation}
Set $r_2=\tfrac{1}{2}\min\{r_0,r_1\}$ and assume
$x\in S^n\setminus\Lambda$ and $0<\delta:=\rho(x)<r_2/4$.
Choose $p\in\Lambda$ with $\mathfrak q(x,p)=\delta$ ($\Lambda$ is compact) and
put $\ell_\delta:=\ell(\delta)$.  All constants below may depend on the fixed
regular pair, on $\mathfrak d,\alpha,\lambda$, and on $k$, but not on $x$.

\emph{Lower bound.}  For $y\in B_{\mathfrak q}(p,\delta)\cap\Lambda$ the triangle
inequality gives $\mathfrak q(x,y)\le \mathfrak q(x,p)+\mathfrak q(p,y)<2\delta\le r_1$, while
$\mathfrak q(x,y)\ge\rho(x)=\delta$; hence
$\mathfrak q(x,y)^{-\alpha}\ge(2\delta)^{-\alpha}$ and, since $\ell$ is decreasing
and $\lambda\ge0$, $\ell(\mathfrak q(x,y))^{-\lambda}\ge
\ell_\delta^{-\lambda}$.  Since
$\delta<r_0$, \eqref{eq:regular} gives
$\mu(B_{\mathfrak q}(p,\delta))\ge c\delta^{\mathfrak d}$, so \eqref{eq:kernel-comparison}
yields
\[
 \int_\Lambda k(\mathfrak q(x,y))\,d\mu(y)
 \ge\int_{B_{\mathfrak q}(p,\delta)\cap\Lambda}k(\mathfrak q(x,y))\,d\mu(y)
 \ge c_1c\,2^{-\alpha}\delta^{\mathfrak d-\alpha}
 \ell_\delta^{-\lambda}.
\]

\emph{Upper bound.}  Since $\mathfrak q(x,\cdot)\ge\delta$ on $\Lambda$,
the annuli
\[
A_j=\{y\in\Lambda:2^j\delta\le \mathfrak q(x,y)<2^{j+1}\delta\},
\qquad j\ge0,
\]
are pairwise disjoint and, for every $J\ge0$,
\[
\bigcup_{j=0}^{J}A_j
=\{y\in\Lambda:\mathfrak q(x,y)<2^{J+1}\delta\}.
\]
Let $J$ be maximal with $(2^{J+1}+1)\delta<r_2$; such a finite $J$
exists because $3\delta<r_2$, whereas
$(2^{j+1}+1)\delta\to\infty$ as $j\to\infty$.  For $0\le j\le J$ the
triangle inequality gives
$A_j\subset B_{\mathfrak q}\bigl(p,(2^{j+1}+1)\delta\bigr)$, a ball of radius less
than $r_2<r_0$, so \eqref{eq:regular} and $2^{j+1}+1\le3\cdot2^j$ give
$\mu(A_j)\le3^{\mathfrak d}C(2^j\delta)^{\mathfrak d}$.  On $A_j$ one has
$2^j\delta\le \mathfrak q(x,y)<2^{j+1}\delta\le r_1$, so
\eqref{eq:kernel-comparison} and \eqref{eq:ell-doubling} give
$k(\mathfrak q(x,y))\le
C_1(2^j\delta)^{-\alpha}[\ell_\delta-(j+1)\log2]^{-\lambda}$,
where the bracket equals $\ell(2^{j+1}\delta)>\ell(r_2)>0$.  Hence
\begin{equation}\label{eq:annulus}
 \int_{A_j}k(\mathfrak q(x,y))\,d\mu(y)
 \le C'\delta^{\mathfrak d-\alpha}2^{-j\gamma}\,\ell(2^{j+1}\delta)^{-\lambda}
 \qquad(0\le j\le J),
\end{equation}
where $\gamma=\alpha-\mathfrak d>0$.  Split $\{0,\dots,J\}$ according to whether
$(j+1)\log2\le \ell_\delta/2$.  For the first range, the bracket is at least
$\ell_\delta/2$, and the geometric series
$\sum_{j\ge0}2^{-j\gamma}=(1-2^{-\gamma})^{-1}$ (this is where
$\alpha>\mathfrak d$ enters) bounds that part of the sum of \eqref{eq:annulus}
by $C\delta^{\mathfrak d-\alpha}\ell_\delta^{-\lambda}$.  For the remaining indices,
$(j+1)\log2>\ell_\delta/2$ and
$\ell(2^{j+1}\delta)\ge\ell(r_2)>0$; therefore
\[
 \sum_{\substack{0\le j\le J\\(j+1)\log2>\ell_\delta/2}}
 2^{-j\gamma}\,\ell(2^{j+1}\delta)^{-\lambda}
 \le C\!\!\sum_{j>\ell_\delta/(2\log2)-1}\!\!2^{-j\gamma}
 \le Ce^{-\gamma \ell_\delta/2}
 \le C\ell_\delta^{-\lambda},
\]
the last inequality because
$\sup_{u\ge\log4}u^{\lambda}e^{-\gamma u/2}<\infty$.  Thus this part
also contributes at most
$C\delta^{\mathfrak d-\alpha}\ell_\delta^{-\lambda}$.  Finally,
maximality of $J$ gives $(2^{J+2}+1)\delta\ge r_2$, hence
$2^{J+1}\delta\ge(r_2-\delta)/2\ge3r_2/8$; the remaining set
$\{y\in\Lambda:\mathfrak q(x,y)\ge2^{J+1}\delta\}$ lies at chordal distance at
least $3r_2/8$ from $x$, where the continuous function $k$ is bounded
by $C_{\mathrm{far}}:=\max_{u\in[3r_2/8,2]}k(u)$, so it contributes at most
$C_{\mathrm{far}}\mu(\Lambda)=C_{\mathrm{far}}$, which is absorbed because
$\delta^{\mathfrak d-\alpha}\ell_\delta^{-\lambda}\to\infty$ as
$\delta\downarrow0$.
Together the three parts give the upper bound in
\eqref{eq:potential}.
\end{proof}

\begin{lemma}[Kernel identities]\label{lem:kernels}
For $b>0$ and $x,y\in S^n$ with $x\ne y$,
\begin{equation}\label{eq:power}
 L_{g_{st},x}\mathfrak q(x,y)^{-b}
 =A_{n,b}\mathfrak q(x,y)^{-b}+B_{n,b}\mathfrak q(x,y)^{-b-2},
\end{equation}
where
\[
 A_{n,b}=\frac{(n-1)(n-2-b)(n-b)}{n-2},
 \qquad
 B_{n,b}=\frac{4(n-1)b(n-2-b)}{n-2}.
\]
In particular, both coefficients are positive when $0<b<n-2$.
Moreover, if $\lambda>0$, then
\begin{align}
 L_{g_{st},x}\bigl(\mathfrak q^{2-n}\ell(\mathfrak q)^{-\lambda}\bigr)
 &=\frac{4(n-1)\lambda}{n-2}\mathfrak q^{-n}
   \frac{(n-2)\ell(\mathfrak q)-(\lambda+1)}{\ell(\mathfrak q)^{\lambda+2}}
 \notag\\
 &\quad+\frac{(n-1)\lambda}{n-2}\mathfrak q^{2-n}
   \frac{2\ell(\mathfrak q)+\lambda+1}{\ell(\mathfrak q)^{\lambda+2}}.
 \label{eq:critical}
\end{align}
For $\lambda=a/2=(n-2)/4$, this expression is positive and
\begin{equation}\label{eq:critical-comparison}
 L_{g_{st},x}\bigl(\mathfrak q^{2-n}\ell(\mathfrak q)^{-a/2}\bigr)
 \asymp \mathfrak q^{-n}\ell(\mathfrak q)^{-a/2-1}
 \qquad(0<\mathfrak q\le2).
\end{equation}
\end{lemma}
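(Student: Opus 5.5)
The plan is a direct computation, reducing everything to radial functions of the chordal distance. Fix $y$ and set $r:=d_{g_{st}}(x,y)\in(0,\pi)$, so that $\mathfrak q=2\sin(r/2)$. With the convention $\Delta=\operatorname{div}\nabla$, a radial function $f(r)$ on the round sphere satisfies
\[
 \Delta f=f''+(n-1)\cot r\,f'.
\]
It is simpler to work with the variable $\mathfrak q$ itself. Viewing $S^n\subset\mathbb R^{n+1}$, the function $w(x):=\mathfrak q(x,y)^2=2-2\langle x,y\rangle$ is the restriction of a linear function plus a constant. Therefore $\nabla w$ is the tangential projection of $-2y$, and
\[
 |\nabla w|^2=4\bigl(1-\langle x,y\rangle^2\bigr)=w(4-w),
 \qquad
 \Delta w=-n\cdot(-2\langle x,y\rangle)\cdot(-1)=2n\langle x,y\rangle=n(2-w).
\]
The second identity uses that the coordinate functions are first eigenfunctions with eigenvalue $n$.

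For $F(w)$, the chain rule then gives
\[
 \Delta F(w)=F''(w)\,w(4-w)+F'(w)\,n(2-w).
\]
For the power kernel take $F(w)=w^{-b/2}$. Substituting, and multiplying by $-\tfrac{4(n-1)}{n-2}$ before adding $n(n-1)F$, produces terms in $w^{-b/2}$ and $w^{-b/2-1}$. Collecting the $w^{-b/2-1}$ coefficient gives
\[
 -\tfrac{4(n-1)}{n-2}\Bigl[\tfrac b2\bigl(\tfrac b2+1\bigr)\cdot4-\tfrac b2\cdot 2n\Bigr]
 =\tfrac{4(n-1)b(n-2-b)}{n-2}=B_{n,b}.
\]
The $w^{-b/2}$ coefficient is
\[
 -\tfrac{4(n-1)}{n-2}\Bigl[-\tfrac b2\bigl(\tfrac b2+1\bigr)+\tfrac{nb}{2}\Bigr]+n(n-1).
\]
This factors as $\tfrac{(n-1)(n-2-b)(n-b)}{n-2}=A_{n,b}$, which I will verify by expanding in $b$ and checking the roots $b=n-2$ and $b=n$. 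Positivity for $0<b<n-2$ is then immediate.

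For the logarithmic kernel, write $G(w)=w^{-\beta}\ell^{-\lambda}$ with $\beta=(n-2)/2$ and $\ell=\log 8-\tfrac12\log w$, so $\ell'(w)=-1/(2w)$. One route is to compute $G'$ and $G''$ directly. A cleaner route is the product rule: $L(\phi\psi)=\psi L\phi-\tfrac{4(n-1)}{n-2}\bigl(\phi\Delta\psi+2\langle\nabla\phi,\nabla\psi\rangle\bigr)$ with $\phi=\mathfrak q^{2-n}$ and $\psi=\ell^{-\lambda}$. Here $\psi L\phi$ is given by \eqref{eq:power} with $b=n-2$, namely $A_{n,n-2}=B_{n,n-2}=0$, so $L\phi=0$ and only the remaining terms survive. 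Using $\psi'(w)=\tfrac{\lambda}{2w}\ell^{-\lambda-1}$ and $\psi''(w)=-\tfrac{\lambda}{2w^2}\ell^{-\lambda-1}+\tfrac{\lambda(\lambda+1)}{4w^2}\ell^{-\lambda-2}$, together with the formulas for $|\nabla w|^2$ and $\Delta w$ above, the result is a combination of $w^{-n/2}\ell^{-\lambda-1}$, $w^{-n/2}\ell^{-\lambda-2}$, $w^{1-n/2}\ell^{-\lambda-1}$ and $w^{1-n/2}\ell^{-\lambda-2}$. Grouping these as $\mathfrak q^{-n}(\cdots)$ and $\mathfrak q^{2-n}(\cdots)$ should reproduce \eqref{eq:critical}. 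The main obstacle is this bookkeeping of coefficients. In particular, the $w(4-w)$ factor splits each term into a $4w$ part, contributing to the $\mathfrak q^{-n}$ bracket, and a $-w^2$ part, contributing to the $\mathfrak q^{2-n}$ bracket. I will double-check the result against the $\lambda\to0$ limit and a numerical spot check.

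Finally, for $\lambda=a/2$ and $0<\mathfrak q\le2$, we have $\ell\ge\log4$. Then
\[
 (n-2)\ell-(\lambda+1)\ge(n-2)\log4-\tfrac{n+2}{4}>0
 \quad\text{for } n\ge6,
\]
so both terms in \eqref{eq:critical} are positive. The first term is comparable to $\mathfrak q^{-n}\ell^{-\lambda-1}$. The second is at most a constant multiple of $\mathfrak q^{2-n}\ell^{-\lambda-1}\le4\,\mathfrak q^{-n}\ell^{-\lambda-1}$, because $\mathfrak q\le2$. Together these give \eqref{eq:critical-comparison}.
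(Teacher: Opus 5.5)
Your computation is correct and produces exactly \eqref{eq:power} and \eqref{eq:critical}, but your route differs from the paper's in two respects.

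The paper works in geodesic polar coordinates about $y$. It converts the radial Laplacian into the $\mathfrak q$-variable and then differentiates $\mathfrak q^{-b}$ and $\mathfrak q^{2-n}\ell(\mathfrak q)^{-\lambda}$ directly. At the end it observes that the zeroth-order term cancels against $n(n-1)f$. Because polar coordinates degenerate at the antipode, it also needs a separate continuity remark there.

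Your ambient variable $w=\mathfrak q^2=2-2\langle x,y\rangle$ has $|\nabla w|^2=w(4-w)$ and $\Delta w=n(2-w)$ by the first-eigenfunction property. This yields $\Delta F(w)=w(4-w)F''+n(2-w)F'$, which is precisely the paper's radial formula rewritten via $f(\mathfrak q)=F(\mathfrak q^2)$. Its advantage is that it holds on all of $S^n\setminus\{y\}$ at once, antipode included.

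For the logarithmic kernel, your product-rule splitting uses $\phi=\mathfrak q^{2-n}$ and $L_{g_{st}}\phi=0$, which follows from $A_{n,n-2}=B_{n,n-2}=0$. This explains the paper's cancellation structurally and shortens the bookkeeping. Carrying it out gives
\[
\phi\Delta\psi+2\langle\nabla\phi,\nabla\psi\rangle
=\lambda\,\mathfrak q^{-n}\frac{(\lambda+1)-(n-2)\ell}{\ell^{\lambda+2}}
-\frac{\lambda}{4}\,\mathfrak q^{2-n}\frac{2\ell+\lambda+1}{\ell^{\lambda+2}},
\]
and multiplying by $-\frac{4(n-1)}{n-2}$ gives \eqref{eq:critical} exactly. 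The paper's direct route, in exchange, needs neither eigenfunction facts nor a prior identification of $\mathfrak q^{2-n}$ as $L_{g_{st}}$-harmonic.

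The final step needs two small repairs.
\begin{itemize}
\item Drop the restriction $n\ge6$. The quantity $(n-2)\log4-\frac{n+2}{4}$ is increasing in $n$ and already equals $\log4-\frac54>0$ at $n=3$. This matters because Proposition~\ref{prop:metric} uses the critical case for every $n\ge3$.
\item A positive constant lower bound for $(n-2)\ell-(\lambda+1)$ does not by itself give the lower half of \eqref{eq:critical-comparison}, since $\ell\to\infty$ as $\mathfrak q\downarrow0$. Use instead $(n-2)\ell-(\lambda+1)\ge\bigl(n-2-\frac{\lambda+1}{\log4}\bigr)\ell$, which holds because $\ell\ge\log4$. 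Its coefficient is positive by the same numerical inequality.
\end{itemize}
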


\begin{proof}
Fix $y\in S^n$ and write
$r=d_{g_{st}}(x,y),\qquad \mathfrak q=2\sin\frac r2$.
For $0<r<\pi$,
\[
 \mathfrak q_r=\cos\frac r2,\qquad
 \mathfrak q_r^2=1-\frac{\mathfrak q^2}{4},\qquad
 \mathfrak q_{rr}=-\frac{\mathfrak q}{4},
 \qquad
 \cot r\,\mathfrak q_r=\frac1{\mathfrak q}-\frac{\mathfrak q}{2},
\]
the last identity because $\sin r=\mathfrak q\,\mathfrak q_r$ and $\cos r=1-\tfrac{\mathfrak q^2}2$.
Hence, if $f=f(\mathfrak q)$ is radial about $y$, then
\begin{align*}
 \Delta_{g_{st}}f
 &=f''\mathfrak q_r^2+f'\bigl(\mathfrak q_{rr}+(n-1)\cot r\,\mathfrak q_r\bigr)\\
 &=\left(1-\frac{\mathfrak q^2}{4}\right)f''
   +\left(\frac{n-1}{\mathfrak q}-\frac{(2n-1)\mathfrak q}{4}\right)f'.
\end{align*}
Although geodesic polar coordinates degenerate at $r=\pi$, both sides
extend continuously there, so the resulting identities hold for every
$0<\mathfrak q\le2$.

For $f(\mathfrak q)=\mathfrak q^{-b}$, one has
$f'=-b\mathfrak q^{-b-1},\qquad
f''=b(b+1)\mathfrak q^{-b-2}$.
Substitution into the preceding formula gives
\[
 \Delta_{g_{st}}\mathfrak q^{-b}
 =b(b-n+2)\mathfrak q^{-b-2}
  +\frac{b(2n-2-b)}4\mathfrak q^{-b}.
\]
Since
$L_{g_{st}}=-\frac{4(n-1)}{n-2}\Delta_{g_{st}}+n(n-1),$
we obtain
\begin{align*}
 L_{g_{st}}\mathfrak q^{-b}
 &=\frac{4(n-1)b(n-2-b)}{n-2}\mathfrak q^{-b-2}\\
 &\quad+
 \frac{n-1}{n-2}
 \bigl[n(n-2)-b(2n-2-b)\bigr]\mathfrak q^{-b}.
\end{align*}
The factorization
$n(n-2)-b(2n-2-b)=(n-2-b)(n-b)$
proves \eqref{eq:power}.  If $0<b<n-2$, then
$n-2-b>0$, $n-b>0$, and $b>0$, so both coefficients are positive.
At $b=n-2$, both coefficients vanish; equivalently,
$\mathfrak q^{2-n}$ is annihilated by $L_{g_{st}}$ off the diagonal.
This is the Green-kernel degeneration underlying the critical case.

For the critical kernel, put
$m=n-2,\qquad t=\ell(\mathfrak q),\qquad
f(\mathfrak q)=\mathfrak q^{-m}t^{-\lambda}$.
Since $t'=-\mathfrak q^{-1}$, direct differentiation gives
\[
 f'
 =\mathfrak q^{-m-1}t^{-\lambda-1}(\lambda-mt)
\]
and
\[
 f''
 =\mathfrak q^{-m-2}t^{-\lambda-2}
 \bigl[m(m+1)t^2-\lambda(2m+1)t+\lambda(\lambda+1)\bigr].
\]
Substituting these derivatives into the radial Laplacian formula and
using $n=m+2$ yields
\[
 \Delta_{g_{st}}f
 =\lambda \mathfrak q^{-n}\frac{(\lambda+1)-mt}{t^{\lambda+2}}
 +\frac14\mathfrak q^{2-n}
   \frac{mn\,t^2-2\lambda t-\lambda(\lambda+1)}{t^{\lambda+2}}.
\]
When $L_{g_{st}}$ is applied, the term containing
$mn\,\mathfrak q^{2-n}t^{-\lambda}$ cancels exactly with
$n(n-1)f$.  The remaining terms are
\begin{align*}
 L_{g_{st}}f
 &=\frac{4(n-1)\lambda}{n-2}\mathfrak q^{-n}
   \frac{(n-2)t-(\lambda+1)}{t^{\lambda+2}}\\
 &\quad+
 \frac{(n-1)\lambda}{n-2}\mathfrak q^{2-n}
   \frac{2t+\lambda+1}{t^{\lambda+2}},
\end{align*}
which is \eqref{eq:critical}.  Finally, take
$\lambda=\frac a2=\frac{n-2}{4}$.  Since $t=\ell(\mathfrak q)\ge\log4$,
\((n-2)t-(\lambda+1)
\ge(n-2)\log4-\frac{n+2}{4}>0\).
Indeed, $(n-2)\log 4-\tfrac{n+2}{4}$ is increasing in $n$ and is already positive at $n=3$, where it equals $\log 4-\tfrac54>0$.  Thus both terms in \eqref{eq:critical} are
positive.  Moreover, for $t\ge\log4$,
\((n-2)t-(\lambda+1)\ge
\Bigl(n-2-\frac{n+2}{4\log4}\Bigr)t\),
and the coefficient in parentheses is positive because it equals
\(\frac{(n-2)\log4-(n+2)/4}{\log4}>0\).
Hence the first term in \eqref{eq:critical} is comparable to
$\mathfrak q^{-n}t^{-\lambda-1}$.  The second term is positive and satisfies
\[
 \mathfrak q^{2-n}\frac{2t+\lambda+1}{t^{\lambda+2}}
 \le C\mathfrak q^{2-n}t^{-\lambda-1}
 =C\mathfrak q^2\mathfrak q^{-n}t^{-\lambda-1}
 \le C\mathfrak q^{-n}t^{-\lambda-1}.
\]
The first term supplies the corresponding lower bound, proving
\eqref{eq:critical-comparison}.
\end{proof}

\begin{proposition}[Metrics from regular pairs]\label{prop:metric}
Let $n\ge3$, let $0<\mathfrak d\le a=(n-2)/2$, and let $(\Lambda,\mu)$ be a
$\mathfrak d$-regular pair in $S^n$ with
$\varnothing\ne\Lambda\subsetneq S^n$.  Define
\[
 \mathcal{K}_{\mathfrak d}(u)=
 \begin{cases}
  u^{-(\mathfrak d+a)},&\mathfrak d<a,\\[2mm]
  u^{-2a}\ell(u)^{-a/2},&\mathfrak d=a,
 \end{cases}
 \qquad
 V(x)=\int_\Lambda \mathcal{K}_{\mathfrak d}(\mathfrak q(x,y))\,d\mu(y),
 \quad x\in S^n\setminus\Lambda.
\]
Then
\begin{equation}\label{eq:metric}
 \widehat g=(1+V)^{4/(n-2)}g_{st}
\end{equation}
is smooth and locally conformally flat on $S^n\setminus\Lambda$; each
connected component is geodesically complete; and its scalar curvature
satisfies
\[
 0<\inf_{S^n\setminus\Lambda}\mathrm{Sc}_{\widehat g}
 \le\sup_{S^n\setminus\Lambda}\mathrm{Sc}_{\widehat g}<\infty.
\]
\end{proposition}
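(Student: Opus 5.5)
The plan is to treat $V$ as a superposition of the kernels from Lemma~\ref{lem:kernels} and read off every property from Lemma~\ref{lem:potential}. Let $b:=\mathfrak d+a$ when $\mathfrak d<a$, so $0<b<2a=n-2$. In the critical case $\mathfrak d=a$ the kernel is $\mathfrak q^{2-n}\ell(\mathfrak q)^{-a/2}$.

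\emph{Smoothness.} On any compact set $Q\subset S^n\setminus\Lambda$ the chordal distance $\mathfrak q(x,y)$ is bounded below for $y\in\Lambda$. Hence all $x$-derivatives of $\mathcal K_{\mathfrak d}(\mathfrak q(x,y))$ are uniformly bounded, and we may differentiate under the integral, since $\mu$ is a probability measure. Thus $V$ is smooth and positive, and $\widehat g$ is a smooth metric conformal to $g_{st}$, hence locally conformally flat.

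\emph{Scalar curvature.} The same differentiation under the integral gives $L_{g_{st}}V=\int_\Lambda L_{g_{st},x}\mathcal K_{\mathfrak d}(\mathfrak q(x,y))\,d\mu(y)$. By \eqref{eq:conformal-covariance-metric},
\[
\mathrm{Sc}_{\widehat g}=(1+V)^{-\kappa}\bigl(n(n-1)+L_{g_{st}}V\bigr).
\]
By \eqref{eq:power}, with both coefficients positive, or by \eqref{eq:critical}--\eqref{eq:critical-comparison}, the integrand is positive. So $\mathrm{Sc}_{\widehat g}>0$ pointwise.

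For the uniform bounds, split $S^n\setminus\Lambda$ into a region $\{\rho\ge\rho_1\}$ and the collar $\{\rho<\rho_1\}$. On the first region $V$ and $L_{g_{st}}V$ are continuous and bounded above and below by positive constants, since kernels are bounded there and $\mu(\Lambda)=1$. So $\mathrm{Sc}_{\widehat g}$ is pinched between positive constants. Note this region need not be compact if it is not closed, but $\rho\ge\rho_1$ makes it closed in $S^n$, hence compact.

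On the collar, apply Lemma~\ref{lem:potential} twice.

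In the subcritical case, take $\alpha=b>\mathfrak d$ and $\lambda=0$ to get $V\asymp\rho^{\mathfrak d-b}=\rho^{-a}$. Then take $\alpha=b+2$, the dominant term of \eqref{eq:power}, to get $L_{g_{st}}V\asymp\rho^{-a-2}$; the lower-order term $A_{n,b}\mathfrak q^{-b}$ is dominated. Since $\kappa=(a+2)/a$, we have $(1+V)^{\kappa}\asymp\rho^{-a-2}$ as $\rho\downarrow0$. This is exactly the matching described in the strategy, and the ratio is pinched.

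In the critical case, take $\alpha=2a$ and $\lambda=a/2$ to get $V\asymp\rho^{-a}\ell(\rho)^{-a/2}$. By \eqref{eq:critical-comparison}, take $\alpha=n=2a+2$ and $\lambda=a/2+1$ to get $L_{g_{st}}V\asymp\rho^{-a-2}\ell(\rho)^{-a/2-1}$. Then $V^\kappa\asymp\rho^{-a-2}\ell^{-(a/2)\kappa}$, and $(a/2)\kappa=(a+2)/2=a/2+1$, so the logarithms match.

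In both cases, near $\Lambda$ we get $n(n-1)+L_{g_{st}}V\asymp(1+V)^\kappa$, which gives the two-sided bound.

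\emph{Completeness.} Let $c$ be a divergent locally Lipschitz curve in a component $\Omega'$. Either $c$ leaves every compact subset of $\Omega'$ by approaching $\Lambda$, or it meets another component's closure; the latter is impossible, since the closure of $\Omega'$ in $S^n\setminus\Lambda$ is $\Omega'$ itself. So $\rho(c(t))\to0$ along some sequence, and in fact, arguing as in Step~3 of Lemma~\ref{lem:onechart}, $\rho(c(t))\to0$ along a sequence of crossings of dyadic shells $\{2^{-k-1}\le\rho\le2^{-k}\}$.

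The $\widehat g$-length element is $(1+V)^{2/(n-2)}ds\gtrsim\rho^{-1}\ell(\rho)^{-1/2\cdot(2/(n-2))\cdot(a)}$. For $\mathfrak d<a$ this is $\rho^{-a\cdot 2/(n-2)}=\rho^{-1}$. In the critical case the exponent of $\ell$ is $(a/2)(2/(n-2))=1/2$, giving $\rho^{-1}\ell(\rho)^{-1/2}$.

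Since $\rho$ is $1$-Lipschitz for $\mathfrak q$ (and hence for $d_{g_{st}}$), each shell crossing has round length at least $2^{-k-1}$. It therefore contributes $\gtrsim1$ in the subcritical case and $\gtrsim k^{-1/2}$ in the critical case. Both series diverge, so every divergent curve has infinite length, and the Hopf--Rinow criterion from Lemma~\ref{lem:onechart} gives completeness of each component.

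\emph{Main obstacle.} I expect the hard step to be the critical case of the scalar-curvature bound. It needs the exact cancellation in \eqref{eq:critical} and a careful check that the lower-order term in $L_{g_{st}}V$ is absorbed uniformly by Lemma~\ref{lem:potential}. A secondary subtlety is that the divergent curve must cross infinitely many shells even if it oscillates, which is handled by the crossing-time argument used before.
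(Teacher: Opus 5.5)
Your proposal is correct and follows the paper's proof essentially step for step: differentiation under the integral, positivity of $L_{g_{st}}V$ from Lemma~\ref{lem:kernels}, two applications of Lemma~\ref{lem:potential} giving $n(n-1)+L_{g_{st}}V\asymp(1+V)^\kappa$ near $\Lambda$ with compactness handling $\{\rho\ge\rho_1\}$, and infinite length of divergent curves for completeness. The only difference is cosmetic: you sum lower bounds over crossings of dyadic $\rho$-shells, while the paper first gets $\rho(\gamma(t))\to0$ from compactness of $\{\rho\ge\varepsilon\}$ and then integrates $|\rho'|/\rho$ (resp.\ $|\rho'|/(\rho\sqrt{\ell(\rho)})$) directly via the chain rule.
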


\begin{proof}
\emph{Smoothness.}  Let $Q_{\mathrm{sm}}\subset S^n\setminus\Lambda$ be a
nonempty compact set, so that
$\varepsilon:=\min\{\mathfrak q(x,y):x\in Q_{\mathrm{sm}},\,y\in\Lambda\}>0$.
On $Q_{\mathrm{sm}}\times\Lambda$ one has $\mathfrak q\ge\varepsilon$.  Since
$\mathfrak q^2$ is smooth, $\mathfrak q$ is smooth on a neighborhood of
this compact set.  As $\mathcal{K}_{\mathfrak d}$ is smooth on
$[\varepsilon,2]$, the integrand
$\mathcal{K}_{\mathfrak d}(\mathfrak q(x,y))$ and all its
$x$-derivatives are continuous, hence uniformly bounded on
$Q_{\mathrm{sm}}\times\Lambda$.  As $\mu$ is finite, dominated
convergence justifies differentiation under the integral: $V$ is
smooth and positive on $S^n\setminus\Lambda$ and
\[
 H:=L_{g_{st}}V=\int_\Lambda
 L_{g_{st},x}\mathcal{K}_{\mathfrak d}(\mathfrak q(x,y))\,d\mu(y).
\]
Here $x\notin\Lambda=\operatorname{supp}\mu$, so the kernel identities
are used only off the diagonal; no distributional mass at $x=y$ enters
this calculation.

\emph{Positivity and asymptotics.}  Suppose first that $\mathfrak d<a$ and set
$b=\mathfrak d+a$, so that $0<b<2a=n-2$.  With
\(I_2=\int_\Lambda \mathfrak q(x,y)^{-b-2}\,d\mu(y)\),
identity \eqref{eq:power} has positive coefficients and gives
\(H=A_{n,b}V+B_{n,b}I_2>0\).
The kernels $u^{-b}$ and $u^{-b-2}$ are continuous and positive on
$(0,2]$ and satisfy the hypotheses of Lemma~\ref{lem:potential} with
$(\alpha,\lambda)=(b,0)$ and $(b+2,0)$, respectively; hence, as
$\rho\downarrow0$,
\(V\asymp\rho^{-a}\)
and
\(I_2\asymp\rho^{-a-2}\).
In particular, for $\rho$ sufficiently small,
\(V\le C\rho^{-a}=C\rho^2\rho^{-a-2}\le C'I_2\).
Since $H=A_{n,b}V+B_{n,b}I_2$ and $B_{n,b}>0$, it follows that
\begin{equation}\label{eq:subcritical-asymptotics}
 V\asymp\rho^{-a},
 \qquad
 H\asymp I_2\asymp\rho^{-a-2}.
\end{equation}
If $\mathfrak d=a$, then
$\mathcal{K}_a(\mathfrak q)=\mathfrak q^{2-n}\ell(\mathfrak q)^{-a/2}$,
and by
Lemma~\ref{lem:kernels} its $L_{g_{st}}$-image is again a continuous
positive function of $\mathfrak q$ alone, comparable to
$\mathfrak q^{-n}\ell(\mathfrak q)^{-a/2-1}$ on $(0,2]$ by
\eqref{eq:critical-comparison}.  Hence $H>0$, and
Lemma~\ref{lem:potential}, applied with
$(\alpha,\lambda)=(2a,\tfrac a2)$ to $V$ and with
$(\alpha,\lambda)=(n,\tfrac a2+1)$ to $H$ (both admissible, as
$\alpha>\mathfrak d=a$), gives
\begin{equation}\label{eq:critical-asymptotics}
 V\asymp\rho^{-a}\ell(\rho)^{-a/2},
 \qquad
 H\asymp\rho^{-a-2}\ell(\rho)^{-a/2-1}.
\end{equation}

\emph{The comparison $H\asymp V^\kappa$.}  Since
$\kappa=\frac{a+2}a$, one has
$a\kappa=a+2,\qquad \frac a2\kappa=\frac a2+1,$
so raising the $V$-asymptotics in
\eqref{eq:subcritical-asymptotics}--\eqref{eq:critical-asymptotics} to
the power $\kappa$ reproduces exactly the $H$-asymptotics: in both
cases there is $\rho_1>0$ with $H\asymp V^\kappa$ on
$\{0<\rho\le\rho_1\}$.  The set $\{\rho\ge\rho_1\}$ is a compact
subset of $S^n\setminus\Lambda$, on which $H$ and $V^\kappa$ are
continuous and positive; hence $H/V^\kappa$ is bounded between
positive constants there as well, and $H\asymp V^\kappa$ holds on all
of $S^n\setminus\Lambda$.

\emph{Curvature.}  Set $U=1+V\ge1$.  Since $L_{g_{st}}1=n(n-1)$,
\(L_{g_{st}}U=n(n-1)+H\asymp1+V^\kappa\).
Since $\kappa>1$,
\(1+V^\kappa\le(1+V)^\kappa\le2^{\kappa-1}\bigl(1+V^\kappa\bigr)\),
the second inequality by convexity of $x\mapsto x^\kappa$.
Consequently,
\(L_{g_{st}}U\asymp1+V^\kappa\asymp U^\kappa\).
As $U$ is smooth and $U\ge1$, \eqref{eq:metric} defines a smooth
metric conformal to $g_{st}$, and hence it is locally conformally flat.
Since $a=\frac{n-2}{2}$ and
$\kappa=\frac{n+2}{n-2}=\frac{a+2}{a}$,
we have $\widehat g=U^{4/(n-2)}g_{st}=U^{2/a}g_{st}$.
For
$L_{g_{st}}
=-\frac{4(n-1)}{n-2}\Delta_{g_{st}}+n(n-1),$
the conformal scalar-curvature formula gives
\(\mathrm{Sc}_{\widehat g}
=U^{-(n+2)/(n-2)}L_{g_{st}}U
=U^{-\kappa}L_{g_{st}}U\).
The comparison $L_{g_{st}}U\asymp U^\kappa$ means that, for some
constants $0<c\le C<\infty$,
\(cU^\kappa\le L_{g_{st}}U\le CU^\kappa\)
on \(S^n\setminus\Lambda\).
Since $U>0$, multiplying by $U^{-\kappa}$ yields
$c\le\mathrm{Sc}_{\widehat g}(x)\le C$ for every
$x\in S^n\setminus\Lambda$.  Thus
\(0<c\leq
\inf_{S^n\setminus\Lambda}\mathrm{Sc}_{\widehat g}
\le \sup_{S^n\setminus\Lambda}\mathrm{Sc}_{\widehat g}
\leq C<\infty\).

\emph{Completeness.}  The inequality $U\ge V$ and the lower bounds in
\eqref{eq:subcritical-asymptotics}--\eqref{eq:critical-asymptotics} show that,
after shrinking $\rho_1$, there is $c>0$ such that on
$\{0<\rho\le\rho_1\}$,
\[
 U^{1/a}\ge
 \begin{cases}
  c\rho^{-1},&\mathfrak d<a,\\[1mm]
  \displaystyle\frac{c}{\rho\sqrt{\ell(\rho)}},&\mathfrak d=a.
 \end{cases}
\]
Note that $\widehat g=U^{2/a}g_{st}$, so
$ds_{\widehat g}=U^{1/a}ds_{g_{st}}$, and that $\rho$ is
$1$-Lipschitz for $g_{st}$, because
$|\rho(x)-\rho(x')|\le \mathfrak q(x,x')\le d_{g_{st}}(x,x')$.

Let $\gamma:[0,T)\to S^n\setminus\Lambda$ be a divergent locally
Lipschitz curve; thus, for every compact $Q\subset S^n\setminus\Lambda$,
there is $t_Q\in[0,T)$ such that $\gamma(t)\notin Q$ for all $t>t_Q$.
For every $\varepsilon>0$, the set
$K_\varepsilon=\{x\in S^n:\rho(x)\ge\varepsilon\}$ is a compact
subset of $S^n\setminus\Lambda$.  Since $\gamma$ is
divergent, it eventually avoids every $K_\varepsilon$; hence
$\rho(\gamma(t))\to0$ as $t\to T$, and there is $t_0$ with
$\rho(\gamma(t))\le\rho_1$ for $t\ge t_0$.  Write
$\rho(t):=\rho(\gamma(t))$.  This function is locally Lipschitz, and
$|\rho'(t)|\le|\gamma'(t)|_{g_{st}}$ almost everywhere.  For each
$t<T$, the positive continuous function $\rho$ has a positive minimum
on $[t_0,t]$; hence the absolutely continuous chain rule applies to
$\log\rho$ and to $\sqrt{\ell(\rho)}$ on this interval.

In the subcritical case, for $t_0\le t<T$,
\[
\begin{aligned}
 \operatorname{Len}_{\widehat g}
 \bigl(\gamma|_{[t_0,t]}\bigr)
 &=\int_{t_0}^{t}U(\gamma(\tau))^{1/a}
   |\gamma'(\tau)|_{g_{st}}\,d\tau\\
 &\ge c\int_{t_0}^{t}\frac{|\rho'(\tau)|}{\rho(\tau)}\,d\tau\\
 &\ge c\left|\int_{t_0}^{t}
   \frac{\rho'(\tau)}{\rho(\tau)}\,d\tau\right|\\
 &=c\bigl|\log\rho(t)-\log\rho(t_0)\bigr|
 \xrightarrow[t\uparrow T]{}+\infty.
\end{aligned}
\]
In the critical case, since
$\frac{d}{dr}\bigl(-2\sqrt{\ell(r)}\bigr)
=\frac1{r\sqrt{\ell(r)}},$
the same argument gives
\[
\begin{aligned}
 \operatorname{Len}_{\widehat g}
 \bigl(\gamma|_{[t_0,t]}\bigr)
 &=\int_{t_0}^{t}U(\gamma(\tau))^{1/a}
   |\gamma'(\tau)|_{g_{st}}\,d\tau\\
 &\ge c\int_{t_0}^{t}
   \frac{|\rho'(\tau)|}
        {\rho(\tau)\sqrt{\ell(\rho(\tau))}}\,d\tau\\
 &\ge c\left|\int_{t_0}^{t}
   \frac{\rho'(\tau)}
        {\rho(\tau)\sqrt{\ell(\rho(\tau))}}\,d\tau\right|\\
 &=2c\bigl|\sqrt{\ell(\rho(t))}
          -\sqrt{\ell(\rho(t_0))}\bigr|
 \xrightarrow[t\uparrow T]{}+\infty.
\end{aligned}
\]
In both cases the terminal limit follows from
$\rho(t)\to0$ as $t\uparrow T$: one has
$\log\rho(t)\to-\infty$ and
$\ell(\rho(t))=\log(8/\rho(t))\to+\infty$.
Thus every
divergent locally Lipschitz curve has infinite $\widehat g$-length.

Suppose that some connected component were not geodesically complete.
By the unit-speed formulation of geodesic completeness, one could, after translating
and reversing the parameter if necessary, choose a unit-speed geodesic
$\gamma:[0,T)\to S^n\setminus\Lambda$, with $T<\infty$, that is not
extendible past $T$.  This geodesic must be divergent.  Indeed, otherwise
there would be a compact set $Q\subset S^n\setminus\Lambda$ and a
sequence of times tending to $T$ whose images under $\gamma$ lie in $Q$.
The corresponding unit tangent vectors lie in the compact unit tangent
bundle over $Q$ and therefore have a convergent subsequence.
Local existence for the smooth geodesic vector field then supplies a
common positive existence time for all sufficiently late terms of this
subsequence, and
uniqueness extends $\gamma$ beyond $T$, contradicting its
nonextendibility.
Thus $\gamma$ is a divergent smooth, hence locally Lipschitz, curve, but
its $\widehat g$-length is $T<\infty$, contradicting the preceding
paragraph.  Every connected component is therefore geodesically
complete.
\end{proof}

\begin{proof}[Proof of Theorem~\ref{thm:uniform-psc-counterexample}]
Let $P\subset S^n$, $M=S^n\setminus P$, and $\mu$ be supplied by
Lemma~\ref{lem:topological}.  Then $(P,\mu)$ is $2$-regular, and
the measure has support equal to the nonempty proper subset \(P\subset S^n\).  Moreover,
the analytic hypothesis $\mathfrak d=2\le a=(n-2)/2$ holds because
$n\geq 6$.  The same lemma shows that $M$ is
contractible, and hence connected, and that
$M\not\cong\mathbb R^n$.  Thus $M$ is the unique connected component
of $S^n\setminus P$.  Since the finite complex $P$ is closed, $M$ is an
open subset of the smooth sphere and hence is a smooth open manifold.
Proposition~\ref{prop:metric} therefore gives
the required geodesically complete metric on $M$.
\end{proof}

\begin{question}\label{q:lowdim}
For $n\in\{4,5\}$, is there a contractible open $n$-manifold
$M\not\cong\mathbb R^n$ carrying a complete locally conformally flat
metric with $\mathrm{Sc}\ge1$?
\end{question}

\begin{proof}[Proof of Theorem~\ref{F}]
Fix $n\geq4$ and choose any $t>0$ as in Theorem~\ref{thm:counterexample-psc}.  The metric
$g^{(t)}$ is complete and locally conformally flat, and its scalar
curvature is strictly positive; its underlying manifold is contractible
and is not homeomorphic to $\mathbb R^n$.  This proves the first
assertion.  For every $n\geq4$, the uniform-decay conclusion of
Theorem~\ref{thm:counterexample-psc}, together with
$\mathrm{Sc}_{g^{(t)}}=
t^{-4/(n-2)}\mathrm{Sc}_{\bar g^{(t)}}$,
shows that $\mathrm{Sc}_{g^{(t)}}(x)\to0$ as
$d_{g_{st}}(x,K)\to0$.  By Proposition~\ref{prop:limitset},
$\dim_{\mathcal{H}}K=1<n$, so $K$ has empty interior in $S^n$.
Fix $p\in K$.  Since $K$ has empty interior, for every $i\geq1$ one may
choose $x_i\in B_{g_{st}}(p,1/i)\setminus K\subset M$.
Then $d_{g_{st}}(x_i,K)<1/i$, and hence
$\mathrm{Sc}_{g^{(t)}}(x_i)\to0$.  Since
$\mathrm{Sc}_{g^{(t)}}>0$ on $M$, it follows that
$\inf_M\mathrm{Sc}_{g^{(t)}}=0$.
Thus the shifted metrics of
Theorem~\ref{thm:counterexample-psc} are not uniformly PSC in any dimension.  For
$n\geq6$, however, Theorem~\ref{thm:uniform-psc-counterexample} supplies a manifold of the
same kind carrying a complete locally conformally flat metric whose
scalar curvature has a positive lower bound, proving the second
assertion. 
\end{proof}

\addcontentsline{toc}{section}{\refname}
\bibliographystyle{alpha}
\bibliography{reference}

\end{document}